\newcommand{\TeXmacs}{T\kern-.1667em\lower.5ex\hbox{E}\kern-.125emX\kern-.1em\lower.5ex\hbox{\textsc{m\kern-.05ema\kern-.125emc\kern-.05ems}}}
\newcommand{\assign}{:=}
\newcommand{\asterisk}{\mathord{*}}
\newcommand{\comma}{{,}}
\newcommand{\equallim}{\mathop{=}\limits}
\newcommand{\mathd}{\mathrm{d}}
\newcommand{\nobracket}{}
\newcommand{\tmcolor}[2]{{\color{#1}{#2}}}
\newcommand{\tmmathbf}[1]{\ensuremath{\boldsymbol{#1}}}
\newcommand{\tmop}[1]{\ensuremath{\operatorname{#1}}}
\newcommand{\tmstrong}[1]{\textbf{#1}}
\newcommand{\tmtextit}[1]{{\itshape{#1}}}
\newenvironment{proof}{\noindent\textbf{Proof\ }}{\hspace*{\fill}$\Box$\medskip}
\newenvironment{proof*}[1]{\noindent\textbf{#1\ }}{\hspace*{\fill}$\Box$\medskip}
\newtheorem{theorem}{Theorem}[section]
\newtheorem{axiom}{Axiom}
\newtheorem{corollary}[theorem]{Corollary}
\newtheorem{definition}[theorem]{Definition}
\newtheorem{lemma}[theorem]{Lemma}
\newtheorem{proposition}[theorem]{Proposition}
{\theorembodyfont{\rmfamily}\newtheorem{remark}[theorem]{Remark}}
\newcommand{\tmkeywords}{\textbf{Keywords:} }
\newcommand{\tmmsc}{\textbf{A.M.S. subject classification:} }
\begin{document}

\title{ Elliptic stochastic quantization of Sinh-Gordon QFT}

\author{ Nikolay Barashkov\thanks{Max Planck Institute for Mathematics in the Sciences, Leipzig, 
  \tmtextit{nikolay.barashkov@mis.mpg.de}} and Francesco C. De Vecchi
  \thanks{Department of Mathematics ``Felice Casorati'', University of Pavia, 
  \tmtextit{francescocarlo.devecchi@unipv.it}}}
\date{}

\maketitle

\begin{abstract}
  The (elliptic) stochastic quantization equation for the (massive) $\cosh
  (\beta \varphi)_2$ model, for the charged parameter in the $L^2$ regime
  (i.e. $\beta^2 < 4 \pi$), is studied. We prove the existence, uniqueness and
  the properties of the invariant measure of the solution to this equation.
  The proof is obtained through a priori estimates and a lattice approximation
  of the equation. For implementing this strategy we generalize some
  properties of Besov spaces in the continuum to analogous results for Besov
  spaces on the lattice. As a final result we show how to use the stochastic
  quantization equation to verify the Osterwalder-Schrader axioms for the
  $\cosh (\beta \varphi)_2$ quantum field theory, including the exponential
  decay of correlation functions.
\end{abstract}

\tmmsc{60H17, 81T08, 81T40}

\tmkeywords{stochastic quantization, Gaussian multiplicative chaos, elliptic
stochastic partial differential equations, Besov spaces (on lattice),
Euclidean quantum field theory, dimensional reduction }


\section{Introduction}

One of the first steps in Euclidean Quantum Field Theory (EQFT) is the (more
or less explicit) construction of quantum models satisfying one of the
equivalent formulations of Osterwalder--Schrader axioms (see, e.g,
{\cite{Glimm_Jaffe_book,Simon_phi2}}). In the case of scalar bosonic quantum
fields, this problem is equivalent to the definition of a probability measures
$\nu$ on the space of tempered distributions $\mathcal{S}' (\mathbb{R}^d)$
formally given by
\begin{equation}
  \mathd \nu = \frac{1}{Z} \exp \left( - \lambda \int_{\mathbb{R}^d} V
  (\varphi (x)) \mathd x \right) \mu (\mathd \varphi), \label{eq:eqft}
\end{equation}
where $\lambda \geqslant 0$, $\mu$ is the Gaussian Free Field with mass $m
\geqslant 0$, which is the Gaussian measure with covariance
\[ \int \langle f, \varphi \rangle_{L^2 (\mathbb{R}^2)} \langle g, \varphi
   \rangle_{L^2 (\mathbb{R}^2)} \mathd \mu = \langle f, (m^2 - \Delta)^{- 1} g
   \rangle, \]
$V : \mathbb{R} \rightarrow \mathbb{R}$ is a regular function describing the
interaction of the field $\varphi$, and $Z \in \mathbb{R}_+$ is a
normalization constant. Important examples, of measures of the form
{\eqref{eq:eqft}}, which have been extensively studied in the literature
include, are:
\begin{itemize}
  \item $\varphi^4_{2, 3}$ models, where $V (\varphi) = \varphi^4$, and $d =
  2, 3$ (see, e.g.
  {\cite{benfatto_ultraviolet_1980,brydges-new-1983,magnen_infinite_1976,park_lambda_1975}}),
  
  \item Sine-Gordon model, where $V (\varphi) = \cos (\beta \varphi)$, $d = 2$
  and $\beta^2 < 8 \pi$ (see, e.g.,
  {\cite{Albeverio-HK-sine-gordon-1979,lacoin_sine-gordon}}),
  
  \item H{\o}egh-Krohn models, where
  \begin{equation}
    V (\varphi) = \int^{\sqrt{8 \pi}}_{- \sqrt{8 \pi}} \exp (\hat{\beta}
    \varphi) \eta (\mathd \hat{\beta}), \label{eq:Hoergh}
  \end{equation}
  where $\eta$ is a positive measure supported in $ \left( - \sqrt{8 \pi},
  \sqrt{8 \pi} \right)$, and $d = 2$ (see, e.g. {\cite{Albeverio-1974}} and
  {\cite{FrohlichPark1977}}). 
\end{itemize}
Since the Gaussian measure $\mu$, in the formal equation {\eqref{eq:eqft}}, is
supported on generic distributions of negative regularity, it is not clear how
to define the composition of the field $\varphi$ with the nonlinear function
$V (\varphi)$. In order to solve this problem we need to exploit
renormalization techniques: We approximate the measure $\mu$ by a sequence of
more regular probability laws $\mu_{\varepsilon}$ (depending on some real
parameter $\varepsilon > 0$) such that $\mu_{\varepsilon} \rightarrow \mu$ as
$\varepsilon \rightarrow 0$. We also replace the potential $V$ by a sequence
of functions $V_{\varepsilon}$, depending on the regularization
$\mu_{\varepsilon}$ chosen, for which the limit of the sequence
$V_{\varepsilon} (\varphi_{\varepsilon})$ (as $\varepsilon \rightarrow 0$) is
well defined and nontrivial. A second problem that we face in the definition
of measure {\eqref{eq:eqft}}, is that, being $\mu$ is translation-invariant,
and thus one cannot expect the integral
\begin{equation}
  \lambda \int_{\mathbb{R}^d} V (\varphi (x)) \mathd x \label{eq:integral}
\end{equation}
to converge when integrated over the whole $\mathbb{R}^d$, even if one is able to give a precise meaning to the composition $V (\varphi)$. To deal with
this difficulty, one can approximate {\eqref{eq:integral}} by
\[ \lambda \int_{\mathbb{R}^d} \rho (x) V (\varphi (x)) \mathd x, \]
where $\rho : \mathbb{R}^d \rightarrow [0, 1]$ is a smooth function decaying
sufficiently quickly at infinity, and then send $\rho \rightarrow 1$. To
summarize to construct a rigorously meaningful version of the measure
{\eqref{eq:eqft}} we have to consider the weak limit
\begin{equation}
  \lim_{\rho \rightarrow 1, \varepsilon \rightarrow 0} \frac{1}{Z_{\rho,
  \varepsilon}} \exp \left( - \lambda \int_{\mathbb{R}^2} \rho V_{\varepsilon}
  (\varphi) \right) \mathd \mu_{\varepsilon} . \label{eq:eqft-approx}
\end{equation}
Many different methods have been used to prove the existence and nontriviality
of the limit {\eqref{eq:eqft-approx}}. Many of them analyze the measure in
{\eqref{eq:eqft-approx}} directly, proving tightness, and obtaining the
existence of some limit measure, at least up to passing to suitable
subsequences.

In the recent decade the technique of stochastic quantization (SQ), first
proposed by Parisi and Wu {\cite{parisi-perturbation-1981}}, for studying
{\eqref{eq:eqft}} has become increasingly popular. In this approach one
considers the measure {\eqref{eq:eqft}} as an invariant measure of some
suitable stochastic partial differential equations (SPDEs). An important
example, of such SPDEs, is the Langevin dynamics associated with the measure
{\eqref{eq:eqft}}, which is described by the equation
\begin{equation}
  \partial_t \phi (t, x) + (m^2 - \Delta) \phi (t, x) + \lambda V' (\phi (t,
  x)) = \xi (t, x) \label{eq:SQE-general}, \quad (t, x) \in \mathbb{R}_+
  \times \mathbb{R}^d
\end{equation}
where $\xi$ is a space time Gaussian white noise on $\mathbb{R}_+ \times
\mathbb{R}^d$. The difficulties in solving {\eqref{eq:SQE-general}} are
analogous to the ones mentioned before for the direct construction of the
measure {\eqref{eq:eqft}}: The expected regularity of the solution $\phi$ is
not enough to provide a natural sense to the composition with the nonlinearity
$V'$. Furthermore the noise $\xi$ is expected to grow at infinity, and, thus,
one can only expect to solve equation {\eqref{eq:SQE-general}} in a weighted
space.

One of the advantages of SQ methods is that we can apply many SPDEs and PDEs
techniques in solving equation {\eqref{eq:SQE-general}} which are, in general,
not available for the direct construction of the measure {\eqref{eq:eqft}}.
The first well posedness result for {\eqref{eq:SQE-general}}, in the case of
$\Phi_2^4$ measure on the torus $\mathbb{T}^2$, was obtained by Da Prato and
Debussche in {\cite{DaPrato_Debussche}} (see also {\cite{Mourrat_Weber2017}}
by Mourrat and Weber about the $\Phi_2^4$ stochastic quantization SPDE on the
plane $\mathbb{R}^2$). The paracontrolled distribution theory of Gubinelli,
Imkeller, and Perkowski {\cite{gubinelli_paracontrolled_2015}} (see also
{\cite{catellier_paracontrolled_2013}}), as well as Hairer's regularity
structures {\cite{haiere_theory_2014}} and Kupiainen's approach
{\cite{kupiainen_renormalization_2016}}, based on renormalization group
techniques, \ allowed to prove well posedness results in the more singular
case of $\Phi_3^4$ measure on $\mathbb{T}^3$. Later these methods were
improved to obtain a priori bounds on the solutions to
{\eqref{eq:SQE-general}}. These bounds were strong enough to control the
invariant measure and thus prove the existence of the limit
{\eqref{eq:eqft-approx}} in $\Phi_2^4$ and $\Phi_3^4$
{\cite{albeverio_invariant_2017,albeverio2021construction,GuHof2018,Mourrat_Weber20172}}.

Another approach using an infinite dimensional stochastic control problem was
used by Gubinelli and one of the authors for constructing $\Phi_3^4$ measure
on $\mathbb{T}^3$ in {\cite{Barashkov2020,Barashkov2021}}, and by Bringmann in
{\cite{bringmann2020}} for the Gibbs measure of Hartree hyperbolic equation.

\

Equation {\eqref{eq:SQE-general}} is not the only possibility of pursuing
stochastic quantization. In {\cite{Parisi_Sourlas_1979,Parisi-Sourlas-1982}}
Parisi and Sourlas proposed an elliptic approach to this problem. They
conjectured that if one solves the equation
\begin{equation}
  (m^2 - \Delta) \phi (z, x) + \lambda V' (\phi (z, x)) = \xi (z, x)
  \label{eq:SQE-elliptic}
\end{equation}
on $(z, x) \in \mathbb{R}^2 \times \mathbb{R}^d$, and where $\xi$ is a
$\mathbb{R}^2 \times \mathbb{R}^d$ Gaussian white noise, then, for any $z \in
\mathbb{R}^2$, the following equivalence should hold
\begin{equation}
  \phi (z, \cdot) \sim \frac{1}{Z} \exp (- \lambda V (\varphi)) \mathd \mu
  \label{eq:law}.
\end{equation}
In the mathematics literature this kind of stochastic quantization was
investigated in {\cite{Klein_supersymmetry}} and proved in {\cite{AlDeGu2018}}
for a class of potentials $V$ including convex potentials.

Let us remark that, although both the elliptic and parabolic SQ methods so
far often fall short of providing enough information on the decay of
correlation functions of $\nu$, they are capable of proving pathwise
properties even when absolute continuity is lost, since
{\eqref{eq:SQE-general}} and {\eqref{eq:SQE-elliptic}} provide a coupling
between the Gaussian Free Field (having law $\mu$) and the interacting fields
(having law $\nu$). Similar results have also been obtained using the
Polchinski equation, see, e.g., {\cite{bauerschmidt_2020}}.

\

The main goal of this paper is to study the elliptic stochastic quantization
equation for the $\tmop{Sinh}$-Gordon model in dimension $d = 2$ (hereafter
called $\cosh (\beta \varphi)_2$ model) in the $L^2$ regime of the charge
parameter $\beta$, namely when $| \beta | < \sqrt{4 \pi}$. In this case the SQ
elliptic equation is formally given by \
\begin{equation}
  (m^2 - \Delta) \phi (z, x) + \lambda \alpha \sinh (\alpha \phi (z, x)) = \xi (z,
  x), \quad (z, x) \in \mathbb{R}^2 \times \mathbb{R}^2
  \label{elliptic-sinh-formal}
\end{equation}
where $\xi$ is a Gaussian white noise on $\mathbb{R}^2 \times \mathbb{R}^2$,
and $\alpha^2 = 4 \pi \beta^2$. \ The $\cosh (\beta \varphi)_2$ model is a
particular case of the H{\o}egh-Krohn model {\eqref{eq:Hoergh}} cited before,
i.e. when the measure $\eta (\mathd \hat{\beta}) = \frac{1}{2} (\delta_{-
\beta} (\mathd \hat{\beta}) + \delta_{\beta} (\mathd \hat{\beta}))$ (where
$\delta_{\pm \beta}$ is the Dirac delta with unitary mass in $\pm \beta$) and
studied in
{\cite{Albeverio_Liang,Albeverio-1974,Albeverio_Song2004,FrohlichPark1977}}
(see also the related model in {\cite{Albeverio_Yoshida2002}}) using
constructive techniques. The Sinh-Gordon model (in the limit case of zero mass
$m \rightarrow 0$) is an Integrable Quantum Field Theory see, e.g,
{\cite{Mussardo2020,Smirnov1992}} and has received considerable attention in
the physics literature
{\cite{Pillin1998,Zamolodchikov2006,Zamolodchikov1995}}. We remark also that a
probabilistic approach developed to the Lioville model ($V (\varphi) = \exp
(\beta \varphi), m = 0$) developed by Kupianen, Rhodes, Vargas and their
coauthors has found remarkable success
{\cite{Guillarmou2020,Kupiainen2020-stress,Kupiainen2018,Kupiainen2020}}. As
of this moment we are not aware of a probabilistic construction of the
Sinh-Gordon in the case $m = 0$, and giving such a construction appears to be
an interesting and challenging problem. \

\

In order to solve equation {\eqref{elliptic-sinh-formal}} it can be useful to
rewrite it introducing a change of variables (introduced by Da Prato and
Debussche in {\cite{DaPrato_Debussche}} for the $\Phi^4_2$ model) in the
following way
\begin{equation}
 (m^2 - \Delta) \bar{\phi} (z, x) +\frac{\lambda \alpha e^{\alpha \bar{\phi}}}{2}
   \; \; ``e^{\alpha W}"\; \;- \frac{\lambda \alpha e^{-\alpha \bar{\phi}}}{2}
   \; \; ``e^{-\alpha W}"\; \; = 0 ,\label{eq:formal2}
\end{equation}
where $\bar{\phi} = \phi - W$ and $W$ is the solution to the linear equation
$(- \Delta + m^2) W = \xi$. It is possible to give a precise meaning to the
expressions $e^{\pm \alpha W}$ (when $\alpha^2 = 4 \pi \beta^2 < 2 (4
\pi)^2$) thanks to Gaussian Multiplicative Chaos (see, e.g.,
{\cite{Rhodes2014}}) through the limit
\begin{equation}
  \lim_{\varepsilon \rightarrow 0} e^{(\pm \alpha W_{\varepsilon} -
  c_{\varepsilon}  (\alpha))} \mathd x = \mu^{\pm \alpha} (\mathd x),
  \label{eq:mualpha}
\end{equation}
where $W_{\varepsilon}$ is a smooth (translation invariant) approximation of
the Gaussian Free Field (in $\mathbb{R}^4$) $W$, converging almost surely to
$W$ as $\varepsilon \rightarrow 0$, and $c_{\varepsilon} (\alpha) \in
\mathbb{R}_+$ is given by the relation
\[ c_{\varepsilon} (\alpha) = \frac{\alpha^2}{2} \mathbb{E}
   [W_{\varepsilon}^2], \]
and thus $c_{\varepsilon} (\alpha) \rightarrow + \infty$ as $\varepsilon
\rightarrow 0$. The limit {\eqref{eq:mualpha}} is known to exist in the
space of tempered distributions $\mathcal{S}' (\mathbb{R}^4)$ on
$\mathbb{R}^4$. There are two main properties that make the random
distribution $\mu^{\pm \alpha}$ different from, for example, Wick powers $: W^n :$
and the Wick complex exponential $: e^{i \alpha W} :$. The first one is the
fact that $\mu^{\pm \alpha}$ is a positive distribution, i.e. a
$\sigma$-finite measure (this permits to exploit the signed structure of the
noise). The second property is that $\mu^{\pm \alpha}$ exhibits
multifractality, i.e. the Besov regularity $s$ of $\mu^{\pm \alpha}$ depends
on the parameter $\alpha$ and on the integrability $p \in \left[ 1, \frac{2 (4
\pi)^2}{\alpha^2} \right)$ in the following way, for any $\delta > 0$ we have
\[ \mu^{\pm \alpha} \in B^{-\frac{\alpha^2}{(4 \pi)^2} (p - 1) - \delta}_{p, p,
   \tmop{loc}} (\mathbb{R}^4) = B^{-\frac{\beta^2}{(4 \pi)} (p - 1) -
   \delta}_{p, p, \tmop{loc}} (\mathbb{R}^4) . \]
These two properties are essential in the study of SPDEs involving the random
distribution {\eqref{eq:mualpha}}. Although, to the best of our knowledge, if
we exclude a brief mention in Remark 1.3 and Remark 1.18 in
{\cite{oh2019parabolic}} (on $\mathbb{T}^2$ and for $\beta^2 < \frac{8 \pi}{3
+ 2 \sqrt{2}} \simeq 1.37 \pi$), there are no studies of singular SPDEs
involving a $\sinh$ nonlinearity, SPDEs driven by a random forcing of the form
{\eqref{eq:mualpha}} has been subject of a certain number of works in
particular concerning the stochastic quantization of \ $V (\varphi) = \exp
(\beta \varphi)_2$ models corresponding to the aforementioned Liouville
quantum gravity: see {\cite{GARBAN2020}} by Garban on the parabolic equation
on $\mathbb{T}^2$ and $\mathbb{S}^2$ for $\beta^2 < 8 \left( 3 - \sqrt{2}
\right) \pi$, {\cite{hoshino2020stochastic,Kusuoka2021,hoshino2023stochastic}} by Hoshino, Kawabi,
and Kusuoka addressing the parabolic problem on $\mathbb{T}^2$ in the whole
subcritical regime $\beta^2 < 8 \pi$, {\cite{oh2019parabolic}} by Oh, Robert,
and Wang about the parabolic and hyperbolic equation for $\beta^2 < 4 \pi$,
{\cite{oh2020stochastic}} by Oh, Robert, Tzvetkov, and Wang about the
parabolic equation in the $L^2$ regime $\beta^2 \leqslant 4 \pi$ on a general
compact manifold, and {\cite{AlDeGu2019}} by Albeverio, Gubinelli and one of
the authors on the elliptic quantization on $\mathbb{R}^2$ with charged
parameter $\beta^2 < 4 \left( 8 - 4 \sqrt{3} \right) \pi$ (see also
{\cite{hoshino2020stochastic}} by Albeverio, Kawabi, Mihalache and Roeckner on
the SQ of exponential and trigonometric problem using a Dirichlet form
approach).

All the previous works use in an essential way the fact that the exponential
$\exp (\alpha \psi)$ is a positive function, in this way the stochastic
quantization equation is equivalent to an equation with bounded coefficients.
In the case of $\sinh (\alpha \phi)$ nonlinearity, we cannot directly use the
sign of the coefficients and we are forced to deal with an SPDE with unbounded
coefficients. For this reason we decided to use a sort of energy estimate for
$\bar{\phi}$ (see Theorem \ref{theorem_apriori1}), inspired by the works on
$\Phi^4_3$ model in
{\cite{albeverio_invariant_2017,albeverio2021construction,GuHof2018}}. This
idea permits us to obtain some a priori estimates for the weighted $H^1$ norm
of $\bar{\phi}$ and the integrals $\int e^{\pm \alpha \bar{\phi} (x)} \mu^{\pm
\alpha} (\mathd x)$ (here is where positivity of the distributions $\mu^{\pm
\alpha}$ plays a central role). The use of this energy method, and so the
choice of $H^1$ norm on $\bar{\phi}$ and not the weaker norms like the Sobolev
$W^{1, p}$ for $p < 2$, is the main reason for the limitation of our
techniques to the $L^2$ regime for the charged parameter $\beta$, i.e.
$\alpha^2 = 4 \pi \beta^2 \leqslant (4 \pi)^2$. These a priori estimates allow
us to prove the existence and uniqueness of the solution to equation
{\eqref{elliptic-sinh-formal}} using some regular approximation (for example a
Galerkin approximation). Unfortunately Theorem \ref{theorem_apriori1} below is
not enough to prove the stochastic quantization for the Sinh-Gordon model,
i.e. to rigorously establish the heuristic relation {\eqref{eq:law}}. For
obtaining this result we need some improved a priori estimates (see Theorem
\ref{theorem:improvedapriori}) obtained by a bootstrap procedure. In order to
implement this bootstrap argument a ``local'' approximation of equation
{\eqref{elliptic-sinh-formal}} is needed, and the Galerkin or other nonlocal
smoothing methods, exploited for studying the exponential models in the
articles mentioned before, cannot be used.

\

For all these reasons, we approximate the SPDE {\eqref{elliptic-sinh-formal}}
by the following equation on the lattice for $(x, z) \in \mathbb{R}^2 \times
\varepsilon \mathbb{Z}^2$:
\begin{equation}
  (m^2 - \Delta_{\mathbb{R}^2} - \Delta_{\varepsilon \mathbb{Z}^2})
  \phi^{\varepsilon} (x, z) + \lambda \varepsilon^{\frac{\alpha^2}{(4 \pi)^2}}
  \sinh (\alpha \phi^{\epsilon} (x, z)) = \xi_{\varepsilon} (x, z),
  \label{eq:elliptic-lattice}
\end{equation}
where $\Delta_{\varepsilon \mathbb{Z}^2}$ is the discrete Laplacian on
$\varepsilon \mathbb{Z}^2$, $\Delta_{\mathbb{R}^2}$ is the standard Laplacian
on $\mathbb{R}^2$, and $\xi_{\varepsilon}$ is an appropriate white noise
having Cameron-Martin space $L^2 (\mathbb{R}^2 \times \varepsilon
\mathbb{Z}^2)$. The factor $\varepsilon^{\alpha^2 / (4 \pi)^2}$ is the necessary
renormalization correction for obtaining the nontrivial limit
{\eqref{eq:mualpha}}, for $\varepsilon \rightarrow 0$.

\

In order to study equation {\eqref{eq:elliptic-lattice}}, we use Besov spaces
on $\mathbb{R}^2 \times \varepsilon \mathbb{Z}^2$, which are very similar to
the ones developed on $\varepsilon \mathbb{Z}^2$ by Martin and Perkowski in
{\cite{Perkowski2019}} and by Gubinelli and Hofmanova in {\cite{GuHof2018}}.
Other works involving the discretization of singular SPDEs are
{\cite{Chouk_Anderson_discrete,Gubinelli_KPZ,Mourrat_discrete,Shen_discrete,Zhu-Zhu-discrete}}
on $\mathbb{T}^d$, using an extension operator by interpolation with
trigonometric polynomials, and
{\cite{Hairer_Erhard,erhard2021scaling,Hairer_Matetski,hairer2021phi34}}
introducing a discrete version of regularity structures. About this framework
our main contribution is the following: the extension operators
$\mathcal{E}^{\varepsilon}$ and $\overline{\mathcal{E}}^{\varepsilon}$ used in
the current paper differ from the ones employed in
{\cite{GuHof2018,Perkowski2019}}. Indeed, in the cited papers, the extension
operators, associating to any function on the lattice a distribution on the
continuum, are nonlocal, and thus, it is not clear how they behave with
respect the composition with the nonlinear coefficients. This problem would
cause technical difficulties in the identification of the limiting equation
for {\eqref{eq:elliptic-lattice}}. Instead the extension operator
$\overline{\mathcal{E}}^{\varepsilon}$, proposed in the present work, is the
same as that employed in {\cite{park_lambda_1975}} (see also the more recent
{\cite{hairer2021phi34}}), and it takes values into linear superpositions of
step functions. Although using $\overline{\mathcal{E}}^{\varepsilon}$ we loose
some regularity, due to the limited smoothness of step functions, \ it
commutes with the nonlinearities and we can prove that, in the $\varepsilon
\rightarrow 0$ limit, it is possible to recover the original regularity (see
Theorem \ref{theorem:extension2} for a precise statement of this fact). Let us
mention also Theorem \ref{theorem:sobolev_besov} where we prove a sort of
equivalent definition of discrete Besov norm using the discrete differences
which could be of interest in the comparison between the discrete Besov
methods used in {\cite{GuHof2018,Perkowski2019}} and the ones based on
discrete regularity structures in
{\cite{Hairer_Erhard,erhard2021scaling,Hairer_Matetski,hairer2021phi34}}.

\

A final contribution of our work is about the convergence of the discrete
Gaussian Multiplicative chaos $\mu^{\alpha}_{\varepsilon}$ to its continuum
counterpart $\mu^{\alpha}$. This convergence problem was already addressed in
other works (see, e.g., Section 5.3 in {\cite{Rhodes2014}} and references
therein). The novelty of our result is that the convergence of
$\overline{\mathcal{E}}^{\varepsilon} (\mu_{\varepsilon}^{\alpha})$ to
$\mu^{\alpha}$ is not only weakly$^{\ast}$ in the space of positive
$\sigma$-finite measures, but also in the Besov spaces of the form $B^{-
\frac{\alpha^2}{_{(4 \pi)^2}} (p - 1) - \varepsilon}_{p, p, \tmop{loc}}
(\mathbb{R}^4)$ for suitable $p \in \left[ 1, \frac{2 (4 \pi)^2}{\alpha^2}
\right)$, $\alpha^2 \leqslant (4 \pi)^2$ and $\varepsilon > 0$ (see Section
\ref{sec:stochastic-estimates}). We conclude the introduction with the main
theorem proved in the paper.

\begin{theorem}
  \label{theorem_main1}Equation {\eqref{elliptic-sinh-formal}} admits a unique
  solution for $| \alpha | < 4 \pi$ in \[\left(B^{-\delta}_{\infty,\infty,\tmop{loc}}(\mathbb{R}^4)+B^{-\frac{\alpha^2}{(4 \pi)^2} (p - 1) - \delta+2}_{p, p,
   \tmop{loc}} (\mathbb{R}^4)\right) \] (for suitable $\delta(\alpha)>0$ and $p(\alpha)>1$) with at most polynomial growth at infinity. Furthermore we have that $\phi (x_0,
  \cdot) \in \mathcal{S}' (\mathbb{R}^2)$ has probability distribution
  $\nu^{\cosh, \beta}_m$ associated with the action
  \[ S (\varphi) = \frac{1}{2} \int_{\mathbb{R}^2} (| \nabla \varphi (z) |^2 +
     m^2 \varphi (z)^2) \mathd z + \lambda \int_{\mathbb{R}^2} : \cosh (\beta
     \varphi (z)) : \mathd z, \]
  where $\beta = \frac{\alpha}{\sqrt{4 \pi}}$ (see Definition
  \ref{definition:coshmodel} for the precise definition of $\nu^{\cosh,
  \beta}_m$). 
\end{theorem}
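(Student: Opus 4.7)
The plan is to deduce the theorem from the lattice approximation~\eqref{eq:elliptic-lattice}, combining a Da Prato--Debussche decomposition with uniform a priori estimates, passing to the continuum via the local extension operator $\overline{\mathcal{E}}^{\varepsilon}$, and then identifying the marginal law by a Parisi--Sourlas type dimensional reduction at the regularised level. First I would decompose $\phi^{\varepsilon} = W^{\varepsilon} + \bar{\phi}^{\varepsilon}$, where $(m^2 - \Delta_{\mathbb{R}^2} - \Delta_{\varepsilon\mathbb{Z}^2}) W^{\varepsilon} = \xi_{\varepsilon}$, so that $\bar{\phi}^{\varepsilon}$ satisfies a lattice version of~\eqref{eq:formal2} driven by the discrete Gaussian multiplicative chaos measures $\mu^{\pm\alpha}_{\varepsilon}$. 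Because $\xi_{\varepsilon}$ is smooth in the $\mathbb{R}^2$ variable and $\sinh$ is monotone, existence and uniqueness of $\phi^{\varepsilon}$ for fixed $\varepsilon>0$ follow from standard monotone-operator arguments in a weighted Sobolev space.

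Second, I would apply Theorem~\ref{theorem_apriori1} to $\bar{\phi}^{\varepsilon}$ to obtain a uniform (in $\varepsilon$) bound on a weighted $H^1$-norm together with bounds on the coupled quantities $\int e^{\pm \alpha \bar{\phi}^{\varepsilon}} \,\mathd \mu^{\pm \alpha}_{\varepsilon}$; here the positivity of $\mu^{\pm \alpha}_{\varepsilon}$ is essential, since it allows both signed terms in the lattice $\sinh$ to be controlled simultaneously, and the restriction $\alpha^2 \le (4\pi)^2$ enters precisely as the threshold for testing $H^1$ fields against $\mu^{\pm \alpha}$. Combining these bounds with the Besov convergence $\overline{\mathcal{E}}^{\varepsilon}(\mu^{\pm \alpha}_{\varepsilon}) \to \mu^{\pm \alpha}$ from Section~\ref{sec:stochastic-estimates} and the regularity-recovery statement of Theorem~\ref{theorem:extension2}, I would extract a subsequence along which $\overline{\mathcal{E}}^{\varepsilon}(\bar{\phi}^{\varepsilon})$ converges in a weighted Besov space. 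Since $\overline{\mathcal{E}}^{\varepsilon}$ is local, it commutes with $e^{\pm \alpha \cdot}$, so the limit solves~\eqref{elliptic-sinh-formal} in the distributional sense. Uniqueness follows by taking two solutions, writing the difference equation, and testing the difference $\bar{\phi}_1 - \bar{\phi}_2$ against itself in weighted $H^1$: monotonicity of $\sinh$ together with the integrability of $e^{\pm \alpha \bar{\phi}_i}$ against $\mu^{\pm \alpha}$ (itself a consequence of the same energy identity) kills all cross-terms.

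Finally, to identify the marginal $\phi(x_0, \cdot)$ with $\nu^{\cosh, \beta}_m$, I would work at the level of the regularised equation, where a version of the Parisi--Sourlas dimensional reduction identity can be verified directly for smooth potentials using either a supersymmetric change of variables or integration by parts against the reference Gaussian measure. To pass this regularised identity to the limit one needs to test against observables depending only on $\phi(x_0,\cdot)$, and Theorem~\ref{theorem_apriori1} is not sufficient for this: this is where the improved a priori estimates of Theorem~\ref{theorem:improvedapriori} enter, providing enough additional regularity on $\bar{\phi}^{\varepsilon}$ (obtained via a bootstrap that crucially uses the local extension $\overline{\mathcal{E}}^{\varepsilon}$) to justify passing the expectation of bounded test functions of the slice to the limit. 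The resulting identity together with Definition~\ref{definition:coshmodel} yields the claim.

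The hard part will be Theorem~\ref{theorem_apriori1}: handling the two unbounded signed coefficients $e^{\pm \alpha \bar{\phi}} \mu^{\pm \alpha}$ simultaneously so as to close an energy estimate without losing the cancellation between the two signs, and doing so in the correct weighted $H^1$ space (which is what forces the $L^2$ regime). The subsequent bootstrap for Theorem~\ref{theorem:improvedapriori} is delicate as well, since higher regularity must be propagated without reintroducing nonlocal contributions that would spoil the composition with $\sinh$; this is ultimately the technical reason for replacing Galerkin-type smoothings by the lattice approximation and the local extension $\overline{\mathcal{E}}^{\varepsilon}$.
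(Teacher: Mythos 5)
Your outline follows the same skeleton as the paper (Da Prato–Debussche decomposition, uniform a priori bounds, local extension operator, bootstrap for the slice, monotonicity-based uniqueness), but there is a genuine gap in the identification step of the marginal law, which is the heart of the theorem.

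You write that the dimensional reduction ``can be verified directly for smooth potentials using either a supersymmetric change of variables or integration by parts.'' That is not quite right: the rigorous Parisi--Sourlas identity used here (Klein--Landau--Perez \cite{Klein_supersymmetry}, \cite{AlDeGu2018}) is a statement about \emph{finite-dimensional systems with convex potentials}, not about arbitrary smooth ones --- convexity is what makes the Jacobian in the supersymmetric change of variables definite-signed, and without it the reduction fails. This is what forces the paper to introduce a \emph{second} approximation parameter $N$ (periodization in the $\varepsilon\mathbb{Z}^2$ direction, Section~\ref{section:periodic}): after periodization the lattice equation~\eqref{eq:mainfinitedimensional} becomes a finite system of elliptic SPDEs on $\mathbb{R}^2$ with convex potential $V_\varepsilon$ (Lemma~\ref{lemma:approximationmeasure}), and \emph{only then} can one invoke Theorem~4 of \cite{AlDeGu2018} to conclude that $\phi_{\varepsilon,N}(x_0,\cdot)\sim\nu^{\cosh,\beta}_{m,\varepsilon,N}$. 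Your proposal has only the lattice parameter $\varepsilon$ and attempts the reduction on the infinite lattice $\varepsilon\mathbb{Z}^2$, where the existing rigorous version does not apply. Even granting the reduction, the proof then has to run a double limit --- first $N\to\infty$ (Lemma~\ref{lemma:convergenceimproved}), then $\varepsilon\to 0$ --- and track tightness of $\nu^{\cosh,\beta}_{m,\varepsilon,N}$ at each stage; this is precisely where the improved estimates of Theorem~\ref{theorem:improvedapriori} are used, and your plan should say so explicitly.

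Two smaller remarks. First, for fixed $\varepsilon>0$ on the \emph{unperiodized} lattice, existence is not a one-line monotone-operator application: the chaos $\mu^{\pm\alpha}_\varepsilon$ is still a genuine measure rather than a function, and the paper constructs solutions via a further spatial cutoff $f_K$ (equation~\eqref{eq:spacialcutoff}) and limits $K\to\infty$, then $N\to\infty$. Second, the passage to the limit in the nonlinear terms $e^{\pm\alpha\bar\phi_\varepsilon}\,\mathrm d\mu^{\pm\alpha}_\varepsilon$ is delicate and requires both a Lipschitz truncation $F_K$ and a frequency cutoff $\Delta_N^{\varepsilon}$ (proofs of Lemma~\ref{lemma:solutiondiscrete} and Theorem~\ref{theorem:convergenceequation1}); ``locality of $\overline{\mathcal{E}}^\varepsilon$'' alone is not enough. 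Your uniqueness argument via monotonicity of $\sinh$ is, on the other hand, exactly the argument of Proposition~\ref{proposition:uniqueness}.
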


\

{\noindent}{\tmstrong{Structure of the Paper}}: In Section \ref{sec:setting}
we recall some standard results on Besov spaces on $\mathbb{R}^d$ and some of
the results proved in {\cite{GuHof2018,Perkowski2019}} on Besov spaces on
$\varepsilon \mathbb{Z}^d .$ We also extend these definitions to $\mathbb{R}^2
\times \varepsilon \mathbb{Z}^2$. Furthermore we introduce difference spaces
on $\varepsilon \mathbb{Z}^d$, introduce our extension operator and prove
bounds for it. Finally we discuss regularity of positive distributions
multiplied by a density on the lattice, which we will need to take advantage
of the positivity of the Gaussian Multiplicative Chaos. In Section
\ref{sec:stochastic-estimates} we will prove the estimates on Gaussian
Multiplicative Chaos which we require for the a priori estimates on
{\eqref{eq:elliptic-lattice}}. To achieve this we will establish estimates on
the Green's function of $(m^2 - \Delta_{\mathbb{R}^2 \times \varepsilon
\mathbb{Z}^2})^2$ on $\mathbb{R}^2 \times \varepsilon \mathbb{Z}^2$. In
Section \ref{sec:stochastic-quantization} we prove a priori estimates on
equation {\eqref{eq:elliptic-lattice}} which are strong enough to find a
convergent subsequence of the solution to {\eqref{eq:elliptic-lattice}} (after
applying the extension operator). The limit will be shown to be in $H^1 
(\mathbb{R}^4)$ almost surely and to satisfy equation
{\eqref{elliptic-sinh-formal}}. Furthermore we will show that solutions to
equation {\eqref{elliptic-sinh-formal}} are unique. Finally we will show that
the solutions have enough regularity to be restricted to a two dimensional
subspace or $\mathbb{R}^2$ thus establishing the main theorem. Finally, in
Appendix \ref{appendix:axioms}, we prove the measure $\nu^{\cosh, \beta}_m$
constructed satisfies the Osterwalder-Schrader axioms, exploiting the methods
of {\cite{albeverio2021construction,GuHof2018,hairer2021phi34}} for
$\varphi^4_3$ measures. To our knowledge this is the first selfcontained proof
of the Osterwalder-Schrader axioms including mass-gap by stochastic
quantization, however it relies heavily on the convexity of the renormalized
interaction. \\

{\noindent}\textbf{Acknowledgements.} The authors would like to thank
Massimiliano Gubinelli and Sergio Albeverio for the comments and suggestions
on an earlier version of the paper. N.B gratefully acknowledges financial
support from \ ERC Advanced Grant 74148 ``Quantum Fields and Probability'' and
the Deutsche Forschungsgemeinschaft (DFG, German Research Foundation) through
CRC 1060. F.C.D. is supported by the DFG under Germany's Excellence Strategy -
GZ770 2047/1, project-id 390685813.

\section{The setting}\label{sec:setting}

For all the rest of the paper we employ the following notations: If $H, K : S
\rightarrow \mathbb{R}_+$ are two functions defined on the same set $S$ we say
that $H \lesssim K$ (or with an abuse of notation that $H (f) \lesssim K (f)$
(where $f \in S$)), if there is some constant $C$ such that for any $f \in S$
we have $H (f) \leqslant C K (f)$. We write also $H \sim K$ (or with an abuse
of notation that $H (f) \sim K (f)$ (where $f \in S$)) \ if $H \lesssim K$ and
$K \lesssim H$.

\subsection{Besov spaces on $\mathbb{R}^d$}\label{section:besov:rd}

Here we report some standard definitions and properties of Besov spaces on
$\mathbb{R}^d$. The main definitions and theorems hold also for the cases of
Besov spaces on $\mathbb{T}^d$ or $\mathbb{R}^{d_1} \times \mathbb{T}^{d_2}$.

\

We fix some notations: For any $\ell \in \mathbb{R}$ we define the (weight)
function $\rho_{\ell}^{(d)} : \mathbb{R}^d \rightarrow \mathbb{R}_+$ as
\[ \rho_{\ell}^{(d)} (y) = \frac{1}{(1 + | y |^2)^{\ell / 2}}, \quad y \in
   \mathbb{R}^d . \]
If $p \in [1, + \infty)$ and $\ell \in \mathbb{R}$ we define
\[ \| f \|^p_{L^p_{\ell} (\mathbb{R}^d)} = \int_{\mathbb{R}^d} | f (y)
   \rho_{\ell}^{(d)} (y) |^p \mathd y, \]
where $\mathd y$ is the standard Lebesgue measure on $\mathbb{R}^d$, and
\[ \| f \|_{L^{\infty}_{\ell} (\mathbb{R}^d)} = \sup_{x \in \mathbb{R}^d} | f
   (x) \rho^{(d)}_{\ell} (x) | . \]
In the following we write $L^p (\mathbb{R}^d)$ for $L^p_0 (\mathbb{R}^d)$
(where $L_0 (\mathbb{R}^d)$ is the space $L^p_{\ell} (\mathbb{R}^d)$ for $\ell
= 0$). We denote by $\mathcal{S} (\mathbb{R}^d)$ the Fr{\'e}chet space of
Schwartz test functions and by $\mathcal{S}' (\mathbb{R}^d)$ its strong dual,
i.e. the nuclear space of tempered distributions. We denote by $\mathcal{F} :
\mathcal{S} (\mathbb{R}^d) \rightarrow \mathcal{S} (\mathbb{R}^d)$ the Fourier
transform on $\mathbb{R}^d$, i.e.
\[ \mathcal{F} (f) (k) = \int_{\mathbb{R}^d} e^{i (k \cdot y)} f (y) \mathd y,
   \quad k \in \mathbb{R}^d, f \in \mathcal{S} (\mathbb{R}^d) \]
and by $\mathcal{F}^{- 1}$ the inverse Fourier transform namely
\[ \mathcal{F}^{- 1} (\hat{f}) (y) = \frac{1}{(2 \pi)^d} \int_{\mathbb{R}^d}
   e^{- i (k \cdot y)} \hat{f} (k) \mathd k, \quad y \in \mathbb{R}^d, \hat{f}
   \in \mathcal{S} (\mathbb{R}^d) . \]
We can extend the Fourier transform on the space of tempered distributions by
duality.

\

We consider a smooth dyadic partition of unity $(\varphi^{(d)}_j)_{j
\geqslant - 1}$ defined on $\mathbb{R}^d$ such that $\varphi_{- 1}^{(d)}$ is
supported in a ball around 0 of radius $\frac{1}{2}$, $\varphi_0^{(d)}$ is
supported in an annulus, $\varphi_j^{(d)} (\cdot) = \varphi_0^{(d)} (2^{- j}
\cdot)$ for $j > 0$ and if $| i - j | > 1$ then $\tmop{supp} \varphi_j^{(d)}
\cap \tmop{supp} \varphi_i^{(d)} = \emptyset$. We set:
\[ K_j^{(d)} = \mathcal{F}^{- 1} (\varphi_j^{(d)}) . \]
\begin{remark}
  \label{remark:partition:conditions}In all the rest of the paper we consider
  an additional condition on the dyadic partition of the unity function,
  which, although not being completely standard, can always be assumed without
  loss of generality (see, e.g., Section 3 and Appendix A in
  {\cite{Mourrat_Weber2017}}): The functions $\varphi_{- 1}^{(d)}$ and
  $\varphi_0^{(d)}$ (and thus all the functions $\{ \varphi_j^{(d)} \}_{j
  \geqslant - 1}$) have to belong to a Gevrey class of index $\theta$ for some
  $\theta > 1$, i.e. there is a constant $C > 0$ for which, for any $\alpha =
  (\alpha_1, \cdots, \alpha_d) \in \mathbb{N}^d_0$, we have
  \[ \| \partial^{\alpha} \varphi^{(d)}_{- 1} \|_{L^{\infty} (\mathbb{R}^d)}
     \leqslant C^{| \alpha |} (\alpha !)^{\theta}, \quad \| \partial^{\alpha}
     \varphi^{(d)}_0 \|_{L^{\infty} (\mathbb{R}^d)} \leqslant C^{| \alpha |}
     (\alpha !)^{\theta}, \]
  where $| \alpha | = \sum \alpha_i$ and $\alpha ! = \prod \alpha_i !$. Under
  the previous conditions we get that there are some constants $\bar{a} > 0$
  and $\bar{\beta} = \frac{1}{\theta}$ for which
  \[ | K_{- 1}^{(d)} (y) | \lesssim \exp (- \bar{a}  (1 + | y
     |^2)^{\bar{\beta} / 2}), \quad | K_0^{(d)} (y) | \lesssim \exp (- \bar{a}
     (1 + | y |^2)^{\bar{\beta} / 2}), \quad y \in \mathbb{R}^d . \]
  By the definition of $\varphi_j^{(d)}$ in terms of $\varphi_0^{(d)}$ we also
  get, for any $j \geqslant 0$,
  \[ | K_j^{(d)} (y) | \lesssim 2^{j d} \exp (- \bar{a}  (1 + 2^{2 j} | y
     |^2)^{\bar{\beta} / 2}), \quad y \in \mathbb{R}^d, \]
  where the constants hidden in the symbol $\lesssim$ do not depend on $j
  \geqslant 0$.
\end{remark}

We introduce also the following function
\begin{equation}
  \upsilon^{(d)} (x) : = 1 - \varphi_{- 1}^{(d)} (x) = \sum_{j \geqslant 0}
  \varphi_0^{(d)} (2^{- j} x), \label{eq:greeku}
\end{equation}
where we use that $\varphi_j^{(d)}$ is a partition of unity and thus
$\varphi_{- 1}^{(d)} (x) + \sum_{j \geqslant 0} \varphi_0^{(d)} (2^{- j} x)
\equallim 1$. The previous relations imply that
\begin{equation}
  \upsilon^{(d)} (2^{- k} x) : = 1 - \varphi_{- 1}^{(d)} (2^{- k} x) = \sum_{j
  \geqslant 0} \varphi_0^{(d)} (2^{- k - j} x) = \sum_{j \geqslant k} \varphi_0^{(d)}
  (2^{ - j} x) . \label{eq:greeku2}
\end{equation}

If $f \in \mathcal{S}' (\mathbb{R}^d)$ we denote by $\Delta_j f \in
C^{\infty} (\mathbb{R}^d)$ the function, having at most polynomial growth at infinity, being the $i$-th Littlewood-Paley block of the tempered
distribution $f$, namely $\Delta_j f$ is given by
\[ \Delta_j f = \mathcal{F}^{- 1} (\varphi_j^{(d)} \cdot \mathcal{F} (f)) =
   K_j \ast f, \]
where $\cdot$ is the natural product between Schwartz functions and tempered
distributions.

\begin{definition}
  If $p, q \in [1, + \infty]$, $s \in \mathbb{R}$ and $\ell \in \mathbb{R}$ we
  define \ the Besov space $B^s_{p, q, \ell} (\mathbb{R}^d)$ as the subset of
  tempered distributions $\mathcal{S}' (\mathbb{R}^d)$ for which $f \in
  B^s_{p, q, \ell} (\mathbb{R}^d)$ if the norm
  \[ \| f \|_{B^s_{p, q, \ell} (\mathbb{R}^d)} \assign \| \{ 2^{(j - 1) s} \|
     \Delta_{j - 1} f \|_{L^p_{\ell} (\mathbb{R}^d)} \}_{j \in \mathbb{N}_0}
     \|_{\ell^q (\mathbb{N}_0)}, \]
  is finite. Here $\| \cdot \|_{\ell^q (\mathbb{N}_0)}$ is the $\ell^q$ norm
  on the space of sequences starting at $0$.
\end{definition}

We report here some results useful in what follows.

\begin{proposition}
  \label{propsition:equivalent}For any $s, \ell, m \in \mathbb{R}$, $p, q \in
  [1, + \infty]$ we have that for any $f \in B^s_{p, q, \ell} (\mathbb{R}^d)$
  \begin{align*} \| f \|_{B^s_{p, q, \ell} (\mathbb{R}^d)} \sim &\| f \cdot
     \rho_{\ell}^{(d)} \|_{B^s_{p, q, 0} (\mathbb{R}^d)} \\ 
   \| f \|_{B^s_{p, q, \ell} (\mathbb{R}^d)} \sim & \| (1 - \Delta)^{m / 2}
     (f) \|_{B^{s - m}_{p, q, \ell} (\mathbb{R}^d)} . \end{align*}
\end{proposition}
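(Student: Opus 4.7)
\textbf{Proof plan for Proposition \ref{propsition:equivalent}.} My plan is to reduce both equivalences to a single weighted convolution lemma: if $K \in L^1(\mathbb{R}^d)$ satisfies the stretched-exponential decay bound
\[
|K(y)| \lesssim A \exp(-\bar{a}(1+|y|^2)^{\bar{\beta}/2})
\]
guaranteed for the Littlewood--Paley kernels by Remark \ref{remark:partition:conditions}, then for any $p\in[1,\infty]$ and $\ell\in\mathbb{R}$,
\[
\|K * g\|_{L^p_\ell(\mathbb{R}^d)} \lesssim A\, \|g\|_{L^p_\ell(\mathbb{R}^d)}.
\]
This follows from the pointwise comparison $\rho_\ell(x)/\rho_\ell(y) \leq 2^{|\ell|/2}(1+|x-y|^2)^{|\ell|/2}$ (valid for any $\ell\in\mathbb{R}$) together with the observation that multiplying $K$ by the polynomial $(1+|\cdot|^2)^{|\ell|/2}$ still gives an $L^1$ function thanks to the Gevrey decay. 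This weighted Young inequality is the workhorse of the whole argument.

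For the first equivalence, I would write
\[
\Delta_j(f\rho_\ell)(x) - \rho_\ell(x)\Delta_j f(x) = \int K_j^{(d)}(x-y)\,f(y)\,(\rho_\ell(y)-\rho_\ell(x))\,\mathd y,
\]
and Taylor-expand $\rho_\ell(y)-\rho_\ell(x)$ in $(y-x)$ around $x$. Since $\rho_\ell$ is smooth with $|\partial^\alpha \rho_\ell(x)| \lesssim_\alpha \rho_\ell(x)$, each term in the expansion can be bounded by $\rho_\ell(x)$ times a convolution of $f$ against $y\mapsto (y-x)^\alpha K_j^{(d)}(x-y)$. By the scaling relation $K_j^{(d)}(y) = 2^{jd}K_0^{(d)}(2^j y)$ and the Gevrey decay, each of these kernels has $L^1_{|\ell|}$-norm of order $2^{-j|\alpha|}$, so the commutator is of strictly lower order in $j$ than the main term $\rho_\ell \Delta_j f$. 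Summing in $j$ with the $\ell^q$-weights $2^{js}$ and using the weighted Young inequality above yields both directions of the equivalence (the reverse direction by applying the same argument with $\rho_\ell^{-1} = \rho_{-\ell}$, which satisfies the same derivative bounds up to a sign on $\ell$).

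For the second equivalence, this is the standard lifting property of Bessel potentials, but carried through in the weighted setting. Let $\tilde{\varphi}_j^{(d)}$ be a Gevrey-class thickening of $\varphi_j^{(d)}$ equal to $1$ on $\operatorname{supp}\varphi_j^{(d)}$ and supported in the slightly enlarged annulus, and set $L_j^{(m)} = \mathcal{F}^{-1}(\tilde{\varphi}_j^{(d)}(k)(1+|k|^2)^{m/2})$. Then $\Delta_j((1-\Delta)^{m/2}f) = L_j^{(m)} * \Delta_j f$, and a straightforward scaling argument shows that $L_j^{(m)}$ still obeys a Gevrey-type decay estimate of the form in Remark \ref{remark:partition:conditions} times an overall $2^{jm}$ factor (for $j\geq 0$; the $j=-1$ block is handled separately). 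Applying the weighted Young inequality gives
\[
\|\Delta_j((1-\Delta)^{m/2}f)\|_{L^p_\ell} \lesssim 2^{jm}\|\Delta_j f\|_{L^p_\ell},
\]
and the reverse inequality follows by replacing $m$ with $-m$ since $(1-\Delta)^{-m/2}(1-\Delta)^{m/2}=\mathrm{Id}$.

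The main obstacle I anticipate is the \emph{uniformity in $j$} of the weighted convolution bounds — naively polynomial weights are not preserved under high-frequency multipliers, and one really needs the stretched-exponential decay of the Littlewood--Paley kernels (and not merely polynomial decay) to absorb the polynomial factor $(1+|x-y|^2)^{|\ell|/2}$ arising from the weight comparison \emph{without} losses that grow with $j$. This is precisely why the Gevrey condition imposed in Remark \ref{remark:partition:conditions} was built into the setup; without it, one would pick up $j$-dependent constants that destroy the Besov equivalence.
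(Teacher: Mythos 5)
The paper does not actually prove this statement: it simply cites Theorem~6.5 of Triebel's book. Your proposal is therefore more informative than the paper, and its skeleton — a weighted Young inequality based on the Peetre comparison $\rho_\ell(x)\leq C\rho_\ell(y)(1+|x-y|)^{|\ell|}$, a commutator estimate for the weight equivalence, and a Fourier--multiplier argument for the lift — is the standard one and is essentially the same strategy the paper itself uses for the lattice analogs (Lemma~\ref{lemma-fm-reg} and Theorem~\ref{theorem:besov:weight}). So the route is sound and consistent with the rest of the paper.

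Two points in your write-up are, however, not quite right as stated. First, the Taylor expansion of $\rho_\ell(y)-\rho_\ell(x)$ around $x$ is only locally convergent: $\rho_\ell$ is real-analytic with radius of convergence comparable to $\langle x\rangle$, so the full series does not converge when $|y-x|$ is large, and one cannot simply sum ``each term in the expansion.'' What you need — and what makes the argument clean — is a \emph{finite} Taylor expansion with remainder, or more simply a mean-value bound $|\rho_\ell(y)-\rho_\ell(x)|\lesssim |x-y|\bigl(\rho_\ell(x)+\rho_\ell(y)\bigr)(1+|x-y|)^{|\ell|}$: a single factor $|x-y|$ already buys $2^{-j}$ against $K_j$, which is enough to sum, since the kernels $K_j^{\alpha}(z)=z^\alpha K_j(z)$ still have Fourier support in the dyadic annulus (so $K_j^\alpha * f$ only sees the nearby blocks $\tilde\Delta_j f$). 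You should make the localization to finitely many blocks explicit and not invoke an infinite sum in $\alpha$ (whose constants $C_\alpha$ grow factorially and make the series converge only for large $j$).

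Second, your closing remark that the Gevrey (stretched-exponential) decay of the Littlewood--Paley kernels is \emph{necessary} to make the weighted Young inequality uniform in $j$ is incorrect: for $j\geq 0$ one has $|K_j(z)|\lesssim_M 2^{jd}(1+2^j|z|)^{-M}$ for every $M$ from Schwartz decay alone, so $(1+|z|)^{|\ell|}|K_j(z)|\lesssim 2^{jd}(1+2^j|z|)^{|\ell|-M}$ has $L^1$-norm independent of $j$ once $M>|\ell|+d$. Rapid polynomial decay — automatic for any $C^\infty_c$ dyadic partition of unity — is all that is required here. The Gevrey refinement in Remark~\ref{remark:partition:conditions} is built into the paper for a different reason, namely to control the positive-measure functionals $\mathbb{M}$ and $\mathbb{N}$ of Section~\ref{section:positive:distribution}, where the explicit stretched-exponential envelope $E^{a,\beta}_{i,j}$ is used in a genuinely quantitative way that polynomial tails would not support.
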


\begin{proof}
  The proof can be found in Theorem 6.5 in {\cite{Triebel2006}}.
\end{proof}

\begin{proposition}
  \label{proposition:embedding}Consider $p_1, p_2, q_1, q_2 \in [1, +
  \infty]$, $p_1 \leq p_2$, $s_1 > s_2$ and $\ell_1, \ell_2 \in \mathbb{R}$ such that
  \[ \ell_1 \leqslant \ell_2, \quad s_1 - \frac{d}{p_1} \geqslant s_2 -
     \frac{d}{p_2}, \]
  then $B^{s_1}_{p_1, q_1, \ell_1} (\mathbb{R}^d)$ is continuously embedded in
  $B^{s_2}_{p_2, q_2, \ell_2} (\mathbb{R}^d)$, and if $\ell_1 < \ell_2$ and
  $s_1 - \frac{d}{p_1} > s_2 - \frac{d}{p_2}$ the embedding is compact.
\end{proposition}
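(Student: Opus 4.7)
The plan is to treat the two assertions (continuous embedding and compact embedding) by reducing everything, via Proposition \ref{propsition:equivalent}, to questions about Littlewood-Paley blocks of Schwartz-tempered distributions. Indeed, since $\|f\|_{B^s_{p,q,\ell}(\mathbb{R}^d)}\sim\|\rho_\ell^{(d)} f\|_{B^s_{p,q,0}(\mathbb{R}^d)}$, setting $g=\rho_{\ell_1}^{(d)} f$ and $\omega=\rho_{\ell_2-\ell_1}^{(d)}$ (which, by $\ell_1\leqslant\ell_2$, is a smooth bounded function on $\mathbb{R}^d$, and is moreover integrable to any sufficiently large power when $\ell_2>\ell_1$), the embedding claim becomes
\[
\|\omega\, g\|_{B^{s_2}_{p_2,q_2,0}(\mathbb{R}^d)}\;\lesssim\;\|g\|_{B^{s_1}_{p_1,q_1,0}(\mathbb{R}^d)},
\]
and similarly the compact embedding is equivalent to the compactness of the map $g\mapsto \omega g$ from the first space to the second.

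For the continuous embedding, I would combine Bernstein's inequality with the Sobolev-gap hypothesis $s_1-d/p_1\geqslant s_2-d/p_2$. When $p_1\leqslant p_2$, applied to each $\Delta_j g$ (whose Fourier transform is supported in a ball of radius $\sim 2^j$), it yields $\|\Delta_j g\|_{L^{p_2}}\lesssim 2^{jd(1/p_1-1/p_2)}\|\Delta_j g\|_{L^{p_1}}$, hence $2^{js_2}\|\Delta_j g\|_{L^{p_2}}\lesssim 2^{js_1}\|\Delta_j g\|_{L^{p_1}}$; multiplication by $\omega$ is bounded on each $L^{p_2}$ block, and the strict inequality $s_1>s_2$ provides the $2^{-j(s_1-s_2)}$ geometric decay that lets one absorb any $\ell^{q_1}\to\ell^{q_2}$ exchange via H\"older in $j$. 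When instead $p_1>p_2$, the gap $s_1-d/p_1\geqslant s_2-d/p_2$ no longer helps, and one exploits the weight: H\"older's inequality in space with exponents $p_2<p_1$ and $r$ satisfying $1/r=1/p_2-1/p_1$ yields $\|\omega h\|_{L^{p_2}}\leqslant\|\omega\|_{L^r}\|h\|_{L^{p_1}}$, and $\|\omega\|_{L^r}$ is finite because $\ell_2-\ell_1$ can be taken sufficiently positive relative to $d/r$.

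For the compactness statement, under the strict inequalities $\ell_1<\ell_2$ and $s_1-d/p_1>s_2-d/p_2$, I would follow a standard truncation/diagonal extraction scheme applied to a bounded sequence $\{g_n\}\subset B^{s_1}_{p_1,q_1,0}$. First, split $g_n=\sum_{j<J}\Delta_j g_n+\sum_{j\geqslant J}\Delta_j g_n$. The low-frequency part is, for each fixed $J$, a sequence of band-limited smooth functions bounded in $L^{p_1}$, so by Rellich-type compactness on balls combined with the strictly decaying weight $\omega$ (which turns local $L^{p_2}$ convergence into global $L^{p_2}$ convergence) we extract a subsequence converging in $B^{s_2}_{p_2,q_2,0}$ after multiplication by $\omega$. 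The high-frequency tail is controlled by the strict Sobolev-gap: for each bounded sequence,
\[
\sum_{j\geqslant J}2^{jq_2 s_2}\|\Delta_j(\omega g_n)\|_{L^{p_2}}^{q_2}\;\lesssim\;2^{-J\delta q_2}\|g_n\|_{B^{s_1}_{p_1,q_1,0}}^{q_2}
\]
for some $\delta>0$, so it vanishes as $J\to\infty$ uniformly in $n$. A diagonal extraction in $J$ produces the desired convergent subsequence.

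The main obstacle I expect is the careful interplay between the two mechanisms just described when $p_1>p_2$: one has to make sure that the weight's integrability is exactly what is needed by the H\"older step and that this is compatible with the Bernstein/band-localization strategy used in the regime $p_1\leqslant p_2$, so that both pieces can be packaged into a single Besov-type estimate on each block and then summed in $j$. Once the block-wise estimates are in place, the sum in $j$ and the compactness argument are essentially routine, and indeed the result is classical; I would expect the authors to quote a reference such as Triebel's book for this proposition.
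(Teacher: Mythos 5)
The paper's own ``proof'' is a one-line citation to Theorem 6.7 of Triebel \cite{Triebel2006}, so your proposal to argue directly from Littlewood--Paley blocks is a genuinely different (and more self-contained) route. The overall architecture is reasonable, but there are two real gaps that would have to be closed before this works.

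First, you repeatedly treat $\Delta_j(\omega g)$ as if it were $\omega\,\Delta_j g$. After setting $g=\rho^{(d)}_{\ell_1}f$ and $\omega=\rho^{(d)}_{\ell_2-\ell_1}$, the quantity you must bound is $\|\omega g\|_{B^{s_2}_{p_2,q_2,0}}$, whose $j$-th block is $\Delta_j(\omega g)$; multiplication by the (non-band-limited) function $\omega$ mixes frequencies, so the chain ``Bernstein on $\Delta_j g$, then multiply by $\omega$'' does not produce an estimate on $\Delta_j(\omega g)$ without an additional commutator or paraproduct argument. The cleanest repair is to factor the embedding through the intermediate space $B^{s_2}_{p_2,q_2,\ell_1}(\mathbb{R}^d)$: the step $B^{s_1}_{p_1,q_1,\ell_1}\hookrightarrow B^{s_2}_{p_2,q_2,\ell_1}$ keeps the weight fixed, reduces via Proposition~\ref{propsition:equivalent} to the unweighted Besov embedding where your Bernstein plus geometric-decay argument is clean; the step $B^{s_2}_{p_2,q_2,\ell_1}\hookrightarrow B^{s_2}_{p_2,q_2,\ell_2}$ is trivial on each block from $\rho^{(d)}_{\ell_2}\leqslant\rho^{(d)}_{\ell_1}$ (using $\ell_1\leqslant\ell_2$). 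This avoids the frequency-mixing issue entirely and is closer to what one actually carries out. The same repair is needed in the compactness argument, where you again estimate $\|\Delta_j(\omega g_n)\|_{L^{p_2}}$ as though $\omega$ passed through the block.

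Second, your treatment of the case $p_1>p_2$ is not supported by the stated hypotheses. H\"older with $1/r=1/p_2-1/p_1$ needs $\omega=\rho^{(d)}_{\ell_2-\ell_1}\in L^r(\mathbb{R}^d)$, i.e.\ $(\ell_2-\ell_1)r>d$, which is a quantitative lower bound on $\ell_2-\ell_1$ that the assumption $\ell_1\leqslant\ell_2$ alone does not give. You acknowledge this implicitly by writing that ``$\ell_2-\ell_1$ can be taken sufficiently positive relative to $d/r$,'' but that is not something the hypotheses allow you to take; it would have to be part of the statement. Since the paper proves the proposition by citation, this is not a bug you can resolve internally: either you should restrict your direct proof to $p_1\leqslant p_2$ (the case actually used in the paper), or record explicitly the extra integrability condition on $\ell_2-\ell_1$ under which the $p_1>p_2$ case holds. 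The compactness part is otherwise sound in outline (strict Sobolev gap for the high-frequency tail, Rellich plus strictly decaying weight to pass from local to global convergence on the low-frequency part, diagonal extraction), once the block-mixing issue above is handled.
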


\begin{proof}
  The proof can be found in Theorem 6.7 in {\cite{Triebel2006}}.
\end{proof}

\begin{proposition}
  \label{propisition_product}Consider $p_1, p_2, p_3, q_1, q_2, q_3 \in [1, +
  \infty]$ such that $\frac{1}{p_1} + \frac{1}{p_2} = \frac{1}{p_3}$ and $q_1
  = q_3$ and $q_2 = \infty$. Moreover consider $\ell_{1,} \ell_2, \ell_3 \in
  \mathbb{R}$ with $\ell_1 + \ell_2 = \ell_3$ and consider $s_1 < 0$, $s_2 >
  0$ and $s_3 = s_1 + s_2 > 0$. Finally, consider the bilinear functional \
  $\Pi (f, g) = f \cdot g$ defined on $\mathcal{S}' (\mathbb{R}^d) \times
  \mathcal{S} (\mathbb{R}^d)$ taking values in $\mathcal{S} '(\mathbb{R}^d)$.
  Then there exists a unique continuous extension of $\Pi$ as the map
  \[ \Pi : B^{s_1}_{p_1, q_1, \ell_1} \times B^{s_2}_{p_2, q_2, \ell_2}
     \rightarrow B^{s_1}_{p_3, q_3, \ell_3}, \]
  and we have, for any $f, g$ for which the norms are defined:
  \[ \| \Pi (f, g) \|_{B^{s_1}_{p_3, q_3, \ell_3}} \lesssim \| f
     \|_{B^{s_1}_{p_1, q_1, \ell_1}} \| g \|_{B^{s_2}_{p_2, q_2, \ell_2}} . \]
\end{proposition}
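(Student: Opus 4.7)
The approach is Bony's paraproduct decomposition, preceded by a reduction to the unweighted case. Since $\rho_{\ell_3}^{(d)}(y) = \rho_{\ell_1}^{(d)}(y) \rho_{\ell_2}^{(d)}(y)$ pointwise (because the exponents add), we have the pointwise identity $\rho_{\ell_3}^{(d)}(fg) = (\rho_{\ell_1}^{(d)} f)(\rho_{\ell_2}^{(d)} g)$. Combined with the norm equivalence $\|h\|_{B^s_{p,q,\ell}} \sim \|\rho_\ell^{(d)} h\|_{B^s_{p,q,0}}$ provided by Proposition \ref{propsition:equivalent}, this reduces the weighted estimate to the unweighted case $\ell_1 = \ell_2 = \ell_3 = 0$ applied to the pair $(\rho_{\ell_1}^{(d)} f, \rho_{\ell_2}^{(d)} g)$, which is our main task.

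For the unweighted estimate I decompose
$$fg = \pi_<(f, g) + \pi_>(f, g) + \pi_0(f, g),$$
with $\pi_<(f, g) = \sum_j S_{j-1} f \cdot \Delta_j g$, $\pi_>(f, g) = \sum_j \Delta_j f \cdot S_{j-1} g$ and $\pi_0(f, g) = \sum_{|i-j| \leq 1} \Delta_i f \cdot \Delta_j g$. Spectral support considerations localize $\Delta_k \pi_<(f, g)$ to scales $j \sim k$. Combining H\"older's inequality (using $1/p_1 + 1/p_2 = 1/p_3$) with the bound $\|S_{j-1} f\|_{L^{p_1}} \lesssim 2^{-j s_1} \|f\|_{B^{s_1}_{p_1, \infty}}$, whose geometric sum over low frequencies converges because $s_1 < 0$, one obtains
$$2^{k s_3} \|\Delta_k \pi_<(f, g)\|_{L^{p_3}} \lesssim \|f\|_{B^{s_1}_{p_1, \infty}} \cdot 2^{k s_2} \|\Delta_k g\|_{L^{p_2}}.$$
Taking $\ell^{q_1}$ in $k$ and using $q_2 = \infty$ together with $q_3 = q_1$ yields the bound for $\pi_<$. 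The piece $\pi_>$ is estimated analogously, with $\|S_{j-1} g\|_{L^{p_2}} \lesssim \|g\|_{L^{p_2}}$ controlled via the continuous embedding $B^{s_2}_{p_2,\infty} \hookrightarrow L^{p_2}$ valid since $s_2 \geq 0$.

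The main obstacle is the resonant term $\pi_0(f, g)$: each block $\Delta_k \pi_0(f, g)$ collects contributions from \emph{all} scales $j \geq k - 1$. Estimating
$\|\Delta_j f \cdot \Delta_j g\|_{L^{p_3}} \lesssim 2^{-j(s_1+s_2)} a_j \|g\|_{B^{s_2}_{p_2,\infty}}$
with $(a_j) \in \ell^{q_1}$, one must control the tail $\sum_{j \geq k-1} 2^{-j s_3}$, which converges precisely because $s_3 = s_1 + s_2 > 0$. This produces a $2^{-k s_3}$ factor, and a final H\"older in the sequence spaces (with $1/q_3 = 1/q_1 + 1/q_2$, where $q_2 = \infty$) gives the required $\ell^{q_1}$ bound in $k$. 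The hypothesis $s_3 > 0$ is sharp for this step. Uniqueness of the bilinear continuous extension follows from continuity together with density of smooth functions in the relevant Besov spaces (in the $q_2 = \infty$ case, one approximates $g$ by mollifications and uses that the above estimates are uniform in the regularization parameter).
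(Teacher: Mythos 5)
There is a genuine obstruction in your treatment of $\pi_>$, and it points to what is almost certainly a typo in the statement itself: compare with the paper's own Theorem~\ref{theorem:besov:product} and Theorem~\ref{theorem:besov:rz:product}, where the analogous lattice products land in $B^{s_1}_{p_3,q_3,\ell_1+\ell_2}$, \emph{not} $B^{s_1+s_2}$. The high-high-low piece $\pi_>(f,g)=\sum_j \Delta_j f\, S_{j-1}g$ has generically only the regularity of $f$, namely $s_1$, not $s_1+s_2$. Since $s_2>0$ (forced by $s_1<0$, $s_1+s_2>0$), the best one can say is $\|S_{j-1}g\|_{L^{p_2}}\lesssim\|g\|_{B^{s_2}_{p_2,\infty}}$ uniformly in $j$, so the block estimate yields $\|\Delta_k\pi_>(f,g)\|_{L^{p_3}}\lesssim\|\Delta_k f\|_{L^{p_1}}\,\|g\|_{B^{s_2}_{p_2,\infty}}$; multiplying by $2^{ks_3}=2^{ks_2}\cdot 2^{ks_1}$ introduces a factor $2^{ks_2}\to\infty$ and the $\ell^{q_1}$ sum diverges. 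Your single sentence dismissing $\pi_>$ as ``estimated analogously'' hides exactly this; a correct proof concludes $fg\in B^{s_1}_{p_3,q_3,\ell_3}$, and this is what the cited source {\cite{Mourrat_Weber2017}} actually proves.

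There is a second, repairable slip in your $\pi_<$ estimate. You bound $\|S_{j-1}f\|_{L^{p_1}}\lesssim 2^{-js_1}\|f\|_{B^{s_1}_{p_1,\infty}}$ and then ``take $\ell^{q_1}$ in $k$'' of $2^{ks_2}\|\Delta_k g\|_{L^{p_2}}$. But $q_2=\infty$, so $\|g\|_{B^{s_2}_{p_2,q_2}}$ only controls the $\ell^\infty$ norm of that sequence, not its $\ell^{q_1}$ norm (which is strictly stronger when $q_1<\infty$). The $\ell^{q_1}$ summability has to be extracted from $f$: since $s_1<0$, the sequence $\{2^{js_1}\|S_{j-1}f\|_{L^{p_1}}\}_j$ is dominated by the convolution of $\{2^{js_1}\|\Delta_j f\|_{L^{p_1}}\}_j$ with the summable geometric sequence $\{2^{ns_1}\}_{n\geq 0}$, so Young's inequality for sequences gives it in $\ell^{q_1}$ with norm $\lesssim\|f\|_{B^{s_1}_{p_1,q_1}}$. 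Use this finer bound on the $f$-factor and only the $\ell^\infty$ bound on the $g$-factor; then $\pi_<$ (and your $\pi_0$ estimate, which is correct as written) lands in $B^{s_3}_{p_3,q_1,\ell_3}$, while $\pi_>$ lands in $B^{s_1}_{p_3,q_1,\ell_3}$, giving the (corrected) statement. The weight reduction at the start of your proof is fine.
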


\begin{proof}
  The proof can be found in {\cite{Mourrat_Weber2017}} in Section 3.3 for
  Besov spaces with exponential weights. The proof for polynomial weights is
  similar.
\end{proof}

\subsection{Besov spaces on lattice}\label{subsection:lattice}

\

Here we want to introduce a modification of Besov spaces when the base space
is a square lattice (see {\cite{GuHof2018,Perkowski2019}}). First we consider
the set $\varepsilon \mathbb{Z}^2$ for some $\varepsilon > 0$, i.e. $z \in
\varepsilon \mathbb{Z}^2$ if $z = (\varepsilon n_1, \varepsilon n_2)$ where
$n_1, n_2 \in \mathbb{Z}$. We endow $\varepsilon \mathbb{Z}^2$ with the
discrete topology and with the measure $\mathd z$ defined, for any integrable
function $f : \varepsilon \mathbb{Z}^2 \rightarrow \mathbb{R}$, as
\[ \int_{\varepsilon \mathbb{Z}^2} f (z) \mathd z \assign \varepsilon^2
   \sum_{z \in \varepsilon \mathbb{Z}^2} f (z) . \]
We can define also the weighted Lebesgue space $L^p_{\ell} (\varepsilon
\mathbb{Z}^2)$, where $p \in [1, + \infty)$ and $\ell \in \mathbb{R}$, with
the norm
\begin{equation}
  \| f \|_{L^p_{\ell}}^p = \int_{\varepsilon \mathbb{Z}^2} (\rho_{\ell} (z))^p
  | f (z) |^p \mathd z \label{eq:Lplattice},
\end{equation}
where $f \in L^p_{\ell} (\varepsilon \mathbb{Z}^2)$, where hereafter we write
$\rho_{\ell} (x) \assign \rho_{\ell}^{(2)} (x)$, $x \in \mathbb{R}^2$ and
$\ell \in \mathbb{R}$. We can define $L_{\ell}^{\infty} (\varepsilon
\mathbb{Z}^2)$ with the norm
\[ \| f \|_{L^{\infty}_{\ell} (\varepsilon \mathbb{Z}^2)} = \sup_{z \in
   \varepsilon \mathbb{Z}^2} | f (z) \rho_{\ell} (z) |, \]
where $f \in L^{\infty}_{\ell} (\varepsilon \mathbb{Z}^2)$ if the norm is
finite. We define the space $\mathcal{S} (\varepsilon \mathbb{Z}^2) =
\bigcap_{\ell < 0} L^{\infty}_{\ell} (\varepsilon \mathbb{Z}^2)$ as the space
of Schwartz test functions defined on $\varepsilon \mathbb{Z}^2$, which forms
a Fr{\'e}chet space with the set of seminorms $\{ \| \cdot
\|_{L^{\infty}_{\ell}} \}_{\ell < 0}$. We define also $\mathcal{S}'
(\varepsilon \mathbb{Z}^2)$ as the topological dual of $\mathcal{S}
(\varepsilon \mathbb{Z}^2)$ with respect to the Fr{\'e}chet space structure of
$\mathcal{S} (\varepsilon \mathbb{Z}^2)$. It is possible to define the Fourier
transform $\mathcal{F}_{\varepsilon} : \mathcal{S} (\varepsilon \mathbb{Z}^2)
\rightarrow C^{\infty} \left( \mathbb{T}_{\frac{1}{\varepsilon}}^2 \right)$,
where $\mathbb{T}_{\frac{1}{\varepsilon}}^2$ is the torus of length $\frac{2
\pi}{\varepsilon}$, i.e. $\mathbb{T}^2_{\frac{1}{\varepsilon}} = \left[ -
\frac{\pi}{\varepsilon}, \frac{\pi}{\varepsilon} \right]^2$, which has the
following form
\[ \mathcal{F}_{\varepsilon} (f) (h) \assign \hat{f} (h) = \int_{\varepsilon
   \mathbb{Z}^2} e^{i z \cdot h} f (z) \mathd z, \]
where $h \in \mathbb{T}^2_{\frac{1}{\varepsilon}}$. In the usual way it is
possible to define the inverse Fourier transform $\mathcal{F}^{- 1} :
C^{\infty} \left( \mathbb{T}_{\frac{1}{\varepsilon}}^2 \right) \rightarrow
\mathcal{S} (\varepsilon \mathbb{Z}^2)$
\[ \mathcal{F}^{- 1}_{\varepsilon} (\hat{f}) (z) \assign \frac{1}{(2 \pi)^2}
   \int_{\mathbb{T}^2_{\frac{1}{\varepsilon}}} e^{- i z \cdot h} \hat{f} (h)
   \mathd h, \]
from $\mathcal{F}^{- 1}_{\varepsilon} : C^{\infty} \left(
\mathbb{T}_{\frac{1}{\varepsilon}}^2 \right) \rightarrow \mathcal{S}
(\varepsilon \mathbb{Z}^2)$. We can extend the Fourier transform
$\mathcal{F}_{\varepsilon}$, and the inverse Fourier transform
$\mathcal{F}_{\varepsilon}^{- 1}$ from the space $\mathcal{S}' (\varepsilon
\mathbb{Z}^2)$ into $\mathcal{D}' \left( \mathbb{T}^2_{\frac{1}{\varepsilon}}
\right)$ (where $\mathcal{D}' \left( \mathbb{T}^2_{\frac{1}{\varepsilon}}
\right)$ is the space of distributions, i.e. the topological dual of
$C^{\infty} \left( \mathbb{T}_{\frac{1}{\varepsilon}}^2 \right)$), and from
$\mathcal{D}' \left( \mathbb{T}^2_{\frac{1}{\varepsilon}} \right)$ into
$\mathcal{S}' (\varepsilon \mathbb{Z}^2)$ respectively.

\

We can extend the concept of weighted Besov spaces $B^s_{p, q, \ell}
(\varepsilon \mathbb{Z}^2)$, where $s \in \mathbb{R}$, $p, q \in [1, +
\infty]$, $\ell \in \mathbb{R}$, of tempered distributions defined on
$\varepsilon \mathbb{Z}^2$. In the following we consider a fixed dyadic
partition of the unity $(\varphi_j)_{j \geqslant - 1} \assign
(\varphi_j^{(2)})_{j \geqslant - 1}$ (recall the notation preceding Remark
\ref{remark:partition:conditions}) defined on $\mathbb{R}^2$ having the
properties required in Section \ref{section:besov:rd} (in particular Remark
\ref{remark:partition:conditions}). For the definition of Besov spaces on the
lattice $\varepsilon \mathbb{Z}^2$ with $\varepsilon = 2^{- N}$, we introduce
a suitable partition of unity $(\varphi_j^{\varepsilon})_{- 1 \leqslant j
\leqslant J_{\varepsilon}}$ for $\mathbb{T}^2_{\frac{1}{\varepsilon}}$ as
follows
\begin{equation}
  \varphi_j^{\varepsilon} (k) = \left\{\begin{array}{l}
    \varphi_j (k) \hspace{6em} \tmop{if} j < J_{\varepsilon}\\
    1 - \sum_{j = - 1}^{J_{\varepsilon} - 1} \varphi_j (k) \quad \tmop{if} j =
    J_{\varepsilon}
  \end{array}\right.  \label{eq:dydadictorus1},
\end{equation}
where
\begin{equation}
  J_{\varepsilon} = \min \left\{ j \geqslant - 1, \tmop{supp} (\varphi_j)
  \centernot{\subset} \left[ - \frac{\pi}{\varepsilon}, \frac{\pi}{\varepsilon}
  \right]^2 \right\} .
\end{equation}
If $f \in \mathcal{S}' (\varepsilon \mathbb{Z}^2)$, for any $- 1 \leqslant i
\leqslant J_{\varepsilon}$ we define
\[ \Delta_i^{\epsilon} f = \mathcal{F}^{- 1}_{\varepsilon} (\varphi_i^{\varepsilon}
   \cdot \mathcal{F}_{\varepsilon} (f)) = K_i^{\varepsilon} \ast_{\varepsilon}
   f, \]
where $K_i^{\varepsilon} = \mathcal{F}^{- 1}_{\varepsilon}
(\varphi_i^{\varepsilon})$.

\begin{remark}
  \label{remark:k:epsilon}It is important to note that when $j <
  J_{\varepsilon}$ the function $K^{\varepsilon}_j$ does not depend on
  $\varepsilon$. More precisely if $K_j = \mathcal{F}^{- 1} (\varphi_j)$
  (where $\varphi_j$ is the function in the dyadic partition of $\mathbb{R}^2$
  introduced above) then, for any $z \in \varepsilon \mathbb{Z}^2$, we have
  \[ K_j^{\varepsilon} (z) = K_j (z), \]
  (the only difference is that in principle the function $K_j^{\varepsilon}$
  is not defined for points outside $\varepsilon \mathbb{Z}^2$). Thanks to
  Remark \ref{remark:partition:conditions}, we have that there are $\bar{a} >
  0$ and $\bar{\beta} \in (0, 1)$ such that for any $z \in \varepsilon
  \mathbb{Z}^2$
  \[ | K^{\varepsilon}_j (z) | \lesssim 2^{2 j} \exp (- \bar{a} (1 + 2^{2 j} |
     z |^2)^{\bar{\beta} / 2}), \]
  where the constants hidden in the symbol $\lesssim$ do not depend both on $j
  \geqslant - 1$ and $0 < \varepsilon \leqslant 1$.
\end{remark}

\begin{remark}
  \label{remark:epsilon2}
  In some sense also the function
  $K^{\varepsilon}_{J_{\varepsilon}}$ does not depend on $\varepsilon$. Indeed, let us write $\varepsilon=c 2^{-J_{\varepsilon}}$, for  a suitable constant $c>0$ .  
 If we denote by $\bar{K} $ the function such that $\bar{K}= K_0^c$ (that is $K^{\varepsilon}_0$ when $\varepsilon = c$, i.e. $K^c_0 (z) = \frac{1}{(2 \pi)^2} \int_{\mathbb{T}^2_1} e^{- i (z
  \cdot q)} (1 - \varphi_{- 1} \left(\frac{q}{c}\right)) \mathd q$), we have that
  \begin{eqnarray}
    |z|^2K^{\varepsilon}_{J_{\varepsilon}} (z) & = & \mathcal{F}_{\varepsilon}^{-
    1} \left( \Delta_x\left(1 - \sum_{- 1 \leqslant j \leqslant J_{\varepsilon}-1}
    \varphi_j^{\varepsilon} (x) \right) \right) (z) \nonumber\\
    & = &2^{-2J_{\varepsilon}} \mathcal{F}_{\varepsilon}^{- 1} (\Delta\upsilon^{(2)} (2^{-
    J_{\varepsilon}} \cdot)) (z) \nonumber\\
   & = &2^{-2J_{\varepsilon}} \mathcal{F}^{-1} \left( \Delta\upsilon^{(2)} (2^{- J_{\varepsilon}} \cdot)
   \right) (z) \nonumber\\
    & = & \varepsilon^{-2}2^{-2J_{\varepsilon}}\mathcal{F}^{-1} \left( \Delta\upsilon^{(2)} \left(c^{-1}\cdot\right)
   \right) \left(\frac{z}{\varepsilon}\right) \nonumber\\
    & = & 
    \frac{2^{-2J_{\varepsilon}}}{\varepsilon^2}  \left|\frac{ z}{\varepsilon c}\right|^2 \bar{K} \left( \frac{z}{\varepsilon} \right)=  \frac{|z|^2}{\varepsilon^2}  \bar{K} \left( \frac{z}{\varepsilon} \right),
    \nonumber
  \end{eqnarray}
  where we used the function $\upsilon^{(2)}$ introduced in
  {\eqref{eq:greeku}} and the relation {\eqref{eq:greeku2}}.  In particular, since $\Delta\upsilon^{(2)} (x) =  - \Delta\varphi_{- 1} (x)$ is in
  Gevrey class for $\theta > 1$ with compact support, as a function on $\mathbb{R}^2$, we have that
  there is $\bar{a} > 0$ and $0 < \bar{\beta} < 1$ such that
  \begin{equation}\label{eq:Jepsilon}
     | K^{\varepsilon}_{J_{\varepsilon}} (z) | \lesssim
     \frac{1}{\varepsilon^2} \exp \left( - \bar{a} \left( 1 + \frac{| z
     |^2}{\varepsilon^2} \right)^{\beta / 2} \right)\quad \forall z\not= 0,
     \end{equation}
  where the constants hidden in the symbol $\lesssim$ do not depend on
  $\varepsilon = c 2^{- k}$ for any $k \in \mathbb{N}_0$.\\
  For $z=0$, we note that 
  \[|K^{\varepsilon}_{J_{\varepsilon}} (z)|=\left|\mathcal{F}_{\varepsilon}^{-
    1} \left( 1 - \sum_{- 1 \leqslant j \leqslant J_{\varepsilon}-1}
    \varphi_j^{\varepsilon} (x) \right) (0)\right|=\left|\int_{\mathbb{T}^2_{\frac{1}{\varepsilon}}}\left(1 - \sum_{- 1 \leqslant j \leqslant J_{\varepsilon}-1}
    \varphi_j^{\varepsilon}(x) \right)\mathd x\right|\leqslant\frac{1}{\epsilon^2}.
  \]
This means that, if we change the constants involved if necessary, inequality \eqref{eq:Jepsilon} holds also for $z=0$, and $|K^{\varepsilon}_{J_{\varepsilon}}|$ is bounded from above by an exponential function rescaling with $\varepsilon$ and not depending on it. 
\end{remark}

\begin{definition}
  For any $p, q \in [1, + \infty]$, $s, \ell \in \mathbb{R}$ we define the
  Besov space $B^s_{p, q, \ell} (\varepsilon \mathbb{Z}^2)$ to be the Banach space,
  subspace of $ \mathcal{S}' (\varepsilon \mathbb{Z}^2)$, for which $f \in
  B^s_{p, q, \ell} (\varepsilon \mathbb{Z}^2)$ if $f \in \mathcal{S}'
  (\varepsilon \mathbb{Z}^2)$
  \begin{equation}
    \| f \|_{B^s_{p, q, \ell} (\varepsilon \mathbb{Z}^2)} = \| \{ 2^{j s} \|
    \Delta_j ^{\varepsilon}f \|_{L^p_{\ell} (\varepsilon \mathbb{Z}^2)} \}_{- 1 \leqslant j
    \leqslant J_{\varepsilon}} \|_{\ell^q (\{ - 1, \cdots, J_{\varepsilon}
    \})} \label{eq:norm:besov:descrete},
  \end{equation}
  where $\| \cdot \|_{\ell^q (\{ - 1, \cdots, J_{\varepsilon} \})}$ is the
  natural norm of the small $\ell$ space $\ell^q (\{ - 1, \cdots,
  J_{\varepsilon} \})$, is finite.
\end{definition}

Many properties of standard Besov spaces can be extended to the case of Besov
spaces on the lattice $\varepsilon \mathbb{Z}^2$. We report here some of them
which will be useful in what follows (see {\cite{GuHof2018,Perkowski2019}} for
a discussion of other properties of Besov spaces on lattices).

\begin{remark}
  From now on when the symbols $\lesssim$ and $\sim$ appear in expressions
  involving norms, or other functionals defined on Besov spaces $B^s_{p, q,
  \ell} (\varepsilon \mathbb{Z}^2)$ or one of their generalization in Section
  \ref{section:besov:rz}, we assume that the constants hidden in the symbols
  $\lesssim$ and $\sim$ can be chosen uniformly with respect to $0 <
  \varepsilon \leqslant 1$. Without this assumption the following results are
  trivial.
\end{remark}

First we introduce the discrete Laplacian $\Delta_{\varepsilon \mathbb{Z}^2} :
L^p_{\ell} (\varepsilon \mathbb{Z}^2) \rightarrow L^p_{\ell} (\varepsilon
\mathbb{Z}^2)$ defined as
\[ \Delta_{\varepsilon \mathbb{Z}^2} f (z) = \sum_{i = 1, 2} \frac{f (z +
   \varepsilon e_i) + f (z - \varepsilon e_i) - 2 f (z)}{\varepsilon^2}, \]
where $\{ e_1, e_2 \}$ is the standard basis of $\mathbb{R}^2$. It is simple
to compute the Fourier transform of $\Delta_{\varepsilon \mathbb{Z}^2}$,
indeed
\begin{eqnarray}
  \mathcal{F}_{\varepsilon} (\Delta_{\varepsilon \mathbb{Z}^2} f) & = &
  \int_{\varepsilon \mathbb{Z}^2} \Delta_{\varepsilon \mathbb{Z}^2} f (z) e^{i
  q \cdot z} \mathd z = \sum_{j = 1, 2} \int_{\varepsilon \mathbb{Z}^2}
  \frac{f (z + \varepsilon e_j) + f (z - \varepsilon e_j) - 2 f (z)}{
  \varepsilon^2} e^{i q \cdot z} \mathd z \nonumber\\
  & = & \sum_{j = 1, 2} \frac{1}{\varepsilon^2} \int_{\varepsilon
  \mathbb{Z}^2} (e^{- i \varepsilon q_j} + e^{+ i \varepsilon q_j} - 2) f
  (z) e^{i q \cdot z} \mathd z \nonumber\\
  & = & - \sum_{j = 1, 2} \frac{4}{\varepsilon^2} \sin^2 \left(
  \frac{\varepsilon q_j}{2} \right) \mathcal{F}_{\varepsilon} (f) (q),
  \nonumber
\end{eqnarray}
where $q = (q_1, q_2) \in \mathbb{T}^2_{\frac{1}{\varepsilon}}$. Writing
\[ \sigma_{-\Delta_{\varepsilon \mathbb{Z}^2}} (q) = \sum_{j = 1, 2}
   \frac{4}{\varepsilon^2} \sin^2 \left( \frac{\varepsilon q_j}{2} \right), \]
and taking any $c > 0$, we get that the operator $(c - \Delta_{\varepsilon
\mathbb{Z}^2})$ is a self-adjoint, positive and invertible on $\mathcal{S}'
(\varepsilon \mathbb{Z}^2)$ with the power $s \in \mathbb{R}$ given by
\[ (c - \Delta_{\varepsilon \mathbb{Z}^2})^s (f) = \mathcal{F}^{-
   1}_{\varepsilon} \left( \left( c + \sigma_{-\Delta_{\varepsilon \mathbb{Z}^2}}
   (q) \right)^s \mathcal{F}_{\varepsilon} (f) (q) \right) . \]
Hereafter, whenever $f,g:\varepsilon \mathbb{Z}^{2} \rightarrow \mathbb{R}$ are two functions in $L^{p_1}_{\ell_1}(\varepsilon \mathbb{Z}^{2})$ and $L^{p_2}_{\ell_2}(\varepsilon \mathbb{Z}^{2})$, for some $p_1,p_2 \geq 1$ and $\ell_1,\ell_2\in\mathbb{R}$, respectively, we write 
\[(f \ast_{\varepsilon} g)(z)= \int_{\epsilon \mathbb{Z}^2} f(z-z') g(z') \mathd z'. \]

\begin{lemma}
  \label{lemma-fm-reg}Consider $s, \ell \in \mathbb{R}$, $p, q \in [1, +
  \infty]$. Furthermore assume that $\sigma_{\varepsilon} =
  \sigma_{\varepsilon} (q) : \mathbb{T}^2_{\frac{1}{\varepsilon}} \rightarrow
  \mathbb{R}$ is a differentiable function such that there is an $m \in
  \mathbb{R}$ such that for any $\beta = (\beta_1, \beta_2) \in
  \mathbb{N}^2_0$, for which $| \beta | = \beta_1 + \beta_2 \leqslant 4$, we
  have
  \[ | \partial_q^{\beta} \sigma_{\varepsilon} (q) | \leqslant C_{\beta} (1 +
     | q |^2)^{(m - | \beta |) / 2}, \]
  where $| \cdot | : \mathbb{T}^2_{\frac{1}{\varepsilon}} \simeq \left[ -
  \frac{\pi}{ \varepsilon}, \frac{\pi}{ \varepsilon} \right]^2 \rightarrow
  \mathbb{R}_+$ defined as $| q | = \sqrt{q_1^2 + q^2_2}$, for some constant
  $C_{\beta} > 0$ independent on $0 < \varepsilon \leqslant 1$. Then the
  operator
  \[ T^{\sigma_{\varepsilon}} f = \mathcal{F}^{- 1}_{\varepsilon}
     (\sigma_{\varepsilon} (q) (\mathcal{F}_{\varepsilon} f) (q)), \quad f \in
     B^s_{p, q, \ell} (\varepsilon \mathbb{Z}^2) \]
  is well defined from $B^s_{p, q, \ell} (\varepsilon \mathbb{Z}^2)$ into
  $B^{s - m}_{p, q, \ell} (\varepsilon \mathbb{Z}^2)$, and we have
  \[ \| T^{\sigma_{\varepsilon}} f \|_{B_{p, q, \ell}^{s - m} (\varepsilon
     \mathbb{Z}^2)} \lesssim \| f \|_{B_{p, q, \ell}^s (\varepsilon
     \mathbb{Z}^2)} . \]
\end{lemma}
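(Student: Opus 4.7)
My plan is to prove this as a discrete Mikhlin/Bernstein-type Fourier multiplier theorem via the modified Littlewood--Paley decomposition $(\varphi_j^\varepsilon)_{-1\le j\le J_\varepsilon}$. The strategy reduces the Besov bound to uniform-in-$\varepsilon$ pointwise estimates on the convolution kernels of each dyadic block, after which a weighted Young inequality finishes the job.

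First I would write $\Delta_j(T^{\sigma_\varepsilon}f)=k_j^\varepsilon *_\varepsilon \widetilde{\Delta}_j f$, with
$$k_j^\varepsilon := \mathcal{F}_\varepsilon^{-1}(\varphi_j^\varepsilon\sigma_\varepsilon),$$
where $\widetilde{\Delta}_j$ is a sum of a fixed finite number of neighbouring Littlewood--Paley blocks whose frequency support contains that of $\varphi_j^\varepsilon$. Using the bounds $|\partial^\alpha\varphi_j^\varepsilon(q)|\lesssim 2^{-j|\alpha|}\mathbbm{1}_{\mathrm{supp}\,\varphi_j^\varepsilon}$ (which for $j<J_\varepsilon$ hold on the annulus $|q|\sim 2^j$ by Remark \ref{remark:k:epsilon}, and for $j=J_\varepsilon$ follow from the rescaling identity in Remark \ref{remark:epsilon2}) together with the hypothesis $|\partial^\beta\sigma_\varepsilon(q)|\lesssim 2^{j(m-|\beta|)}$ on $\mathrm{supp}\,\varphi_j^\varepsilon$, repeated integration by parts in the torus Fourier integral
$$z^\beta k_j^\varepsilon(z)=(-i)^{|\beta|}\int_{\mathbb{T}^2_{1/\varepsilon}}\partial^\beta(\varphi_j^\varepsilon\sigma_\varepsilon)(q)\,e^{-iq\cdot z}\,dq,\qquad |\beta|\le 4,$$
followed by a change of variables $q=2^jq'$, yields the uniform kernel decay
$$|k_j^\varepsilon(z)|\lesssim 2^{j(m+2)}(1+2^j|z|)^{-N},\qquad 0\le N\le 4.$$

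Next, Peetre's inequality $\rho_\ell(x)\lesssim\rho_\ell(y)(1+|x-y|^2)^{|\ell|/2}$ combined with a weighted Young inequality on $\varepsilon\mathbb{Z}^2$ gives
$$\|k_j^\varepsilon *_\varepsilon g\|_{L^p_\ell(\varepsilon\mathbb{Z}^2)}\lesssim \bigl\|k_j^\varepsilon(\cdot)(1+|\cdot|^2)^{|\ell|/2}\bigr\|_{L^1(\varepsilon\mathbb{Z}^2)}\,\|g\|_{L^p_\ell(\varepsilon\mathbb{Z}^2)}\lesssim 2^{jm}\|g\|_{L^p_\ell(\varepsilon\mathbb{Z}^2)},$$
where the last step uses the pointwise kernel bound above with $N$ chosen large enough relative to $|\ell|$. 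Multiplying by $2^{j(s-m)}$, taking the $\ell^q$ norm in $j$, and using $\|\widetilde{\Delta}_j f\|_{L^p_\ell}\lesssim\sum_{|j-i|\le C}\|\Delta_i f\|_{L^p_\ell}$ then produces the desired Besov bound.

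The main obstacle is ensuring uniformity in $\varepsilon$ for the top block $j=J_\varepsilon$, where $\varphi_{J_\varepsilon}^\varepsilon=1-\sum_{j<J_\varepsilon}\varphi_j$ is not a simple dilate of a fixed bump and the interior dyadic scaling argument does not directly apply. The saving ingredient is the explicit formula $K_{J_\varepsilon}^\varepsilon(z)=\varepsilon^{-2}\bar K(z/\varepsilon)$ from Remark \ref{remark:epsilon2}, which together with the Gevrey-type decay of $\bar K$ permits the integration-by-parts estimates to be carried out at scale $2^{J_\varepsilon}\sim\varepsilon^{-1}$ with constants independent of $\varepsilon$.
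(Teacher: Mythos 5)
Your proposal follows essentially the same route as the paper's proof: reduce to a uniform-in-$\varepsilon$ estimate on the $L^1$ norm (with polynomial weight) of the kernel $\mathcal{F}_\varepsilon^{-1}(\varphi_j^\varepsilon\sigma_\varepsilon)$ by integration by parts in the torus Fourier integral, invoke the dyadic frequency localisation together with the multiplier hypothesis to produce the factor $2^{jm}$, close with the weighted Young inequality, and treat the top block $j=J_\varepsilon$ separately via the rescaling identity of Remark~\ref{remark:epsilon2}. The only difference from the paper is cosmetic: you put the thin cut-off $\varphi_j^\varepsilon$ on the kernel side and a fattened block $\widetilde{\Delta}_j$ on the function side, whereas the paper does the reverse ($K_j^{\sigma_\varepsilon}=\sum_{|i-j|<2}\mathcal{F}_\varepsilon^{-1}(\varphi_i\sigma_\varepsilon)$ convolved with $\Delta_j f$); these are equivalent up to relabelling.
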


\begin{proof}
  Our goal is to show that if $m \in \mathbb{R}$ then
  \[ \| \mathcal{F}_{\varepsilon}^{- 1} (\varphi_j^{\varepsilon}
     \sigma_{\varepsilon} \mathcal{F}_{\varepsilon} f) \|_{L_{\ell}^p
     (\varepsilon \mathbb{Z}^2)} \lesssim 2^{m j} \| \Delta^{\varepsilon}_j f
     \|_{L_{\ell}^p (\varepsilon \mathbb{Z}^2)} . \]
  We write
  \[ \mathcal{F}_{\varepsilon}^{- 1} (\varphi_j^{\varepsilon} \sigma_{\varepsilon}
     \mathcal{F}_{\varepsilon} f) = K^{\sigma_{\varepsilon}}_j
     \ast_{\varepsilon} \Delta_j^{\varepsilon} f ,\]
  where
  \[ K_j^{\sigma_{\varepsilon}} = \sum_{| i - j | < 2} \mathcal{F}^{-
     1}_{\varepsilon} (\varphi_i^{\varepsilon} \sigma_{\varepsilon}) . \]
  We want to prove that for any $r \in \mathbb{N}$, $\|
  K_j^{\sigma_{\varepsilon}} \|_{L_{- r}^1 (\varepsilon \mathbb{Z}^2)}
  \leqslant C (r) 2^{m j}$, which implies the statement in Lemma
  \ref{lemma-fm-reg} by the weighted Young's inequality. \\
  Let us first consider
  the case $j = - 1$. We \ can choose a smooth function $\psi : \mathbb{R}^2
  \rightarrow [0, 1]$ supported in the ball $B (0, 2)$ and such that $\psi (x)
  = 1$ if $| x | \leqslant 1$ and, writing
  \begin{equation} K_{- 1}^{\sigma_{\varepsilon}} (z) = \frac{1}{(2 \pi)^2}
     \int_{\mathbb{T}_{\frac{1}{\varepsilon}}^2} e^{i (z \cdot q)} \psi (q)
     \sigma_{\varepsilon} (q) \mathd q \label{eq:Ksigma}, \end{equation}
  since $\tmop{supp} (\mathcal{F}_{\varepsilon} (\Delta^{\varepsilon}_{- 1} f)) \subset B
  (0, 1)$, we have
  \[ \mathcal{F}_{\varepsilon}^{- 1} (\sigma_{\varepsilon}
     (\mathcal{F}_{\varepsilon} \Delta_{- 1}^{\varepsilon} f) (k)) = K_{-
     1}^{\sigma_{\varepsilon}} \ast_{\varepsilon} \Delta_{- 1}^{\varepsilon} f. \]
  Considering $\beta = (\beta_1, \beta_2), \alpha = (\alpha_1, \alpha_2) \in
  \mathbb{N}^2_0$, from equation {\eqref{eq:Ksigma}} we get that
  \begin{eqnarray}
    | (1 + | z |^2)^M K^{\sigma_{\varepsilon}}_{- 1} (z) | & = & \frac{1}{(2
    \pi)^2} \left| \int_{\mathbb{T}_{\frac{1}{\varepsilon}}^2} \left( \left( 1
    - \Delta_{\mathbb{T}_{\frac{1}{\varepsilon}}^2}^q \right)^M e^{i (z \cdot
    q)} \right) \psi (q)_{} \sigma_{\varepsilon} (q) \mathd q \right|
    \nonumber\\
    & = & \frac{1}{(2 \pi)^2} \left|
    \int_{\mathbb{T}_{\frac{1}{\varepsilon}}^2} e^{i (z \cdot q)} \left( 1 -
    \Delta_{\mathbb{T}_{\frac{1}{\varepsilon}}^2}^q \right)^M (\psi (q)_{}
    \sigma_{\varepsilon} (q)) \mathd q \right| \nonumber\\
    & \leqslant & \sum_{| \alpha | + | \beta | \leqslant 2 M} c_{\alpha, \beta}
    \int_{\mathbb{T}_{\frac{1}{\varepsilon}}^2} e^{i (z \cdot q)} |
    \partial^{\alpha}_q \psi (q) (\partial^{\beta}_q \sigma_{\varepsilon} (q))
    | \mathd q \leqslant C_M, \label{eq:boundsKsigma1} 
  \end{eqnarray}
  where $\Delta^q_{\mathbb{T}_{\frac{1}{\varepsilon}}^2}$ is the standard
  Laplacian on $\mathbb{T}_{\frac{1}{\varepsilon}}^2$ with respect to the $q =
  (q_1, q_2)$ variables, and $c_{\alpha, \beta}, C_M > 0$ are suitable
  constants independent on $0 < \varepsilon \leqslant 1$. Choosing $M = 2 +
  \ell$ \ we deduce that $\| K \|_{L_{\ell}^1 (\varepsilon \mathbb{Z}^2)}
  \leqslant \tilde{C}_3$ for some constant $\tilde{C}_3$ not depending on $0 <
  \varepsilon \leqslant 1$.\\
  
  For $0 \leqslant j < J_{\varepsilon}$ we proceed similarly after a
  rescaling argument: We can choose a smooth function $\tilde{\psi} :
  \mathbb{R}^2 \rightarrow [0, 1]$ supported in an annulus strictly contained in $\mathbb{T}_1^2$, which is
  identically $1$ on the support of $\sum_{| i - 1 | < 2} \varphi^{\varepsilon}_i
  = \sum_{| i - 1 | < 2} \varphi_i$ (when $\varepsilon \leqslant 1$), and
  writing,
  \[ K^{\sigma_{\varepsilon},\lambda} (z) = \frac{1}{(2 \pi)^2}
     \int_{\mathbb{T}^2_{\frac{1}{\lambda \varepsilon}}} e^{i (z \cdot q)}
     \tilde{\psi}  (q) \sigma_{\varepsilon} (\lambda q) \mathd q = \frac{1}{(2
     \pi)^2} \int_{\mathbb{R}^2} e^{i (z \cdot q)} \tilde{\psi}  (q)
     \sigma_{\varepsilon} (\lambda q) \mathd q, \]
  where $\lambda = 2^j$, since $\tmop{supp} (\tilde{\psi}) \subset
  \mathbb{T}^2_{\frac{1}{\lambda \varepsilon}}$, we get
  \[ \mathcal{F}_{\varepsilon}^{- 1} (\sigma_{\varepsilon} (q)
     (\mathcal{F}_{\varepsilon} \Delta_j^{\varepsilon} f) (q)) = \lambda^2
     K^{\sigma_{\varepsilon},\lambda} (\lambda z) \ast_{\varepsilon}
     \Delta_j^{\varepsilon} f, \]
     where we used the fact that 
     \[K^{\sigma_{\varepsilon},\lambda}(\lambda z)=\frac{1}{(2\pi\lambda)^2}\int_{\mathbb{R}^2}e^{i (z\cdot q)}\tilde{\psi}\left(\frac{q}{\lambda} \right)\sigma_{\varepsilon}(q)\mathd q, \]
     which is $\frac{1}{\lambda^2}$ times the Fourier transform of $\sigma_{\epsilon}$ projected on the set $\lambda \tmop{supp} (\tilde{\psi}) \supseteq \tmop{supp} (\varphi_j))$. Then, using the same argument in equation {\eqref{eq:boundsKsigma1}}, we
  obtain
  \begin{eqnarray}
    | (1 + | z |^2)^M K^{\sigma_{\varepsilon},\lambda} (z) | & \leqslant & \frac{1}{(2
    \pi)^2} \left| (1 + | z |^2)^M \int_{\mathbb{R}^2} e^{i (z \cdot q)}
    \tilde{\psi}  (q) \sigma_{\varepsilon} (\lambda q) \mathd q \right|
    \nonumber\\
    & \leqslant & \frac{1}{(2 \pi)^2} \left| \int_{\mathbb{R}^2} e^{i (z
    \cdot q)} (1 - \Delta_q)^M (\tilde{\psi}  (q) \sigma_{\varepsilon}
    (\lambda q)) \mathd q \right| \nonumber\\
    & \leqslant & \left| \sum_{| \alpha | + | \beta | = 2 M} c_{\alpha,
    \beta} \lambda^{| \beta |} \int_{\mathbb{R}^2} e^{i (z \cdot q)}
    \partial^{\alpha}_q \tilde{\psi}  (q) \partial_q^{\beta}
    \sigma_{\varepsilon} (\lambda q) \mathd q \right| \nonumber\\
    & \leqslant & \left| \sum_{| \alpha | + | \beta | = 2 M} C_{\beta}
    c_{\alpha, \beta} \lambda^{| \beta |} \int_{\mathbb{R}^2} e^{i (z \cdot
    q)} \partial^{\alpha}_q \tilde{\psi}  (q) (1 + | \lambda q |^2)^{(m - |
    \beta |) / 2} \mathd q \right| . \nonumber
  \end{eqnarray}
  Since $\tilde{\psi}$ is supported in an annulus $B (0, R_1) \backslash B (0,
  R_2)$ for some constant radius $R_1 > R_2 > 0$ we can estimate $(1 + |
  \lambda q |^2)^{(m - | \beta |) / 2}$ on $B (0, R_1) \backslash B (0, R_2)$
  getting
  \[ \lambda^{| \beta |} (1 + | \lambda q |^2)^{(m - | \beta |) / 2} \leqslant
     R_2^{m - | \beta |} \lambda^{| \beta |} | \lambda |^{m - | \beta |}
     \leqslant R_2^{m - | \beta |} | \lambda |^m \leqslant R_2^{m - | \beta |}
     2^{m j} . \]
  Finally for $j = J_{\varepsilon}$ we recall that, setting $\upsilon^{(2)} =
  1 - \varphi_{- 1}$ we have $\varphi_{J_{\varepsilon}}^{\varepsilon} (\cdot)
  = 1 - \sum_{j < J_{\varepsilon}} \varphi_i^{\varepsilon} = \upsilon^{(2)}
  (2^{- J_{\varepsilon}} \cdot)$. Now we again proceed as it was done in Remark \ref{remark:epsilon2} (for the proof of inequality \eqref{eq:Jepsilon}) and in the previous parts of the present proof with the
  scaling $\lambda = 2^{J_{\varepsilon}}$
  \[ T^{\sigma_{\varepsilon}} \Delta_{J_{\varepsilon}} f = 2^{J_{\varepsilon}}
     \bar{K}^{\sigma_{\varepsilon},\lambda} (2^{J_{\varepsilon}} \cdot)
     \ast_{\varepsilon} \Delta_{J_{\varepsilon}} f, \]
  where
  \[ \bar{K}^{\sigma_{\varepsilon},\lambda} (x) = \int_{\mathbb{T}^2_2}
     e^{i (z \cdot q)} v^{(2)}  (q) \sigma_{\varepsilon} (\lambda q) \mathd q
     = \int_{\mathbb{R}^2} e^{i (z \cdot q)} \upsilon^{(2)}  (q)
     \sigma_{\varepsilon} (\lambda q) \mathd q, \]
  where we used that $2^{J_{\varepsilon}} \varepsilon \leqslant 2$ and that
  $\Delta\upsilon^{(2)}$ is supported in an annulus $\mathcal{A}$ satisfying
  $\mathcal{A} \subseteq \mathbb{T}_2^2$. We can proceed as in the previous
  case in order to obtain a uniform bound on $\| \bar{K}_{\lambda}
  \|_{L^1_{r} (\varepsilon \mathbb{Z}^2)}$, for any $r\in \mathbb{N}$, from which we obtain the claim.
\end{proof}

\begin{remark}
  \label{remark:besov:lattice:equivalence1}Consider a positive function
  $\sigma_{\varepsilon}$ for which both $\sigma_{\varepsilon}$ and the
  $\frac{1}{\sigma_{\varepsilon}}$ satisfy the hypotheses of Lemma
  \ref{lemma-fm-reg} for some $m \in \mathbb{R}$ and $- m$ respectively, then,
  $s, \ell \in \mathbb{R}$, $p, q \in [1, + \infty]$ and $f \in B^s_{p, q,
  \ell} (\varepsilon \mathbb{Z}^2)$, by using the fact that
  $T^{\sigma_{\varepsilon}} = T^{\frac{1}{\sigma_{\varepsilon}},- 1}$ and
  applying Lemma \ref{lemma-fm-reg} to both $T^{\sigma_{\varepsilon}}$ and
  $T^{\frac{1}{\sigma_{\varepsilon}}}$, we get
  \[ \| T^{\sigma_{\varepsilon}} f \|_{B^{s - m}_{p, q, \ell} (\varepsilon
     \mathbb{Z}^2)} \lesssim \| f \|_{B^s_{p, q, \ell} (\varepsilon
     \mathbb{Z}^2)} = \left\| T^{\frac{1}{\sigma_{\varepsilon}}}
     (T^{\sigma_{\varepsilon}} f) \right\|_{B^s_{p, q, \ell} (\varepsilon
     \mathbb{Z}^2)} \lesssim \| T^{\sigma_{\varepsilon}} (f) \|_{B^{s - m}_{p,
     q, \ell} (\varepsilon \mathbb{Z}^2)} . \]
  This implies that
  \[ \| T^{\sigma_{\varepsilon}} f \|_{B^{s - m}_{p, q, \ell} (\varepsilon
     \mathbb{Z}^2)} \sim \| f \|_{B^s_{p, q, \ell} (\varepsilon \mathbb{Z}^2)}
     . \]
\end{remark}

\begin{theorem}
  \label{theorem:besov:weight}For any $p, q \in [1, + \infty]$, $s, \ell \in
  \mathbb{R}$ we have that
  \begin{equation}
    \| f \|_{B^s_{p, q, \ell} (\varepsilon \mathbb{Z}^2)} \sim \| f \cdot
    \rho_{\ell} \|_{B^s_{p, q, 0} (\varepsilon \mathbb{Z}^2)}, \quad f \in
    B^s_{p, q, \ell} (\varepsilon \mathbb{Z}^2).
    \label{eq:besov:lattice:equivalence1}
  \end{equation}
  Furthermore for any $m \in \mathbb{R}$ we get
  \begin{equation}
    \| (1 - \Delta_{\varepsilon \mathbb{Z}^2})^{m / 2} f \|_{B^{s - m}_{p, q,
    \ell} (\varepsilon \mathbb{Z}^2)} \sim \| f \|_{B^s_{p, q, \ell}
    (\varepsilon \mathbb{Z}^2)} .\label{eq:besov:lattice:equivalence2}
  \end{equation}
\end{theorem}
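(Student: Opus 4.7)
I will establish the two equivalences \eqref{eq:besov:lattice:equivalence1} and \eqref{eq:besov:lattice:equivalence2} separately, taking the second one first since it follows directly from the machinery already in place.

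\medskip

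\emph{Equivalence \eqref{eq:besov:lattice:equivalence2}.} The plan is to recognize $(1-\Delta_{\varepsilon\mathbb{Z}^2})^{m/2}$ as the Fourier multiplier $T^{\sigma_\varepsilon}$ with symbol
$$
\sigma_\varepsilon(q)=\bigl(1+\sigma_{-_{\varepsilon\mathbb{Z}^2}}(q)\bigr)^{m/2},
$$
and then to apply Remark~\ref{remark:besov:lattice:equivalence1} to get the two-sided bound. This reduces matters to verifying the hypotheses of Lemma~\ref{lemma-fm-reg} for both $\sigma_\varepsilon$ (with order $m$) and $1/\sigma_\varepsilon$ (with order $-m$), uniformly in $0<\varepsilon\leq 1$ and $q\in\mathbb{T}^2_{1/\varepsilon}$. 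Using the explicit expression $\sigma_{-_{\varepsilon\mathbb{Z}^2}}(q)=\sum_{i=1,2}\tfrac{4}{\varepsilon^2}\sin^2(\varepsilon q_i/2)$, the elementary inequality $\tfrac{4}{\pi^2}x^2\leq \sin^2 x\leq x^2$ for $|x|\leq \pi/2$ yields $\sigma_{-_{\varepsilon\mathbb{Z}^2}}(q)\sim |q|^2$ on the torus, and differentiating the sine directly gives $|\partial_q^\beta \sigma_{-_{\varepsilon\mathbb{Z}^2}}(q)|\lesssim (1+|q|^2)^{(2-|\beta|)/2}$ with constants independent of $\varepsilon$. The required bound $|\partial_q^\beta \sigma_\varepsilon(q)|\lesssim (1+|q|^2)^{(m-|\beta|)/2}$, and the analogous bound for $\sigma_\varepsilon^{-1}$, then follow from Fa\`a di Bruno's formula applied to the composition with $x\mapsto x^{\pm m/2}$.

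\medskip

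\emph{Equivalence \eqref{eq:besov:lattice:equivalence1}.} My plan is to prove the pointwise-in-$j$ comparison
$$
\|\Delta_j(\rho_\ell f)\|_{L^p(\varepsilon\mathbb{Z}^2)}\sim \|\rho_\ell\,\Delta_j f\|_{L^p(\varepsilon\mathbb{Z}^2)}
$$
with constants independent of $j\in\{-1,\dots,J_\varepsilon\}$ and $0<\varepsilon\leq 1$; taking weighted $\ell^q$-norms in $j$ then yields \eqref{eq:besov:lattice:equivalence1}. The starting point is the commutator identity
$$
\Delta_j(\rho_\ell f)(z)-\rho_\ell(z)\Delta_j f(z) = \sum_{w\in\varepsilon\mathbb{Z}^2} K_j^\varepsilon(z-w)\bigl(\rho_\ell(w)-\rho_\ell(z)\bigr)f(w)\,\varepsilon^2.
$$
Using the elementary quasi-invariance $\rho_\ell(w)\leq C\,\rho_\ell(z)(1+|w-z|)^{|\ell|}$ together with the mean value inequality $|\nabla \rho_\ell(y)|\lesssim \rho_\ell(y)/(1+|y|)$, one obtains the pointwise bound $|\rho_\ell(w)-\rho_\ell(z)|\lesssim \rho_\ell(z)\,|w-z|\,(1+|w-z|)^{|\ell|}$. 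Combined with the sub-Gaussian (Gevrey) kernel decay established in Remarks~\ref{remark:k:epsilon} and \ref{remark:epsilon2}, the effective kernel $|K_j^\varepsilon(y)|\,|y|\,(1+|y|)^{|\ell|}$ has $L^1$-norm of order $2^{-j}$ for $-1\leq j<J_\varepsilon$ and of order $\varepsilon$ for $j=J_\varepsilon$, providing an extra smallness factor. A weighted Young inequality on $\varepsilon\mathbb{Z}^2$ together with the symmetric argument obtained by swapping $(f,\rho_\ell)\leftrightarrow (\rho_\ell f,\rho_{-\ell})$ then yields the desired two-sided comparison.

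\medskip

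\emph{Main obstacle.} The principal technical difficulty lies in \eqref{eq:besov:lattice:equivalence1}: the commutator and Young-inequality estimates must be carried out uniformly across the three regimes of $K_j^\varepsilon$, namely $j=-1$, $0\leq j<J_\varepsilon$ and the boundary block $j=J_\varepsilon$, the last having the anomalous rescaled form $K_{J_\varepsilon}^\varepsilon(z)=\varepsilon^{-2}\bar K(z/\varepsilon)$ from Remark~\ref{remark:epsilon2}. Handling this boundary block, where the Littlewood--Paley cutoff meets the lattice cutoff, is the only delicate point; once the kernel bounds are checked separately in each regime, the rest of the argument reduces to the weighted Young inequality on $\varepsilon\mathbb{Z}^2$.
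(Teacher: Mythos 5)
Your treatment of \eqref{eq:besov:lattice:equivalence2} is exactly the paper's approach (identify $(1-\Delta_{\varepsilon\mathbb{Z}^2})^{m/2}$ as $T^{\sigma_\varepsilon}$, verify the symbol bounds for $\sigma_\varepsilon$ and $\sigma_\varepsilon^{-1}$ uniformly in $\varepsilon$, apply Lemma~\ref{lemma-fm-reg} and Remark~\ref{remark:besov:lattice:equivalence1}), and it is correct.

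For \eqref{eq:besov:lattice:equivalence1} your route diverges from the paper (which invokes Theorem~6.5 of Triebel with only the two properties $|\partial^\alpha\rho_\ell|\lesssim\rho_\ell$ and the quasi-invariance under shifts), and the argument as written has a genuine gap. The claimed \emph{pointwise-in-$j$ comparison} $\|\Delta_j(\rho_\ell f)\|_{L^p}\sim\|\rho_\ell\Delta_j f\|_{L^p}$ is false: take $f$ band-limited with frequency concentrated well away from the $j$-th annulus, so that $\Delta_j f=0$ and hence $\rho_\ell\Delta_j f=0$, while $\rho_\ell f$ has a spread-out spectrum and $\Delta_j(\rho_\ell f)\neq 0$. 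What your commutator estimate actually yields, via weighted Young applied to the kernel $|K_j^\varepsilon(\cdot)|\,|\cdot|\,(1+|\cdot|)^{|\ell|}$, is
\[
\bigl\|\Delta_j(\rho_\ell f)-\rho_\ell\Delta_j f\bigr\|_{L^p(\varepsilon\mathbb{Z}^2)}\ \lesssim\ 2^{-j}\,\|\rho_\ell f\|_{L^p(\varepsilon\mathbb{Z}^2)},
\]
with the \emph{global} $L^p$ norm on the right, not a per-block quantity. When you then take the weighted $\ell^q$-norm in $j$, the error contributes
$\bigl\|\{2^{j(s-1)}\}_j\bigr\|_{\ell^q}\,\|\rho_\ell f\|_{L^p}$,
which is infinite for $s\geq 1$; and even after passing to a higher-order Taylor expansion of $\rho_\ell$ to gain $2^{-jN}$ with $N>s$, you still face the term $\|\rho_\ell f\|_{L^p}$, which for $s\leq 0$ cannot be absorbed into either $\|f\|_{B^s_{p,q,\ell}}$ or $\|\rho_\ell f\|_{B^s_{p,q,0}}$ (negative-regularity Besov norms do not control $L^p$). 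So the first-order commutator closes only for $0<s<1$, the higher-order version extends this to $s>0$, and a separate device (typically duality $(B^s_{p,q,\ell})^\ast = B^{-s}_{p',q',-\ell}$ for $1<p,q<\infty$, or the full block-decomposition argument of Triebel where one bounds $\|\Delta_j(\rho_\ell\Delta_i f)\|_{L^p}$ by a quantity decaying rapidly in $|i-j|$ and sums in both indices) is needed for $s\leq 0$. Your identification of the boundary block $j=J_\varepsilon$ and of the need for $\varepsilon$-uniformity is correct, but the more fundamental obstruction — that the commutator error is not block-local — is the step that would need to be repaired.
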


\begin{proof}
  The proof of the equivalence {\eqref{eq:besov:lattice:equivalence1}} follows
  the same line of the analogous statement on $\mathbb{R}^d$ (see, e.g.,
  Theorem 6.5 of {\cite{Triebel2006}}) and it is consequence of the fact that,
  for any $\ell \in \mathbb{R}$, $\alpha \in \mathbb{N}_0^2$, there are some
  constants $C_{\alpha, \ell}, C_{\ell} > 0$ for which
  \[ | \partial^{\alpha} \rho_{\ell} (x) | \leqslant C_{\alpha, \ell}
     \rho_{\ell} (x), \quad x \in \mathbb{R}^2 \]
  and
  \[ \rho_{\ell} (x) \leqslant C_{\ell} \rho_{\ell} (y) (1 + | x - y |)^{|
     \ell |} . \]
  The equivalence {\eqref{eq:besov:lattice:equivalence2}} follows from Lemma
  \ref{lemma-fm-reg} and Remark \ref{remark:besov:lattice:equivalence1}, by
  noting that
  \[ \sigma_{\varepsilon} (q) = \mathcal{F}_{\varepsilon} ((1 -
     \Delta_{\varepsilon \mathbb{Z}^2})^m) (q) = \left( 1 + \sigma_{-
     \Delta_{\varepsilon \mathbb{Z}^2}} (q) \right) \leqslant (1 +
     C_{\tmop{sign} (m)} | q |^2)^{m / 2}, \]
  where
  \[ C_{+ 1} = 1, \quad C_{- 1} = \frac{2}{\pi}, \]
  and similarly for the derivatives of $\sigma_{\varepsilon} (q)$ with respect
  to $q$.
\end{proof}

We now extend the standard estimates on products between a function and a
distribution with suitable Besov regularity. More precisely if $f \in
\mathcal{S} (\varepsilon \mathbb{Z}^2)$ and $g \in B^s_{p, q, \ell}
(\varepsilon \mathbb{Z}^2)$ we can define the product
\begin{equation}
  f g = f \cdot g = \sum_{- 1 \leqslant i \leqslant J_{\varepsilon}, - 1
  \leqslant j \leqslant J_{\varepsilon}} \Delta_i^{\varepsilon} f \Delta_j^{\varepsilon} g,
  \label{eq:product}
\end{equation}
which is a well defined element of $B^s_{p, q, \ell} (\varepsilon
\mathbb{Z}^2)$.

\begin{theorem}
  \label{theorem:besov:product}Consider $s_1 \leqslant 0, s_2 > 0$ such that
  $s_1 + s_2 > 0$, $p_1, p_2, q_1, q_2 \in [1, + \infty]$ , $\frac{1}{p_1} +
  \frac{1}{p_2} = \frac{1}{p_3} \leqslant 1$, and $\frac{1}{q_1} +
  \frac{1}{q_2} = \frac{1}{q_3} \leqslant 1$ then the product $\Pi (f, g) = f
  \cdot g$ defined on $\mathcal{S}' (\varepsilon \mathbb{Z}^2) \times
  \mathcal{S} (\varepsilon \mathbb{Z}^2)$ into $\mathcal{S}' (\varepsilon
  \mathbb{Z}^2)$ can be extended in a unique continuous way to a map
  \[ \Pi \; : B^{s_1}_{p_1, q_1, \ell_1} (\varepsilon \mathbb{Z}^2) \times
     B^{s_2}_{p_2, q_2, \ell_2} (\varepsilon \mathbb{Z}^2) \rightarrow
     B^{s_1}_{p_3, q_3, \ell_1 + \ell_2} (\varepsilon \mathbb{Z}^2) . \]
  Furthermore for any $f \in B^{s_1}_{p_1, q_1, \ell_1} (\varepsilon
  \mathbb{Z}^2)$ and $g \in B^{s_2}_{p_2, q_2, \ell_2} (\varepsilon
  \mathbb{Z}^2)$ we have
  \[ \| \Pi (f, g) \| = \| f \cdot g \|_{B^{s_1}_{p_3, q_3, \ell_1 + \ell_2}
     (\varepsilon \mathbb{Z}^2)} \lesssim \| f \|_{B^{s_1}_{p_1, q_1, \ell_1}
     (\varepsilon \mathbb{Z}^2)} \| g \|_{\tmmathbf{} B^{s_2}_{p_2, q_2,
     \ell_2} (\varepsilon \mathbb{Z}^2)} . \]
\end{theorem}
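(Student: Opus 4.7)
The plan is to adapt the classical Bony paraproduct decomposition to the lattice Littlewood--Paley blocks $\{\Delta_j\}_{-1 \leqslant j \leqslant J_\varepsilon}$. Writing $S_k f = \sum_{-1 \leqslant i \leqslant k} \Delta_i f$ for the low-frequency cutoff, I would split the formal sum \eqref{eq:product} as
\[
f g = T_f g + T_g f + R(f,g),
\]
with $T_f g = \sum_{-1 \leqslant j \leqslant J_\varepsilon} S_{j-2} f \cdot \Delta_j g$, the symmetric term $T_g f$, and the resonant term $R(f,g) = \sum_{|i-j| \leqslant 1} \Delta_i f \cdot \Delta_j g$. A direct Fourier-side calculation on $\mathbb{T}^2_{1/\varepsilon}$ shows that $S_{j-2} f \cdot \Delta_j g$ has Fourier support in an annulus of scale $2^j$ (for $j < J_\varepsilon$), while $\Delta_i f \cdot \Delta_j g$ with $|i-j| \leqslant 1$ lives in a ball of scale $\sim 2^j$. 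This is the usual almost-orthogonality that justifies reading off Besov norms from the dyadic pieces.

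For the two paraproducts, the key estimate is H\"older's inequality with polynomial weights together with a Bernstein-type bound on $S_{j-2}$: using $s_1 \leqslant 0$, a geometric sum yields $\| S_{j-2} f \|_{L^{p_1}_{\ell_1}(\varepsilon \mathbb{Z}^2)} \lesssim 2^{-j s_1} \| f \|_{B^{s_1}_{p_1, q_1, \ell_1}(\varepsilon \mathbb{Z}^2)}$, and hence
\[
2^{j(s_1+s_2)} \| S_{j-2} f \cdot \Delta_j g \|_{L^{p_3}_{\ell_1+\ell_2}(\varepsilon \mathbb{Z}^2)} \lesssim c_j \| f \|_{B^{s_1}_{p_1,q_1,\ell_1}} \| g \|_{B^{s_2}_{p_2,q_2,\ell_2}}
\]
with $(c_j) \in \ell^{q_2}$. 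Annular Fourier localization then puts $T_f g$ in $B^{s_1+s_2}_{p_3, q_2, \ell_1+\ell_2}(\varepsilon \mathbb{Z}^2)$, which embeds continuously into $B^{s_1}_{p_3, q_3, \ell_1+\ell_2}(\varepsilon \mathbb{Z}^2)$ since $s_2 \geqslant 0$. The analogous scheme applied to $T_g f$ (with the roles of $s_1, s_2$ interchanged) gives the required $B^{s_1}$ regularity directly.

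For the resonant term, H\"older gives $\| \Delta_i f \cdot \Delta_j g \|_{L^{p_3}_{\ell_1+\ell_2}} \lesssim 2^{-i s_1} 2^{-j s_2} a_i b_j \| f \|_{B^{s_1}} \| g \|_{B^{s_2}}$. Because the Fourier support of each summand is now a ball of scale $2^j$, the block $\Delta_k R(f,g)$ only sees pairs $(i,j)$ with $j \geqslant k - O(1)$, and the corresponding norm is bounded by a tail $\sum_{j \geqslant k - O(1)} 2^{-j(s_1+s_2)} d_j$ which is summable \emph{precisely} because $s_1 + s_2 > 0$. This yields $R(f,g) \in B^{s_1+s_2}_{p_3, q_3, \ell_1+\ell_2}(\varepsilon \mathbb{Z}^2)$, again embedding into $B^{s_1}$.

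The main technical obstacle will be the boundary block $\Delta_{J_\varepsilon}$, whose Fourier cutoff $\upsilon^{(2)}(2^{-J_\varepsilon}\cdot)$ is not an annulus but covers the entire high-frequency region of $\mathbb{T}^2_{1/\varepsilon}$, and whose products involve convolution modulo $\mathbb{T}^2_{1/\varepsilon}$, producing aliasing that is absent in the continuum. Verifying the almost-orthogonality and the multiplier estimates uniformly in $\varepsilon$ at this scale is where the lattice case departs from Proposition \ref{propisition_product}; I would handle it by leaning on the explicit kernel bounds for $K^\varepsilon_{J_\varepsilon}$ from Remark \ref{remark:epsilon2}, the uniform multiplier estimates of Lemma \ref{lemma-fm-reg}, and the weight-commutation of Theorem \ref{theorem:besov:weight}. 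Once this boundary analysis is carried out with constants uniform in $\varepsilon$, continuity of the extension from $\mathcal{S}' \times \mathcal{S}$ to the full Besov pair follows by density of Schwartz functions in each factor.
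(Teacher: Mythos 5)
Your proposal follows exactly the strategy the paper adopts: a Bony paraproduct/resonance decomposition, the almost-orthogonality observations that the paraproduct pieces are annular-supported at scale $2^j$ while the resonant pieces are ball-supported (the lattice versions of which the paper cites from Lemma~A.6 of Martin--Perkowski), and then the classical H\"older/Bernstein estimates as in Bahouri--Chemin--Danchin and Mourrat--Weber, with the boundary block $\Delta_{J_\varepsilon}$ handled via the uniform kernel bounds of Remark~\ref{remark:epsilon2} and Lemma~\ref{lemma-fm-reg}. This matches the paper's proof, which is given as a pointer to Lemma~4.2 of \cite{Perkowski2019} together with the remark that the classical argument transfers once these lattice support facts are established.
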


\begin{proof}
  The proof of the present theorem is given in Lemma 4.2 of
  {\cite{Perkowski2019}} in the unweighted (i.e. $\ell_1 = \ell_2 = 0$) and
  infinite integrability (i.e. $p_1 = q_1 = p_2 = q_2 = \infty$) case. The
  proof in {\cite{Perkowski2019}}, is based on the fact that the Fourier
  transform of the product $\left( \sum_{i < j} \Delta_i f \right) \Delta_j g$
  is supported in the annulus $2^j \mathcal{A} \cap
  \mathbb{T}^2_{\frac{1}{\varepsilon}}$ (where $\mathcal{A} \subset B (0, R_1)
  \backslash B (0, R_1)$ for suitable $R_1 > R_2 > 0$ real constants), and the
  Fourier transform of the product $\Delta_i f \Delta_j g$, where $| i - j | < 2$,
  is supported in $B (0, 2^j R_3) \cap \mathbb{T}^2_{\frac{1}{\varepsilon}}$,
  for a suitable constant $R_3 > 0$ (see Lemma A.6 in {\cite{Perkowski2019}}).
  Exploiting these observations the proof follows the same lines of the
  analogous result for Besov spaces on $\mathbb{R}^d$ (see, e.g., Chapter 2 of
  {\cite{Bahouri2011}} and the proof of Theorem 3.17 of
  {\cite{Mourrat_Weber2017}}).
\end{proof}

\begin{theorem}
  \label{theorem:besov:lattice:embedding}Consider $p_1, p_2, q_1, q_2 \in [1,
  + \infty]$, $p_1 \leq p_2$, $s_1 > s_2$ and $\ell_1, \ell_2 \in \mathbb{R}$ if
  \[ \ell_1 \leqslant \ell_2, \quad s_1 - \frac{d}{p_1} \geqslant s_2 -
     \frac{d}{p_2}, \]
  then $B^{s_1}_{p_1, q_1, \ell_1} (\varepsilon \mathbb{Z}^2)$ is continuously
  embedded in $B^{s_2}_{p_2, q_2, \ell_2} (\varepsilon \mathbb{Z}^2)$, and the
  norm of the embedding is uniformly bounded in $0 < \varepsilon \leqslant 1$.
\end{theorem}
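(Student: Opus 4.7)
The plan is to adapt the standard Besov-embedding proof to the lattice, using the Bernstein-type inequality that follows from the Gevrey-uniform kernel bounds of Remarks \ref{remark:k:epsilon} and \ref{remark:epsilon2}. The genuinely new content compared to the continuum case is the uniformity of all constants in $\varepsilon \in (0, 1]$.

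First I would reduce to the case $\ell_1 = \ell_2 = \ell$: since $\rho_{\ell_1}(x) \geq \rho_{\ell_2}(x)$ pointwise when $\ell_1 \leq \ell_2$, the inclusion $L^p_{\ell_1}(\varepsilon\mathbb{Z}^2) \hookrightarrow L^p_{\ell_2}(\varepsilon\mathbb{Z}^2)$ is a contraction uniformly in $\varepsilon$, and applying this block-by-block to $\Delta_j f$ yields $\|f\|_{B^s_{p,q,\ell_2}} \leq \|f\|_{B^s_{p,q,\ell_1}}$.

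Second, I would establish a uniform Bernstein inequality on the lattice: for $1 \leq p_1 \leq p_2 \leq \infty$, $\ell \in \mathbb{R}$ and $-1 \leq j \leq J_\varepsilon$,
\[
\|\Delta_j f\|_{L^{p_2}_\ell(\varepsilon\mathbb{Z}^2)} \lesssim 2^{jd(1/p_1 - 1/p_2)} \|\Delta_j f\|_{L^{p_1}_\ell(\varepsilon\mathbb{Z}^2)}
\]
with constant independent of $\varepsilon$. Pick a slightly fattened dyadic bump $\tilde{\varphi}_j^\varepsilon$ equal to $1$ on $\operatorname{supp}\varphi_j^\varepsilon$, set $\tilde{K}_j^\varepsilon \assign \mathcal{F}_\varepsilon^{-1}(\tilde{\varphi}_j^\varepsilon)$, so $\Delta_j f = \tilde{K}_j^\varepsilon \ast_\varepsilon \Delta_j f$. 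A weighted Young's inequality, exploiting the slowly-varying property $\rho_\ell(x) \lesssim \rho_\ell(x-y)(1+|y|)^{|\ell|}$, reduces the claim to the kernel bound $\|(1+|\cdot|)^{|\ell|}\tilde{K}_j^\varepsilon\|_{L^r(\varepsilon\mathbb{Z}^2)} \lesssim 2^{jd(1-1/r)}$ with $1/r = 1 + 1/p_2 - 1/p_1$. For $0 \leq j < J_\varepsilon$ this follows from the rescaling $y = 2^j z$ together with the Gevrey decay recorded in Remark \ref{remark:k:epsilon}, exactly as in the proof of Lemma \ref{lemma-fm-reg}; for $j = J_\varepsilon$ I would use the identity $K^\varepsilon_{J_\varepsilon}(z) = \varepsilon^{-2}\bar{K}(z/\varepsilon)$ from Remark \ref{remark:epsilon2} and the Gevrey decay of $\bar{K}$; the block $j = -1$ is trivial since the kernel is $\varepsilon$-independent.

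Third, combining these ingredients, the hypothesis $s_1 - d/p_1 \geq s_2 - d/p_2$ rewrites as $s_1 \geq s_2 + d(1/p_1 - 1/p_2)$, so from Bernstein, for every $0 \leq j \leq J_\varepsilon$,
\[
2^{js_2}\|\Delta_j f\|_{L^{p_2}_\ell(\varepsilon\mathbb{Z}^2)} \lesssim 2^{js_1}\|\Delta_j f\|_{L^{p_1}_\ell(\varepsilon\mathbb{Z}^2)},
\]
while the $j = -1$ term is bounded directly. To transfer the $\ell^{q_1}$-summation to $\ell^{q_2}$, if $q_1 \leq q_2$ the inclusion $\ell^{q_1} \hookrightarrow \ell^{q_2}$ is immediate; otherwise one exploits the strict gap $s_1 > s_2$ to insert a summable factor $2^{-j\eta}$ for some $\eta > 0$ (choosing $s_1' \in (s_2, s_1]$ with $s_1' - d/p_1 \geq s_2 - d/p_2$, then replacing $s_1$ by $s_1'$ in the previous display) and applies H\"older in $\ell^q$ with $1/r = 1/q_2 - 1/q_1 > 0$.

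The main technical point is securing the $\varepsilon$-uniformity of the Bernstein constant, specifically for the top block $j = J_\varepsilon$ whose kernel is not a dyadic rescaling of a fixed $K_j$ but an $\varepsilon$-rescaling of $\bar{K}$; Remark \ref{remark:epsilon2} shows that nonetheless $\|(1+|\cdot|)^{|\ell|}\tilde{K}^\varepsilon_{J_\varepsilon}\|_{L^r(\varepsilon\mathbb{Z}^2)}$ scales as $\varepsilon^{-2(1-1/r)} \sim 2^{J_\varepsilon d(1-1/r)}$, with Gevrey-uniform constants. Once this is in place the rest of the proof is a direct adaptation of the continuum argument.
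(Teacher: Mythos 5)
Your argument is correct, and it follows the same broad strategy as the paper (separate the weight change from the $(s,p,q)$ change, and handle the latter by a uniform lattice Bernstein inequality), but the first reduction is carried out differently. The paper proves the weight change via the isomorphisms $I_{\ell_i}(f)=\rho_{\ell_i}f$ from Theorem~\ref{theorem:besov:weight} together with the product estimate of Theorem~\ref{theorem:besov:product}, reducing to the genuinely unweighted case $\ell=0$ and then quoting the continuum Bernstein argument. You instead observe that $\ell_1\leqslant\ell_2$ gives $\rho_{\ell_1}\geqslant\rho_{\ell_2}$ pointwise, so $L^{p}_{\ell_1}\hookrightarrow L^{p}_{\ell_2}$ is a contraction block-by-block; this is more elementary and avoids both the multiplier theorem and the paraproduct estimate, at the price of having to keep the weight $\ell$ throughout and therefore needing a weighted, rather than unweighted, Bernstein inequality. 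That trade is harmless since the weighted Young bound you invoke (via $\rho_\ell(x)\lesssim\rho_\ell(x-y)(1+|y|)^{|\ell|}$) is standard for polynomial weights, and you correctly identify that the only $\varepsilon$-sensitive point is the top block $j=J_\varepsilon$, which Remark~\ref{remark:epsilon2} controls. One small simplification worth noting: rather than constructing a fattened periodic bump $\tilde\varphi^\varepsilon_{J_\varepsilon}$ (whose Gevrey decay would need a separate check), you can use $\Delta_j f=\sum_{|i-j|\le 1}\Delta_i\Delta_j f$ and take $\tilde K_j^\varepsilon=\sum_{|i-j|\le 1}K^\varepsilon_i$, for which the estimates of Remarks~\ref{remark:k:epsilon} and~\ref{remark:epsilon2} apply directly. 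Finally, your H\"older device for $q_1>q_2$ requires a strictly smaller $s_1'$ still satisfying $s_1'-d/p_1\geqslant s_2-d/p_2$, which is available only when the differential inequality is strict; this matches the standard Besov embedding (and the reference the paper cites, which assumes $q_1\leqslant q_2$ in the borderline case), so it is a limitation of the theorem's statement rather than of your proof.
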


\begin{proof}
  By Theorem \ref{theorem:besov:weight}, we have that $I_{\ell_i} :
  B^{s_i}_{p_i, q_i, \ell_i} (\varepsilon \mathbb{Z}^2) \rightarrow
  B^{s_i}_{p_i, q_i, 0} (\varepsilon \mathbb{Z}^2)$, defined as
  \[ I_{\ell_i} (f) = \rho_{\ell_i} f ,\]
  is an isomorphism. This means that proving that $B^{s_1}_{p_1, q_1, \ell_1}
  (\varepsilon \mathbb{Z}^2) \hookrightarrow B^{s_2}_{p_2, q_2, \ell_2}
  (\varepsilon \mathbb{Z}^2)$ continuously is equivalent to prove that the map
  $I_{\ell_2} \circ I_{\ell_1}^{- 1}$ is a continuous embedding of
  $B^{s_1}_{p_1, q_1, 0} (\varepsilon \mathbb{Z}^2)$ into $B^{s_2}_{p_2, q_2,
  0} (\varepsilon \mathbb{Z}^2)$. On the other hand we have that
  \begin{equation}
    I_{\ell_2} \circ I_{\ell_1}^{- 1} (g) = \rho_{\ell_2 - \ell_1} g, \quad g
    \in B^{s_1}_{p_1, q_1, 0} (\varepsilon \mathbb{Z}^2).
    \label{eq:besov:lattice:weight1}
  \end{equation}
  Since $\ell_2 - \ell_1 \geqslant 0$ and so we have that $\rho_{\ell_2 -
  \ell_1} \in B^k_{\infty, \infty, 0} (\varepsilon \mathbb{Z}^2)$, for any $k
  \in \mathbb{N}_0$, by Theorem \ref{theorem:besov:product} $I_{\ell_2} \circ
  I_{\ell_1}^{- 1}$ is a continuous operator from $B^{s_1}_{p_1, q_1, 0}
  (\varepsilon \mathbb{Z}^2)$ into itself and with norm
  \[ \| I_{\ell_1} \circ I_{\ell_2}^{- 1} \|_{\mathcal{L} (B^{s_1}_{p_1, q_1,
     0} (\varepsilon \mathbb{Z}^2), B^{s_1}_{p_1, q_1, 0} (\varepsilon
     \mathbb{Z}^2))} \lesssim \| \rho_{\ell_2 - \ell_1} \|_{B^k_{\infty,
     \infty, 0} (\varepsilon \mathbb{Z}^2)} \lesssim \| \rho_{\ell_2 - \ell_1}
     \|_{B^k_{\infty, \infty, 0} (\mathbb{R}^2)}, \]
  which is uniformly bounded in $0 < \varepsilon \leqslant 1$. This implies
  that we can reduce the proof of the present theorem to the proof of the fact
  that $B^{s_1}_{p_1, q_1, 0} (\varepsilon \mathbb{Z}^2) \hookrightarrow
  B^{s_2}_{p_2, q_2, 0} (\varepsilon \mathbb{Z}^2)$ and the inclusion is
  continuous.
  
  This last statement can be proved following the argument for the analogous
  statement on $\mathbb{R}^d$ (see, e.g., Proposition 2.20 in
  {\cite{Bahouri2011}}).
\end{proof}

\subsection{Difference spaces and Besov spaces on
lattice}\label{section:besov:difference}

In this subsection we discuss a different space of functions with fractional
positive regularity on $\varepsilon \mathbb{Z}^2$. More precisely we introduce
the following Banach function spaces:

\begin{definition}
  \label{definition:difference}Consider $0 \leqslant s \leqslant 1$, $p \in
  [1, + \infty]$ and $\ell \in \mathbb{R}$ we define the (difference)
  fractional Sobolev space $W^{s, p}_{\ell} (\varepsilon \mathbb{Z}^2)$ the
  Banach space of measurable functions $f : \varepsilon \mathbb{Z}^2
  \rightarrow \mathbb{R}$ such that
  \begin{equation}
    \| f \|_{W^{s, p}_{\ell} (\varepsilon \mathbb{Z}^2)} = \| f \|_{L^p_{\ell}
    (\varepsilon \mathbb{Z}^2)} + \sup_{k \in \varepsilon \mathbb{Z}^2, 0 < |
    k | \leqslant 1} | k |^{- s} \left( \varepsilon^2 \sum_{x \in \varepsilon
    \mathbb{Z}^2} \rho^p_{\ell} (x)^{} | \nobracket f (x + k_{}) - f (x
    \nobracket) |^p \right)^{1 / p} \label{eq:no} 
  \end{equation}
  is finite.
\end{definition}

\begin{remark}
  \label{remark:equivalence:difference}In the case $s = 1$ an equivalent norm
  on $W^{1, p}_{\ell}$ is given by the following expression
  \begin{equation}
    \| f \|_{W^{s, p}_{\ell} (\varepsilon \mathbb{Z}^2)} \sim \| f
    \|_{L^p_{\ell} (\varepsilon \mathbb{Z}^2)} + \left( \varepsilon^2 \sum_{x
    \in \varepsilon \mathbb{Z}^2, i = 1, 2} (\rho_{\ell} (x))^p \varepsilon^{-
    p s} | f (x + \varepsilon e_i) - f (x) |^p \right)^{1 / p},
    \label{eq:difference:norm:equivalence}
  \end{equation}
  where $\{ e_i \}_{i = 1, 2}$ is the standard basis of $\mathbb{Z}^2$. Indeed
  we have that for any $k \in \varepsilon \mathbb{Z}^2$ of the form $k = (k_1,
  0)$, $\varepsilon < k_1 \leqslant 1$ we have
  \[ \begin{array}{rl}
       &\left( \varepsilon^2 \sum_{x \in \varepsilon \mathbb{Z}^2}
       (\rho_{\ell} (x))^p | k |^{- p} | f (x + k) - f (x) |^p \right)^{1
       / p} = \frac{1}{| k_1 |} \| f (k_1 e_i + \cdot) - f (\cdot)
       \|_{L^p_{\ell} (\varepsilon \mathbb{Z}^2)} =\\
       \leqslant & \frac{1}{| k_1 |} \sum_{1 \leqslant h \leqslant
       \frac{k_1}{\varepsilon}} \| f (\varepsilon h e_i + \cdot) - f
       (\varepsilon (h - 1) e_i + \cdot) \|_{L^p_{\ell} (\varepsilon \mathbb{Z}^2)} \sim
       \frac{k_1}{k_1 \varepsilon} \| f (\varepsilon e_i + \cdot) - f (\cdot)
       \|_{L^p_{\ell} (\varepsilon \mathbb{Z}^2)}\\
       \sim & \left( \varepsilon^2 \sum_{x \in \varepsilon \mathbb{Z}^2, i = 1,
       2} (\rho_{\ell} (x))^p \varepsilon^{- p s} | f (x + \varepsilon e_i) -
       f (x) |^p \right)^{1 / p} .
     \end{array} \]
  A similar argument can be done for $k = (0, k_2)$ and then, combining these
  two results, we get the equivalence
  {\eqref{eq:difference:norm:equivalence}}, since the converse inequality is
  obvious. 
\end{remark}

\begin{remark}
  \label{remark:Lipschitz}Let $F : \mathbb{R} \rightarrow \mathbb{R}$ be a
  globally Lipschitz function such that $F (0) = 0$, then it is easy to see
  that if $f \in W^{s, p}_{\ell} (\varepsilon \mathbb{Z}^2)$ then also $F (f)
  \in W^{s, p}_{\ell} (\varepsilon \mathbb{Z}^2)$ and
  \[ \| F (f) \|_{W^{s, p}_{\ell} (\varepsilon \mathbb{Z}^2)} \lesssim \| f
     \|_{W^{s, p}_{\ell} (\varepsilon \mathbb{Z}^2)} . \]
\end{remark}

Hereafter we denote by $\tau_z : \mathcal{S}' (\varepsilon \mathbb{Z}^2)
\rightarrow \mathcal{S}' (\varepsilon \mathbb{Z}^2)$ the translation with
respect to the vector $z \in \varepsilon \mathbb{Z}^2$
\[ \tau_z (f) (z') = f (z + z'), \quad z, z' \in \varepsilon \mathbb{Z}^2, f
   \in \mathcal{S}' (\varepsilon \mathbb{Z}^2). \]
Furthermore if $k \in \varepsilon \mathbb{Z}^2$ we introduce the notation
\[ D_k f = (\tau_k f - f) . \]
Finally we write
\[ \nabla^s_{\varepsilon} f (x) = \left( \begin{array}{c}
     \varepsilon^{- s} (\tau_{\varepsilon e_1} f (x) - f (x))\\
     \varepsilon^{- s} (\tau_{\varepsilon e_2} f (x) - f (x))
   \end{array} \right), \]
where $\{ e_i \}_{i = 1, 2}$ is the standard basis of $\mathbb{Z}^2$. We also
use the notation $\nabla_{\varepsilon} f = \nabla^1_{\varepsilon} f$.

\

We want to establish a relation between the space $W^{s, p}_{\ell}
(\varepsilon \mathbb{Z}^2)$ and the analogous Besov space $B^{s \pm
\delta}_{p, p, \ell} (\varepsilon \mathbb{Z}^2)$ (where $\delta > 0$).

\begin{theorem}
  \label{theorem:sobolev_besov}Consider $0 \leqslant s \leqslant 1$, $p, q \in
  [1, + \infty]$, $\ell \in \mathbb{R}$, and $\delta_1 > 0$ then for any $f
  \in B^s_{p, q, \ell} (\varepsilon \mathbb{Z}^2)$ we have
  \begin{equation}
    \| f \|_{B^{s - \delta_1}_{p, p, \ell} (\varepsilon \mathbb{Z}^2)}
    \lesssim \| f \|_{W^{s, p}_{\ell} (\varepsilon \mathbb{Z}^2)} \lesssim \|
    f \|_{B^{s + \delta_1}_{p, p, \ell} (\varepsilon \mathbb{Z}^2)} .
    \label{eq:inequality:difference}
  \end{equation}
\end{theorem}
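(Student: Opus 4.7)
The plan is to reduce \eqref{eq:inequality:difference} to two Littlewood--Paley type estimates on the blocks $\Delta_j f$, uniformly in $\varepsilon$ and in $-1\le j\le J_\varepsilon$, and then absorb the loss $\delta_1$ by a geometric summation in $j$. Morally I would treat $W^{s,p}_\ell$ as a Nikolskii-type space $B^s_{p,\infty,\ell}$ on the lattice, and pay $\delta_1>0$ for the passage between $\ell^\infty$ and $\ell^p$ in the dyadic index.

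For the first inequality $\|f\|_{B^{s-\delta_1}_{p,p,\ell}}\lesssim\|f\|_{W^{s,p}_\ell}$, I would prove
\[
\|\Delta_j f\|_{L^p_\ell(\varepsilon\mathbb{Z}^2)}\;\lesssim\;2^{-js}\,\|f\|_{W^{s,p}_\ell(\varepsilon\mathbb{Z}^2)},\qquad -1\le j\le J_\varepsilon.
\]
The case $j=-1$ is weighted Young. For $j\ge 0$, since $\mathrm{supp}(\varphi_j^\varepsilon)$ avoids the origin one has $\int K_j^\varepsilon(y)\,dy=\varphi_j^\varepsilon(0)=0$, so
\[
\Delta_j f(x)=\int_{\varepsilon\mathbb{Z}^2}K_j^\varepsilon(y)\,D_{-y}f(x)\,dy.
\]
Split at $|y|\le 1$ and $|y|>1$. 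The inner part is controlled by Minkowski's inequality together with the definition of $W^{s,p}_\ell$, which gives $\|\rho_\ell D_{-y}f\|_{L^p}\le |y|^s\|f\|_{W^{s,p}_\ell}$; the rescaling $z=2^j y$ applied to the Gevrey bound of Remark~\ref{remark:k:epsilon} produces $\int_{|y|\le 1}|K_j^\varepsilon(y)||y|^s\,dy\lesssim 2^{-js}$. The outer part is negligible by super-polynomial decay of $K_j^\varepsilon$, the polynomial factor $(1+|y|)^{|\ell|}$ arising from $\rho_\ell(x)\le C\rho_\ell(x-y)(1+|y|)^{|\ell|}$ being harmlessly absorbed. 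For $j=J_\varepsilon$ the same argument works using the explicit form $K^\varepsilon_{J_\varepsilon}(y)=\varepsilon^{-2}\bar K(y/\varepsilon)$ from Remark~\ref{remark:epsilon2}, which gives $2^{-J_\varepsilon s}\sim\varepsilon^s$. Taking $\ell^p$ in $j$ and using summability of $\sum_j 2^{-j\delta_1 p}$ concludes this half.

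For the second inequality $\|f\|_{W^{s,p}_\ell}\lesssim\|f\|_{B^{s+\delta_1}_{p,p,\ell}}$, I would establish the multiplier bound
\[
\|D_k\Delta_j f\|_{L^p_\ell}\;\lesssim\;\min\bigl(1,\,|k|\,2^j\bigr)\,\|\Delta_j f\|_{L^p_\ell},\qquad 0<|k|\le 1.
\]
The $1$-bound is weighted Young applied to the translation. The $|k|\,2^j$-bound follows from Lemma~\ref{lemma-fm-reg} applied to the symbol $e^{ik\cdot q}-1$: on the support of $\varphi_j^\varepsilon$ one has $|q|\sim 2^j$ and $|\partial^\beta(e^{ik\cdot q}-1)|\lesssim |k|\,|q|^{1-|\beta|}$ (for $|\beta|\ge 1$ even without the factor $|k|$), and the rescaling argument in the proof of that lemma yields the claim. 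Summing $D_k f=\sum_j D_k\Delta_j f$, using the bound above together with Hölder's inequality in $j$ against the sequence $\bigl(2^{j(s+\delta_1)}\|\Delta_j f\|_{L^p_\ell}\bigr)_j\in\ell^p$, and evaluating the deterministic $\ell^{p'}$-sum, one finds
\[
\sum_{j=-1}^{J_\varepsilon}\min(1,|k|2^j)^{p'}2^{-j(s+\delta_1)p'}\;\lesssim\;|k|^{sp'},
\]
concentrated at the scale $j_k\sim -\log_2|k|\le J_\varepsilon$; here the strict inequality $\delta_1>0$ is exactly what prevents a logarithmic divergence at the borderline $s=1$. Therefore $|k|^{-s}\|\rho_\ell D_k f\|_{L^p}\lesssim\|f\|_{B^{s+\delta_1}_{p,p,\ell}}$, uniformly in $k$, yielding the second half.

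The main obstacle is the careful bookkeeping at the highest frequency $j=J_\varepsilon$, where the usual $j$-uniform dyadic scaling degenerates to an $\varepsilon$-dependent scaling; here the explicit representation of Remark~\ref{remark:epsilon2} is essential both to verify the mean-zero property of $K^\varepsilon_{J_\varepsilon}$ and to recover the correct power $\varepsilon^s\sim 2^{-J_\varepsilon s}$. A secondary, but pervasive, difficulty is propagating the polynomial weight $\rho_\ell$ through translations and convolutions: this is handled uniformly via $\rho_\ell(x)\le C\rho_\ell(x-y)(1+|y|)^{|\ell|}$, the extra factor being dominated by the Gevrey-type decay of the kernels from Remarks~\ref{remark:k:epsilon} and \ref{remark:epsilon2}.
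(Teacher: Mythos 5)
Your proof is correct, and the first half takes a genuinely different route from the paper. For the direction $\|f\|_{B^{s-\delta_1}_{p,p,\ell}}\lesssim\|f\|_{W^{s,p}_\ell}$, the paper (Lemma~\ref{lemma:first:inequality}) constructs an explicit Fourier multiplier $\sigma_\varepsilon$ pieced together from one-dimensional dyadic blocks and carefully chosen lattice shifts $k_{\ell,j}$ obeying $|\sin(k_{\ell,j}\cdot q/2)|\geqslant C_1$, so that $\Delta_j f$ is reconstructed from $D_{k_{\ell,j}}f$ plus a lower-order remainder; the whole point of this machinery is to ``invert'' the symbol $e^{-ik\cdot q}-1$ uniformly in $\varepsilon$, including at the boundary frequency $J_\varepsilon$. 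You instead exploit the mean-zero property $\int K_j^\varepsilon=\varphi_j^\varepsilon(0)=0$ (valid for $0\leqslant j\leqslant J_\varepsilon$, with $j=J_\varepsilon$ handled by the explicit form $K^\varepsilon_{J_\varepsilon}=\varepsilon^{-2}\bar K(\cdot/\varepsilon)$) to write $\Delta_j f=\int K_j^\varepsilon(y)D_{-y}f\,dy$ and then apply Minkowski, splitting at $|y|\leqslant 1$ versus $|y|>1$. This is more elementary and, if anything, more transparent; the paper's multiplier construction buys nothing extra here, since both arguments rely on the same Gevrey decay of $K_j^\varepsilon$ from Remarks~\ref{remark:k:epsilon}--\ref{remark:epsilon2} and give the Nikolskii-type bound $\|\Delta_jf\|_{L^p_\ell}\lesssim 2^{-js}\|f\|_{W^{s,p}_\ell}$, after which the $\delta_1$-loss from $\ell^\infty$ to $\ell^p$ is identical. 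For the converse direction, your argument (symbol bound $\min(1,|k|2^j)$ on each dyadic block, then Hölder in $j$) is essentially the same as Lemma~\ref{lemma:second:inequality}, which estimates $\|\tau_y K_j-K_j\|_{L^1_\ell}\lesssim|y|2^j$ via the mean value theorem and sums the geometric series directly; the only cosmetic difference is that the paper sums without Hölder and thereby obtains the converse with no $\delta_1$-loss whenever $s<1$, reserving a separate argument for $s=1$, whereas you uniformly pay $\delta_1$ — which is all the theorem statement asks for. One small precision note: you invoke Lemma~\ref{lemma-fm-reg} for the symbol $e^{ik\cdot q}-1$, but that lemma as stated does not track the $|k|$-dependence; what you are really using is its proof technique (rescaled kernel $L^1$ bound on an annulus), or equivalently the direct estimate $\|\tau_kK_j-K_j\|_{L^1_\ell}\lesssim|k|2^j$, so it is worth saying so explicitly rather than citing the lemma verbatim.
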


In order to prove Theorem \ref{theorem:sobolev_besov} we establish the
following lemma.

\begin{lemma}
  \label{lemma:first:inequality}Consider $0 \leqslant s \leqslant 1$, $p \in
  [1, + \infty]$, $\ell \in \mathbb{R}$, and $\delta_1 > 0$ then for any $f
  \in W^{s, p}_{\ell} (\varepsilon \mathbb{Z}^2)$ we have
  \[ \| f \|_{B^{s - \delta_1}_{p, p, \ell} (\varepsilon \mathbb{Z}^2)}
     \lesssim \| f \|_{W^{s, p}_{\ell} (\varepsilon \mathbb{Z}^2)} . \]
\end{lemma}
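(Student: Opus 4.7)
\begin{proof*}{Proof plan.}
The approach is to bound each Littlewood--Paley block $\Delta_j f = K_j^{\varepsilon} \ast_{\varepsilon} f$ by $2^{-js}\|f\|_{W^{s,p}_{\ell}(\varepsilon\mathbb{Z}^2)}$, uniformly in $\varepsilon$. The extra gap $\delta_1>0$ will then absorb the mismatch between the sup-type definition of the difference norm and the $\ell^p$ summation of the Besov norm. The key observation is that for $j\geqslant 0$ the function $\varphi_j^{\varepsilon}$ vanishes at the origin, so $\int_{\varepsilon\mathbb{Z}^2} K_j^{\varepsilon}(z)\, \mathd z = \varphi_j^{\varepsilon}(0)=0$, and the same is true at $j=J_{\varepsilon}$ since $\upsilon^{(2)}(0)=1-\varphi_{-1}(0)=0$. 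Consequently for these blocks one may rewrite
\[
  (\Delta_j f)(x) \;=\; \int_{\varepsilon\mathbb{Z}^2} K_j^{\varepsilon}(z)\,\bigl(f(x-z)-f(x)\bigr)\,\mathd z \;=\; -\int_{\varepsilon\mathbb{Z}^2} K_j^{\varepsilon}(z)\, D_{-z} f(x)\,\mathd z.
\]

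The main estimate is obtained by Minkowski's inequality in $L^p_{\ell}$. After using the weight-translation bound $\rho_{\ell}(x)\leqslant C_{\ell}\rho_{\ell}(x-z)(1+|z|)^{|\ell|}$ from the proof of Theorem \ref{theorem:besov:weight}, one gets
\[
  \|D_{-z}f\|_{L^p_{\ell}(\varepsilon\mathbb{Z}^2)} \;\lesssim\; \bigl(1+|z|\bigr)^{|\ell|}\,\|D_{-z}f\|_{L^p_{0}(\varepsilon\mathbb{Z}^2 + \{-z\})},
\]
which, splitting the integral over $z\in\varepsilon\mathbb{Z}^2$ into the regions $|z|\leqslant 1$ and $|z|>1$, is controlled by $|z|^s\|f\|_{W^{s,p}_{\ell}}$ in the former region (directly from Definition \ref{definition:difference}, since then $-z\in\varepsilon\mathbb{Z}^2$ and $|-z|\leqslant 1$) and by $(1+|z|)^{|\ell|}\|f\|_{L^p_{\ell}}$ in the latter (by triangle inequality). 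Therefore
\[
  \|\Delta_j f\|_{L^p_{\ell}} \;\lesssim\; \|f\|_{W^{s,p}_{\ell}} \int_{|z|\leqslant 1}|K_j^{\varepsilon}(z)|\,|z|^s\,\mathd z \;+\; \|f\|_{L^p_{\ell}}\int_{|z|>1}|K_j^{\varepsilon}(z)|\,(1+|z|)^{|\ell|}\,\mathd z.
\]

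The two integrals are evaluated using the Gevrey-type decay provided by Remark \ref{remark:k:epsilon} for $-1\leqslant j<J_{\varepsilon}$ and Remark \ref{remark:epsilon2} for $j=J_{\varepsilon}$. The scaling $w=2^j z$ (respectively $w=z/\varepsilon$ for the top block) turns the first integral into $\lesssim 2^{-js}$ (resp.\ $\lesssim \varepsilon^s\sim 2^{-J_{\varepsilon}s}$), while the stretched-exponential decay makes the second integral super-polynomially small in $2^j$; the block $j=-1$ is treated directly by the uniform $L^1$ bound on $K_{-1}$ and Young's inequality, giving $\|\Delta_{-1}f\|_{L^p_{\ell}}\lesssim\|f\|_{L^p_{\ell}}\lesssim\|f\|_{W^{s,p}_{\ell}}$. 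Altogether we arrive at
\[
  2^{j(s-\delta_1)}\|\Delta_j f\|_{L^p_{\ell}(\varepsilon\mathbb{Z}^2)} \;\lesssim\; 2^{-j\delta_1}\,\|f\|_{W^{s,p}_{\ell}(\varepsilon\mathbb{Z}^2)},
\]
uniformly in $-1\leqslant j\leqslant J_{\varepsilon}$ and in $0<\varepsilon\leqslant 1$, and taking the $\ell^p$ norm over $j$ yields the claim since $\sum_j 2^{-j\delta_1 p}<\infty$.

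The step I expect to be the most delicate is the book-keeping for the top block $j=J_{\varepsilon}$: one has to verify that the rescaled kernel $\varepsilon^{-2}\bar{K}(z/\varepsilon)$ still integrates to zero (using $\upsilon^{(2)}(0)=0$) and that the Gevrey decay of $\bar{K}$ from Remark \ref{remark:epsilon2} is strong enough to absorb the weight $(1+|z|)^{|\ell|}$ uniformly in $\varepsilon$; everything else is a routine scaling argument, but this endpoint requires some care to keep all constants $\varepsilon$-independent.
\end{proof*}
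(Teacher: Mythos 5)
Your argument is correct, and it takes a genuinely different route from the paper's. The paper proves Lemma \ref{lemma:first:inequality} via Fourier multiplier theory: it builds a one-dimensional dyadic partition $\tilde\varphi_i^{\varepsilon}$, selects lattice vectors $k_{\ell,j}$ on which $\bigl|\sin\bigl(\tfrac{k_{\ell,j}\cdot q}{2}\bigr)\bigr|$ is bounded below on the annulus supporting $\tilde\varphi_j(q_1)\tilde\varphi_\ell(q_2)$, constructs symbols $\sigma_{\varepsilon}$ and $\bar\sigma_{\varepsilon}$ of order $-s$ and $-2s$ respectively, and exploits the algebraic identity
\[
  \mathcal{F}^{-1}_{\varepsilon}(\sigma_{\varepsilon}\tilde\varphi_j^{\varepsilon}\tilde\varphi_{\ell}^{\varepsilon}\mathcal{F}_{\varepsilon}(|k_{\ell,j}|^{-s}D_{k_{\ell,j}}f))
  = \mathcal{F}^{-1}_{\varepsilon}(\tilde\varphi_j^{\varepsilon}\tilde\varphi_{\ell}^{\varepsilon}\mathcal{F}_{\varepsilon}f)
  - \mathcal{F}^{-1}_{\varepsilon}(\bar\sigma_{\varepsilon}\tilde\varphi_j^{\varepsilon}\tilde\varphi_{\ell}^{\varepsilon}\mathcal{F}_{\varepsilon}f)
\]
together with the multiplier estimate of Lemma \ref{lemma-fm-reg}. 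You instead prove the bound directly on the physical side: you use the zero-mean property $\int_{\varepsilon\mathbb{Z}^2}K_j^{\varepsilon}=\varphi_j^{\varepsilon}(0)=0$ for $0\leqslant j\leqslant J_{\varepsilon}$ to rewrite $\Delta_j f$ as an average of differences $D_{-z}f$ against the kernel, then apply Minkowski, the weight-translation inequality, and the Gevrey decay of $K_j^{\varepsilon}$ (Remarks \ref{remark:k:epsilon} and \ref{remark:epsilon2}) to extract the factor $2^{-js}$. Your approach is more elementary and transparent; it avoids the (somewhat delicate) construction of the $k_{\ell,j}$ and the verification of the symbol bounds for $\sigma_{\varepsilon},\bar\sigma_{\varepsilon}$, at the price of having to control the Riemann sums over $\varepsilon\mathbb{Z}^2$ by the corresponding continuum integrals (which the kernel decay makes routine, but you should say explicitly that the comparison constant is $\varepsilon$-uniform). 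The paper's route fits more naturally into its systematic multiplier framework (Lemma \ref{lemma-fm-reg}, Remark \ref{remark:besov:lattice:equivalence1}) and is what is reused in the two-sided Theorem \ref{theorem:sobolev_besov} and in the $\mathbb{R}^2\times\varepsilon\mathbb{Z}^2$ analogue. Both arguments lose the same $\delta_1$: the paper through the $2^{-\delta' m}$ geometric tail of its multiplier decomposition, you through summing $2^{-j\delta_1 p}$ over $j$; one could not dispense with $\delta_1$ because $W^{s,p}_\ell$ is a sup-type (hence $\ell^\infty$) space while $B^s_{p,p,\ell}$ is $\ell^p$.
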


\begin{proof}
  Let $\tilde{\varphi}_i^{\varepsilon} : \mathbb{R} \rightarrow [0, 1]$ be a
  dyadic partition of unity of $\mathbb{T}_{\frac{1}{\varepsilon}}^1$ as the
  one introduced in equation {\eqref{eq:dydadictorus1}} for
  $\mathbb{T}^2_{\frac{1}{\varepsilon}}$.  
Since for $i<J_{\varepsilon}$ the support of $\tilde{\varphi}_i^{\varepsilon}$ is strictly contained in $\left[-\frac{1}{\epsilon},\frac{1}{\epsilon} \right]$, we can suppose that the functions $\tilde{\varphi}_i^{\varepsilon} : \mathbb{R} \rightarrow [0, 1]$ do not depend on $\varepsilon$ when $i < J_{\varepsilon}$  and also
  \[ \tmop{supp} (\tilde{\varphi}_i^{\varepsilon} (q_1)
     \tilde{\varphi}_j^{\varepsilon} (q_2)) \subset \tmop{supp} \left( \sum_{|
     k - \max (i, j) | < 2} \varphi_k^{\varepsilon} \right), \quad \tmop{supp}
     (\varphi_k^{\varepsilon}) \subset \tmop{supp} \left( \sum_{| \max (i, j)
     - k | < 2} \tilde{\varphi}_i (q_1) \tilde{\varphi}_j^{\varepsilon} (q_2)
     \right), \]
  where $\varphi_k^{\varepsilon}$ is the dyadic partition of unity of
  $\mathbb{T}^2_{\frac{1}{\varepsilon}}$ introduced in equation
  {\eqref{eq:dydadictorus1}}. Consider also some constants $k_{\ell, j} =
  (k_{\ell, j, 1}, k_{\ell, j, 2}) \in \varepsilon \mathbb{Z}^2$ forming a
  sequence for which $\varepsilon \leqslant | k_{\ell, j} | \leqslant 1$ and
  there is a constant $C_1 > 0$ independent of $0 < \varepsilon \leqslant 1$
  for which
  \begin{equation}
    \left| \sin \left( \frac{k_{\ell, j} \cdot q_{}}{2} \right) \right|
    \geqslant C_1, \label{eq:sin:inequality}
  \end{equation}
  for any $q = (q_1, q_2) \in \tmop{supp} (\tilde{\varphi}_j (q_1)
  \tilde{\varphi}_{\ell} (q_2))$ for $j, \ell \centernot{=} - 1$. It is important to
  note that the inequality {\eqref{eq:sin:inequality}} implies that there is a
  $C_2 > 0$, independent of $\varepsilon > 0$, for which
  \begin{equation}
    | k_{\ell, j} | > \frac{C_2}{| q |}, \label{eq:sin:inequality2}
  \end{equation}
  for $q = (q_1, q_2) \in \tmop{supp} (\tilde{\varphi}_j (q_1)
  \tilde{\varphi}_{\ell} (q_2))$, $q \centernot{=} 0$. Let $\sigma_{\varepsilon} :
  \mathbb{T}^2_{\frac{1}{\varepsilon}} \rightarrow \mathbb{R}$ be a smooth
  function defined as
  \[ \sigma_{\varepsilon} (q) = \sum_{- 1 \leqslant \ell \leqslant
     J_{\varepsilon}} \sum_{- 1 \leqslant j \leqslant J_{\varepsilon}}
     \tilde{\varphi}_j^{\varepsilon} (q_1)
     \tilde{\varphi}_{\ell}^{\varepsilon} (q_2) \frac{(e^{ i k_{\ell, j} \cdot
     q} - 1)}{| k_{\ell, j} |^s \left( 1 + 4 | k_{\ell, j} |^{- 2 s} \sin^2
     \left( \frac{k_{\ell, j} \cdot q}{2} \right) \right)} \]
  We have that
  \[ \mathcal{F}^{- 1}_{\varepsilon} (\sigma_{\varepsilon}
     \tilde{\varphi}_j^{\varepsilon} \tilde{\varphi}_{\ell}^{\varepsilon} 
     \mathcal{F}_{\varepsilon} (| k_{\ell, j} |^{- s} D_{k_{\ell, j}} f)) =
     \mathcal{F}^{- 1}_{\varepsilon} (\tilde{\varphi}_{\ell}^{\varepsilon}
     \tilde{\varphi}_j^{\varepsilon}  \mathcal{F}_{\varepsilon} (f)) -
     \mathcal{F}^{- 1}_{\varepsilon} (\bar{\sigma}_{\varepsilon} 
     \tilde{\varphi}_j^{\varepsilon} \tilde{\varphi}_{\ell}^{\varepsilon}
     \mathcal{F}_{\varepsilon} (f)), \]
  where
  \[ \bar{\sigma}_{\varepsilon} (q) = \sum_{- 1 \leqslant \ell \leqslant
     J_{\varepsilon}} \sum_{- 1 \leqslant j \leqslant J_{\varepsilon}}
     \tilde{\varphi}_j^{\varepsilon} (q_1)
     \tilde{\varphi}_{\ell}^{\varepsilon} (q_2) \frac{1}{\left( 1 + 4 |
     k_{\ell, j} |^{- 2 s} \sin^2 \left( \frac{k_{\ell, j} \cdot q}{2} \right)
     \right)} . \]
  By inequalities {\eqref{eq:sin:inequality}} and
  {\eqref{eq:sin:inequality2}}, the functions $\sigma_{\varepsilon} (q)$ and
  $\bar{\sigma}_{\varepsilon} (q)$ are built in such a way that for any $\beta
  \in \mathbb{N}_0^2$, such that $| \beta | \leqslant 4$, there are some
  constants $C_{\beta} > 0$ (independent of $0 < \varepsilon \leqslant 1$) for
  which
  \[ | \partial^{\beta}_q \sigma_{\varepsilon} (q) | \leqslant C_{\beta} (1 +
     | q |)^{- s - | \beta |}, \quad | \partial^{\beta}_q
     \bar{\sigma}_{\varepsilon} (q) | \leqslant C_{\beta} (1 + | q |)^{- 2 s -
     | \beta |} . \]
  By Lemma \ref{lemma-fm-reg}, this implies that
  \begin{eqnarray}
    \| f \|_{B^{s - \delta - \delta'}_{p, p, \ell} (\varepsilon \mathbb{Z}^2)}
    & \leqslant & \sum_{r, j} \| T_{\sigma_{\varepsilon}}
    (\mathcal{F}_{\varepsilon}^{- 1} (\tilde{\varphi}_j^{\varepsilon}
    \tilde{\varphi}_r^{\varepsilon}  \mathcal{F}_{\varepsilon} (| k_{r, j}
    |^{- s}_{} D_{k_{r, j}} f)))_{} \|_{B^{s - \delta - \delta'}_{p, p, \ell}
    (\varepsilon \mathbb{Z}^2)} \nonumber\\
    &  & + \sum_{r, j} \| T_{\bar{\sigma}_{\varepsilon}}
    (\mathcal{F}_{\varepsilon}^{- 1} (\tilde{\varphi}_j^{\varepsilon}
    \tilde{\varphi}_r^{\varepsilon}  \mathcal{F}_{\varepsilon} (f)))_{}
    \|_{B^{s - \delta - \delta'}_{p, p, \ell} (\varepsilon \mathbb{Z}^2)}
    \nonumber\\
    & \lesssim & \sum_{r, j} \| \mathcal{F}_{\varepsilon}^{- 1}
    (\tilde{\varphi}_j^{\varepsilon} \tilde{\varphi}_r^{\varepsilon} 
    \mathcal{F}_{\varepsilon} (| k_{r, j} |^{- s}_{} D_{k_{r, j}} f))_{}
    \|_{B^{- \delta - \delta'}_{p, p, \ell} (\varepsilon \mathbb{Z}^2)} 
    \nonumber\\
    &  & + \sum_{r, j} \| \mathcal{F}_{\varepsilon}^{- 1}
    (\tilde{\varphi}_j^{\varepsilon} \tilde{\varphi}_r^{\varepsilon} 
    \mathcal{F}_{\varepsilon} (f))_{} \|_{B^{- s - \delta - \delta}_{p, p,
    \ell} (\varepsilon \mathbb{Z}^2)} \nonumber\\
    & \lesssim & \sum_m \sum_{| \max (r, j) - m | < 2} 2^{- m (\delta +
    \delta')} \| \mathcal{F}^{- 1}_{\varepsilon} (\varphi_m
    \mathcal{F}_{\varepsilon} (| k_{r, j} |^{- s}_{} D_{k_{r, j}} f))
    \|_{L^p_{\ell} (\varepsilon \mathbb{Z}^2)} \nonumber\\
    &  & + \sum_m \sum_{| \max (r, j) - m | < 2} 2^{- m (\delta + \delta')}
    \| \mathcal{F}_{\varepsilon}^{- 1} (\varphi_m  \mathcal{F}_{\varepsilon}
    (f))_{} \|_{L^p_{\ell} (\varepsilon \mathbb{Z}^2)} \nonumber\\
    & \lesssim & \sum_m 2^{- \delta' m} \sum_{| \max (r, j) - m | < 2} \| |
    k_{r, j} |^{- s}_{} D_{k_{r, j}} f \|_{B^{- \delta}_{p, p, \ell}
    (\varepsilon \mathbb{Z}^2)}  \nonumber\\
    &  & + \sum_m 2^{- \delta' m} m^2 \| f \|_{B^{- \delta}_{p, p, \ell}
    (\varepsilon \mathbb{Z}^2)} \nonumber\\
    & \lesssim & \sum_m 2^{- \delta' m} m^2 (\max_{\varepsilon < | k |
    \leqslant 1} \| | k_{r, j} |^{- s}_{} D_{k_{r, j}} f \|_{B^{- \delta}_{p,
    p, \ell} (\varepsilon \mathbb{Z}^2)}) \nonumber\\
    &  & + \| f \|_{B^{- \delta}_{p, p, \ell} (\varepsilon \mathbb{Z}^2)}
    \nonumber\\
    & \lesssim & \| f \|_{W^{s, p}_{\ell} (\varepsilon \mathbb{Z}^2)},
    \nonumber
  \end{eqnarray}
  where we used that $\| g \|_{B^{- \delta}_{p, p, \ell} (\varepsilon
  \mathbb{Z}^2)} \lesssim \| g \|_{L^p_{\ell} (\varepsilon \mathbb{Z}^2)}$ and
  the constants hidden in the symbol $\lesssim$ do not depend on $0 <
  \varepsilon \leqslant 1$. If we choose $\delta, \delta' > 0$ such that
  $\delta + \delta' = \delta_1$ we get the thesis.
\end{proof}

\begin{remark}
  \label{remark:first:inequality}The result of Lemma
  \ref{lemma:first:inequality} can be extended also to the case of $s
  \geqslant 1$ in the following way: for any $h \in \varepsilon \mathbb{Z}^2$,
  $n \in \mathbb{N}$, and $f \in L^p_{\ell} (\varepsilon \mathbb{Z}^2)$ define
  the operator
  \[ D_h^n f (z) = \sum_{m = 0}^n \left( \begin{array}{c}
       n\\
       m
     \end{array} \right) (- 1)^{m + 1} f \left( x + \left( \frac{n}{2} - m
     \right) h \right), \]
  when $n$ is even and $D^n_h f = D^{n - 1}_h (D_h f)$ when $n$ is odd. Then
  the norm of $W^{s, p}_{\ell} (\varepsilon \mathbb{Z}^2)$ when $n - 1 < s
  \leqslant n$ (where $n \in \mathbb{N}$) is given by, see e.g. Section 1.2.2
  equation (3) of {\cite{Triebel1992}},
  \begin{equation}
    \tmcolor{black}{\| f \|_{W^{s, p}_{\ell} (\varepsilon \mathbb{Z}^2)} = \|
    f \|_{L^p_{\ell} (\varepsilon \mathbb{Z}^2)} + \sum_{m = 1}^n \sup_{h \in
    \varepsilon \mathbb{Z}^2, \varepsilon \leqslant | h | \leqslant 1}
    \frac{\| D^m_h f \|_{L^p_{\ell}}}{| h |^{m \wedge s}} .
    \label{eq:higher:difference}}
  \end{equation}

  By noting that
  \[ \mathcal{F}_{\varepsilon} (D^n_h) \sim (\cos (h \cdot q) -
     1)^{\frac{n}{2}}, \]
  when $n$ is even, and
  \[ \mathcal{F}_{\varepsilon} (D^n_h) \sim (\cos (h \cdot q) - 1)^{\frac{n -
     1}{2}} (e^{i (h \cdot q)} - 1), \]
  when $n$ is odd, by using Lemma \ref{lemma-fm-reg} and a suitable extension
  of the proof of Lemma \ref{lemma:first:inequality}, we get that, for any $s
  \geqslant 0$, $p \in [1, + \infty]$ and $\delta > 0$
  \[ \| f \|_{B^{s - \delta}_{p, p, \ell} (\varepsilon \mathbb{Z}^2)}
     \lesssim \| f \|_{W^{s, p}_{\ell} (\varepsilon \mathbb{Z}^2)}, \]
  where the norm $\| \cdot \|_{W^{s, p}_{\ell} (\varepsilon \mathbb{Z}^2)}$ is
  defined in equation {\eqref{eq:higher:difference}}.
\end{remark}

\begin{lemma}
  \label{lemma:second:inequality}Consider $0 < s < 1$, $p \in [1, + \infty]$,
  and $\ell \in \mathbb{R}$ then for $f \in B^s_{p, p, \ell} (\varepsilon
  \mathbb{Z}^2)$ we have
  \[ \| f \|_{W^{s, p}_{\ell} (\varepsilon \mathbb{Z}^2)} \lesssim \| f
     \|_{B^s_{p, p, \ell} (\varepsilon \mathbb{Z}^2)} . \]
\end{lemma}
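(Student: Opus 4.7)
The plan is to prove the estimate by decomposing $f$ into Littlewood-Paley blocks and treating low and high frequencies separately, after which the $W^{s,p}_\ell$ norm splits neatly across the dyadic scales. Bounding the $L^p_\ell$ part of the $W^{s,p}_\ell$ norm is immediate: since the Littlewood-Paley sum on the lattice is finite, Minkowski and H\"older give
\[ \| f \|_{L^p_\ell(\varepsilon\mathbb{Z}^2)} \leqslant \sum_{-1\leqslant j \leqslant J_\varepsilon} \|\Delta_j f\|_{L^p_\ell(\varepsilon\mathbb{Z}^2)} \leqslant \Big(\sum_j 2^{-jsp'}\Big)^{1/p'} \|f\|_{B^s_{p,p,\ell}(\varepsilon\mathbb{Z}^2)} \lesssim \|f\|_{B^s_{p,p,\ell}(\varepsilon\mathbb{Z}^2)}, \]
where $p' = p/(p-1) < \infty$ and the geometric sum converges since $s>0$.

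For the difference part, fix $k\in \varepsilon\mathbb{Z}^2$ with $0 < |k| \leqslant 1$. The crux of the proof is the lattice analogue of the Bernstein-type estimate
\[ \| D_k \Delta_j f \|_{L^p_\ell(\varepsilon\mathbb{Z}^2)} \lesssim \min(|k|\,2^j,1)\,\|\Delta_j f\|_{L^p_\ell(\varepsilon\mathbb{Z}^2)}, \]
with an implicit constant uniform in $j$, $k$ and $\varepsilon$. The trivial bound (exploiting that $\rho_\ell(x+k)\sim \rho_\ell(x)$ uniformly for $|k|\leqslant 1$) gives the $\min = 1$ case. For $|k|\,2^j \leqslant 1$, I would write $\Delta_j f = \tilde K_j^{\varepsilon} \ast_\varepsilon \Delta_j f$ for an enlarged kernel $\tilde K_j^{\varepsilon}$ (so that $\tilde\varphi_j^{\varepsilon} \equiv 1$ on $\tmop{supp}(\varphi_j^{\varepsilon})$) and reduce matters to bounding the $L^1_{-m}$ norm (for $m$ large enough to absorb the polynomial weight via a weighted Young inequality) of $D_k \tilde K_j^{\varepsilon}$. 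This $L^1$ bound is then deduced from Lemma \ref{lemma-fm-reg} applied to the Fourier multiplier $(e^{-ik\cdot q}-1)\tilde\varphi_j^{\varepsilon}(q)$: after the rescaling $q \mapsto 2^j q$, the multiplier is supported in a fixed annulus, on which $|e^{-ik\cdot q}-1| \lesssim |k|\,2^j$ by Taylor expansion (since $|k|\,2^j \leqslant 1$), and analogous bounds hold for its derivatives. The case $j = J_\varepsilon$ requires the separate treatment of Remark \ref{remark:epsilon2}, where $\tilde K_{J_\varepsilon}^{\varepsilon}$ scales like $\varepsilon^{-2}\bar K(\cdot/\varepsilon)$.

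Once the Bernstein-type estimate is in hand, writing $\|\Delta_j f\|_{L^p_\ell} = 2^{-js} c_j$ with $\|c_j\|_{\ell^p} = \|f\|_{B^s_{p,p,\ell}(\varepsilon\mathbb{Z}^2)}$, Minkowski gives
\[ |k|^{-s}\|D_k f\|_{L^p_\ell} \leqslant |k|^{1-s}\!\!\sum_{2^j|k|\leqslant 1} 2^{j(1-s)} c_j \;+\; |k|^{-s}\!\!\sum_{2^j|k|>1} 2^{-js} c_j. \]
H\"older's inequality in $\ell^{p'}$-$\ell^p$ applied to each piece yields geometric sums that converge thanks to $0 < s < 1$: the first contributes $|k|^{1-s}\cdot |k|^{-(1-s)} = 1$ and the second $|k|^{-s}\cdot |k|^{s} = 1$, both times the $\ell^p$-norm of $\{c_j\}$. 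Summing the two contributions produces the claimed bound by $\|f\|_{B^s_{p,p,\ell}(\varepsilon\mathbb{Z}^2)}$.

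The principal obstacle lies in the lattice Bernstein-type estimate: in the continuum one could differentiate and use the mean value theorem on $\tilde K_j$, but on $\varepsilon\mathbb{Z}^2$ this must be replaced by a careful Fourier multiplier analysis, controlling the dependence of constants uniformly in $\varepsilon$ (and in particular addressing the endpoint block $j = J_\varepsilon$). The remaining dyadic summation and the $L^p_\ell$-piece of the norm are then routine consequences of H\"older and the range $0 < s < 1$.
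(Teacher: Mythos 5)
Your proof is correct and takes essentially the same route as the paper: the key ingredient in both is the dyadic estimate $\|D_k\Delta_j f\|_{L^p_\ell}\lesssim\min(|k|2^j,1)\|\Delta_j f\|_{L^p_\ell}$, obtained in the paper on the kernel side via $\|\tau_y K_j - K_j\|_{L^1_\ell}\lesssim|y|2^j$ and in your proposal on the multiplier side via Lemma \ref{lemma-fm-reg} applied to $(e^{-ik\cdot q}-1)\tilde\varphi_j^\varepsilon$, which are two formulations of the same estimate. The remaining split at $j_k\sim\log_2(1/|k|)$ and the geometric summation for $0<s<1$ match the paper (which routes through $B^s_{p,\infty,\ell}$ rather than applying H\"older directly, a harmless cosmetic difference).
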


\begin{proof}
  Consider $f \in B^s_{p, p, \ell} (\varepsilon \mathbb{Z}^2) \subset B^s_{p,
  \infty, \ell} (\varepsilon \mathbb{Z}^2)$. First we compute $\| \tau_y
  \Delta^{\varepsilon}_j f - \Delta^{\varepsilon}_j f \|_{L^p_{\ell} (\varepsilon \mathbb{Z}^2)}$, for
  $\varepsilon \leqslant y \leqslant 1$. For $j = - 1$ we get
  \begin{eqnarray}
    \| \tau_y \Delta^{\varepsilon}_{- 1} f - \Delta_{- 1}^{\varepsilon} f \|_{L^p_{\ell} (\varepsilon
    \mathbb{Z}^2)} & = & \sum_{| i + 1 | < 2} \| (\tau_y - I) K_{- 1}^{\varepsilon}
    \ast_{\varepsilon} \Delta_i^{\varepsilon} f \|_{L^p_{\ell} (\varepsilon \mathbb{Z}^2)}
    \nonumber\\
    & \lesssim & \sum_{| i + 1 | < 2} \| \tau_y K_{- 1}^{\varepsilon} - K_{- 1}^{\varepsilon}
    \|_{L^1_{\ell} (\varepsilon \mathbb{Z}^2)} \| \Delta_i^{\varepsilon} f \|_{L^p_{\ell}
    (\varepsilon \mathbb{Z}^2)} \nonumber\\
    & \lesssim & 2^{- s} \| \tau_y K_{- 1}^{\varepsilon} - K_{- 1}^{\varepsilon} \|_{L^1_{\ell}
    (\varepsilon \mathbb{Z}^2)} \| f \|_{B^s_{p, \infty, \ell} (\varepsilon
    \mathbb{Z}^2)} . \nonumber
  \end{eqnarray}
  Furthermore we get
  \[ | \tau_y K_{- 1}^{\varepsilon} - K_{- 1}^{\varepsilon} | (x) = \left|\int_0^{|y|} \nabla_y K_{- 1}^{\varepsilon} \left( x +
     t \frac{y}{| y |} \right) \mathd t\right|, \]
  which implies that
  \[ \| \tau_y K_{- 1}^{\varepsilon} - K_{- 1}^{\varepsilon}\|_{L^1_{\ell} (\varepsilon \mathbb{Z}^2)}
     \leq \| \nabla K_{- 1}^{\varepsilon} \|_{L^1_{\ell} (\varepsilon \mathbb{Z}^2)} | y |,
  \]
  and so $\| \tau_y \Delta_{- 1}^{\varepsilon} f - \Delta_{- 1}^{\varepsilon}
  f \|_{L^p_{\ell} (\varepsilon \mathbb{Z}^2)} \lesssim 2^{- s} | y | \| f
  \|_{B^s_{p, \infty, \ell} (\varepsilon \mathbb{Z}^2)}$.\\
  
  If $- 1 < j < J_{\varepsilon}$, using a similar argument we obtain
  \[ \| \tau_y \Delta_j^{\varepsilon} f - \Delta_j^{\varepsilon} f
     \|_{L^p_{\ell} (\varepsilon \mathbb{Z}^2)} \lesssim 2^{- s j} | y | \|
     \nabla K_j^{\varepsilon} \|_{L^1_{\ell} (\varepsilon \mathbb{Z}^2)} \| f \|_{B^s_{p,
     \infty, \ell}} \lesssim 2^{- s j + j} | y | \| f \|_{B^s_{p, \infty,
     \ell} (\varepsilon \mathbb{Z}^2)}, \]
  and for $j = J_{\varepsilon}$ we have
  \[ \| \tau_y \Delta_{J_{\varepsilon}}^{\varepsilon} f -
     \Delta_{J_{\varepsilon}}^{\varepsilon} f \|_{L^p_{\ell} (\varepsilon
     \mathbb{Z}^2)} \lesssim 2^{- s J_{\varepsilon}} | y | \| \nabla
     K_{J_{\varepsilon}}^{\varepsilon} \|_{L^1_{\ell} (\varepsilon
     \mathbb{Z}^2)} \| f \|_{B^s_{p, \infty, \ell} (\varepsilon \mathbb{Z}^2)}
     \lesssim 2^{- s J_{\varepsilon} + J_{\varepsilon}} | y | \| f \|_{B^s_{p, \infty,
     \ell} (\varepsilon \mathbb{Z}^2)} . \]
  Finally we can use the following trivial inequality
  \[ \| \tau_y \Delta_i^{\varepsilon} f - \Delta_i^{\varepsilon} f
     \|_{L^p_{\ell} (\varepsilon \mathbb{Z}^2)} \leqslant 2 \|
     \Delta_i^{\varepsilon} f \|_{L^p_{\ell} (\varepsilon \mathbb{Z}^2)}
     \lesssim 2^{- j s} \| f \|_{B^s_{p, \infty, \ell} (\varepsilon
     \mathbb{Z}^2)} . \]
  Exploiting the previous inequalities we get
  \begin{equation}
    \| \tau_y f - f \|_{L^p_{\ell} (\varepsilon \mathbb{Z}^2)} \lesssim \| f
    \|_{B^s_{p, \infty, \ell} (\varepsilon \mathbb{Z}^2)} \left( | y | \sum_{j
    \leqslant j_y} 2^{(1 - s) j} + \sum_{j_y < j \leqslant J_{\varepsilon}}^{}
    2^{- s j} \right). \label{eq:normY}
  \end{equation}
  If we choose $j_y \in \mathbb{N}$ such that $\frac{1}{| y |} \leqslant
  2^{j_y} \leqslant \frac{2}{| y |}$, inequality {\eqref{eq:normY}} implies
  \begin{equation}
    \| \tau_y f - f \|_{L^p_{\ell} (\varepsilon \mathbb{Z}^2)} \lesssim | y
    |^s \| f \|_{B^s_{p, \infty, \ell} (\varepsilon \mathbb{Z}^2)} \lesssim |
    y |^s \| f \|_{B^s_{p, p, \ell} (\varepsilon \mathbb{Z}^2)}
    \label{eq:normY2} .
  \end{equation}
  Dividing both sides of {\eqref{eq:normY2}} by $| y |^s$ and taking the sup
  over $\varepsilon \leqslant | y | \leqslant 1$ we get the thesis, since $\|
  f \|_{L^p} \lesssim \| f \|_{B_{p, p}^s (\varepsilon \mathbb{Z}^2)}$.
\end{proof}

\begin{proof*}{Proof of Theorem \ref{theorem:sobolev_besov}}
For $s=0$, $W^{0,p}_{\ell}=L^p_{\ell}$, and the inequality \eqref{eq:inequality:difference} follows from Besov embedding theorem.\\

For $0<s\leqslant 1$, the first part of inequalities {\eqref{eq:inequality:difference}} was proved
  in Lemma \ref{lemma:first:inequality} and the second one was proved in Lemma
  \ref{lemma:second:inequality} under the condition $s < 1$, so we need to
  prove only the second part of {\eqref{eq:inequality:difference}} when $s =
  1$. In this case, by Remark \ref{remark:equivalence:difference}, we can use
  the equivalent expression of the norm given in equation
  {\eqref{eq:difference:norm:equivalence}}.
  
  First we observe that
  \[ \varepsilon^{- s} \mathcal{F} (\tau_{\varepsilon e_i} f - f) (q) =
     \varepsilon^{- s} (e^{i \varepsilon (e_i \cdot q)} - 1) \mathcal{F} f
     (q), \]
  where $\{ e_i \}_{i = 1.2}$ is the standard basis of $\mathbb{Z}^2$. For any
  $\beta \in \mathbb{N}^2$ we get
  \begin{equation}
    | \partial_q^{\beta} \varepsilon^{- s} (e^{i \varepsilon (e_i \cdot q)} -
    1) | \leqslant \varepsilon^{| \beta | - s} . \label{eq:difference10}
  \end{equation}
  In addition $| \varepsilon^{- s} (e^{i \varepsilon (e_i \cdot q)} - 1) |
  \leqslant | q |^s$ which, together with {\eqref{eq:difference10}}, gives
  \begin{equation}
    | \partial_q^{\alpha} \varepsilon^{- s} (e^{i \varepsilon (e_i \cdot q)} -
    1) | \leqslant C (1 + | q |)^{s - | \alpha |},
    \label{estm:symbol-discre-grad}
  \end{equation}
  for some constant $C > 0$ independent of $0 < \varepsilon \leqslant 1$.\\
  
  \tmcolor{red}{\tmcolor{red}{\tmcolor{black}{By Lemma \ref{lemma-fm-reg} and
  the embedding $\| f \|_{L^p} \lesssim \| f \|_{B_{p, p}^{\delta}
  (\varepsilon \mathbb{Z}^2)}$, inequality {\eqref{estm:symbol-discre-grad}} \
  implies that
  \[ \varepsilon^2 \sum_{z \in \varepsilon
     \mathbb{Z}^2} \rho_{\ell} (z)^p | \nabla_{\varepsilon} f(z) |^p
     \lesssim \| f \|_{B^{1 + \delta}_{p, p, \ell} (\varepsilon
     \mathbb{Z}^2)}^p ,\]}}}
  
  which in turn gives the statement.
\end{proof*}

\subsection{Extension operator on lattice}\label{section:extension}

\

In this section we want to study the problem of extending a function (or a
distribution) on the lattice $\varepsilon \mathbb{Z}^2$ to a function (or
distribution) on $\mathbb{R}^2$. Here in the definition of Besov space
$B^s_{p, p, \ell} (\mathbb{R}^2)$ we use the same dyadic partition of unity
$\{ \varphi_j \}_{j \geqslant - 1}$ introduced in Section
\ref{subsection:lattice}.

\

We introduce the reconstruction operator. Let $w_0 (x) =\mathbb{I}_{\left( -
\frac{1}{2}, \frac{1}{2} \right]^2} (x)$ and $w_{0, \varepsilon} (x) =
\varepsilon^{- 2} w_0 (\varepsilon^{- 1} x)$. We define an operator
\[ \mathcal{E}^{\varepsilon} : \mathcal{S}' (\varepsilon \mathbb{Z}^2)
   \rightarrow \mathcal{S}' (\mathbb{R}^2) \]
as follows: if $f \in \mathcal{S}' (\varepsilon \mathbb{Z}^2)$ we
defined the measurable function $\mathcal{E}^{\varepsilon} (f)$ on
$\mathbb{R}^2$ as
\begin{equation}
  \mathcal{E}^{\varepsilon} (f) (z) = w_{0, \varepsilon} \ast_{\varepsilon} f
  (z) = \sum_{z' \in \varepsilon \mathbb{Z}^2} f (z') \mathbb{I}_{\left( -
  \frac{\varepsilon}{2}, \frac{\varepsilon}{2} \right]^2} (z - z'), \quad f
  \in \mathcal{S}' (\varepsilon \mathbb{Z}^2).\label{eq:definition:extension1}
\end{equation}
In other words, for any $f \in \mathcal{S}' (\mathbb{R})$, the function
$\mathcal{E}^{\varepsilon} (f)$ is constant on the squares
\begin{equation}
  Q_{\varepsilon} (z') \assign \left( - \frac{\varepsilon}{2} + z'_1,
  \frac{\varepsilon}{2} + z'_1 \right] \times \left( - \frac{\varepsilon}{2} +
  z'_2, \frac{\varepsilon}{2} + z'_2 \right] \subset \mathbb{R}^2,
  \label{eq:defsquares}
\end{equation}
where $z' = (z'_1, z'_2) \in \varepsilon \mathbb{Z}^2$. More precisely for any
$z \in Q_{\varepsilon} (z')$ the function $\mathcal{E}^{\varepsilon} (f)$ on
$\mathbb{R}^2$ is equal to $f (z')$ which is the value of $f$ on the
intersection $Q_{\varepsilon} (z') \cap \varepsilon \mathbb{Z}^2$.

\begin{remark}
  \label{remark:extension}Since, for any $0 < \varepsilon \leqslant 1$,
  \begin{equation}
    \sup_{| x - y | < \varepsilon} \rho_{\ell} (x) \sim \rho_{\ell} (y)
    \label{eq:weight:sim}, \quad x, y \in \mathbb{R}^2,
  \end{equation}
  then for any $s \in \mathbb{R}$ and $p \in [1, + \infty]$ if $f \in B^s_{p,
  p, \ell} (\varepsilon \mathbb{Z}^2)$ then function
  $\mathcal{E}^{\varepsilon} (f) \in L^p_{\ell} (\mathbb{R}^2)$. Furthermore,
  if $s > 0$, we have
  \[ \| \mathcal{E}^{\varepsilon} (f) \|_{L^p_{\ell} (\mathbb{R}^2)} \sim \| f
     \|_{L^p_{\ell} (\varepsilon \mathbb{Z}^2)} \lesssim \| f \|_{B^s_{p, p,
     \ell} (\varepsilon \mathbb{Z}^2)} . \]
\end{remark}

\begin{theorem}
  \label{theorem:extension1}For any $p \in [1, + \infty)$ and $0 < s \leqslant 1$, the operator $\mathcal{E}^{\varepsilon}$ is continuous from
  $W^{s, p}_{\ell} (\varepsilon \mathbb{Z}^2)$ into $B^{s \wedge \frac{1}{p}}_{p, \infty, \ell}
  (\mathbb{R}^2)$. Furthermore we have
  \[ \sup_{0 < \varepsilon \leqslant 1} \| \mathcal{E}^{\varepsilon}
     \|_{\mathcal{L} (W^{s, p}_{\ell} (\varepsilon \mathbb{Z}^2), B^{s \wedge \frac{1}{p}}_{p,
     \infty, \ell} (\mathbb{R}^2))} < + \infty . \]
  In the case $s = 1$ we have that $\mathcal{E}^{\varepsilon}$ is continuous
  from $W^{1, p}_{\ell} (\varepsilon \mathbb{Z}^2)$ into $B^{\frac{1}{p}}_{p,
  p, \ell} (\mathbb{R}^2)$ with the operator norm being uniformly bounded in
  $0 < \varepsilon \leqslant 1$.
\end{theorem}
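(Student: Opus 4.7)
The plan is to work with the first-difference characterization of Besov spaces: for $0 < \sigma < 1$, $p \in [1,+\infty)$ one has
\[
\|g\|_{B^\sigma_{p,\infty,\ell}(\mathbb{R}^2)} \sim \|g\|_{L^p_\ell(\mathbb{R}^2)} + \sup_{0 < |h| \leqslant 1} |h|^{-\sigma} \|g(\cdot+h) - g(\cdot)\|_{L^p_\ell(\mathbb{R}^2)},
\]
together with the analogous integral expression controlling the $B^\sigma_{p,p,\ell}$ norm. The $L^p_\ell$ piece was already recorded in Remark \ref{remark:extension}, so the task reduces to bounding the continuum first-difference seminorm of $g = \mathcal{E}^\varepsilon(f)$ by $\|f\|_{W^{s,p}_\ell(\varepsilon \mathbb{Z}^2)}$, uniformly in $0 < \varepsilon \leqslant 1$.

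The key structural observation is that $\mathcal{E}^\varepsilon(f)$ is constant on each cell $Q_\varepsilon(z')$, so the increment $\mathcal{E}^\varepsilon(f)(z+h) - \mathcal{E}^\varepsilon(f)(z)$ vanishes whenever $z$ and $z+h$ lie in the same cell. I would split the analysis into the two regimes $|h| \geqslant \varepsilon$ and $|h| < \varepsilon$. In the first regime, let $h_{\mathrm{lat}} \in \varepsilon \mathbb{Z}^2$ be the nearest lattice point, so $|h_{\mathrm{lat}}| \sim |h|$ and $|h - h_{\mathrm{lat}}| \lesssim \varepsilon$. The identity $\mathcal{E}^\varepsilon(f)(\cdot + h_{\mathrm{lat}}) = \mathcal{E}^\varepsilon(\tau_{h_{\mathrm{lat}}} f)$, combined with $\|\mathcal{E}^\varepsilon g\|_{L^p_\ell(\mathbb{R}^2)} \sim \|g\|_{L^p_\ell(\varepsilon \mathbb{Z}^2)}$ and the defining bound $\|\tau_{h_{\mathrm{lat}}} f - f\|_{L^p_\ell(\varepsilon \mathbb{Z}^2)} \leqslant |h_{\mathrm{lat}}|^s \|f\|_{W^{s,p}_\ell}$, yields the desired $|h|^s \|f\|_{W^{s,p}_\ell}$ bound for the lattice-aligned part; the residual shift by $h - h_{\mathrm{lat}}$, of norm $\lesssim \varepsilon$, falls into the second regime.

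For $|h| < \varepsilon$ the increment is supported on strips of total width $|h_1|+|h_2| \lesssim |h|$ along the cell interfaces. A direct geometric count together with the slow variation of the weight on scale $\varepsilon$ (relation \eqref{eq:weight:sim}) gives
\[
\|\mathcal{E}^\varepsilon(f)(\cdot+h) - \mathcal{E}^\varepsilon(f)(\cdot)\|_{L^p_\ell(\mathbb{R}^2)}^p \lesssim |h|\, \varepsilon \sum_{z' \in \varepsilon \mathbb{Z}^2,\, i = 1,2} \rho_\ell(z')^p |f(z'+\varepsilon e_i) - f(z')|^p.
\]
The lattice sum is estimated by $\varepsilon^{sp-2} \|f\|_{W^{s,p}_\ell}^p$, which is simply the $k = \varepsilon e_i$ instance of Definition \ref{definition:difference}. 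Dividing by $|h|^{sp}$ and invoking the critical hypothesis $sp \leqslant 1$ together with $|h| \leqslant \varepsilon \leqslant 1$ produces $|h|^{-sp}\|\mathcal{E}^\varepsilon(f)(\cdot+h) - \mathcal{E}^\varepsilon(f)(\cdot)\|_{L^p_\ell}^p \lesssim |h|^{1-sp} \varepsilon^{sp-1} \|f\|_{W^{s,p}_\ell}^p \leqslant \|f\|_{W^{s,p}_\ell}^p$. Taking the supremum over $h$ delivers the continuous embedding into $B^s_{p,\infty,\ell}$.

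For the endpoint statement $s = 1 \to B^{1/p}_{p,p,\ell}$ one plugs the same two-regime estimates into the integral $\int |h|^{-3} \|\mathcal{E}^\varepsilon(f)(\cdot+h) - \mathcal{E}^\varepsilon(f)(\cdot)\|_{L^p_\ell}^p\, dh$. The large-$|h|$ contribution reduces via the Lipschitz-type lattice estimate $\|\tau_{h_{\mathrm{lat}}} f - f\|_{L^p_\ell} \lesssim |h|\, \|f\|_{W^{1,p}_\ell}$ to $\int_\varepsilon^1 r^{p-2}\, dr$, which is uniformly bounded for $p > 1$. I expect the main technical obstacle to be the small-$|h|$ regime: a naive application of the estimate above only recovers $B^{1/p}_{p,\infty,\ell}$, and in order to reach $\ell^p$-summability one must keep track of the anisotropic boundary-layer measure $|h_1|+|h_2|$ and distribute the lattice differences against this directional weight (Hölder-type rearrangement), or alternatively deduce the endpoint by real interpolation of the subcritical $W^{s,p}_\ell \to B^s_{p,p,\ell}$ embeddings ($s < 1/p$) against the trivial $L^p_\ell \to L^p_\ell$ bound, using the identity $(L^p, W^{1,p})_{1/p,p} = B^{1/p}_{p,p}$.
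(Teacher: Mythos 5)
Your treatment of the range $0 < s \leqslant \tfrac{1}{p}$ is essentially the paper's proof: both use the first-difference characterization of $B^s_{p,\infty,\ell}(\mathbb{R}^2)$, the equivalence $\|f\|_{L^p_\ell(\varepsilon\mathbb{Z}^2)} \sim \|\mathcal{E}^\varepsilon f\|_{L^p_\ell(\mathbb{R}^2)}$, and the two-regime split $|h|\geqslant\varepsilon$ versus $|h|<\varepsilon$ with the cell-boundary-layer estimate in the small-$|h|$ regime. You are a bit more careful than the paper in the regime $|h|\geqslant\varepsilon$: the paper asserts the bound directly from Definition \ref{definition:difference} even though $h$ need not be a lattice vector, whereas you make this precise by rounding $h$ to $h_{\mathrm{lat}} \in \varepsilon\mathbb{Z}^2$ and feeding the residual shift into the small-$|h|$ estimate. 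That is a genuine (if minor) improvement in rigor.

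Your instinct that the endpoint $s=1$ is the problematic part is correct, but the two repairs you sketch cannot work as stated. The anisotropic refinement $\|\tau_h g - g\|^p_{L^p_\ell} \lesssim (|h_1|+|h_2|)\varepsilon^{p-1}\|\nabla_\varepsilon f\|^p_{L^p_\ell}$ still gives a logarithmically divergent integral: in polar coordinates, $\int_{|h|<\varepsilon}(|h_1|+|h_2|)|h|^{-3}\,\mathrm{d}h \sim \int_0^\varepsilon r^{-1}\,\mathrm{d}r = \infty$. The interpolation route $(L^p,W^{1,p})_{1/p,p} = B^{1/p}_{p,p}$ fails because $\mathcal{E}^\varepsilon$ does not map $W^{1,p}_\ell(\varepsilon\mathbb{Z}^2)$ into $W^{1,p}_\ell(\mathbb{R}^2)$ at all: $\mathcal{E}^\varepsilon(f)$ is piecewise constant, its distributional gradient is a measure concentrated on cell boundaries, so one has no admissible endpoint on the continuum side stronger than $B^{1}_{p,\infty,\ell}$, and interpolating with that endpoint does not land in $B^{1/p}_{p,p,\ell}$. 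In fact the endpoint conclusion as written appears to be incorrect: take $f = \delta_0$, which belongs to $W^{1,p}_\ell(\varepsilon\mathbb{Z}^2)$ for any fixed $\varepsilon>0$; then $\mathcal{E}^\varepsilon(f) = \mathbb{I}_{Q_\varepsilon(0)}$, whose modulus of continuity satisfies $\omega_p(\mathbb{I}_{Q_\varepsilon(0)}, t) \sim (t\varepsilon)^{1/p}$ for $t < \varepsilon$, so that $\int_0^1 (t^{-1/p}\omega_p(\cdot,t))^p\,\mathrm{d}t/t$ diverges and $\mathbb{I}_{Q_\varepsilon(0)} \notin B^{1/p}_{p,q}(\mathbb{R}^2)$ for any $q<\infty$. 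The correct target for $s=1$ is $B^{1/p}_{p,\infty,\ell}$, which your own two-regime estimate already yields: for $|h|<\varepsilon$ one has $|h|^{-1/p}\|\tau_h g - g\|_{L^p_\ell} \lesssim \varepsilon^{1-1/p}\|\nabla_\varepsilon f\|_{L^p_\ell} \leqslant \|f\|_{W^{1,p}_\ell}$ because $p\geqslant 1$ and $\varepsilon\leqslant 1$. (The paper's written proof stops after the $s\leqslant 1/p$ case and supplies no argument for the endpoint clause, consistent with the issue you noticed.)
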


\begin{proof}
  In order to prove the theorem we consider an equivalent formulation of the
  norm $B^s_{p, \infty, \ell} (\mathbb{R}^2)$ (see, e.g., Theorem 6.9 Chapter
  6 of {\cite{Triebel2006}}) namely, for any $g \in B^s_{p, \infty, \ell}
  (\mathbb{R}^2)$,
  \begin{equation}
    \| g \|_{B^s_{p, \infty, \ell} (\mathbb{R}^2)} \sim \| g \|_{L^p_{\ell}
    (\mathbb{R}^2)} + \sup_{0 < | h | \leqslant 1, h \in \mathbb{R}^2} | h
    |^{- s} \| \tau_h g - g \|_{L^p_{\ell} (\mathbb{R}^2)} .
    \label{eq:extension1}
  \end{equation}
  By Definition \ref{definition:difference}, using the fact that $\| f
  \|_{L^p_{\ell} (\varepsilon \mathbb{Z}^2)} \sim \| \mathcal{E}^{\varepsilon}
  (f) \|_{L^p_{\ell} (\mathbb{R}^2)}$ and writing $g =
  \mathcal{E}^{\varepsilon} (f)$, we have
  \[ \| g \|_{L^p_{\ell} (\mathbb{R}^2)} + \sup_{\varepsilon < | h |
     \leqslant 1} | h |^{- s} \| \tau_h g - g \|_{L^p_{\ell} (\mathbb{R}^2)}
     \lesssim \| f \|_{W^{s, p}_{\ell} (\varepsilon \mathbb{Z}^2)} . \]
  When $| h | \leqslant \varepsilon$ the function $| \tau_h g - g |$ could be
  nonzero only in a rectangle of area $\varepsilon h$, and when it is nonzero we
  get
  \[ | \tau_h g (z') - g (z') | \leqslant \varepsilon^s |
     \nabla_{\varepsilon}^s f (z) |, \]
  where $z \in \varepsilon \mathbb{Z}^2$ such that $| z' - z | \leqslant
  \varepsilon$. Since $| h | \leqslant \varepsilon$ and $s \leqslant \frac{1}{p}$ we
  get
  \[ | h |^{- s} \| \tau_h g - g \|_{L^p_{\ell} (\mathbb{R}^2)} \lesssim | h
     |^{- s} | h |^{\frac{1}{p}} \varepsilon^{s - \frac{1}{p}} \|
     \nabla_{\varepsilon}^s f \|_{L^p_{\ell} (\varepsilon \mathbb{Z}^2)}
     \lesssim \left( \frac{| h |}{\varepsilon} \right)^{\frac{1}{p} - s} \|
     \nabla_{\varepsilon}^s f \|_{L^p_{\ell} (\varepsilon \mathbb{Z}^2)}
     \lesssim \| \nabla_{\varepsilon}^s f \|_{L^p_{\ell} (\varepsilon
     \mathbb{Z}^2)} . \]\end{proof}

\begin{corollary}
  \label{corollary:extension1}For any $p \in [1, + \infty)$, $\delta>0$, $1\leqslant q \leqslant +\infty$, and $0 < s
  \leqslant \frac{1}{p}$, the operator $\mathcal{E}^{\varepsilon}$ is
  continuous from $B^s_{p, q, \ell} (\varepsilon \mathbb{Z}^2)$ into $B^s_{p,
  \infty, \ell} (\mathbb{R}^2)$. Furthermore the norm
  \[ \sup_{0 < \varepsilon \leqslant 1} \| \mathcal{E}^{\varepsilon}
     \|_{\mathcal{L} (B^s_{p, q, \ell} (\varepsilon \mathbb{Z}^2), B^{s-\delta}_{p,
     \infty, \ell} (\mathbb{R}^2))} < + \infty . \]
\end{corollary}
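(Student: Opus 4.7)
The plan is to deduce the corollary from Theorem \ref{theorem:extension1} by interposing a chain of uniform embeddings on the lattice side. First I would reduce to the case $q = \infty$ via the trivial observation that the sequence $(2^{js}\|\Delta_j f\|_{L^p_\ell})_{-1 \leq j \leq J_\varepsilon}$ has its $\ell^\infty$-norm dominated by its $\ell^q$-norm, so
\[ B^s_{p,q,\ell}(\varepsilon \mathbb{Z}^2) \hookrightarrow B^s_{p,\infty,\ell}(\varepsilon \mathbb{Z}^2) \]
continuously with embedding constant $1$, in particular uniformly in $0 < \varepsilon \leq 1$.

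Next I would observe that the proof of Lemma \ref{lemma:second:inequality} actually establishes the stronger statement
\[ \|f\|_{W^{s,p}_\ell(\varepsilon \mathbb{Z}^2)} \lesssim \|f\|_{B^s_{p,\infty,\ell}(\varepsilon \mathbb{Z}^2)} \]
(with constants uniform in $\varepsilon$), since every intermediate estimate in that proof is phrased in terms of $\|f\|_{B^s_{p,\infty,\ell}}$, and only the final line invokes $B^s_{p,p,\ell} \hookrightarrow B^s_{p,\infty,\ell}$ to produce the weaker conclusion that is actually stated. Thus without any new argument we obtain the uniform embedding $B^s_{p,\infty,\ell}(\varepsilon \mathbb{Z}^2) \hookrightarrow W^{s,p}_\ell(\varepsilon \mathbb{Z}^2)$ for $0 < s < 1$ and $p \in (1,+\infty)$. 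The endpoint $s = 1/p = 1$ (i.e.\ $p = 1$) is the only case falling outside this range, but in that case Theorem \ref{theorem:extension1} already lands in $B^{1/p}_{p,p,\ell}(\mathbb{R}^2) \hookrightarrow B^{1/p}_{p,\infty,\ell}(\mathbb{R}^2)$, so it may be handled by combining the analogue of the above argument using the equivalent difference norm of Remark \ref{remark:equivalence:difference} with the $s = 1$ branch of Theorem \ref{theorem:extension1}.

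Finally, Theorem \ref{theorem:extension1} itself supplies the continuous map
\[ \mathcal{E}^\varepsilon : W^{s,p}_\ell(\varepsilon \mathbb{Z}^2) \to B^s_{p,\infty,\ell}(\mathbb{R}^2), \]
uniformly in $0 < \varepsilon \leq 1$. Composing the three maps
\[ B^s_{p,q,\ell}(\varepsilon \mathbb{Z}^2) \hookrightarrow B^s_{p,\infty,\ell}(\varepsilon \mathbb{Z}^2) \hookrightarrow W^{s,p}_\ell(\varepsilon \mathbb{Z}^2) \xrightarrow{\,\mathcal{E}^\varepsilon\,} B^s_{p,\infty,\ell}(\mathbb{R}^2) \]
delivers the claim with a uniform operator norm, as required.

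I expect the only subtle point to be the verification that the constants in the reinterpreted Lemma \ref{lemma:second:inequality} remain uniform in $\varepsilon$: the dyadic splitting $\|\tau_y f - f\|_{L^p_\ell} \lesssim \|f\|_{B^s_{p,\infty,\ell}}\bigl(|y|\sum_{j \leq j_y} 2^{(1-s)j} + \sum_{j > j_y} 2^{-sj}\bigr)$ from that proof must remain balanced when the choice $2^{j_y}\sim |y|^{-1}$ interacts with the lattice cutoff $J_\varepsilon$, but since $y \geq \varepsilon$ forces $j_y \leq J_\varepsilon$ the splitting is legitimate and the geometric sums give the bound $|y|^s$ with $\varepsilon$-independent constants.
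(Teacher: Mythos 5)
Your proof is correct and follows essentially the same chain of embeddings as the paper's own (one-line) proof, namely $B^s_{p,q,\ell}(\varepsilon\mathbb{Z}^2) \hookrightarrow W^{s,p}_\ell(\varepsilon\mathbb{Z}^2) \xrightarrow{\mathcal{E}^\varepsilon} B^s_{p,\infty,\ell}(\mathbb{R}^2)$. The paper cites the intermediate embedding in the form $B^s_{p,p,\ell}\hookrightarrow W^{s,p}_\ell$, which as stated only covers $q\leq p$; your observation that the proof of Lemma~\ref{lemma:second:inequality} actually yields $B^s_{p,\infty,\ell}\hookrightarrow W^{s,p}_\ell$ is accurate (that proof manipulates $\|f\|_{B^s_{p,\infty,\ell}}$ throughout, downgrading only in its final line) and cleanly handles all $q\in[1,\infty]$, so your version is if anything a bit more careful on this point.
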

\begin{proof}
  The corollary is a simple consequence of the continuous immersion $B^s_{p,
  p, \ell} (\varepsilon \mathbb{Z}^2) \hookrightarrow W^{s, p}_{\ell}
  (\varepsilon \mathbb{Z}^2)$ proved in Theorem \ref{theorem:sobolev_besov}.
\end{proof}

\begin{theorem}
  \label{theorem:extension2}Consider $0 < s \leqslant 1$ and let
  $f_{\varepsilon}$ be a sequence of functions in $W^{s, p}_{\ell}
  (\varepsilon \mathbb{Z}^2)$ such that $\sup_{0 < \varepsilon \leqslant 1} \|
  f_{\varepsilon} \|_{W^{s, p}_{\ell} (\varepsilon \mathbb{Z}^2)} < + \infty$,
  and $\mathcal{E}^{\varepsilon} (f_{\varepsilon}) \rightarrow f$ in
  $L^p_{\ell} (\mathbb{R}^2)$, then $f \in B^s_{p, \infty, \ell}
  (\mathbb{R}^2)$. Furthermore if $s = 1$ and $1 < p < + \infty$ then $f \in
  W^{1, p}_{\ell} (\mathbb{R}^2)$ (where $W^{1, p}_{\ell} (\mathbb{R}^2)$ is
  the weighted Sobolev space of one times weakly differentiable functions with
  weak derivatives in $L_{\ell}^p (\mathbb{R}^2)$).
\end{theorem}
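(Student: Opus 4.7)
The plan is to establish the modulus-of-continuity bound
\[
\|\tau_h f - f\|_{L^p_\ell(\mathbb{R}^2)} \lesssim |h|^s \sup_{0<\varepsilon\leqslant 1} \|f_\varepsilon\|_{W^{s,p}_\ell(\varepsilon\mathbb{Z}^2)}, \qquad 0 < |h| \leqslant 1,
\]
and then invoke the translation-based characterization of $B^s_{p,\infty,\ell}(\mathbb{R}^2)$ (Theorem 6.9 of \cite{Triebel2006}) already used in the proof of Theorem \ref{theorem:extension1}. The starting point is the triangle inequality
\[
\|\tau_h f - f\|_{L^p_\ell} \leqslant \|\tau_h(f - \mathcal{E}^\varepsilon f_\varepsilon)\|_{L^p_\ell} + \|\tau_h \mathcal{E}^\varepsilon f_\varepsilon - \mathcal{E}^\varepsilon f_\varepsilon\|_{L^p_\ell} + \|\mathcal{E}^\varepsilon f_\varepsilon - f\|_{L^p_\ell},
\]
in which the first and third terms vanish as $\varepsilon \to 0$, using the hypothesis $\mathcal{E}^\varepsilon f_\varepsilon \to f$ in $L^p_\ell$ together with the weight comparability $\rho_\ell(\cdot - h) \sim \rho_\ell$ for $|h|\leqslant 1$. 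The real work is to control the middle term uniformly in $\varepsilon \leqslant |h|$.

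I would decompose $h = h' + \eta$ with $h' \in \varepsilon\mathbb{Z}^2$ the closest lattice point to $h$, so that $|\eta| \leqslant \varepsilon\sqrt{2}$. Lattice translations commute with $\mathcal{E}^\varepsilon$ (both sides are step functions constant on the same cells), so Definition \ref{definition:difference} applied at the lattice vector $h'$ gives
\[
\|\tau_{h'} \mathcal{E}^\varepsilon f_\varepsilon - \mathcal{E}^\varepsilon f_\varepsilon\|_{L^p_\ell(\mathbb{R}^2)} \sim \|\tau_{h'} f_\varepsilon - f_\varepsilon\|_{L^p_\ell(\varepsilon\mathbb{Z}^2)} \leqslant |h'|^s \|f_\varepsilon\|_{W^{s,p}_\ell} \lesssim |h|^s \|f_\varepsilon\|_{W^{s,p}_\ell}.
\]
For the residual sub-cell shift $\eta$, the key geometric observation is that $\mathcal{E}^\varepsilon f_\varepsilon$ is constant on each cell $Q_\varepsilon(z)$, so $\tau_\eta \mathcal{E}^\varepsilon f_\varepsilon - \mathcal{E}^\varepsilon f_\varepsilon$ is supported in thin boundary strips of total area $\lesssim \varepsilon|\eta|$ inside each cell, and equals a nearest-neighbor difference $f_\varepsilon(z + \varepsilon e_i) - f_\varepsilon(z)$ there. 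Using Definition \ref{definition:difference} at the lattice vector $k = \varepsilon e_i$ (for which $|k|\sim \varepsilon$) to estimate $\sum_z \rho_\ell(z)^p |f_\varepsilon(z + \varepsilon e_i) - f_\varepsilon(z)|^p \lesssim \varepsilon^{ps-2} \|f_\varepsilon\|_{W^{s,p}_\ell}^p$, I obtain
\[
\|\tau_\eta \mathcal{E}^\varepsilon f_\varepsilon - \mathcal{E}^\varepsilon f_\varepsilon\|^p_{L^p_\ell} \lesssim \varepsilon|\eta| \cdot \varepsilon^{ps-2} \|f_\varepsilon\|^p_{W^{s,p}_\ell} \lesssim \varepsilon^{ps} \|f_\varepsilon\|^p_{W^{s,p}_\ell},
\]
i.e.\ a bound of $\varepsilon^s \|f_\varepsilon\|_{W^{s,p}_\ell} \leqslant |h|^s \|f_\varepsilon\|_{W^{s,p}_\ell}$ in the regime $\varepsilon \leqslant |h|$.

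Letting $\varepsilon \to 0$ in the triangle inequality establishes the claimed modulus bound. For $0 < s < 1$ this is exactly the translation characterization of the Besov norm and proves $f \in B^s_{p,\infty,\ell}(\mathbb{R}^2)$. For $s = 1$ and $1 < p < \infty$, the same bound says that $h^{-1}(\tau_h f - f)$ is uniformly bounded in the reflexive space $L^p_\ell(\mathbb{R}^2)$ as $h\to 0$; extracting weak limits along coordinate directions identifies the distributional partial derivatives of $f$ as elements of $L^p_\ell$, yielding $f\in W^{1,p}_\ell(\mathbb{R}^2)$. The main obstacle is the sub-cell shift estimate: the step-function extension jumps across every cell boundary, and the bookkeeping must precisely balance the boundary area $\varepsilon|\eta|$ against the nearest-neighbor difference bound $\varepsilon^{ps-2}\|f_\varepsilon\|^p$ so that no regularity is lost in the limit. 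This mechanism is exactly how the regularity loss from $B^s_{p,\infty,\ell}$ down to $B^{1/p}_{p,p,\ell}$ seen in Theorem \ref{theorem:extension1} is healed once convergence is available.
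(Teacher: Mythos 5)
Your proof is essentially the paper's argument, with one improvement in explicitness. The paper's proof passes directly from $\sup_{\delta < |h| < 1}|h|^{-s}\|\tau_h\mathcal{E}^\varepsilon(f_\varepsilon)-\mathcal{E}^\varepsilon(f_\varepsilon)\|_{L^p_\ell}$ to $\|f_\varepsilon\|_{W^{s,p}_\ell}$, implicitly relying on the (asserted but not detailed) first displayed inequality in the proof of Theorem~\ref{theorem:extension1}. You make that missing step explicit: the decomposition $h = h' + \eta$ into a lattice shift controlled by Definition~\ref{definition:difference} and a sub-cell residual $|\eta|\lesssim\varepsilon$ controlled by the boundary-strip area count $\varepsilon|\eta|\cdot\varepsilon^{ps-2}\lesssim\varepsilon^{ps}$ once $\varepsilon\leqslant|h|$. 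Two small points that you should tidy but that do not affect correctness: (a) one should pick $h'$ with $|h'|\leqslant 1$ (e.g.\ coordinate-wise truncation toward the origin rather than nearest point) so that Definition~\ref{definition:difference} applies; (b) for $s=1$ the first-difference modulus does not itself give $B^1_{p,\infty,\ell}$ (which requires second differences in Triebel's characterization), but it does give $W^{1,p}_\ell$ via the difference-quotient/weak-limit argument you describe, and then $W^{1,p}_\ell\hookrightarrow B^1_{p,\infty,\ell}$ closes the first assertion. This last point is also glossed over in the paper's proof, so you are in the same boat.
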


\begin{proof}
  We have that
  \begin{eqnarray}
    \| f \|_{B^s_{p, \infty, \ell} (\mathbb{R}^2)} & \sim & \| f
    \|_{L^p_{\ell} (\mathbb{R}^2)} + \sup_{| h | < 1} | h |^{- s} \| \tau_h f
    - f \|_{L^p_{\ell} (\mathbb{R}^2)} \nonumber\\
    & \sim & \| f \|_{L^p_{\ell} (\mathbb{R}^2)} + \sup_{\delta > 0}
    \sup_{\delta < | h | < 1} | h |^{- s} \| \tau_h f - f \|_{L^p_{\ell}
    (\mathbb{R}^2)} \nonumber\\
    & \lesssim & \| f \|_{L^p_{\ell} (\mathbb{R}^2)} + \sup_{\delta > 0}
    \limsup_{\varepsilon \rightarrow 0} \sup_{\delta < | h | < 1} | h |^{- s}
    \| \tau_h \mathcal{E}^{\varepsilon} (f_{\varepsilon}) -
    \mathcal{E}^{\varepsilon} (f_{\varepsilon}) \|_{L^p_{\ell}} \nonumber\\
    & \lesssim & \| f \|_{L^p_{\ell} (\mathbb{R}^2)} + \sup_{\delta > 0}
    \limsup_{\varepsilon \rightarrow 0} \| f_{\varepsilon} \|_{W^{s, p}_{\ell}
    (\varepsilon \mathbb{Z}^2)} \nonumber\\
    & \lesssim & \sup_{0 < \varepsilon \leqslant 1} \| f_{\varepsilon}
    \|_{W^{s, p}_{\ell} (\varepsilon \mathbb{Z}^2)} < + \infty ,\nonumber
  \end{eqnarray}
  where we used that, for any $g\in L^p_{\ell}(\varepsilon \mathbb{Z}^2)$,  $\|\mathcal{E}^{\varepsilon}(g) \|_{L^p_{\ell}(\mathbb{R}^2)} \sim \|g \|_{L^p(\varepsilon \mathbb{Z}^2)}$,  and that, for $\delta \geq \varepsilon$, $\sup_{\delta<|h|<1}|h|^{-s} \|\tau_h g-g \|_{L^p_{\ell}(\varepsilon\mathbb{Z}^2)} \leq \sup_{\varepsilon <|h|<1} |h|^{-s}\|\tau_h g-g \|_{L^p_{\ell}(\varepsilon\mathbb{Z}^2)}  $. \\
  
   For the case $s = 1$ we recall the following properties of Sobolev spaces:
  Define the operator $\tilde{D}_h : L^p_{\ell} (\mathbb{R}^2) \rightarrow
  L^p_{\ell} (\mathbb{R}^2)$ as
  \[ \tilde{D}_h (g) = \frac{\tau_h g - g}{| h |}, \quad g \in L^p
     (\mathbb{R}^2), h \in \mathbb{R}^2 \]
  then if
  \[ \sup_{| h | \leqslant 1} (\| \tilde{D}_h g \|_{L^p_{\ell}
     (\mathbb{R}^2)}) < C, \]
  where $C \in \mathbb{R}_+$ is a suitable constant, then $g \in W^{s,
  p}_{\ell} (\mathbb{R}^2)$ and
  \[ \| \nabla g \|_{L^p_{\ell} (\mathbb{R}^2)} \leqslant 2 C, \]
  see, e.g., Theorem 3 in Chapter 5, Section 5.8 of {\cite{Evans1998}} for the
  proof in unweighted case, the case with weights is similar. Furthermore we observe that, considering $w \in L^p_{\ell}(\mathbb{R}^2)$ and $\varepsilon < | h |$, if we write
$n_{\varepsilon} = \left\lfloor \frac{| h |}{\varepsilon} \right\rfloor$ and
$e_h = \frac{h}{| h |}$ we get
\begin{eqnarray}
  \left\| \frac{\tau_h w - w}{| h |} \right\|_{L^p_{\ell}(\mathbb{R}^2)} & = &
  \frac{\varepsilon}{| h |}  \frac{\left\| \tau_h (w) - \tau_{n_{\varepsilon}
  e_h} (w) + \sum_{j = 1}^{n_{\varepsilon}} \tau_{j \varepsilon e_h} (w) -
  \tau_{(j - 1) \varepsilon e_h} (w) \right\|_{L^p_{\ell}(\mathbb{R}^2)}}{\varepsilon}
  \nonumber\\
  & \leqslant & \frac{1}{| h |} \| \tau_h (w) - \tau_{n_{\varepsilon} e_h}
  (w) \|_{L^p_{\ell}(\mathbb{R}^2)} + \frac{\varepsilon}{| h |} \sum_{j =
  1}^{n_{\varepsilon}} \| \tau_{(j - 1) \varepsilon e_h}
  (\tilde{D}_{\varepsilon e_h} w) \|_{L^p_{\ell}(\mathbb{R}^2)} \nonumber\\
  & \lesssim & \frac{1}{| h |} \| \tau_h (w) - \tau_{n_{\varepsilon} e_h} (w)
  \|_{L^p_{\ell}} + \frac{\varepsilon n_{\varepsilon}}{| h |} \|
  \tilde{D}_{\varepsilon e_h} w  \|_{L^p_{\ell}(\mathbb{R}^2)} \nonumber\\
  & \lesssim & \frac{1}{| h |} \| \tau_h (w) - \tau_{n_{\varepsilon} e_h} (w)
  \|_{L^p_{\ell}} + \| \tilde{D}_{\varepsilon e_h} w  \|_{L^p_{\ell}(\mathbb{R}^2)}
  \nonumber,
\end{eqnarray}
where we use that $\| g \|_{L^p_{\ell}} \sim \| \tau_k (g) \|_{L^p_{\ell}}$
when $| k | < 1$, and that $\frac{\varepsilon n_{\varepsilon}}{| h |} < 1$. By
the lower semi-continuity of norm of $L^p_{\ell}(\mathbb{R}^2)$ with respect to the weak convergence, we have that
\begin{eqnarray}
  \| \tilde{D}_h f \|_{L^p_{\ell} (\mathbb{R}^2)}= \left\| \frac{\tau_h f - f}{| h |} \right\|_{L^p_{\ell} (\mathbb{R}^2)} &
  \leqslant & \limsup_{\varepsilon \rightarrow 0} \left\| \frac{\tau_h
  (\mathcal{E}^{\varepsilon} (f_{\varepsilon})) - \mathcal{E}^{\varepsilon}
  (f_{\varepsilon})}{| h |} \right\|_{L^p_{\ell} (\mathbb{R}^2)} \nonumber\\
  & \leqslant & \limsup_{\varepsilon \rightarrow 0} \| \tilde{D}_{\varepsilon
  e_h} \mathcal{E}^{\varepsilon} (f_{\varepsilon})  \|_{L^p_{\ell}
  (\mathbb{R}^2)} \nonumber\\
  & \leqslant & \sup_{0<\varepsilon \leqslant 1} \| \nabla_{\varepsilon} f_{\varepsilon} \|_{L^p_{\ell}
  (\varepsilon \mathbb{Z}^2)} \nonumber,
\end{eqnarray}
where we used that $\| \tau_h (\mathcal{E}^{\varepsilon} (f_{\varepsilon}))-\tau_{n_{\varepsilon} e_h} (\mathcal{E}^{\varepsilon} (f_{\varepsilon}))
\|_{L^p_{\ell}} = 0$.
  This implies that $\sup_{| h | \leqslant 1} \| D_h f \|_{L^p_{\ell}
  (\mathbb{R}^2)} \leqslant \sup_{0 < \varepsilon \leqslant 1} \|
  \nabla_{\varepsilon} f_{\varepsilon} \|_{L^p_{\ell} (\varepsilon
  \mathbb{Z}^2)}$ and so
  \[ \| \nabla f \|_{L^p_{\ell} (\mathbb{R}^2)^{}} \leqslant 2 \sup_{0 <
     \varepsilon \leqslant 1} \| \nabla_{\varepsilon} f_{\varepsilon}
     \|_{L^p_{\ell} (\varepsilon \mathbb{Z}^2)} . \]
\end{proof}

We prove now a result about the extension operator and Besov spaces with
negative regularity $s < 0$ which, although probably not being optimal, is
enough for proving the main results in the current paper.

\begin{theorem}
  \label{theorem:extension:lattice:negative}Consider $p \in (1, + \infty)$, $s
  \leqslant 0$, $\delta > 0$ and $\ell \in \mathbb{R}$ then the extension
  operator $\mathcal{E}^{\varepsilon}$ is continuous from $B^s_{p, p, \ell}
  (\varepsilon \mathbb{Z}^2)$ into $B^{s - \delta}_{p, p, \ell}
  (\mathbb{R}^2)$. Furthermore the norm
  \[ \sup_{0 < \varepsilon \leqslant 1} \| \mathcal{E}^{\varepsilon}
     \|_{\mathcal{L} (B^s_{p, p, \ell} (\varepsilon \mathbb{Z}^2), B^{s -
     \delta}_{p, p, \ell} (\mathbb{R}^2))} < + \infty . \]
\end{theorem}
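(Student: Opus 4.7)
The plan is to proceed by duality. A direct computation from the definition \eqref{eq:definition:extension1} shows that the formal adjoint of $\mathcal{E}^{\varepsilon}$, with respect to the natural $L^2$ pairings, is the local averaging operator
\[
\mathcal{R}^{\varepsilon}(g)(z) \;:=\; \frac{1}{\varepsilon^2}\int_{Q_{\varepsilon}(z)} g(x)\, dx, \qquad z \in \varepsilon\mathbb{Z}^2,
\]
in the sense that $\int_{\mathbb{R}^2} \mathcal{E}^{\varepsilon}(f)(x)\, g(x)\, dx = \int_{\varepsilon\mathbb{Z}^2} f(z)\, \mathcal{R}^{\varepsilon}(g)(z)\, dz$ for $f \in \mathcal{S}'(\varepsilon\mathbb{Z}^2)$ and $g \in \mathcal{S}(\mathbb{R}^2)$. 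For $p \in (1,\infty)$ with conjugate exponent $p'$, the weighted Besov spaces $B^{\sigma}_{p,p,\ell}$ are in duality with $B^{-\sigma}_{p',p',-\ell}$ through this pairing: on $\mathbb{R}^2$ this is classical, and on the lattice the same statement follows by reducing the weight via Theorem \ref{theorem:besov:weight} and running the standard Littlewood--Paley argument. Consequently, the theorem is equivalent to establishing
\[
\|\mathcal{R}^{\varepsilon}(g)\|_{B^{-s}_{p',p',-\ell}(\varepsilon\mathbb{Z}^2)} \;\lesssim\; \|g\|_{B^{-s+\delta}_{p',p',-\ell}(\mathbb{R}^2)},
\]
uniformly in $\varepsilon \in (0,1]$.

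Since $-s \geq 0$ and $-s + \delta > 0$, the target estimate now lives entirely in positive-regularity spaces, so the next step is to pass to the difference-quotient formulation $W^{t,p'}_{-\ell}$ on both sides. Picking $0 < \delta' < \delta$ and applying Remark \ref{remark:first:inequality} on the lattice side, together with the standard continuum analogue of Lemma \ref{lemma:second:inequality} (extended to all $t \geq 0$ via higher-order differences), the problem reduces to showing that $\mathcal{R}^{\varepsilon}$ maps $W^{t,p'}_{-\ell}(\mathbb{R}^2)$ continuously into $W^{t,p'}_{-\ell}(\varepsilon\mathbb{Z}^2)$ for every $t \geq 0$, uniformly in $\varepsilon \in (0,1]$.

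This last bound is the core of the proof and is a direct Jensen-type computation. For the $L^{p'}_{-\ell}$ part, $|\mathcal{R}^{\varepsilon}(g)(z)|^{p'} \leq \varepsilon^{-2}\int_{Q_{\varepsilon}(z)}|g(x)|^{p'}\,dx$ by Jensen; summing with the weight $\rho_{-\ell}^{p'}(z) \sim \rho_{-\ell}^{p'}(x)$ on $Q_{\varepsilon}(z)$ uniformly for $\varepsilon \leq 1$ (using \eqref{eq:weight:sim}) gives the $L^{p'}_{-\ell}$ bound. For finite differences: since $h \in \varepsilon\mathbb{Z}^2$, translation by $h$ commutes with integration over $Q_{\varepsilon}(z)$, hence with $\mathcal{R}^{\varepsilon}$, so $D_h^n \mathcal{R}^{\varepsilon}(g) = \mathcal{R}^{\varepsilon}(D_h^n g)$ for every order $n$; applying the same Jensen bound to $D_h^n g$ in place of $g$ and then dividing by $|h|^{n \wedge t}$ before taking the supremum over admissible $h$ yields the required $W^{t,p'}_{-\ell}$ estimate.

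The main obstacle is less technical than bookkeeping: one must distribute the buffer $\delta' < \delta$ so that both the lattice Besov-to-$W$ passage and the continuum $W$-to-Besov passage fit, and one must invoke the higher-order difference formulation of Remark \ref{remark:first:inequality} (and its continuum analogue) whenever $-s + \delta' \geq 1$, which is the only nontrivial regime. A subsidiary point that requires some care is the duality statement for weighted lattice Besov spaces, which is not explicitly spelled out in the paper but follows directly from Theorem \ref{theorem:besov:weight} and the classical Littlewood--Paley duality argument transferred to the lattice via Lemma \ref{lemma-fm-reg}.
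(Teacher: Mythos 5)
Your proposal is essentially the paper's proof of Theorem~\ref{theorem:extension:lattice:negative}: both dualize the estimate, use that $\mathcal{E}^{\varepsilon}$ has the local averaging operator (your $\mathcal{R}^{\varepsilon}$, the paper's $\mathcal{D}^{\varepsilon}$ from~\eqref{eq:operatorD}) as its $L^2$-adjoint, and reduce to a positive-regularity bound for the discretization operator proved via the commutativity $D^n_h\mathcal{D}^{\varepsilon}=\mathcal{D}^{\varepsilon}D^n_{-h}$ together with Jensen's inequality on the cubes $Q_{\varepsilon}(z)$ --- which is exactly the content and proof of Lemma~\ref{lemma:discretization:operator}. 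The one organizational difference is that where you invoke an abstract lattice Besov duality $\langle f,\mathcal{R}^{\varepsilon}g\rangle\lesssim\|f\|_{B^s_{p,p,\ell}}\|\mathcal{R}^{\varepsilon}g\|_{B^{-s}_{p',p',-\ell}}$, the paper sidesteps stating such a duality theorem by writing the pairing explicitly via the self-adjoint powers $(1-\Delta_{\varepsilon\mathbb{Z}^2})^{\pm(s-\delta/2)}$ and applying ordinary Hölder in $L^p_{\ell}\times L^{p'}_{-\ell}$, with Theorem~\ref{theorem:besov:weight} converting the resulting quantities back to Besov norms --- a concrete substitute for the duality you appeal to, but logically the same step. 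Both routes are correct and the bookkeeping of the $\delta$-losses distributes the same way.
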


Before proving Theorem \ref{theorem:extension:lattice:negative}, we introduce
the notion of discretization operator $\mathcal{D}^{\varepsilon}$ from the
space $L^p_{\ell} (\mathbb{R}^2)$, where $p \in [1, + \infty]$, into
$\mathcal{S}' (\varepsilon \mathbb{Z}^2)$ defined as
\begin{equation}
  \mathcal{D}^{\varepsilon} (\varphi) (z) = \frac{1}{\varepsilon^2}
  \int_{Q_{\varepsilon} (z)} \varphi (x) \mathd x, \quad \varphi \in
  L^p_{\ell} (\mathbb{R}^2), \quad z \in \varepsilon \mathbb{Z}^2,
  \label{eq:operatorD}
\end{equation}
where $Q_{\varepsilon} (z)$ as defined in equation {\eqref{eq:defsquares}}.

\begin{lemma}
  \label{lemma:discretization:operator}Consider $p \in (1, + \infty)$ then the
  linear operator $\mathcal{D}^{\varepsilon} : L^p_{\ell} (\mathbb{R}^2)
  \rightarrow L^p_{\ell} (\varepsilon \mathbb{Z}^2)$ is continuous with a norm
  uniformly bounded in $0 < \varepsilon \leqslant 1$, and also, writing $q \in
  (1, + \infty)$ such that $\frac{1}{p} + \frac{1}{q} = 1$, for any $f \in
  L^p_{\ell} (\mathbb{R}^2)$ and $g \in L^q_{\ell} (\varepsilon \mathbb{Z}^2)$
  we have
  \begin{equation}
    \int_{\mathbb{R}^2} f (x) \mathcal{E}^{\varepsilon} (g) (x) \mathd x =
    \int_{\varepsilon \mathbb{Z}^2} \mathcal{D}^{\varepsilon} (f) (z) g (z)
    \mathd z. \label{eq:Depsilon1}
  \end{equation}
  Furthermore for any $s > 0$, $\delta > 0$ and $f \in B^s_{p, \infty, \ell}
  (\mathbb{R}^2)$ we have
  \[ \| \mathcal{D}^{\varepsilon} f \|_{B^{s - \delta}_{p, p, \ell}
     (\varepsilon \mathbb{Z}^2)} \lesssim \| f \|_{B^s_{p, \infty, \ell}
     (\mathbb{R}^2)} . \]
\end{lemma}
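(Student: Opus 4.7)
The plan is to prove the three assertions in order, with the main work in the Besov estimate.

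For the first assertion, I would observe that Jensen's inequality applied inside each cell gives $|\mathcal{D}^{\varepsilon}(\varphi)(z)|^p \le \frac{1}{\varepsilon^2}\int_{Q_{\varepsilon}(z)}|\varphi(x)|^p\,\mathd x$, and then summing with the weight, using the equivalence \eqref{eq:weight:sim} to replace $\rho_\ell(z)$ by $\rho_\ell(x)$ up to a uniform constant on each $Q_\varepsilon(z)$, yields
\[
\|\mathcal{D}^{\varepsilon}\varphi\|_{L^p_\ell(\varepsilon\mathbb{Z}^2)}^p
= \varepsilon^2 \sum_z \rho_\ell(z)^p |\mathcal{D}^\varepsilon\varphi(z)|^p
\lesssim \int_{\mathbb{R}^2}\rho_\ell(x)^p|\varphi(x)|^p\,\mathd x,
\]
with constants independent of $0<\varepsilon\le1$.

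For the duality identity \eqref{eq:Depsilon1}, I would just unfold the definition \eqref{eq:definition:extension1} of $\mathcal{E}^{\varepsilon}$ and Fubini: since the squares $Q_\varepsilon(z')$ partition $\mathbb{R}^2$,
\[
\int_{\mathbb{R}^2} f(x)\mathcal{E}^\varepsilon(g)(x)\,\mathd x
= \sum_{z'\in\varepsilon\mathbb{Z}^2} g(z')\int_{Q_\varepsilon(z')}f(x)\,\mathd x
= \varepsilon^2\sum_{z'}g(z')\mathcal{D}^\varepsilon(f)(z')
= \int_{\varepsilon\mathbb{Z}^2}\mathcal{D}^\varepsilon(f)(z)g(z)\,\mathd z.
\]

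The heart of the lemma is the Besov bound. The key observation is that $\mathcal{D}^\varepsilon$ commutes with translations by lattice vectors: for $k\in\varepsilon\mathbb{Z}^2$,
\[
\tau_k\mathcal{D}^\varepsilon f(z)=\mathcal{D}^\varepsilon f(z+k)=\frac{1}{\varepsilon^2}\int_{Q_\varepsilon(z+k)}f(x)\,\mathd x=\mathcal{D}^\varepsilon(\tau_k f)(z),
\]
so $D_k\mathcal{D}^\varepsilon f=\mathcal{D}^\varepsilon(\tau_k f-f)$. Combining this with the first assertion and the standard difference characterization of the Besov space on $\mathbb{R}^2$ (namely $\|\tau_h f-f\|_{L^p_\ell(\mathbb{R}^2)}\lesssim |h|^s\|f\|_{B^s_{p,\infty,\ell}(\mathbb{R}^2)}$ for $0<|h|\le1$, which holds for $s\le 1$; see the equivalence used in \eqref{eq:extension1}), I obtain for $0<s\le 1$ and $k\in\varepsilon\mathbb{Z}^2$ with $0<|k|\le1$
\[
\|D_k\mathcal{D}^\varepsilon f\|_{L^p_\ell(\varepsilon\mathbb{Z}^2)}
\lesssim \|\tau_k f-f\|_{L^p_\ell(\mathbb{R}^2)}
\lesssim |k|^s\|f\|_{B^s_{p,\infty,\ell}(\mathbb{R}^2)}.
\]
This shows $\|\mathcal{D}^\varepsilon f\|_{W^{s,p}_\ell(\varepsilon\mathbb{Z}^2)}\lesssim \|f\|_{B^s_{p,\infty,\ell}(\mathbb{R}^2)}$ uniformly in $\varepsilon$, and then Lemma \ref{lemma:first:inequality} delivers the loss of $\delta$ and the conclusion $\|\mathcal{D}^\varepsilon f\|_{B^{s-\delta}_{p,p,\ell}(\varepsilon\mathbb{Z}^2)}\lesssim \|f\|_{B^s_{p,\infty,\ell}(\mathbb{R}^2)}$.

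For $s>1$, the same strategy applies but using higher order finite differences: one checks that $D_k^m\mathcal{D}^\varepsilon f=\mathcal{D}^\varepsilon(D_k^m f)$ by the translation commutation, controls the iterated differences of $f$ in $L^p_\ell(\mathbb{R}^2)$ by $|k|^{m\wedge s}\|f\|_{B^s_{p,\infty,\ell}}$ via the analogue of \eqref{eq:higher:difference} on $\mathbb{R}^2$, and then appeals to the extension of Lemma \ref{lemma:first:inequality} to arbitrary $s\ge 0$ described in Remark \ref{remark:first:inequality}. I expect the main obstacle to be purely notational bookkeeping for $s>1$; the core idea is the commutation $\tau_k\mathcal{D}^\varepsilon=\mathcal{D}^\varepsilon\tau_k$, which reduces everything to the first assertion together with the difference characterization of Besov spaces on $\mathbb{R}^2$ and the already-established analogue on the lattice.
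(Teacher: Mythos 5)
Your proof is correct and follows essentially the same route as the paper's: Jensen's inequality for the $L^p$ bound, the partition-of-$\mathbb{R}^2$ computation for the duality identity, the commutation of $\mathcal{D}^{\varepsilon}$ with (higher-order) finite differences by lattice vectors, and then Lemma~\ref{lemma:first:inequality} together with Remark~\ref{remark:first:inequality} to pass from $W^{s,p}_\ell$ to $B^{s-\delta}_{p,p,\ell}$. The only micro-refinement in the paper's version is that it first lowers $s$ to a non-integer $s'\in(s-\delta,s)$ before invoking the difference characterization (avoiding the well-known failure of the first-order characterization at integer regularity), but since your argument already concedes a $\delta$-loss this is a cosmetic adjustment, not a gap.
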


\begin{proof}
  Consider $f \in L^p_{\ell} (\mathbb{R}^2)$ then, by Jensen
  inequality, we have
  \begin{eqnarray}
    \| \mathcal{D}^{\varepsilon} ( f) \|^p_{L^p_{\ell} (\varepsilon
    \mathbb{Z}^2)} & = & \varepsilon^2 \sum_{z \in \varepsilon \mathbb{Z}^2}
    \left| \frac{1}{\varepsilon^2} \int_{Q_{\varepsilon} (z)} \rho_{\ell} (z)
    f (x) \mathd x \right|^p \nonumber\\
    & \lesssim & \varepsilon^2 \sum_{z \in \varepsilon \mathbb{Z}^2}
    \frac{1}{\varepsilon^2} \int_{Q_{\varepsilon} (z)} | \rho_{\ell} (x) f (x)
    |^p \mathd x \nonumber\\
    & \lesssim & \int | \rho_{\ell} (x) f (x) |^p \mathd x = \| f
    \|^p_{L^p_{\ell} (\mathbb{R}^2)} , \nonumber
  \end{eqnarray}
  where the constant in the symbol $\lesssim$ can be chosen uniformly in $\varepsilon \leqslant 1$.  Equality {\eqref{eq:Depsilon1}} follows from the definition of
  $\mathcal{E}^{\varepsilon} (g)$.\\
  
  Consider now $f \in B^s_{p, \infty, \ell} (\mathbb{R}^2)$, if $s - \delta <
  s' < s$ such $n - 1 < s' < s \leqslant n$ for some $n \in \mathbb{N}$, we
  have that an equivalent definition of $B^{s'}_{p, \infty, \ell}
  (\mathbb{R}^2)$ is given by
  \[ \| f \|_{B^{s'}_{p, \infty, \ell}(\mathbb{R}^{2})} \sim \| f \|_{L^p_{\ell} (\mathbb{R}^2)} + \sum_{m = 1}^n \sup_{0 <
     | h | \leqslant 1} \frac{\| D^m_h f \|_{L^p_{\ell} (\mathbb{R}^2)}}{| h |^{m \wedge s'}},\]
  where $D^m_h$ is the standard extension to $\mathbb{R}^2$ of the operator
  having the same name introduced in {\eqref{eq:higher:difference}}.\\
  
  When $h \in \varepsilon \mathbb{Z}^2$ the operator $D^m_h$ commutes with the
  operator $\mathcal{D}^{\varepsilon}$ in the following sense: for any $f \in
  L^p_{\ell} (\mathbb{R}^2)$ and for even $n$
  \begin{eqnarray}
    D^m_h (\mathcal{D}^{\varepsilon} (f)) (z) & = & \sum_{m = 0}^n \left(
    \begin{array}{c}
      n\\
      m
    \end{array} \right) (- 1)^{m + 1} \tau_{\left( \frac{n}{2} - m \right) h}
    \mathcal{D}^{\varepsilon} (f) (z) \nonumber\\
    & = & \frac{1}{\varepsilon^2} \sum_{m = 0}^n \left( \begin{array}{c}
      n\\
      m
    \end{array} \right) (- 1)^{m + 1} \int_{Q_{\varepsilon} \left( z + \left(
    \frac{n}{2} - m \right) h \right)} f (x) \mathd x \nonumber\\
    & = & \frac{1}{\varepsilon^2} \sum_{m = 0}^n \left( \begin{array}{c}
      n\\
      m
    \end{array} \right) (- 1)^{m + 1} \int_{Q_{\varepsilon} (z)} f \left( x -
    \left( \frac{n}{2} - m \right) h \right) \mathd x \nonumber\\
    & = & \mathcal{D}^{\varepsilon} (D^m_{- h} (f)) (z), \nonumber
  \end{eqnarray}
  and similarly for $n$ odd. Then, using the result of the first part of the
  proof, we get
  \begin{eqnarray}
    \| \mathcal{D}^{\varepsilon} (f) \|_{W^{s', p}_{\ell} (\varepsilon
    \mathbb{Z}^2)} & = & \| \mathcal{D}^{\varepsilon} (f) \|_{L^p_{\ell}
    (\varepsilon \mathbb{Z}^2)} + \sum_{m = 1}^n \sup_{h \in \varepsilon
    \mathbb{Z}^2, \varepsilon < | h | \leqslant 1} \frac{\| D^m_h
    (\mathcal{D}^{\varepsilon} (f)) \|_{L^p_{\ell} (\varepsilon
    \mathbb{Z}^2)}}{| h |^{m \wedge s'}} \nonumber\\
    & = & \| \mathcal{D}^{\varepsilon} (f) \|_{L^p_{\ell} (\varepsilon
    \mathbb{Z}^2)} + \sum_{m = 1}^n \sup_{h \in \varepsilon \mathbb{Z}^2,
    \varepsilon < | h | \leqslant 1} \frac{\| \mathcal{D}^{\varepsilon} (D^m_h
    f) \|_{L^p_{\ell} (\varepsilon \mathbb{Z}^2)}}{| h |^{m \wedge s'}}
    \nonumber\\
    & \lesssim & \| f \|_{L^p_{\ell} (\mathbb{R}^2)} + \sum_{m = 1}^n \sup_{h
    \in \varepsilon \mathbb{Z}^2, \varepsilon < | h | \leqslant 1} \frac{\|
    D^m_h f \|_{L^p_{\ell} (\mathbb{R}^2)}}{| h |^{m \wedge s'}} \nonumber\\
    & \lesssim & \| f \|_{B^s_{p, \infty, \ell} (\mathbb{R}^2)} . \nonumber
  \end{eqnarray}
  Since, by Lemma \ref{lemma:first:inequality} and Remark
  \ref{remark:first:inequality}, $\| \mathcal{D}^{\varepsilon} (f) \|_{B^{s -
  \delta}_{p, p, \ell} (\varepsilon \mathbb{Z}^2)} \lesssim \|
  \mathcal{D}^{\varepsilon} (f) \|_{W^{s, p}_{\ell} (\varepsilon
  \mathbb{Z}^2)}$ the lemma is proved.
\end{proof}

\begin{proof*}{Proof of Theorem \ref{theorem:extension:lattice:negative}}
  Consider $f \in B^s_{p, p, \ell} (\varepsilon \mathbb{Z}^2)$ then we have
  \begin{equation}
    \| \mathcal{E}^{\varepsilon} (f) \|_{B^{s - \delta}_{p, p, \ell}
    (\mathbb{R}^2)} = \sup_{g \in B^{- s + \delta}_{q, q, - \ell}
    (\mathbb{R}^2), \| g \|_{B^{- s + \delta}_{q, q, - \ell} (\mathbb{R}^2)} =
    1} \left| \int_{\mathbb{R}^2} \mathcal{E}^{\varepsilon} (f) (x) g (x)
    \mathd x \right| , \label{eq:extension:lattice:negative1}
  \end{equation}
  where $\frac{1}{p}+\frac{1}{q}=1$.
  On the other hand, by Lemma \ref{lemma:discretization:operator}, we have
  \[ \int_{\mathbb{R}^2} \mathcal{E}^{\varepsilon} (f) (x) g (x) \mathd x =
     \int_{\varepsilon \mathbb{Z}^2} f (z) \mathcal{D}^{\varepsilon} (g) (z)
     \mathd z. \]
  Furthermore, since $(1 - \Delta_{\varepsilon \mathbb{Z}^2})$ is
  self-adjoint, by H{\"o}lder's inequality, we get
  \[ \begin{array}{rl}
       &\left| \int_{\varepsilon \mathbb{Z}^2} f (z) \mathcal{D}^{\varepsilon}
       (g) (z) \mathd z \right| \\
       =& \left| \int_{\varepsilon \mathbb{Z}^2} \rho_{\ell} (z) (-
       \Delta_{\varepsilon \mathbb{Z}^2} + 1)^{s - \delta / 2} (f) (z) \rho_{-
       \ell} (z) (- \Delta_{\varepsilon \mathbb{Z}^2} + 1)^{- s + \delta / 2}
       (\mathcal{D}^{\varepsilon} (g)) (z) \mathd z \right|\\
       \leqslant& \| (- \Delta_{\varepsilon \mathbb{Z}^2} + 1)^{s - \delta / 2}
       (f) \|_{L^p_{\ell} (\varepsilon \mathbb{Z}^2)} \| (-
       \Delta_{\varepsilon \mathbb{Z}^2} + 1)^{- s + \delta / 2}
       (\mathcal{D}^{\varepsilon} (g)) \|_{L^q_{- \ell} (\varepsilon
       \mathbb{Z}^2)} .
     \end{array} \]
  By Theorem \ref{theorem:besov:weight} and Theorem
  \ref{theorem:besov:lattice:embedding} we obtain
  \[ \| (- \Delta_{\varepsilon \mathbb{Z}^2} + 1)^{s - \delta / 2} (f)
     \|_{L^p_{\ell} (\varepsilon \mathbb{Z}^2)} \lesssim \| f \|_{B^s_{p, p,
     \ell} (\varepsilon \mathbb{Z}^2)} \]
  \[ \| (- \Delta_{\varepsilon \mathbb{Z}^2} + 1)^{- s + \delta / 2}
     (\mathcal{D}^{\varepsilon} (g)) \|_{L^q_{- \ell} (\varepsilon
     \mathbb{Z}^2)} \lesssim \| \mathcal{D}^{\varepsilon} (g) \|_{B^{- s + 3
     \delta / 4}_{q, q, - \ell} (\varepsilon \mathbb{Z}^2)} . \]
  Thus by Lemma \ref{lemma:discretization:operator} we have
  \[ \| \mathcal{D}^{\varepsilon} (g) \|_{B^{- s + 3 \delta / 4}_{q, q, \ell}
     (\varepsilon \mathbb{Z}^2)} \lesssim \| g \|_{B^{- s + \delta}_{q, q, \ell} (\mathbb{R}^2)}, \]
  and so by equality {\eqref{eq:extension:lattice:negative1}} the thesis
  holds.
\end{proof*}

\subsection{Besov spaces on $\mathbb{R}^2 \times \varepsilon
\mathbb{Z}^2$}\label{section:besov:rz}

In this section we discuss the definition and properties of Besov spaces
defined on a four dimensional space where the first two dimensions are
continuous and the second two are discrete. More precisely we want to describe
the following Besov spaces
\begin{equation}
  B^{s_1, s_2}_{p, \ell_1, \ell_2} (\mathbb{R}^2 \times \varepsilon
  \mathbb{Z}^2) \assign B^{s_1}_{p, p, \ell_1} (\mathbb{R}^2, B^{s_2}_{p, p,
  \ell_2} (\varepsilon \mathbb{Z}^2)) \label{eq:besov:r2}
\end{equation}
\begin{equation}
  B^s_{p, r, \ell} (\mathbb{R}^2 \times \varepsilon \mathbb{Z}^2),
  \label{eq:besov:total}
\end{equation}
where $p, r \in [1, + \infty]$, $\ell, \ell_1, \ell_2 \in \mathbb{R}$ and
$s_1, s_2 \in \mathbb{R}$.

\

Before giving a precise definition of the spaces {\eqref{eq:besov:r2}} and
{\eqref{eq:besov:total}}, we introduce some sets of functions on the
topological space \ $\mathbb{R}^2 \times \varepsilon \mathbb{Z}^2$ equipped
with the natural product topology. If $p \in [1, + \infty)$ and $\ell, \ell_1,
\ell_2 \in \mathbb{R}$ we define the (weighted) Lebesgue space $L^p_{\ell_1,
\ell_2} (\mathbb{R}^2 \times \varepsilon \mathbb{Z}^2)$ and $L^p_{\ell}
(\mathbb{R}^2 \times \varepsilon \mathbb{Z}^2)$ as the subset of (Borel)
measurable functions $f : \mathbb{R}^2 \times \varepsilon \mathbb{Z}^2
\rightarrow \mathbb{R}$ for which the norms
\[ \| f \|^p_{L^p_{\ell_1, \ell_2} (\mathbb{R}^2 \times \varepsilon
   \mathbb{Z}^2)} \assign \int_{\mathbb{R}^2 \times \varepsilon \mathbb{Z}^2}
   | f (x, z) \rho_{\ell_1} (x) \rho_{\ell_2} (z) |^p \mathd x \mathd z \]
\[ \| f \|^p_{L^p_{\ell} (\mathbb{R}^2 \times \varepsilon \mathbb{Z}^2)}
   \assign \int_{\mathbb{R}^2 \times \varepsilon \mathbb{Z}^2} | f (x, z)
   \rho_{\ell}^{(4)} (x, z) |^p \mathd x \mathd z, \]
respectively, are finite. First we consider the space of continuous functions
$C^0 (\mathbb{R}^2 \times \varepsilon \mathbb{Z}^2)$ and also the space of
differentiable function $C^k (\mathbb{R}^2 \times \varepsilon \mathbb{Z}^2)$,
i.e. the subset of $C^0 (\mathbb{R}^2 \times \varepsilon \mathbb{Z}^2)$ for
which $f \in C^k (\mathbb{R}^2 \times \varepsilon \mathbb{Z}^2)$ if and only
if for any $\alpha = (\alpha_1, \alpha_2) \in \mathbb{N}^2_0$ with $| \alpha |
= \alpha_1 + \alpha_2 \leqslant k$ we have $\partial^{\alpha}_x f \in C^0
(\mathbb{R}^2 \times \varepsilon \mathbb{Z}^2)$ where
\[ \partial^{\alpha}_x f (x, z) = \partial^{\alpha_1}_{x_1}
   \partial^{\alpha_2}_{x_2} f (x, z), \]
where $x = (x_1, x_2) \in \mathbb{R}^2$. It is important to note that there is a
natural identification between the continuous functions $C^0 (\mathbb{R}^2
\times \varepsilon \mathbb{Z}^2)$ and the Fr{\'e}chet space $C^0
(\mathbb{R}^2, C^0 (\varepsilon \mathbb{Z}^2))$ of continuous functions from
$\mathbb{R}^2$ into the Fr{\'e}chet space $C^0 (\varepsilon \mathbb{Z}^2)$
(which since $\varepsilon \mathbb{Z}^2$ has the discrete topology is
equivalent to the set of functions from $\varepsilon \mathbb{Z}^2$ into
$\mathbb{R}^2$). In a similar way we can identify the space $C^k (\mathbb{R}^2
\times \varepsilon \mathbb{Z}^2)$ with the Fr{\'e}chet space $C^k
(\mathbb{R}^2, C^0 (\varepsilon \mathbb{Z}^2))$ of $k$ differentiable
functions from $\mathbb{R}^2$ into $\varepsilon \mathbb{Z}^2$. We can also
define
\[ \mathcal{S} (\mathbb{R}^2 \times \varepsilon \mathbb{Z}^2) \assign
   \mathcal{S} (\mathbb{R}^2) \hat{\otimes} \mathcal{S} (\varepsilon
   \mathbb{Z}^2), \quad \mathcal{S}' (\mathbb{R}^2 \otimes \varepsilon
   \mathbb{Z}^2) \assign \mathcal{S}' (\mathbb{R}^2) \hat{\otimes}
   \mathcal{S}' (\varepsilon \mathbb{Z}^2), \]
where $\mathcal{S} (\mathbb{R}^2)$ and $\mathcal{S}' (\mathbb{R}^2)$ are the
space of Schwartz test functions and tempered distribution on $\mathbb{R}^2$
respectively, and $\hat{\otimes}$ is the (unique) topological completion of
the tensor product between the nuclear spaces $\mathcal{S} (\mathbb{R}^2)$ and
$\mathcal{S}' (\mathbb{R}^2)$ and the Fr{\'e}chet spaces $\mathcal{S}
(\varepsilon \mathbb{Z}^2)$ and $\mathcal{S}' (\varepsilon \mathbb{Z}^2)$. By
standard arguments we can identify $\mathcal{S} (\mathbb{R}^2 \times
\varepsilon \mathbb{Z}^2)$ and $\mathcal{S}' (\mathbb{R}^2 \times \varepsilon
\mathbb{Z}^2)$ with $\mathcal{S} (\mathbb{R}^2, \mathcal{S} (\varepsilon
\mathbb{Z}^2))$ (i.e. the space of Schwartz test functions taking values in
the the Fr{\'e}chet space $\mathcal{S} (\varepsilon \mathbb{Z}^2)$) and
$\mathcal{S}' (\mathbb{R}^2, \mathcal{S}' (\varepsilon \mathbb{Z}^2))$ (i.e.
the space of tempered distribution taking values in the Fr{\'e}chet space
$\mathcal{S}' (\varepsilon \mathbb{Z}^2)$). We can define a Fourier transform
$\mathcal{F}$ from $\mathcal{S} (\mathbb{R}^2 \times \varepsilon
\mathbb{Z}^2)$ into $\mathcal{S} \left( \mathbb{R}^2 \times
\mathbb{T}^2_{\frac{1}{\varepsilon}} \right) = \mathcal{S} (\mathbb{R}^2)
\hat{\otimes} C^{\infty} \left( \mathbb{T}^2_{\frac{1}{\varepsilon}} \right)$
as follows
\[ \mathcal{F}_{\varepsilon} (f) (k, h) = \int_{\mathbb{R}^2 \times
   \varepsilon \mathbb{Z}^2} e^{i (k \cdot x + h \cdot z) } f (x, z) \mathd x
   \mathd z = \mathcal{F}^x (\mathcal{F}^z_{\varepsilon} (f)), \quad k \in
   \mathbb{R}^2, h \in \mathbb{T}^2_{\frac{1}{\varepsilon}} ,\]
where $f \in \mathcal{S} (\mathbb{R}^2 \times \varepsilon \mathbb{Z}^2)$ and
$\mathcal{F}^x$ is the Fourier transform with respect the continuum variables
$x \in \mathbb{R}^2$ and $\mathcal{F}^z_{\varepsilon}$ is the Fourier
transform with respect the discrete variables $z \in \varepsilon
\mathbb{Z}^2$. We also consider the inverse Fourier transform $\mathcal{F}^{-
1}_{\varepsilon} : \mathcal{S} \left( \mathbb{R}^2 \times
\mathbb{T}^2_{\frac{1}{\varepsilon}} \right) \rightarrow \mathcal{S}
(\mathbb{R}^2 \times \varepsilon \mathbb{Z}^2)$ as
\[ \mathcal{F}_{\varepsilon}^{- 1} (\hat{f}) (x, z) = \frac{1}{(2 \pi)^4}
   \int_{\mathbb{R}^2 \times \mathbb{T}^2_{\frac{1}{\varepsilon}}} e^{- i (k
   \cdot x + h \cdot z) } \hat{f} (k, h) \mathd k \mathd h = \mathcal{F}^{- 1,
   x} (\mathcal{F}^{- 1, z}_{\varepsilon} (f)), \quad x \in \mathbb{R}^2, z
   \in \varepsilon \mathbb{Z}^2 . \]
In a standard way it is possible to extend $\mathcal{F}_{\varepsilon}$ and
$\mathcal{F}^{- 1}_{\varepsilon}$ on $\mathcal{S}' (\mathbb{R}^2 \times
\varepsilon \mathbb{Z}^2)$. Using the previous identification if $f \in
\mathcal{S} (\mathbb{R}^2 \times \varepsilon \mathbb{Z}^2)$ or $g \in
\mathcal{S}' (\mathbb{R}^2 \times \varepsilon \mathbb{Z}^2)$ we can define the
Schwartz test function $\Delta_i^x f \in \mathcal{S} (\mathbb{R}^2 \times
\varepsilon \mathbb{Z}^2)$ and tempered distribution $\Delta_i^x g \in \mathcal{S}'
(\mathbb{R}^2 \times \varepsilon \mathbb{Z}^2)$ as follows
\[ \Delta_i^x f = K_i \asterisk_x f = \mathcal{F}^{- 1, x} (\varphi_i
   \mathcal{F}^x (f)), \quad \Delta_i^x g = K_i \ast_x g = \mathcal{F}^{- 1,
   x} (\varphi_i \mathcal{F}^x (g)), \]
where the convolution $\ast_x$ is taken only with respect the real variables
$x \in \mathbb{R}^2$. We use a similar notation $\Delta^z_j$ for the
Littlewood-Paley blocks with respect the discrete variable $z \in \varepsilon
\mathbb{Z}^2$.

\begin{definition}
  Consider $p, q \in [1, + \infty]$, $s \in \mathbb{R}$ and $\ell \in
  \mathbb{R}$ we define the Besov space $B^s_{p, q, \ell} (\mathbb{R}^2 \times
  \varepsilon \mathbb{Z}^2)$ the subspace of $\mathcal{S}' (\mathbb{R}^2
  \times \varepsilon \mathbb{Z}^2)$ for which $f \in B^s_{p, q, \ell}
  (\mathbb{R}^2 \times \varepsilon \mathbb{Z}^2)$ if and only if
  \begin{equation}
    \| f \|_{B^s_{p, q, \ell} (\mathbb{R}^2 \times \varepsilon \mathbb{Z}^2)}
    \assign \left\| \left\{ 2^{s (r - 1)} \left\| \sum_{- 1 \leqslant i, - 1
    \leqslant j \leqslant J_{\varepsilon}, \max (i, j) = r - 1} \Delta_i^x
    \Delta_j^z f \right\|_{L^p_{\ell} (\mathbb{R}^2 \times \varepsilon
    \mathbb{Z}^2)} \right\}_{r \in \mathbb{N}_0} \right\|_{\ell^q
    (\mathbb{N}_0)},
  \end{equation}
  is finite.
\end{definition}

Hereafter we write, for any $f \in \mathcal{S}' (\mathbb{R}^2 \times
\varepsilon \mathbb{Z}^2)$ and $r \geqslant - 1$,
\begin{equation}
  \tilde{\Delta}_r f = \mathcal{F}_{\varepsilon}^{- 1} \left( \sum_{- 1
  \leqslant i, - 1 \leqslant j \leqslant J_{\varepsilon}, \max (i, j) = r}
  \varphi_i (k) \varphi_j^{\varepsilon} (q) \mathcal{F} (f) (k, q) \right) .
  \label{eq:tilde:delta}
\end{equation}
\begin{remark}
  \label{remark:k:epsilon2}Consider the functions $\tilde{K}_r^{\varepsilon} :
  \mathbb{R}^2 \times \varepsilon \mathbb{Z}^2 \rightarrow \mathbb{R}$, $r
  \geqslant - 1$, defined as
  \begin{eqnarray}
    \tilde{K}_r^{\varepsilon} (x, z) & = & \mathcal{F}_{\varepsilon}^{- 1}
    \left( \sum_{- 1 \leqslant i, - 1 \leqslant j \leqslant J_{\varepsilon},
    \max (i, j) = r} \varphi_i (k) \varphi_j^{\varepsilon} (q) \right)
    \nonumber\\
    & = & \sum_{- 1 \leqslant i, - 1 \leqslant j \leqslant J_{\varepsilon},
    \max (i, j) = r} K_i (x) K_j^{\varepsilon} (z) . \nonumber
  \end{eqnarray}
  When $r \leqslant J_{\varepsilon} - 1$ the functions
  $\tilde{K}^{\varepsilon}_r$ does not depend on $\varepsilon$ (in the sense
  explained in Remark \ref{remark:k:epsilon} and Remark \ref{remark:epsilon2})
  and we have
  \[ \tilde{K}_r^{\varepsilon} (x, z) = K_r (x) \sum_{- 1 \leqslant j
     \leqslant r - 1} K_j (z) + \left( \sum_{- 1 \leqslant i \leqslant r - 1}
     K_i (x) \right) K_r (z) + K_r (x) K_r (z) . \]
  Using the fact that $(\varphi_j)_{j \geqslant - 1}$ is a dyadic partition of
  unity it is simple to see that
  \[ 2^{2 r} K_{- 1} (2^r x) = \sum_{- 1 \leqslant j \leqslant r} K_j (x) .
  \]
  This means that, when $r \leqslant J_{\varepsilon}-1$, there are $\bar{a} \in
  \mathbb{R}_+$ and $\bar{\beta} \in (0, 1)$ such that
  \[ | \tilde{K}_r^{\varepsilon} (x, z) | \lesssim 2^{4 r} \exp (- \bar{a} (1
     + 2^{2 r} | x |^2 + 2^{2 r} | z |^2)^{\bar{\beta}}), \]
  where the constants hidden in the symbol $\lesssim$ do not depend on $0 <
  \varepsilon \leqslant 1$ and $- 1 \leqslant r \leqslant J_{\varepsilon} -
  1$.\\
  
  In the case $r = J_{\varepsilon}$, the following equality holds
  \[ \tilde{K}^{\varepsilon}_{J_{\varepsilon}} (x, z) = K_{J_{\varepsilon}}
     (x) + \left( \sum_{- 1 \leqslant i \leqslant J_{\varepsilon} - 1} K_i (x)
     \right) K_{J_{\varepsilon}}^{\varepsilon} (z) . \] Thus, by Remark \ref{remark:partition:conditions} and Remark
  \ref{remark:epsilon2}, we obtain
  \[ | \tilde{K}^{\varepsilon}_{J_{\varepsilon}} (x, z) | \lesssim (2^{2
     J_{\varepsilon}} \exp (- \bar{a} (1 + 2^{2 J_{\varepsilon}} | x
     |^2)^{\bar{\beta}}) + 2^{4 J_{\varepsilon}} \exp (- \bar{a} (1 + 2^{2 r}
     | x |^2 + 2^{2 r} | z |^2)^{\bar{\beta}})), \]
  where the constants hidden in the symbol $\lesssim$ do not depend on $0 <
  \varepsilon \leqslant 1$.\\
  
  Finally, when $r > J_{\varepsilon}$, the function $\tilde{K}_r (x, z) = K_r
  (x)$ does not depend on $z \in \varepsilon \mathbb{Z}^2$, and, by Remark
  \ref{remark:partition:conditions}, we get
  \[ | \tilde{K}_r^{\varepsilon} (x, z) | \lesssim 2^{2 r} \exp (- \bar{a} (1
     + 2^{2 r} | x |^2)^{\bar{\beta}}), \]
  where, as usual, the constants hidden in the symbol $\lesssim$ do not
  depend on $0 < \varepsilon \leqslant 1$.
\end{remark}

\begin{definition}
  Consider $p \in [1, + \infty]$, $s_1, s_2 \in \mathbb{R}$, $\ell_1, \ell_2
  \in \mathbb{R}$ we define the Besov space of functions from $\mathbb{R}^2$
  taking values in the Banach space $B^{s_2}_{p, \ell_1, \ell_2} (\varepsilon
  \mathbb{Z}^2)$ as the Banach space of $f \in \mathcal{S}' (\mathbb{R}^2
  \times \varepsilon \mathbb{Z}^2)$ such that
  \begin{equation}
    \| f \|_{B^{s_1, s_2}_{p, \ell_2, \ell_2} (\mathbb{R}^2 \times \varepsilon
    \mathbb{Z}^2)} \assign \left\| \left\{ 2^{s_1 (j - 1)} \left\| \|
    \Delta_{(j - 1)}^x (f) \|_{B^{s_2}_{p, p, \ell_2} (\varepsilon
    \mathbb{Z}^2)} \right\|_{L^p_{\ell_1} (\mathbb{R}^2)} \right\}_{j \in
    \mathbb{N}_0} \right\|_{\ell^p (\mathbb{N}_0)} 
    \label{eq:norm:besovproduct}
  \end{equation}
  is finite. 
\end{definition}

\begin{remark}
  \label{remark:besov:banach}It is important to note that the norm
  {\eqref{eq:norm:besovproduct}} coincides with the one of the Besov space
  $B^{s_1}_{p, p, \ell_1}$ of distributions from $\mathbb{R}^2$ taking values
  in the Banach space $B^{s_2}_{p, p, \ell_2} (\varepsilon \mathbb{Z}^2)$.
  More generally if $E$ is a Banach space, $p, q \in [1, + \infty]$ and $s,
  \ell \in \mathbb{R}$ it is possible to define the Besov space $B^s_{p, q,
  \ell} (\mathbb{R}^d, E)$ of the distributions on $\mathbb{R}^d$ taking
  values in $E$ as the subset of $\mathcal{S}' (\mathbb{R}^d, E) \assign
  \mathcal{S}' (\mathbb{R}^d) \hat{\otimes} E$ with the finite norm
  \[ \| g \|_{B^s_{p, q, \ell} (\mathbb{R}^d, E)} = \| \{ 2^{s (j - 1)} \|
     \| \Delta_{(j - 1)} (f) \|_E \|_{L^p_{\ell} (\mathbb{R}^2)} \}_{j \in
     \mathbb{N}_0} \|_{\ell^q (\mathbb{N}_0)}, \]
  where $g \in \mathcal{S}' (\mathbb{R}^d, E)$ (see Chapter 2 of
  {\cite{Amann2019}} \ for the details). We will use this notion and the
  notation $B^s_{p, q, \ell} (\mathbb{R}^2, E)$ in what follows.
\end{remark}

We prove a useful equivalent expression of the norm $\| \cdot \|_{B^{s_1,
s_2}_{p, \ell_2, \ell_2} (\mathbb{R}^2 \times \varepsilon \mathbb{Z}^2)}$.

\begin{theorem}
  \label{theorem:besov:rz:norm}Consider $p \in [1, + \infty)$, $s_1, s_2 \in
  \mathbb{R}$, $\ell_1, \ell_2 \in \mathbb{R}$ if $f \in B^{s_1, s_2}_{p,
  \ell_1, \ell_2} (\mathbb{R}^2 \times \varepsilon \mathbb{Z}^2)$ we have that
  \begin{eqnarray}
    \| f \|^p_{B^{s_1, s_2}_{p, \ell_2, \ell_2} (\mathbb{R}^2 \times
    \varepsilon \mathbb{Z}^2)} & = & \sum_{i, j \geqslant - 1, j \leqslant
    J_{\varepsilon}} 2^{s_1 i p} 2^{s_2 j p} \int_{\mathbb{R}^2 \times
    \varepsilon \mathbb{Z}^2} | \Delta_i^x \Delta_j^z f (x, z) |^p
    \rho_{\ell_1} (x)^p \rho_{\ell_2} (z)^p \mathd x \mathd z, \nonumber\\
    & = & \sum_{i, j \geqslant - 1, j \leqslant J_{\varepsilon}} 2^{s_1 i p}
    2^{s_2 j p} \| \Delta_i^x \Delta_j^z f \|_{L^p_{\ell_1, \ell_2}
    (\mathbb{R}^2 \times \varepsilon \mathbb{Z}^2)}^p,
    \label{eq:besov:alternative:norm} 
  \end{eqnarray}
  where
  \[ \Delta_i^x \Delta_j^z f = K_i \ast^x (K_j^{\varepsilon} \ast^z f), \]
  where $\ast^x$ is the convolution with respect the continuum variables $x
  \in \mathbb{R}^2$ and $\ast^z$ is the convolution with respect the discrete
  variables $z \in \varepsilon \mathbb{Z}^2$.
\end{theorem}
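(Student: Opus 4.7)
The proof is essentially a direct unpacking of nested definitions together with Fubini's theorem, so the plan is to peel off one layer of norm at a time and reassemble the result. The key observation is that the Littlewood-Paley block $\Delta_i^x$ acts only on the continuum variable and the block $\Delta_j^z$ acts only on the discrete one, so the two operators commute, and writing $\tilde\Delta^x_i \tilde\Delta^z_j f(x,z) = K_i \ast^x (K_j^\varepsilon \ast^z f)(x,z)$ unambiguously produces a measurable function on $\mathbb R^2\times\varepsilon\mathbb Z^2$ whenever $f$ is a tempered distribution there (this uses the regularity of $K_i,K_j^\varepsilon$ recalled in Remark~\ref{remark:k:epsilon} and Remark~\ref{remark:epsilon2}).

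Concretely, I would first unroll the outer Besov norm in \eqref{eq:norm:besovproduct}: since $q=p$, we have
\[
\| f \|^p_{B^{s_1, s_2}_{p, \ell_1, \ell_2}(\mathbb{R}^2 \times \varepsilon \mathbb{Z}^2)}
= \sum_{i \geq -1} 2^{s_1 i p}\bigl\| \| \Delta_i^x f \|_{B^{s_2}_{p, p, \ell_2} (\varepsilon \mathbb{Z}^2)} \bigr\|^p_{L^p_{\ell_1} (\mathbb{R}^2)},
\]
by reindexing $j\mapsto i+1$ in the defining sum. Next I would apply the definition of the weighted $L^p_{\ell_1}(\mathbb R^2)$ norm of a Banach-valued function (Remark~\ref{remark:besov:banach}) to rewrite the outer quantity as an integral in $x$ of $\|\Delta_i^x f(x,\cdot)\|^p_{B^{s_2}_{p,p,\ell_2}(\varepsilon\mathbb{Z}^2)}$ multiplied by $\rho_{\ell_1}(x)^p$.

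I would then plug in the definition \eqref{eq:norm:besov:descrete} of the discrete Besov norm of $\Delta_i^x f(x,\cdot)$, which gives a finite sum over $-1\leq j\leq J_\varepsilon$ with weight $2^{s_2 j p}$ of the quantities $\|\Delta_j^z \Delta_i^x f(x,\cdot)\|_{L^p_{\ell_2}(\varepsilon\mathbb Z^2)}^p$. Expanding this last norm as a sum over $z\in\varepsilon\mathbb Z^2$ multiplied by $\rho_{\ell_2}(z)^p$ and interchanging the sum/integral in $x$ with the sum in $z$ (this is legitimate by Tonelli, since everything is nonnegative), one obtains
\[
\sum_{i\geq -1}\sum_{j=-1}^{J_\varepsilon} 2^{s_1 i p} 2^{s_2 j p}\int_{\mathbb R^2\times\varepsilon\mathbb Z^2} |\Delta_i^x \Delta_j^z f(x,z)|^p \rho_{\ell_1}(x)^p \rho_{\ell_2}(z)^p\,dx\,dz,
\]
which is exactly \eqref{eq:besov:alternative:norm} by definition of $L^p_{\ell_1,\ell_2}$.

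There is no real obstacle here beyond careful bookkeeping: the only subtle point is to justify commuting $\Delta_i^x$ and $\Delta_j^z$, which follows because their Fourier multipliers $\varphi_i(k)$ and $\varphi_j^\varepsilon(q)$ act on disjoint frequency variables in the joint Fourier transform $\mathcal F_\varepsilon$, together with the application of Tonelli to exchange the nonnegative sum in $z\in\varepsilon\mathbb Z^2$ with the integral in $x\in\mathbb R^2$. Both indices $i,j$ range over finite or countable sets with nonnegative integrands, so no integrability assumption beyond $f\in B^{s_1,s_2}_{p,\ell_1,\ell_2}$ is needed to conclude.
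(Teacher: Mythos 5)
Your proof is correct and follows essentially the same route as the paper: unfold the nested norm definitions in \eqref{eq:norm:besovproduct} and \eqref{eq:norm:besov:descrete}, then use Tonelli (the paper invokes monotone convergence, which plays the same role for nonnegative integrands) to interchange the $x$-integral with the $z$-sum and the $j$-sum, noting that $\Delta_i^x$ and $\Delta_j^z$ commute because they act on separate frequency variables.
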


\begin{proof}
  The proof follows from the fact that we can identify $\mathcal{S}'
  (\mathbb{R}^2, \mathcal{S}' (\varepsilon \mathbb{Z}^2))$ with $\mathcal{S}'
  (\mathbb{R}^2 \times \varepsilon \mathbb{Z}^2)$, from the monotone
  convergence theorem and from the definition of the norm
  {\eqref{eq:norm:besov:descrete}} of Besov space $B^{s_2}_{p, p, \ell_2}
  (\varepsilon \mathbb{Z}^2)$.
\end{proof}

We now propose an extension to the Besov spaces on $\mathbb{R}^2 \times
\varepsilon \mathbb{Z}^2$ of the results of the previous sections.

\begin{lemma}
  \label{lemma-fm-reg2}Assume that the smooth function $\sigma = \sigma (k, q)
  : \mathbb{R}^2 \times \varepsilon^{- 1} \mathbb{T}^2 \rightarrow \mathbb{R}$
  satisfies
  \[ | \partial_k^{\alpha} \partial_q^{\beta} \sigma (k, q) | \leqslant C (1 +
     | k |^2 + | q |^2)^{(m - (| \alpha | + | \beta |)) / 2}, \]
  where $\alpha, \beta \in \mathbb{N}_0^2$, $| \alpha |, | \beta | \leqslant
  4$, and $m \in \mathbb{R}$. Consider $s \in \mathbb{R}$, $\ell \in
  \mathbb{R}$ and $p, r \in [1, + \infty]$, then, the operator
  \[ T^{\sigma} f = \mathcal{F}^{- 1}_{\varepsilon} (\sigma (k, q)
     (\mathcal{F}_{\varepsilon} f) (k, q)), \quad f \in B^s_{p, \ell}
     (\mathbb{R}^2 \times \varepsilon \mathbb{Z}^2) \]
  is well defined from $B^s_{p, r, \ell} (\mathbb{R}^2 \times \varepsilon
  \mathbb{Z}^2)$ into $B^{s - m}_{p, r, \ell} (\mathbb{R}^2 \times \varepsilon
  \mathbb{Z}^2)$ and
  \begin{equation}
    \| T^{\sigma} f \|_{B_{p, r, \ell}^{s - m} (\mathbb{R}^2\times\varepsilon \mathbb{Z}^2)}
    \lesssim \| f \|_{B_{p, r, \ell}^s (\mathbb{R}^2\times\varepsilon \mathbb{Z}^2)} .
    \label{eq:inequality:sigma:rz1}
  \end{equation}
  Furthermore if $m \leqslant 0$ and considering $s_1, s_2 \in \mathbb{R}$ and
  $\theta_1, \theta_2 \geqslant 0$ such that $0 < \theta_1 + \theta_2
  \leqslant 1$ we have that $T^{\sigma}$ is a well defined operator from
  $B^{s_1, s_2}_{p, \ell_1, \ell_2} (\mathbb{R}^2 \times \varepsilon
  \mathbb{Z}^2)$ into $B^{s_1 - \theta_1 m, s_2 - \theta_2 m}_{p, \ell_1,
  \ell_2} (\mathbb{R}^2 \times \varepsilon \mathbb{Z}^2)$ and we have
  \begin{equation}
    \| T^{\sigma} f \|_{B^{s_1 - \theta_1 m, s_2 - \theta_2 m}_{p, \ell_1,
    \ell_2} (\mathbb{R}^2 \times \varepsilon \mathbb{Z}^2)} \lesssim \| f
    \|_{B^{s_1, s_2}_{p, \ell_1, \ell_2} (\mathbb{R}^2 \times \varepsilon
    \mathbb{Z}^2)} . \label{eq:inequality:sigma:rz2}
  \end{equation}
\end{lemma}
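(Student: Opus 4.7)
\begin{proof*}{Proof proposal for Lemma \ref{lemma-fm-reg2}}
The plan is to follow the general strategy of Lemma \ref{lemma-fm-reg}, but in the product setting, splitting into kernel estimates dyadic block by dyadic block. For the first inequality \eqref{eq:inequality:sigma:rz1}, I would work with the total Littlewood--Paley blocks $\tilde{\Delta}_r$ from \eqref{eq:tilde:delta} and show that if $\tilde{\chi}_r(k,q) = \sum_{\max(i,j)=r, j \leq J_\varepsilon} \varphi_i(k) \varphi_j^{\varepsilon}(q) \cdot \tilde{\psi}(2^{-r}(k,q))$ is a slight fattening of the frequency support (where $\tilde{\psi}$ is an annulus cut-off chosen so that $\tilde{\chi}_r \equiv 1$ on the support of $\tilde\varphi_r^{\varepsilon}(k,q)$ after rescaling), then
\[
\tilde{\Delta}_r (T^{\sigma} f) = K^{\sigma}_r \ast \tilde{\Delta}_r f,
\qquad
K^{\sigma}_r(x,z) = \mathcal{F}_\varepsilon^{-1}\bigl(\sigma(k,q)\, \tilde{\chi}_r(k,q)\bigr),
\]
with the convolution being continuous in $x$ and discrete in $z$. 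By the weighted Young inequality, it suffices to show $\|K^{\sigma}_r\|_{L^1_\ell(\mathbb{R}^2 \times \varepsilon \mathbb{Z}^2)} \lesssim 2^{rm}$ uniformly in $0 < \varepsilon \leq 1$ and $r \geq -1$. This in turn is obtained, as in Lemma \ref{lemma-fm-reg}, by a scaling $(k,q) = 2^r(k',q')$ and by applying $(1 - \Delta_{k'} - \Delta_{q'})^M$ inside the Fourier integral; since $\tilde{\psi}$ is supported in an annulus, the assumption on $\sigma$ gives $|\partial_k^\alpha \partial_q^\beta \sigma(2^r \cdot)| \lesssim 2^{r(m-|\alpha|-|\beta|)}(1+|k'|^2+|q'|^2)^{(m-|\alpha|-|\beta|)/2}$, from which the desired uniform polynomial decay of $K^{\sigma}_r$ (and the factor $2^{rm}$) follows. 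The boundary cases $r = -1$ and $r = J_\varepsilon$ require separate but analogous arguments as in Lemma \ref{lemma-fm-reg}, relying on Remark \ref{remark:k:epsilon2}. Summing in $\ell^r$ after multiplication by $2^{(s-m)r}$ yields \eqref{eq:inequality:sigma:rz1}.

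For the second inequality \eqref{eq:inequality:sigma:rz2}, I would instead use the \emph{product} dyadic decomposition $\Delta^x_i \Delta^z_j$ and Theorem \ref{theorem:besov:rz:norm}. Writing
\[
\Delta^x_i \Delta^z_j (T^\sigma f) = K^{\sigma}_{i,j} \ast \Delta^x_i \Delta^z_j f,
\qquad K^{\sigma}_{i,j}(x,z) = \mathcal{F}_\varepsilon^{-1}\bigl(\sigma(k,q)\,\psi_i^{\mathbb{R}^2}(k) \psi_j^{\varepsilon}(q)\bigr),
\]
where $\psi_i^{\mathbb{R}^2}, \psi_j^{\varepsilon}$ are $C^\infty_c$ bumps that are identically $1$ on the support of $\varphi_i\varphi_j^\varepsilon$ and concentrated in annuli of sizes $2^i, 2^j$ respectively, a completely analogous integration-by-parts argument gives
\[
\|K^{\sigma}_{i,j}\|_{L^1_{\ell_1,\ell_2}(\mathbb{R}^2\times\varepsilon\mathbb{Z}^2)}
\lesssim (1 + 2^{2i} + 2^{2j})^{m/2} \sim 2^{m \max(i,j)},
\]
uniformly in $0<\varepsilon \leq 1$. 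Weighted Young then gives $\|\Delta^x_i \Delta^z_j (T^\sigma f)\|_{L^p_{\ell_1,\ell_2}} \lesssim 2^{m \max(i,j)} \|\Delta^x_i \Delta^z_j f\|_{L^p_{\ell_1,\ell_2}}$. Since $m \leq 0$ and $\theta_1,\theta_2 \geq 0$ with $\theta_1 + \theta_2 \leq 1$, for any $i,j \geq -1$,
\[
m \max(i,j) \leq m(\theta_1 i + \theta_2 j),
\]
which yields $2^{m \max(i,j)} \leq 2^{m(\theta_1 i + \theta_2 j)}$. Multiplying by $2^{(s_1 - \theta_1 m) i} 2^{(s_2 - \theta_2 m) j}$, summing in $\ell^p$ and using the equivalent expression \eqref{eq:besov:alternative:norm} of the anisotropic norm produces exactly \eqref{eq:inequality:sigma:rz2}.

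The main technical obstacle, in both parts, is the uniform control (in $\varepsilon$) of the $L^1_\ell$ norms of the kernels $K^{\sigma}_r$ and $K^{\sigma}_{i,j}$ when the relevant dyadic index approaches $J_\varepsilon$, where the torus structure of $\mathbb{T}^2_{1/\varepsilon}$ is felt: here one uses the rescaling trick and the fact that $\upsilon^{(2)}$ is Gevrey (as in Remarks \ref{remark:partition:conditions}, \ref{remark:epsilon2} and \ref{remark:k:epsilon2}) to rewrite the inverse Fourier transform as an integral over $\mathbb{R}^2$ (resp.\ $\mathbb{R}^2 \times \mathbb{T}^2_2$) with a bump factor, after which the integration-by-parts bound is exactly as in the continuum case. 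Everything else is bookkeeping: a standard Bernstein-type argument to pass from frequency-localized kernel bounds to the Besov estimates, using Theorem \ref{theorem:besov:weight} and Remark \ref{remark:besov:lattice:equivalence1} whenever we need to juggle weights inside Young's inequality.
\end{proof*}
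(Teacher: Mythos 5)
Your proof is correct and essentially follows the paper's strategy: a kernel estimate via integration by parts after rescaling, then weighted Young's inequality block by block, with two minor variations worth noting. For \eqref{eq:inequality:sigma:rz1}, you use a single rescaling by $2^r = 2^{\max(i,j)}$ of the joint block $\tilde\Delta_r$; the paper instead rescales by $\lambda_1 = 2^i$ and $\lambda_2 = 2^j$ independently in the two groups of variables and afterwards concludes $\|K_{\lambda_1,\lambda_2}\|_{L^1_\ell} \lesssim \max(\lambda_1,\lambda_2)^m$. Both work because the rescaled frequency support is bounded away from the origin; your single rescaling is slightly cleaner notationally. For \eqref{eq:inequality:sigma:rz2}, you first establish $\|K^{\sigma}_{i,j}\|_{L^1_{\ell_1,\ell_2}} \lesssim 2^{m\max(i,j)}$ and then convert arithmetically to $2^{m(\theta_1 i + \theta_2 j)}$, whereas the paper factorizes the symbol weight directly inside the Fourier integral via $(1 + |\lambda_1 k|^2 + |\lambda_2 q|^2)^{m/2} \leq (1 + |\lambda_1 k|^2)^{\theta_1 m/2}(1 + |\lambda_2 q|^2)^{\theta_2 m/2}$ (which holds for $m\leq 0$, $\theta_1,\theta_2\geq 0$, $\theta_1+\theta_2\leq 1$) and thereby gets $\|K_{\lambda_1,\lambda_2}\|_{L^1}\lesssim \lambda_1^{\theta_1 m}\lambda_2^{\theta_2 m}$ in one step. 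Your reordering is more modular (one kernel bound, two readings); the paper's is more direct. One small slip: the inequality $m\max(i,j) \leq m(\theta_1 i + \theta_2 j)$ that you invoke is false at the single point $(i,j)=(-1,-1)$ when $\theta_1+\theta_2<1$ (there $\max(i,j)=-1 < -(\theta_1+\theta_2) = \theta_1 i + \theta_2 j$, and $m\leq 0$ flips the sign). Since this is one fixed term it only costs a bounded constant $2^{-m(1-\theta_1-\theta_2)}$ and the conclusion is unaffected, but the inequality as stated should carry an $O(1)$ correction.
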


\begin{proof}
  The proof follows the same arguments of Lemma \ref{lemma-fm-reg}, we report
  here only the main differences. If we define
  \begin{eqnarray*}
    &  & K_{\lambda_1, \lambda_2}^{\sigma_{\varepsilon}} (x, z)\\
    & = & \frac{1}{(2 \pi)^4} \int_{\mathbb{R}^2 \times
    \mathbb{T}^2_{\frac{1}{\lambda_2 \varepsilon}}} e^{i (z \cdot q) + (x
    \cdot k)} \tilde{\psi}  (k) \tilde{\psi}  (q) \sigma_{\varepsilon}
    (\lambda_1 k, \lambda_2 q) \mathd k \mathd q\\
    & = & \frac{1}{(2 \pi)^4} \int_{\mathbb{R}^4} e^{i (z \cdot q) + (x \cdot
    k)} \tilde{\psi}  (k) \tilde{\psi}  (q) \sigma_{\varepsilon} (\lambda_1 k,
    \lambda_2 q) \mathd k \mathd q,
  \end{eqnarray*}
  where $\lambda_1 = 2^i$, $\lambda_2 = 2^j \leqslant 2^{J_{\varepsilon}}$ and
  $\psi : \mathbb{R}^2 \rightarrow [0, 1]$ is a suitable function supported in
  an annulus $B_{R_1} (0) \backslash B_{R_2} (0)$ of constant radius $R_1 >
  R_2 > 0$ as in the proof of Lemma \ref{lemma-fm-reg}, then we have
  \begin{eqnarray}
    | (1 + | z |^2 + | x |^2)^M K^{\sigma}_{\lambda_1, \lambda_2} (x, z) | &
    \leqslant & \left. \left| \sum_{| \alpha_1 | + | a_2 | + | \beta_1 | + |
    \beta_2 | = 2 M} C_{| \beta | + | \alpha |} c_{\alpha_1, \alpha_2,
    \beta_1, \beta_2} \lambda_1^{| \alpha_1 |} \lambda_2^{| \beta_1 |} \times
    \right. \right. \nonumber\\
    &  & \int_{\mathbb{R}^2} e^{i (z \cdot q) + i(x \cdot k)}
    \partial^{\alpha_2}_q \tilde{\psi}  (k) \partial^{\beta_2}_q \tilde{\psi} 
    (q) \times \nonumber\\
    &  & \left. \left. \phantom{\int_{\mathbb{R}^2}} (1 + | \lambda_2 k |^2 +
    | \lambda_2 q |^2)^{(m - | \beta_1 | - | \alpha_1 |) / 2} \mathd k \mathd
    q \right| \right. . \nonumber
  \end{eqnarray}
  So, since $\tilde{\psi}$ is supported in an annulus of constant radius, we
  can estimate on the support of $\tilde{\psi} (k) \tilde{\psi} (q)$:
  \[ \begin{array}{rl}
       &\lambda_1^{| \alpha_1 |} \lambda^{| \beta_1 |}_2 (1 + | \lambda_1 k |^2
       + | \lambda_2 q |^2)^{- (| \alpha_1 | + | \beta_1 |) / 2}\\
       = & (\lambda_1^{| \alpha_1 |} (1 + | \lambda_1 k |^2 + | \lambda_2 q
       |^2)^{- | \alpha_1 | / 2}) (\lambda^{| \beta_1 |}_2 (1 + | \lambda_1 k
       |^2 + | \lambda_2 q |^2)^{- | \beta_1 | / 2})\\
       \leqslant& \lambda_1^{| \alpha_1 |} (1 + | \lambda_1 k |^2)^{- |
       \alpha_1 | / 2} \lambda^{| \beta_1 |}_2 (1 + | \lambda_2 q |^2)^{- |
       \beta_1 | / 2}\\
       \leqslant & R_1^{| \alpha_1 |} R_1^{| \beta_1 |} .
     \end{array} \]
  On the other hand on the support of $\tilde{\psi} (k) \tilde{\psi} (q)$
  \[ (1 + | \lambda_1 k |^2 + | \lambda_2 q |^2)^{m / 2} \leqslant
     \left\{\begin{array}{l}
       R_1^m (\max (\lambda_1, \lambda_2))^m \hspace{4.6em} \text{if } m
       \leqslant 0\\
       (1 + 2 R_2)^m (\max (\lambda_1, \lambda_2))^m \hspace{1.2em} \text{if } m > 0
     \end{array}\right. . \]
  This implies that
  \[ \| K_{\lambda_1, \lambda_2} \|_{L_l^1 (\mathbb{R}^2)} \lesssim \max
     (\lambda_1, \lambda_2)^m \]
  which, by following the same argument of the proof of Lemma
  \ref{lemma-fm-reg}, implies inequality {\eqref{eq:inequality:sigma:rz1}}. On
  the other hand, when $m \leqslant 0$, we have
  \[ (1 + | \lambda_1 k |^2 + | \lambda_2 q |^2)^{m / 2} \leqslant (1 + |
     \lambda_1 k |^2)^{\theta_1 m / 2} (1 + | \lambda_2 q |^2)^{\theta_2 m /
     2} \leqslant R_1 \lambda_1^{\theta_1 m} \lambda_2^{\theta_2 m}, \]
  and following the previous line of reasoning we can analogously also obtain
  the second claim. 
\end{proof}

Hereafter we write
\[ \Delta_{\mathbb{R}^2 \times \varepsilon \mathbb{Z}^2} =
   \Delta_{\mathbb{R}^2} + \Delta_{\varepsilon \mathbb{Z}^2} \]
where $\Delta_{\mathbb{R}^2}$ is the standard Laplacian on $\mathbb{R}^2$
acting on the continuum variables $x \in \mathbb{R}^2$, and
$\Delta_{\varepsilon \mathbb{Z}^2}$ is the discrete Laplacian on $\varepsilon
\mathbb{Z}^2$ acting on the discrete variables $z \in \varepsilon
\mathbb{Z}^2$.

\begin{theorem}
  \label{theorem:besov:rz:equivalent}Consider $s \in \mathbb{R}$, $\ell \in
  \mathbb{R}$ and $p, q \in [1, + \infty]$ then for any $f \in B^s_{p, q,
  \ell} (\mathbb{R}^2 \times \varepsilon \mathbb{Z}^2)$ and any $m \in
  \mathbb{R}$ we have
  \[ \| f \|_{B^s_{p, q, \ell} (\mathbb{R}^2 \times \varepsilon \mathbb{Z}^2)}
     \sim \| f \cdot \rho_{\ell}^{(4)} (x, z) \|_{B^s_{p, q, \ell}
     (\mathbb{R}^2 \times \varepsilon \mathbb{Z}^2)} \]
  \[ \| (- \Delta_{\mathbb{R}^2 \times \varepsilon \mathbb{Z}^2} + 1)^{m/2} (f)
     \|_{B^{s - m}_{p, q, \ell} (\mathbb{R}^2 \times \varepsilon
     \mathbb{Z}^2)} \sim \| f \|_{B^s_{p, q, \ell} (\mathbb{R}^2 \times
     \varepsilon \mathbb{Z}^2)} . \]
  Consider $p \in [1, + \infty]$, $s_1, s_2 \in \mathbb{R}$ and $\ell_1,
  \ell_2 \in \mathbb{R}$. For any $f \in B^{s_1, s_2}_{p, \ell_1, \ell_2}
  (\mathbb{R}^2 \times \varepsilon \mathbb{Z}^2)$,and $m \leqslant 0$,
  $\theta_1, \theta_2 \geqslant 0$ and $0 < \theta_1 + \theta_2 < 1$ \ we have
  \[ \| f \|_{B^{s_1, s_2}_{p, \ell_1, \ell_2} (\mathbb{R}^2 \times
     \varepsilon \mathbb{Z}^2)} \sim \| f \cdot \rho_{\ell_1} (x)
     \rho_{\ell_2} (z) \|_{B^{s_1, s_2}_{p, 0, 0} (\mathbb{R}^2 \times
     \varepsilon \mathbb{Z}^2)}, \]
  \[ \| (- \Delta_{\mathbb{R}^2 \times \varepsilon \mathbb{Z}^2} + 1)^{\frac{m}{2}} (f)
     \|_{B^{s_1 - \theta_1 m, s_2 - \theta_2 m}_{p, \ell_1, \ell_2}
     (\varepsilon \mathbb{Z}^2)} \lesssim \| f \|_{B^{s_1, s_2}_{p, \ell_1,
     \ell_2} (\varepsilon \mathbb{Z}^2)} . \]
\end{theorem}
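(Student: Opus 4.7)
The plan is to split the claims into two types: the Bessel equivalences involving powers of $(1-\Delta_{\mathbb{R}^2\times\varepsilon\mathbb{Z}^2})$, and the weight equivalences involving multiplication by $\rho^{(4)}_\ell$ (or $\rho_{\ell_1}(x)\rho_{\ell_2}(z)$). The first class will be handled by Lemma \ref{lemma-fm-reg2}, the second by a paraproduct argument mirroring Theorem \ref{theorem:besov:weight}.

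For the Bessel equivalences, the relevant operator is the Fourier multiplier with symbol
\[ \sigma_m(k,q) = \bigl(1+|k|^2 + \sigma_{-\Delta_{\varepsilon\mathbb{Z}^2}}(q)\bigr)^m, \]
where $\sigma_{-\Delta_{\varepsilon\mathbb{Z}^2}}(q) = \sum_{i=1,2}(4/\varepsilon^2)\sin^2(\varepsilon q_i/2)$ is the same positive smooth symbol that appeared in the proof of Theorem \ref{theorem:besov:weight}, with $q$-derivative bounds uniformly in $\varepsilon$ comparable to those of $(1+|q|^2)$. Consequently both $\sigma_m$ and its inverse $\sigma_{-m}=1/\sigma_m$ satisfy the hypothesis of Lemma \ref{lemma-fm-reg2} with the appropriate scaling exponent. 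Applying that lemma to the pair and composing, exactly as in Remark \ref{remark:besov:lattice:equivalence1} for the one-variable case, yields the isotropic equivalence on $B^s_{p,q,\ell}(\mathbb{R}^2\times\varepsilon\mathbb{Z}^2)$. The anisotropic Bessel statement on $B^{s_1,s_2}_{p,\ell_1,\ell_2}$ then comes from the second clause of Lemma \ref{lemma-fm-reg2}, whose parameters $\theta_1,\theta_2$ precisely distribute the regularity drop across the continuum and discrete directions.

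For the weight equivalences, I would reduce to showing that the multiplication operator $I_\ell:f\mapsto \rho^{(4)}_\ell f$ is bounded from $B^s_{p,q,\ell}$ to $B^s_{p,q,0}$ with norm uniform in $\varepsilon$, and similarly in reverse by multiplying with $\rho^{(4)}_{-\ell}$. Following the argument recalled in Theorem \ref{theorem:besov:weight} (itself the Triebel-type proof), this reduces to the pointwise bounds $|\partial^\alpha \rho^{(4)}_\ell(y)|\lesssim \rho^{(4)}_\ell(y)$ and $\rho^{(4)}_\ell(y)\lesssim \rho^{(4)}_\ell(y')(1+|y-y'|)^{|\ell|}$, combined with a Bony-type paraproduct decomposition using the blocks $\tilde\Delta_r$ defined in \eqref{eq:tilde:delta}. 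The kernel bounds of Remark \ref{remark:k:epsilon2} ensure that each paraproduct piece behaves uniformly in $\varepsilon$. For the anisotropic version, I factor $\rho_{\ell_1}(x)\rho_{\ell_2}(z)$ and, via the decoupled norm representation of Theorem \ref{theorem:besov:rz:norm}, apply the one-variable weighted product estimates (Theorem \ref{theorem:besov:weight} in $z$ and Proposition \ref{propsition:equivalent} in $x$) separately in each factor.

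The main obstacle throughout is the uniformity in $\varepsilon\in(0,1]$: both the Fourier-multiplier bound of Lemma \ref{lemma-fm-reg2} and the paraproduct estimates at the highest frequency scale $r=J_\varepsilon$ involve the lattice cutoff, and a naive continuum derivation would fail there. The rescaling trick already used in the proof of Lemma \ref{lemma-fm-reg}, together with the kernel decay bundled into Remark \ref{remark:k:epsilon2}, is precisely what pushes the estimates through uniformly; once that technical point is absorbed, the remainder is a careful transcription of the one-variable proofs to the product space $\mathbb{R}^2\times\varepsilon\mathbb{Z}^2$, exploiting the tensor-product nature of the Fourier transform.
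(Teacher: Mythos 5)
Your proposal matches the paper's own proof: both reduce the Bessel equivalences to Lemma \ref{lemma-fm-reg2} by checking that the symbol $\left(1 + |k|^2 + \sigma_{-\Delta_{\varepsilon\mathbb{Z}^2}}(q)\right)^m$ and its reciprocal satisfy the required $\varepsilon$-uniform derivative bounds, and both handle the weight equivalences by transplanting the Triebel-type pointwise-bound argument of Theorem \ref{theorem:besov:weight} to the product space using the decoupled norm of Theorem \ref{theorem:besov:rz:norm}. The level of detail you give on the paraproduct decomposition and the $r=J_\varepsilon$ frequency scale is a little finer than what the paper writes out, but the strategy is the same.
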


\begin{proof}
  The proof is analogous to the one of Theorem \ref{theorem:besov:weight},
  where we use the expressions of the norm given in Theorem
  \ref{theorem:besov:rz:norm}, and the results of Lemma \ref{lemma-fm-reg} are
  now replaced by the analogous ones of Lemma \ref{lemma-fm-reg2}. Indeed we
  have that
  \begin{eqnarray*}
    \sigma_{(- \Delta_{\mathbb{R}^2 \times \varepsilon \mathbb{Z}^2} + 1)}^m &
    = & \mathcal{F}_{\varepsilon} ((- \Delta_{\mathbb{R}^2 \times \varepsilon
    \mathbb{Z}^2} + 1)^m)\\
    & = & \left( | k |^2 + \sum_{j = 1, 2} \frac{4}{\varepsilon^2} \sin^2
    \left( \frac{\varepsilon q_i}{2} \right) + 1 \right)^m\\
    & = & \left( | k |^2 + \sigma_{\Delta_{\varepsilon \mathbb{Z}^2}} (q) + 1
    \right)^m.
  \end{eqnarray*}
  Moreover, we have that
  \begin{equation}
    \left| \partial^{\alpha} \left( \sigma_{(- \Delta_{\mathbb{R}^2 \times
    \varepsilon \mathbb{Z}^2} + 1)} (k, q) \right)^m \right| \lesssim
    \left\{\begin{array}{l}
      (| k |^2 + | q |^2 + 1)^{\frac{m - | \alpha |}{2} |} \quad \text{ for } m \geqslant
      0\\
      \left( | k |^2 + \frac{2 | q |^2}{\pi} + 1 \right)^{\frac{m - | \alpha |}{2}} \;
      \text{ for } m < 0
    \end{array},\right. \label{eq:inequalitysigma}
  \end{equation}
  where $\alpha\in \mathbb{N}^4_0$, the differential operator  is defined as $\partial^{\alpha}=\partial_{k_1}^{\alpha_1}\partial_{k_2}^{\alpha_2}\partial_{q_1}^{\alpha_3}\partial_{q_2}^{\alpha_4}$, and the constants hidden in the symbol $\lesssim$ do not depend on $0 <
  \varepsilon \leqslant 1$. Inequality {\eqref{eq:inequalitysigma}} permits
  the application of Lemma \ref{lemma-fm-reg2} in a way analogous of the one
  of the proof of Theorem \ref{theorem:besov:weight}, getting the thesis. \end{proof}

\begin{theorem}
  \label{theorem:besov:rz:product}Consider $s_1 \leqslant 0, s_2 > 0$ such
  that $s_1 + s_2 > 0$, $p_1, p_2, q_1, q_2 \in [1, + \infty]$, $p_1 \leq p_2$,
  $\frac{1}{p_1} + \frac{1}{p_2} = \frac{1}{p_3} < 1$, and $\frac{1}{q_1} +
  \frac{1}{q_2} = \frac{1}{q_3} < 1$ then the product $\Pi (f, g) = f \cdot g$
  defined on $\mathcal{S}' (\varepsilon \mathbb{Z}^2) \times \mathcal{S}
  (\varepsilon \mathbb{Z}^2)$ into $\mathcal{S}' (\varepsilon \mathbb{Z}^2)$
  can be extended in a unique continuous way to a map
  \[ \Pi \; : B^{s_1}_{p_1, q_1, \ell_1} (\mathbb{R}^2 \times \varepsilon
     \mathbb{Z}^2) \times B^{s_2}_{p_2, q_2, \ell_2} (\mathbb{R}^2 \times
     \varepsilon \mathbb{Z}^2) \rightarrow B^{s_1}_{p_3, q_3, \ell_1 + \ell_2}
     (\mathbb{R}^2 \times \varepsilon \mathbb{Z}^2), \]
  with the estimate, for any $f \in B^{s_1}_{p_1, q_1, \ell_1} (\mathbb{R}^2
  \times \varepsilon \mathbb{Z}^2)$ and $g \in B^{s_2}_{p_2, q_2, \ell_2}
  (\mathbb{R}^2 \times \varepsilon \mathbb{Z}^2)$,
  \[ \| f \cdot g \|_{B^{s_1}_{p_3, q_3, \ell_1 \ell_2} (\mathbb{R}^2 \times
     \varepsilon \mathbb{Z}^2)} \lesssim \| f \|_{B^{s_1}_{p_1, q_1, \ell_1}
     (\mathbb{R}^2 \times \varepsilon \mathbb{Z}^2)} \| g \|_{B^{s_2}_{p_2,
     q_2, \ell_2} (\mathbb{R}^2 \times \varepsilon \mathbb{Z}^2)} . \]
\end{theorem}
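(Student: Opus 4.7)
The plan is to mirror the proof of Theorem \ref{theorem:besov:product} in the mixed continuum-lattice setting, using the diagonal Littlewood-Paley decomposition $(\tilde{\Delta}_r)_{r\geq -1}$ defined in \eqref{eq:tilde:delta}, together with the kernel estimates of Remark \ref{remark:k:epsilon2}. First I would write a Bony paraproduct decomposition
\begin{equation*}
  fg \;=\; \Pi_{\prec}(f,g) + \Pi_{\succ}(f,g) + \Pi_{\circ}(f,g),
\end{equation*}
where $\Pi_{\prec}(f,g) = \sum_{j\geq 0} (S_{j-2}f)\,\tilde{\Delta}_j g$ with $S_{j-2} = \sum_{r\leq j-2}\tilde{\Delta}_r$, $\Pi_{\succ}$ is its symmetric counterpart, and $\Pi_{\circ}=\sum_{|i-j|\leq 1}\tilde{\Delta}_i f\,\tilde{\Delta}_j g$. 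The crucial Fourier-support observations (mixed-space analogues of Lemma A.6 in \cite{Perkowski2019}) are that $\mathcal{F}_\varepsilon\bigl((S_{j-2}f)\,\tilde{\Delta}_j g\bigr)$ is supported in an annulus of the form $2^j\mathcal{A}\cap(\mathbb{R}^2\times\mathbb{T}^2_{1/\varepsilon})$, whereas for $|i-j|\leq 1$ the product $\tilde{\Delta}_i f\,\tilde{\Delta}_j g$ has Fourier support in a ball $B(0,R\cdot 2^j)\cap(\mathbb{R}^2\times\mathbb{T}^2_{1/\varepsilon})$; these facts follow from the product structure of $\tilde{K}^\varepsilon_r$ together with the dyadic properties of $\varphi_i$ and $\varphi_j^\varepsilon$.

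Next, for each of the three pieces I would bound $\|\tilde{\Delta}_r(\cdot)\|_{L^{p_3}_{\ell_1+\ell_2}(\mathbb{R}^2\times\varepsilon\mathbb{Z}^2)}$ by applying Hölder's inequality (using $\frac{1}{p_1}+\frac{1}{p_2}=\frac{1}{p_3}$) pointwise on $\mathbb{R}^2\times\varepsilon\mathbb{Z}^2$, after factoring the weight via the Peetre-type bound $\rho^{(4)}_{\ell_1+\ell_2}(x,z)\lesssim \rho^{(4)}_{\ell_1}(x,z)\,\rho^{(4)}_{\ell_2}(x,z)$ (valid since the weights are slowly varying), plus the standard fact that within a block the weight can be transferred to the factor, as in Theorem \ref{theorem:besov:rz:equivalent}. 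For $\Pi_{\prec}$ the localization at frequency $2^j$ gives a factor $2^{-s_2 j}\|g\|_{B^{s_2}_{p_2,q_2,\ell_2}}$ and $\|S_{j-2}f\|_{L^{p_1}_{\ell_1}}\lesssim \|f\|_{B^{s_1}_{p_1,q_1,\ell_1}}$ (using $s_1\leq 0$); summation in $j$ then gives regularity $s_2\geq s_1$. For $\Pi_{\succ}$ one obtains regularity $s_1$ by the symmetric argument. For $\Pi_{\circ}$, since the supports overlap, one sums over $|i-j|\leq 1$ and applies Lemma \ref{lemma-fm-reg2} with a smooth cutoff selecting the ball of radius $R\cdot 2^j$ to recover a factor $2^{-(s_1+s_2)j}$, after which the hypothesis $s_1+s_2>0$ makes the geometric series in $j$ convergent. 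The resulting $\ell^{q_3}$-summation uses $\tfrac{1}{q_1}+\tfrac{1}{q_2}=\tfrac{1}{q_3}$ together with Hölder's inequality on sequence spaces.

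The main technical obstacle is the truncation at $r=J_\varepsilon$ coming from \eqref{eq:dydadictorus1}, since the standard Fourier-support argument assumes an idealized dyadic partition on $\mathbb{R}^2\times\mathbb{R}^2$. I would handle this by splitting the sums at $r=J_\varepsilon$: for $r<J_\varepsilon$ the kernel $\tilde{K}^\varepsilon_r$ coincides with its continuum counterpart (first bullet of Remark \ref{remark:k:epsilon2}), so the classical argument applies verbatim with bounds uniform in $\varepsilon$; for $r=J_\varepsilon$ the bound on $\tilde{K}^\varepsilon_{J_\varepsilon}$ in the second bullet of Remark \ref{remark:k:epsilon2} gives $\|\tilde{K}^\varepsilon_{J_\varepsilon}\|_{L^1_\ell}\lesssim 1$ uniformly in $\varepsilon$, which is enough to close the estimates. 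With this splitting in place, the remainder of the argument is the standard Bony decomposition of Chapter 2 of \cite{Bahouri2011} adapted to the weighted Besov setting as in Section 3.3 of \cite{Mourrat_Weber2017}, exactly as the authors used for the proof of Theorem \ref{theorem:besov:product}.
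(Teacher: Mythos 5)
Your proposal is correct and follows essentially the same approach as the paper's proof: establish the Fourier-support properties of $\tilde{\Delta}_r$ on $\mathbb{R}^2\times\varepsilon\mathbb{Z}^2$ (with bounds uniform in $\varepsilon$, including at the top level $r=J_\varepsilon$ via Remark \ref{remark:k:epsilon2}), then run the standard Bony paraproduct argument as in Chapter 2 of \cite{Bahouri2011} and Theorem 3.17 of \cite{Mourrat_Weber2017}. The paper's proof is considerably terser, merely recording the mixed annulus/ball support observation and pointing to those references; your write-up fills in the same skeleton with more detail (the three-piece decomposition, the Hölder and weight-factorization steps, the splitting at $r=J_\varepsilon$), without deviating in strategy.
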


\begin{proof}
  The proof is similar to the one sketched in the proof of Theorem
  \ref{theorem:besov:product}. Indeed the support of the Fourier transform of
  $\tilde{\Delta}_r (f)$, for $f \in \mathcal{S}' (\mathbb{R}^2 \times
  \varepsilon \mathbb{Z}^2)$ and $r \geqslant 0$, is supported in the set $(B
  (0, 2^r R_1) \times B (0, 2^r R_1)) \backslash (B (0, 2^r R_2) \times B (0,
  2^r R_2))$ (where $B (x, R) \subset \mathbb{R}^2$ is the ball of radius $R >
  0$ and center $x \in \mathbb{R}^2$) for suitable positive constants $R_1 >
  R_2 > 0$. From this observation the proof follows the same line of the one
  on $\mathbb{R}^d$ (see, e.g., Chapter 2 of {\cite{Bahouri2011}} and the
  proof of Theorem 3.17 of {\cite{Mourrat_Weber2017}}).
\end{proof}

\begin{theorem}
  \label{theorem:besov:rz:embedding}Consider $p_1, p_2, q_1, q_2 \in [1, +
  \infty]$, $p_1 \leqslant p_2$, $s_1 > s_2$ and $\ell_1, \ell_2 \in \mathbb{R}$ if
  \[ \ell_1 \leqslant \ell_2, \quad s_1 - \frac{4}{p_1} \geqslant s_2 -
     \frac{4}{p_2}, \]
  then $B^{s_1}_{p_1, q_1, \ell_1} (\mathbb{R}^2 \times \varepsilon
  \mathbb{Z}^2)$ is continuously embedded in $B^{s_2}_{p_2, q_2, \ell_2}
  (\mathbb{R}^2 \times \varepsilon \mathbb{Z}^2)$, and the norm of the
  embedding is uniformly bounded in $0 < \varepsilon \leqslant 1$. Furthermore if $p_1, p_2 \in [1, + \infty]$, $p_1 \leqslant p_2$, $s_1, s_2, s_3, s_4 \in
  \mathbb{R}$ such that $s_1 > s_3$, $s_2 > s_4$, and $\ell_1, \ell_2, \ell_3,
  \ell_4 \in \mathbb{R}$ if
  \[ \ell_1 \leqslant \ell_3, \quad \ell_2 \leqslant \ell_4, \quad s_1 -
     \frac{2}{p_1} \geqslant s_3 - \frac{2}{p_2}, \quad s_2 - \frac{2}{p_1}
     \geqslant s_4 - \frac{2}{p_2} \]
  then $B^{s_1, s_2}_{p_1, \ell_1, \ell_2} (\mathbb{R}^2 \times \varepsilon
  \mathbb{Z}^2)$ is continuously embedded in $B^{s_3, s_4}_{p_2, \ell_3,
  \ell_4} (\mathbb{R}^2 \times \varepsilon \mathbb{Z}^2)$, and the norm of the
  embedding is uniformly bounded in $0 < \varepsilon \leqslant 1$.
\end{theorem}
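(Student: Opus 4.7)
The plan is to mirror the proof of Theorem \ref{theorem:besov:lattice:embedding}, which handled the purely discrete case. First I would reduce to the unweighted situation: by Theorem \ref{theorem:besov:rz:equivalent}, the multiplication map $I_{\ell} : f \mapsto \rho^{(4)}_{\ell} f$ is an isomorphism between $B^s_{p, q, \ell}(\mathbb{R}^2 \times \varepsilon \mathbb{Z}^2)$ and $B^s_{p, q, 0}(\mathbb{R}^2 \times \varepsilon \mathbb{Z}^2)$, with operator norm uniform in $\varepsilon$. Therefore proving $B^{s_1}_{p_1, q_1, \ell_1} \hookrightarrow B^{s_2}_{p_2, q_2, \ell_2}$ amounts to showing that the composition $I_{\ell_2} \circ I_{\ell_1}^{-1}$, which is multiplication by $\rho^{(4)}_{\ell_2 - \ell_1}$, is continuous from $B^{s_1}_{p_1, q_1, 0}$ into $B^{s_2}_{p_2, q_2, 0}$. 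Since $\ell_2 - \ell_1 \geq 0$, the function $\rho^{(4)}_{\ell_2 - \ell_1}$ is smooth and bounded together with all its derivatives, hence belongs to $B^k_{\infty, \infty, 0}(\mathbb{R}^2 \times \varepsilon \mathbb{Z}^2)$ for every $k \in \mathbb{N}_0$ with norm uniform in $\varepsilon$; by the product estimate of Theorem \ref{theorem:besov:rz:product} this multiplication is bounded, and the problem reduces to the unweighted embedding.

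For the unweighted embedding the main tool is a Bernstein-type inequality on the mixed space: I would show that for every $f$ and $r \geq -1$,
\[
\|\tilde{\Delta}_r f\|_{L^{p_2}(\mathbb{R}^2 \times \varepsilon \mathbb{Z}^2)} \lesssim 2^{4r(1/p_1 - 1/p_2)} \|\tilde{\Delta}_r f\|_{L^{p_1}(\mathbb{R}^2 \times \varepsilon \mathbb{Z}^2)},
\]
uniformly in $0 < \varepsilon \leq 1$. This follows by writing $\tilde{\Delta}_r f = \tilde{K}^\varepsilon_r \ast_\varepsilon \bigl(\sum_{|r' - r| \leq 1} \tilde{\Delta}_{r'} f\bigr)$ and applying the weighted Young inequality on $\mathbb{R}^2 \times \varepsilon \mathbb{Z}^2$ with $1 + 1/p_2 = 1/p_1 + 1/s$; one then only needs $\|\tilde{K}_r^\varepsilon\|_{L^s} \lesssim 2^{4r(1 - 1/s)}$, which follows from the Gaussian-type decay estimates listed in Remark \ref{remark:k:epsilon2}. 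Once the Bernstein inequality is in hand, summing over $r$ gives
\[
2^{s_2 r} \|\tilde{\Delta}_r f\|_{L^{p_2}} \lesssim 2^{-((s_1 - s_2) - 4(1/p_1 - 1/p_2)) r} \cdot 2^{s_1 r} \|\tilde{\Delta}_r f\|_{L^{p_1}},
\]
and the hypothesis $s_1 - 4/p_1 \geq s_2 - 4/p_2$, combined with $s_1 > s_2$, makes the exponent non-positive (and strictly negative except in the critical equality case, where one uses the standard $\ell^{q_1} \hookrightarrow \ell^{q_2}$ nesting). The main obstacle is verifying the Bernstein bound uniformly in $\varepsilon$ across the three regimes $r < J_\varepsilon$, $r = J_\varepsilon$, $r > J_\varepsilon$ of Remark \ref{remark:k:epsilon2}; in the last regime the kernel $\tilde{K}^\varepsilon_r(x,z) = K_r(x)$ loses its discrete component, but since $\Delta^z_j$ is then frozen at the terminal block $j = J_\varepsilon$, integrating $x$ and $z$ separately and counting the remaining indices still yields the scaling $2^{4r(1-1/s)}$.

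For the second claim on the anisotropic spaces $B^{s_1, s_2}_{p, \ell_1, \ell_2}$, the same strategy applies but one works block-by-block with the factorised kernel $K_i \otimes K_j^\varepsilon$. Here Young's inequality is applied in the two variables separately, producing the two-parameter Bernstein bound
\[
\|\Delta_i^x \Delta_j^z f\|_{L^{p_2}(\mathbb{R}^2 \times \varepsilon \mathbb{Z}^2)} \lesssim 2^{2i(1/p_1 - 1/p_2)} 2^{2j(1/p_1 - 1/p_2)} \|\Delta_i^x \Delta_j^z f\|_{L^{p_1}(\mathbb{R}^2 \times \varepsilon \mathbb{Z}^2)},
\]
uniformly in $\varepsilon$ by Remark \ref{remark:k:epsilon} and Remark \ref{remark:epsilon2}. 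Using the equivalent expression of the norm given in Theorem \ref{theorem:besov:rz:norm}, one then sums in $i$ and $j$ independently, invoking $s_1 - 2/p_1 \geq s_3 - 2/p_2$ and $s_2 - 2/p_1 \geq s_4 - 2/p_2$ (together with the strict inequalities $s_1 > s_3$, $s_2 > s_4$) exactly as in the isotropic case; the first reduction step via the weights $\rho_{\ell_1}(x) \rho_{\ell_2}(z)$ is again carried out by the anisotropic analogue of the isomorphism in Theorem \ref{theorem:besov:rz:equivalent} combined with Theorem \ref{theorem:besov:rz:product}.
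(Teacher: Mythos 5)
Your treatment of the first (isotropic) part is essentially the paper's own route: the authors reduce to Theorem~\ref{theorem:besov:lattice:embedding} using the observation from the proof of Theorem~\ref{theorem:besov:rz:product} that $\tilde\Delta_r f$ has Fourier support in a four-dimensional annulus, and Theorem~\ref{theorem:besov:lattice:embedding} itself first peels off the weight exactly as you describe and then cites the standard Bernstein-based argument from \cite{Bahouri2011}. You simply make the Bernstein inequality and the kernel bounds from Remark~\ref{remark:k:epsilon2} explicit, which is sound.

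For the second (anisotropic) part the two proofs genuinely diverge. The paper appeals to Amann's abstract theory (Theorems~2.2.4 and~2.2.5 of~\cite{Amann2019}) for Besov spaces of distributions taking values in a Banach space, viewing $B^{s_1,s_2}_{p,\ell_1,\ell_2}(\mathbb{R}^2\times\varepsilon\mathbb{Z}^2)$ as $B^{s_1}_{p,p,\ell_1}(\mathbb{R}^2,B^{s_2}_{p,p,\ell_2}(\varepsilon\mathbb{Z}^2))$ as in Remark~\ref{remark:besov:banach}, composing an embedding in the outer variable with the lattice embedding of Theorem~\ref{theorem:besov:lattice:embedding} in the fibre. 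You instead prove a two-parameter Bernstein estimate block by block using the factorised kernel $K_i\otimes K_j^\varepsilon$, together with the norm expression of Theorem~\ref{theorem:besov:rz:norm}. Both routes are valid; yours is more self-contained and makes the role of the terminal block $j=J_\varepsilon$ visible, while the paper's is shorter at the price of invoking external machinery. One caveat worth flagging for your version: when you sum over $(i,j)$, the exchange of the $\ell^{p_1}$ and $\ell^{p_2}$ sequence norms at the critical Sobolev exponent implicitly uses $p_1\leqslant p_2$ (via a strict gap $s_1>s_3$, $s_2>s_4$ away from equality, or the $\ell^{p_1}\hookrightarrow\ell^{p_2}$ nesting at equality). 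This restriction is not spelled out in the statement but is tacitly present already in the cited references Proposition~\ref{proposition:embedding} and Theorem~\ref{theorem:besov:lattice:embedding}, so it is not a defect specific to your argument, but it should not be left implicit.
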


\begin{proof}
  The proof of the first part of the theorem can be reduced to the one of
  Theorem \ref{theorem:besov:lattice:embedding}, using the same observation as
  in the proof of Theorem \ref{theorem:besov:rz:product}.\\
  
  The second part of the theorem is a special case of Theorem 2.2.4 and
  Theorem 2.2.5 of {\cite{Amann2019}} about the embedding of Besov spaces
  formed by distributions taking values in Banach spaces. 
\end{proof}

\subsection{Difference spaces and extension operator on $\mathbb{R}^2 \times
\varepsilon \mathbb{Z}^2$}

\

First we introduce the analogous of the difference spaces introduced in
Section \ref{section:besov:difference}.

\begin{definition}
  Consider $s > 0$, $p \in (1, + \infty)$ and $\ell \in \mathbb{R}$ we define
  the space $W^{s, p}_{\ell} (\mathbb{R}^2 \times \varepsilon \mathbb{Z}^2)$
  as the linear subspace of $L^p_{\ell} (\mathbb{R}^2 \times \varepsilon
  \mathbb{Z}^2)$ whose elements $f \in W^{s, p}_{\ell} (\mathbb{R}^2 \times
  \varepsilon \mathbb{Z}^2)$ are such that the norm
  \[ \| f \|_{W^{s, p}_{\ell} (\mathbb{R}^2 \times \varepsilon \mathbb{Z}^2)}
     \assign \| f \|_{L^p_{\ell}(\mathbb{R}^2 \times \varepsilon \mathbb{Z}^2)} + \| f \|_{S W^{s, p}_{\ell} (\mathbb{R}^2
     \times \varepsilon \mathbb{Z}^2)}, \]
  where $\| \cdot \|_{S W^{s, p}_{\ell} (\mathbb{R}^2 \times \varepsilon
  \mathbb{Z}^2)}$ is the seminorm
  \[ \| f \|_{S W^{s, p}_{\ell} (\mathbb{R}^2 \times \varepsilon
     \mathbb{Z}^2)} = \sup_{0 < | h | \leqslant 1, h \in \mathbb{R}^2 \times
     \varepsilon \mathbb{Z}^2} \frac{\| \tau_h f - f \|_{L^p_{\ell}
     (\mathbb{R}^2 \tmcolor{red}{\tmcolor{black}{\times \varepsilon
     \mathbb{Z}^2}})}}{| h |^s}, \]
  where $\tau_h f (y) = f (h + y)$, is finite.
\end{definition}

For the space $W^{s, p}_{\ell} (\mathbb{R}^2 \times \varepsilon \mathbb{Z}^2)$
we can prove a result analogous to Theorem \ref{theorem:sobolev_besov}.

\begin{theorem}
  \label{theorem:differenceBesov} Consider $0 < s \leqslant 1$, $p \in (1, +
  \infty)$ and $\ell \in \mathbb{R}$ then for any $\delta_1 > 0$ and any $f
  \in B^s_{p, p, \ell} (\mathbb{R}^2 \times \varepsilon \mathbb{Z}^2)$ we have
  \begin{equation}
    \| f \|_{B^{s - \delta_1}_{p, p, \ell} (\mathbb{R}^2 \times \varepsilon
    \mathbb{Z}^2)} \lesssim \| f \|_{W^{s, p}_{\ell} (\mathbb{R}^2 \times
    \varepsilon \mathbb{Z}^2)} \lesssim \| f \|_{B^{s + \delta_1}_{p, p, \ell}
    (\mathbb{R}^2 \times \varepsilon \mathbb{Z}^2)}.
    \label{eq:inequality:norms:rz1}
  \end{equation}
\end{theorem}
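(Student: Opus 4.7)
The plan is to mirror the two-lemma proof of Theorem~\ref{theorem:sobolev_besov}, replacing the purely-lattice tools with their mixed analogues from Section~\ref{section:besov:rz}. So I would prove the two inequalities separately, using Lemma~\ref{lemma-fm-reg2} (instead of Lemma~\ref{lemma-fm-reg}) for the Fourier multiplier bounds and using the decay/size estimates for the kernels $\tilde{K}_r^{\varepsilon}$ collected in Remark~\ref{remark:k:epsilon2} for the convolution arguments.

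For the lower bound $\|f\|_{B^{s-\delta_1}_{p,p,\ell}(\mathbb{R}^2\times\varepsilon\mathbb{Z}^2)} \lesssim \|f\|_{W^{s,p}_{\ell}(\mathbb{R}^2\times\varepsilon\mathbb{Z}^2)}$, I follow the recipe of Lemma~\ref{lemma:first:inequality}. Introduce a refined partition of unity $\tilde{\varphi}_{r}^{\varepsilon}(k)\tilde{\varphi}_{j}^{\varepsilon}(q)$ on $\mathbb{R}^2\times\mathbb{T}^2_{1/\varepsilon}$, and for each pair $(r,j)$ choose a shift vector $h_{r,j}\in\mathbb{R}^2\times\varepsilon\mathbb{Z}^2$ with $\varepsilon\leqslant|h_{r,j}|\leqslant 1$ and $|h_{r,j}|\sim 2^{-\max(r,j)}$, selected so that $|\sin(h_{r,j}\cdot(k,q)/2)|\geqslant C>0$ uniformly on the support of $\tilde{\varphi}_r^{\varepsilon}(k)\tilde{\varphi}_j^{\varepsilon}(q)$ (and consequently $|h_{r,j}|\gtrsim(1+|(k,q)|)^{-1}$ there). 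Define the symbol
\[
\sigma_{\varepsilon}(k,q)\;=\;\sum_{r,j}\tilde{\varphi}_r^{\varepsilon}(k)\tilde{\varphi}_j^{\varepsilon}(q)\,\frac{e^{-ih_{r,j}\cdot(k,q)}-1}{|h_{r,j}|^{s}\bigl(1+4|h_{r,j}|^{-2s}\sin^{2}(h_{r,j}\cdot(k,q)/2)\bigr)},
\]
together with the analogous compensating symbol $\bar\sigma_{\varepsilon}$, so that each $\tilde{\Delta}_r f$ can be recovered as a linear combination of multiplier actions on rescaled difference operators $|h_{r,j}|^{-s}D_{h_{r,j}}f$ plus a copy of $f$ smoothed by an operator of order $-2s$. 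A direct computation shows that $\sigma_\varepsilon$ and $\bar\sigma_\varepsilon$ satisfy the hypotheses of Lemma~\ref{lemma-fm-reg2} with orders $-s$ and $-2s$ respectively, uniformly in $\varepsilon$; summing the resulting geometric series in a scale index (losing an arbitrarily small $\delta_1$ to make it converge) produces the stated bound.

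For the upper bound $\|f\|_{W^{s,p}_{\ell}(\mathbb{R}^2\times\varepsilon\mathbb{Z}^2)} \lesssim \|f\|_{B^{s+\delta_1}_{p,p,\ell}(\mathbb{R}^2\times\varepsilon\mathbb{Z}^2)}$, I follow Lemma~\ref{lemma:second:inequality}: decompose $\tau_h f - f = \sum_{r\geqslant -1}(\tau_h \tilde{\Delta}_r f - \tilde{\Delta}_r f)$, and, using that $\tau_h \tilde{K}^{\varepsilon}_r - \tilde{K}^{\varepsilon}_r = \int_0^1\bigl(h\cdot\nabla\tilde{K}^{\varepsilon}_r(\cdot + th)\bigr)\mathrm{d}t$ together with the Gevrey/exponential-decay estimates from Remark~\ref{remark:k:epsilon2}, obtain the pair of bounds
\[
\bigl\|\tau_h\tilde{\Delta}_r f - \tilde{\Delta}_r f\bigr\|_{L^p_{\ell}} \;\lesssim\; \min\bigl(2^{r}|h|,\,1\bigr)\,2^{-r s}\,\|f\|_{B^{s}_{p,\infty,\ell}}.
\]
Splitting the sum at the dyadic level $r_h$ with $2^{r_h}\sim|h|^{-1}$ gives $\|\tau_h f - f\|_{L^p_\ell}\lesssim |h|^s \|f\|_{B^s_{p,\infty,\ell}}$; dividing by $|h|^s$, taking the supremum over $0<|h|\leqslant 1$ and using the embedding $B^{s+\delta_1}_{p,p,\ell}\hookrightarrow B^s_{p,\infty,\ell}$ (Theorem~\ref{theorem:besov:rz:embedding}) yields the upper inequality.

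The main obstacle is the anisotropy: one must choose $h_{r,j}$ so that the cancellation factor $e^{-i h_{r,j}\cdot(k,q)}-1$ is quantitatively invertible on the appropriate annulus in both the continuous frequency $k$ and the discrete frequency $q\in\mathbb{T}^2_{1/\varepsilon}$ \emph{simultaneously}, with bounds that are uniform in $0<\varepsilon\leqslant 1$. Once this choice is made (e.g.\ by taking $h_{r,j}$ along the dominant block of $\max(r,j)$), the rest of the argument is routine, since Lemma~\ref{lemma-fm-reg2} and the kernel estimates of Remark~\ref{remark:k:epsilon2} are already of the precise mixed form needed.
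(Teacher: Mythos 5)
Your proposal follows essentially the same route the paper intends: the paper's own proof merely records the translation identity $\mathcal{F}_\varepsilon(\tau_h f)(k,q) = e^{ih\cdot(k,q)}\mathcal{F}_\varepsilon(f)$ and then defers to ``adapting'' Lemma~\ref{lemma:first:inequality}, Lemma~\ref{lemma:second:inequality} and Theorem~\ref{theorem:sobolev_besov}, with Lemma~\ref{lemma-fm-reg} replaced by Lemma~\ref{lemma-fm-reg2}. Your two-part argument --- the multiplier symbols $\sigma_\varepsilon,\bar\sigma_\varepsilon$ built from the cancellation factor $e^{-ih_{r,j}\cdot(k,q)}-1$ for the lower bound, and the kernel-differencing argument against $\tilde K_r^\varepsilon$ for the upper bound --- is precisely that adaptation, and your explicit identification of the anisotropy obstacle (choosing $h_{r,j}$ along the dominant $\max(r,j)$ block of a tensor-product dyadic partition so that the cancellation is uniform in $\varepsilon$) is the one genuinely nontrivial point the paper glosses over.

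One small inaccuracy in the upper bound as written: the intermediate claim $\|\tau_h f - f\|_{L^p_\ell}\lesssim |h|^s\|f\|_{B^s_{p,\infty,\ell}}$ fails at the endpoint $s=1$, because the low-frequency piece $|h|\sum_{r\le r_h}2^{r(1-s)}$ degenerates to $|h|\,r_h \sim |h|\log(1/|h|)$; applying the embedding $B^{s+\delta_1}_{p,p,\ell}\hookrightarrow B^s_{p,\infty,\ell}$ only afterwards does not remove the logarithm. The fix is cheap and uses the $\delta_1$ you were going to spend anyway: start from $\|\tau_h\tilde\Delta_r f-\tilde\Delta_r f\|_{L^p_\ell}\lesssim\min(2^r|h|,1)\,2^{-r(s+\delta_1)}\|f\|_{B^{s+\delta_1}_{p,\infty,\ell}}$, so that the low-frequency sum $|h|\sum_{r\le r_h}2^{-r\delta_1}$ is $O(|h|)$ uniformly in $r_h$, covering all $0<s\le 1$ in one stroke. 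Alternatively, handle $s=1$ separately via the first-order equivalent difference norm, which is how the paper's own Theorem~\ref{theorem:sobolev_besov} (via Remark~\ref{remark:equivalence:difference}) deals with the endpoint before being invoked here.
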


\begin{proof}
  We note that for any $h \in \mathbb{R}^2 \times \varepsilon \mathbb{Z}^2$ we
  have
  \begin{equation}
    \mathcal{F}_{\varepsilon} (\tau_h f) (k, q) = e^{i (h \cdot (k, q))}
    \mathcal{F}_{\varepsilon} (f) . \label{eq:translation:rz1}
  \end{equation}
  Using identity {\eqref{eq:translation:rz1}} and Lemma \ref{lemma-fm-reg2},
  the arguments of Lemma \ref{lemma:first:inequality}, Lemma
  \ref{lemma:second:inequality} and Theorem \ref{theorem:sobolev_besov} can be
  easily adapted for proving inequalities {\eqref{eq:inequality:norms:rz1}}.
\end{proof}

We study the following extension operator
$\overline{\mathcal{E}}^{\varepsilon}$ from $\mathcal{S}' (\mathbb{R}^2 \times
\varepsilon \mathbb{Z}^2)$ into $\mathcal{S}' (\mathbb{R}^2 \times
\mathbb{R}^2) \simeq \mathcal{S}' (\mathbb{R}^2) \hat{\otimes} \mathcal{S}'
(\mathbb{R}^2)$ defined as
\[ \overline{\mathcal{E}}^{\varepsilon} \assign I_{\mathcal{S}'
   (\mathbb{R}^2)} \otimes \mathcal{E}^{\varepsilon}, \]
where $\mathcal{E}^{\varepsilon} : \mathcal{S}' (\varepsilon \mathbb{Z}^2)
\rightarrow \mathcal{S}' (\mathbb{R}^2)$ is defined in
{\eqref{eq:definition:extension1}}. In particular if $f$ is a measurable
function on $\mathbb{R}^2 \times \varepsilon \mathbb{Z}^2$ we can identify
$\overline{\mathcal{E}}^{\varepsilon} (f)$ with a measurable function on
$\mathbb{R}^4$ and we have
\[ \overline{\mathcal{E}}^{\varepsilon} (f) (x, z) = \sum_{z' \in \varepsilon
   \mathbb{Z}^2} f (x, z') \mathbb{I}_{Q_{\varepsilon} (z')} (z), \]
where, as usual, $Q_{\varepsilon} (z')$ is defined in equation
{\eqref{eq:defsquares}}.

\begin{theorem}
  \label{theorem:extension3}For any $p \in (1, + \infty)$, $0 < s \leqslant 1$
  and $\ell \in \mathbb{R}$ and, for any $f_{\varepsilon} \in W^{s, p}_{\ell}
  (\mathbb{R}^2 \times \varepsilon \mathbb{Z}^2)$ we have
  \[ \| \overline{\mathcal{E}}^{\varepsilon} (f_{\varepsilon}) \|_{B^{s
     \wedge \frac{1}{p}}_{p, p, \ell} (\mathbb{R}^4)} \lesssim \|
     f_{\varepsilon} \|_{W^{s, p}_{\ell} (\mathbb{R}^2 \times \varepsilon
     \mathbb{Z}^2)} . \]
  Furthermore if $\sup_{0 < \varepsilon \leqslant 1} \| f_{\varepsilon}
  \|_{W^{s, p}_{\ell} (\mathbb{R}^2 \times \varepsilon \mathbb{Z}^2)} < +
  \infty$ and $\overline{\mathcal{E}}^{\varepsilon} (f_{\varepsilon})
  \rightarrow f$ in $\mathcal{S}' (\mathbb{R}^4)$, then $f \in B^s_{p, \infty,
  \ell} (\mathbb{R}^4)$ and $\| f \|_{B^s_{p, \infty, \ell} (\mathbb{R}^4)}
  \lesssim \sup_{0 < \varepsilon \leqslant 1} \| f_{\varepsilon} \|_{W^{s,
  p}_{\ell} (\mathbb{R}^2 \times \varepsilon \mathbb{Z}^2)}$, furthermore when
  $s = 1$ $f \in W^{1, p}_{\ell} (\mathbb{R}^4)$ and $\| f \|_{W^{1, p}_{\ell}
  (\mathbb{R}^4)} \lesssim \sup_{0 < \varepsilon \leqslant 1} \|
  f_{\varepsilon} \|_{W^{1, p}_{\ell} (\mathbb{R}^2 \times \varepsilon
  \mathbb{Z}^2)}$. Finally if $s \leqslant 0$ and $\delta > 0$ we get
  \[ \| \overline{\mathcal{E}}^{\varepsilon} (f_{\varepsilon}) \|_{B^{s -
     \delta}_{p, p, \ell} (\mathbb{R}^4)} \lesssim \| f_{\varepsilon}
     \|_{B^s_{p, p, \ell} (\mathbb{R}^2 \times \varepsilon \mathbb{Z}^2)} . \]
\end{theorem}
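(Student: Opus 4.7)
The overall strategy is to adapt, to the mixed continuum/lattice setting, the three complementary arguments already developed on $\varepsilon\mathbb{Z}^2$: the difference-norm bound of Theorem \ref{theorem:extension1} (for the positive regularity part), the lower semi-continuity argument of Theorem \ref{theorem:extension2} (for the $\limsup$ statement), and the duality argument of Theorem \ref{theorem:extension:lattice:negative} (for the negative regularity part). The crucial structural observation is that $\overline{\mathcal{E}}^\varepsilon = I_{\mathcal{S}'(\mathbb{R}^2)} \otimes \mathcal{E}^\varepsilon$ acts as the identity in the $x$ variable and precisely as the operator studied before in the $z$ variable, so translations in $x$ commute with $\overline{\mathcal{E}}^\varepsilon$ trivially, and translations in $z$ by elements of $\varepsilon\mathbb{Z}^2$ pull back to lattice translations of $f_\varepsilon$.

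For the positive regularity bound, I would use the equivalent difference characterization of $B^{\sigma}_{p,q,\ell}(\mathbb{R}^4)$ applied with $\sigma = s\wedge 1/p$ and $G = \overline{\mathcal{E}}^\varepsilon(f_\varepsilon)$. Writing $h=(h_x,h_z)\in\mathbb{R}^2\times\mathbb{R}^2$, split by the triangle inequality into a pure $h_x$-translation and a pure $h_z$-translation (the weight $\rho_\ell$ is slowly varying so the intermediate translation costs only a constant). The $h_x$-step is immediate from the definition of $W^{s,p}_\ell(\mathbb{R}^2\times\varepsilon\mathbb{Z}^2)$ together with the $L^p_\ell$-isometry property of Remark \ref{remark:extension}. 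For the $h_z$-step, separate two regimes: when $|h_z|\geq\varepsilon$, round to the nearest lattice point, use that $\tau_{(0,k)}G = \overline{\mathcal{E}}^\varepsilon(\tau^z_k f_\varepsilon)$ for $k\in\varepsilon\mathbb{Z}^2$, and apply the $W^{s,p}_\ell$ seminorm; when $|h_z|<\varepsilon$ the function $\tau_{(0,h_z)}G - G$ vanishes outside a boundary strip of area $\sim \varepsilon|h_z|$ per cell and is bounded by $\varepsilon^s|\nabla^s_\varepsilon f_\varepsilon|$ on that strip, yielding $\|\tau_{(0,h_z)}G-G\|^p_{L^p_\ell}\lesssim |h_z|\,\varepsilon^{sp-1}\|f_\varepsilon\|_{W^{s,p}_\ell}^p$. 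The inequality $\sigma\leq 1/p$ then gives $|h_z|^{-\sigma}\|\tau_{(0,h_z)}G-G\|_{L^p_\ell}\lesssim (|h_z|/\varepsilon)^{1/p-\sigma}\|f_\varepsilon\|_{W^{s,p}_\ell}\lesssim \|f_\varepsilon\|_{W^{s,p}_\ell}$. For the limit statement, I would repeat the chain of inequalities in the proof of Theorem \ref{theorem:extension2} verbatim, replacing $\mathcal{E}^\varepsilon$ by $\overline{\mathcal{E}}^\varepsilon$; the characterization of $W^{1,p}_\ell(\mathbb{R}^4)$ by $L^p$-boundedness of difference quotients then handles the $s=1$ case.

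For the negative regularity part, I would proceed by duality as in Theorem \ref{theorem:extension:lattice:negative}, introducing the mixed discretization $\overline{\mathcal{D}}^\varepsilon = I_{\mathcal{S}'(\mathbb{R}^2)}\otimes \mathcal{D}^\varepsilon$ whose boundedness and smoothing properties follow fiberwise from Lemma \ref{lemma:discretization:operator}. The adjoint identity
\[ \int_{\mathbb{R}^4}\overline{\mathcal{E}}^\varepsilon(f_\varepsilon)\,g\,dx\,dz = \int_{\mathbb{R}^2\times\varepsilon\mathbb{Z}^2} f_\varepsilon\,\overline{\mathcal{D}}^\varepsilon(g)\,dx\,dz \]
reduces $\|\overline{\mathcal{E}}^\varepsilon(f_\varepsilon)\|_{B^{s-\delta}_{p,p,\ell}(\mathbb{R}^4)}$ to a testing against $g\in B^{-s+\delta}_{q,q,-\ell}(\mathbb{R}^4)$; after inserting $(1-\Delta_{\mathbb{R}^2\times\varepsilon\mathbb{Z}^2})^{\pm(s-\delta/2)}$ and applying Theorem \ref{theorem:besov:rz:equivalent} together with Theorem \ref{theorem:besov:rz:embedding} to both factors, the estimate on $\overline{\mathcal{D}}^\varepsilon(g)$ closes the argument. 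The main technical obstacle is the fine-scale $|h_z|<\varepsilon$ regime in the positive part: one must keep track of the saturation of regularity at $1/p$ coming from the step-function structure in the $z$-direction and, in order to obtain the stronger $q=p$ summability rather than $q=\infty$, integrate the improved fine-scale decay $\|\tau_h G-G\|_{L^p_\ell}^p\lesssim |h_z|\varepsilon^{sp-1}$ through the Slobodeckij-type characterization and check that the resulting integrals converge uniformly in $\varepsilon$.
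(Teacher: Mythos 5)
Your approach is exactly the paper's: the authors' own proof consists of a single sentence saying the result follows by a suitable modification of Theorems \ref{theorem:extension1}, \ref{theorem:extension2} and \ref{theorem:extension:lattice:negative}, which is precisely the blueprint you spell out (identity in $x$, rounding and fine-scale strip estimate in $z$, duality via $\overline{\mathcal{D}}^{\varepsilon}$ for negative regularity, and the $\limsup$/weak-derivative argument for the convergence part). One caveat on your closing remark: the obstruction to genuine $q=p$ summability at exponent $s$ when $s<1/p$ sits in the coarse regime $|h_z|\geq\varepsilon$, where $\|\tau_{(0,h_z)}\overline{\mathcal{E}}^{\varepsilon}(f_\varepsilon)-\overline{\mathcal{E}}^{\varepsilon}(f_\varepsilon)\|_{L^p_\ell}\lesssim|h_z|^s$ only yields a logarithmically divergent Slobodeckij integral (whereas your fine-scale bound $|h_z|\varepsilon^{sp-1}$ does integrate uniformly for $sp<1$); so, consistently with Theorem \ref{theorem:extension1} on $\varepsilon\mathbb{Z}^2$, the positive-regularity conclusion for $s\leq 1/p$ is most naturally $B^s_{p,\infty,\ell}(\mathbb{R}^4)$, with the $q=p$ index appearing only at the saturated exponent $1/p$ when $s=1$.
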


\begin{proof}
  The proof is a suitable modification of the proof of Theorem
  \ref{theorem:extension1}, Theorem \ref{theorem:extension2} and Theorem
  \ref{theorem:extension:lattice:negative}.
\end{proof}

We conclude this section with an useful compactness result about differences
and Besov spaces on $\mathbb{R}^2 \times \varepsilon \mathbb{Z}^2$.

\begin{theorem}
  \label{theorem:compact}Consider $p \in [1, + \infty]$, $0 < s_1 < s_2
  \leqslant 1$ and $\ell_1 > \ell_2$ then the space $W^{s_2, p}_{\ell_1}
  (\mathbb{R}^2 \times \varepsilon \mathbb{Z}^2)$ is compactly embedded in
  $W^{s_1, p}_{\ell_2} (\mathbb{R}^2 \times \varepsilon \mathbb{Z}^2)$.
\end{theorem}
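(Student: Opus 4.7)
The plan is a two-step argument. First, extract a subsequence that converges in the weaker topology $L^p_{\ell_2}(\mathbb{R}^2 \times \varepsilon \mathbb{Z}^2)$ via a weighted Fr\'echet--Kolmogorov (Riesz--Kolmogorov) compactness criterion; second, upgrade this to convergence in $W^{s_1,p}_{\ell_2}$ by an interpolation between the uniform $W^{s_2,p}_{\ell_1}$ bound (which controls the difference seminorm) and the $L^p_{\ell_2}$ convergence (which controls the $L^p$ part and the large-translation regime).

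For the first step, take $\{f_n\}$ bounded in $W^{s_2,p}_{\ell_1}$ and verify the three Fr\'echet--Kolmogorov conditions on $L^p_{\ell_2}$: uniform $L^p_{\ell_2}$-boundedness, equicontinuity of translations $\sup_n\|\tau_h f_n-f_n\|_{L^p_{\ell_2}}\to 0$ as $|h|\to 0$, and tightness $\sup_n\int_{|y|>R}|\rho_{\ell_2}(y)f_n(y)|^p\,\mathrm d y\to 0$ as $R\to\infty$. The first two follow from the $W^{s_2,p}_{\ell_1}$ bound together with the pointwise comparison between $\rho_{\ell_1}$ and $\rho_{\ell_2}$ implied by the hypothesis relating $\ell_1$ and $\ell_2$, together with the estimate $\|\tau_h f_n-f_n\|_{L^p_{\ell_1}}\lesssim |h|^{s_2}\|f_n\|_{W^{s_2,p}_{\ell_1}}$ built into the difference-norm definition. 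Tightness follows from the fact that the ratio of the two polynomial weights decays strictly at infinity, so
\[
\int_{|y|>R}|\rho_{\ell_2}(y)f_n(y)|^p\,\mathrm d y \;\lesssim\; \Bigl(\sup_{|y|>R}\tfrac{\rho_{\ell_2}(y)}{\rho_{\ell_1}(y)}\Bigr)^p\,\|f_n\|_{L^p_{\ell_1}}^p
\]
tends to $0$ uniformly in $n$. This produces a subsequence $\{f_{n_k}\}$ convergent in $L^p_{\ell_2}$.

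For the second step, set $g_{k,j}=f_{n_k}-f_{n_j}$, which remains bounded in $W^{s_2,p}_{\ell_1}$ and tends to zero in $L^p_{\ell_2}$. Given $\eta>0$ choose a cut-off scale $\delta>0$ and split the seminorm defining $\|\cdot\|_{W^{s_1,p}_{\ell_2}}$. For $|h|\leq \delta$ write $|h|^{-s_1}=|h|^{s_2-s_1}|h|^{-s_2}$ and use the uniform $W^{s_2,p}_{\ell_1}$-bound to estimate
\[
\frac{\|\tau_h g_{k,j}-g_{k,j}\|_{L^p_{\ell_2}}}{|h|^{s_1}}\;\lesssim\; |h|^{s_2-s_1}\,\|g_{k,j}\|_{W^{s_2,p}_{\ell_1}}\;\lesssim\; \delta^{s_2-s_1},
\]
which is $<\eta/2$ for $\delta$ small (using $s_2>s_1$). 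For $|h|>\delta$, combine $\|\tau_h g_{k,j}\|_{L^p_{\ell_2}}\lesssim \|g_{k,j}\|_{L^p_{\ell_2}}$ (valid uniformly on $|h|\leq 1$ by the submultiplicativity of $\rho_{\ell_2}$) with the triangle inequality to bound this contribution by $\delta^{-s_1}\|g_{k,j}\|_{L^p_{\ell_2}}$, which is $<\eta/2$ for $k,j$ sufficiently large by the first step. Together with the $L^p_{\ell_2}$-norm contribution itself, this shows $\{f_{n_k}\}$ is Cauchy in $W^{s_1,p}_{\ell_2}$.

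The main obstacle is establishing the Fr\'echet--Kolmogorov theorem on the mixed continuous/discrete space $\mathbb{R}^2\times\varepsilon\mathbb{Z}^2$ with polynomial weights. Although the result can be obtained directly, since this space is a locally compact abelian group with an invariant metric for which the classical criterion extends verbatim, the cleanest route is to apply the extension operator $\overline{\mathcal{E}}^{\varepsilon}$ of Section \ref{section:extension}: it is an $L^p_{\ell}$-isometry (up to a bounded constant) and, as verified in Theorem \ref{theorem:extension3}, it controls translation-difference seminorms up to scales of order $\varepsilon$, so compactness on $\mathbb{R}^4$ transfers to compactness on the lattice-valued side. The argument as described handles $1\leq p<\infty$; the case $p=\infty$ is not directly covered by Fr\'echet--Kolmogorov and would require a separate argument (e.g.\ an Arzel\`a--Ascoli argument combined with the H\"older continuity implicit in the seminorm).
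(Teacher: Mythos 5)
Your argument is correct (for $p<\infty$) but follows a genuinely different route from the paper's. The paper's proof is abstract: since compactness is a topological, not a quantitative, property, for each \emph{fixed} $\varepsilon$ one may use the (non-uniform in $\varepsilon$) isomorphism $W^{s,p}_0(\varepsilon\mathbb{Z}^2)\simeq L^p(\varepsilon\mathbb{Z}^2)$; this turns $W^{s,p}_0(\mathbb{R}^2\times\varepsilon\mathbb{Z}^2)$ into a vector-valued Besov space $B^{s}_{p,\infty,0}(\mathbb{R}^2;L^p(\varepsilon\mathbb{Z}^2))$, and the compactness then follows from the compact embedding $L^p(\varepsilon\mathbb{Z}^2)\hookrightarrow L^p_{\ell'}(\varepsilon\mathbb{Z}^2)$ (finite-dimensional blocks plus a strictly decaying weight ratio) together with a vector-valued Besov embedding theorem of Amann. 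You instead run a hands-on Riesz--Kolmogorov argument in $L^p_{\ell_2}$ and upgrade to the difference seminorm by the $\delta$-split. Your route avoids the vector-valued Besov machinery but requires setting up Riesz--Kolmogorov on $\mathbb{R}^2\times\varepsilon\mathbb{Z}^2$; when you do so, note that on the lattice factor translation-equicontinuity is vacuous and only tightness is needed, since $L^p$ of a finite set of lattice sites is finite-dimensional. The $\delta$-split upgrade step is correct as written.

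Two caveats. First, as you yourself flag, $p=\infty$ is outside the scope of Riesz--Kolmogorov; the paper's Definition \ref{definition:difference} in fact only defines $W^{s,p}_\ell$ for $p\in(1,\infty)$, so the discrepancy is already latent in the statement and is not peculiar to your proof. Second, and more importantly: your tightness step claims that $\rho_{\ell_2}/\rho_{\ell_1}$ decays at infinity, but with the paper's convention $\rho_\ell(y)=(1+|y|^2)^{-\ell/2}$ one has $\rho_{\ell_2}/\rho_{\ell_1}=(1+|y|^2)^{(\ell_1-\ell_2)/2}$, which tends to $0$ only if $\ell_1<\ell_2$ --- the \emph{opposite} of the printed hypothesis $\ell_1>\ell_2$. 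This is a sign error in the theorem statement itself (compare Proposition \ref{proposition:embedding}, which requires $\ell_1<\ell_2$ for the compact Besov embedding, and the paper's own proof, which embeds $W^{s_2,p}_0$ into $W^{s_1,p}_{\ell_1}$ with $\ell_1>0$, i.e., the target index is the larger one). Your argument tacitly assumes the corrected inequality; the same correction is also needed for the estimate $\|\tau_h g-g\|_{L^p_{\ell_2}}\lesssim |h|^{s_2}\|g\|_{W^{s_2,p}_{\ell_1}}$ in your second step, which requires $\rho_{\ell_2}\lesssim\rho_{\ell_1}$, i.e.\ $\ell_1\le\ell_2$. You should state explicitly that the intended hypothesis is $\ell_1<\ell_2$ rather than asserting a decay that the printed inequality would not deliver.
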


\begin{proof}
  We prove the theorem in the case $\ell_2 = 0$. The general case can be
  obtained as in exploiting the same argument of the proof of Theorem
  \ref{theorem:besov:lattice:embedding}.\\
  
  For $\varepsilon > 0$, since $W^{s, p}_0 (\varepsilon \mathbb{Z}^2)$ is
  isomorphic to $L^p (\varepsilon \mathbb{Z}^2)$, the space $W^{s_2, p}_0
  (\mathbb{R}^2 \times \varepsilon \mathbb{Z}^2)$ is isomorphic to
  $B^{s_2}_{p, \infty, 0} (\mathbb{R}^2, L^p (\varepsilon \mathbb{Z}^2))$.
  Since $L^p (\varepsilon \mathbb{Z}^2)$ is compactly embedded in
  $L^p_{\frac{\ell_1}{2}} (\varepsilon \mathbb{Z}^2)$, by Theorem 7.2.3 of
  {\cite{Amann2019}}, the space $B^{s_2}_{p, \infty, 0} (\mathbb{R}^2, L^p
  (\varepsilon \mathbb{Z}^2))$ is compactly embedded (with respect to the
  natural embedding) in the space $B^{s_1}_{p, \infty, \frac{\ell_1}{2}}
  \left( \mathbb{R}^2, L^p_{\frac{\ell_1}{2}} (\varepsilon \mathbb{Z}^2)
  \right)$. Using the isomorphism between $L^p_{\frac{\ell_1}{2}} (\varepsilon
  \mathbb{Z}^2)$ and $B^{s_1}_{p, \infty, \frac{\ell_1}{2}} (\varepsilon
  \mathbb{Z}^2)$, the space $B^{s_1}_{p, \infty, \frac{\ell_1}{2}} \left(
  \mathbb{R}^2, L^p_{\frac{\ell_1}{2}} (\varepsilon \mathbb{Z}^2) \right)$ is
  continuously embedded in $W^{s_1, p}_{\ell_1} (\mathbb{R}^2 \times
  \varepsilon \mathbb{Z}^2)$. The thesis, thus, follows from the fact that the
  composition of a compact embedding with a continuous embedding is compact.
\end{proof}

\begin{remark}
  The isomorphism between $W^{s, p}_0 (\varepsilon \mathbb{Z}^2)$ and $L^p
  (\varepsilon \mathbb{Z}^2)$ (equipped with their natural norm
  {\eqref{eq:no}} and {\eqref{eq:Lplattice}} respectively) is not an isometry,
  and the norm of this isomorphism diverges to $+ \infty$ as $\varepsilon
  \rightarrow 0$. This is the reason why we cannot use it in the proofs of the
  previous results. Since compactness is a topological property and not a
  metric one we can use the isomorphism between $W^{s, p}_0 (\varepsilon
  \mathbb{Z}^2)$ and $L^p (\varepsilon \mathbb{Z}^2)$ in the proof of Theorem
  \ref{theorem:compact}. 
\end{remark}

\begin{corollary}
  \label{corollary:compact}Consider $0 < s_1 < s_2 \leqslant 1$ and $\ell_1 >
  \ell_2$ then the space $B^{s_2}_{p, p, \ell_1} (\mathbb{R}^2 \times
  \varepsilon \mathbb{Z}^2)$ is compactly embedded in $B^{s_1}_{p, p, \ell_1}
  (\mathbb{R}^2 \times \varepsilon \mathbb{Z}^2)$.
\end{corollary}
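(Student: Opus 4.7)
The plan is to deduce Corollary \ref{corollary:compact} by sandwiching the Besov inclusion between two difference-space inclusions and invoking Theorem \ref{theorem:compact} in the middle; the Besov-to-difference comparisons are provided by Theorem \ref{theorem:differenceBesov}. I note first that, as stated, the weight on the target space is $\ell_1$, but since the hypothesis $\ell_1 > \ell_2$ would otherwise play no role, the natural reading is that the target is $B^{s_1}_{p,p,\ell_2}(\mathbb{R}^2 \times \varepsilon \mathbb{Z}^2)$; I will produce the argument for this formulation.

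First I would fix $\delta > 0$ small enough that $0 < s_1 + \delta < s_2 - \delta \leqslant 1$; this is possible because $s_1 < s_2 \leqslant 1$. Then Theorem \ref{theorem:differenceBesov} yields two continuous embeddings, with operator norms uniform in $0 < \varepsilon \leqslant 1$:
\[
B^{s_2}_{p,p,\ell_1}(\mathbb{R}^2 \times \varepsilon \mathbb{Z}^2) \hookrightarrow W^{s_2-\delta,p}_{\ell_1}(\mathbb{R}^2 \times \varepsilon \mathbb{Z}^2),
\]
\[
W^{s_1+\delta,p}_{\ell_2}(\mathbb{R}^2 \times \varepsilon \mathbb{Z}^2) \hookrightarrow B^{s_1}_{p,p,\ell_2}(\mathbb{R}^2 \times \varepsilon \mathbb{Z}^2),
\]
coming respectively from the right inequality and the left inequality of \eqref{eq:inequality:norms:rz1}.

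Next, since $s_1 + \delta < s_2 - \delta$ (both in $(0,1]$) and $\ell_2 < \ell_1$, Theorem \ref{theorem:compact} applies to give that the natural inclusion
\[
W^{s_2-\delta,p}_{\ell_1}(\mathbb{R}^2 \times \varepsilon \mathbb{Z}^2) \hookrightarrow W^{s_1+\delta,p}_{\ell_2}(\mathbb{R}^2 \times \varepsilon \mathbb{Z}^2)
\]
is compact. Composing the three maps above, the embedding $B^{s_2}_{p,p,\ell_1} \hookrightarrow B^{s_1}_{p,p,\ell_2}$ factors through a compact operator (composed with continuous operators on either side) and is therefore compact, which is the claim.

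There is no genuinely hard step here: the corollary is a formal consequence of Theorem \ref{theorem:compact} together with the equivalence up to arbitrarily small loss of regularity between Besov and difference norms (Theorem \ref{theorem:differenceBesov}). The only points to check are the arithmetic choice of $\delta$ and the fact that the Besov-to-difference constants can be chosen uniformly in $\varepsilon$, both of which are explicitly built into the cited statements.
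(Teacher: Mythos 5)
Your proof is correct and is precisely the argument the paper has in mind: the paper's own proof consists only of the single line citing Theorem \ref{theorem:differenceBesov} and Theorem \ref{theorem:compact}, and you have filled in exactly the sandwiching scheme (Besov $\hookrightarrow$ difference space, compact difference-space embedding, difference space $\hookrightarrow$ Besov) with the appropriate $\delta$-loss of regularity at each comparison. Your observation that the target weight in the statement should be $\ell_2$ rather than $\ell_1$ is also correct — as written the hypothesis $\ell_1 > \ell_2$ would be vacuous, and in any case the version you prove is stronger and implies the one stated, since $B^{s_1}_{p,p,\ell_2} \hookrightarrow B^{s_1}_{p,p,\ell_1}$ continuously when $\ell_2 < \ell_1$.
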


\begin{proof}
  The corollary is a consequence of Theorem \ref{theorem:differenceBesov} and
  Theorem \ref{theorem:compact}.
\end{proof}

\subsection{Positive distributions and Besov
spaces}\label{section:positive:distribution}

\

In this section we want to study positive (tempered) distribution on
$\mathbb{R}^d$ or $\mathbb{R}^2 \times \varepsilon \mathbb{Z}^2$. Namely we
want to consider elements $\eta$ or $\eta_{\varepsilon}$ respectively in
$\mathcal{S}' (\mathbb{R}^4)$ or $\mathcal{S}' (\mathbb{R}^2 \times
\varepsilon \mathbb{Z}^2)$ such that, for any $f \in \mathcal{S}
(\mathbb{R}^4)$ or $f_{\varepsilon} \in \mathcal{S} (\mathbb{R}^2 \times
\varepsilon \mathbb{Z}^2)$ positive, i.e. for each $y \in \mathbb{R}^4$ and
$(x, z) \in \mathbb{R}^2 \times \varepsilon \mathbb{Z}^2$ we have $f (y),
f_{\varepsilon} (x, z) \geqslant 0$, we have
\[ \langle f, \eta \rangle, \langle f_{\varepsilon}, \eta_{\varepsilon}
   \rangle \geqslant 0. \]
For well known theorems on positive tempered distribution on $\mathbb{R}^d$
see, for example, {\cite{Treves}}. These can be easily extended to the case  
of underlying space $\mathbb{R}^2 \times \varepsilon \mathbb{Z}^2$. In
particular we can identify every positive tempered distribution on
$\mathbb{R}^2 \times \varepsilon \mathbb{Z}^2$ with a Radon ($\sigma$-finite)
positive measure (see Theorem 21.1 of {\cite{Treves}} for a proof in
$\mathbb{R}^d$ case). This implies that considering a positive distribution in
$B^s_{p, q, \ell} (\mathbb{R}^d)$, $B^{s_1, s_2}_{p, \ell_1, \ell_2}
(\mathbb{R}^2 \times \varepsilon \mathbb{Z}^2)$ etc. is equivalent to consider
a Radon measure that is also in the previous spaces.

\

We introduce the following functions
\begin{equation}
  E_{i, j}^{a, \beta} (x, z) = 2^{2 i + 2 j} \exp \left( - a (1 + 2^{2 i} | x
  |^2 + 2^{2 j} | z |^2)^{\frac{\beta}{2}} \right) ,\label{eq:definitionE}
\end{equation}
where $a \in \mathbb{R}_+$, $0 < \beta < 1$, $i, j \in \mathbb{N}$ and $0
\leqslant j \leqslant J_{\varepsilon}$, and
\begin{equation}
  \tilde{E}_i^{a, \beta} (x) = 2^{2 i} \exp \left( - a (1 + 2^{2 i} | x
  |^2)^{\frac{\beta}{2}} \right) .\label{eq:definitionE1}
\end{equation}

We also introduce the following positive functionals

\begin{eqnarray}
  \mathbb{M}_{p, \ell_1, \ell_2, \varepsilon}^{s, s'} (\eta, a, \beta) &
  \assign & \left( \sum_{i \in \mathbb{N}_0, j \leqslant J_{\varepsilon}} 2^{s
  p i+ s' p j} \int_{\mathbb{R}^2 \times \varepsilon \mathbb{Z}^2}(\rho_{\ell_1} (x) \rho_{\ell_2} (z))^p ((E^{a,
  \beta}_{i, j} \asterisk  \eta) (x, z))^p
  \mathd x \mathd z \right)^{1 / p}  \label{eq:Mss1}\\
  \mathbb{M}_{p, \ell_1, \ell_2}^{s, s'} (\eta, a, \beta) & \assign & \left(
  \sum_{i, j \in \mathbb{N}_0} 2^{s p i + s' p j} \int_{\mathbb{R}^4} (\rho_{\ell_1} (x) \rho_{\ell_2} (z))^p((E^{a,
  \beta}_{i, j} \asterisk  \eta) (x, z))^p
  \mathd x \mathd z \right)^{1 / p}  \label{eq:Mss2}\\
  \mathbb{N}^s_{p, \ell, \varepsilon} (\eta, a, \beta) & \assign & \left(
  \sum_{- 1 \leqslant r \leqslant J_{\varepsilon}} 2^{r s p}
  \int_{\mathbb{R}^2 \times \varepsilon \mathbb{Z}^2} (\rho_{\ell}^{(4)} (x, z))^p ((E^{a, \beta}_{r, r}
  \asterisk \eta) (x, z))^p \mathd x \mathd z
  \right)^{1 / p} \nonumber\\
  &  & + \left( \sum_{r \geqslant J_{\varepsilon}} 2^{r s p}
  \int_{\mathbb{R}^2 \times \varepsilon \mathbb{Z}^2} (\rho_{\ell}^{(4)} (x, z))^p((\tilde{E}^{a, \beta}_r
  \asterisk_x  \eta) (x, z))^p \mathd x \mathd z
  \right)^{1 / p} \nonumber\\
  &  & + \left( \int_{\mathbb{R}^2 \times \varepsilon \mathbb{Z}^2}(\rho_{\ell}^{(4)} (x,
  z))^p
  ((\tilde{E}^{a, \beta}_{J_{\varepsilon}} \asterisk_z  \eta) (x, z))^p \mathd x \mathd z \right)^{1 / p}  \label{eq:Ns1}\\
  \mathbb{N}^s_{p, \ell} (\eta, a, \beta) & \assign & \left( \sum_{r \geqslant
  - 1} 2^{r s p} \int_{\mathbb{R}^4} (\rho_{\ell}^{(4)} (x, z))^p ((E^{a, \beta}_{r, r} \asterisk
   \eta) (x, z))^p \mathd x \mathd z \right)^{1 / p} .
  \label{eq:Ns2}
\end{eqnarray}

\begin{remark}\label{remark:exchangeconvolution}
It is important to note that in the expressions \eqref{eq:Mss1}, \eqref{eq:Mss2}, \eqref{eq:Ns1} and \eqref{eq:Ns2}, we can "exchange the order" between the convolution operation $E^{a,\beta}_{i,j}\asterisk$ and the multiplication by the weights $\rho_{\ell_1}\rho_{\ell_2}$ and $\rho^{(4)}_{\ell}$. Indeed, by using the fact that $\rho_{\ell}(x) \lesssim (1+|x-y|^{\upsilon}) \rho_{\ell}(y)$ (for some $\upsilon>0$), and the fact that $\eta$ is a positive distribution, we get that, for any $\kappa>0$ small enough,  
\[ (E^{a+\kappa,\beta} \asterisk (\rho_{\ell_1} \rho_{\ell_2}  \eta)) (x,z) \lesssim  \rho_{\ell_1}(x) \rho_{\ell_2}(z) (E^{a,\beta} \asterisk  \eta) (x,z)  \lesssim (E^{a-\kappa,\beta} \asterisk (\rho_{\ell_1} \rho_{\ell_2}  \eta)) (x,z), \]
with a similar relation holding replacing $\rho_{\ell_1}(x) \rho_{\ell_2}(z)$  by $\rho_{\ell}^{(4)}$, $E^{a,\beta}_{i,j}$ by $\tilde{E}^{a,\beta}_{r}$, and $\mathbb{R}^2 \times \varepsilon\mathbb{Z}^2$ by $\mathbb{R}^4$. In this way we obtain, for example in the case of $\mathbb{N}_{p,\ell}^s(\eta,a,\beta)$, 
\begin{multline*} 
\left( \sum_{r \geqslant
  - 1} 2^{r s p} \int_{\mathbb{R}^4} (E^{a+\kappa, \beta}_{r, r} \asterisk
  ( \rho_{\ell}^{(4)} \eta )(x,z))^p \mathd x \mathd z \right)^{1 / p}  \lesssim \mathbb{N}^s_{p, \ell} (\eta, a, \beta) \lesssim \\
  \left( \sum_{r \geqslant
  - 1} 2^{r s p} \int_{\mathbb{R}^4} (E^{a-\kappa, \beta}_{r, r} \asterisk
  ( \rho_{\ell}^{(4)} \eta )(x,z))^p \mathd x \mathd z \right)^{1 / p},
  \end{multline*}
  with analogous inequalities holding for $\mathbb{M}^{s,s'}_{p,\ell_1,\ell_2,\varepsilon}$, $\mathbb{M}^{s,s'}_{p,\ell_1,\ell_2}$, and $\mathbb{N}^s_{p, \ell,\varepsilon} (\eta, a, \beta)$.
\end{remark}

\begin{theorem}
\label{theorem:norminequalities}For any $p \in [1, + \infty]$, $s_1, s_2, s
  \in \mathbb{R}$ and $\ell_1, \ell_2 \in \mathbb{R}$, for any positive Radon
  measures $\eta_{\varepsilon} \in B^{s_1, s_2}_{p, p, \ell_1, \ell_2}
  (\mathbb{R}^2 \times \varepsilon \mathbb{Z}^2)$ and $\eta \in B^{s_1,
  s_2}_{p, p, \ell_1, \ell_2} (\mathbb{R}^4)$ we have
  \[ \| \eta_{\varepsilon} \|_{B^{s_1, s_2}_{p, \ell_1, \ell_2} (\mathbb{R}^2
     \times \varepsilon \mathbb{Z}^2)} \lesssim \mathbb{M}_{p, \ell_1, \ell_2,
     \varepsilon}^{s_1, s_2} (\eta_{\varepsilon}, \bar{a}, \bar{\beta}), \quad
     \| \eta \|_{B^{s_1, s_2}_{p, \ell_1, \ell_2} (\mathbb{R}^4)} \lesssim
     \mathbb{M}_{p, \ell_1, \ell_2}^{s_1, s_2} (\eta, \bar{a}, \bar{\beta}),
  \]
  where $\bar{a} \in \mathbb{R}_+$ and $\bar{\beta} \in (0, 1)$ are the
  constants appearing in Remark \ref{remark:partition:conditions}, Remark
  \ref{remark:k:epsilon} and Remark \ref{remark:epsilon2}. Furthermore for any
  $s \in \mathbb{R}$ and $\eta_{\varepsilon} \in B^s_{p, p, \ell}
  (\mathbb{R}^2 \times \varepsilon \mathbb{Z}^2)$ and $\eta \in B^s_{p, p,
  \ell} (\mathbb{R}^4)$ we have
  \[ \| \eta_{\varepsilon} \|_{B^s_{p, p, \ell} (\mathbb{R}^2 \times
     \varepsilon \mathbb{Z}^2)} \lesssim \mathbb{N}_{p, \ell, \varepsilon}^s
     (\eta_{\varepsilon}, \bar{a}, \bar{\beta}), \quad \| \eta \|_{B^s_{p, p,
     \ell} (\mathbb{R}^4)} \lesssim \mathbb{N}_{p, \ell}^s (\eta, \bar{a},
     \bar{\beta}) . \]
\end{theorem}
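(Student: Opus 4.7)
The essential fact is positivity: since $\eta$ and $\eta_\varepsilon$ are positive Radon measures, for any measurable kernel $K$ one has the pointwise bound $|K \ast \eta|(y) \leq (|K| \ast \eta)(y)$. Combining this with the stretched-exponential/Gevrey envelopes for the Littlewood--Paley kernels $K_j$, $K_j^\varepsilon$ and $\tilde K_r^\varepsilon$ collected in Remark \ref{remark:partition:conditions}, Remark \ref{remark:k:epsilon}, Remark \ref{remark:epsilon2} and Remark \ref{remark:k:epsilon2} immediately yields the pointwise majorants
$$|K_i \ast^x K_j^\varepsilon \ast^z \eta|(x,z) \lesssim \bigl(E_{i,j}^{\bar a,\bar\beta} \ast \eta\bigr)(x,z), \qquad |\tilde K_r^\varepsilon \ast \eta|(x,z) \lesssim \bigl(E_{r,r}^{\bar a,\bar\beta} \ast \eta\bigr)(x,z)$$
for indices $i,j \geq -1$ with $j \leq J_\varepsilon$, respectively for $-1 \leq r \leq J_\varepsilon - 1$. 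The analogous one-variable bounds involving $\tilde E^{\bar a,\bar\beta}_{J_\varepsilon}(z)$ and $\tilde E^{\bar a,\bar\beta}_r(x)$ cover the boundary ($r = J_\varepsilon$) and high-frequency ($r > J_\varepsilon$) blocks, again by Remark \ref{remark:k:epsilon2}.

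The second step is to bring the polynomial weight inside the convolution. Here I would use the elementary comparison $\rho_{\ell_1}(x)\rho_{\ell_2}(z) \lesssim \rho_{\ell_1}(x')\rho_{\ell_2}(z')(1+|x-x'|)^{|\ell_1|}(1+|z-z'|)^{|\ell_2|}$ already invoked in the proof of Theorem \ref{theorem:besov:weight}, together with its $\rho_\ell^{(4)}$ analogue. The polynomial factors $(1+|x-x'|)^{|\ell_1|}(1+|z-z'|)^{|\ell_2|}$ are absorbed uniformly in $i,j$ by the stretched-exponential decay of $E^{\bar a,\bar\beta}_{i,j}$, at the cost of slightly lowering $\bar a$ (which can be hidden in the implicit constants, or built into the definition of $\bar a$ from the Remarks). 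This yields
$$\rho_{\ell_1}(x)\rho_{\ell_2}(z)\, \bigl(E^{\bar a,\bar\beta}_{i,j} \ast \eta\bigr)(x,z) \lesssim \bigl(E^{\bar a,\bar\beta}_{i,j} \ast (\rho_{\ell_1}\rho_{\ell_2}\eta)\bigr)(x,z),$$
and analogous pointwise estimates for the one-variable kernels $\tilde E^{\bar a,\bar\beta}_r$.

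With these two reductions in hand, the $\mathbb{M}$-bound follows by using the representation of the norm in Theorem \ref{theorem:besov:rz:norm}, taking $p$-th powers of the pointwise estimates, integrating against $dx\,dz$, and summing over $i,j$; the definition \eqref{eq:Mss1} is then recovered term by term, and similarly for \eqref{eq:Mss2} in the continuum case. For the $\mathbb{N}$-bound one plugs in the decomposition of $\tilde K_r^\varepsilon$ from Remark \ref{remark:k:epsilon2} and splits the sum over $r$ into $-1 \leq r \leq J_\varepsilon - 1$, $r = J_\varepsilon$, and $r > J_\varepsilon$, which match precisely the three summands of \eqref{eq:Ns1}. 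The continuum estimate \eqref{eq:Ns2} follows from the same argument restricted to the bulk regime. The main technical nuisance I anticipate is the bookkeeping at the boundary scale $r = J_\varepsilon$, where $\tilde K^\varepsilon_{J_\varepsilon}$ splits as a piece depending only on $x$ plus a product of a low-pass filter in $x$ with the high-frequency kernel $K^\varepsilon_{J_\varepsilon}(z)$; the first piece feeds into the $\tilde E^{\bar a,\bar\beta}_r \ast_x$-summand, while the second requires one application of Young's inequality in the $x$-variable (using the uniform $L^1$-bound of $\tilde E^{\bar a,\bar\beta}_{J_\varepsilon}$ in $x$) in order to be absorbed into the pure $z$-convolution term $\tilde E^{\bar a,\bar\beta}_{J_\varepsilon} \ast_z (\rho^{(4)}_\ell \eta)$.
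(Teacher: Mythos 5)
Your proposal is correct and takes essentially the same approach as the paper: the paper's own proof of Theorem~\ref{theorem:norminequalities} is a one-sentence appeal to positivity of $\eta_\varepsilon,\eta$ together with the Gevrey envelope bounds on the Littlewood--Paley kernels from Remarks~\ref{remark:partition:conditions}, \ref{remark:k:epsilon}, \ref{remark:epsilon2} (and implicitly Remark~\ref{remark:k:epsilon2}). You unpack precisely those details --- pointwise domination of $|\tilde\Delta_r\eta|$ by $E^{\bar a,\bar\beta}_{\cdot,\cdot}\ast\eta$, absorption of the polynomial weight into the stretched-exponential decay (at the cost of lowering $\bar a$, which is fine since the Remarks give a whole interval of admissible values), and the boundary bookkeeping at $r=J_\varepsilon$ --- and this all goes through.
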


\begin{proof}
  The thesis is a consequence of the fact that $\eta_{\varepsilon}, \eta$ are
  positive measures (thus $\langle f_{\varepsilon}, \eta_{\varepsilon} \rangle
  \leqslant \langle g_{\varepsilon}, \eta_{\varepsilon} \rangle$, $\langle f,
  \eta \rangle \leqslant \langle g, \eta \rangle$ for any $f_{\varepsilon},
  g_{\varepsilon} \in \mathcal{S} (\mathbb{R}^2 \times \varepsilon
  \mathbb{Z}^2)$ and $f, g \in \mathcal{S} (\mathbb{R}^4)$ such that
  $f_{\varepsilon} \leqslant g_{\varepsilon}$ and $f \leqslant g$), and of
  Remark \ref{remark:partition:conditions}, Remark \ref{remark:k:epsilon} and
  Remark \ref{remark:epsilon2}.
\end{proof}

\begin{theorem}
  \label{thm:estimate-measure}Consider a positive measure $\eta$, suppose that
  $\nobracket \mathbb{M}_{r, \ell, \varepsilon}^{- s_1, - s_2} (\eta, a,
  \beta)) < + \infty$ for some $s_1, s_2 \geqslant 0$, $\ell \geqslant 0$ and
  $r \in [1, + \infty]$ and consider $g \in L^{p'}_{\ell} (\mathd \eta)$ for
  some $p' \in (1, + \infty)$. Then we have that $g \mathd \eta \in B^{-
  k_1}_{p, p, \ell} (\mathbb{R}^2, B^{- k_2}_{p, p, \ell} (\varepsilon
  \mathbb{Z}^2))$ with $p = r q'$, where $\frac{1}{p'} + \frac{1}{q'} = 1$,
  and $k_i = \frac{2}{p'} + \frac{s_i}{q'}$. Furthermore we have
  \[ \| g \mathd \eta \|_{B^{- k_1, - k_2}_{p, \ell, \ell} (\mathbb{R}^2
     \times \varepsilon \mathbb{Z}^2)} \lesssim \| g \|_{L^{p'}_{\ell}
     (\mathbb{R}^2 \times \varepsilon \mathbb{Z}^2, d \eta)} (\mathbb{M}_{r,
     \ell, \varepsilon}^{- s_1, - s_2} (\eta, q' \bar{a}, \bar{\beta}))^{r /
     p}, \]
  where $\bar{a} \in \mathbb{R}_+$ and $\bar{\beta} \in (0, 1)$ are the
  constants in Remark \ref{remark:partition:conditions}, Remark
  \ref{remark:k:epsilon} and Remark \ref{remark:epsilon2}.
\end{theorem}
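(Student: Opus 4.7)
The plan is to apply the dyadic characterization of the Besov norm from Theorem \ref{theorem:besov:rz:norm}, bound each Littlewood--Paley block of $g\mathd\eta$ by a pointwise H\"older inequality with respect to the positive measure $\eta$ against the Gaussian-type kernels $E^{\bar a,\bar \beta}_{i,j}$ of Remark \ref{remark:partition:conditions}, Remark \ref{remark:k:epsilon} and Remark \ref{remark:epsilon2}, and resum by matching exponents.

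The starting point is the identity
\[
\|g\mathd\eta\|^p_{B^{-k_1,-k_2}_{p,\ell,\ell}(\mathbb{R}^2\times\varepsilon\mathbb{Z}^2)} = \sum_{i\geqslant -1,\; -1\leqslant j\leqslant J_\varepsilon} 2^{-k_1ip-k_2jp}\,\|\Delta_i^x\Delta_j^z(g\mathd\eta)\|^p_{L^p_{\ell,\ell}}
\]
combined with the pointwise bound
\[
|\Delta_i^x\Delta_j^z(g\mathd\eta)(y,w)|\lesssim \int E^{\bar a,\bar\beta}_{i,j}(y-x,w-z)|g(x,z)|\mathd\eta(x,z).
\]
H\"older's inequality with respect to $\mathd\eta$ with the conjugate pair $(p',q')$, splitting $|g|=|g|\cdot 1$, gives
\[
|\Delta_i^x\Delta_j^z(g\mathd\eta)(y,w)| \lesssim \bigl(E^{\bar a,\bar\beta}_{i,j}\ast(|g|^{p'}\mathd\eta)(y,w)\bigr)^{1/p'}\bigl(E^{\bar a,\bar\beta}_{i,j}\ast\eta(y,w)\bigr)^{1/q'}.
\]

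Next one absorbs the weight $\rho_\ell(y,w)$ from the outer $L^p_{\ell,\ell}$-norm into the two kernel factors via the slowly-varying inequality $\rho_\ell(y,w)\lesssim \rho_\ell(x,z)(1+|y-x|+|w-z|)^{|\ell|}$, swallowing the polynomial factor into the exponential decay at the cost of replacing $\bar a$ by $q'\bar a$ (the factor $q'$ appearing so that after raising to the $p$-th power and using $p/q'=r$ the second factor contributes exactly the kernel $E^{q'\bar a,\bar\beta}_{i,j}$ that defines $\mathbb{M}^{-s_1,-s_2}_{r,\ell,\varepsilon}(\eta,q'\bar a,\bar\beta)$). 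Splitting $\rho_\ell=\rho_\ell^{1/p'}\rho_\ell^{1/q'}$ between the two factors yields
\[
\rho_\ell(y,w)|\Delta_i^x\Delta_j^z(g\mathd\eta)(y,w)|\lesssim \bigl(E^{q'\bar a,\bar\beta}_{i,j}\ast(\rho_\ell|g|^{p'}\mathd\eta)(y,w)\bigr)^{1/p'}\bigl(E^{q'\bar a,\bar\beta}_{i,j}\ast(\rho_\ell\eta)(y,w)\bigr)^{1/q'}.
\]
For the first factor, the uniform estimate $\|E^{q'\bar a,\bar\beta}_{i,j}\|_{L^\infty}\lesssim 2^{2i+2j}$ together with the total-mass bound $\int \rho_\ell|g|^{p'}\mathd\eta\lesssim \|g\|^{p'}_{L^{p'}_\ell(\mathd\eta)}$ gives
\[
\bigl(E^{q'\bar a,\bar\beta}_{i,j}\ast(\rho_\ell|g|^{p'}\mathd\eta)(y,w)\bigr)^{1/p'}\lesssim 2^{2(i+j)/p'}\|g\|_{L^{p'}_\ell(\mathd\eta)}.
\]

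Raising the combined pointwise estimate to the $p$-th power and integrating in $(y,w)$,
\[
\|\Delta_i^x\Delta_j^z(g\mathd\eta)\|^p_{L^p_{\ell,\ell}} \lesssim 2^{2p(i+j)/p'}\,\|g\|^p_{L^{p'}_\ell(\mathd\eta)}\int \bigl(E^{q'\bar a,\bar\beta}_{i,j}\ast(\rho_\ell\eta)(y,w)\bigr)^r\mathd y\mathd w.
\]
The exponent identity $k_ip=2p/p'+s_ir$, immediate from $k_i=2/p'+s_i/q'$ and $p=rq'$, means that $2^{-k_ip+2p/p'}=2^{-s_ir}$, so summing over $(i,j)$ produces exactly $\|g\|^p_{L^{p'}_\ell(\mathd\eta)}\cdot(\mathbb{M}^{-s_1,-s_2}_{r,\ell,\varepsilon}(\eta,q'\bar a,\bar\beta))^r$, and taking $p$-th roots delivers the claim. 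The main obstacle is the careful bookkeeping of the weight $\rho_\ell$ in the absorption and Hölder-split steps: the polynomial factor from the slowly varying weight must be uniformly (in $i,j,\varepsilon$) absorbed into the exponential decay, and the multiple weight conventions appearing in $L^{p'}_\ell(\mathd\eta)$, in $\mathbb{M}^{-s_1,-s_2}_{r,\ell,\varepsilon}$ and in the ambient $L^p_{\ell,\ell}$ must be reconciled using $\ell\geqslant 0$ so that the constants are uniform in $\varepsilon$.
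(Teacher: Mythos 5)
The strategy—dyadic characterization of the Besov norm, kernel bounds from the Remarks, Hölder against $\mathd\eta$, and exponent bookkeeping—is the same as the paper's, and the exponent bookkeeping $k_ip = 2p/p' + s_ir$ is correct. But there is a genuine gap in how the kernel $E^{q'\bar a,\bar\beta}_{i,j}$ is supposed to appear, and your explanation of it is wrong.

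Your Hölder split pairs $(E_{i,j}^{1/p'}|g|)\cdot (E_{i,j}^{1/q'})$ against $\mathd\eta$, which gives a second factor $\bigl(E^{\bar a,\bar\beta}_{i,j}\ast\eta\bigr)^{1/q'}$ with the \emph{original} decay constant $\bar a$. You then claim that absorbing the slowly-varying weight $\rho_\ell$ "replaces $\bar a$ by $q'\bar a$," but this is backwards: absorbing a polynomial $(1+|y-x|+|w-z|)^{|\ell|}$ into $\exp(-\bar a(\cdots)^{\bar\beta/2})$ can only \emph{decrease} the effective rate, never increase it to $q'\bar a > \bar a$. Since $E^{q'\bar a,\bar\beta}_{i,j}\leqslant E^{\bar a,\bar\beta}_{i,j}$ pointwise, $\mathbb M_{r,\ell,\varepsilon}^{-s_1,-s_2}(\eta,q'\bar a,\bar\beta)\leqslant\mathbb M_{r,\ell,\varepsilon}^{-s_1,-s_2}(\eta,\bar a,\bar\beta)$, so your argument at best yields a strictly weaker bound than the one stated. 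In the paper's proof, the factor $q'\bar a$ does not come from the weight at all: the Hölder step is applied directly as $\int|K_i||K_j||g|\rho\,\mathd\eta\leqslant\bigl(\int|g|^{p'}\rho\,\mathd\eta\bigr)^{1/p'}\bigl(\int|K_i|^{q'}|K_j|^{q'}\rho\,\mathd\eta\bigr)^{1/q'}$, pairing the whole kernel with the constant function $1$, and then $(E^{\bar a,\bar\beta}_{i,j})^{q'}=2^{2(i+j)(q'-1)}E^{q'\bar a,\bar\beta}_{i,j}$ produces both the $q'\bar a$ rate and, after the $1/q'$-th root, the factor $2^{2(i+j)/p'}$ you obtained by a different route. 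To repair your proof you should not split the kernel multiplicatively; instead perform Hölder as $\int E_{i,j}|g|\,\mathd\eta\leqslant\bigl(\int|g|^{p'}\mathd\eta\bigr)^{1/p'}\bigl(\int E_{i,j}^{q'}\mathd\eta\bigr)^{1/q'}$ (the full kernel to the $q'$-th power on the second factor), at which point both the $q'\bar a$ and the $2^{2(i+j)/p'}$ fall out of the algebraic identity above and the weight handling becomes the only remaining bookkeeping.
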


\begin{proof}
  Let $K_j (\cdot)$ be the Littlewood-Paley function linked with the
  Littlewood-Paley block $\Delta_j$. By our assumptions on $K_j$ (see Remark
  \ref{remark:partition:conditions}, Remark \ref{remark:k:epsilon} and Remark
  \ref{remark:epsilon2}) there exists some $\bar{a} \in \mathbb{R}_+$ and
  $\bar{\beta} \in (0, 1)$ such that
  \[ | K_j (x) | \lesssim 2^{2 j} \exp (- \bar{a} 2^{j \bar{\beta}} | x
     |^{\bar{\beta}}), \]
  for $i \in \mathbb{N}_0$, $x \in \mathbb{R}^2$ and uniformly in
  $\varepsilon$. We have that by H{\"o}lder's inequality
  \begin{equation}
    \begin{array}{rl}
      &K_i \asterisk_x K_j \asterisk_z (\rho_{\ell'} (x') \rho_{\ell'} (z') g
      \mathd \eta) (x, z)\\
      \lesssim& \| g \|_{L^{p'}_{\ell'} (\mathd \eta)} \left(
      \int_{\mathbb{R}^2 \times \varepsilon \mathbb{Z}^2} | K_i (x - x')
      |^{q'} | K_j (z - z') |^{q'} \rho_{\ell'} (x') \rho_{\ell'} (z') \mathd
      \eta (x', z') \right)^{\frac{1}{q'}}\\
      \lesssim &\| g \|_{L^{p'}_{\ell'} (\mathd \eta)} \left( \int (E_{i,
      j}^{\bar{a}, \bar{\beta}} (x - x', z - z'))^{q'} \rho_{\ell'} (x')
      \rho_{\ell'} (z') \mathd \eta (x', z') \right)^{\frac{1}{q'}}\\
      \lesssim & \| g \|_{L^{p'}_{\ell'} (\mathd \eta)} 2^{\frac{2 j}{p'} +
      \frac{2 i}{p'}} \left( \int E_{i, j}^{q' \bar{a}, \bar{\beta}} (x - x',
      z - z') \rho_{\ell'} (x') \rho_{\ell'} (z') \mathd \eta (x', z')
      \right)^{\frac{1}{q'}},
    \end{array}
  \end{equation}
  where we used that
  \[ (E_{i, j}^{\bar{a}, \bar{\beta}} (x - x', z - z'))^{q'} = 2^{2 i (q' -
     1) + 2 j (q' - 1)} E_{i, j}^{q' \bar{a}, \bar{\beta}} (x - x', z - z') .
  \]
  On the other hand, by Theorem \ref{theorem:besov:rz:norm} and Remark \ref{remark:exchangeconvolution}, we have
  \begin{eqnarray}
    \| g \mathd \eta \|_{B^{- k_1, - k_2}_{p, \ell', \ell'} (\mathbb{R}^2
    \times \varepsilon \mathbb{Z}^2)}^p & \lesssim & \sum_{i \geqslant 0}
    \sum_{- 1 \leqslant j \leqslant J_{\varepsilon}} 2^{- k_1 p i - k_2 p j}
    \times \nonumber\\
    &  & \times \int_{\mathbb{R}^2 \times \varepsilon \mathbb{Z}^2} | K_i
    \ast_x K_j \ast_z (\rho_{\ell'} (x) \rho_{\ell'} (z) g \mathd \eta) |^p
    \mathd x \mathd z \nonumber\\
    & \lesssim & \| g \|_{L^{p'}_{\ell'} (\mathd \eta)}^p \sum_{i \geqslant
    0} \sum_{- 1 \leqslant j \leqslant J_{\varepsilon}} 2^{- \left( k_1 -
    \frac{2}{p'} \right) p i - \left( k_2 - \frac{2}{p'} \right) p j} \times
    \nonumber\\
    &  & \times \int_{\mathbb{R}^2 \times \varepsilon \mathbb{Z}^2} (E^{q'
    \bar{a}, \bar{\beta}}_{i, j} \asterisk (\rho_{\ell} (x) \rho_{\ell} (z)
    \eta (x, z)))^{\frac{p}{q'}} \mathd x \mathd z \nonumber\\
    & \lesssim & \| g \|_{L^{p'}_{\ell'} (\mathd \eta)}^p \sum_{i \geqslant
    0} \sum_{- 1 \leqslant j \leqslant J_{\varepsilon}} 2^{- s_1 r i - s_2 r
    j} \times \nonumber\\
    &  & \times \int_{\mathbb{R}^2 \times \varepsilon \mathbb{Z}^2} (E^{q'
    \bar{a}, \bar{\beta}}_{i, j} \asterisk (\rho_{\ell} (x, z) \eta (x, z)))^r
    \mathd x \mathd z \nonumber\\
    & \lesssim & \| g \|_{L^{p'}_{\ell'} (\mathd \eta)}^p (\mathbb{M}_{r,
    \ell, \varepsilon}^{- s_1, - s_2} (\eta, q' \bar{a}-\kappa, \bar{\beta}))^r .
    \nonumber
  \end{eqnarray} \end{proof}

\section{Stochastic estimates and regularity of the
noise}\label{sec:stochastic-estimates}

\

In this section we want to study the regularity of the stochastic terms in
equation {\eqref{eq:formal2}}.

\subsection{The setting}\label{section:setting}

\

Let $\xi$ be a white noise on $\mathbb{R}^4$. Using the noise $\xi$ we can
define a white noise $\xi_{\varepsilon}$ on $\mathbb{R}^2 \times \varepsilon
\mathbb{Z}^2$ in the following way
\[ \xi_{\varepsilon} (x, z) =\frac{1}{\varepsilon} \xi (\delta_x \otimes
   \mathbb{I}_{Q_{\varepsilon} (z)}), \]
where $Q_{\varepsilon} (z)$ is defined in equation {\eqref{eq:defsquares}},
$\delta_x$ is the Dirac delta with unitary mass in $x \in \mathbb{R}^2$ and
$\mathbb{I}_{Q_{\varepsilon} (z)}$ denotes the indicator function of the set
$Q_{\varepsilon} (z)$.\\

 We can define also the following natural free fields on $\mathbb{R}^4$ and
$\mathbb{R}^2 \times \varepsilon \mathbb{Z}^2$.
\[ W = (- \Delta_{\mathbb{R}^4} + m^2)^{- 1} (\xi), \]
\[ W_{\varepsilon} = (- \Delta_{\mathbb{R}^2 \times \varepsilon \mathbb{Z}^2}
   + m^2)^{- 1} (\xi_{\varepsilon}), \]
\[ \bar{W}_{\varepsilon} = \overline{\mathcal{E}}_{\varepsilon} ((-
   \Delta_{\mathbb{R}^2 \times \varepsilon \mathbb{Z}^2} + m^2)^{- 1}
   (\xi_{\varepsilon})) . \]
We have to define four Green functions:
\[ \mathcal{G}_{\varepsilon} : \mathbb{R}^2 \times \varepsilon \mathbb{Z}^2
   \rightarrow \mathbb{R}, \quad \overline{\mathcal{G}}_{\varepsilon} :
   \mathbb{R}^4 \rightarrow \mathbb{R}, \quad
   \widetilde{\mathcal{G}}_{\varepsilon} : \mathbb{R}^4 \rightarrow
   \mathbb{R}, \quad \mathcal{G} : \mathbb{R}^4 \rightarrow \mathbb{R}, \]
where
\begin{align*}
    \mathcal{G}_{\varepsilon} (x, z) \assign & \frac{1}{(2 \pi)^4}
   \int_{\mathbb{R}^2 \times \mathbb{T}^2_{\frac{1}{\varepsilon}}} \frac{e^{-
   i (x \cdot y + k \cdot z)}}{\left( | y |^2 + 4 \varepsilon^{- 2} \sin^2
   \left( \frac{\varepsilon k_1}{2} \right) + 4 \varepsilon^{- 2} \sin^2
   \left( \frac{\varepsilon k_2}{2} \right) + m^2 \right)^2} \mathd y \mathd k
   ; \\
\overline{\mathcal{G}}_{\varepsilon} (x, z) \assign & \frac{1}{(2 \pi)^4}
   \int_{\mathbb{R}^4} \frac{\left(16 \sin^2 \left( \frac{\varepsilon k_1}{2}
   \right) \sin^2 \left( \frac{\varepsilon k_2}{2} \right)
   \right)}{\varepsilon^4 k_1^2 k_2^2} \frac{e^{- i (x \cdot y + k \cdot
   z)}}{\left( | y |^2 + 4 \varepsilon^{- 2} \sin^2 \left( \frac{\varepsilon
   k_1}{2} \right) + 4 \varepsilon^{- 2} \sin^2 \left( \frac{\varepsilon
   k_2}{2} \right) + m^2 \right)^2} \mathd y \mathd k \\
   \widetilde{\mathcal{G}}_{\varepsilon} (x, z) \assign & \frac{1}{(2 \pi)^4}
   \int_{\mathbb{R}^4} \frac{\left(16 \sin^2 \left( \frac{\varepsilon k_1}{2}
   \right) \sin^2 \left( \frac{\varepsilon k_2}{2} \right)
   \right)}{\varepsilon^4 k_1^2 k_2^2} \frac{e^{- i (x \cdot y + k \cdot
   z)}}{(| y |^2 + | k |^2 + m^2)^2} \mathd y \mathd k 
   \end{align*}
and
\[ \mathcal{G} (x, z) \assign \frac{1}{(2 \pi)^4} \int_{\mathbb{R}^4}
   \frac{e^{- i (x \cdot y + k \cdot z)}}{(| y |^2 + | k |^2 + m^2)^2} \mathd
   y \mathd k. \]
Using the previous notation we have that
\[ \mathcal{G} (x - x', z - z') =\mathbb{E} [W (x, z) W (x', z')], \quad
   \mathcal{G}_{\varepsilon} (x - x', z - z') =\mathbb{E} [W_{\varepsilon} (x,
   z) W_{\varepsilon} (x', z')], \]
\[ \overline{\mathcal{G}}_{\varepsilon} (x, z) =\mathbb{E}
   [\bar{W}_{\varepsilon} (x, z) \bar{W}_{\varepsilon} (x', z')] . \]

\subsection{Wick exponential and multiplicative Gaussian chaos}

Giving a meaning to equation {\eqref{eq:formal2}} will require to us define
the exponential of $W$. However $W$ is almost surely not a function, so we
require renormalization. For $\alpha^2 \in [0, 2 (4 \pi)^2)$, we want to
define
\[ \mu^{\alpha} = : \exp (\alpha W) : ='' \exp \left( \alpha W -
   \frac{\alpha^2}{2} \mathbb{E} [W^2] \right) \;'' \]
which does not immediately make sense since $\mathbb{E} [W^2] = \infty$.
However we can approximate it in the following way: Let $\sigma_s$ be a smooth compactly supported function such
that, for any $T > 0$,
\[ \int^T_0 \sigma^2_s (n) \mathd s = \frac{\rho_T (n)}{(m^2 + | n |^2)^2}, \quad n\in \mathbb{R}^4, \]
with $\rho_T \in C^{\infty}_c (\mathbb{R}^4)$ depending smoothly on $T$ and
$\rho_T \rightarrow 1$ as $T \rightarrow \infty$. Then define
\[ W_T = \int^T_0 \sigma_{t} (D) \mathd X_t ,\]
where $X_t$ is a cylindrical Browning motion in $L^2 (\mathbb{R}^4)$. Since
$W_T$ has almost surely compact support in frequency it is a function and we
can compute by Ito's isometry
\[ \mathbb{E} [W_T (x) W_T (y)] = K_T (x - y), \]
where we have introduced $K_T = \mathcal{F}^{- 1} \left( \frac{\rho_T
(n)}{(m^2 + | n |^2)^2} \right)$. From this it follows that
\[ W_T \rightarrow W, \]
as $T \rightarrow \infty$ at least in law since $W_T$ is Gaussian. We can then
introduce the approximation
\[ \mu^{\alpha, T} = \exp \left( \alpha W_T - \frac{\alpha^2}{2} \mathbb{E}
   [W_T^2] \right) . \]
Recall that for $W_T$ we can also define the Wick powers $: W^n_T :$
inductively by $: W^1_T : = W_T$ and
\[ : W^{n + 1}_T : = n \int_0^T W_t \mathd : W^n_t : . \]
An alternative definition of $\mu^{\alpha, T}$ would be to write the
exponential as a series and Wick order the individual terms. The following
lemma shows that these two ways are equivalent and that $\mu^{\alpha, T}$
converges to a well defined limit as $T \rightarrow \infty$ which we will
denote by $\mu^{\alpha}$. Furthermore define $\mu^{- \alpha}$ as the limit of
\[ \mu^{- \alpha, T} = \exp \left( - \alpha W_T - \frac{\alpha^2}{2}
   \mathbb{E} [W_T^2] \right). \]

\begin{lemma}
  \label{lemma:series}For any $\alpha \in \mathbb{R}$,we have that
  \[ \exp \left( \alpha W_T - \frac{\alpha^2}{2} \mathbb{E} [W_T^2] \right) =
     \sum_{n\in \mathbb{N}_{0}} \frac{\alpha^n}{n!} : (W_T)^n : . \]
  Furthermore, when $\alpha^2 < (4 \pi)^2$, we get that
  \[ \lim_{T \rightarrow \infty} \sum_{n \in \mathbb{N}_0} \frac{\alpha^n}{n!} : W_T^n : = \sum_{n\in \mathbb{N}_{0}}
     \frac{\alpha^n}{n!} : W^n : \]
  almost surely.
\end{lemma}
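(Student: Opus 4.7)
The plan for the algebraic identity at fixed $T$ is to reduce to the classical Hermite polynomial generating function. Since $W_T$ is a spectral cut-off of $W$, it is almost surely a smooth function with a spatially constant variance $\sigma_T^2 := \mathbb{E}[W_T(x)^2]$. For a centered Gaussian $G$ of variance $\sigma^2$ the standard identification $: G^n : = \sigma^n H_n(G/\sigma)$ holds, where $H_n$ denotes the probabilistic Hermite polynomial. Applying this pointwise and substituting $y = W_T(x)/\sigma_T$, $t = \alpha \sigma_T$ into the generating function
\[
\sum_{n \geq 0} \frac{t^n}{n!} H_n(y) = \exp\left( t y - \frac{t^2}{2} \right)
\]
yields the first claim; the series converges absolutely and pointwise almost surely because the exponential has infinite radius of convergence and $W_T$ is almost surely smooth.

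For the convergence statement, I would test both sides against a fixed $f \in C_c^\infty(\mathbb{R}^4)$ and prove that $S_T := \sum_n \frac{\alpha^n}{n!} \langle f, :W_T^n:\rangle \to S := \sum_n \frac{\alpha^n}{n!} \langle f, :W^n:\rangle$ first in $L^2(\Omega)$ and then almost surely. By orthogonality of distinct Wiener chaoses and the It\^o isometry,
\[
\mathbb{E}|S_T|^2 = \sum_{n \geq 0} \frac{\alpha^{2n}}{n!} \iint f(x) f(y) K_T(x-y)^n \, dx \, dy = \iint f(x) f(y) \exp\bigl( \alpha^2 K_T(x-y) \bigr) \, dx \, dy.
\]
Since $K(x-y) \sim -\frac{1}{8\pi^2}\log|x-y|$ near the diagonal in dimension four (plus a bounded smooth remainder), the right-hand side is bounded uniformly in $T$ provided $\iint f(x) f(y) |x-y|^{-\alpha^2/(8\pi^2)} dx \, dy < \infty$, which is precisely the condition $\alpha^2 < 2(4\pi)^2$. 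A uniform pointwise bound $K_T \leq K + C$ off the diagonal together with $K_T \to K$ pointwise away from $0$ then gives, by dominated convergence applied to the joint sum-integral above, that $S_T \to S$ in $L^2(\Omega)$; the same argument with $\alpha$ replaced by $-\alpha$ handles $\mu^{-\alpha}$.

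To upgrade $L^2$ to almost sure convergence, I would exploit the martingale structure inherited from the stochastic construction of $W_T$. Because $W_T = \int_0^T \sigma_s(D) \, dX_s$ is adapted to the filtration $\mathcal{F}_T := \sigma(X_s : s \leq T)$ and $\mu^{\alpha,T}$ is a Wick (stochastic) exponential, the process $T \mapsto \langle f, \mu^{\alpha,T}\rangle$ is a nonnegative $\mathcal{F}_T$-martingale whenever $f \geq 0$ (and a difference of two such for general $f \in C_c^\infty$). Doob's theorem yields an almost sure limit, and the $L^2$ convergence established above identifies that limit with $S$; one concludes the claimed almost sure identity by a density/separability argument over a countable family of test functions $f$ generating the topology in which the statement is interpreted.

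The main obstacle will be the uniform-in-$T$ tail control needed to interchange $\lim_T$ and $\sum_n$, which is where the sharp subcriticality enters: one needs $K_T$ to obey the same logarithmic upper bound as $K$ with a constant independent of $T$, so that the generating identity $\sum_n \frac{\alpha^{2n}}{n!} K_T^n = \exp(\alpha^2 K_T)$ remains integrable against $f \otimes f$ uniformly in $T$. This is exactly the $L^2$-subcriticality threshold for the log-correlated field $W$ in dimension four. The boundary case $\alpha^2 = 2(4\pi)^2$ stated with the non-strict inequality sits precisely at this $L^2$-threshold and would require a more delicate argument (derivative-martingale or $L^p$ with $p < 2$); I would first establish the result in the strict subcritical regime and only then address the borderline case if needed for subsequent applications.
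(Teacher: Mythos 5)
Your proof is correct, and for the algebraic identity at fixed $T$ you take a genuinely different route from the paper. The paper establishes $M_T^1 = M_T^2$ by showing that both the ordinary exponential $M_T^1$ and the Wick series $M_T^2$ satisfy the same scalar SDE $\mathd M_T = \alpha M_T \,\mathd W_T$ with common initial condition $M_0 = 1$, and then invokes uniqueness for that linear SDE; you instead use the pointwise Hermite generating function $\sum_n \tfrac{t^n}{n!} H_n(y) = \exp(ty - t^2/2)$ together with $:G^n: = \sigma^n H_n(G/\sigma)$. Your approach is more elementary and does not depend on the martingale parametrization of $W_T$, but it does require silently identifying the paper's recursive Ito definition $:W_T^{n+1}: = \int W_T\,\mathd :W_T^n:$ with the Hermite-polynomial definition; this is standard but worth a line. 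For the convergence as $T \to \infty$, you and the paper are essentially doing the same thing: you both show the martingale $T \mapsto M_T$ (paired with a test function, or viewed as a Besov-space-valued process) is bounded in $L^2$ uniformly in $T$, then apply Doob's martingale convergence theorem to get almost-sure convergence, and use density of test functions (or the Hilbert-space structure of $B^{-s}_{2,2,\ell}$) to conclude. One caveat: you are right that a uniform-in-$T$ upper bound $K_T \leq K + C$ is needed to run the dominated-convergence and $L^2$-boundedness arguments, and the paper does not spell this out for the abstract $\rho_T$-cut-off either (it does so carefully only for the lattice regularization); your identification of this as the real content of the argument, and of the boundary case $\alpha^2 = 2(4\pi)^2$ as requiring separate treatment, is sound.
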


\begin{proof}
  We define
  \[ M_T^1 = \exp \left( \alpha W_T - \frac{\alpha^2}{2} \mathbb{E} [W_T^2]
     \right) \]
  and
  \[ M_T^2 = \sum_{n \in \mathbb{N}} \frac{\alpha^n}{n!} : W_T^n : . \]
  We claim that $M_T^1 = M_T^2$. Obviously $M_0^1 = M_0^2$. By the well known
  calculation for the stochastic exponential \
  \[ \mathd M_T^1 = \alpha M^1_T \mathd W_T, \]
  from which it also follows that $M^{1}_T$ is a local martingale. Furthermore, since $\alpha^2 < (4 \pi)^2$ and, thus, the second moment of $M^{1}_T$ is finite, we
  have that \tmcolor{black}{$\sup_T \mathbb{E} [\| M^{1}_T \|^2_{B^{-
  s}_{2, 2, \ell}}] < \infty$ for some $s > 0$, so considered as a process on the
  Hilbert space $B^{- s}_{2, 2, \ell}$, $M^1_T$ has bounded quadratic variation }and
  converges to some $M^1_{\infty}$ in $B^{- s}_{2, 2, \ell}$ almost surely.\\
  
  On the other hand
  \begin{eqnarray*}
    \mathd M_T^2 & = & \mathd \left( \sum_{n \in \mathbb{N}}
    \frac{\alpha^n}{n!} : W_T^n : \right)\\
    & = & \left( \sum_{n \in \mathbb{N}} \frac{\alpha^n}{n!} \mathd : W_T^n :
    \right)\\
    & = & \left( \sum_{n \in \mathbb{N}} \frac{\alpha^n}{(n - 1) !} : W_T^{n
    - 1} : \mathd W_T \right)\\
    & = & \alpha M_T^2 \mathd W_T .
  \end{eqnarray*}
  Now the result follows by standard SDE theory.
\end{proof}

We also define the positive measures on $\mathbb{R}^4$ and $\mathbb{R}^2
\times \varepsilon \mathbb{Z}^2$
\[ \mu_{\varepsilon}^{\alpha} = : \exp (\alpha W_{\varepsilon}) : = \exp
   \left( \alpha W_{\varepsilon} - \frac{\alpha^2}{2} \mathbb{E}
   [W_{\varepsilon}^2] \right), \]
and
\[ \bar{\mu}^{\alpha}_{\varepsilon} = : \exp (\alpha \bar{W}_{\varepsilon}) :
   = \exp \left( \alpha \bar{W}_{\varepsilon} - \frac{\alpha^2}{2}
   \mathbb{E} [\bar{W}_{\varepsilon}^2] \right) . \]
We recall that, using the fact that $\overline{\mathcal{E}}_{\varepsilon}$
commutes with the composition with functions, we get
\[ \bar{\mu}_{\varepsilon}^{\alpha} (x, y) \assign
   \overline{\mathcal{E}}_{\varepsilon} (\mu^{\alpha}_{\varepsilon}) (x, y) .
\]
Furthermore we can show in analogy with Lemma \ref{lemma:series} that
\[ \mu^{\alpha}_{\varepsilon} = \sum \frac{\alpha^n}{n!} : W_{\varepsilon}^n
   : \quad, \bar{\mu}^{\alpha}_{\varepsilon} = \sum \frac{\alpha^n}{n!} :
   \bar{W}_{\varepsilon}^n : . \]
We recall a well know fact on the Green function $\mathcal{G}$:

\begin{proposition}
  \label{prop:estimate-green-classical}For any $m > 0$ there is a constant
  $C_1$ (depending only on $m$) such that, for any $(x, z) \in \mathbb{R}^4$,
  $(x, z) \centernot{=} 0$,
  \[ | \mathcal{G} (x, z) | \leqslant - \frac{2}{(4 \pi)^2} \log \left(
     \sqrt{x^2 + z^2} \wedge 1 \right) + C_1 . \]
\end{proposition}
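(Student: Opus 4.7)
The plan is to reduce the four-dimensional Fourier integral defining $\mathcal{G}$ to a one-dimensional heat-kernel type integral, and then control that integral by elementary splitting.

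First I would use the Schwinger representation
\[ \frac{1}{(|\xi|^2+m^2)^2} = \int_0^\infty t\, e^{-t(|\xi|^2+m^2)}\, dt, \]
and exchange the order of integration. This is legitimate because for any fixed $(x,z)\neq 0$ the integrand (after bounding the oscillatory factor by $1$) is absolutely integrable on $\mathbb{R}^4\times(0,\infty)$. Carrying out the inner Gaussian integral in $\xi=(y,k)\in\mathbb{R}^4$ produces $(\pi/t)^2 e^{-r^2/(4t)}$ with $r=\sqrt{|x|^2+|z|^2}$, giving the representation
\[ \mathcal{G}(x,z) \;=\; \frac{1}{16\pi^2}\int_0^\infty \frac{1}{t}\, e^{-t m^2 - r^2/(4t)}\, dt \;=:\; \frac{I(r)}{16\pi^2}. \]
In particular $\mathcal{G}\geq 0$, so $|\mathcal{G}|=\mathcal{G}$, and the claim reduces to a one-variable estimate for $I(r)$.

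Next I would bound $I(r)$ in the two regimes $r\leq 1$ and $r\geq 1$. For $r\leq 1$ I split the integral at $t=1/m^2$: the tail $\int_{1/m^2}^\infty t^{-1}e^{-tm^2}\,dt$ depends only on $m$, while for the inner piece the substitution $u=r^2/(4t)$ gives
\[ \int_0^{1/m^2}\frac{1}{t}\,e^{-r^2/(4t)}\,dt \;\leq\; \int_{r^2 m^2/4}^\infty \frac{e^{-u}}{u}\,du \;\leq\; -2\log r \;+\; C(m), \]
estimating by $1/u$ on $[r^2m^2/4,1]$ (producing the $-2\log r$) and by $e^{-u}$ on $[1,\infty)$ (producing a constant). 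Dividing through by $16\pi^2$ yields the desired inequality with the correct leading coefficient $-\tfrac{2}{(4\pi)^2}$. For $r\geq 1$, the pointwise bound $e^{-r^2/(4t)}\leq e^{-1/(4t)}$ turns $I(r)$ into a convergent integral independent of $r$, which is absorbed into $C_1$, and here $\log(r\wedge 1)=0$, matching the statement.

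I do not expect a substantial obstacle. The only place a careful reader might pause is the interchange of integrals, but positivity of the representation after separating the oscillatory factor makes this routine. The proof could alternatively be phrased by identifying $I(r)=2K_0(mr)$ via the standard integral formula for the modified Bessel function of the second kind and invoking the asymptotics $K_0(u)\sim-\log(u/2)-\gamma$ as $u\downarrow 0$ and $K_0(u)=O(u^{-1/2}e^{-u})$ as $u\to\infty$; the elementary splitting above avoids invoking any special-function identity and keeps the dependence on $m$ fully explicit in $C_1$.
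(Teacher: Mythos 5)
Your overall strategy --- passing to the Schwinger/heat-kernel representation and then doing an elementary one-variable estimate --- is the same one used in the reference the paper cites for this statement (Stein, Chapter~V, \S 3, which derives exactly this Gamma-type integral for Bessel potentials and reads off the logarithmic singularity from it). Your one-variable bounds on $I(r)$ in both regimes $r\le 1$ and $r\ge 1$ are correct and produce the right leading coefficient $-\tfrac{2}{(4\pi)^2}=-\tfrac{1}{8\pi^2}$.

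There is, however, a genuine gap in the justification of the interchange of integrals, which you flag as routine but which is not. You claim that after bounding the oscillatory factor by $1$ the integrand is absolutely integrable on $\mathbb{R}^4\times(0,\infty)$. It is not: integrating out $t$ first returns $(|\xi|^2+m^2)^{-2}$, and
\[ \int_{\mathbb{R}^4}\frac{d\xi}{(|\xi|^2+m^2)^2} \;=\; c\int_0^\infty\frac{\rho^3}{(\rho^2+m^2)^2}\,d\rho \;=\;+\infty, \]
since the radial integrand decays only like $\rho^{-1}$ at infinity. This logarithmic divergence is exactly what produces the $\log$ singularity of $\mathcal{G}$ at the origin, so it cannot be dismissed, and Tonelli fails in the direction you use it: the Fourier integral defining $\mathcal{G}$ is only conditionally convergent. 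The representation $\mathcal{G}(x,z)=\tfrac{1}{16\pi^2}\int_0^\infty t^{-1}e^{-tm^2-r^2/(4t)}\,dt$ is nonetheless correct, and the clean way to justify it is to run Fubini in the opposite direction: set $f(w)=\tfrac{1}{16\pi^2}\int_0^\infty t^{-1}e^{-tm^2-|w|^2/(4t)}\,dt$; from your own estimates $f$ has only a logarithmic singularity at $w=0$ and decays exponentially at infinity, hence $f\in L^1(\mathbb{R}^4)\cap L^2(\mathbb{R}^4)$ and the double integral $\int_{\mathbb{R}^4}\int_0^\infty t^{-1}e^{-tm^2-|w|^2/(4t)}\,dt\,dw$ is finite, so Fubini legitimately gives $\mathcal{F}(f)(\xi)=(|\xi|^2+m^2)^{-2}$; then $f=\mathcal{G}$ follows from Plancherel and $L^2$-inversion, together with continuity of $f$ off the origin. (Alternatively one can truncate to $|\xi|\le R$, interchange on the compact region, and pass $R\to\infty$, but that needs an additional argument for the convergence of the oscillatory truncated integrals.) With this repair the rest of your proof stands.
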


\begin{proof}
  See Section 3, Chapter V of {\cite{Stein1970}}.
\end{proof}

More generally, in order to stress the dependence of $\mathcal{G}$ on the mass
$m > 0$, we use also the notation
\[ \mathfrak{G}^{m^2} (x, z) = \frac{1}{(2 \pi)^4} \int_{\mathbb{R}^4}
   \frac{e^{-i (y \cdot x + k \cdot z)}}{(| y |^2 + | k |^2 + m^2)^2} \mathd y
   \mathd k. \]

\subsection{Estimates on $\mathcal{G}_{\varepsilon}$}

In this section we prove the following useful estimate on the Green function
$\mathcal{G}_{\varepsilon}$. Our proof differs from proofs of the analogous
statement on $\varepsilon \mathbb{Z}^2$ (see, e.g., Section 4.2.2 of
{\cite{Itzyksonbook}} or Appendix A of {\cite{bauerschmidt_2019}}) and uses a
comparison with the Green's function of $(m^2 - \Delta_{\mathbb{R}^4})^2$ on
$\mathbb{R}^4$ which is well known.

\begin{theorem}
  \label{theorem:green1}For any $m > 0$, $0 < \varepsilon \leqslant 1$ and $C
  > \frac{2}{(4 \pi)^2}$ there exists a $D_C \in \mathbb{R}_+$ such that, for
  any $x \in \mathbb{R}^2$ and $z \in \varepsilon \mathbb{Z}^2$, we have
  \begin{equation}
    | \mathcal{G}_{\varepsilon} (x, z) | \leqslant C \log_+ \left(
    \frac{1}{\sqrt{x^2 + z^2} \vee \varepsilon} \right) + D_C,
    \label{eq:green1}
  \end{equation}
  where $\log_+ (x) = \log (x) \vee 0$.
\end{theorem}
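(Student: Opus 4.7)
}

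The plan is to represent $\mathcal{G}_{\varepsilon}$ through the heat semigroup and then compare the discrete heat kernel on $\varepsilon\mathbb{Z}^2$ with the continuum 2D heat kernel. Using the Laplace-transform identity $(A+m^2)^{-2} = \int_0^\infty t\, e^{-t(A+m^2)}\, dt$ applied in Fourier variables with $A = |y|^2 + \sigma_{-\Delta_{\varepsilon\mathbb{Z}^2}}(k)$, and the fact that the Laplacian on $\mathbb{R}^2\times\varepsilon\mathbb{Z}^2$ is a sum of two commuting pieces, I can write
\[
  \mathcal{G}_\varepsilon(x,z) \;=\; \int_0^\infty t\, e^{-tm^2}\, h^{\mathbb{R}^2}_t(x)\, h^{\varepsilon\mathbb{Z}^2}_t(z)\, dt,
\]
where $h^{\mathbb{R}^2}_t(x)=(4\pi t)^{-1}e^{-|x|^2/(4t)}$ and $h^{\varepsilon\mathbb{Z}^2}_t(z)=(2\pi)^{-2}\int_{\mathbb{T}^2_{1/\varepsilon}}e^{-ik\cdot z-t\sigma_{-\Delta_{\varepsilon\mathbb{Z}^2}}(k)}dk$. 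Since $\mathcal{G}_\varepsilon\ge 0$ (its Fourier symbol is positive), this also gives the sign of $\mathcal{G}_\varepsilon$.

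Next, from $(2/\pi)^2|k|^2 \le \sigma_{-\Delta_{\varepsilon\mathbb{Z}^2}}(k) \le |k|^2$ on $\mathbb{T}^2_{1/\varepsilon}$, together with the trivial volume bound $h^{\varepsilon\mathbb{Z}^2}_t(z)\le h^{\varepsilon\mathbb{Z}^2}_t(0)\le \varepsilon^{-2}$, a Fourier-inversion argument yields Gaussian estimates uniformly in $\varepsilon$:
\[
  h^{\varepsilon\mathbb{Z}^2}_t(z) \;\le\; C\,\min\!\Bigl(t^{-1},\varepsilon^{-2}\Bigr)\, e^{-c|z|^2/(t\vee \varepsilon^2)}.
\]
Moreover for $t\ge\varepsilon^2$ the discrete heat kernel is close to the continuum one: a pointwise local-CLT-type comparison gives
\[
  h^{\varepsilon\mathbb{Z}^2}_t(z) \;\le\; (1+\eta(\varepsilon^2/t))\,\frac{1}{4\pi t},
\]
where $\eta(\cdot)\to 0$ at the origin. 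These two bounds are the only technical inputs beyond elementary calculus.

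I would then split the time integral at the scales $\varepsilon^2$, $r^2:=|x|^2+|z|^2$, and $1/m^2$:

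(i) For $t\in[0,\varepsilon^2]$, use $h^{\varepsilon\mathbb{Z}^2}_t\le C\varepsilon^{-2}$ and $h^{\mathbb{R}^2}_t\le (4\pi t)^{-1}$, giving a contribution $\lesssim \int_0^{\varepsilon^2}\varepsilon^{-2}dt\le 1$.

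(ii) For $t\in[\varepsilon^2,\,r^2\vee\varepsilon^2]$ (relevant only when $r>\varepsilon$), the Gaussian exponentials are bounded; the prefactor collapses to $t^{-1}$ and the integral is $\lesssim \log(r/\varepsilon)$, which is itself $\le \log(1/\varepsilon)$. This part is absorbed into the constant relative to the leading logarithm.

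(iii) For $t\in[r^2\vee\varepsilon^2,\,1/m^2]$, the Gaussian factors $e^{-|x|^2/(4t)-c|z|^2/t}$ are $1+o(1)$, and the sharp comparison $h^{\varepsilon\mathbb{Z}^2}_t(z)\le(1+\eta)(4\pi t)^{-1}$ combines with $h^{\mathbb{R}^2}_t(x)\le(4\pi t)^{-1}$ to give the main term
\[
  \int_{r^2\vee\varepsilon^2}^{1/m^2}\frac{(1+\eta)\,dt}{(4\pi)^2\, t} \;=\; \frac{1+\eta}{(4\pi)^2}\,\log\!\frac{1}{m^2(r^2\vee\varepsilon^2)} \;=\; \frac{2(1+\eta)}{(4\pi)^2}\,\log\!\frac{1}{r\vee\varepsilon} + O(1).
\]

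(iv) For $t\ge 1/m^2$, exponential decay of $e^{-tm^2}$ against algebraic prefactors gives an $O(1)$ contribution.

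Summing the four pieces, and noting that $r\vee\varepsilon\ge(\sqrt{x^2+z^2})\wedge\varepsilon$ (this is exactly where the $\wedge\varepsilon$ cutoff enters), yields the bound
\[
  |\mathcal{G}_\varepsilon(x,z)| \;\le\; \frac{2(1+\eta)}{(4\pi)^2}\,\log_+\!\frac{1}{\sqrt{x^2+z^2}\wedge\varepsilon} + D,
\]
which is precisely the desired inequality once $\varepsilon$ is small enough to make $\frac{2(1+\eta)}{(4\pi)^2}<C$; the remaining range of $\varepsilon$ is handled by enlarging $D_C$.

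The main obstacle is obtaining the sharp prefactor $2/(4\pi)^2$: generic Gaussian upper bounds on $h^{\varepsilon\mathbb{Z}^2}_t$ produce a suboptimal constant, so one genuinely needs a local-CLT comparison $h^{\varepsilon\mathbb{Z}^2}_t(z)=(4\pi t)^{-1}(1+o(1))$ valid for $t/\varepsilon^2\to\infty$. The gap between this asymptotic and the true discrete heat kernel is exactly what forces the strict inequality $C>2/(4\pi)^2$ rather than equality.
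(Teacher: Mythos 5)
Your approach -- representing $\mathcal{G}_\varepsilon$ via the subordination identity $(A+m^2)^{-2}=\int_0^\infty t\,e^{-t(A+m^2)}\,dt$ and analysing the product of continuum and discrete heat kernels over dyadic time scales -- is genuinely different from the paper's. The paper works entirely at the Fourier level: for $\max(|x|,|z|)\geq\varepsilon$ it shows $|\mathcal{G}_\varepsilon-\mathcal{G}|=O(1)$ by directly estimating the integral of the difference of the two symbols (splitting the frequency domain at the torus boundary), and then invokes the known bound on $\mathcal{G}$; for $|x|,|z|<\varepsilon$ it bounds the relevant $\mathbb{T}^2$-integral by hand. The paper's route gives the slightly sharper on-diagonal/off-diagonal dichotomy (effectively $\log_+\frac{1}{r\vee\varepsilon}$), which matches what is stated in Corollary~\ref{corollary:green2} for $\overline{\mathcal{G}}_\varepsilon$; your route, as written, aims at a uniform $\log(1/\varepsilon)$ bound, which still suffices for the statement as written since $\log_+\frac{1}{\varepsilon}\leq\log_+\frac{1}{r\wedge\varepsilon}$.

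There is however a concrete gap in step (ii). Over $t\in[\varepsilon^2,r^2]$ you bound the Gaussian exponentials by $1$ and use the crude prefactor $t^{-1}$ to get $\lesssim\log(r/\varepsilon)$, claiming this is ``absorbed into the constant''. It is not a constant: it can be as large as $\log(1/\varepsilon)$ (larger still if you forget the $e^{-tm^2}$ cutoff for $r>1/m$), and the hidden constant in $\lesssim$ is whatever comes out of the crude $h_t^{\varepsilon\mathbb{Z}^2}\lesssim t^{-1}$ bound, which is strictly larger than $1/(4\pi)$. So step (ii) contributes a multiple of $\log(1/\varepsilon)$ with an \emph{uncontrolled} prefactor, which can dominate the sharp term from (iii) and makes the final constant exceed any prescribed $C>2/(4\pi)^2$. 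Two fixes are available, and you already have the ingredients for both: either apply the local-CLT bound $h_t^{\varepsilon\mathbb{Z}^2}(z)\leq h_t^{\varepsilon\mathbb{Z}^2}(0)\leq(1+\eta(\varepsilon^2/t))\frac{1}{4\pi t}$ on \emph{all} of $[\varepsilon^2,1/m^2]$ (merging (ii) and (iii), dropping the Gaussian exponentials, and splitting $\eta$ into a bounded piece on $[\varepsilon^2,M\varepsilon^2]$ and a small-$\delta$ piece on $[M\varepsilon^2,1/m^2]$), which is the cleanest; or genuinely use Gaussian decay in step (ii) to make it $O(1)$. A separate caveat if you pursue the second option: the Gaussian estimate $h^{\varepsilon\mathbb{Z}^2}_t(z)\lesssim t^{-1}e^{-c|z|^2/t}$ is \emph{false} in the super-ballistic regime $\varepsilon^2\leq t\ll|z|\varepsilon$, where the continuous-time random walk kernel has Poissonian rather than Gaussian tails (visible from the Bessel-function asymptotics $I_n(a)\sim(2\pi n)^{-1/2}(ea/2n)^n$ for $n\gg a$); the correct uniform upper bound there is of Carne--Varopoulos type, $\exp(-c(\frac{|z|^2}{t}\wedge\frac{|z|}{\varepsilon}))$. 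Since the first fix never invokes off-diagonal decay, it avoids this subtlety entirely and is the one I would use.
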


\begin{proof}
  We distinguish the cases $\max (| z |, | x |) \geqslant \varepsilon$ and
  $\max (| z |, | x |) < \varepsilon$.
  
  Consider first $\max (| z |, | x |) \geqslant \varepsilon$, and the
  difference $\mathcal{G}_{\varepsilon} (x, z) - \mathcal{G} (x, z)$. Then we
  have
  \begin{eqnarray*}
    &  & | \mathcal{G}_{\varepsilon} (x, z) - \mathcal{G} (x, z) |\\
    & \leqslant & \left| \frac{1}{(2 \pi)^4} \int_{\mathbb{R}^2 \times
    \mathbb{T}_{\frac{1}{\varepsilon}}^2} \frac{e^{-i (y \cdot x + k \cdot
    z)}}{\left( | y |^2 + 4 \varepsilon^{- 2} \sin^2 \left( \frac{\varepsilon
    k_1}{2} \right) + 4 \varepsilon^{- 2} \sin^2 \left( \frac{\varepsilon
    k_2}{2} \right) + m^2 \right)^2} - \frac{e^{-i (y \cdot x + k \cdot z)}}{(|
    y |^2 + | k |^2 + m^2)^2} \mathd y \mathd k \right|\\
    &  & + \left| \frac{1}{(2 \pi)^4} \int_{\mathbb{R}^4
    \backslash\mathbb{R}^2 \times \mathbb{T}^2_{\frac{1}{\varepsilon}}} \frac{e^{-i (y \cdot x + k
    \cdot z)}}{(| y |^2 + | k |^2 + m^2)^2} \mathd y \mathd k \right|\\
    & = & J_1 + J_2.
  \end{eqnarray*}
  For $J_1$ we have
  \begin{eqnarray*}
    &  & (2 \pi)^4 J_1\\
    & = & \int_{\mathbb{R}^2 \times \mathbb{T}_{\frac{1}{\varepsilon}}^2}
    e^{-i (y \cdot x + k \cdot z)} \left( \frac{(| y |^2 + | k |^2 + m^2)^2 -
    \left( | y |^2 + 4 \varepsilon^{- 2} \sin^2 \left( \frac{\varepsilon
    k_1}{2} \right) + 4 \varepsilon^{- 2} \sin^2 \left( \frac{\varepsilon
    k_2}{2} \right) + m^2 \right)^2}{\left( | y |^2 + 4 \varepsilon^{- 2}
    \sin^2 \left( \frac{\varepsilon k_1}{2} \right) + 4 \varepsilon^{- 2}
    \sin^2 \left( \frac{\varepsilon k_2}{2} \right) + m^2 \right)^2 (| y |^2 +
    | k |^2 + m^2)^2} \right) \mathd y \mathd k\\
    & = & \int_{\mathbb{R}^2 \times \mathbb{T}_{\frac{1}{\varepsilon}}^2}
    e^{-i (y \cdot x + k \cdot z)} \left( \frac{(A + B) (A - B)}{A^2 B^2}
    \right) \mathd y \mathd k.
  \end{eqnarray*}

  Here we set \ $A = (| y |^2 + | k |^2 + m^2)$ and $B = \left( | y |^2 + 4
  \varepsilon^{- 2} \sin^2 \left( \frac{\varepsilon k_1}{2} \right) + 4
  \varepsilon^{- 2} \sin^2 \left( \frac{\varepsilon k_2}{2} \right) + m^2
  \right)$.
  
  Now it is simple to see that $| A | \lesssim B$ (where the constants hidden
  in the symbol $\lesssim$ are independent of $0 < \varepsilon \leqslant 1$)
  which implies $| A + B | \lesssim B$ and
  \[ \left| \int_{\mathbb{R}^2 \times \mathbb{T}_{\frac{1}{\varepsilon}}^2}
     e^{i (y \cdot x + k \cdot z)} \left( \frac{(A + B) (A - B)}{A^2 B^2}
     \right) \mathd k \mathd y \right| \lesssim 2 \int_{\mathbb{R}^2 \times
     \mathbb{T}_{\frac{1}{\varepsilon}}^2} \left( \frac{(A - B)}{A^2 B}
     \right) \mathd y \mathd k. \]
  If we apply Taylor expansion to $\varepsilon^{- 2} \sin^2 \left(
  \frac{\varepsilon | k_1 |}{2} \right)$ we get
  \[ 4 \varepsilon^{- 2} \sin^2 \left( \frac{\varepsilon | k_1 |}{2} \right) =
     | k_1 |^2 + \frac{1}{\varepsilon^2} \mathcal{O} ((\varepsilon | k_1 |)^3),
  \]
  and so
  \begin{eqnarray*}
    \left( \frac{(A - B)}{A^2 B} \right) & \lesssim & \frac{\varepsilon (| k_1
    |)^3}{(| y |^2 + \varepsilon^{- 2} \sin^2 (\varepsilon k_1) +
    \varepsilon^{- 2} \sin^2 (\varepsilon k_2) + m^2) (| y |^2 + | k |^2 +
    m^2)^2}\\
    & \lesssim & \frac{\varepsilon (| k_1 |)^3}{(| y |^2 + | k |^2 +
    m^2)^3}\\
    & \lesssim & \frac{\varepsilon}{(| y |^2 + | k |^2 + m^2)^{3 / 2}}.
  \end{eqnarray*}
  Since
  \[ \int_{\mathbb{R}^2 \times \mathbb{T}_{\frac{1}{\varepsilon}}^2}
     \frac{1}{(| y |^2 + | k |^2 + m^2)^{3 / 2}} \lesssim \varepsilon^{- 1}, \]
  we can conclude that
  \[ J_1 = \frac{1}{(2 \pi)^4} \left| \int_{\mathbb{R}^2 \times
     \mathbb{T}_{\frac{1}{\varepsilon}}^2} \frac{e^{-i (k, y) \cdot x}}{\left(
     | y |^2 + 4 \varepsilon^{- 2} \sin^2 \left( \frac{\varepsilon k_1}{2}
     \right) + 4 \varepsilon^{- 2} \sin^2 \left( \frac{\varepsilon k_2}{2}
     \right) + m^2 \right)^2} - \frac{e^{-i (k, y) \cdot x}}{(| y |^2 + | k |^2
     + m^2)^2} \mathd k \mathd y \right| \leqslant K_1, \]
  where the constant $K_1 > 0$ is independent of $0 < \varepsilon \leqslant
  1$. Let us now turn to $J_2$. Consider $R_{\varepsilon} (y, k) =
  \frac{1}{(\varepsilon m^2 + | y |^2 + | k |^2)^2}$ \
  \begin{align*}
      J_2 =& \frac{1}{(2 \pi)^4} \left| \int_{\mathbb{R}^4
     \backslash \mathbb{R}^2 \times \mathbb{T}^2} R_{\varepsilon} (y, k) e^{-i \left( y \cdot
     \frac{x}{\varepsilon} + k \cdot \frac{z}{\varepsilon} \right)} \mathd y
     \mathd k \right| \\
     \lesssim& \left| \int_{\mathbb{R}^4, | k_1 | > 1, | k_2 |
     < 1} R_{\varepsilon} (y, k) e^{-i \left( y \cdot \frac{x}{\varepsilon} + k
     \cdot \frac{z}{\varepsilon} \right)} \mathd y \mathd k \right|  
    + \left| \int_{\mathbb{R}^4, | k_2 | > 1, | k_1 | < 1} R_{\varepsilon}
     (y, k) e^{-i \left( y \cdot \frac{x}{\varepsilon} + k \cdot
     \frac{z}{\varepsilon} \right)} \mathd y \mathd k \right| \\& + \left|
     \int_{\mathbb{R}^4, | k_2 | > 1, | k_1 | > 1} R_{\varepsilon} (y, k) e^{-i
     \left( y \cdot \frac{x}{\varepsilon} + k \cdot \frac{z}{\varepsilon}
     \right)} \mathd y \mathd k \right| \\
   =& J_{2, 1} + J_{2, 2} + J_{2, 3} . \end{align*}
  First we focus on $J_{2, 1}$ obtaining
  \begin{eqnarray*}
    J_{2, 1} & = & \left| \int_{\mathbb{R}^2 \times \mathbb{R}^2} \frac{e^{-i
    \left( y \cdot \frac{x}{\varepsilon} + k \cdot \frac{z}{\varepsilon}
    \right)}}{(\varepsilon m^2 + | y |^2 + | k_1 |^2 + | k_2 |^2 + 1)^2}
    \mathd y \mathd k \right|\\
    &  & + \left| \int_{\mathbb{R}^4, | k_1 | > 1, | k_2 | < 1} \left(
    \frac{1}{(\varepsilon m^2 + | y |^2 + | k_1 |^2 + | k_2 |^2)^2} -
    \frac{1}{(\varepsilon m^2 + | y |^2 + | k_1 |^2 + | k_2 |^2 + 1)^2}
    \right) \mathd y \mathd k \right|\\
    &  & + K_2\\
    & \leqslant & \left| \mathfrak{G}^{\varepsilon m^2 + 1} \left(
    \frac{x}{\varepsilon}, \frac{z}{\varepsilon} \right)\right| + K_2 \\
    &  & + \left| \int_{\mathbb{R}^4, | k_1 | > 1, | k_2 | < 1} \left(
    \frac{2 (\varepsilon m^2 + | y |^2 + | k_1 |^2 + | k_2 |^2 +
    1)}{(\varepsilon m^2 + | y |^2 + | k_1 |^2 + | k_2 |^2 + 1)^2
    (\varepsilon m^2 + | y |^2 + | k_1 |^2 + | k_2 |^2)^2} \right) \mathd y
    \mathd k \right|\\
    & \leqslant & \left| \mathfrak{G}^{\varepsilon m^2 + 1} (1, 0) \right|+ K_3 \leqslant
    K_4,
  \end{eqnarray*}
  where we use the fact that $\max \left( \frac{| x |}{\varepsilon}, \frac{| z
  |}{\varepsilon} \right) \geqslant 1$, and \ $K_2, K_3, K_4 > 0$ are suitable
  constants independent of $0 < \varepsilon \leqslant 1$. In a similar way we
  get an estimate $J_{2, 2}, J_{2, 3} \leqslant K_5$ for some constant $K_5$
  independent of $0 < \varepsilon \leqslant 1$. Since $\mathcal{G} (x, z)$
  satisfies inequality {\eqref{eq:green1}} when $\max (| x |, | z |) \geqslant
  \varepsilon$ the thesis is proved in this case.\\
  
  Consider now $z = 0$ (that means $| z | < \varepsilon$ since $z \in
  \varepsilon \mathbb{Z}^2$) and $| x | < \varepsilon$, then we have
  \begin{align*}
       \mathcal{G}_{\varepsilon} (x, 0) &\leqslant \frac{1}{(4 \pi) (2 \pi)^2}
     \int_{\mathbb{T}^2} \frac{1}{\sin^2 (k_1) + \sin^2 (k_2) + \varepsilon^2
     m^2} \mathd k_1 \mathd k_2 \\
   &\leqslant \frac{1}{(4 \pi) (2 \pi)^2} \int_{| k | < H} \frac{1}{\left(
     \frac{\sin (H)}{H} \right)^2  | k |^2 + \varepsilon^2 m^2} \mathd k_1
     \mathd k_2 + \frac{1}{(4 \pi)^2} \int_{\mathbb{T}^2} \frac{1}{2 \sin
     (H)^2} \mathd k_1 \mathd k_2, \end{align*}
  where we can choose any $H$ satisfying $0 < H < \pi$. The integral
  $\frac{1}{(4 \pi)^2} \int_{\mathbb{T}^2} \frac{1}{2 \sin (H)^2} \mathd k_1
  \mathd k_2$ is bounded in $\varepsilon$. On the other hand
  \[ \frac{1}{(4 \pi) (2 \pi)^2} \int_{| k | < H} \frac{1}{\left( \frac{\sin
     (H)}{H} \right)^2  | k |^2 + \varepsilon^2 m^2} \mathd k_1 \mathd k_2
     \leqslant - \left( \frac{H}{\sin (H)} \right)^2 \frac{2}{(4 \pi)^2} \log
     (\varepsilon) + K_6 ,\]
  where $K_6$ can be chosen uniformly on $\varepsilon$. This means that
  \[ \mathcal{G}_{\varepsilon} (x, 0) \leqslant - \left( \frac{H}{\sin (H)}
     \right)^2 \frac{2}{(4 \pi)^2} \log (\varepsilon) + K_{7, H} ,\]
  where $K_{7, H}$ depends only on $H$, and it is finite when $H > 0$. Since
  $\frac{H}{\sin (H)} \rightarrow 1$ as $H \rightarrow 0$ the theorem is
  proved.
\end{proof}

\subsection{Estimates on $\overline{\mathcal{G}}_{\varepsilon}$}

In this section we focus our attention on the the Green functions
$\overline{\mathcal{G}}_{\varepsilon}$ and
$\widetilde{\mathcal{G}}_{\varepsilon}$.

\begin{theorem}
  \label{eq:eepsilon}For any $m > 0$ there is a constant $C_2 > 0$ (not
  depending on $0 < \varepsilon \leqslant 1$) such that
  \[ | \overline{\mathcal{G}}_{\varepsilon} (\bar{x}) -
     \widetilde{\mathcal{G}}_{\varepsilon} (\bar{x}) | \leqslant C_2 . \]
\end{theorem}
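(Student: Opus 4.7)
The plan is to estimate the difference of the Green's functions by their Fourier representation, using the trivial bound $|e^{-i(x\cdot y+k\cdot z)}|\le1$, so that the task reduces to showing
\[
I_\varepsilon \;:=\; \int_{\mathbb{R}^4} \frac{\sin^2(\varepsilon k_1/2)\sin^2(\varepsilon k_2/2)}{\varepsilon^4 k_1^2 k_2^2}\Bigl|\frac{1}{Q_\varepsilon(y,k)^2}-\frac{1}{P(y,k)^2}\Bigr|\,dy\,dk\;\lesssim\;1
\]
uniformly in $0<\varepsilon\le1$, where $P(y,k)=|y|^2+|k|^2+m^2$ and $Q_\varepsilon(y,k)=|y|^2+4\varepsilon^{-2}(\sin^2(\varepsilon k_1/2)+\sin^2(\varepsilon k_2/2))+m^2$. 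I would then write
\[
\Bigl|\frac{1}{Q_\varepsilon^2}-\frac{1}{P^2}\Bigr|\;=\;\frac{(P-Q_\varepsilon)(P+Q_\varepsilon)}{P^2 Q_\varepsilon^2},
\]
noting the crucial identities $0\le P-Q_\varepsilon = \sum_i\bigl(k_i^2-4\varepsilon^{-2}\sin^2(\varepsilon k_i/2)\bigr)$ and $Q_\varepsilon\le P$.

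The first step is a change of variables $u=\varepsilon k$, $v=\varepsilon y$, under which $I_\varepsilon$ becomes $\varepsilon$-independent except for the additive $\varepsilon^2 m^2$ inside both denominators, which only helps integrability. I then split $\mathbb{R}^4$ into the low-frequency regime $\{|u|\le1\}$ and the high-frequency regime $\{|u|>1\}$, and treat them separately. In the low-frequency regime I use the Taylor expansion $x^2-4\sin^2(x/2)=x^4/12+O(x^6)$, so that $P-Q_\varepsilon\lesssim\varepsilon^2|k|^4$ after rescaling, $P+Q_\varepsilon\sim P\sim|v|^2+|u|^2+\varepsilon^2 m^2$ (using $Q_\varepsilon\sim P$ there), and the sin factor is bounded by $1/16$. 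Performing the $v$-integral against $(|v|^2+a)^{-3}$ reduces the matter to $\int_{|u|\le1}|u|^4(|u|^2+\varepsilon^2 m^2)^{-2}\,du\lesssim 1$.

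In the high-frequency regime $\{|u|>1\}$ cancellation is no longer available, so I estimate $|A'-B'|\le A'+B'$ and bound each term separately; this is where the main technical care is required. For the $B'$-contribution, integrating in $v$ yields $\pi/(|u|^2+\varepsilon^2 m^2)$ and the integral over $u$ is handled by decomposing according to whether each $|u_i|$ is $\le1$ or $>1$ and using $\sin^2(u_i/2)/u_i^2\le\min(1/4,1/u_i^2)$. For the $A'$-contribution, the $v$-integral produces $\pi/(4\sin^2(u_1/2)+4\sin^2(u_2/2)+\varepsilon^2 m^2)$, which is singular on the periodic lattice $u\in2\pi\mathbb{Z}^2$. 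The decisive observation is that at each singular point $(2\pi n_1,2\pi n_2)$ the numerator $\sin^2(u_1/2)\sin^2(u_2/2)$ vanishes to order $\delta_1^2\delta_2^2$, and the elementary bound $ab/(a+b+c)\le\min(a,b)$ combined with the factor $1/(u_1^2 u_2^2)\sim 1/(n_1^2 n_2^2)$ (for $n_i\ne0$) gives a locally finite contribution around each singularity together with a summable tail $\sum_{n\in\mathbb{Z}^2\setminus\{0\}}1/(n_1^2 n_2^2)$; the cases $n_1=0$ or $n_2=0$ (but not both) are treated similarly, and $(n_1,n_2)=(0,0)$ is excluded by $|u|>1$.

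The main obstacle is precisely this last high-frequency estimate: one must exploit the simultaneous vanishing of the sin factor at the zeros of the discrete symbol $Q_\varepsilon$ carefully enough to get a bound independent of $\varepsilon$, since otherwise a naive $L^\infty$ bound on $A'$ would grow like $(\varepsilon^2 m^2)^{-2}$. Once both regimes are controlled, combining the two bounds yields the uniform estimate $I_\varepsilon\le C_2$, which is the claim.
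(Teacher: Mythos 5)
Your proposal is correct and follows essentially the same route as the paper: bound the phase by $1$, integrate out the two continuous frequencies, Taylor-expand $x^2-4\sin^2(x/2)$ in a low-frequency regime, and in the high-frequency regime decompose into lattice cells of period $2\pi$ and exploit the simultaneous vanishing of the $\sin^2$ numerator at the zeros of the discrete symbol. The only cosmetic difference is that you split radially ($|u|\lessgtr 1$) whereas the paper splits coordinate-wise ($|k_i|\lessgtr\pi/\varepsilon$, producing $K_<, K_{>,1}, K_{>,2}, K_{>,3}$ with a hybrid Taylor/lattice treatment in the mixed quadrants), but this does not change the substance of the argument.
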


\begin{proof}
  We have that
  \begin{eqnarray}
    &  & | \overline{\mathcal{G}}_{\varepsilon} (\bar{x}) -
    \widetilde{\mathcal{G}}_{\varepsilon} (\bar{x}) | \nonumber\\
    & = & \frac{1}{(2 \pi)^4} \int_{\mathbb{R}^4} \frac{\left( \sin^2 \left(
    \frac{\varepsilon k_1}{2} \right) \sin^2 \left( \frac{\varepsilon k_2}{2}
    \right) \right)}{\varepsilon^4 k_1^2 k_2^2} \nonumber\\
    &  & \times \left( \frac{1}{\left( m^2 + \frac{4 \sin^2 \left(
    \frac{\varepsilon k_1}{2} \right)}{\varepsilon^2} + \frac{4 \sin^2 \left(
    \frac{\varepsilon k_2}{2} \right)}{\varepsilon^2} + k_3^2 + k_4^2
    \right)^2} - \frac{1}{(m^2 + | k |^2)^2} \right) \mathd k \nonumber.
  \end{eqnarray}
  Integrating with respect to the variables $k_3, k_4$ we obtain
  \[ K = \frac{1}{(2 \pi)^3} \int_{\mathbb{R}^2} \frac{\left( \sin^2 \left(
     \frac{\varepsilon k_1}{2} \right) \sin^2 \left( \frac{\varepsilon k_2}{2}
     \right) \right)}{\varepsilon^4 k_1^2 k_2^2} \left( \frac{1}{\left( m^2 +
     \frac{4 \sin^2 \left( \frac{\varepsilon k_1}{2} \right)}{\varepsilon^2} +
     \frac{4 \sin^2 \left( \frac{\varepsilon k_2}{2} \right)}{\varepsilon^2}
     \right)} - \frac{1}{(m^2 + | k |^2)} \right) \mathd k. \]
  We can decompose the integral in
  \[ K_{<} = \frac{1}{(2 \pi)^3} \int_{| k_1 | < \frac{\pi}{\varepsilon}, |
     k_2 | < \frac{\pi}{\varepsilon}} \frac{\left( \sin^2 \left(
     \frac{\varepsilon k_1}{2} \right) \sin^2 \left( \frac{\varepsilon k_2}{2}
     \right) \right) \left( | k |^2 - \frac{4 \sin^2 \left( \frac{\varepsilon
     k_1}{2} \right)}{\varepsilon^2} - \frac{4 \sin^2 \left( \frac{\varepsilon
     k_2}{2} \right)}{\varepsilon^2} \right)}{\varepsilon^4 k_1^2 k_2^2 \left(
     m^2 + \frac{4 \sin^2 \left( \frac{\varepsilon k_1}{2}
     \right)}{\varepsilon^2} + \frac{4 \sin^2 \left( \frac{\varepsilon k_2}{2}
     \right)}{\varepsilon^2} \right) (m^2 + | k |^2)} \mathd k \]
  and
  \begin{eqnarray*}
    &  & K - K_{<}\\
    & \leqslant & K_{>, 1} + K_{>, 2} + K_{>, 3}\\
    & = & \frac{1}{(2 \pi)^3} \int_{| k_1 | > \frac{\pi}{\varepsilon}, | k_2
    | > \frac{\pi}{\varepsilon}} \frac{\left( \sin^2 \left( \frac{\varepsilon
    k_1}{2} \right) \sin^2 \left( \frac{\varepsilon k_2}{2} \right) \right)
    \left( | k |^2 - \frac{4 \sin^2 \left( \frac{\varepsilon k_1}{2}
    \right)}{\varepsilon^2} - \frac{4 \sin^2 \left( \frac{\varepsilon k_2}{2}
    \right)}{\varepsilon^2} \right)}{\varepsilon^4 k_1^2 k_2^2 \left( m^2 +
    \frac{4 \sin^2 \left( \frac{\varepsilon k_1}{2} \right)}{\varepsilon^2} +
    \frac{4 \sin^2 \left( \frac{\varepsilon k_2}{2} \right)}{\varepsilon^2}
    \right) (m^2 + | k |^2)} \mathd k\\
    &  & + \frac{1}{(2 \pi)^3} \int_{| k_1 | < \frac{\pi}{\varepsilon}, | k_2
    | > \frac{\pi}{\varepsilon}} \frac{\left( \sin^2 \left( \frac{\varepsilon
    k_1}{2} \right) \sin^2 \left( \frac{\varepsilon k_2}{2} \right) \right)
    \left( | k |^2 - \frac{4 \sin^2 \left( \frac{\varepsilon k_1}{2}
    \right)}{\varepsilon^2} - \frac{4 \sin^2 \left( \frac{\varepsilon k_2}{2}
    \right)}{\varepsilon^2} \right)}{\varepsilon^4 k_1^2 k_2^2 \left( m^2 +
    \frac{4 \sin^2 \left( \frac{\varepsilon k_1}{2} \right)}{\varepsilon^2} +
    \frac{4 \sin^2 \left( \frac{\varepsilon k_2}{2} \right)}{\varepsilon^2}
    \right) (m^2 + | k |^2)} \mathd k\\
    &  & + \frac{1}{(2 \pi)^3} \int_{| k_1 | > \frac{\pi}{\varepsilon}, | k_2
    | < \frac{\pi}{\varepsilon}} \frac{\left( \sin^2 \left( \frac{\varepsilon
    k_1}{2} \right) \sin^2 \left( \frac{\varepsilon k_2}{2} \right) \right)
    \left( | k |^2 - \frac{4 \sin^2 \left( \frac{\varepsilon k_1}{2}
    \right)}{\varepsilon^2} - \frac{4 \sin^2 \left( \frac{\varepsilon k_2}{2}
    \right)}{\varepsilon^2} \right)}{\varepsilon^4 k_1^2 k_2^2 \left( m^2 +
    \frac{4 \sin^2 \left( \frac{\varepsilon k_1}{2} \right)}{\varepsilon^2} +
    \frac{4 \sin^2 \left( \frac{\varepsilon k_2}{2} \right)}{\varepsilon^2}
    \right) (m^2 + | k |^2)} \mathd k.
  \end{eqnarray*}

  Since $\left| \frac{\sin (x)}{x} \right| < 1$ and $\sin^2 \left(
  \frac{\varepsilon k_1}{2} \right) = \frac{\varepsilon^2 | k_1 |^2}{4} + \mathcal{O}
  (\varepsilon^3 | k_1 |^3)$ we have
  \[ K_{<} \lesssim \varepsilon \int_{| k_i | < \frac{\pi}{\varepsilon}}
     \frac{| k |^3}{(m^2 + | k |^2)^2} \mathd k \lesssim \varepsilon
     \frac{\pi}{\varepsilon} \lesssim 1. \]
  For $K_{>, 1}$ instead we have
  \begin{eqnarray}
    K_{>, 1} & \leqslant & \frac{1}{\varepsilon^2 (2 \pi)^3} \int_{| k_i | >
    \pi} \frac{\left( \sin^2 \left( \frac{k_1}{2} \right) \sin^2 \left(
    \frac{k_2}{2} \right) \right)}{k_1^2 k_2^2} \left( \frac{1}{\left( m^2 +
    \frac{4 \sin^2 \left( \frac{k_1}{2} \right)}{\varepsilon^2} + \frac{4
    \sin^2 \left( \frac{k_2}{2} \right)}{\varepsilon^2} \right)} +
    \frac{1}{\left( m^2 + \frac{| k |^2}{\varepsilon^2} \right)} \right)
    \mathd k \nonumber\\
    & \lesssim & \frac{1}{\varepsilon^2} \int_{| k_i | > \pi} \frac{\left(
    \sin^2 \left( \frac{k_1}{2} \right) \sin^2 \left( \frac{k_2}{2} \right)
    \right)}{k_1^2 k_2^2} \frac{1}{\left( 1 + \frac{4 \sin^2 \left(
    \frac{k_1}{2} \right)}{m^2 \varepsilon^2} + \frac{4 \sin^2 \left(
    \frac{k_2}{2} \right)}{m^2 \varepsilon^2} \right)} \mathd k + \nonumber\\
    &  & + \frac{1}{\varepsilon^2} \int_{| k_i | > \pi} \frac{\left( \sin^2
    \left( \frac{k_1}{2} \right) \sin^2 \left( \frac{k_2}{2} \right)
    \right)}{k_1^2 k_2^2} \frac{1}{\left( m^2 + \frac{| k |^2}{\varepsilon^2}
    \right)} \mathd k \nonumber\\
    & \lesssim & \frac{1}{\varepsilon^2} \sum_{z \in \mathbb{Z}^2, z_1
    \centernot{=} 0, z_2 \centernot{=} 0} \frac{1}{z_1^2 z_2^2} \int_{[- \pi, \pi]^2}
    \frac{\left( \sin^2 \left( \frac{k_1}{2} \right) \sin^2 \left(
    \frac{k_2}{2} \right) \right)}{\left( 1 + \frac{4 \sin^2 \left(
    \frac{k_1}{2} \right)}{m^2 \varepsilon^2} + \frac{4 \sin^2 \left(
    \frac{k_2}{2} \right)}{m^2 \varepsilon^2} \right)} \mathd k + \nonumber\\
    &  & + \frac{1}{\varepsilon^2} \sum_{z \in \mathbb{Z}^2, z_1 \centernot{=} 0,
    z_2 \centernot{=} 0} \frac{1}{z_1^2 z_2^2} \frac{1}{\frac{| z
    |^2}{\varepsilon^2}} \int_{[- \pi, \pi]^2} \left( \sin^2 \left(
    \frac{k_1}{2} \right) \sin^2 \left( \frac{k_2}{2} \right) \right) \mathd k
    \lesssim 1, \nonumber
  \end{eqnarray}
  since
  \begin{eqnarray}
    \int_{[- \pi, \pi]^2} \frac{\left( \sin^2 \left( \frac{k_1}{2} \right)
    \sin^2 \left( \frac{k_2}{2} \right) \right)}{\left( 1 + \frac{4 \sin^2
    \left( \frac{k_1}{2} \right)}{m^2 \varepsilon^2} + \frac{4 \sin^2 \left(
    \frac{k_2}{2} \right)}{m^2 \varepsilon^2} \right)} \mathd k & \lesssim &
    \int_{[- \pi, \pi]^2} \frac{k_1^2 k_2^2}{\left( 1 +
    \frac{k_1^2}{\varepsilon^2} + \frac{k_2^2}{\varepsilon^2} \right)} \mathd
    k \lesssim \varepsilon^6 \int_{\left[ - \frac{\pi}{\varepsilon},
    \frac{\pi}{\varepsilon} \right]^2} \frac{k_1^2 k_2^2}{(1 + k_1^2 + k_2^2)}
    \mathd k \nonumber\\
    & \lesssim & \varepsilon^6 \int_{\left[ - \frac{\pi}{\varepsilon},
    \frac{\pi}{\varepsilon} \right]^2} k_1 k_2 \mathd k \lesssim \varepsilon^6
    \cdot \frac{1}{\varepsilon^2} \cdot \frac{1}{\varepsilon^2} \lesssim
    \varepsilon^2  ,\label{eq:ellepsilon}
  \end{eqnarray}
  and also
  \[ \int_{[- 2 \pi, 2 \pi]^2} \left( \sin^2 \left( \frac{k_1}{2} \right)
     \sin^2 \left( \frac{k_2}{2} \right) \right) \mathd k \lesssim 1. \]
  For $K_{>, 2}$ we have
  \begin{eqnarray}
    K_{>, 2} & \lesssim & \frac{1}{\varepsilon} \int_{| k_1 | <
    \frac{\pi}{\varepsilon}, | k_2 | > \frac{\pi}{\varepsilon}}
    \frac{\varepsilon \sin^2 \left( \frac{\varepsilon k_2}{2} \right) | k_1
    |^3}{k_2^2 \left( m^2 + \frac{4 \sin^2 \left( \frac{\varepsilon k_1}{2}
    \right)}{\varepsilon^2} + \frac{4 \sin^2 \left( \frac{\varepsilon k_2}{2}
    \right)}{\varepsilon^2} \right) \left( 1 + \frac{| k |^2}{\varepsilon^2}
    \right)} \mathd k + \nonumber\\
    &  & + \frac{1}{\varepsilon} \int_{| k_2 | > \frac{\pi}{\varepsilon}}
    \frac{1}{\left( 1 + \frac{| k_2 |^2}{\varepsilon^2} \right)} \mathd k_2
    \nonumber\\
    & \lesssim & \frac{1}{\varepsilon} \cdot \varepsilon^3 \cdot \log
    (\varepsilon) + \frac{1}{\varepsilon} \cdot \varepsilon \lesssim 1,
    \nonumber
  \end{eqnarray}
  and a similar estimate can be proved for $K_{>, 3}$. Since all the previous
  constants do not depend on $0 < \varepsilon \leqslant 1$ the theorem is
  proved.
\end{proof}

\begin{corollary}
  \label{corollary:green2}For any $m > 0$ and $C > \frac{2}{(4 \pi)^2}$ there
  is a constant $D^a_C$ independent of $0 < \varepsilon \leqslant 1$ such that
  \[ | \overline{\mathcal{G}}_{\varepsilon} (x, z) | \leqslant C \log_+ \left(
     \frac{1}{\sqrt{x^2 + z^2} \vee \varepsilon} \right) + D^a_C .\]
\end{corollary}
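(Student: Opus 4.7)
The plan is to combine Theorem \ref{eq:eepsilon} with the classical logarithmic bound of Proposition \ref{prop:estimate-green-classical}. By Theorem \ref{eq:eepsilon} we have $|\overline{\mathcal{G}}_\varepsilon(x,z) - \widetilde{\mathcal{G}}_\varepsilon(x,z)| \leqslant C_2$ uniformly in $\varepsilon \in (0,1]$, so it is enough to establish the claimed estimate for $\widetilde{\mathcal{G}}_\varepsilon$, with the additive constant $C_2$ absorbed into $D^a_C$.

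The key observation is that the Fourier symbol
\[ \frac{\sin^2(\varepsilon k_1/2)\sin^2(\varepsilon k_2/2)}{\varepsilon^4 k_1^2 k_2^2} \]
appearing in the definition of $\widetilde{\mathcal{G}}_\varepsilon$ is, up to a universal constant, the Fourier transform of a nonnegative kernel $h_\varepsilon$ on $\mathbb{R}^2$ which factorises as the tensor product of two one-dimensional tent functions, each obtained by self-convolving the rescaled indicator $\varepsilon^{-1}\mathbb{I}_{[-\varepsilon/2,\varepsilon/2]}$. A direct Fourier computation then gives
\[ \widetilde{\mathcal{G}}_\varepsilon(x,z) \;=\; \int_{\mathbb{R}^2} h_\varepsilon(z - z')\, \mathcal{G}(x, z')\, \mathd z', \]
with the properties $h_\varepsilon \geqslant 0$, $\tmop{supp}(h_\varepsilon) \subset [-\varepsilon,\varepsilon]^2$, $\|h_\varepsilon\|_{L^1} \lesssim 1$, together with the crucial scaling identity $h_\varepsilon(w) = \varepsilon^{-2} h_1(w/\varepsilon)$ for a fixed bounded density $h_1$ supported in $[-1,1]^2$.

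With this representation in hand, I split on $r := \sqrt{x^2+z^2}$ versus $\varepsilon$. In the large-scale regime $r \geqslant 3\varepsilon$, the triangle inequality yields $\sqrt{x^2+(z')^2} \geqslant r/2$ for every $z'$ in the translated support of $h_\varepsilon(z-\cdot)$; combining this with Proposition \ref{prop:estimate-green-classical} and $\|h_\varepsilon\|_{L^1} \lesssim 1$ gives $|\widetilde{\mathcal{G}}_\varepsilon(x,z)| \leqslant \tfrac{2}{(4\pi)^2}\,\log_+(1/r) + D'$, which is the claim since $r \vee \varepsilon = r$. In the short-scale regime $r < 3\varepsilon$, I use the scaling of $h_\varepsilon$ and change variables $z' = z + \varepsilon u$ to obtain
\[ \int h_\varepsilon(z-z')\,\log_+\!\bigl(1/\sqrt{x^2+(z')^2}\bigr)\,\mathd z' \;\leqslant\; \int h_1(-u)\,\log_+\!\bigl(1/|z+\varepsilon u|\bigr)\,\mathd u \;=\; |\log\varepsilon| + O(1), \]
where the denominator is bounded below by $|z+\varepsilon u|$ and the $O(1)$ is $-\int h_1(u)\log|c-u|\,\mathd u$ with $c = -z/\varepsilon \in [-3,3]^2$, which is uniformly bounded since the logarithmic singularity is integrable against a bounded density on a compact set. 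Because $r\vee\varepsilon = \varepsilon$ in this regime, this gives the desired bound, and adding the constant $C_1\|h_\varepsilon\|_{L^1}$ from Proposition \ref{prop:estimate-green-classical} concludes the proof.

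The main technical subtlety lies in the short-scale regime: a naive $L^\infty \cdot L^1$ estimate on the convolution $h_\varepsilon \ast_z |\mathcal{G}|$ would produce a coefficient strictly larger than $\tfrac{2}{(4\pi)^2}$ in front of the logarithm, which could violate the required bound for $C$ just above $\tfrac{2}{(4\pi)^2}$. The rescaling argument using $h_\varepsilon(w) = \varepsilon^{-2} h_1(w/\varepsilon)$ is precisely what recovers the correct logarithmic coefficient, by reducing the estimate to an $\varepsilon$-independent integral against the profile $h_1$.
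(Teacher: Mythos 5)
Your proof is correct and follows the same route as the paper's: reduce to $\widetilde{\mathcal{G}}_\varepsilon$ via Theorem \ref{eq:eepsilon}, identify $\widetilde{\mathcal{G}}_\varepsilon$ as a convolution $w_{0,\varepsilon}\ast w_{0,\varepsilon}\ast\mathcal{G}$ in the $z$-variable (your $h_\varepsilon$ is exactly this tent-function kernel), and apply Proposition \ref{prop:estimate-green-classical}. You simply fill in the convolution estimate that the paper's one-line proof leaves implicit, in particular the rescaling argument in the short-scale regime that recovers the sharp coefficient $\tfrac{2}{(4\pi)^2}$.
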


\begin{proof}
  This is a consequence of Theorem \ref{eq:eepsilon}, and the fact that, using
  Proposition \ref{prop:estimate-green-classical}, we get that
  \[ \widetilde{\mathcal{G}}_{\varepsilon} (x, z) = (w_{0, \varepsilon} \ast
     w_{0, \varepsilon} \ast \mathcal{G}) (x, z) \leqslant - \frac{2}{(4
     \pi)^2} \log \left( \sqrt{x^2 + z^2} \vee \varepsilon \right) + K, \]
  where $w_{0, \varepsilon} (z) = \frac{1}{\varepsilon^2} \mathbb{I}_{\left( -
  \frac{1}{2}, \frac{1}{2} \right]^2} (\varepsilon^{- 1} \cdot)$, for some
  constant $K > 0$ independent of $0 < \varepsilon \leqslant 1$.
\end{proof}

\subsection{Stochastic estimates}

From the estimate of Theorem \ref{eq:eepsilon} we get the following results.

\begin{lemma}
  \label{Lemma:estimate-wick-green}For any $a > 0$, $\beta \in (0, 1)$, $C >
  \frac{2}{(4 \pi)^2}$, $i \geqslant 0$ and $0 \leqslant j \leqslant
  J_{\varepsilon}$ we have
  \begin{equation}
    \begin{array}{ll}
      & \frac{1}{n!} \mathbb{E} \left[ \left( \int_{\mathbb{R}^2 \times
      \varepsilon \mathbb{Z}^2} E^{a, \beta}_{i, j} (x, z) : W_{\varepsilon}
      (x, z)^n : \mathd x \mathd z \right)^2 \right]\\
      \lesssim & \int_{\mathbb{R}^4 \times \varepsilon \mathbb{Z}^4} E^{a,
      \beta}_{i, j} (x, z) E^{a, \beta}_{i, j} (x', z') \left( D_C + C \log_+
      \left( \frac{1}{\sqrt{| x - x' |^2 + | z - z' |^2} \vee \varepsilon}
      \right) \right)^n \mathd x \mathd z \mathd x' \mathd z'
    \end{array} \label{eq:EAB1}
  \end{equation}
  
  \begin{equation}
    \begin{array}{ll}
      & \frac{1}{n!} \mathbb{E} \left[ \left( \int_{\mathbb{R}^2 \times
      \varepsilon \mathbb{Z}^2} \tilde{E}^{a, \beta}_i (x) : W_{\varepsilon}
      (x, z)^n : \mathd x \right)^2 \right]\\
      \lesssim & \int_{\mathbb{R}^4} \tilde{E}^{a, \beta}_i (x) \tilde{E}^{a,
      \beta}_i (x') \left( D_C + C \log_+ \left( \frac{1}{\sqrt{| x - x' |^2}
      \vee \varepsilon} \right) \right)^n \mathd x \mathd x'
    \end{array} \label{eq:tildeE1}
  \end{equation}
  \begin{equation}
    \begin{array}{ll}
      & \frac{1}{n!} \mathbb{E} \left[ \left( \int_{\mathbb{R}^2 \times
      \varepsilon \mathbb{Z}^2} \tilde{E}^{a, \beta}_{J_{\varepsilon}} (z) :
      W_{\varepsilon} (x, z)^n : \mathd z \right)^2 \right]\\
      \lesssim & \int_{\varepsilon \mathbb{Z}^4} \tilde{E}^{a, \beta}_i (z) \tilde{E}^{a,
      \beta}_i (z') \left( D_C + C \log_+ \left( \frac{1}{\sqrt{| z - z' |^2}
      \vee \varepsilon} \right) \right)^n \mathd z \mathd z',
    \end{array} \label{eq:tildeE2}
  \end{equation}
  where $E^{a, \beta}_{i, j}$ and $\tilde{E}^{a, \beta}_i$ are defined in
  equations {\eqref{eq:definitionE}} and equation {\eqref{eq:definitionE1}}, and the constants hidden in the symbols $\lesssim$ do not depend on $n\in\mathbb{N}$.
\end{lemma}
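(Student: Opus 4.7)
The central idea is to reduce all three inequalities to the Wick isometry for the Gaussian field $W_\varepsilon$ (whose covariance is $\mathcal{G}_\varepsilon$), combined with the pointwise logarithmic bound on $\mathcal{G}_\varepsilon$ from Theorem~\ref{theorem:green1}. Recall that for any centered Gaussian field $X$ with covariance kernel $K$, the standard Wick pairing formula gives
\[
\mathbb{E}\bigl[{:}X(y)^n{:}\,{:}X(y')^n{:}\bigr] = n!\,K(y,y')^n,
\]
so for any test function $\phi$,
\[
\frac{1}{n!}\,\mathbb{E}\Bigl[\Bigl(\textstyle\int \phi(y)\,{:}X(y)^n{:}\,dy\Bigr)^2\Bigr] = \int\!\!\!\int \phi(y)\,\phi(y')\,K(y,y')^n\,dy\,dy'.
\]

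For \eqref{eq:EAB1}, I would apply this identity with $X = W_\varepsilon$, $K = \mathcal{G}_\varepsilon$, $\phi = E_{i,j}^{a,\beta}$ and $y = (x,z) \in \mathbb{R}^2 \times \varepsilon\mathbb{Z}^2$, to get
\[
\frac{1}{n!}\,\mathbb{E}\Bigl[\Bigl(\textstyle\int E_{i,j}^{a,\beta}(x,z)\,{:}W_\varepsilon(x,z)^n{:}\,dx\,dz\Bigr)^2\Bigr] = \int\!\!\!\int E_{i,j}^{a,\beta}(x,z)\,E_{i,j}^{a,\beta}(x',z')\,\mathcal{G}_\varepsilon(x-x',z-z')^n\,dx\,dz\,dx'\,dz'.
\]
Since $E_{i,j}^{a,\beta} \geq 0$ and $|\mathcal{G}_\varepsilon^n| = |\mathcal{G}_\varepsilon|^n$, I majorize $\mathcal{G}_\varepsilon^n$ by $|\mathcal{G}_\varepsilon|^n$ inside the integral, and then apply Theorem~\ref{theorem:green1} pointwise (for any chosen $C > 2/(4\pi)^2$, with the corresponding constant $D_C$) to bound
\[
|\mathcal{G}_\varepsilon(x-x',z-z')|^n \leq \Bigl(D_C + C\log_+\bigl(\tfrac{1}{\sqrt{|x-x'|^2+|z-z'|^2}\vee\varepsilon}\bigr)\Bigr)^n,
\]
which yields \eqref{eq:EAB1}. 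The constants produced are uniform in $0<\varepsilon\leq 1$ because the bound in Theorem~\ref{theorem:green1} is.

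The proofs of \eqref{eq:tildeE1} and \eqref{eq:tildeE2} are entirely analogous: the same Wick identity is applied to test functions depending on only one of the two two-dimensional variables, with the other held fixed. For \eqref{eq:tildeE1} the relevant covariance is $\mathcal{G}_\varepsilon(x-x',0)$, and for \eqref{eq:tildeE2} it is $\mathcal{G}_\varepsilon(0,z-z')$; in both cases Theorem~\ref{theorem:green1} still provides the desired pointwise logarithmic bound.

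The argument is routine: there is no real obstacle, since both ingredients (the Wick isometry and the Green's function estimate) are in hand. The only point that deserves a line of care is that $\mathcal{G}_\varepsilon^n$ need not be pointwise nonnegative when $n$ is odd, but this is absorbed harmlessly by passing to $|\mathcal{G}_\varepsilon|^n$, a step justified by the positivity of the Gaussian weights $E_{i,j}^{a,\beta}$ and $\tilde{E}_i^{a,\beta}$.
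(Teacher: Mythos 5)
Your proposal is correct and follows the same route as the paper: expand the square, apply Fubini and the Wick pairing identity $\mathbb{E}[{:}W_\varepsilon(x,z)^n{:}\,{:}W_\varepsilon(x',z')^n{:}] = n!\,\mathcal{G}_\varepsilon(x-x',z-z')^n$, then bound $|\mathcal{G}_\varepsilon|$ pointwise via Theorem~\ref{theorem:green1}, treating the mixed cases by freezing one variable. Your explicit remark about absorbing the possible sign of $\mathcal{G}_\varepsilon^n$ for odd $n$ through positivity of the weights is a sensible point of care that the paper leaves implicit.
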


\begin{proof}
  By definition and Fubini's theorem
  \begin{eqnarray*}
    &  & \mathbb{E} \left[ \left( \int_{\mathbb{R}^2 \times \varepsilon
    \mathbb{Z}^2} E^{a, \beta}_{i, j} (x, z) : W_{\varepsilon} (x, z)^n :
    \mathd x \mathd z \right)^2 \right]\\
    & = & \int_{\mathbb{R}^4 \times \varepsilon \mathbb{Z}^4} E^{a,
    \beta}_{i, j} (x, z) E^{a, \beta}_{i, j} (x', z') \mathbb{E} [:
    W_{\varepsilon} (x, z)^n : : W_{\varepsilon} (x', z')^n :] \mathd x \mathd
    z \mathd x' \mathd z' .
  \end{eqnarray*}
  Now by Wick's theorem we have
  \[ \mathbb{E} [: W_{\varepsilon} (x, z)^n : : W_{\varepsilon} (x', z')^n :]
     = n!\mathbb{E} [W_{\varepsilon} (x, z) W_{\varepsilon} (x', z')]^n = n!
     [\mathcal{G}_{\varepsilon} (x - x', z - z')]^n . \]
  Thus, inequality {\eqref{eq:EAB1}} follows from Theorem
  \ref{theorem:green1}. Using the fact that
  \[ \mathbb{E} \left[ \left( \int_{\mathbb{R}^2} \tilde{E}^{a, \beta}_i (x) :
     W_{\varepsilon} (x, z)^n : \mathd x \right)^2 \right] =
     \int_{\mathbb{R}^4} \tilde{E}^{a,
     \beta}_i (x) \tilde{E}^{a, \beta}_i (x') \mathbb{E} [: W_{\varepsilon}
     (x, z)^n : : W_{\varepsilon} (x', z)^n :] \mathd x \mathd x' \]
  \[ \mathbb{E} \left[ \left( \int_{\varepsilon\mathbb{Z}^2} \tilde{E}^{a,
     \beta}_{J_{\varepsilon}} (z) : W_{\varepsilon} (x, z)^n :  \mathd
     z \right)^2 \right] = \int_{\varepsilon \mathbb{Z}^4}
     \tilde{E}^{a, \beta}_{J_{\varepsilon}} (z) \tilde{E}^{a,
     \beta}_{J_{\varepsilon}} (z') \mathbb{E} [: W_{\varepsilon} (x, z)^n : :
     W_{\varepsilon} (x, z')^n :] \mathd z \mathd z', \]
  and again Theorem \ref{theorem:green1}, we get the thesis.
\end{proof}

\begin{lemma}
  \label{Lemma:estimate-wick-green2}Under the same hypothesis of Lemma
  \ref{Lemma:estimate-wick-green}, inequalities {\eqref{eq:EAB1}},
  {\eqref{eq:tildeE1}} and {\eqref{eq:tildeE2}} hold by replacing
  $W_{\varepsilon}$ by $\bar{W}_{\varepsilon}$ and the integration on
  $\mathbb{R}^2 \times \varepsilon \mathbb{Z}^2$ and by integration on
  $\mathbb{R}^4$.
\end{lemma}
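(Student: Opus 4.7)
The plan is to follow the argument of Lemma \ref{Lemma:estimate-wick-green} verbatim, only swapping the discrete field $W_{\varepsilon}$ for its extension $\bar{W}_{\varepsilon}$, the integration domain $\mathbb{R}^{2}\times\varepsilon\mathbb{Z}^{2}$ for $\mathbb{R}^{4}$, and the discrete Green's function $\mathcal{G}_{\varepsilon}$ for $\overline{\mathcal{G}}_{\varepsilon}$. All the structural ingredients (Fubini, Wick's theorem) are insensitive to this change; the only genuine analytic input is a pointwise bound on the covariance of $\bar{W}_{\varepsilon}$, which is exactly what Corollary \ref{corollary:green2} provides.

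First I will record the two preliminaries. Since $\bar{W}_{\varepsilon}=\overline{\mathcal{E}}^{\varepsilon}(W_{\varepsilon})$ is a jointly measurable centered Gaussian field on $\mathbb{R}^{4}$, its covariance is $\mathbb{E}[\bar{W}_{\varepsilon}(x,z)\bar{W}_{\varepsilon}(x',z')]=\overline{\mathcal{G}}_{\varepsilon}(x-x',z-z')$, as recorded in Section \ref{section:setting}. The Wick powers $:\bar{W}_{\varepsilon}^{n}:$ are then defined in the standard way (equivalently via the series representation in Lemma \ref{lemma:series} applied with $\bar{W}_{\varepsilon}$ in place of $W_{T}$), and Wick's isometry gives
\[
\mathbb{E}\bigl[:\!\bar{W}_{\varepsilon}(x,z)^{n}\!:\;:\!\bar{W}_{\varepsilon}(x',z')^{n}\!:\bigr]
= n!\,\bigl[\overline{\mathcal{G}}_{\varepsilon}(x-x',z-z')\bigr]^{n}.
\]

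Next I apply Fubini to each of the three second moments. For the analog of \eqref{eq:EAB1} this yields
\[
\frac{1}{n!}\mathbb{E}\!\left[\left(\int_{\mathbb{R}^{4}}E_{i,j}^{a,\beta}(x,z):\!\bar{W}_{\varepsilon}(x,z)^{n}\!:\mathrm{d}x\,\mathrm{d}z\right)^{2}\right]
=\int_{\mathbb{R}^{8}}E_{i,j}^{a,\beta}(x,z)E_{i,j}^{a,\beta}(x',z')\bigl[\overline{\mathcal{G}}_{\varepsilon}(x-x',z-z')\bigr]^{n}\mathrm{d}x\,\mathrm{d}z\,\mathrm{d}x'\,\mathrm{d}z',
\]
and identical identities hold for the two one-dimensional analogs of \eqref{eq:tildeE1} and \eqref{eq:tildeE2}. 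Inserting the pointwise bound
\[
|\overline{\mathcal{G}}_{\varepsilon}(x-x',z-z')|\le C\log_{+}\!\left(\frac{1}{\sqrt{|x-x'|^{2}+|z-z'|^{2}}\vee\varepsilon}\right)+D_{C}^{a}
\]
from Corollary \ref{corollary:green2} (and its obvious one-variable specializations, obtained by setting the missing coordinate to zero) produces the required inequalities for $\bar{W}_{\varepsilon}$.

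Conceptually there is no obstacle beyond those already dealt with in Section 3.3: the logarithmic bound on $\overline{\mathcal{G}}_{\varepsilon}$ was the whole point of establishing Theorem \ref{eq:eepsilon} and Corollary \ref{corollary:green2}, precisely so that Gaussian chaos estimates for the extended field enjoy the same qualitative form as those for the lattice field. Thus the only thing to verify carefully is that $\bar{W}_{\varepsilon}$ really has $\overline{\mathcal{G}}_{\varepsilon}$ as its covariance; this is a direct computation from the definition $\bar{W}_{\varepsilon}=\overline{\mathcal{E}}^{\varepsilon}(W_{\varepsilon})$ together with the fact that $\overline{\mathcal{E}}^{\varepsilon}=I\otimes\mathcal{E}^{\varepsilon}$ convolves in the $z$ variable with the indicator $\mathbb{I}_{Q_{\varepsilon}(0)}$, which on the Fourier side yields exactly the factor $\varepsilon^{-4}k_{1}^{-2}k_{2}^{-2}\sin^{2}(\varepsilon k_{1}/2)\sin^{2}(\varepsilon k_{2}/2)$ appearing in $\overline{\mathcal{G}}_{\varepsilon}$. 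Everything else is a mechanical repetition of the proof of Lemma \ref{Lemma:estimate-wick-green}.
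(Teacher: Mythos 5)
Your proposal is correct and takes essentially the same route as the paper: apply Fubini and Wick's theorem exactly as in Lemma \ref{Lemma:estimate-wick-green}, with the covariance now given by $\overline{\mathcal{G}}_{\varepsilon}$, and then invoke Corollary \ref{corollary:green2} in place of Theorem \ref{theorem:green1} for the logarithmic bound. The paper states this substitution in one sentence; you have simply spelled out the details.
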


\begin{proof}
  The proof is the similar to the one of Lemma \ref{Lemma:estimate-wick-green}
  where the role of Theorem \ref{theorem:green1} is replaced by Corollary
  \ref{corollary:green2}.
\end{proof}

\begin{theorem}
  \label{theorem:stochasticestimates1}Fix $\alpha \in \mathbb{R}$ such that
  $\frac{\alpha^2}{(4 \pi)^2} < 2$, then for any $0 < \varepsilon \leqslant
  1$, $a \in \mathbb{R}_+$ and $0 < \beta < 1$, $p \geqslant 2$ such that
  $\frac{\alpha^2}{(4 \pi)^2} (p - 1) < 2$, any $\delta \geqslant 0$ and $s_1,
  s_2 > \frac{\delta}{2}$ such that $s_1 + s_2 - \delta > \frac{\alpha^2}{(4
  \pi)^2} (p - 1)$ and $s_1\vee s_2 \leq \frac{\delta}{2}+2$ we have
  \begin{equation}
    \mathbb{E} [| E_{i, j}^{a, \beta} \asterisk \mu_{\varepsilon}^{\alpha} |^p
    (x)] =\mathbb{E} [| E_{i, j}^{a, \beta} \asterisk
    \mu_{\varepsilon}^{\alpha} |^p (0)] \leqslant C_{\alpha, a, \beta} 2^{p
    \left[ \left( s_1 - \frac{\delta}{2} \right) i + \left( s_2 -
    \frac{\delta}{2} \right) j \right]} \label{eq:stochasticmu1}
  \end{equation}
  \begin{equation}
    \mathbb{E} [| E_{i, j}^{a, \beta} \asterisk
    \bar{\mu}_{\varepsilon}^{\alpha} |^p (x)] =\mathbb{E} [| E_{i, j}^{a,
    \beta} \asterisk \bar{\mu}_{\varepsilon}^{\alpha} |^p (0)] \leqslant
    \bar{C}_{\alpha, a, \beta} 2^{p \left[ \left( s_1 - \frac{\delta}{2}
    \right) i + \left( s_2 - \frac{\delta}{2} \right) j \right]},
    \label{eq:stochasticmu2}
  \end{equation}
  where $C_{\alpha, a, \beta}, \bar{C}_{\alpha, a, \beta} > 0$ are some
  constants depending only on $\alpha, a$ and $\beta$ and not on $0 <
  \varepsilon \leqslant 1$.
\end{theorem}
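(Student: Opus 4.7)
The plan is to reduce to a pointwise estimate at the origin by translation invariance, collapse the full $L^p(\Omega)$-moment of $\mu^\alpha_\varepsilon$ into a single 2-point function via the Wick expansion, Nelson's hypercontractivity, and a Cauchy--Schwarz summation of the chaos series, and then extract the desired $(i,j)$-dependent decay by an anisotropic rescaling. By translation invariance of $\xi_\varepsilon$ (hence of $W_\varepsilon$ and $\mu^\alpha_\varepsilon$), the quantity $\mathbb{E}[|E^{a,\beta}_{i,j}\asterisk\mu^\alpha_\varepsilon|^p(x)]$ is independent of $x$, so I evaluate at $x=0$. Expanding $\mu^\alpha_\varepsilon=\sum_n\frac{\alpha^n}{n!}{:}W^n_\varepsilon{:}$ via Lemma~\ref{lemma:series}, applying Minkowski in $L^p(\Omega)$, and using Nelson's hypercontractivity $\|{:}W^n_\varepsilon{:}\|_{L^p(\Omega)}\leq(p-1)^{n/2}\|{:}W^n_\varepsilon{:}\|_{L^2(\Omega)}$ (valid for $p\geq 2$), I obtain
\[
\mathbb{E}[|E^{a,\beta}_{i,j}\asterisk\mu^\alpha_\varepsilon|^p(0)]^{1/p}\leq \sum_{n\geq 0}\frac{|\alpha|^n(p-1)^{n/2}}{\sqrt{n!}}\sqrt{I_n},\qquad I_n:=\frac{1}{n!}\,\mathbb{E}\bigl[|E^{a,\beta}_{i,j}\asterisk{:}W^n_\varepsilon{:}|^2(0)\bigr].
\]
By Lemma~\ref{Lemma:estimate-wick-green}, for any $C>2/(4\pi)^2$ one has $I_n\leq \int E^{a,\beta}_{i,j}(-y)E^{a,\beta}_{i,j}(-y')[D_C+C\log_+(1/(|y-y'|\vee\varepsilon))]^n\,dy\,dy'$. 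A Cauchy--Schwarz with geometric weights $c_n=2^{-\gamma n}$ ($\gamma>0$ arbitrarily small) sums the series: $\sum_n c_n$ is bounded and $\sum_n c_n^{-1}\alpha^{2n}(p-1)^n I_n/n!\leq \int E\,E\,\exp(2^\gamma\alpha^2(p-1)(D_C+C\log_+))$, yielding (after absorbing the constant $e^{2^\gamma\alpha^2(p-1)D_C}$)
\[
\mathbb{E}[|E^{a,\beta}_{i,j}\asterisk\mu^\alpha_\varepsilon|^p(0)]^{2/p}\lesssim \int E^{a,\beta}_{i,j}(-y)E^{a,\beta}_{i,j}(-y')\,(|y-y'|\vee\varepsilon)^{-\alpha^2(p-1)C'}\,dy\,dy',
\]
where $C'=2^\gamma C$ can be taken arbitrarily close to (but strictly above) $2/(4\pi)^2$.

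Next I split this power-law singularity anisotropically. Writing $y=(x,z)\in\mathbb{R}^2\times\varepsilon\mathbb{Z}^2$ and applying Young's inequality $|y|^2=|x|^2+|z|^2\geq|x|^{2\theta_1}|z|^{2\theta_2}$ (for any $\theta_1,\theta_2\geq 0$ with $\theta_1+\theta_2=1$), extended to the $\vee\varepsilon$-version $(|y|\vee\varepsilon)\geq(|x|\vee\varepsilon)^{\theta_1}(|z|\vee\varepsilon)^{\theta_2}$, gives
\[
(|y-y'|\vee\varepsilon)^{-\alpha^2(p-1)C'}\leq (|x-x'|\vee\varepsilon)^{-\theta_1\alpha^2(p-1)C'}(|z-z'|\vee\varepsilon)^{-\theta_2\alpha^2(p-1)C'}.
\]
Rescaling $u=2^i x$, $v=2^j z$ cancels the $2^{2i+2j}$ prefactor of $E^{a,\beta}_{i,j}$ against the Jacobians (the profile becomes $\tilde E(u,v)=\exp(-a(1+|u|^2+|v|^2)^{\beta/2})$) and extracts factors $2^{i\theta_1\alpha^2(p-1)C'}$ and $2^{j\theta_2\alpha^2(p-1)C'}$ from the two distance terms. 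The remaining integral factorizes into two 2D integrals of the form $\int\int\tilde E\,\tilde E\,(|u_1-u_2|\vee 2^i\varepsilon)^{-\theta_1\alpha^2(p-1)C'}\,du_1\,du_2$ (and analogously in $v$), each of which is finite uniformly in $\varepsilon$ provided $\theta_l\alpha^2(p-1)C'<2$, the 2D Riesz integrability threshold.

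Finally, choosing $\theta_l=(s_l-\delta/2)/(s_1+s_2-\delta)$ and $C'$ so that $\alpha^2(p-1)C'/2=s_1+s_2-\delta$---permissible precisely because the hypothesis $s_1+s_2-\delta>\alpha^2(p-1)/(4\pi)^2$ allows $C'>2/(4\pi)^2$---aligns the extracted powers with $2^{2(s_l-\delta/2)(i\text{ or }j)}$, and raising to the $p/2$-th power produces the target $2^{p[(s_1-\delta/2)i+(s_2-\delta/2)j]}$. Since the bound is monotone in $s_1,s_2$, without loss of generality I may assume $s_l-\delta/2<1$, which guarantees $\theta_l\alpha^2(p-1)C'<2$ and hence 2D Riesz convergence. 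The argument for $\bar\mu^\alpha_\varepsilon$ is identical, with $W_\varepsilon$, $\mathcal G_\varepsilon$, Theorem~\ref{theorem:green1}, and Lemma~\ref{Lemma:estimate-wick-green} replaced by $\bar W_\varepsilon$, $\overline{\mathcal G}_\varepsilon$, Corollary~\ref{corollary:green2}, and Lemma~\ref{Lemma:estimate-wick-green2}, and the integration domain $\mathbb{R}^2\times\varepsilon\mathbb{Z}^2$ replaced by $\mathbb{R}^4$. The main technical obstacle is the simultaneous fine-tuning of the three parameters $\gamma$, $C$, and $(\theta_1,\theta_2)$: the strict hypothesis $s_1+s_2-\delta>\alpha^2(p-1)/(4\pi)^2$ is exactly the margin required to carry the Cauchy--Schwarz loss $2^\gamma$ and the Green's-function slack $C>2/(4\pi)^2$ while still matching the asymmetric target Besov exponents.
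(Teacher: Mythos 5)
Your proof is correct and follows essentially the same route as the paper's. Both arguments expand $\mu^\alpha_\varepsilon$ in Wick chaos, invoke Lemma~\ref{Lemma:estimate-wick-green} together with the discrete Green's function bound (Theorem~\ref{theorem:green1}) to obtain a power-law singularity of exponent slightly above $\frac{2\alpha^2(p-1)}{(4\pi)^2}$, split that singularity anisotropically via a weighted AM--GM $(|x|^{\theta_1}|z|^{\theta_2}\le|y|)$, and then rescale $(x,z)\mapsto(2^i x,2^j z)$ so that the $2^{2(i+j)}$ prefactors of $E^{a,\beta}_{i,j}$ cancel the Jacobian and the desired $2^{p[(s_1-\delta/2)i+(s_2-\delta/2)j]}$ drops out. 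The one structural difference is organizational: the paper first proves the $p=2$ case and then reduces $p>2$ to it by the substitution $\alpha\mapsto\gamma\alpha\sqrt{p-1}$ (a weighted Cauchy--Schwarz with geometric ratio $\gamma>1$), whereas you apply Minkowski, hypercontractivity, and weighted Cauchy--Schwarz with geometric weights $c_n=2^{-\gamma n}$ to the chaos series up front, arriving in a single pass at the $L^1$ estimate $\int E\,E\,\exp\bigl(2^\gamma\alpha^2(p-1)(D_C+C\log_+)\bigr)$. These two reductions are algebraically identical. Two small points in your favour: you make explicit the Riesz integrability condition $\theta_l\alpha^2(p-1)C'<2$ and how the WLOG reduction $s_l-\delta/2<1$ guarantees it, and your final bookkeeping of $\theta_l$ and $C'$ correctly tracks the factor of $2$ in the exponent that the paper's $p=2$ passage slightly garbles (the paper requires $(C+\kappa)\alpha^2<s_1+s_2-\delta$ instead of $<2(s_1+s_2-\delta)$, an apparent slip since they'd then need the stronger hypothesis $s_1+s_2-\delta>\frac{2\alpha^2}{(4\pi)^2}$, whereas the theorem only assumes $s_1+s_2-\delta>\frac{\alpha^2}{(4\pi)^2}(p-1)$).
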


\begin{proof}
  We provide a detailed proof only for inequality {\eqref{eq:stochasticmu1}}
  the proof of {\eqref{eq:stochasticmu2}} being completely analogous where the
  roles of Theorem \ref{theorem:green1} and Lemma
  \ref{Lemma:estimate-wick-green} are played by Corollary
  \ref{corollary:green2} and Lemma \ref{Lemma:estimate-wick-green2}.
  
  First we consider the case $p = 2$. We have that
  \begin{eqnarray}
    \mathbb{E} [| E_{i, j}^{a, \beta} \asterisk \mu_{\varepsilon}^{\alpha}
    |^2] & = & \mathbb{E} \left[ \left( \int_{\mathbb{R}^2 \times \varepsilon
    \mathbb{Z}^2} E^{a, \beta}_{i, j} (x, z) \mu^{\alpha}_{\varepsilon}
    (\mathd x, \mathd z) \right)^2 \right] \nonumber\\
    & = & \mathbb{E} \left[ \left( \sum_{n = 0}^{+ \infty} \int_{\mathbb{R}^2
    \times \varepsilon \mathbb{Z}^2} \frac{\alpha^n}{n!} E^{a, \beta}_{i, j}
    (x, z) : W (x, z)^n : \mathd x \mathd z \right)^2 \right] \nonumber\\
    & = & \sum_{n = 0}^{+ \infty} \frac{\alpha^{2 n}}{(n!)^2} \mathbb{E}
    \left[ \left( \int_{\mathbb{R}^2 \times \varepsilon \mathbb{Z}^2} E^{a,
    \beta}_{i, j} (x, z) : W (x, z)^n : \mathd x \mathd z \right)^2 \right]
    \nonumber\\
    \tmop{Lemma} \ref{Lemma:estimate-wick-green} & \leqslant &
    \int_{\mathbb{R}^4 \times \varepsilon \mathbb{Z}^4} E^{a, \beta}_{i, j}
    (x, z) E^{a, \beta}_{i, j} (x', z') \nonumber\\
    &  & \times \exp \left( \alpha^2 D_C + C \alpha^2 \log_+ \left(
    \frac{1}{\sqrt{| x - x' |^2 + | z - z' |^2} \vee \varepsilon} \right)
    \right) \mathd x \mathd z \mathd x' \mathd z' \nonumber\\
    & \lesssim & \int_{\mathbb{R}^8} E^{a, \beta}_{i, j} (x, z) E^{a,
    \beta}_{i, j} (x', z') \exp (\alpha^2 D_C) \nonumber\\
    &  & \times \left( 1 + \left( \frac{1}{\sqrt{| x - x' |^2 + | z - z' |^2}
    \vee \varepsilon} \right)^{(C + \kappa) \alpha^2} \right) \mathd x \mathd
    z \mathd x' \mathd z' \nonumber
,  \end{eqnarray}
  where $\kappa$ is any positive number, the constant in $\lesssim$ depends
  only on $a, \beta$ and $\kappa$, we used the orthogonality of the Wick
  monomials $: W_{\varepsilon} (x, z)^n :$ of different degrees, the fact that
  for any $g \in L^1 (\mathbb{R}^2 \times \varepsilon \mathbb{Z}^2) \cap C^0
  (\mathbb{R}^2 \times \varepsilon \mathbb{Z}^2)$ we have
  \[ \mathbb{E} \left[ \left( \int_{\mathbb{R}^2 \times \varepsilon
     \mathbb{Z}^2} g (x, z) : W_{\varepsilon} (x, z)^n : \mathd x \mathd z
     \right)^2 \right] = n! \int_{\mathbb{R}^4 \times \varepsilon
     \mathbb{Z}^4} g (x, z) g (x', z') (\mathcal{G}_{\varepsilon} (x - x', z -
     z'))^n \mathd x \mathd z \mathd x' \mathd z', \]
  and we used Theorem \ref{theorem:green1} in the last two steps.\\
  
  For any $0 < \theta < 1$ we have $| x |^{\theta} | z |^{1 - \theta}
  \leqslant \sqrt{| x |^2 + | z |^2} \leq  \sqrt{| x |^2 + | z |^2} \vee \varepsilon $ which implies that
  \[ \frac{1}{\sqrt{| x |^2 + | z |^2} \vee \varepsilon} \leq  \frac{1}{\sqrt{| x
     |^2 + | z |^2}} \leqslant \frac{1}{| x |^{\theta} | z |^{1 - \theta}} .
  \]
  From this it follows that
  \begin{align}\label{eq:estimate-diagonal}
    &\mathbb{E} [| E_{i, j}^{a, \beta} \asterisk \mu_{\varepsilon}^{\alpha}
    |^2] \\\lesssim& \int_{\mathbb{R}^8} E^{a, \beta}_{i, j} (x, z) E^{a,
    \beta}_{i, j} (x', z') e^{\alpha^2 D_C} (1 + (| x - x' |^{\theta} | z - z'
    |^{1 - \theta})^{- (C + \kappa) \alpha^2}) \mathd x \mathd z \mathd x'
    \mathd z' \nonumber\\
    \lesssim& 2^{(i \theta + j (1 - \theta)) (C + \kappa) \alpha^2}
    \int_{\mathbb{R}^8} E^{a, \beta}_{0, 0} (x, z) E^{a, \beta}_{0, 0} (x',
    z') e^{\alpha^2 D_C} (1 + (| x - x' |^{\theta} | z - z' |^{1 - \theta})^{-
    (C + \kappa) \alpha^2}) \mathd x \mathd z \mathd x' \mathd z' . \nonumber
  \end{align}
  
  If we choose $C > \frac{2}{(4 \pi)^2}$ and $\kappa > 0$ small enough such
  that $\frac{2 \alpha^2}{(4 \pi)^2} < (C + \kappa) \alpha^2 < 2(s_1 + s_2 -
  \delta)$ and if we choose $0 < \theta < 1$ such that
  \[ \theta (C + \kappa) \alpha^2 < 2\left(s_1 - \frac{\delta}{2}\right)<2 \quad (1 - \theta)
     (C + \kappa) \alpha^2 < 2\left(s_2 - \frac{\delta}{2}\right)<2 \]
     (observe that this also implies that the integrands in \eqref{eq:estimate-diagonal} are integrable on the diagonal),
  we get the thesis for $p = 2$.\\
  
  Let us consider $p > 2$, and consider any $\gamma > 1$, then, using
  hypercontractivity of Gaussian Wick monomials (see Chapter IV, Theorem 1 of
  {\cite{Ustunel1995}}), i.e.
  \[ \mathbb{E} [(: W^n_{\varepsilon} :)^p] \leqslant \left( \sqrt{p - 1}
     \right)^n (\mathbb{E} [: W^n_{\varepsilon} :^2])^{\frac{p}{2}}, \]
  we have
  \begin{eqnarray}
    (\mathbb{E} [| E_{i, j}^{a, \beta} \asterisk \mu_{\varepsilon}^{\alpha}
    |^p])^{\frac{1}{p}} & \leqslant & \sum_{n = 0}^{+ \infty} \frac{| \alpha
    |^n}{n!} (\mathbb{E} [| E_{i, j}^{a, \beta} \asterisk : W_{\varepsilon}^n
    : |^p])^{\frac{1}{p}} \nonumber\\
    & \lesssim & \sum_{n = 0}^{+ \infty} \frac{\left( | \alpha | \sqrt{p - 1}
    \right)^n}{n!} (\mathbb{E} [| E_{i, j}^{a, \beta} \asterisk :
    W_{\varepsilon}^n : |^2])^{\frac{1}{2}} \nonumber\\
    & \lesssim & \left( \frac{\gamma^2}{\gamma^2 - 1} \right)^{\frac{1}{2}}
    \left( \sum_{n = 0}^{+ \infty} \frac{\gamma^{2n} \alpha^{2 n} (p -
    1)^n}{(n!)^2} \mathbb{E} [| E_{i, j}^{a, \beta} \asterisk :
    W_{\varepsilon}^n : |^2] \right)^{\frac{1}{2}} \nonumber\\
    & \lesssim & \left( \frac{\gamma^2}{\gamma^2 - 1} \right)^{\frac{1}{2}}
    \left( \mathbb{E} \left[ \left| E_{i, j}^{a, \beta} \asterisk
    \mu_{\varepsilon}^{\gamma \alpha \sqrt{(p - 1)}} \right|^2 \right]
    \right)^{\frac{1}{2}} . \nonumber
  \end{eqnarray}
  If we choose $\gamma > 1$ small enough such that $s_1 + s_2 - \delta >
  \frac{2 \alpha^2 \gamma^2 (p - 1)}{(4 \pi)^2}$ we get the thesis from the
  first part of the proof where $p = 2$.
\end{proof}

\begin{theorem}
  \label{label:thm-estimate-chaos}Fix $\alpha \in \mathbb{R}$ such that
  $\frac{\alpha^2}{(4 \pi)^2} < 2$, then for any $0 < \varepsilon \leqslant
  1$, $a \in \mathbb{R}_+$ and $0 < \beta < 1$, $p \geqslant 2$ such that
  $\frac{\alpha^2}{(4 \pi)^2} (p - 1) < 2$, for any $s > \frac{\alpha^2}{(4
  \pi)^2} (p - 1)$ we have
  \begin{equation}
    \mathbb{E} [| E_{i, j}^{a, \beta} \asterisk \mu_{\varepsilon}^{\alpha}
    |^p] \leqslant C'_{\alpha, a, \beta} 2^{p  (i\wedge j) s}
    \label{eq:stochasticmumax1}
  \end{equation}
  \begin{equation}
    \mathbb{E} [| \tilde{E}_i^{a, \beta} \asterisk_x
    \mu_{\varepsilon}^{\alpha} |^p] \leqslant C'_{\alpha, a, \beta} 2^{p i s},
    \quad \mathbb{E} [| \tilde{E}_{J_{\varepsilon}}^{a, \beta} \asterisk_z
    \mu_{\varepsilon}^{\alpha} |^p] \leqslant C'_{\alpha, a, \beta} 2^{p
    J_{\varepsilon} s}
  \end{equation}
  \begin{equation}
    \mathbb{E} [| E_{i, j}^{a, \beta} \asterisk
    \bar{\mu}_{\varepsilon}^{\alpha} |^p] \leqslant \bar{C}'_{\alpha, a,
    \beta} 2^{p (i\wedge j) s} \label{eq:stochasticmumax2}
  \end{equation}
  \begin{equation}
    \mathbb{E} [| \tilde{E}_i^{a, \beta} \asterisk_x
    \bar{\mu}_{\varepsilon}^{\alpha} |^p] \leqslant C'_{\alpha, a, \beta} 2^{p
    i s}, \quad \mathbb{E} [| \tilde{E}_{J_{\varepsilon}}^{a, \beta}
    \asterisk_z \bar{\mu}_{\varepsilon}^{\alpha} |^p] \leqslant C'_{\alpha, a,
    \beta} 2^{p J_{\varepsilon} s},
  \end{equation}
  where $C'_{\alpha, a, \beta}, \bar{C}'_{\alpha, a, \beta} > 0$ are some
  universal constants depending only on $\alpha, a$ and $\beta$.
\end{theorem}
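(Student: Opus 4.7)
The bound on $E_{i,j}^{a,\beta} \asterisk \mu_\varepsilon^{\alpha}$ in {\eqref{eq:stochasticmumax1}} follows directly from Theorem \ref{theorem:stochasticestimates1}: take $\delta = 0$ and $s_1 = s_2 = s/2$ in that theorem. The constraint $s_1 + s_2 - \delta > \frac{\alpha^2}{(4\pi)^2}(p-1)$ then reduces to $s > \frac{\alpha^2}{(4\pi)^2}(p-1)$, exactly the hypothesis here, while the conclusion yields
\[
\mathbb{E}\bigl[| E_{i,j}^{a,\beta} \asterisk \mu_\varepsilon^\alpha|^p\bigr] \lesssim 2^{p s (i+j)/2}.
\]
Since $i + j \leqslant 2\max(i,j)$, this is bounded by $2^{p s \max(i,j)}$, giving {\eqref{eq:stochasticmumax1}}. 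Inequality {\eqref{eq:stochasticmumax2}} for $\bar{\mu}_\varepsilon^\alpha$ is obtained identically, with Lemma \ref{Lemma:estimate-wick-green2} and Corollary \ref{corollary:green2} replacing Lemma \ref{Lemma:estimate-wick-green} and Theorem \ref{theorem:green1} wherever they are invoked in the proof of Theorem \ref{theorem:stochasticestimates1}.

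For the partial convolutions $\tilde{E}_i^{a,\beta} \asterisk_x \mu_\varepsilon^\alpha$ and $\tilde{E}_{J_\varepsilon}^{a,\beta} \asterisk_z \mu_\varepsilon^\alpha$, the plan is to mimic the three-step scheme of the proof of Theorem \ref{theorem:stochasticestimates1}. Namely: (i) expand $\mu_\varepsilon^\alpha = \sum_n \frac{\alpha^n}{n!} : W_\varepsilon^n :$ via Lemma \ref{lemma:series} and use orthogonality of Wick monomials of distinct order to reduce the second moment to a single sum; (ii) apply the partial-convolution second-moment bounds {\eqref{eq:tildeE1}} and {\eqref{eq:tildeE2}} from Lemma \ref{Lemma:estimate-wick-green}, exponentiating the $\log_+$ factor to exchange the infinite series for the single integrand $\exp\bigl(\alpha^2 D_C + C\alpha^2 \log_+\bigl(1/(|x-x'| \vee \varepsilon)\bigr)\bigr) \lesssim 1 + (|x-x'|\vee\varepsilon)^{-(C+\kappa)\alpha^2}$ (and analogously in $z$); and (iii) rescale via $u = 2^i x$ (respectively $u = 2^{J_\varepsilon} z$) to extract the factor $2^{i(C+\kappa)\alpha^2}$ (respectively $2^{J_\varepsilon(C+\kappa)\alpha^2}$). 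Choose $C > \frac{2}{(4\pi)^2}$ and $\kappa > 0$ small so that $(C+\kappa)\alpha^2 < 2s$; hypercontractivity of Gaussian chaos, applied as at the end of the proof of Theorem \ref{theorem:stochasticestimates1} via the substitution $\alpha \rightsquigarrow \gamma \alpha \sqrt{p-1}$, then upgrades the $p=2$ estimate to general $p$. The $\bar{\mu}_\varepsilon^\alpha$ versions again run identically.

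The main technical difficulty lies in step (iii) of the partial-convolution cases. Unlike the full kernel $E_{i,j}^{a,\beta}$, which sits on a 4-dimensional base, the kernels $\tilde{E}_i^{a,\beta}$ and $\tilde{E}_{J_\varepsilon}^{a,\beta}$ produce integrals against the logarithmic singularity $(|x-x'|\vee \varepsilon)^{-(C+\kappa)\alpha^2}$ over only 2 dimensions. Consequently, the critical threshold above which the $\varepsilon$-floor in Theorem \ref{theorem:green1} ceases to be redundant drops from $\alpha^2 \approx 2(4\pi)^2$ to $\alpha^2 \approx (4\pi)^2$: when $(C+\kappa)\alpha^2 > 2$ the post-rescaling integral behaves like $(2^i \varepsilon)^{2 - (C+\kappa)\alpha^2}$, producing an explicit negative power of $\varepsilon$. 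One must then verify that, in the range of $i$ for which these kernels actually enter the norm $\mathbb{N}^s_{p,\ell,\varepsilon}$ defined in {\eqref{eq:Ns1}} --- namely $i \geqslant J_\varepsilon$ for $\tilde{E}_i^{a,\beta}\asterisk_x$ and $i = J_\varepsilon$ for $\tilde{E}_{J_\varepsilon}^{a,\beta}\asterisk_z$ --- the extra factor $\varepsilon^{2-(C+\kappa)\alpha^2} = 2^{J_\varepsilon ((C+\kappa)\alpha^2 -2)}$ is indeed absorbed into the target $2^{p i s}$ using $J_\varepsilon \leqslant i$ and the strict inequality $(C+\kappa)\alpha^2 < 2s$.
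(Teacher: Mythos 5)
Your proposal is correct and follows the same broad strategy the paper intends: re-run the argument of Theorem \ref{theorem:stochasticestimates1}, but with a $\max(i,j)$-type bound on the rescaled singularity in place of the $\theta$-interpolation. Your treatment of \eqref{eq:stochasticmumax1} and \eqref{eq:stochasticmumax2} is in fact a slightly cleaner reduction than the paper's: where the paper re-derives the estimate via the pointwise inequality
\[ \left(\sqrt{|2^{-i}x|^2 + |2^{-j}z|^2}\right)^{-(C+\kappa)\alpha^2} \leqslant 2^{\max(i,j)(C+\kappa)\alpha^2}\left(\sqrt{|x|^2+|z|^2}\right)^{-(C+\kappa)\alpha^2}, \]
you simply specialize Theorem \ref{theorem:stochasticestimates1} to $\delta = 0$, $s_1 = s_2 = s/2$ and use $i+j\leqslant 2\max(i,j)$, avoiding reopening the integral. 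Both give the same bound $2^{ps\max(i,j)}$. For the partial convolutions $\tilde{E}_i^{a,\beta}\asterisk_x$ and $\tilde{E}_{J_\varepsilon}^{a,\beta}\asterisk_z$ the paper is essentially silent beyond ``analogous,'' so your filling in of the scheme (Wick orthogonality, the bounds \eqref{eq:tildeE1}--\eqref{eq:tildeE2}, exponentiation of the $\log_+$, rescaling, hypercontractivity via $\alpha\rightsquigarrow\gamma\alpha\sqrt{p-1}$) is the substantive content, and your observation on the $\varepsilon$-floor is an accurate and useful refinement: after rescaling the singular integral now lives over only two dimensions, so it fails to converge without the $\vee\,\varepsilon$ cutoff once $(C+\kappa)\alpha^2 > 2$, and one must retain the floor and check that for the indices actually feeding into $\mathbb{N}^s_{p,\ell,\varepsilon}$ --- namely $i\geqslant J_\varepsilon$ --- the resulting $\varepsilon^{2-(C+\kappa)\alpha^2}$ factor is absorbed via $J_\varepsilon\leqslant i$ together with $(C+\kappa)\alpha^2 < 2s$. (In the paper's actual applications one has $|\alpha| < 4\pi$, so $(C+\kappa)\alpha^2 < 2$ for $C$ close to $2/(4\pi)^2$ and $\kappa$ small, in which case the two-dimensional integral converges without the floor and the bound holds for all $i$; your more careful argument covers the full stated range $\alpha^2/(4\pi)^2 < 2$.)
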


\begin{proof}
  The proof is analogous to the one of Theorem
  \ref{theorem:stochasticestimates1} where, using the notations of the proof
  of Theorem \ref{theorem:stochasticestimates1}, we exploit the fact that
  \[ \left( \sqrt{| 2^{- i} x |^2 + | 2^{- j} z |^2} \right)^{- (C + \kappa)
     \alpha^2} \leqslant 2^{(i\wedge j) (C + \kappa) \alpha^2} \left( \sqrt{|
     x |^2 + | z |^2} \right)^{- (C + \kappa) \alpha^2} . \]
\end{proof}

\begin{corollary}\label{corollary-3.10}
  For $\alpha \in \mathbb{R},$ $0 < \beta < 1$, $p \geqslant 2$ such that
  $\frac{\alpha^2}{(4 \pi)^2} (p - 1) < 2$, for any $s,s_1,s_2 \in \mathbb{R}$ such that $s,s_1+s_2 < -\frac{\alpha^2}{(4
  \pi)^2} (p - 1)$, we have
  \[ \sup_{\varepsilon} \mathbb{E} \left[ | \mathbb{M}^{s_1, s_2}_{p, \ell_1,
     \ell_2, \varepsilon} (\mu_{\varepsilon}^{\alpha}) |^p + |
     \mathbb{M}^{s_1, s_2}_{p, \ell_1, \ell_2}
     (\bar{\mu}_{\varepsilon}^{\alpha}) |^p + | \mathbb{N}^s_{p, \ell,
     \varepsilon} (\mu_{\varepsilon}^{\alpha}, a, \beta) |^p + |
     \mathbb{N}^s_{p, \ell} (\bar{\mu}_{\varepsilon}^{\alpha}, a, \beta) |^p
     \right] < \infty, \]
  provided that $\ell_1, \ell_2 > 2$, $\ell > 4$.
\end{corollary}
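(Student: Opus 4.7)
The plan is to reduce each of the four functionals to a sum of pointwise $p$-th moment estimates at a single base point, exploiting translation invariance of $\mu_\varepsilon^\alpha$ and $\bar\mu_\varepsilon^\alpha$, and then sum the Littlewood--Paley--type series using the strict inequality $s>\tfrac{\alpha^2}{(4\pi)^2}(p-1)$ as slack.

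First I would deal with the polynomial weights. For each $(x_0,z_0)$, the pointwise inequality $\rho_{\ell_i}(y)\lesssim \rho_{\ell_i}(y_0)(1+|y-y_0|)^{|\ell_i|}$ combined with the stretched-exponential decay of $E^{a,\beta}_{i,j}$ (cf.\ \eqref{eq:definitionE}) allows one to absorb polynomial factors by shrinking $a$ to some $a'<a$; this gives
\[
\bigl|E^{a,\beta}_{i,j}\ast(\rho_{\ell_1}(x)\rho_{\ell_2}(z)\eta)(x_0,z_0)\bigr|
\;\lesssim\;\rho_{\ell_1}(x_0)\rho_{\ell_2}(z_0)\cdot\bigl(E^{a',\beta}_{i,j}\ast\eta\bigr)(x_0,z_0),
\]
and analogous bounds with $\tilde E^{a,\beta}_r$ and $\rho_\ell^{(4)}$ in place of $E^{a,\beta}_{i,j}\rho_{\ell_1}\rho_{\ell_2}$. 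After raising to the $p$-th power and taking expectation, Fubini together with translation invariance of the Gaussian chaoses reduces everything to pointwise moments at the origin, multiplied by the deterministic integrals $\int\rho_{\ell_1}^p\rho_{\ell_2}^p\,dx\,dz$ and $\int(\rho_\ell^{(4)})^p\,dx\,dz$, which are finite because $\ell_1,\ell_2>2$ (so $p\ell_i>2$) and $\ell>4$ (so $p\ell>4$) with plenty of room.

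Next I would apply the pointwise moment bounds. For $\mathbb M^{s_1,s_2}_{p,\ell_1,\ell_2,\varepsilon}$ I would use Theorem~\ref{theorem:stochasticestimates1} with parameters $\tilde s_1,\tilde s_2,\delta$ chosen so that $\tilde s_1+\tilde s_2-\delta>\tfrac{\alpha^2}{(4\pi)^2}(p-1)$ and so that the geometric series
\[
\sum_{i\ge 0,\;j\le J_\varepsilon}2^{p[(s_1+\tilde s_2-\delta/2)j+(s_2+\tilde s_1-\delta/2)i]}
\]
is summable uniformly in $\varepsilon$; this is possible precisely because $\tfrac{\alpha^2}{(4\pi)^2}(p-1)<2$ leaves room (the implicit restriction on the admissible $s_1,s_2$ is $s_1+s_2<-\tfrac{\alpha^2}{(4\pi)^2}(p-1)$, with both negative). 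For $\mathbb N^s_{p,\ell,\varepsilon}$ I would apply Theorem~\ref{label:thm-estimate-chaos} with some $s'$ satisfying $\tfrac{\alpha^2}{(4\pi)^2}(p-1)<s'<s$, giving $\mathbb E[|E^{a',\beta}_{r,r}\ast\mu^\alpha_\varepsilon(0)|^p]\lesssim 2^{prs'}$, so that after multiplying by the weight $2^{-rsp}$ (reading $\mathbb N^{-s}$) the series $\sum_r 2^{-rp(s-s')}$ converges. The boundary pieces involving $\tilde E^{a,\beta}_r$ for $r\ge J_\varepsilon$ and $\tilde E^{a,\beta}_{J_\varepsilon}$ are controlled in the same way using the last two displayed bounds of Theorem~\ref{label:thm-estimate-chaos}, with geometric sums that telescope at $r=J_\varepsilon$ independently of $\varepsilon$.

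The $\bar\mu^\alpha_\varepsilon$ estimates go through by replacing the Green-function input (Theorem~\ref{theorem:green1}) by Corollary~\ref{corollary:green2}; translation invariance still holds for $\bar\mu^\alpha_\varepsilon$ in $x$ and for shifts in $\varepsilon\mathbb Z^2$ in $z$, which suffices since the integrand after the weight trick depends only on such shifts. The main obstacle, as I see it, is purely bookkeeping: one must verify that the parameter triples $(\tilde s_1,\tilde s_2,\delta)$ furnished by Theorem~\ref{theorem:stochasticestimates1} are compatible with both the required absolute summability and the polynomial integrability range $\ell_1,\ell_2>2$, $\ell>4$, and that all constants produced during the weight manipulation and the reduction $a\mapsto a'$ are independent of $\varepsilon$. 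Once this is checked the four bounds follow simultaneously by linearity of expectation.
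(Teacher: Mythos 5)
Your argument is correct and is essentially what the paper intends: the paper's proof is a one-liner (``this follows from Theorem~\ref{label:thm-estimate-chaos} and the definition of $\mathbb{M},\mathbb{N}$''), and you have supplied the mechanical steps it leaves implicit — absorbing the polynomial weights into the stretched-exponential kernels by shrinking $a$ to $a'<a$ uniformly in $i,j$, reducing to pointwise moments via Fubini and (lattice) translation invariance, integrating the weights using $\ell_1,\ell_2>2$ and $\ell>4$, and summing the resulting geometric series with the slack coming from $s>\frac{\alpha^2}{(4\pi)^2}(p-1)$. The only deviation is that you invoke Theorem~\ref{theorem:stochasticestimates1} for the $\mathbb{M}$-part where the paper cites only Theorem~\ref{label:thm-estimate-chaos}; that is harmless (the $\max(i,j)$ bound of the latter already suffices under the implicit sign condition on $s_1,s_2$), and your charitable reading of the superscripts (negative for $\mathbb{M}$, effectively $-s$ inside $\mathbb{N}$) is indeed the correct interpretation of the somewhat loosely stated corollary.
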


\begin{proof}
  This follows from Theorem \ref{label:thm-estimate-chaos} and the definition
  of $\mathbb{M}, \mathbb{N}$.
\end{proof}

\subsection{Convergence of $\bar{\mu}^{\alpha}_{\varepsilon}$}

In this section we want to prove that $\bar{\mu}_{\varepsilon}^{\alpha}$
converges (in suitable Besov spaces) in $L^p$ to $\mu^{\alpha}$. We also want
to discuss some important consequences that can be deduced from this
convergence.

\begin{theorem}
  \label{theorem:Lpconvergence}For any $f \in L^r (\mathbb{R}^4)$ (for some $1
  < r < 2$) we have that, for any $n \in \mathbb{N}$,
  \[ \langle \overline{\mathcal{E}}_{\varepsilon} (: W_{\varepsilon}^n :), f
     \rangle = \langle : \bar{W}_{\varepsilon}^n :, f \rangle \rightarrow
     \langle : W^n :, f \rangle \]
  in $L^p (\Omega)$ for any $p \geqslant 2$. In particular there is a
  subsequence $\varepsilon_k = 2^{- r_k} \rightarrow 0$, such that for any $n
  \in \mathbb{N}$ we have $: \bar{W}^n_{\varepsilon_k} : \rightarrow : W^n :$
  weakly in $L^{r'} (\mathbb{R}^4)$, where $\frac{1}{r} + \frac{1}{r'} = 1$,
  and almost surely.
\end{theorem}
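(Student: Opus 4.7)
The plan is to reduce $L^p(\Omega)$ convergence to $L^2(\Omega)$ convergence via Gaussian hypercontractivity, and then analyse the $L^2$ norm of the difference using Wick's isometry together with the Green-function estimates already established.

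First, I would observe that $\langle :\bar W^n_\varepsilon:,f\rangle$ and $\langle :W^n:,f\rangle$ both belong to the $n$-th homogeneous Wiener chaos associated with $\xi$, since $\bar W_\varepsilon=\overline{\mathcal E}^\varepsilon(-\Delta_{\mathbb R^2\times\varepsilon\mathbb Z^2}+m^2)^{-1}\xi_\varepsilon$ and $W$ are both linear functionals of the common white noise $\xi$. By Gaussian hypercontractivity (see e.g.\ Chapter IV of \cite{Ustunel1995}), all $L^p(\Omega)$ norms are equivalent on a fixed chaos, so it suffices to prove $L^2(\Omega)$ convergence.

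Second, applying Wick's isometry $\mathbb E[:X^n(u):\,:Y^n(v):]=n!\,(\mathbb E[X(u)Y(v)])^n$ for jointly Gaussian $X,Y$, I would write
\[
\mathbb E\bigl[\langle :\bar W^n_\varepsilon:-:W^n:,f\rangle^2\bigr]
= n!\iint f(u)f(v)\bigl[\bar{\mathcal G}_\varepsilon(u-v)^n-2\mathcal H_\varepsilon(u,v)^n+\mathcal G(u-v)^n\bigr]\,du\,dv,
\]
where $\mathcal H_\varepsilon(u,v)\assign\mathbb E[\bar W_\varepsilon(u)W(v)]$ is the cross-covariance, explicitly representable in Fourier by an integral featuring one sinc-type factor from $\overline{\mathcal E}^\varepsilon$ and the mixed denominator in which one factor is the continuum symbol $(|y|^2+|k|^2+m^2)$ and the other is the discretised symbol $(|y|^2+\sigma_{-\Delta_{\varepsilon\mathbb Z^2}}(k)+m^2)$.

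Third, I would establish pointwise convergence $\bar{\mathcal G}_\varepsilon(u-v),\mathcal H_\varepsilon(u,v)\to\mathcal G(u-v)$ for every $u\neq v$, via dominated convergence in the Fourier representations, using $4\varepsilon^{-2}\sin^2(\varepsilon k_i/2)\to k_i^2$ and the sinc factors tending to $1$. Combined with Corollary \ref{corollary:green2}, which gives the uniform bound $|\bar{\mathcal G}_\varepsilon(u-v)|\lesssim\log_+(1/(|u-v|\vee\varepsilon))+1$, together with the analogous bound for $\mathcal H_\varepsilon$ (obtained by rerunning the argument of Theorem \ref{eq:eepsilon} for the half-discretised Green's function), and the exponential decay at infinity coming from $m>0$, one obtains a dominating function of the form $(\log_+(1/|u-v|)+1)^n e^{-c|u-v|}$. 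Since the logarithmic singularity is locally in every $L^q$ and the exponential decay ensures global integrability, this dominating function is integrable against $|f(u)f(v)|$ for any $f\in L^r$ with $r>1$ (by Young's inequality applied to the convolution $|\mathcal G|^n\ast f$). Dominated convergence then shows the three bracketed terms have the same limit $\mathcal G^n$ and cancel, proving $L^2(\Omega)$ convergence.

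Finally, for the almost-sure subsequence statement, I would fix a countable dense subset $\{f_j\}\subset L^r(\mathbb R^4)$ and apply a Cantor diagonal extraction to the dyadic scales $\varepsilon=2^{-N}$: from the $L^p(\Omega)$ convergence applied to each $f_j$ and each $n\in\mathbb N$ we extract successive subsequences converging almost surely, and a diagonal subsequence $\{\varepsilon_k\}$ then achieves almost-sure convergence simultaneously for all $(n,j)$, from which the claimed weak convergence follows. The main obstacle I foresee is the cross-covariance analysis: proving a clean uniform logarithmic bound for $\mathcal H_\varepsilon$ is not directly covered by the results already in the paper and requires reworking the Fourier-splitting argument of Theorem \ref{eq:eepsilon} in the partially discrete setting where only one of the two symbols is lattice-regularised.
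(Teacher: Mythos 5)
Your decomposition differs from the paper's in a way that matters. The paper avoids cross-covariances entirely: it writes $\bar W_\varepsilon$ and $W$ as Fourier multipliers $\mathfrak K_\varepsilon(D)\xi$ and $\mathfrak K(D)\xi$ of the same white noise $\xi$, so that $:\bar W^n_\varepsilon:-:W^n:$ is a single $n$-fold Wiener--It\^o integral with kernel $\mathfrak K_\varepsilon^{\otimes n}-\mathfrak K^{\otimes n}$; the $L^2(\Omega)$ norm is then a Fourier-space integral of $|\hat f|^2$ against $|\mathfrak K_\varepsilon^{\otimes n}-\mathfrak K^{\otimes n}|^2$, controlled by a telescoping expansion, Hausdorff--Young (giving $|\hat f|^2\in L^{r'}$) and Young's inequality for convolutions, plus a separate elementary lemma that $\mathfrak K_\varepsilon\to\mathfrak K$ in $L^{2p}(\mathbb R^4)$. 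You instead expand $\mathbb E(a-b)^2$ by Wick's theorem in physical space, which introduces the cross-covariance $\mathcal H_\varepsilon(u,v)=\mathbb E[\bar W_\varepsilon(u)W(v)]$, and you rightly flag the resulting uniform logarithmic bound as the obstacle.

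That obstacle is a genuine gap. $\mathcal H_\varepsilon$ is covered neither by Theorem \ref{theorem:green1} nor by Corollary \ref{corollary:green2}; since $\bar W_\varepsilon$ is constant on lattice cells and hence not translation-invariant, $\mathcal H_\varepsilon$ is a function of both arguments (not of $u-v$ alone), so the required bound must hold uniformly in the phase of $u,v$ within their cells; and the Fourier-splitting argument of Theorem \ref{eq:eepsilon} would have to be redone for a half-discretised denominator together with the aliasing that enters when one relates $\xi_\varepsilon$ to $\xi$. There is a second, smaller gap you do not flag: your dominated-convergence step also requires uniform-in-$\varepsilon$ decay of $\bar{\mathcal G}_\varepsilon$ and $\mathcal H_\varepsilon$ at large separation, which the paper never proves and never needs since Hausdorff--Young works entirely in frequency space. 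The hypercontractivity reduction and the diagonal extraction for the almost-sure statement are fine and essentially equivalent to what the paper does (which builds a countable metric from a dense sequence of test functions and extracts along that).
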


In the following we write
\[ \mathfrak{K}_{\varepsilon} (k) = \frac{16\left( \sin \left( \frac{\varepsilon
   k_1}{2} \right) \sin \left( \frac{\varepsilon k_2}{2} \right)
   \right)}{\varepsilon^2 k_1 k_2 \left( m^2 + \frac{4 \sin^2 \left(
   \frac{\varepsilon k_1}{2} \right)}{\varepsilon^2} + \frac{4 \sin^2 \left(
   \frac{\varepsilon k_2}{2} \right)}{\varepsilon^2} + k_3^2 + k_4^2 \right)},
\]
where $(k_1, k_2, k_3, k_4) \in \mathbb{R}^4$, and
\[ \mathfrak{K} (k) = \frac{1}{(m^2 + | k |^2)}, \]
\begin{lemma}
  We have that $\mathfrak{K}_{\varepsilon} \rightarrow \mathfrak{K}$ in $L^{2
  p} (\mathbb{R}^4)$ for any $1 < p < + \infty$.
\end{lemma}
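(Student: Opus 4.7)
The plan is to prove pointwise a.e.\ convergence $\mathfrak{K}_\varepsilon(k) \to \mathfrak{K}(k)$ and then upgrade it to $L^{2p}$-convergence via a uniform $L^{2p}$ bound obtained by a Brillouin-zone decomposition in the $(k_1, k_2)$ variables. Pointwise, for $k \in \mathbb{R}^4$ with $k_1 k_2 \neq 0$, the Taylor expansions $\sin(\varepsilon k_i/2)/(\varepsilon k_i/2) \to 1$ and $4\varepsilon^{-2}\sin^2(\varepsilon k_i/2) \to k_i^2$ give $\mathfrak{K}_\varepsilon(k) \to \mathfrak{K}(k)$ as $\varepsilon \to 0$ (up to a harmless normalization constant).

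For the uniform $L^{2p}$ control, decompose the $(k_1, k_2)$-plane into translates $[-\pi/\varepsilon, \pi/\varepsilon]^2 + 2\pi\mathbf{n}/\varepsilon$ for $\mathbf{n} \in \mathbb{Z}^2$, and let $Q_{\mathbf{n}}$ denote the corresponding slab $([-\pi/\varepsilon, \pi/\varepsilon]^2 + 2\pi\mathbf{n}/\varepsilon) \times \mathbb{R}^2$. On the central zone $Q_0$, the inequality $|\sin t|/|t| \geq 2/\pi$ for $|t| \leq \pi/2$ yields the denominator lower bound $\geq c(m^2 + |k|^2)$, so that $|\mathfrak{K}_\varepsilon(k)| \leq C\,\mathfrak{K}(k)$ uniformly; combined with $\mathfrak{K} \in L^{2p}(\mathbb{R}^4)$ (which requires $p>1$), this gives a uniform $L^{2p}$ bound on $Q_0$. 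On $Q_{\mathbf{n}}$ with $\mathbf{n} \neq 0$, substitute $k_i = 2\pi n_i/\varepsilon + \tilde{k}_i$ with $|\tilde{k}_i| \leq \pi/\varepsilon$. Exploiting the periodicity of $\sin(\varepsilon\cdot/2)$ (up to sign), the bound $|2\pi n_i + \varepsilon \tilde{k}_i| \geq \pi|n_i|$ for $n_i \neq 0$ (and $|\sin(\varepsilon \tilde{k}_i/2)/(\varepsilon \tilde{k}_i)| \leq 1/2$ for $n_i = 0$), together with the same coercive lower bound $\geq (4/\pi^2)(\tilde{k}_1^2 + \tilde{k}_2^2)$ in the denominator, one obtains
\[
\|\mathfrak{K}_\varepsilon\|_{L^{2p}(Q_{\mathbf{n}})}^{2p} \leq \frac{C}{(1+|n_1|)^{2p}(1+|n_2|)^{2p}}
\]
uniformly in $\varepsilon$ and $\mathbf{n}$; summing gives a total uniform $L^{2p}$ bound.

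With uniform boundedness and pointwise convergence in hand, $L^{2p}$-convergence follows by a tail/DCT argument. For any cube $K_R = [-R, R]^4$, when $\varepsilon < \pi/R$ we have $K_R \subset Q_0$, and dominated convergence with dominator $C\mathfrak{K}$ gives $\|\mathfrak{K}_\varepsilon - \mathfrak{K}\|_{L^{2p}(K_R)} \to 0$. The tail $K_R^c$ splits into its $Q_0$-part (bounded by $C\|\mathfrak{K}\|_{L^{2p}(K_R^c)}$, small for large $R$) and its $Q_0^c$-part (bounded by the tail of the convergent double sum from the previous step, since $K_R^c \cap Q_0^c$ involves only zones with $\max|n_i| \geq R\varepsilon/\pi$ — and the sum is geometric-summable regardless). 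Letting first $\varepsilon \to 0$ and then $R \to \infty$ completes the argument. The main technical difficulty is the bookkeeping for the edge zones where exactly one of $n_1, n_2$ vanishes: there, the decay factor $1/(\pi|n_i|)$ is absent, and one must rely on the full coercivity $\tilde{k}_i^2$ in the denominator to extend the residual integration from $[-\pi/\varepsilon, \pi/\varepsilon]$ to $\mathbb{R}$ without a spurious factor of $1/\varepsilon$; this is precisely where $p > 1$ enters, both for $\mathfrak{K} \in L^{2p}(\mathbb{R}^4)$ and for integrability at infinity of the residual $\tilde{k}$-integral.
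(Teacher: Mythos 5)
The pointwise-convergence step and the zone-by-zone bound
\[
\|\mathfrak{K}_\varepsilon\|_{L^{2p}(Q_{\mathbf n})}^{2p}\lesssim \frac{1}{(1\vee|n_1|)^{2p}(1\vee|n_2|)^{2p}}
\]
are both correct, but the passage from these to $L^{2p}$-convergence has a genuine gap. The zone estimates, being uniform in $\varepsilon$, only yield $\sup_\varepsilon\|\mathfrak{K}_\varepsilon\|_{L^{2p}(\mathbb{R}^4)}<\infty$; they give no decay of $\|\mathfrak{K}_\varepsilon\|_{L^{2p}(Q_0^c)}$ as $\varepsilon\to 0$. Your tail argument needs exactly such decay: when $\varepsilon<\pi/R$ one has $Q_0^c\subset K_R^c$, so the set $K_R^c\cap Q_0^c$ is all of $Q_0^c$ and the ``tail of the convergent double sum'' you invoke is in fact the entire sum, a fixed $\varepsilon$- and $R$-independent constant. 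Sending $\varepsilon\to 0$ and then $R\to\infty$ therefore leaves this constant as an uncontrolled residual. (Uniform $L^{2p}$-boundedness together with pointwise convergence does not yield $L^{2p}$-convergence; a constant-norm bump escaping to infinity — here, mass in the high Brillouin zones — is precisely the obstruction.)

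The repair is to use the additional smallness that your crude bound throws away: on $Q_{\mathbf n}$ with $n_1\neq 0$, after the substitution $k_1=2\pi n_1/\varepsilon+\tilde k_1$, one has $|\sin(\varepsilon k_1/2)|=|\sin(\varepsilon\tilde k_1/2)|\leq \min(1,\varepsilon|\tilde k_1|/2)$, so
\[
\int_{Q_{\mathbf n}}|\mathfrak{K}_\varepsilon|^{2p}\lesssim \frac{1}{(1\vee|n_1|)^{2p}(1\vee|n_2|)^{2p}}\int_{\mathbb{R}^4}\frac{\min(1,\varepsilon|\tilde k_1|)^{2p}}{(1+|\tilde k|^2)^{2p}}\,\mathd\tilde k,
\]
and the last integral tends to $0$ as $\varepsilon\to 0$ by dominated convergence, so that $\|\mathfrak{K}_\varepsilon\|_{L^{2p}(Q_0^c)}\to 0$ after summing the geometric factor. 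The paper sidesteps all of this bookkeeping by producing a single $\varepsilon$-uniform pointwise dominator for $|\mathfrak{K}_\varepsilon|^{2p}$ that lies in $L^1(\mathbb{R}^4)$ and then applying dominated convergence directly to $\int|\mathfrak{K}_\varepsilon-\mathfrak{K}|^{2p}$. Your approach, once the extra $\min(1,\varepsilon|\tilde k_1|)$ factor is retained, is a viable alternative, but as written the tail step does not close.
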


\begin{proof}
  We observe that
  \begin{eqnarray}
    | \mathfrak{K}_{\varepsilon} (k) |^{2 p} & \lesssim & \frac{1}{(1 + | k
    |^2)^p} + \frac{1}{1 + | k_1 |^p} \frac{1}{(1 + | k_2 |^2 + | k_3 |^2 + |
    k_4 |^2)^{\frac{3}{2} p}}  \nonumber\\
    &  & + \frac{1}{1 + | k_2 |^p} \frac{1}{(1 + | k_1 |^2 + | k_3 |^2 + |
    k_4 |^2)^{\frac{3}{2} p}} .  \label{eq:mathfrakG}
  \end{eqnarray}
  where the constant in implicit in the symbol $\lesssim$ do not depend on
  $\varepsilon$. Since the function in the right hand side of equation
  {\eqref{eq:mathfrakG}} is in $L^1 (\mathbb{R}^4)$ and since
  $\mathfrak{K}_{\varepsilon} \rightarrow \mathfrak{K}$ pointwise, we can
  apply dominated convergence theorem to $\int_{\mathbb{R}^4} |
  \mathfrak{K}_{\varepsilon} - \mathfrak{K} |^{2 p} \mathd k$ obtaining the
  thesis.
\end{proof}

\begin{proof*}{Proof of Theorem \ref{theorem:Lpconvergence}}
  Here we follow an argument similar to {\cite{Simon_phi2}} Theorem V.3. By
  hypercontractivity it is sufficient to prove the convergence in $L^2
  (\Omega)$. We have that
  \begin{eqnarray}
    \mathbb{E} | \langle : \bar{W}_{\varepsilon}^n :, f \rangle - \langle :
    W^n :, f \rangle |^2 & = & \int_{\mathbb{R}^{4 n}} | \hat{f} (k_1 + \cdots
    + k_n) |^2  \nonumber\\
    & \phantom{\int} & \times | \mathfrak{K}_{\varepsilon} (k_1) \cdots
    \mathfrak{K}_{\varepsilon} (k_n) - \mathfrak{K} (k_1) \cdots \mathfrak{K}
    (k_n) |^2 \mathd k_1 \cdots \mathd k_n \nonumber\\
    & \lesssim & \sum_{\ell = 1}^n \int_{\mathbb{R}^{4 n}} | \hat{f} (k_1 +
    \cdots + k_n) |^2 \prod_{a = 1}^{\ell - 1} | \mathfrak{K}_{\varepsilon}
    (k_a) |^2 \nonumber\\
    &  & \times | \mathfrak{K}_{\varepsilon} (k_{\ell}) - \mathfrak{K}
    (k_{\ell}) |^2 \prod_{a = \ell + 1}^n | \mathfrak{K} (k_a) |^2 \mathd k_1
    \cdots \mathd k_n . \label{eq:convergenceprob1}
  \end{eqnarray}
  We use the fact that, by Hausdorff-Young inequality, $| \hat{f} |^2 \in
  L^{r'} (\mathbb{R}^4)$ for some $r' > 1$. Since $|
  \mathfrak{K}_{\varepsilon} - \mathfrak{K} |^2$ converges to $0$ in $L^p
  (\mathbb{R}^4)$ for $p = \frac{n r'}{(r' - 1)}$ and $|
  \mathfrak{K}_{\varepsilon} |^2$ is uniformly bounded in $L^p
  (\mathbb{R}^4)$, by Young inequality for convolutions, the integrand in
  {\eqref{eq:convergenceprob1}} converges to $0$ in $L^1$, which implies the
  first thesis.
  
  Furthermore observe in view of the previous discussion that
  {\eqref{eq:convergenceprob1}} can reformulated as
  \[ \mathbb{E} | \langle : \bar{W}_{\varepsilon}^n :, f \rangle - \langle :
     W^n :, f \rangle |^2 \leqslant C_n o (\varepsilon) \| f \|^2_{L^r}, \]
  where $C_n > 0$ is some constant depending on $n$ but not on $\varepsilon$.
  Now take a sequence $f_m$ which is dense in the ball $B (0, 1) \subset L^r
  (\mathbb{R}^4)$, of center $0$ and radius $1$, and define the quantity
  \[ d (\varphi, \psi) = \sum_{m\in \mathbb{N}} \frac{1}{m^2} | \langle \varphi - \psi, f_m
     \rangle_{L^2 (\mathbb{R}^4)} | . \]
  Clearly $d (\varphi_k, \psi) \rightarrow 0$ implies $\varphi_k \rightarrow
  \psi$ weakly in $L^{r'}$. On the other hand
  \[ \mathbb{E}d \left( : \bar{W}_{\varepsilon}^n :, \quad : W^n : \right)
     \leqslant C^{1 / 2}_n o (\varepsilon) \sum \frac{1}{m^2} \rightarrow 0.
  \]
  Extracting a subsequence such that $d \left( : \bar{W}_{\varepsilon}^n :, \;
  : W^n : \right) \rightarrow 0$ almost surely implies the statement of
  Theorem \ref{theorem:Lpconvergence}.
\end{proof*}

We are now going to prove a theorem about the convergence of the functionals
$\mathbb{M}_{p, \ell_1, \ell_2, \varepsilon}^{s, s'}
(\mu^{\alpha}_{\varepsilon}, a, \beta)$, $\mathbb{M}_{p, \ell_1, \ell_2}^{s,
s'} (\bar{\mu}^{\alpha}_{\varepsilon}, a, \beta)$, $\mathbb{M}_{p, \ell_1,
\ell_2}^{s, s'} (\mu^{\alpha}, a, \beta)$, $\mathbb{N}^s_{p, \ell,
\varepsilon} (\mu^{\alpha}_{\varepsilon}, a, \beta)$, $\mathbb{N}^s_{p, \ell}
(\bar{\mu}^{\alpha}_{\varepsilon}, a, \beta)$ and $\mathbb{N}^s_{p, \ell}
(\mu^{\alpha}, a, \beta)$ defined in equations {\eqref{eq:Mss1}},
{\eqref{eq:Mss2}}, {\eqref{eq:Ns1}} and {\eqref{eq:Ns2}}.

\begin{theorem}
  \label{theorem:convergencebar}Fix $\alpha \in \mathbb{R}$ such that
  $\frac{\alpha^2}{(4 \pi)^2} < 2$, then for any $a \in \mathbb{R}_+$ and $0 <
  \beta < 1$, $p \geqslant 2$ such that $\frac{\alpha^2}{(4 \pi)^2} (p - 1) <
  2$, any $\delta \geqslant 0$, any $s_1, s_2 <- \frac{\delta}{2}$ such that
  $s_1 + s_2 + \delta <- \frac{\alpha^2}{(4 \pi)^2} (p - 1)$ and $s < - \frac{\alpha^2}{(4 \pi)^2} (p - 1)$, and for any $\ell_1, \ell_2 > 2$ and
  $\ell > 4$, we have
  \[ \mathbb{M}^{s_1, s_2}_{p, \ell_1, \ell_2}
     (\bar{\mu}_{\varepsilon}^{\alpha}, a, \beta) \rightarrow \mathbb{M}^{s_1,
     s_2}_{p, \ell_1, \ell_2} (\mu^{\alpha}, a, \beta), \quad \mathbb{N}^s_{p,
     \ell} (\bar{\mu}_{\varepsilon}^{\alpha}, a, \beta) \rightarrow
     \mathbb{N}^s_{p, \ell} (\mu^{\alpha}, a, \beta) \]
  in $L^p (\Omega)$.
\end{theorem}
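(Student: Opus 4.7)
The plan is to deduce $L^p(\Omega)$ convergence of the functionals from $L^q(\Omega)$ convergence of each individual Littlewood--Paley ``cell'' $E_{i,j}^{a,\beta} \asterisk (\rho_{\ell_1}\rho_{\ell_2}\bar{\mu}^{\alpha}_{\varepsilon})(x,z)$, combined with dominated convergence applied to the sum-integral defining $\mathbb{M}^p$ (and similarly $\mathbb{N}^p$).

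For \textbf{cellwise convergence}, I would fix $(i,j,x,z)$ and set $f_{i,j,x,z}(x',z') = E^{a,\beta}_{i,j}(x-x',z-z')\,\rho_{\ell_1}(x')\,\rho_{\ell_2}(z')$. Because $E^{a,\beta}_{i,j}$ decays exponentially, $f_{i,j,x,z}$ lies in every $L^r(\mathbb{R}^4)$, $r \in (1,\infty)$. Expanding $\bar{\mu}^{\alpha}_{\varepsilon}$ and $\mu^{\alpha}$ via the Wick series of Lemma \ref{lemma:series} and applying Theorem \ref{theorem:Lpconvergence} term by term gives $L^2(\Omega)$ convergence of each term, which passes to $L^q(\Omega)$ for any $q \geq 2$ by Gaussian hypercontractivity. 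The $L^2$ estimates used in the proof of Theorem \ref{theorem:stochasticestimates1} control the series tail uniformly in $\varepsilon$, so one obtains
$$E_{i,j}^{a,\beta} \asterisk (\rho_{\ell_1}\rho_{\ell_2}\bar{\mu}^{\alpha}_{\varepsilon})(x,z) \xrightarrow{\varepsilon \to 0} E_{i,j}^{a,\beta} \asterisk (\rho_{\ell_1}\rho_{\ell_2}\mu^{\alpha})(x,z) \quad \text{in } L^q(\Omega), \ q \geq 2.$$

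For the \textbf{passage to the limit}, I view $\mathbb{M}^{s_1,s_2}_{p,\ell_1,\ell_2}(\eta)^p$ as a single integral over $(i,j,x,z,\omega)$. Cellwise $L^q(\Omega)$ convergence with $q > p$ gives $L^1(\Omega)$ convergence of $|E^{a,\beta}_{i,j} \asterisk (\rho\bar\mu^{\alpha}_{\varepsilon})|^p$ for each fixed $(i,j,x,z)$. The strict inequalities $\alpha^2(p-1)/(4\pi)^2 < 2$ and $s_1+s_2-\delta > \alpha^2(p-1)/(4\pi)^2$ leave room to apply Theorem \ref{theorem:stochasticestimates1} with parameters $q > p$ and appropriate auxiliary $(t_1,t_2,\tilde\delta)$, producing a uniform $L^{q/p}(\Omega)$ bound on the integrand that is summable in $(i,j)$ (against the weight $2^{s_1 pj+s_2 pi}$) and integrable in $(x,z)$ (using $\ell_1,\ell_2 > 2$ and translation invariance of $\bar\mu^{\alpha}_{\varepsilon}$). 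Dominated convergence then yields $L^1(\Omega)$ convergence of $\mathbb{M}^{s_1,s_2}_{p,\ell_1,\ell_2}(\bar\mu^{\alpha}_{\varepsilon})^p \to \mathbb{M}^{s_1,s_2}_{p,\ell_1,\ell_2}(\mu^{\alpha})^p$, and the elementary inequality $|a^{1/p}-b^{1/p}|^p \leq |a-b|$ for $a,b \geq 0$ upgrades this to $L^p(\Omega)$ convergence of the functionals themselves. The parallel argument for $\mathbb{N}^s_{p,\ell}$, using Theorem \ref{label:thm-estimate-chaos} (including the tail terms for $r > J_{\varepsilon}$ built with $\tilde E^{a,\beta}$), completes the proof.

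The \textbf{main obstacle} is producing a dominating function simultaneously summable in $(i,j)$ and integrable in $(x,z)$ uniformly in $\varepsilon$. This requires carefully balancing the parameters $(t_1,t_2,\tilde\delta)$ fed into Theorem \ref{theorem:stochasticestimates1} against those $(s_1,s_2,\delta)$ fixed in the statement, and using the translation invariance of $\bar\mu^{\alpha}_{\varepsilon}$ to decouple $(x,z)$ from the scaling in $(i,j)$ (paying a polynomial weight-growth correction absorbed by $\rho_{\ell_1}^p\rho_{\ell_2}^p$). The strict subcriticality assumption $\alpha^2(p-1)/(4\pi)^2 < 2$ is precisely what provides the margin to increase $p$ to $q$ for the uniform integrability step.
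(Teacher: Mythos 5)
Your proposal is correct and is, up to reorganization, the same argument as the paper's: Theorem~\ref{theorem:Lpconvergence} for Wick-degree-by-degree convergence, Gaussian hypercontractivity to lift from $L^2(\Omega)$ to $L^q(\Omega)$, and the uniform stochastic estimates (Theorem~\ref{theorem:stochasticestimates1} / Lemma~\ref{Lemma:estimate-wick-green2}) as the source of a dominating bound. Where the paper first establishes $E^{a,\beta}_{i,j}\ast\bar\mu^\alpha_\varepsilon\to E^{a,\beta}_{i,j}\ast\mu^\alpha$ in $L^1(\Omega,L^2_{\ell_1,\ell_2}(\mathbb{R}^4))$ via a Wick-tail cutoff plus uniform integrability, and then for $p>2$ truncates the dyadic sum via equations~\eqref{eq:sum:cutoff}--\eqref{eq:sum:finite}, you instead fix the cell $(i,j,x,z)$, get $L^q(\Omega)$ convergence there for $q>p$ (with the same Wick-tail estimate and hypercontractivity), and then integrate out over $(i,j,x,z)$ with a genuine dominated-convergence argument: Theorem~\ref{theorem:stochasticestimates1} supplies a dominant $H(i,j,x,z)=C\,2^{p[(t_1-\tilde\delta/2)i+(t_2-\tilde\delta/2)j]}\rho^p_{\ell_1}(x)\rho^p_{\ell_2}(z)$ that is independent of $\varepsilon$, and its summability-integrability requires exactly the sign/strictness conditions in the hypothesis. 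The translation-invariance-plus-weight-comparison step you flag (using $\rho_\ell(y')\lesssim\rho_\ell(y)(1+|y-y'|)^{|\ell|}$ to push $\rho$ outside the convolution at the cost of shrinking $a$) is indeed what makes the $(x,z)$-integral decouple. Two minor points: first, you also make explicit the final step $\mathbb{E}|\mathbb{M}_\varepsilon-\mathbb{M}|^p\leq\mathbb{E}|\mathbb{M}_\varepsilon^p-\mathbb{M}^p|$, which the paper leaves implicit, and this is a genuine small improvement in exposition; second, the $\tilde E^{a,\beta}$ tail terms you mention for $\mathbb{N}$ belong to the discrete functional $\mathbb{N}^s_{p,\ell,\varepsilon}$ on $\mathbb{R}^2\times\varepsilon\mathbb{Z}^2$, not to the continuum $\mathbb{N}^s_{p,\ell}$ appearing in this theorem --- they are handled in Theorem~\ref{theorem_convergenceepsilon}, not here.
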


\begin{proof}
  We only prove the assertion for $\mathbb{M}^{s_1, s_2}_{p, \ell_1, \ell_2}
  (\bar{\mu}_{\varepsilon}^{\alpha}, a, \beta)$, the proof for
  $\mathbb{N}^s_{p, \ell} (\bar{\mu}_{\varepsilon}^{\alpha}, a, \beta)$ is
  similar.
  
  First we want to prove that $E_{i, j}^{a, \beta} \ast
  \bar{\mu}^{\alpha}_{\varepsilon} \rightarrow E_{i, j}^{a, \beta} \ast
  \mu^{\alpha}$ in $L^1 (\Omega, L^p_{\ell_1,\ell_2} (\mathbb{R}^4))$. For this, we
  consider first the case $p = 2$. For any $N \in \mathbb{N}$, write
  \[ \bar{\mu}^{\alpha, N}_{\varepsilon} = \sum_{n = 0}^N \frac{\alpha^n}{n!} :
     \bar{W}^n_{\varepsilon} : . \]
  Then, by the properties of the expectation of a Wick product and Remark \ref{remark:exchangeconvolution}, we have
  \begin{eqnarray*}
    &  & \mathbb{E} \left[ \| E^{a, \beta}_{i, j} \ast (\bar{\mu}^{\alpha,
    N}_{\varepsilon}) - E^{a, \beta}_{i, j} \ast (\bar{\mu}^{\alpha}_{\varepsilon})
    \|_{L^2_{\ell_1,\ell_2} (\mathbb{R}^4)}^2 \right]\\
   & = & \sum_{n = N + 1}^{+ \infty} \frac{\alpha^{2n}}{(n!)^2} \mathbb{E}\| E_{i,j}^{a,\beta}\asterisk :W^n_{\varepsilon}:\|_{L^2_{\ell_1,\ell_2}(\mathbb{R}^4)}^2\\
    & \lesssim & 
    \sum_{n = N + 1}^{+ \infty} \frac{\alpha^{2n}}{(n!)^2} \mathbb{E}[| E^{a,\beta}_{i,
    j} \ast : \bar{W}^n_{\varepsilon} :(0,0)|^2]
    \left( \int_{\mathbb{R}^4} [\rho_{\ell_1} (x) \rho_{\ell_2} (z)]^2 \mathd
    x \mathd z \right)\\
    & \lesssim & \sum_{n = N + 1}^{+ \infty} \frac{\alpha^{2n}}{(n!)}\int_{\mathbb{R}^8} E^{a, \beta}_{i, j} (x, z) E^{a-\kappa, \beta}_{i, j} (x',
    z')\\
    &  & \times \left( D_C + C \log_+ \left( \frac{1}{\sqrt{| x - x' |^2 + |
    z - z' |^2} \vee \varepsilon} \right) \right)^n \mathd x \mathd z \mathd
    x' \mathd z'\\
    & \longrightarrow & 0, \quad \tmop{as} N \rightarrow \infty,
  \end{eqnarray*}
  if we choose $\ell_1, \ell_2 > 2$, and $C>\frac{2}{4\pi^2}$ close enough to $\frac{2}{4\pi^2}$. So for any $\delta > 0$ we can choose an
  $N$ large enough such that
  \[ \sup_{0 < \varepsilon \leqslant 1} \left\{ \mathbb{E} \left[\| E^{a, \beta}_{i, j} \ast \bar{\mu}^{\alpha,
    N}_{\varepsilon} - E^{a, \beta}_{i, j} \ast \bar{\mu}^{\alpha}_{\varepsilon}
    \|^2_{L^{2}_{\ell_1,\ell_2}(\mathbb{R}^4)}\right] \right\}
     \leqslant \delta . \]
  Now from Theorem \ref{theorem:Lpconvergence}
  $\overline{\mathcal{E}}^{\varepsilon} (: W^n_{\varepsilon} :) =
  \bar{W}^{n}_{\varepsilon} \rightarrow : W^n :$ weakly in $L^r_{\ell_1, \ell_2}
  (\mathbb{R}^4)$ $\mathbb{P}$-almost surely. Since $E_{i, j}$ are smooth, it
  is not hard to deduce from the previous convergence that
  \[ E_{i, j}^{a,\beta} \ast :\bar{W}_{\varepsilon}^n: \rightarrow E_{i, j}^{a,\beta} \ast  : W^n : \]
  in $L^2 (\mathbb{R}^4)$, and almost surely, after
  extracting a suitable subsequence. Since
  \[ \sup_{\varepsilon} \left| \mathbb{E} \left[ \| E_{i, j}^{a, \beta} \ast
     \bar{W}^{n}_{\varepsilon} \|^{p}_{L_{\ell_1, \ell_2}^2 (\mathbb{R}^4)} \right]
     \right| < \infty, \]
  we can deduce by uniform integrability that
  \[ \mathbb{E} \left[ \| E_{i, j}^{a,\beta} \ast : \bar{W}_{\varepsilon}^n : - E_{i,
     j}^{a,\beta} \ast : W^n : \|^{2}_{L^2_{\ell_1, \ell_2} (\mathbb{R}^4)} \right]
     \rightarrow 0. \]
  Summing everything up we get
  \begin{eqnarray*}
    \lim_{\varepsilon \rightarrow 0} \mathbb{E} \left[ \| E_{i, j}^{a, \beta}
    \ast \bar{\mu}^{\alpha}_{\varepsilon} - E_{i, j}^{a, \beta} \ast
    \mu^{\alpha} \|^{2}_{L^2_{\ell_1, \ell_2} (\mathbb{R}^4)} \right] & \leqslant
    & \lim_{\varepsilon \rightarrow 0} \sum_{n = 0}^N \frac{\alpha^{2n}}{(n!)^2}
    \mathbb{E} \| : \bar{W}^n_{\varepsilon} : - : W^n : \|^{2}_{L^2_{\ell_1, \ell_2}
    (\mathbb{R}^4)} + 2 \delta\\
    & = & 2 \delta .
  \end{eqnarray*}
  Since $\delta > 0$ is arbitrary, this gives the statement for $p = 2$. Now
  to prove the full statement for $p > 2$ we proceed similarly: Again from
  equation {\eqref{eq:stochasticmumax1}} we deduce that for any $\delta > 0$
  there exists an $N$ such that that
  \begin{equation}
    \sup_{\varepsilon} \mathbb{E} \left[ \left| \mathbb{M}^{s_1, s_2}_{p,
    \ell_1,\ell_2, \varepsilon} (\mu_{\varepsilon}^{\alpha}, a, \beta)^p - \sum_{i \in
    \mathbb{N}_0, j \leqslant N} 2^{s_1 p i + s_2 p j} \int_{\mathbb{R}^4} (\rho_{\ell_1} (x)\rho_{\ell_2}(z))^p ((E^{a,
    \beta}_{i, j} \asterisk 
    \bar{\mu}_{\varepsilon}^{\alpha}) (x, z))^p \mathd x \mathd z \right|
    \right] \leqslant \delta . \label{eq:sum:cutoff}
  \end{equation}
  By the first step we have that
  \begin{equation}
    \begin{array}{lll}
      &  & \lim_{\varepsilon \rightarrow 0} \sum_{i \in \mathbb{N}_0, j
      \leqslant N} 2^{s_1 p i + s_2 p j} \int_{\mathbb{R}^4}(\rho_{\ell_1} (x) \rho_{\ell_2} (z))^p ((E^{a, \beta}_{i, j}
      \asterisk 
\bar{\mu}_{\varepsilon}^{\alpha}) (x, z))^p \mathd x \mathd z\\
      & = & \sum_{i \in \mathbb{N}_0, j \leqslant N} 2^{s_1 p i + s_2 p j}
      \int_{\mathbb{R}^4} (\rho_{\ell_1} (x)
      \rho_{\ell_2} (z))^p((E^{a, \beta}_{i, j} \asterisk  \mu^{\alpha}) (x, z))^p \mathd x \mathd z.
    \end{array} \label{eq:sum:finite}
  \end{equation}
  Combining {\eqref{eq:sum:cutoff}} and {\eqref{eq:sum:finite}} it is not hard
  to deduce that
  \begin{eqnarray*}
    &  & \lim_{\varepsilon \rightarrow 0} \mathbb{E} \left| \sum_{i, j \in
    \mathbb{N}_0} 2^{s_1 p i + s_2 p j} \int_{\mathbb{R}^4}(\rho_{\ell_1} (x) \rho_{\ell_2} (z))^p ((E^{a, \beta}_{i, j}
    \asterisk \bar{\mu}_{\varepsilon}^{\alpha}) (x, z))^p \mathd x \mathd z \right.\\
    &  & \left. - \sum_{i, j \in \mathbb{N}_0} 2^{s_1 p i + s_2 p j}
    \int_{\mathbb{R}^4} (\rho_{\ell_1} (x)
    \rho_{\ell_2} (z))^p((E^{a, \beta}_{i, j} \asterisk \mu^{\alpha}) (x, z))^p \mathd x \mathd z \right|.\\
    & = & 0
  \end{eqnarray*}
  This proves that $|\mathbb{M}^{s_1, s_2}_{p, \ell_1, \ell_2}
     (\bar{\mu}_{\varepsilon}^{\alpha}, a, \beta)|^p \rightarrow |\mathbb{M}^{s_1,
     s_2}_{p, \ell_1, \ell_2} (\mu^{\alpha}, a, \beta)|^p$ in $L^1(\Omega)$, which means in particular that $\mathbb{M}^{s_1, s_2}_{p, \ell_1, \ell_2}
     (\bar{\mu}_{\varepsilon}^{\alpha}, a, \beta) \rightarrow \mathbb{M}^{s_1,
     s_2}_{p, \ell_1, \ell_2} (\mu^{\alpha}, a, \beta)$ in probability and that the sequence $|\mathbb{M}^{s_1, s_2}_{p, \ell_1, \ell_2}
     (\bar{\mu}_{\varepsilon}^{\alpha}, a, \beta)|^p $ is uniformly integrable. By Vitali theorem (see \cite[Theorem 4.5.2]{Bogachev}) it follows that $\mathbb{M}^{s_1, s_2}_{p, \ell_1, \ell_2}
     (\bar{\mu}_{\varepsilon}^{\alpha}, a, \beta) \rightarrow \mathbb{M}^{s_1,
     s_2}_{p, \ell_1, \ell_2} (\mu^{\alpha}, a, \beta)$ in $L^p(\Omega,L^p_{l_1,l_2}(\Omega))$.
  
\end{proof}

\begin{remark}
  \label{remark:convergenceintegral}A consequence of proof of Theorem
  \ref{theorem:Lpconvergence} and the proof of Theorem
  \ref{theorem:convergencebar} is the following: consider $\frac{\alpha^2}{(4
  \pi)^2} < 2$ and $p \geqslant 2$ such that $\frac{\alpha^2}{(4 \pi)^2} (p -
  1) < 2$, then for any continuous function $f \in C^0 (\mathbb{R}^4)$
  decreasing at infinity faster then $\rho_{\ell}$ (for some $\ell > 4$) we
  have that $\int_{\mathbb{R}^4} f (x, z) \mu^{\alpha}_{\varepsilon} (\mathd
  x, \mathd z)$ converges to $\int_{\mathbb{R}^4} f (x, z) \mu^{\alpha}
  (\mathd x, \mathd z)$ in $L^p (\Omega)$.
\end{remark}

In the following theorem we want to analyze the relationship between
$\bar{\mu}^{\alpha}_{\varepsilon}$ (which, we recall, is defined on
$\mathbb{R}^4$) and its discrete counterpart $\mu^{\alpha}_{\varepsilon}$
defined on $\mathbb{R}^2 \times \varepsilon \mathbb{Z}^2$.

\begin{theorem}
  \label{theorem_convergenceepsilon}Under the same hypotheses on $\alpha$,
  $s_1$, $s_2$, $s$, $\ell_1$, etc. of Theorem \ref{theorem:convergencebar} we
  have
  \begin{eqnarray*}
    \mathbb{E} [| \mathbb{M}^{s_1, s_2}_{p, \ell_1, \ell_2, \varepsilon}
    (\mu_{\varepsilon}^{\alpha}, a, \beta) -\mathbb{M}^{s_1, s_2}_{p, \ell_1,
    \ell_2} (\bar{\mu}_{\varepsilon}^{\alpha}, a, \beta) |^p] & \rightarrow &
    0\\
    \mathbb{E} [| \mathbb{N}^s_{p, \ell, \varepsilon}
    (\mu_{\varepsilon}^{\alpha}, a, \beta) -\mathbb{N}^s_{p, \ell}
    (\bar{\mu}_{\varepsilon}^{\alpha}, a, \beta) |^p] & \rightarrow & 0,
  \end{eqnarray*}
  and so
  \[ \mathbb{M}^{s_1, s_2}_{p, \ell_1, \ell_2, \varepsilon}
     (\mu_{\varepsilon}^{\alpha}, a, \beta) \rightarrow \mathbb{M}^{s_1,
     s_2}_{p, \ell_1, \ell_2} (\mu^{\alpha}, a, \beta), \quad \mathbb{N}^s_{p,
     \ell, \varepsilon} (\mu_{\varepsilon}^{\alpha}, a, \beta) \rightarrow
     \mathbb{N}^s_{p, \ell} (\mu^{\alpha}, a, \beta), \]
  in $L^p (\Omega)$.
\end{theorem}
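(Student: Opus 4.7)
The second limit in each of the two statements follows from the first by the triangle inequality together with Theorem \ref{theorem:convergencebar}. The task therefore reduces to showing, in $L^p(\Omega)$, that $\mathbb{M}^{s_1,s_2}_{p,\ell_1,\ell_2,\varepsilon}(\mu^\alpha_\varepsilon,a,\beta)$ and $\mathbb{M}^{s_1,s_2}_{p,\ell_1,\ell_2}(\bar{\mu}^\alpha_\varepsilon,a,\beta)$ differ by a vanishing quantity (and analogously for $\mathbb{N}$). The structural identity making this possible is that the extension map $\overline{\mathcal{E}}^\varepsilon$, by \eqref{eq:definition:extension1} and Fubini, is an $L^p$-isometry between $L^p(\mathbb{R}^2 \times \varepsilon\mathbb{Z}^2)$ and its image (step functions in $z$) inside $L^p(\mathbb{R}^4)$. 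Up to weight-asymptotic corrections---negligible since $\rho_{\ell_k}(z)/\rho_{\ell_k}(z')\to 1$ uniformly for $z\in Q_\varepsilon(z')$---each lattice $L^p$ norm in the definition of $\mathbb{M}^{s_1,s_2}_{p,\ell_1,\ell_2,\varepsilon}$ can thus be rewritten as the $L^p(\mathbb{R}^4)$ norm of $\overline{\mathcal{E}}^\varepsilon(E_{i,j}^{a,\beta}\asterisk \rho_{\ell_1}\rho_{\ell_2}\mu^\alpha_\varepsilon)$, placing both functionals on the same footing.

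After this rewriting, the difference of the two functionals splits into two contributions. The first is the tail over $j>J_\varepsilon$, which appears only in $\mathbb{M}^{s_1,s_2}_{p,\ell_1,\ell_2}(\bar{\mu}^\alpha_\varepsilon)$; Theorem \ref{theorem:stochasticestimates1} bounds $2^{s_1 pi + s_2 pj}\,\mathbb{E}\|E_{i,j}^{a,\beta}\asterisk \rho\bar{\mu}^\alpha_\varepsilon\|_{L^p(\mathbb{R}^4)}^p$ uniformly in $\varepsilon$ by a sequence summable over $(i,j)$ under the stated hypotheses, so the restricted sum over $j>J_\varepsilon$ vanishes as $J_\varepsilon\to\infty$. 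The second contribution, for each $(i,j)$ with $j\leq J_\varepsilon$, is the index-by-index difference between $\overline{\mathcal{E}}^\varepsilon(E_{i,j}^{a,\beta}\asterisk \rho\mu^\alpha_\varepsilon)$ and $E_{i,j}^{a,\beta}\asterisk \rho\bar{\mu}^\alpha_\varepsilon$ in $L^p(\mathbb{R}^4)$. Writing both as integrals against $\mu^\alpha_\varepsilon$, the discrepancy is controlled pointwise by the $z$-oscillation of $E_{i,j}^{a,\beta}$ on scale $\varepsilon$, of order $\varepsilon 2^j$ times a comparable Gaussian-type kernel. For fixed $(i,j)$ this tends to zero with $\varepsilon$; alternatively, expanding $\mu^\alpha_\varepsilon = \sum_n (\alpha^n/n!) : W^n_\varepsilon :$ and invoking Theorem \ref{theorem:Lpconvergence} together with the identity $: \bar{W}^n_\varepsilon : = \overline{\mathcal{E}}^\varepsilon(: W^n_\varepsilon :)$ identifies the $L^p(\Omega)$ limit of both expressions as $E_{i,j}^{a,\beta}\asterisk : W^n :$.

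A dominated-convergence argument in the $(i,j)$-sum---using the stochastic bound as a uniform $\ell^p$-summable dominating sequence, and the per-block convergence just described---then yields the desired $L^p(\Omega)$ convergence for $\mathbb{M}$. The argument for $\mathbb{N}$ is analogous, but the lattice functional $\mathbb{N}^s_{p,\ell,\varepsilon}$ contains two additional sums over $r>J_\varepsilon$ with convolution only in $x$ or only in $z$; these have no counterpart in $\mathbb{N}^s_{p,\ell}(\bar{\mu}^\alpha_\varepsilon)$ and must be shown to vanish. This is done using Theorem \ref{label:thm-estimate-chaos}, whose bound $\mathbb{E}|\tilde{E}^{a,\beta}_r \asterisk_x \mu^\alpha_\varepsilon|^p \lesssim 2^{prs'}$ for any $s' > \alpha^2(p-1)/(4\pi)^2$ combines with the strict inequality $s > \alpha^2(p-1)/(4\pi)^2$ of the theorem to give summability tails that vanish as $J_\varepsilon \to \infty$.

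The hardest point is the per-block comparison for $j$ near $J_\varepsilon$: the pointwise bound on the discrepancy is of order $\varepsilon 2^j$, which is only $O(1)$ in this boundary regime, so no per-block smallness is available. This is circumvented by the $\ell^p$-summability provided by Theorems \ref{theorem:stochasticestimates1} and \ref{label:thm-estimate-chaos}: contributions from $(i,j)$ near the boundary $j \sim J_\varepsilon$ are uniformly small in the sum even when not small term-by-term, and dominated convergence takes over.
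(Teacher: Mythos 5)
Your proposal is correct and follows essentially the same strategy as the paper: reduce to a term-by-term comparison over the dyadic indices $(i,j)$ (resp. $r$ for $\mathbb{N}$), control the per-block discrepancy between the $\mathbb{R}^4$ integral and the lattice Riemann sum by the scale-$\varepsilon$ oscillation of the relevant quantities in the $z$-variable, and conclude by dominated convergence over the index sum, with the stochastic estimates of Theorems~\ref{theorem:stochasticestimates1} and \ref{label:thm-estimate-chaos} supplying the summable dominating sequence. The one cosmetic difference is the organizing lens: you push the lattice integrals into $\mathbb{R}^4$ via the $L^p$-isometry of $\overline{\mathcal{E}}^\varepsilon$ and then measure the $z$-oscillation of the \emph{kernel} $E^{a,\beta}_{i,j}$ (order $\varepsilon\,2^j$), whereas the paper stays with the lattice Riemann sum and bounds the per-block error via the random H\"{o}lder seminorm $\| E_{i,j}\ast\bar{\mu}^{\alpha}_{\varepsilon}\|_{L^\infty_{-\ell_1}(\mathbb{R}^2,\;C^{\delta}_{-\ell_2}(\mathbb{R}^2))}$ of the \emph{convolution}, then applies H\"{o}lder's inequality in $\omega$ to make the per-term $L^1(\Omega)$-convergence rigorous. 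These are two faces of the same estimate. Two small inaccuracies in your write-up that do not affect correctness: the extra ``tail'' terms you flag for $\mathbb{N}^s_{p,\ell,\varepsilon}$ (the $\tilde{E}^{a,\beta}$ sums for $r\ge J_\varepsilon$) sit on the \emph{lattice} side, while the continuum $\mathbb{N}^s_{p,\ell}$ simultaneously has its own $r>J_\varepsilon$ tail with the kernels $E^{a,\beta}_{r,r}$ — both tails vanish by summability, so the conclusion is unchanged but the bookkeeping is two-sided; and your worry about ``no per-block smallness for $j\sim J_\varepsilon$'' is moot for the statement at hand, since dominated convergence only needs, for each \emph{fixed} $(i,j)$, convergence as $\varepsilon\to 0$ (and $J_\varepsilon\to\infty$), which your own bound $\varepsilon\,2^j\to 0$ already delivers.
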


\begin{proof}
  We only prove the statement for $\mathbb{M}^{s_1, s_2}_{p, \ell,
  \varepsilon}$, the proof for $\mathbb{N}^s_{p, \ell, \varepsilon}$ is
  analogous. By definition of $\mathbb{M}^{s_1, s_2}_{p, \ell, \varepsilon}$
  we have
  \begin{eqnarray*}
    &  & \mathbb{E} | \mathbb{M}^{s_1, s_2}_{p, \ell, \varepsilon}
    (\mu_{\varepsilon}^{\alpha}, a, \beta)^p -\mathbb{M}^{s_1, s_2}_{p, \ell}
    (\bar{\mu}_{\varepsilon}^{\alpha}, a, \beta)^p |\\
    & = & \sum_{i, j} 2^{s_1 p i + s_2 p j} \mathbb{E} \left[ \left|
    \int_{\mathbb{R}^4} (\rho_{\ell_1} \rho_{\ell_2})^p | E_{i, j}^{a,\beta} \ast
\bar{\mu}^{\alpha}_{\varepsilon} |^p \mathd z \mathd x -
\int_{\mathbb{R}^2 \times \varepsilon \mathbb{Z}^2} (\rho_{\ell_1}
    \rho_{\ell_2})^p | E_{i, j}^{a,\beta} \ast \mu^{\alpha}_{\varepsilon} |^p \mathd x
    \mathd z \right| \right]\\
    & \leqslant & \sum_{i, j} 2^{s_1 p i + s_2 p j} \left( \mathbb{E} \left[
    \left| \int_{\mathbb{R}^4} (\rho_{\ell_1} \rho_{\ell_2})^p | E_{i, j}^{a,\beta} \ast
    \bar{\mu}^{\alpha}_{\varepsilon} |^p \mathd z \mathd x \right| \right]
    +\mathbb{E} \left[ \left| \int_{\mathbb{R}^2 \times \varepsilon
    \mathbb{Z}^2} (\rho_{\ell_1} \rho_{\ell_2})^p | E_{i, j}^{a,\beta} \ast
    \mu^{\alpha}_{\varepsilon} |^p \mathd x \mathd z \right| \right] \right)\\
    & = & \mathbb{E} [| \mathbb{M}^{s_1, s_2}_{p, \ell_1, \ell_2}
    (\bar{\mu}_{\varepsilon}^{\alpha}, a, \beta) |^p] +\mathbb{E} [|
    \mathbb{M}^{s_1, s_2}_{p, \ell_1, \ell_2, \varepsilon}
    (\mu_{\varepsilon}^{\alpha}, a, \beta) |^p],
  \end{eqnarray*}
  so applying dominated convergence to the sum it is enough to show that
  \begin{equation}
    \mathbb{E} \left[ \left| \int_{\mathbb{R}^4} (\rho_{\ell_1} \rho_{\ell_2})^p |
    E_{i, j}^{a,\beta} \ast \bar{\mu}^{\alpha}_{\varepsilon} |^p \mathd z \mathd x -
    \int_{\mathbb{R}^2 \times \varepsilon \mathbb{Z}^2} (\rho_{\ell_1}
    \rho_{\ell_2})^p | E_{i, j}^{a,\beta} \ast \mu^{\alpha}_{\varepsilon} |^p \mathd x
    \mathd z \right| \right] \rightarrow 0 \label{eq:mumubar1}.
  \end{equation}
  To prove this observe that
  \[ \left| \int_{\mathbb{R}^4} (\rho_{\ell_1} \rho_{\ell_2})^p | E_{i, j}^{a,\beta} \ast
     \bar{\mu}^{\alpha}_{\varepsilon} |^p (x, z) \mathd x \mathd z -
     \varepsilon^2 \sum_{z \in \varepsilon \mathbb{Z}^2} \int_{\mathbb{R}^2}
     (\rho_{\ell_1} \rho_{\ell_2})^p | E_{i, j}^{a,\beta} \ast
     \bar{\mu}^{\alpha}_{\varepsilon} |^p (x, z) \mathd x \right| \rightarrow
     0 \quad \tmop{as} \varepsilon \rightarrow 0 \]
  almost surely since we have that
  \[ \| \rho_{\ell_2} \|_{\mathcal{C}^{\delta}_{- \ell_2} (\mathbb{R}^2)}, \|
     | E_{i, j}^{a,\beta} \ast \bar{\mu}^{\alpha}_{\varepsilon} (\omega) | (x, z)
     \|_{L^{\infty}_{- \ell_1} (\mathbb{R}^2, C^{\delta}_{- \ell_2}
     (\mathbb{R}^2))} \lesssim C_{i, j} (\omega) \]
  where $C_{i, j} (\omega)$ is an almost surely finite positive random
  variable.\\
  
  Indeed for any $x \in \mathbb{R}^2$ we get
  \begin{eqnarray*}
    &  & \left| \int_{\mathbb{R}^4} (\rho_{\ell_1} \rho_{\ell_2})^p | E_{i, j}^{a,\beta}
    \ast \bar{\mu}^{\alpha}_{\varepsilon} |^p (x, z) \mathd x \mathd z -
    \varepsilon^2 \sum_{z \in \varepsilon \mathbb{Z}^2} \int_{\mathbb{R}^2}
    (\rho_{\ell_1} \rho_{\ell_2})^p | E_{i, j}^{a,\beta} \ast \mu^{\alpha}_{\varepsilon} |^p
    (x, z) \mathd x \right|\\
    & = & \left| \int_{\mathbb{R}^2} \sum_{z \in \varepsilon \mathbb{Z}^2}
    \rho_{\ell_1}^p \left( \int_{y \in Q_{\varepsilon} (z)} \rho_{\ell_2}^p (y) |
    E_{i, j}^{a,\beta} \ast \bar{\mu}^{\alpha}_{\varepsilon} |^p \left( x, y \right)
    \mathd y \right) \mathd x - \varepsilon^2 \sum_{z \in \varepsilon
    \mathbb{Z}^2} \int_{\mathbb{R}^2} (\rho_{\ell_1} \rho_{\ell_2})^p | E_{i, j}^{a,\beta}
    \ast \mu^{\alpha}_{\varepsilon} |^p (x, z) \mathd x \right|\\
    & = & \left| \int_{\mathbb{R}^2} \sum_{z \in \varepsilon \mathbb{Z}^2}
    \rho_{\ell_1}^p (x) \left[ \int_{y \in Q_{\varepsilon} (z)} \left(
    \rho_{\ell_2}^p (y) \left| E_{i, j}^{a,\beta} \ast \bar{\mu}^{\alpha}_{\varepsilon}
    \right|^p (x, y) \mathd y - \rho_{\ell_2}^p (z) | E_{i, j} ^{a,\beta}\ast
    \mu^{\alpha}_{\varepsilon} |^p (x, z) \right) \mathd y \right] \mathd x
    \right|\\
    & = & \left| \int_{\mathbb{R}^2} \sum_{z \in \varepsilon \mathbb{Z}^2}
    \rho_{\ell_1}^p (x) \left[ \int_{y \in Q_{\varepsilon} (z)} (\rho_{\ell_2}^p
    (y) - \rho_{\ell_2}^p (z)) | E_{i, j}^{a,\beta} \ast \bar{\mu}^{\alpha}_{\varepsilon}
    |^p (x, y)) \mathd y + \right. \right.\\
    &  & \left. \left. + \int_{y \in Q_{\varepsilon} (z)} \rho_{\ell_2}^p (z)
    (| E_{i, j}^{a,\beta} \ast \bar{\mu}^{\alpha}_{\varepsilon} |^p (x, y) - | E_{i, j}^{a,\beta}
    \ast \mu^{\alpha}_{\varepsilon} |^p (x, z)) \mathd y \right] \mathd x
    \right|\\
    & \leqslant & \left| \int_{\mathbb{R}^2} \sum_{z \in \varepsilon
    \mathbb{Z}^2} \rho_{\ell_1}^p (x) \int_{y \in Q_{\varepsilon} (z)}
    (\rho_{\ell_2}^p (y) - \rho_{\ell_2}^p (z) | E_{i, j}^{a,\beta} \ast
    \bar{\mu}^{\alpha}_{\varepsilon} |^p (x, y) \mathd y) \mathd x \right| +\\
    &  &  \left| \sum_{z \in \varepsilon \mathbb{Z}^2} \int_{\mathbb{R}^2}
    (\rho_{\ell_1} \rho_{\ell_2})^p \int_{y \in Q_{\varepsilon} (z)} (| E_{i, j}^{a,\beta}
    \ast \bar{\mu}^{\alpha}_{\varepsilon} |^{p - 1} (x, y) + | E_{i, j}^{a,\beta} \ast
    \mu^{\alpha}_{\varepsilon} |^{p - 1} (x, z)) \times \right.\\
    &  & \left. \left. \times (| E_{i, j}^{a,\beta} \ast
    \bar{\mu}^{\alpha}_{\varepsilon} | (x, y) - | E_{i, j}^{a,\beta} \ast
    \bar{\mu}^{\alpha}_{\varepsilon} | (x, z)) \mathd y \mathd x 
    \phantom{\int} \right|. \right.
  \end{eqnarray*}
  Now the first term can be estimated by
  \begin{eqnarray*}
    &  & \varepsilon^{\delta} \| \rho_{\ell_1} \|_{\mathcal{C}_{-
    \ell_2}^{\delta} (\mathbb{R}^2)} \int_{\mathbb{R}^2} \sum_{z \in
    \varepsilon \mathbb{Z}^2} \int_{y \in Q_{\varepsilon} (z)} ((\rho_{\ell_1}
    (x) \rho_{\ell_2} (y))^p | E_{i, j}^{a,\beta} \ast \bar{\mu}^{\alpha}_{\varepsilon} |^p
    (x, y) \mathd y) \mathd x\\
    & = & \varepsilon^{\delta} \| \rho_{\ell_1} \|_{\mathcal{C}_{-
    \ell_2}^{\delta} (\mathbb{R}^2)} \int_{\mathbb{R}^4} \rho_{\ell_1} (x)
    \rho_{\ell_2} (z) | E_{i, j}^{a,\beta} \ast \bar{\mu}^{\alpha}_{\varepsilon} |^p (z,
    x)  \mathd x \mathrm{d}z.
  \end{eqnarray*}
  For the second term we get the bound
  \begin{eqnarray*}
    &  & \sum_{z \in \varepsilon \mathbb{Z}^2} \int_{y \in Q_{\varepsilon}
    (z)} \rho_{\ell_2}^p (z) (| E_{i, j} ^{a,\beta}\ast \bar{\mu}^{\alpha}_{\varepsilon}
    |^{p - 1} (x, y) + | E_{i, j}^{a,\beta} \ast \bar{\mu}^{\alpha}_{\varepsilon} |^{p -
    1} (x, z))\\
    &  & (| E_{i, j}^{a,\beta} \ast \bar{\mu}^{\alpha}_{\varepsilon} | (x, y) - | E_{i,
    j}^{a,\beta} \ast \bar{\mu}^{\alpha}_{\varepsilon} | (x, z)) \mathd y\\
    & \leqslant & \varepsilon^{\delta} \sum_{z \in \varepsilon \mathbb{Z}^2}
    \| | E_{i, j}^{a,\beta} \ast \bar{\mu}^{\alpha}_{\varepsilon}  | (z)
    \|_{L^{\infty} (\mathbb{R}^2, C^{\delta}_{- \ell_2} (\mathbb{R}^2))}
    \rho_{\ell_1}^{2p} (z) \int_{y \in Q_{\epsilon} (z)} (| E_{i, j}^{a,\beta} \ast
    \bar{\mu}^{\alpha}_{\varepsilon} |^{p - 1} (x, y) + | E_{i, j}^{a,\beta} \ast
    \bar{\mu}^{\alpha}_{\varepsilon} |^{p - 1} (x, z)) \mathd y\\
    & \leqslant & \varepsilon^{\delta}\sum_{z\in\varepsilon\mathbb{Z}^2} \int_{\mathbb{R}^2} \rho^{2p}_{\ell_1} (y) (|
    E_{i, j}^{a,\beta} \ast \bar{\mu}^{\alpha}_{\varepsilon} |^{p - 1} (y) + | E_{i, j}^{a,\beta}
    \ast \bar{\mu}^{\alpha}_{\varepsilon} |^{p - 1} (z)) \| | E_{i, j}^{a,\beta} \ast
    \bar{\mu}^{\alpha}_{\varepsilon}  | \|_{C^{\delta}_{- \ell_2}}\mathrm{d} y\\
    & = & \varepsilon^{\delta} \| E_{i, j}^{a,\beta} \ast
    \bar{\mu}^{\alpha}_{\varepsilon} \|^{p - 1}_{L^{p - 1}} \| E_{i, j}^{a,\beta} \ast
    \bar{\mu}^{\alpha}_{\varepsilon} \|_{C^{\delta}_{- \ell}},
  \end{eqnarray*}
  and, by H{\"o}lder inequality,
  \[ \mathbb{E} [\| E_{i, j}^{a,\beta} \ast \bar{\mu}^{\alpha}_{\varepsilon} \|^{p -
     1}_{L^{p - 1}} \| E_{i, j}^{a,\beta} \ast \bar{\mu}^{\alpha}_{\varepsilon} 
     (z) \|_{C^{\delta}_{- \ell}}] \leqslant \mathbb{E} [\| E_{i, j}^{a,\beta} \ast
     \bar{\mu}^{\alpha}_{\varepsilon} \|^p_{L^{p - 1}}]^{\frac{p - 1}{p}}
     \mathbb{E} [\| E_{i, j}^{a,\beta} \ast \bar{\mu}^{\alpha}_{\varepsilon} (\omega)
     (z) \|^p_{C^{\delta}_{- \ell}}]^{1 / p} . \]
\end{proof}

\subsection{Convergence of the periodic approximation}\label{section:periodic}

In this section we discuss a periodic approximation of the measure
$\mu^{\alpha}_{\varepsilon}$ (with respect to the $\varepsilon \mathbb{Z}^2$
variables) that we will use in Section \ref{sec:stochastic-quantization}.

First we introduce a periodized version of $\xi_{\varepsilon}$ in the
following way: for any $N \in \mathbb{N}$ even number, we denote by
$[z]_{\varepsilon, N} \in \left[ - \frac{N \varepsilon}{2 }, \frac{N \varepsilon}{2} \right)^2$ the following number: let  $r_{i, z}$ be the reminder of the integer  division
$\frac{z_i}{\varepsilon} = N q_{i,z} + r_{i,z}$ (where $0 \leqslant r_{i,z} < N$, $q_{i,z} \in
\mathbb{Z}$ and $i \in \{ 1, 2 \}$), then we set $[z]_{\varepsilon, N} = \left(
\varepsilon r_{1, z} - \frac{N\varepsilon}{2}, \varepsilon r_{2, z} - \frac{N
\varepsilon}{2} \right)$. Using this notation we can introduce the periodized noise
\[ \xi_{\varepsilon, N} (f \otimes \delta_z) = \xi_{\varepsilon} (f \otimes
   \delta_{[z]_{\varepsilon, N}}) . \]
It is simple to see that for $z \in \left[ - \frac{N \varepsilon}{2}, \frac{N
\varepsilon}{2} \right)^2 \cap \varepsilon \mathbb{Z}^2$ we have
\[ \xi_{\varepsilon, N} (f \otimes \delta_z) = \xi_{\varepsilon} (f \otimes
   \delta_z), \]
and $\xi_{\varepsilon, N}$ is $\varepsilon N$ periodic with respect to the
variables in $\varepsilon \mathbb{Z}^2$. We write
\[ W_{\varepsilon, N} = (- \Delta_{\mathbb{R}^2} - \Delta_{\varepsilon
   \mathbb{Z}^2} + m)^{- 1} (\xi_{\varepsilon, N}) \]
and then we introduce the positive measure
\[ \mu^{\alpha}_{\varepsilon, N} = \sum_{n = 0}^{+ \infty} \frac{\alpha^n}{n!} :
   W_{\varepsilon, N}^n : = \exp \left( \alpha W_{\varepsilon, N} -
   \frac{\alpha^2}{2} \mathbb{E} [(W_{\varepsilon, N})^2] \right) . \]
It is useful also to write
\[ \mathcal{G}_{\varepsilon, N} (x, z) = \sum_{k \in \left(
   \mathbb{T}_{\frac{1}{\varepsilon}} \cap \frac{\pi}{N}
   \mathbb{Z} \right)^2} \frac{1}{N^2 (2 \pi)^2} \int_{\mathbb{R}^2}
   \frac{e^{- i (x \cdot y + k \cdot z)}}{\left( | y |^2 + 4 \varepsilon^{- 2}
   \sin^2 \left( \frac{\varepsilon k_1}{2} \right) + 4 \varepsilon^{- 2}
   \sin^2 \left( \frac{\varepsilon k_2}{2} \right) + m^2 \right)^2} \mathd y.
\]
We remark that
\[ \mathbb{E} [W_{\varepsilon, N} (x, z) W_{\varepsilon, N} (0, 0)] =
   \mathcal{G}_{\varepsilon, N} (x, z) . \]
\begin{lemma}
  The function $\mathcal{G}_{\varepsilon, N} (x, z)$ is uniformly bounded in
  $N$ and, for any $\varepsilon > 0$, it converges pointwise to
  $\mathcal{G}_{\varepsilon}$.
\end{lemma}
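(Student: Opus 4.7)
The plan is to recognize $\mathcal{G}_{\varepsilon,N}(x,z)$ as a Riemann-sum approximation, in the $k$-variable, of the integral defining $\mathcal{G}_\varepsilon(x,z)$, and then to deduce both assertions from the continuity and uniform boundedness of the integrand on the compact torus $\mathbb{T}^2_{1/\varepsilon}$.

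First I would handle the inner $y$-integral separately, setting
\[
h_\varepsilon(x,k) \assign \int_{\mathbb{R}^2}\frac{e^{-i x\cdot y}}{\bigl(|y|^2+\sigma_{-\Delta_{\varepsilon\mathbb{Z}^2}}(k)+m^2\bigr)^2}\,\mathd y,
\]
with $\sigma_{-\Delta_{\varepsilon\mathbb{Z}^2}}(k)=4\varepsilon^{-2}\sin^2(\varepsilon k_1/2)+4\varepsilon^{-2}\sin^2(\varepsilon k_2/2)\geq 0$. For every fixed $\varepsilon>0$ and $x\in\mathbb{R}^2$ the function $k\mapsto h_\varepsilon(x,k)$ is continuous on the compact torus $\mathbb{T}^2_{1/\varepsilon}$ and, by bounding the denominator from below by $(|y|^2+m^2)^2$ and performing the resulting two-dimensional $y$-integral, satisfies the pointwise estimate $|h_\varepsilon(x,k)|\leq C/m^2$, uniformly in $x$ and $k$. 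In these terms both $\mathcal{G}_\varepsilon(x,z)$ and $\mathcal{G}_{\varepsilon,N}(x,z)$ take the form, respectively, of an integral and of a Riemann sum on a uniform mesh of size $O(1/N)$ of the continuous integrand $k\mapsto e^{-ik\cdot z}h_\varepsilon(x,k)$ over $\mathbb{T}^2_{1/\varepsilon}$.

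The pointwise convergence $\mathcal{G}_{\varepsilon,N}(x,z)\to\mathcal{G}_\varepsilon(x,z)$ then follows from the classical Riemann-sum convergence theorem for continuous functions on a compact domain. For the uniform boundedness in $N$, estimating in absolute value and invoking the pointwise bound $|h_\varepsilon(x,k)|\leq C/m^2$ reduces the matter to a Riemann sum of a uniformly bounded continuous function over the fixed-volume set $\mathbb{T}^2_{1/\varepsilon}$, which yields a bound of the form $C'(\varepsilon,m)$ independent of $N$ (and in fact of $x,z$).

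The only delicate point, and the main potential obstacle, is a normalization check: one must verify that the prefactor $1/(N^2(2\pi)^2)$ in the definition of $\mathcal{G}_{\varepsilon,N}$, together with the $(2\pi)^{-2}$ associated with the Fourier inversion on $\mathbb{T}^2_{1/\varepsilon}$, corresponds to the correct Lebesgue cell volume of the lattice $(\mathbb{T}_{1/\varepsilon}\cap\tfrac{\pi}{\varepsilon N}\mathbb{Z})^2$. This is a bookkeeping verification traced back to the construction of the periodization $\xi_{\varepsilon,N}$ via the remainder operation $[\cdot]_{\varepsilon,N}$ and its Fourier expansion; it carries no genuine analytic difficulty since the integrand in $k$ is smooth and the domain is compact.
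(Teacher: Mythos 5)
Your proposal is correct and takes essentially the same approach as the paper: you bound the denominator from below by $(|y|^2+m^2)^2$ (using $\sigma_{-\Delta_{\varepsilon\mathbb{Z}^2}}\geq 0$) to obtain a uniform bound on the summands and hence on $\mathcal{G}_{\varepsilon,N}$, and you deduce pointwise convergence from the convergence of Riemann sums to the integral defining $\mathcal{G}_\varepsilon$ for a continuous integrand on the compact $\mathbb{T}^2_{1/\varepsilon}$. Your version is a bit more explicit than the paper's (isolating the inner $y$-integral $h_\varepsilon$ and stating its continuity and bound $\pi/m^2$), but there is no substantive difference; the normalization cross-check you flag is indeed the only bookkeeping point, and it does not affect the argument.
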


\begin{proof}
  We have that
  \[ | \mathcal{G}_{\varepsilon, N} | \leqslant \frac{1}{\varepsilon^2 (2
     \pi)^2} \int_{\mathbb{R}^2} \frac{1}{(| y |^2 + m^2)^2} \mathd y. \]
  The convergence of $\mathcal{G}_{\varepsilon, N} (x, z)$ follows from the
  convergence of the Riemann sums to the Riemann integral and the fact that
  the integrands defining $\mathcal{G}_{\varepsilon, N}$ and
  $\mathcal{G}_{\varepsilon}$ are smooth and summable.
\end{proof}

\begin{proposition}
  \label{proposition:convergenceperiodic}For any $\varepsilon > 0$, $s < 0$,
  $s_1 \in \mathbb{R}$, $s_2 < 0$, $p \in [2, + \infty]$, $a \in
  \mathbb{R}_+$, $0 < \beta < 1$, and $\ell > 4$ we have that
  \[ \begin{array}{lll}
       \mathbb{E} [| \mathbb{M}^{s_1, s_2}_{p, \ell, \varepsilon}
       (\mu_{\varepsilon, N}^{\alpha}, a, \beta) -\mathbb{M}^{s_1, s_2}_{p,
       \ell, \varepsilon} (\mu_{\varepsilon}^{\alpha}, a, \beta) |^2] &
       \rightarrow & 0\\
       \mathbb{E} [| \mathbb{N}^s_{p, \ell, \varepsilon} (\mu_{\varepsilon,
       N}^{\alpha}, a, \beta) -\mathbb{N}^s_{p, \ell, \varepsilon}
       (\bar{\mu}_{\varepsilon}^{\alpha}, a, \beta) |^2] & \rightarrow & 0,
     \end{array} \]
  as $N \rightarrow + \infty$.
\end{proposition}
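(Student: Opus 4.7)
The plan is to mirror the two-step strategy used in the proofs of Theorem \ref{theorem:convergencebar} and Theorem \ref{theorem_convergenceepsilon}: reduce by hypercontractivity to $p = 2$, truncate the Wick chaos expansion at some level $N_0$, control the tail uniformly in $N$ by stochastic a priori estimates, and handle the remaining finite sum via pointwise convergence of covariance kernels combined with dominated convergence. Once convergence has been proved for each fixed $(i,j)$, a further dominated convergence argument (in the summation over $(i,j)$ that defines $\mathbb{M}$ and $\mathbb{N}$) will conclude the proof.

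The first ingredient is a uniform-in-$N$ analogue of Theorem \ref{theorem:stochasticestimates1} and Theorem \ref{label:thm-estimate-chaos} for $\mu_{\varepsilon,N}^\alpha$. Because $\xi_{\varepsilon,N}$ is obtained from $\xi_\varepsilon$ by periodization of the lattice variable, the covariance of $W_{\varepsilon,N}$ equals the periodized Green's function
\[
\mathcal{G}_{\varepsilon,N}(x,z) \;=\; \sum_{k \in \mathbb{Z}^2} \mathcal{G}_\varepsilon(x,\, z - \varepsilon N k).
\]
For fixed $\varepsilon > 0$ the Green's function $\mathcal{G}_\varepsilon$ decays exponentially at infinity (thanks to the mass $m > 0$), so the sum converges and $\mathcal{G}_{\varepsilon,N}$ inherits the same logarithmic bound as $\mathcal{G}_\varepsilon$ near the origin (the $k \neq 0$ contributions being uniformly bounded) and the same decay at infinity, both uniformly in $N$. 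Plugging this uniform bound into the proof of Theorem \ref{theorem:stochasticestimates1} (only the covariance estimate of Theorem \ref{theorem:green1} enters) yields $\mathbb{E}|E_{i,j}^{a,\beta} \ast \mu_{\varepsilon,N}^\alpha|^p(x) \lesssim 2^{p[(s_1-\delta/2)i + (s_2-\delta/2)j]}$ with constants independent of $N$, and similarly for the one-variable kernels $\tilde{E}_i^{a,\beta}, \tilde{E}_{J_\varepsilon}^{a,\beta}$.

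The second ingredient is the pointwise convergence of covariance kernels. Since $\xi_{\varepsilon,N}$ and $\xi_\varepsilon$ are constructed from the same underlying white noise $\xi$, a direct calculation gives the cross-covariance
\[
\mathcal{K}_N(x,z) \;\assign\; \mathbb{E}\big[W_{\varepsilon,N}(x,z)\, W_\varepsilon(0,0)\big] \;=\; \sum_{k \in \mathbb{Z}^2} \mathcal{G}_\varepsilon(x,\, z - \varepsilon N k),
\]
so both $\mathcal{G}_{\varepsilon,N}$ and $\mathcal{K}_N$ converge pointwise to $\mathcal{G}_\varepsilon$ as $N \to \infty$ and are dominated uniformly in $N$. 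By Wick's theorem, for each fixed $n$ and $(i,j)$,
\[
\mathbb{E}\bigl|E_{i,j}^{a,\beta}\ast(:W_{\varepsilon,N}^n: -\, :W_\varepsilon^n:)\bigr|^2(0) \;=\; n! \int E_{i,j}^{a,\beta}(y)\,E_{i,j}^{a,\beta}(y')\bigl[\mathcal{G}_{\varepsilon,N}^n - 2\mathcal{K}_N^n + \mathcal{G}_\varepsilon^n\bigr](y-y')\,dy\,dy',
\]
and dominated convergence sends this to zero as $N \to \infty$.

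Combining the two ingredients as in the proof of Theorem \ref{theorem:convergencebar} (truncate the chaos expansion at some $N_0$, use the first ingredient to make both tails smaller than $\delta/2$ uniformly in $N$, and pass to the limit on the finite sum using the second) yields $\mathbb{E}\|E_{i,j}^{a,\beta}\ast(\mu_{\varepsilon,N}^\alpha - \mu_\varepsilon^\alpha)\|_{L^p_\ell}^2 \to 0$ for each fixed $(i,j)$. The uniform estimates of the first step, applied with slightly better exponents $s_1' > s_1$, $s_2' > s_2$ (available because $s_2 < 0$ and $s < 0$ leave an unused margin), then provide a dominating sequence allowing one to pass to the limit in the $(i,j)$-summation defining $\mathbb{M}^{s_1,s_2}_{p,\ell,\varepsilon}$ and $\mathbb{N}^s_{p,\ell,\varepsilon}$; hypercontractivity lifts the result from $p = 2$ to general $p \geq 2$. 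The main obstacle is the first step: the boundedness statement for $\mathcal{G}_{\varepsilon,N}$ given by the preceding lemma must be sharpened to the logarithmic bound at the origin needed to rerun the proof of Theorem \ref{theorem:stochasticestimates1}, and this requires a careful use of the exponential decay of $\mathcal{G}_\varepsilon$ to show that the $k \neq 0$ terms in the periodic sum are uniformly bounded in $N$.
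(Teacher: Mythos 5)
Your overall plan—reduce to convergence of $E_{i,j}\ast\mu^\alpha_{\varepsilon,N}$ for each fixed $(i,j)$, then sum via dominated convergence—is the same template the paper uses. But the detailed execution differs from the paper's in ways that matter, and your self-identified "main obstacle" is a red herring.

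The key simplification you miss is that here $\varepsilon > 0$ is \emph{fixed}, so you do not need the logarithmic bound on $\mathcal{G}_{\varepsilon,N}$ uniform in $N$ that would be required to rerun Theorem \ref{theorem:stochasticestimates1}. The lemma immediately preceding the Proposition gives the crude bound $|\mathcal{G}_{\varepsilon,N}(x,z)| \leq C_{\varepsilon}$ with $C_{\varepsilon} = \frac{1}{\varepsilon^2(2\pi)^2}\int\frac{dy}{(|y|^2+m^2)^2}$, uniform in $N$ (and in $(x,z)$). This blunt bound already yields, via Wick's theorem,
\[
\mathbb{E}\Big[\Big(\int g\,d\mu^\alpha_{\varepsilon,N}\Big)^2\Big] = \int g(y)g(y')\,e^{\alpha^2\mathcal{G}_{\varepsilon,N}(y-y')}\,dy\,dy' \lesssim e^{\alpha^2 C_{\varepsilon}}\|g\|_{L^1}^2
\]
uniformly in $N$, which is all the a priori control one needs. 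In particular $\mathbb{E}\|E_{i,j}\ast(\rho\mu^\alpha_{\varepsilon,N})\|^2_{L^2_{\ell}}$ is bounded uniformly in $i$, $j$, and $N$. Since at fixed $\varepsilon$ the index $j$ ranges over the finite set $\{-1,\dots,J_{\varepsilon}\}$ and the hypothesis $s_2 < 0$ provides geometric decay in $i$, the weighted $(i,j)$-sum is trivially summable---no multifractal decay estimate on $\mu^\alpha_{\varepsilon,N}$ is needed. This also obviates the chaos-truncation step and the appeal to hypercontractivity: because the covariance is uniformly bounded, one can work directly with the exponential $e^{\alpha^2\mathcal{G}}$ rather than expanding and truncating; the passage from pointwise $L^2$ convergence to $L^p$ convergence is then handled by a plain $L^\infty$-type bound $\|E_{i,j}\ast\mu\|_{L^\infty_{\ell_1,\ell_2}}\lesssim\|E_{i,j}\|_{L^\infty}\|\rho\mu\|_{M}$ together with dominated convergence.

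A second, more minor issue: your explicit periodization formula for the cross-covariance, $\mathcal{K}_N(x,z) = \sum_{k\in\mathbb{Z}^2}\mathcal{G}_\varepsilon(x, z - \varepsilon N k)$, is not correct---that sum is the autocovariance $\mathcal{G}_{\varepsilon,N}$ of the periodized field, not the cross-covariance $\mathbb{E}[W_{\varepsilon,N}(x,z)W_\varepsilon(0,0)]$. The paper avoids computing the cross-covariance in closed form: it only needs that $\mathbb{E}[W_{\varepsilon,N}(x,z)W_\varepsilon(x',z')] \to \mathcal{G}_\varepsilon(x-x',z-z')$ pointwise, and this follows from a.s.\ pointwise convergence $W_{\varepsilon,N}\to W_\varepsilon$ plus the domination $W_{\varepsilon,N}W_\varepsilon \leq W_{\varepsilon,N}^2 + W_\varepsilon^2$, with no explicit formula required. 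You should adopt this soft argument instead of relying on your closed-form identity.
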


\begin{proof}
  Arguing as in the proof of Theorem \ref{theorem_convergenceepsilon} it is
  enough to show that
  \[ \mathbb{E} \| E_{i, j}^{a,\beta} \ast \mu_{\varepsilon, N}^{\alpha} - E_{i, j}^{a,\beta}
     \ast \mu_{\varepsilon}^{\alpha} \|^2_{L_{\ell_1, \ell_2}^p (\mathbb{R}^2
     \times \varepsilon \mathbb{Z}^2)} \rightarrow 0 \]
  as $N \rightarrow \infty$. Observe that for any $g \in L^1 (\mathbb{R}^2
  \times \varepsilon \mathbb{Z}^2)$ we have
  \begin{eqnarray*}
    &  & \mathbb{E} \left[ \left( \int_{\mathbb{R}^2 \times \varepsilon
    \mathbb{Z}^2} g (x, z) \mathd \mu^{\alpha}_{\varepsilon, N} \right)^2
    \right]\\
    & = & \int_{(\mathbb{R}^2 \times \varepsilon \mathbb{Z}^2)^2} g (x, z) g
    (x', z') e^{\alpha^2 \mathcal{G}_{\varepsilon, N} (x - x', z - z')} \mathd
    x \mathd x' \mathd z \mathd z'\\
    & \lesssim & \exp \left( \frac{1}{\varepsilon^2} \right) \| g \|_{L^1
    (\mathbb{R}^2 \times \varepsilon Z^2)}^2,
  \end{eqnarray*}
  uniformly in $N \in \mathbb{N}$. Furthermore by the binomial formula
  \begin{align*} &\mathbb{E} \left[ \left( \int_{\mathbb{R}^2 \times \varepsilon
     \mathbb{Z}^2} g (x, z) \mathd \mu^{\alpha}_{\varepsilon, N} -
     \int_{\mathbb{R}^2 \times \varepsilon \mathbb{Z}^2} g (x, z) \mathd
     \mu^{\alpha}_{\varepsilon} \right)^2 \right] \\
   =&\mathbb{E} \left[ \left( \int_{\mathbb{R}^2 \times \varepsilon
     \mathbb{Z}^2} g (x, z) \mathd \mu^{\alpha}_{\varepsilon, N} \right)^2
     \right] +\mathbb{E} \left[ \left( \int_{\mathbb{R}^2 \times \varepsilon
     \mathbb{Z}^2} g (x, z) \mathd \mu^{\alpha}_{\varepsilon} \right)^2
     \right] \\
  & - 2\mathbb{E} \left[ \int_{\mathbb{R}^2 \times \varepsilon \mathbb{Z}^2}
     g (x, z) \mathd \mu^{\alpha}_{\varepsilon, N} \int_{\mathbb{R}^2 \times
     \varepsilon \mathbb{Z}^2} g (x, z) \mathd \mu^{\alpha}_{\varepsilon}
     \right]. \end{align*}
  A consequence of Wick theorem is that
  \begin{eqnarray*}
    &  & \mathbb{E} \left[ \int_{\mathbb{R}^2 \times \varepsilon
    \mathbb{Z}^2} g (x, z) \mathd \mu^{\alpha}_{\varepsilon, N}
    \int_{\mathbb{R}^2 \times \varepsilon \mathbb{Z}^2} g (x, z) \mathd
    \mu^{\alpha}_{\varepsilon} \right]\\
    & = & \int_{(\mathbb{R}^2 \times \varepsilon \mathbb{Z}^2)^2} g (x, z) g
    (x', z') e^{\alpha^2 \mathbb{E} [W_{\varepsilon, N} (x, z) W_{\varepsilon}
    (x', z')]} \mathd x \mathd x' \mathd z \mathd z' .
  \end{eqnarray*}
  And since
  \[ W_{\varepsilon, N} (x, z) W_{\varepsilon} (x', z') \leqslant
     (W_{\varepsilon, N} (x, z))^2 + (W_{\varepsilon} (x', z'))^2 \]
  and clearly
  \[ W_{\varepsilon, N} (x, z) \rightarrow W_{\varepsilon} (x, z) \]
  almost surely as $N \rightarrow \infty$, we have by dominated convergence
  that
  \[ \mathbb{E} [W_{\varepsilon, N} (x, z) W_{\varepsilon} (x', z')]
     \rightarrow \mathcal{G}_{\varepsilon} (x - x', z - z') . \]
  So
  \begin{eqnarray*}
    &  & \lim_{N \rightarrow \infty} \mathbb{E} \left[ \left(
    \int_{\mathbb{R}^2 \times \varepsilon \mathbb{Z}^2} g (x, z) \mathd
    \mu^{\alpha}_{\varepsilon, N} \right)^2 \right] +\mathbb{E} \left[ \left(
    \int_{\mathbb{R}^2 \times \varepsilon \mathbb{Z}^2} g (x, z) \mathd
    \mu^{\alpha}_{\varepsilon} \right)^2 \right]\\
    &  & - \lim_{N \rightarrow \infty} 2\mathbb{E} \left[ \int_{\mathbb{R}^2
    \times \varepsilon \mathbb{Z}^2} g (x, z) \mathd
    \mu^{\alpha}_{\varepsilon, N} \int_{\mathbb{R}^2 \times \varepsilon
    \mathbb{Z}^2} g (x, z) \mathd \mu^{\alpha}_{\varepsilon} \right]\\
    & = & 2 \int_{(\mathbb{R}^2 \times \varepsilon \mathbb{Z}^2)^2} g (x, z)
    g (x', z') e^{\alpha^2 \mathcal{G}_{\varepsilon} (x - x', z - z')} \mathd
    x \mathd x' \mathd z \mathd z'\\
    &  & - 2 \int_{(\mathbb{R}^2 \times \varepsilon \mathbb{Z}^2)^2} g (x, z)
    g (x', z') e^{\alpha^2 \mathcal{G}_{\varepsilon} (x - x', z - z')} \mathd
    x \mathd x' \mathd z \mathd z'\\
    & = & 0.
  \end{eqnarray*}
  This implies that $\mathbb{E} [| E_{i, j}^{a,\beta} \ast \mu_{\varepsilon, N}^{\alpha}
  - E_{i, j}^{a,\beta} \ast \mu_{\varepsilon}^{\alpha} |^2 (x, z)] \rightarrow 0$ for
  $x, z \in \mathbb{R}^2 \times \varepsilon \mathbb{Z}^2$. Now since
  \[ \| E_{i, j}^{a,\beta} \ast \mu^{\alpha}_{\varepsilon} \|_{L_{\ell_1,
     \ell_2}^{\infty} (\mathbb{R}^2 \times \varepsilon \mathbb{Z}^2)} \lesssim
     \| E_{i, j}^{a,\beta} \|_{L^{\infty} (\mathbb{R}^2 \times \varepsilon
     \mathbb{Z}^2)} \| \rho_{\ell_1} (x) \rho_{\ell_2} (z) \mu^{\alpha}_{\varepsilon} \|_{L^1(\mathbb{R}^2\times \varepsilon \mathbb{Z}^2)},
  \]
  we have, by dominated convergence, that, for any \ $\ell_1, \ell_2 > 2$,
  \[ \mathbb{E} \left[ \int_{\mathbb{R}^2\times \varepsilon \mathbb{Z}^2} | E_{i, j}^{a,\beta} \ast \mu_{\varepsilon, N}^{\alpha} -
     E_{i, j}^{a,\beta} \ast \mu_{\varepsilon}^{\alpha} |^p (x, z) (\rho_{\ell_1} (x)
     \rho_{\ell_2} (z))^p \mathd x \mathd z \right] \rightarrow 0, \]
  which implies the statement.
\end{proof}

\section{Elliptic stochastic quantization of $\cosh (\beta \varphi)_2$ model
}\label{sec:stochastic-quantization}

\subsection{The $\cosh (\beta \varphi)_2$ model}\label{section:coshmodel}

\

In this section we want to recall briefly the definition and the construction
of Euclidean $\cosh (\beta \varphi)_2$ model.

\

First consider the Gaussian measure $\tilde{\nu}_{m, \varepsilon, N}$ on
$\mathcal{S}' (\varepsilon (\mathbb{Z}/ N\mathbb{Z})^2)$, where $m > 0$ is the
mass of the boson, $N \in \mathbb{N}$ \ is the size of the spatial cut-off,
and $\varepsilon > 0$ is the size of the ultra-violet cut-off, describing a
random field $\tilde{\varphi}_{\varepsilon, N} \in \mathcal{S}' (\varepsilon
(\mathbb{Z}/ N\mathbb{Z})^2)$ with covariance
\[ \int \tilde{\varphi}_{\varepsilon, N} (x) \tilde{\varphi}_{\varepsilon, N}
   (y) \tilde{\nu}_{m, \varepsilon, N} (\mathd \tilde{\varphi}_{\varepsilon, N}) = (-
   \Delta_{\varepsilon \mathbb{Z}^2, N} + m^2)^{- 1} (x - y), \quad x, y \in
   \varepsilon (\mathbb{Z}/ N\mathbb{Z})^2, \]
where $(- \Delta_{\varepsilon \mathbb{Z}^2, N} + m^2)^{- 1} (x - y)$ is the
Green function of the operator $- \Delta_{\varepsilon \mathbb{Z}^2} + m^2$
with periodic boundary condition, and $\Delta_{\varepsilon \mathbb{Z}^2, N}$
is the discrete Laplacian on $(\mathbb{Z}/ N\mathbb{Z})^2$. There is a
natural continuous map
\[ i_{\varepsilon, N} : \mathcal{S}' (\varepsilon (\mathbb{Z}/ N\mathbb{Z})^2)
   \rightarrow \mathcal{S}' (\varepsilon \mathbb{Z}^2), \]
given by the periodic extension, i.e. if $g \in \mathcal{S}' (\varepsilon
(\mathbb{Z}/ N\mathbb{Z})^2)$ we have
\[ i_{\varepsilon, N} [g] (z) = g ([z]_{\varepsilon, N}), \quad z \in
   \varepsilon \mathbb{Z}^2, \]
where $[z]_{\varepsilon, N} \in \varepsilon (\mathbb{Z}/ N\mathbb{Z})^2$ is
the equivalence class of the point $z \in \varepsilon \mathbb{Z}^2$. We recall
that the extension operator $\mathcal{E}_{\varepsilon} : \mathcal{S}'
(\varepsilon \mathbb{Z}^2) \rightarrow \mathcal{S}' (\mathbb{R}^2)$
(introduced in Section \ref{section:extension}) is a continuous map (and so
measurable) we then denote by
\[ \nu_{m, \varepsilon, N} = (\mathcal{E}_{\varepsilon} \circ i_{\varepsilon,
   N})_{\ast} (\tilde{\nu}_{m, \varepsilon, N}) . \]
The measure $\nu_{m, \varepsilon, N}$ is a Gaussian measure on $\mathcal{S}'
(\mathbb{R}^2)$ whose associated random field is constant in boxes of side
$\varepsilon$ centers on $\varepsilon \mathbb{Z}^2$ and periodic with period
$\varepsilon N$. We use the following notation $\hat{\mathbb{T}}^2_N = \left[ -
\frac{N \varepsilon}{2 }, \frac{N \varepsilon}{2 } \right]^2$ and we denote
\[ V^{\cosh, \beta}_N (\varphi) = \lambda
   \int_{\hat{\mathbb{T}}^2_N} (: e^{\beta \varphi (x)} : + : e^{- \beta
   \varphi (x)} :) \mathd x, \]
which is well defined $\nu_{m, \varepsilon, N}$ almost surely, and $: e^{\pm
\beta \varphi} :$ is defined
\[ : e^{\pm \beta \varphi} : = \sum_{n = 0}^{+ \infty} \frac{(\pm
   \beta)^n}{n!} : \varphi^n : \]
as the Wick exponential with of the random field $\varphi$ with respect the
measure $\nu_{m, \varepsilon, N}$. We also use the notation
\[:\cosh(\beta \varphi ):=\frac{1}{2}\left(:e^{\beta \varphi}: + :e^{-\beta \varphi}: \right). \]
We further write
\[ Z_{m, \varepsilon, N} = \int e^{- V_N^{\cosh, \beta} (\varphi)} \nu_{m,
   \varepsilon, N} (\mathd \varphi) . \]
\begin{definition}
  \label{definition:coshmodel}We say that the measure $\nu_m^{\cosh, \beta}$
  on $\mathcal{S}' (\mathbb{R}^2)$ is the measure related with the Euclidean
  quantum field theory having action
  \[ S (\varphi) = \frac{1}{2} \int_{\mathbb{R}^2} (| \nabla \varphi (z) |^2
     + m^2 \varphi (z)^2) \mathd z + 2 \lambda \int_{\mathbb{R}^2} : \cosh
     (\beta \varphi (z)) : \mathd z, \]
  if there are two sequences $\varepsilon_{n'} \rightarrow 0$ and $N_n
  \rightarrow + \infty$ (where $\varepsilon_{n'} > 0$ and $N_n \in
  \mathbb{N}$) such that
  \[ \nu_m^{\cosh, \beta} (\mathd \varphi) = \left( \lim_{\varepsilon_{n'}\rightarrow 0}
     \left( \lim_{N_n \rightarrow + \infty} \frac{e^{- V_{N_n}^{\cosh, \beta}
     (\varphi)}}{Z_{m, \varepsilon_{n'}, N_n}} \nu_{m, \varepsilon_{n'}, N_n}
     (\mathd \varphi) \right) \right), \]
  where the limits are taken in weak sense in the space of probability
  measures.
\end{definition}

\subsection{The equation and its approximations}\quad

Here we consider the following elliptic (heuristic) SPDE
\begin{equation} (- \Delta_{\mathbb{R}^4} + m^2) \phi + 2 \alpha \sinh (\alpha \phi -
   \infty) = \xi . \label{eq:heuristic1} \end{equation}
More precisely we say that a random field taking values in $\mathcal{S}'
(\mathbb{R}^4)$ is a solution to equation {\eqref{eq:heuristic1}}, if, writing
$\bar{\phi} = \phi - W$, where as usual $W = (- \Delta_{\mathbb{R}^4} +
m^2)^{- 1} (\xi)$, then the random field solves the following equation
\begin{equation}
  (- \Delta_{\mathbb{R}^4} + m^2) \bar{\phi} + \alpha e^{\alpha \bar{\phi}}
  \mu^{\alpha} - \alpha e^{- \alpha \bar{\phi}} \mu^{- \alpha} = 0,
  \label{eq:main5}
\end{equation}
where $\mu^{\alpha}$ and $\mu^{- \alpha}$ are the positive measure introduced
in Section \ref{sec:stochastic-estimates} and defined by $\mu^{\pm \alpha}
\assign \sum_{n = 0}^{+ \infty} \frac{(\pm \alpha)^n}{n!} : W^n :$.

\

We introduce some approximations to equation {\eqref{eq:main5}}, using the
discretization and the perioditization introduced in Section
\ref{sec:stochastic-estimates}. We define the following equations
\begin{equation}
  (- \Delta_{\mathbb{R}^2 \times \varepsilon \mathbb{Z}^2} + m^2)
  \bar{\phi}_{\varepsilon, N} + \alpha e^{\alpha \bar{\phi}_{\varepsilon, N}}
  \mu^{\alpha}_{\varepsilon, N} - \alpha e^{- \alpha \bar{\phi}_{\varepsilon,
  N}} \mu^{- \alpha}_{\varepsilon, N} = 0 \label{eq:mainfinitedimensional}
\end{equation}
\begin{equation}
  (- \Delta_{\mathbb{R}^2 \times \varepsilon \mathbb{Z}^2} + m^2)
  \bar{\phi}_{\varepsilon} + \alpha e^{\alpha \bar{\phi}_{\varepsilon}}
  \mu^{\alpha}_{\varepsilon} - \alpha e^{- \alpha \bar{\phi}_{\varepsilon}}
  \mu^{- \alpha}_{\varepsilon} = 0, \label{eq:maindiscrete}
\end{equation}
where $- \Delta_{\mathbb{R}^2 \times \varepsilon \mathbb{Z}^2} (g) = -
\Delta_{\mathbb{R}^2} g - \Delta_{\varepsilon \mathbb{Z}^2} g$, and
\[ \mu^{\pm \alpha}_{\varepsilon, N} \assign \sum_{n = 0}^{+ \infty}
   \frac{(\pm \alpha)^n}{n!} : W^n_{\varepsilon, N} :, \quad \mu^{\pm
   \alpha}_{\varepsilon} \assign \sum_{n = 0}^{+ \infty} \frac{(\pm
   \alpha)^n}{n!} : W^n_{\varepsilon} :, \]
and where as, usual, $W_{\varepsilon, N} = (- \Delta_{\mathbb{R}^2 \times
\varepsilon \mathbb{Z}^2} + m^2)^{- 1} (\xi_{\varepsilon, N})$ and
$W_{\varepsilon} = (- \Delta_{\mathbb{R}^2 \times \varepsilon \mathbb{Z}^2} +
m^2)^{- 1} (\xi_{\varepsilon})$ (see Section \ref{section:periodic} and
Section \ref{section:setting} for the definition of $\xi_{\varepsilon, N}$ and
$\xi_{\varepsilon}$).

\

In the rest of the present section we want to prove the existence and
uniqueness of solution to equation {\eqref{eq:main5}} using an approximation
argument starting from the solutions to equations
{\eqref{eq:mainfinitedimensional}} and {\eqref{eq:maindiscrete}}.

\

\begin{theorem}
There is a unique solution $\bar{\phi}$ to equation {\eqref{eq:main5}} such
  that $\bar{\phi}, e^{\pm \frac{\alpha}{2} \bar{\phi}} \in H^1_{\ell} (\mathbb{R}^4)$, $ e^{\pm 2\alpha
  \bar{\phi}} \in L^1_{2\ell} (\mathbb{R}^4, \mathd \mu^{\pm \alpha})$, for any $\ell \geqslant \ell_0$ (where $\ell_0 > 0$ is large enough). Furthermore, this solution $\bar\phi$ is such that $e^{\pm \beta \alpha \bar{\phi}} \in H^1_{\ell}
  (\mathbb{R}^4 )$ and $ e^{\pm (1 + 2
  \beta) \alpha \bar{\phi}} \in L^1_{\ell}(\mathbb{R}^4, \mathd
  \mu^{\pm \alpha})$ for any $0 \leqslant \beta < 1$. Here we introduce the norm given by 
  \[\|f\|_{L_\ell^1(\mathbb{R}^4,\mathd\mu^{\pm \alpha})}=\int \rho^{(4)}_{\ell}(x)|f(x)|\mathd \mu^{\pm \alpha}, \] and $L_\ell^1(\mathbb{R}^4,\mathd\mu^{\pm \alpha})$ is the space of functions where this norm is finite. 
\end{theorem}

\subsection{Some a priori estimates}

In order to prove the existence for equation {\eqref{eq:maindiscrete}} we need
to prove some a priori estimates for an equation of the form
\begin{equation}
  (m^2 - \Delta_{\mathbb{R}^2} - \Delta_{\varepsilon \mathbb{Z}^2}) \psi +
  e^{\alpha \psi} \mathd \eta_+ - e^{- \alpha \psi} \mathd \eta_- = 0,
  \label{eq:apriori}
\end{equation}
where
\begin{itemize}
  \item $\alpha, m > 0$,
  
  \item $\eta_+, \eta_- \in \mathcal{M} (\mathbb{R}^2 \times \varepsilon
  \mathbb{Z}^2) \cap B^{- 1 + \kappa_{\alpha}}_{2, 2, \ell} (\mathbb{R}^2
  \times \varepsilon \mathbb{Z}^2)$ where $\kappa_{\alpha}$ is a suitable
  constant depending on $\alpha$, and $\ell > 0$ big enough, and where we denote by $\mathcal{M}(\mathbb{R}^2 \times \varepsilon \mathbb{Z}^2)$ the space of $\sigma$-finite measures on $\mathbb{R}^2 \times \varepsilon \mathbb{Z}^2$.
\end{itemize}

Hereafter we use the following notation: if $\upsilon : \mathbb{R}^2 \times
\varepsilon \mathbb{Z}^2 \rightarrow \mathbb{R}$ is a function weakly
differentiable with respect the first two variable we write
\[ \nabla_{\mathbb{R}^2 \times \varepsilon \mathbb{Z}^2} \upsilon = \left(
   \begin{array}{c}
     \nabla_{\mathbb{R}^2} \upsilon\\
     \nabla_{\varepsilon} \upsilon
   \end{array} \right), \]
where the first component is the vector in $\mathbb{R}^2$ obtained as the
gradient with respect to the first two coordinates and the second is the
finite difference defined in Section \ref{section:besov:difference}.

\begin{theorem}
  \label{theorem_apriori1}Suppose that $\psi$ solves {\eqref{eq:apriori}}, and
  suppose that $\int \rho_{\ell}^{(4)} (x, z) \mathd \eta_+ (x, z), \int
  \rho_{\ell}^{(4)} (x, z) \mathd \eta_- (x, z) < + \infty$ and $\| \psi
  \|_{H^1_{\ell/2} (\mathbb{R}^2 \times \varepsilon \mathbb{Z}^2)} < + \infty$,
  then
  \begin{eqnarray*}
    &  & \| \rho^{(4)}_{\ell/2} \nabla_{\mathbb{R}^2 \times \varepsilon
    \mathbb{Z}^2} \psi \|^2_{L^2} + \| \rho^{(4)}_{\ell/2} \psi \|_{L^2}^2 +
    \int_{\mathbb{R}^2 \times \varepsilon \mathbb{Z}^2} \rho_{\ell}^{(4)} |
    \psi | e^{\alpha \psi} d \eta_+ + \int_{\mathbb{R}^2 \times \varepsilon
    \mathbb{Z}^2} \rho_{\ell}^{(4)} | \psi | e^{- \alpha \psi} \mathd \eta_-\\
    & \lesssim & (\rho_{\ell}^{(4)} \mathd \eta_+) (\mathbb{R}^2 \times
    \varepsilon \mathbb{Z}^2) + (\rho_{\ell}^{(4)} \mathd \eta_-)
    (\mathbb{R}^2 \times \varepsilon \mathbb{Z}^2).
  \end{eqnarray*}
\end{theorem}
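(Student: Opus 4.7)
The idea is a weighted energy estimate obtained by testing \eqref{eq:apriori} against $\psi\rho_\ell^{(4)}$. The two key inputs will be (i) an integration/summation by parts that transfers the weight off $\psi$ while producing only harmless lower-order terms, and (ii) elementary convexity-type bounds on $t\mapsto t e^{\pm\alpha t}$.

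First I would compute, using integration by parts in the continuous directions and summation by parts in the discrete ones,
\[
\int (m^2-\Delta_{\mathbb{R}^2\times\varepsilon\mathbb{Z}^2})\psi\cdot\psi\,\rho_\ell^{(4)} \;=\; \int|\nabla_{\mathbb{R}^2\times\varepsilon\mathbb{Z}^2}\psi|^2\rho_\ell^{(4)} + m^2\!\int\psi^2\rho_\ell^{(4)} - \tfrac12\!\int\psi^2\,\Delta_{\mathbb{R}^2\times\varepsilon\mathbb{Z}^2}\rho_\ell^{(4)} + R_\varepsilon,
\]
where $R_\varepsilon$ collects the discrete Leibniz defect of the form $\tfrac{\varepsilon}{2}\sum_i\!\int(\nabla_{\varepsilon,i}^+\psi)^2\,\nabla_{\varepsilon,i}^+\rho_\ell^{(4)}$. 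Since $|\nabla_{\varepsilon,i}^+\rho_\ell^{(4)}|\lesssim\ell\,\rho_\ell^{(4)}$ and $|\Delta_{\mathbb{R}^2\times\varepsilon\mathbb{Z}^2}\rho_\ell^{(4)}/\rho_\ell^{(4)}|\lesssim\ell^2$ uniformly in $0<\varepsilon\le 1$, the term $R_\varepsilon$ (carrying the factor $\varepsilon$) can be absorbed into a quarter of the gradient term and, for constants allowed to depend on $\ell, m$, the weight-Laplacian term $\tfrac12\int\psi^2\Delta\rho_\ell^{(4)}$ is absorbed into a half of the mass term $m^2\int\psi^2\rho_\ell^{(4)}$.

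For the nonlinear contributions I would use the pointwise inequalities, valid for every $\alpha>0$ and every $x\in\mathbb{R}$ and immediate from minimizing $t\mapsto t e^{\pm\alpha t}$ at $t=\mp1/\alpha$:
\[
x e^{\alpha x}\geq -\tfrac{1}{\alpha e},\qquad |x|\,e^{\alpha x}\leq x e^{\alpha x}+\tfrac{2}{\alpha e},\qquad |x|\,e^{-\alpha x}\leq -x e^{-\alpha x}+\tfrac{2}{\alpha e}.
\]
Applied pointwise with $x=\psi$ and integrated against the positive measures $\rho_\ell^{(4)}d\eta_\pm$, they yield
\[
\int\rho_\ell^{(4)}|\psi|e^{\pm\alpha\psi}d\eta_\pm \;\leq\; \pm\!\int\rho_\ell^{(4)}\psi e^{\pm\alpha\psi}d\eta_\pm + \tfrac{2}{\alpha e}\,\eta_\pm(\rho_\ell^{(4)}).
\]
Combining with the energy identity (in which the right-hand side of the equation contributes exactly $-\int\psi e^{\alpha\psi}\rho_\ell^{(4)}d\eta_+ + \int\psi e^{-\alpha\psi}\rho_\ell^{(4)}d\eta_-$) and rearranging, the four positive quantities appearing in the conclusion are simultaneously controlled by $\eta_+(\rho_\ell^{(4)})+\eta_-(\rho_\ell^{(4)})$, as claimed.

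\textbf{Main obstacle.} The delicate points are of a technical nature. First, one has to justify the testing step when $\eta_\pm$ is only a distribution in $B^{-1+\kappa_\alpha}_{2,2,\ell}$; this requires interpreting $\int\psi e^{\pm\alpha\psi}\rho_\ell^{(4)}d\eta_\pm$ via the duality of Theorem~\ref{thm:estimate-measure}, using the hypothesis $\psi\in H^1_\ell$ together with a Besov embedding to place $e^{\pm\alpha\psi}\psi\rho_\ell^{(4)}$ in $B^{1-\kappa_\alpha}_{2,2,-\ell}$, and then a density/approximation by smooth $\eta_\pm^n$ for which the integration is classical, passing to the limit through the positivity of $e^{\pm\alpha\psi}d\eta_\pm$. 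Second, the careful accounting of $R_\varepsilon$ and of the weight-Laplacian term requires the explicit bounds $|\partial^\alpha\rho_\ell^{(4)}|\lesssim_{\alpha,\ell}\rho_\ell^{(4)}$ together with some care in choosing $\ell_0$ so that both the measure integrability and the absorption work simultaneously. The sign-exploiting step (iii) and the energy identity themselves are otherwise entirely algebraic.
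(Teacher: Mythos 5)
Your strategy is the right one and matches the paper's: test \eqref{eq:apriori} against a weighted copy of $\psi$, integrate/sum by parts, and use the pointwise lower bound on $t e^{\pm\alpha t}$ to exploit convexity. The elementary inequalities you invoke for the nonlinear terms are correct and are exactly what the paper uses (it merely phrases them as ``$x e^{\pm x}$ is bounded below''). The proposed justification of the pairing with $d\eta_\pm$ via density and Theorem~\ref{thm:estimate-measure} is sensible, if slightly more than the paper bothers to write.

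However, there is a genuine gap in the absorption of the weight-derivative terms. You claim that because $|\Delta_{\mathbb{R}^2\times\varepsilon\mathbb{Z}^2}\rho_\ell^{(4)}|\lesssim\ell^2\rho_\ell^{(4)}$ the term $\tfrac12\int\psi^2\Delta\rho_\ell^{(4)}$ can be ``absorbed into a half of the mass term'' with constants depending on $\ell,m$. That is not an allocation of constants but a sign problem: after absorption you are left with $(m^2-C\ell^2)\int\psi^2\rho_\ell^{(4)}$, which is negative (hence useless) whenever $\ell$ must be taken large for $\eta_\pm(\rho_\ell^{(4)})<\infty$ while $m>0$ is the fixed mass. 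The same issue arises in the cross term $\int\psi\,\nabla\rho_\ell^{(4)}\cdot\nabla\psi$: without smallness of $|\nabla\rho_\ell^{(4)}|/\rho_\ell^{(4)}$ you cannot simultaneously absorb into the gradient and mass quadratic forms. The paper resolves exactly this by replacing $\rho_\ell^{(4)}$ with the dilated weight $\rho_{\ell,\lambda}^{(4)}(y)=\rho_\ell^{(4)}(\lambda y)$, for which $|\nabla\rho_{\ell,\lambda}^{(4)}|\leq\delta\,\rho_{\ell,\lambda}^{(4)}$ with $\delta\sim\ell\lambda$ made as small as desired by shrinking $\lambda$; since $\rho_{\ell,\lambda}^{(4)}$ and $\rho_\ell^{(4)}$ are equivalent up to a $\lambda$-dependent constant, the final estimate transfers back. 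Your discrete defect term $R_\varepsilon$ has the same problem: the bound $|\nabla_\varepsilon^+\rho_\ell^{(4)}|\lesssim\ell\rho_\ell^{(4)}$ gives $R_\varepsilon\lesssim\varepsilon\ell\int|\nabla_\varepsilon\psi|^2\rho_\ell^{(4)}$, which is not absorbable into a quarter of the gradient term uniformly in $0<\varepsilon\leq1$ once $\ell$ is large. The fix is the same $\lambda$-dilation. With that modification in place your argument closes.
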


\begin{proof}
  Multiplying equation {\eqref{eq:apriori}} by $\rho_{\ell, \lambda}^{(4)}
  \psi$, where
  \[ \rho_{\ell, \lambda}^{(4)} (x, z) = \rho^{(4)}_{\ell} (\lambda x,
     \lambda z), \]
  with $\lambda > 0$, and then taking the integral, we obtain
  \begin{multline}
    - \int_{\mathbb{R}^2 \times \varepsilon \mathbb{Z}^2} \rho_{\ell,
    \lambda}^{(4)} \psi (\Delta_{\mathbb{R}^2 \times \varepsilon \mathbb{Z}^2}
    \psi) \mathd x \mathd z + m^2 \int_{\mathbb{R}^2 \times \varepsilon
    \mathbb{Z}^2} \rho_{\ell, \lambda}^{(4)} (\psi)^2 \mathd x \mathd z
    \\
    + \int_{\mathbb{R}^2 \times \varepsilon \mathbb{Z}^2} \rho_{\ell,
    \lambda}^{(4)} \psi e^{\alpha \psi} d \eta_+ - \int_{\mathbb{R}^2 \times
    \varepsilon \mathbb{Z}^2} \rho_{\ell, \lambda}^{(4)} \psi e^{- \alpha
    \psi} d \eta_- = 0. \label{eq:esppart}
  \end{multline}
The term $ - \int_{\mathbb{R}^2 \times
    \varepsilon \mathbb{Z}^2} \rho_{\ell, \lambda}^{(4)} \psi e^{- \alpha
    \psi} d \eta_-$ could be bounded from below as follows 
\begin{align*} 
- \int \rho_{\ell, \lambda}^{(4)} \psi e^{- \alpha \psi} \mathd \eta_- =& \int \rho_{\ell, \lambda}^{(4)} \psi_- e^{\alpha \psi_-} \mathd \eta_- - \int \rho_{\ell, \lambda}^{(4)} \psi_+ e^{- \alpha \psi_+} \mathd \eta_- \\
=& \int \rho_{\ell, \lambda}^{(4)} \psi_-e^{\alpha \psi_-} \mathd \eta_- +\int \rho_{\ell, \lambda}^{(4)} \psi_+ e^{- \alpha \psi_+} \mathd \eta_- - 2 \int \rho_{\ell, \lambda}^{(4)} \psi_+ e^{- \alpha \psi_+} \mathd \eta_-\\
\geqslant& \int \rho_{\ell, \lambda}^{(4)} | \psi | e^{- \alpha \psi} \mathd \eta_- - 2 C \int \rho_{\ell, \lambda}^{(4)} \mathd \eta_-,
\end{align*}
where $\psi_{\pm}$ are respectively the positive and negative parts of the function $\psi=\psi_+-\psi_+$, and 
 $C=\sup_{x \geqslant 0} x e^{- \alpha x}$.
In an analogous way we can handle the term $\int_{\mathbb{R}^2 \times \varepsilon \mathbb{Z}^2} \rho_{\ell,
    \lambda}^{(4)} \psi e^{\alpha \psi} d \eta_+$ in \eqref{eq:esppart}.
For the other terms in equality \eqref{eq:esppart}, we start by integrating by parts
  \[ - \int_{\mathbb{R}^2 \times \varepsilon \mathbb{Z}^2} \rho_{\ell,
     \lambda}^{(4)} \psi (\Delta_{\mathbb{R}^2 \times \varepsilon
     \mathbb{Z}^2} \psi) \mathd x \mathd z = \int_{\mathbb{R}^2 \times
     \varepsilon \mathbb{Z}^2} \nabla_{\mathbb{R}^2 \times \varepsilon
     \mathbb{Z}^2} \psi \cdot (\nabla_{\mathbb{R}^2 \times \varepsilon
     \mathbb{Z}^2} (\rho_{\ell, \lambda}^{(4)} \psi)) \mathd x \mathd z \]
  and
\begin{equation} 
\nabla_{\mathbb{R}^2 \times \varepsilon \mathbb{Z}^2} (\rho_{\ell,
     \lambda}^{(4)} \psi) = \ll \rho_{\ell, \lambda}^{(4)},
     \nabla_{\mathbb{R}^2 \times \varepsilon \mathbb{Z}^2} \psi \gg + \psi
     \nabla_{\mathbb{R}^2 \times \varepsilon \mathbb{Z}^2} \rho_{\ell,
     \lambda}^{(4)} \label{eq:nabla1},
\end{equation}
  where $\ll \rho_{\ell, \lambda}^{(4)}, \nabla_{\mathbb{R}^2 \times
  \varepsilon \mathbb{Z}^2} \psi \gg$ is the vector with components
  \[ \ll \rho_{\ell, \lambda}^{(4)}, \nabla_{\mathbb{R}^2 \times \varepsilon
     \mathbb{Z}^2} \psi \gg (x, z) = \left( \begin{array}{c}
       \rho_{\ell, \lambda}^{(4)} (x, z) \partial_{x^1} \psi (x, i)\\
       \rho_{\ell, \lambda}^{(4)} (x, z) \partial_{x^2} \psi (x, i)\\
       \varepsilon^{- 1} \rho_{\ell, \lambda}^{(4)} (x, z + \varepsilon e_1)
       D_{\varepsilon e_1} \psi\\
       \varepsilon^{- 1} \rho_{\ell, \lambda}^{(4)} (x, z + \varepsilon e_2)
       D_{\varepsilon e_2} \psi
     \end{array} \right), \]
  where $\{ e_1, e_2 \}$ is the standard basis of $\varepsilon \mathbb{Z}^2
  \subset \mathbb{R}^2$, and $D_h (f) (x, z) = f (x, z + h) - f (x, z)$, for
  any $h \in \varepsilon \mathbb{Z}^2$. For any $\delta > 0$ we can choose
  $\ell$ large enough and $\lambda$ small enough such that, by computing
  \[ \nabla_{\mathbb{R}^4} \rho_{\ell, \lambda}^{(4)} (x, z) = -\ell \lambda (1
     + \lambda | (x, z) |^2)^{- (\ell + 2)/2}  (x, z), \]
  we get
  \begin{equation}
    | \nabla_{\mathbb{R}^4} \rho_{\ell, \lambda}^{(4)} (x,z) | \leqslant \delta |
    \rho_{\ell, \lambda}^{(4)} (x,z) | . \label{eq:rholambda}
  \end{equation}
  Using the fundamental theorem of calculus, we get
  \[ | \nabla_{\mathbb{R}^2 \times \varepsilon \mathbb{Z}^2} \rho_{\ell,
     \lambda}^{(4)} (x, z) | \leqslant \delta | \rho_{\ell, \lambda}^{(4)} (x, z) | . \]
Putting all the previous inequalities together we obtain 
  \begin{eqnarray*}
    \int_{\mathbb{R}^2 \times \varepsilon \mathbb{Z}^2} \nabla_{\mathbb{R}^2
    \times \varepsilon \mathbb{Z}^2} (\rho_{\ell, \lambda}^{(4)}) \psi
    \nabla_{\mathbb{R}^2 \times \varepsilon \mathbb{Z}^2} \psi \mathd x \mathd
    z & \leqslant & \delta \left( \int_{\mathbb{R}^2 \times \varepsilon
    \mathbb{Z}^2} \rho_{\ell, \lambda}^{(4)} (\psi^2 + | \nabla_{\mathbb{R}^2 \times
    \varepsilon \mathbb{Z}^2} \psi |^2) \mathd x \mathd z \right).
  \end{eqnarray*}      
For the second term in the sum \eqref{eq:nabla1} we get
  \begin{eqnarray*}
    \int \ll \rho_{\ell, \lambda}^{(4)}, \nabla_{\mathbb{R}^2 \times \varepsilon
    \mathbb{Z}^2} \psi \gg \nabla_{\mathbb{R}^2 \times \varepsilon
    \mathbb{Z}^2} \psi \mathrm{d}x \mathrm{d}z & = & \int_{\mathbb{R}^2 \times \varepsilon
    \mathbb{Z}^2} \rho_{\ell, \lambda}^{(4)} (x, z) (| \partial_{x^1} \psi (x, z)
    |^2 + | \partial_{x^2} \psi (x, z) |^2) \mathd x \mathd z\\
    &  & + \int_{\mathbb{R}^2 \times \varepsilon \mathbb{Z}^2} \rho_{\ell,
    \lambda}^{(4)} (x, z + \varepsilon e_1) | \varepsilon^{- 1} D_{\varepsilon e_1}
    \psi (x, z) |^2 \mathd x \mathd z\\
    &  & + \int_{\mathbb{R}^2 \times \varepsilon \mathbb{Z}^2} \rho_{\ell,
    \lambda}^{(4)} (x, z + \varepsilon e_2) | \varepsilon^{- 1} D_{\varepsilon e_2}
    \psi (x, z) |^2 \mathd x \mathd z\\
    & \geqslant & \frac{1}{2} \int_{\mathbb{R}^2 \times \varepsilon
    \mathbb{Z}^2} \rho_{\ell, \lambda}^{(4)} | \nabla_{\mathbb{R}^2 \times
    \varepsilon \mathbb{Z}^2} \psi |^2 \mathd x \mathd z.
  \end{eqnarray*}
Inserting the previous estimates in \eqref{eq:esppart}, choosing $\delta$ small enough, and using the fact that $\rho_{\ell}^{(4)} \lesssim_{\lambda} \rho^{(4)}_{\ell,\lambda} \lesssim_{\lambda} \rho^{(4)}_{\ell}$ (for any fixed $\lambda >0$), we can conclude. 
\end{proof}

\begin{theorem}
  \label{theorem_apriori2} Suppose that $\eta_{\pm}, \psi$ satisfy the
  hypotheses of Theorem \ref{theorem_apriori1} and suppose that $\eta_{\pm}
  \in B^{- 1 + \kappa}_{2, 2, \ell} (\mathbb{R}^2 \times \varepsilon
  \mathbb{Z}^2)$ where $\kappa \in (0, 1)$, then for any $\beta \in (- 1, 1)$
  we have
  \begin{align*}&\| \nabla_{\mathbb{R}^2 \times \varepsilon \mathbb{Z}^2} (e^{\pm \beta
     \alpha \psi}) \|^2_{L^2_{\ell / 2}} + \left\| \sqrt{| \psi |}_{} e^{\pm
     \beta \alpha \psi} \right\|_{L^2_{\ell / 2}}^2 + \int \rho_{\ell}^{(4)}
     e^{\pm (1 + 2 \beta) \alpha \psi} d \eta_{\pm} \\
   \lesssim& P_{\beta} \left( \int_{\mathbb{R}^2 \times \varepsilon
     \mathbb{Z}^2} \rho_{\ell}^{(4)} \mathd \eta_{\pm}, \| \eta_{\pm} \|_{B^{- 1 + \kappa}_{2, 2, \ell
     / 2} (\mathbb{R}^2 \times \varepsilon \mathbb{Z}^2)} \right), \end{align*}
     
  where $P_{\beta}$ is a suitable polynomial depending on $\beta$ and the
  constant implicit in the symbol $\lesssim$ are independent of $\varepsilon$.
\end{theorem}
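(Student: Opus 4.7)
Following the energy template of Theorem~\ref{theorem_apriori1}, I would test equation~\eqref{eq:apriori} against $\rho_{\ell,\lambda}^{(4)}\,\phi(\psi)$, where
$$
\phi(s)\assign\frac{e^{2\beta\alpha s}-1}{2\beta\alpha},\qquad \phi'(s)=e^{2\beta\alpha s},
$$
and $\rho_{\ell,\lambda}^{(4)}(x,z)\assign\rho_\ell^{(4)}(\lambda x,\lambda z)$ with $\lambda>0$ small enough that $|\nabla\rho_{\ell,\lambda}^{(4)}|\leq\delta\,\rho_{\ell,\lambda}^{(4)}$. Integrating by parts on the Laplacian and invoking the chain rule $|\nabla e^{\beta\alpha\psi}|^2=(\beta\alpha)^2 e^{2\beta\alpha\psi}|\nabla\psi|^2$ yields $(\beta\alpha)^{-2}\|\nabla_{\mathbb{R}^2\times\varepsilon\mathbb{Z}^2}e^{\beta\alpha\psi}\|_{L^2_{\ell/2}}^2$ up to a gradient--weight error absorbed via $\delta$, exactly as in the proof of Theorem~\ref{theorem_apriori1}. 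The mass term $m^2\int\rho\,\psi\phi(\psi)$ is nonnegative and satisfies the pointwise lower bound $s\phi(s)\geq c_\beta|s|e^{2\beta\alpha s}-C_\beta$ (easily checked by separating $|s|$ small and $|s|$ large), producing $\|\sqrt{|\psi|}\,e^{\beta\alpha\psi}\|_{L^2_{\ell/2}}^2$ up to an additive multiple of $\int\rho\,d\eta_\pm$. The interaction terms expand as
\begin{align*}
\int\rho\,\phi(\psi)\,e^{\alpha\psi}d\eta_+ &=\tfrac{1}{2\beta\alpha}\Bigl[\int\rho\,e^{(1+2\beta)\alpha\psi}d\eta_+-\int\rho\,e^{\alpha\psi}d\eta_+\Bigr],\\
-\!\int\rho\,\phi(\psi)\,e^{-\alpha\psi}d\eta_- &=-\tfrac{1}{2\beta\alpha}\Bigl[\int\rho\,e^{(2\beta-1)\alpha\psi}d\eta_--\int\rho\,e^{-\alpha\psi}d\eta_-\Bigr],
\end{align*}
so after rearrangement the LHS of the theorem appears on the left against three ``residuals'' on the right: $\int\rho\,e^{\pm\alpha\psi}d\eta_\pm$ and the new, dangerous $\int\rho\,e^{(2\beta-1)\alpha\psi}d\eta_-$.

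\textbf{Easy residuals and the case $|\beta|<1/2$.} The residuals $\int\rho\,e^{\pm\alpha\psi}d\eta_\pm$ are immediately bounded by Theorem~\ref{theorem_apriori1} via the pointwise inequality $e^{\pm\alpha\psi}\leq e^\alpha+|\psi|\,e^{\pm\alpha\psi}$. For $\beta\in(0,1/2)$ the remaining residual is tamed by Young's inequality applied to $(e^{-\alpha\psi})^{1-2\beta}$ (note $1-2\beta\in(0,1)$):
$$
e^{(2\beta-1)\alpha\psi}\;\leq\;(1-2\beta)\delta\,e^{-\alpha\psi}+C_{\beta,\delta},
$$
which reduces it to the already-controlled $\int\rho\,e^{-\alpha\psi}d\eta_-$ plus a bounded term. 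The range $\beta\in(-1/2,0)$ is then obtained by the symmetry $(\psi,\eta_+,\eta_-)\mapsto(-\psi,\eta_-,\eta_+)$ of~\eqref{eq:apriori}, which preserves the equation and sends $\beta$ to $-\beta$.

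\textbf{The hard part and main obstacle: $|\beta|\geq 1/2$.} For $\beta\in[1/2,1)$ the exponent $(2\beta-1)\alpha$ is nonnegative and the Young bound above is useless; here the distributional regularity $\eta_-\in B^{-1+\kappa}_{2,2,\ell/2}$ must be brought in. The pointwise inequality $e^{(2\beta-1)\alpha\psi}\leq 1+e^{\beta\alpha\psi}$ (valid since $(2\beta-1)/\beta\in[0,1)$) reduces matters to $\int\rho_\ell\,e^{\beta\alpha\psi}\,d\eta_-$, which by the Besov duality of Theorem~\ref{theorem:besov:rz:product} satisfies
$$
\int\rho_\ell\,e^{\beta\alpha\psi}\,d\eta_-\;\lesssim\;\|e^{\beta\alpha\psi}\|_{B^{1-\kappa}_{2,2,\ell/2}}\,\|\eta_-\|_{B^{-1+\kappa}_{2,2,\ell/2}}.
$$
The Sobolev-type embedding $H^1_{\ell/2}\hookrightarrow B^{1-\kappa}_{2,2,\ell/2}$ of Theorem~\ref{theorem:besov:rz:embedding} then bounds the Besov norm by $\|\nabla e^{\beta\alpha\psi}\|_{L^2_{\ell/2}}+\|e^{\beta\alpha\psi}\|_{L^2_{\ell/2}}$; the first summand sits already on the LHS, and the second is dominated by $\|\sqrt{|\psi|}\,e^{\beta\alpha\psi}\|_{L^2_{\ell/2}}^2+C\int\rho_{\ell/2}^2$ via $e^{2\beta\alpha\psi}\leq e^{2\beta\alpha}+|\psi|\,e^{2\beta\alpha\psi}$, again in the LHS. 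A Young's inequality with small parameter then absorbs a small fraction of these quantities back into the LHS, closing the estimate with the claimed polynomial dependence $P_\beta$; the case $\beta\in(-1,-1/2]$ follows by $\psi\mapsto-\psi$. The main obstacle is precisely this self-bootstrap: once $(2\beta-1)\alpha>0$ the cross chaos $\int\rho\,e^{(2\beta-1)\alpha\psi}d\eta_-$ is no longer reducible to quantities from Theorem~\ref{theorem_apriori1}, and one has to feed it back into the estimate via Besov--Sobolev duality while keeping all constants uniform in $0<\varepsilon\leq 1$ thanks to the lattice framework of Section~\ref{section:besov:rz}. Constants must necessarily blow up as $|\beta|\to 1$, consistent with the chaos integrability constraint $\alpha^2(p-1)/(4\pi)^2<2$ shaping the polynomial $P_\beta$.
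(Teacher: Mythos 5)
Your route genuinely differs from the paper's. You test once against $\rho_{\ell,\lambda}\,\phi(\psi)$ with $\phi(s)=\frac{e^{2\beta\alpha s}-1}{2\beta\alpha}$, get the cross term $\int\rho\, e^{(2\beta-1)\alpha\psi}\,\mathrm{d}\eta_-$, reduce it (for $\beta\geq 1/2$) via the pointwise bound $e^{(2\beta-1)\alpha\psi}\leq 1+e^{\beta\alpha\psi}$ to the \emph{same} $\beta$-level, and then close by Besov duality followed by self-absorption of $\|\nabla e^{\beta\alpha\psi}\|_{L^2_{\ell/2}}$ into the left-hand side. The paper instead runs a dyadic induction on $\beta_k=\frac{2^k-1}{2^k}$: the base case $\beta=\tfrac12$ is free because testing with $\rho e^{\pm\alpha\psi}$ makes the cross term simply $\int\rho\,\mathrm{d}\eta_\mp$, and the inductive step at $\beta_{k+1}$ produces the cross term $\int\rho\, e^{\mp\beta_k\alpha\psi}\,\mathrm{d}\eta_\pm$, which is controlled by the \emph{previous} level via the same Besov duality. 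So you absorb into the current level while the paper descends to a lower one; both mechanisms rest on the same duality pairing against $\eta_\pm\in B^{-1+\kappa}_{2,2,\ell/2}$. Your version avoids the induction altogether and is arguably cleaner; the cost is that you rely on the self-absorption being uniform in $\beta$ (fine, since the favourable coefficient $(\beta\alpha)^{-2}$ in front of $\|\nabla e^{\beta\alpha\psi}\|^2$ stays bounded away from zero and infinity on $\beta\in[\tfrac12,1)$).

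There is, however, one genuine gap in the way you handle the Laplacian term. You write ``invoking the chain rule $|\nabla e^{\beta\alpha\psi}|^2=(\beta\alpha)^2 e^{2\beta\alpha\psi}|\nabla\psi|^2$'' and claim this goes through ``exactly as in the proof of Theorem~\ref{theorem_apriori1}.'' That identity is valid only in the continuum directions; for the discrete gradient $\nabla_\varepsilon$ on $\varepsilon\mathbb{Z}^2$ there is no exact chain rule, since
\[
D_{\varepsilon e_i}\bigl(e^{\beta\alpha\psi}\bigr)(z)=e^{\beta\alpha\psi(z+\varepsilon e_i)}-e^{\beta\alpha\psi(z)}\ne \beta\alpha\, e^{\beta\alpha\psi(z)}\,D_{\varepsilon e_i}\psi(z),
\]
and Theorem~\ref{theorem_apriori1} never needs one (its test function is $\rho\,\psi$, not an exponential of $\psi$). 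What you actually obtain from the discrete integration by parts is, schematically,
\[
\sum_{z,i}\rho\cdot\frac{1}{\varepsilon^2}\,D_{\varepsilon e_i}\psi\;D_{\varepsilon e_i}\!\bigl(\phi(\psi)\bigr)
=\frac{1}{2\beta\alpha}\sum_{z,i}\rho\cdot\frac{1}{\varepsilon^2}\,D_{\varepsilon e_i}\psi\;D_{\varepsilon e_i}\!\bigl(e^{2\beta\alpha\psi}\bigr),
\]
and to bound this from below by a constant times $\|\nabla_\varepsilon e^{\beta\alpha\psi}\|^2$ you must invoke Lemma~\ref{lemma:differenceexp} (the inequality $|D_{\varepsilon e_i}e^{\gamma\psi/2}|^2\leq C\,(D_{\varepsilon e_i}\psi)\,D_{\varepsilon e_i}e^{\gamma\psi}$ with $\varepsilon$-uniform constant), which is precisely what the paper proves and uses for this purpose. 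Without that lemma your energy identity does not yield the discrete part of $\|\nabla_{\mathbb{R}^2\times\varepsilon\mathbb{Z}^2} e^{\beta\alpha\psi}\|^2_{L^2_{\ell/2}}$, so the estimate would not be uniform in $\varepsilon$. Once you insert Lemma~\ref{lemma:differenceexp}, your argument goes through.
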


Before proving the previous result we prove the following lemma.

\begin{lemma}
  \label{lemma:differenceexp}Consider a measurable function $\psi :
  \mathbb{R}^2 \times \varepsilon \mathbb{Z}^2 \rightarrow \mathbb{R}$, and
  consider $\gamma > 0$, then there exists a constant $C > 0$ independent of
  $\varepsilon$ such that \
  \begin{equation}
    | D_{\varepsilon e_i} e^{\pm (\gamma / 2)\psi} |^2 \leq \pm C (D_{\varepsilon
    e_i} \psi) D_{\varepsilon e_i} e^{\pm \gamma \psi},
    \label{inequality-grad-exp}
  \end{equation}
  where $\{ e_1, e_2 \}$ is the standard basis of $\mathbb{Z}^2$.
\end{lemma}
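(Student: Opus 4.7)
The plan is to observe that inequality \eqref{inequality-grad-exp} is purely pointwise: for each fixed $x \in \mathbb{R}^2 \times \varepsilon\mathbb{Z}^2$, setting $a = \psi(x)$ and $b = \psi(x + \varepsilon e_i)$, it reduces to proving the one-variable inequality
\[
(e^{\pm \gamma b/2} - e^{\pm \gamma a/2})^2 \;\leq\; C\,(b-a)\bigl(e^{\pm\gamma b} - e^{\pm\gamma a}\bigr)
\]
for all $a,b \in \mathbb{R}$ (with matching signs on both sides). Since this is entirely independent of $\varepsilon$ and of the lattice vector $e_i$, once the real-variable inequality is established with a constant $C = C(\gamma)$, the claim of the lemma follows immediately with the same constant.

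For the $+$ sign, I would factor the difference of exponentials as a difference of squares,
\[
e^{\gamma b} - e^{\gamma a} \;=\; \bigl(e^{\gamma b/2} - e^{\gamma a/2}\bigr)\bigl(e^{\gamma b/2} + e^{\gamma a/2}\bigr),
\]
so that the inequality becomes
\[
e^{\gamma b/2} - e^{\gamma a/2} \;\leq\; C\,(b-a)\bigl(e^{\gamma b/2} + e^{\gamma a/2}\bigr),
\]
once one divides by the common factor $e^{\gamma b/2} - e^{\gamma a/2}$, which has the same sign as $b - a$ (the case $a = b$ being trivial). Assuming WLOG $b > a$, the mean value theorem applied to the function $t \mapsto e^{\gamma t/2}$ gives
\[
e^{\gamma b/2} - e^{\gamma a/2} \;=\; \tfrac{\gamma}{2}\,(b-a)\,e^{\gamma c/2}
\]
for some $c \in [a,b]$, and $e^{\gamma c/2} \leq e^{\gamma b/2} \leq e^{\gamma b/2} + e^{\gamma a/2}$. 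This yields the inequality with $C = \gamma/2$.

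For the $-$ sign one simply applies the result just proved to the function $-\psi$ in place of $\psi$: this replaces every difference $D_{\varepsilon e_i}\psi$ by $-D_{\varepsilon e_i}\psi$ and $D_{\varepsilon e_i}e^{\pm\gamma\psi}$ by $D_{\varepsilon e_i}e^{\mp\gamma\psi}$, while leaving the squared left-hand side unchanged. In either case the constant produced is $C = \gamma/2$, manifestly independent of $\varepsilon$ and of $i \in \{1,2\}$.

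There is no serious obstacle here: the statement is a pointwise real-variable inequality and the only content is the algebraic factorization $x^2 - y^2 = (x-y)(x+y)$ combined with the mean value theorem. The role of the lemma in the sequel is what matters — it will be used in the a priori estimates to convert differences of $e^{\pm\alpha\psi}$ (which appear naturally from the nonlinearity) into differences of $e^{\pm\alpha\psi/2}$ multiplied by differences of $\psi$, so that $\|\nabla_{\mathbb{R}^2\times\varepsilon\mathbb{Z}^2}(e^{\pm\beta\alpha\psi})\|^2_{L^2_{\ell/2}}$ in Theorem \ref{theorem_apriori2} can be controlled by energy terms produced by testing \eqref{eq:apriori} against $\rho^{(4)}_{\ell,\lambda}\, e^{\pm 2\beta\alpha\psi}$.
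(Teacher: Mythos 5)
Your proof is correct and rests on the same core factorization
\[
e^{\gamma b}-e^{\gamma a}=\bigl(e^{\gamma b/2}-e^{\gamma a/2}\bigr)\bigl(e^{\gamma b/2}+e^{\gamma a/2}\bigr)
\]
as the paper's argument. The paper, after pulling out the common factor $e^{\pm\gamma\psi(x,z)}$ and writing $g=D_{\varepsilon e_i}\psi$, reduces the claim to showing that $g/\tanh(\gamma g/4)$ is bounded away from zero, which it verifies by examining the limits at $g\to\pm\infty$, applying de L'H\^opital at $g=0$, and invoking continuity. You instead finish with the mean value theorem for $t\mapsto e^{\gamma t/2}$; this is a little more direct, avoids the compactness/limit argument, and produces an explicit $C=\gamma/2$ (the sharp constant $\gamma/4$ would require the argument by the $\coth$ asymptotics, but the sharp value is irrelevant to the application).

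One caveat worth spelling out. For the $-$ sign, your substitution $\psi\mapsto-\psi$ yields, after tracking the minus through $D_{\varepsilon e_i}(-\psi)=-D_{\varepsilon e_i}\psi$,
\[
|D_{\varepsilon e_i}e^{-\gamma\psi/2}|^2\;\leq\;-C\,(D_{\varepsilon e_i}\psi)\,(D_{\varepsilon e_i}e^{-\gamma\psi}),
\]
and the minus sign on the right is in fact essential: $D_{\varepsilon e_i}\psi$ and $D_{\varepsilon e_i}e^{-\gamma\psi}$ always have opposite signs, so the right-hand side of the lemma as literally stated (no sign) would be non\mbox{-}positive while the left is non\mbox{-}negative. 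The paper's own statement and its $\coth$ computation have the same slip — the identity $(e^{\pm\gamma g/2}+1)/(e^{\pm\gamma g/2}-1)=\coth(\gamma g/4)$ only holds for the $+$ sign; for $-$ it produces $-\coth(\gamma g/4)$. In the application inside Theorem \ref{theorem_apriori2} this is harmless because the $-$ case is obtained by testing equation \eqref{eq:apriori} against $-\rho_{\ell,\lambda}e^{-\alpha\bar\phi}$, which restores positivity; but it is worth recording that your derivation via $-\psi$ actually gives the correctly signed form of the inequality.
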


\begin{proof}
  We have to show that, for any $(x, z) \in \mathbb{R}^2 \times \varepsilon
  \mathbb{Z}^2$, we have, for a suitable $C > 0$,
  \begin{equation}
    \begin{array}{rl}
      & \left( e^{\pm \frac{\gamma}{2} (\psi (x, z + \varepsilon e_i) -
      \psi (x, z))} - 1 \right) ^2 e^{\pm \gamma \psi (x, z)}\\
      \leqslant& C e^{\pm \gamma \psi (x, z)} (e^{\pm \gamma (\psi (x, z +
      \varepsilon e_i) - \psi (x, z))} - 1) (\psi (x, z + \varepsilon e_i) -
      \psi (x, z)) .
    \end{array} \label{eq:diffexp}
  \end{equation}
  Dividing the inequality {\eqref{eq:diffexp}} by $e^{\pm \gamma \psi (x,
  z)}$, {\eqref{eq:diffexp}} is equivalent to \
  \[  \left( e^{\pm \frac{\gamma}{2} (\psi (x, z + \varepsilon e_i) -
     \psi (x, z))} - 1 \right) ^2 \leqslant C (e^{\pm \gamma (\psi (x,
     z + \varepsilon e_i) - \psi (x, z))} - 1) (\psi (x, z + \varepsilon e_i)
     - \psi (x, z)). \]
  So proving inequality {\eqref{eq:diffexp}} is equivalent to show that
  \begin{equation}
    \frac{1}{C} \leqslant \frac{(e^{\pm \gamma (\psi (x, z + \varepsilon e_i)
    - \psi (x, z))} - 1) (\psi (x, z + \varepsilon e_i) - \psi (x, z))}{\left(
    e^{\pm \frac{\gamma}{2} (\psi (x, z + \varepsilon e_i) - \psi (x, z))} - 1
    \right)^2} ,\label{eq:diffexp2}
  \end{equation}
  for some $C > 0$. In order to prove this, if we denote by $g = \psi (x, z +
  \varepsilon e_i) - \psi (x, z)$, inequality {\eqref{eq:diffexp2}} is
  equivalent to what follows \ \ \
  \[ \frac{1}{C} \leqslant \frac{(e^{\pm \gamma g} - 1) g}{\left( e^{\pm
     \frac{\gamma}{2} (g)} - 1 \right)^2} = \frac{\left( e^{\pm
     \frac{\gamma}{2} g} - 1 \right) \left( e^{\pm \frac{\gamma}{2} g} + 1
     \right) g}{\left( e^{\pm \frac{\gamma}{2} g} - 1 \right)^2} =
     \frac{\left( e^{\pm \frac{\gamma}{2} g} + 1 \right)}{\left( e^{\pm
     \frac{\gamma}{2} (g)} - 1 \right)} g = \coth \left( \frac{\gamma g}{4}
     \right) g = \frac{g}{\tanh \left( \frac{\gamma g}{4} \right)}. \]
  Now it is not hard to see that $\lim_{g \rightarrow \pm \infty} \coth \left(
  \frac{\gamma g}{4} \right) g = \infty$ and $\coth \left( \frac{\gamma g}{4}
  \right) g > 0$ if $g \neq 0$. By de L'Hopital rule we get
  \[ \lim_{g \rightarrow 0} \frac{g}{\tanh \left( \frac{\gamma g}{4} \right)}
     = \lim_{g \rightarrow 0} \frac{4}{\gamma \tanh' \left( \frac{\gamma g}{4}
     \right)} = \lim_{g \rightarrow 0} \frac{4}{\gamma \left( 1 - \tanh^2
     \left( \frac{\gamma g}{4} \right) \right)} = \frac{4}{\gamma}. \]
  Now by continuity of $\coth \left( \frac{\gamma g}{4} \right) g$ we can
  obtain inequality {\eqref{eq:diffexp2}} and thus the thesis.
\end{proof}

{\color{blue} }
\begin{proof*}{Proof of Theorem \ref{theorem_apriori2}}
  The proof can be done by induction on $|\beta| \leqslant \frac{2^k - 1}{2^k}$.
  For $k = 1$ (and $\beta = \frac{1}{2}$), using the notations of the proof of
  Theorem \ref{theorem_apriori1}, we multiply equation {\eqref{eq:apriori}} by
  $\pm \rho_{\ell, \lambda} e^{\pm \alpha \psi}$,  
  \[ \mp \int (\Delta_{\mathbb{R}^2 \times \varepsilon \mathbb{Z}^2} \psi)
     \rho_{\ell, \lambda}^{(4)} e^{\pm \alpha \psi} \mathd x \mathd z \pm m^2 \int
     \rho_{\ell, \lambda}^{(4)} \psi e^{\pm \alpha \psi} \mathd x \mathd z + \int
     \rho_{\ell, \lambda}^{(4)} e^{\pm 2\alpha \psi} \mathd \eta_{\pm} - \int
     \rho_{\ell, \lambda}^{(4)} \mathd \eta_{\mp} =0. \]
  Now integrating by parts on the first term we obtain
  \begin{eqnarray*}
    &  &\int (\Delta_{\mathbb{R}^2 \times \varepsilon \mathbb{Z}^2} \psi)
    \rho_{\ell, \lambda}^{(4)} e^{- \alpha \psi} \mathd x \mathd z\\
    & = & - \int \nabla_{\mathbb{R}^2 \times \varepsilon \mathbb{Z}^2} \psi
    \nabla_{\mathbb{R}^2 \times \varepsilon \mathbb{Z}^2} (\rho_{\ell,
    \lambda}^{(4)} e^{- \alpha \psi}) \mathd x \mathd z\\
    & = & -\int \nabla_{\mathbb{R}^2 \times \varepsilon \mathbb{Z}^2} (\psi)
    \ll \rho_{\ell, \lambda}^{(4)}, \nabla_{\mathbb{R}^2 \times \varepsilon
    \mathbb{Z}^2} e^{- \alpha \psi} \gg \mathd x \mathd z - \int
    \nabla_{\mathbb{R}^2 \times \varepsilon \mathbb{Z}^2} (\psi)
    \nabla_{\mathbb{R}^2 \times \varepsilon \mathbb{Z}^2} (\rho_{\ell,
    \lambda}^{(4)}) e^{- \alpha \psi} \mathd x \mathd z
  \end{eqnarray*}
  \[ \  \]
  with, similarly to the proof of Theorem \ref{theorem_apriori1},
  \[ \ll \rho_{\ell,\lambda}^{(4)}, \nabla_{\mathbb{R}^2 \times \varepsilon \mathbb{Z}^2} e^{-
     \alpha \psi} \gg (x, z) = \left( \begin{array}{c}
       \rho_{\ell, \lambda}^{(4)} (x, z) \partial_{x^1} e^{- \alpha \psi (x, z)}\\
       \rho_{\ell, \lambda}^{(4)} (x, z) \partial_{x^2} e^{- \alpha \psi (x, z)}\\
       \rho_{\ell, \lambda}^{(4)} (x, z + \varepsilon e_1) \varepsilon^{- 1}
       D_{\varepsilon e_1} e^{- \alpha \psi (x, z)}\\
       \rho_{\ell, \lambda}^{(4)} (x, z + \varepsilon e_2) \varepsilon^{- 1}
       D_{\varepsilon e_2} e^{- \alpha \psi (x, z)}
     \end{array} \right). \]
  Computing the scalar product we have
  \begin{align*}
&\nabla_{\mathbb{R}^2 \times \varepsilon \mathbb{Z}^2} \psi \ll
       \rho_{\ell, \lambda}^{(4)}, \nabla_{\mathbb{R}^2 \times \varepsilon
       \mathbb{Z}^2} e^{- \alpha \psi} \gg (x, z) 
\\
       =& \rho_{\ell, \lambda}^{(4)} (x, z) \partial_{x^1} e^{- \alpha \psi (x, z)}
       \partial_{x^1} \psi (x, z) + \rho_{\ell, \lambda}^{(4)} (x, z) \partial_{x^2}
       e^{- \alpha \psi (x, z)} \partial_{x^2} \psi (x, z)\\
       &+ \varepsilon^{- 2} \rho_{\ell, \lambda}^{(4)} (x, z + \varepsilon e_1)
       D_{\varepsilon e_1} e^{- \alpha \psi (x, z)} D_{\varepsilon e_1} \psi
       (x, z) \\
       &+ \varepsilon^{- 2} \rho_{\ell, \lambda} (x, z + \varepsilon e_2)
       D_{\varepsilon e_2} e^{- \alpha \psi (x, z)} D_{\varepsilon e_2} \psi
       (x, z) \\
       =& -\frac{4}{\alpha} \rho_{\ell, \lambda}^{(4)} (x, z) (\partial_{x^1} e^{-
       \alpha / 2 \psi (x, z)})^2 - \frac{4}{\alpha} \rho_{\ell, \lambda}^{(4)} (x,
       z) (\partial_{x^2} e^{- \alpha / 2 \psi (x, z)})^2 \\
       &+ \varepsilon^{- 2} \rho_{\ell, \lambda}^{(4)} (x, z + \varepsilon e_1)
       D_{\varepsilon e_1} e^{- \alpha \psi (x, z)} D_{\varepsilon e_1} \psi
       (x, z) \\
       &+ \varepsilon^{- 2} \rho_{\ell, \lambda}^{(4)}(x, z + \varepsilon e_2)
       D_{\varepsilon e_2} e^{- \alpha \psi (x, z)} D_{\varepsilon e_2} \psi
       (x, z) .
\end{align*}
  In analogous way as in the proof of Theorem \ref{theorem_apriori1} and the
  result of Lemma \ref{lemma:differenceexp} fixing $\delta > 0$ we can choose
  $\lambda$ small enough such that we have
  \[ -\nabla_{\mathbb{R}^2 \times \varepsilon \mathbb{Z}^2} \psi \ll
     \rho_{\ell, \lambda}^{(4)}, \nabla_{\mathbb{R}^2 \times \varepsilon
     \mathbb{Z}^2} e^{- \alpha \psi} \gg \geqslant \;  \frac{1}{C} \rho_{\ell,
     \lambda}^{(4)} \left| \nabla_{\mathbb{R}^2 \times \varepsilon \mathbb{Z}^2}
     e^{- \frac{\alpha}{2} \psi} \right|^2 . \]
  Now we have to estimate
  \begin{eqnarray*}
    \int \nabla_{\mathbb{R}^2 \times \varepsilon \mathbb{Z}^2} (\psi)
    (\nabla_{\mathbb{R}^2 \times \varepsilon \mathbb{Z}^2} \rho_{\ell,
    \lambda}^{(4)}) e^{- \alpha \psi} \mathd x \mathd z & = & \int \partial_{x^1}
    \rho_{\ell, \lambda}^{(4)} \partial_{x^1} \psi e^{- \alpha \psi} \mathd x \mathd
    z + \int \partial_{x^2} \rho_{\ell, \lambda}^{(4)} \partial_{x^2} \psi e^{-
    \alpha \psi} \mathd x \mathd z\\
    &  & + \varepsilon^{- 2} \int D_{\varepsilon e_1} \rho_{\ell, \lambda}^{(4)}
    D_{\varepsilon e_1} \psi e^{- \alpha \psi} \mathd x \mathd z \\
    &  & + \varepsilon^{- 2} \int D_{\varepsilon e_2} \rho_{\ell, \lambda}^{(4)}
    D_{\varepsilon e_2} \psi e^{- \alpha \psi} \mathd x \mathd z.
  \end{eqnarray*}
  For the first term we apply chain rule to get, for $\alpha > 0$,
  \begin{align*} \int | \partial_{x^i} \rho_{\ell, \lambda}^{(4)} \partial_{x^i} \psi e^{-
     \alpha \psi} | \mathd x \mathd z =& \frac{2}{\alpha} \int | \partial_{x^i}
 \rho_{\ell, \lambda}^{(4)}|\cdot| e^{- \alpha \psi / 2} | \cdot |\partial_{x^i} e^{- \alpha
     \psi / 2} | \mathd x \mathd z \\
     \leqslant& \frac{\delta}{\alpha} \left(\left\|
     \rho_{\ell/2, \lambda}^{(4)} \partial_{x^i} e^{- \frac{\alpha}{2} \psi}
     \right\|_{L^2}^2 + \left\|
     \rho_{\ell/2, \lambda}^{(4)}  e^{- \frac{\alpha}{2} \psi}
     \right\|_{L^2}^2 \right),  \end{align*}
  for $i \in \{ 1, 2 \}$. Now for the 3rd and 4th term we observe that
  denoting by $\varepsilon^{- 1} D_{\varepsilon e_1} \rho_{\ell, \lambda}^{(4)} (x,
  z) = \varepsilon^{- 1} (\rho_{\ell, \lambda}^{(4)} (x, z + \varepsilon e_1) -
  \rho_{\ell, \lambda}^{(4)} (x, z))$ we have that $\varepsilon^{- 1} D_{\varepsilon
  e_1} \rho_{\ell, \lambda} ^{(4)}\leqslant \delta \rho_{\ell, \lambda}^{(4)}$.\\
  
  Observing that for any $g \in \mathbb{R}$
  \[ | g | \lesssim | e^{- g} - 1 | | g | + | e^{- g / 2} - 1 |, \]
  we get, for some $C > 0$,
  \begin{align*}
       &| (\psi (x, z + \varepsilon e_i) - \psi (x, z)) e^{- \alpha \psi (x,
       z)} |\\
       \leqslant& C | \psi (x, z + \varepsilon e_i) - \psi (x, z) | | e^{-
       \alpha \psi (x, z)} (e^{- \alpha (\psi (x, z + \varepsilon e_i) - \psi
       (x, z))} - 1) | \\
       &+ C | e^{- \alpha \psi (x, z)} (e^{- \alpha (\psi (x, z + \varepsilon
       e_i) - \psi (x, z)) / 2} - 1) |\\
       \leqslant& C D_{\varepsilon e_i} \psi D_{\varepsilon e_i} e^{- \alpha
       \psi} + C e^{- \alpha \psi / 2} D_{\varepsilon e_i} e^{- \alpha \psi /
       2}.
\end{align*}
  From this we have
 \begin{align*}
       &\varepsilon^{- 2} \int | D_{\varepsilon e_i} (\rho_{\ell, \lambda}^{(4)}) (x,
       z) | | D_{\varepsilon e_i} (\psi) (x, z) | e^{- \alpha \psi (x, z)}
       \mathd x \mathd z \\
       \leqslant& C \delta \varepsilon^{- 2} \int \rho_{\ell, \lambda}^{(4)} (x, z +
       \varepsilon e_i) D_{\varepsilon e_i} \psi D_{\varepsilon e_i} e^{-
       \alpha \psi} + \varepsilon^{- 2} C \delta \int \rho_{\ell, \lambda}^{(4)} |
       D_{\varepsilon e_i} e^{- \alpha \psi / 2} |^2 \mathd x \mathd z +
       \delta \int \rho_{\ell, \lambda}^{(4)} C e^{- \alpha \psi} \mathd x \mathd z.
 \end{align*}
  By Lemma \ref{lemma:differenceexp}, the first two terms, on the right hand
  side of the previous inequality, can be absorbed into the positive term
  $\int \nabla_{\mathbb{R}^2 \times \varepsilon \mathbb{Z}^2} \psi \ll
  \rho_{\ell, \lambda}^{(4)}, \nabla_{\mathbb{R}^2 \times \varepsilon \mathbb{Z}^2}
  e^{- \alpha \bar{\phi}_n} \gg$. For the third term we estimate
  \[ C \int \rho_{\ell, \lambda}^{(4)} e^{- \alpha \psi} \mathd x \mathd z
     \leqslant C + \int \rho_{\ell, \lambda}^{(4)} | \psi | e^{- \alpha \psi} \mathd
     x \mathd z, \]
  which was estimated in Theorem \ref{theorem_apriori1} by $(\rho_{\ell}^{(4)}
  \mathd \eta_+) (\mathbb{R}^2 \times \varepsilon \mathbb{Z}^2) +
  (\rho_{\ell}^{(4)} \mathd \eta_-) (\mathbb{R}^2 \times \varepsilon
  \mathbb{Z}^2)$. This proves the theorem for $\beta \leq \frac{1}{2} .$\\
  
  Suppose that we prove the theorem for $\beta \leq \frac{2^k - 1}{2^k}$, $k
  \in \mathbb{N}$, and consider $\beta_{k + 1} = \frac{2^{k + 1} - 1}{2^{k +
  1}}$. \ Multiplying both sides by $\rho e^{\pm 2 \beta_{k + 1} \alpha
  \bar{\phi}}$ and repeating the previous estimates we obtain
  \begin{align*} &\left\| \rho_{\ell/2}^{(4)} \nabla_{\mathbb{R}^2 \times \varepsilon
     \mathbb{Z}^2} e^{\pm \beta_{k + 1} \alpha \psi} \right\|^2_{L^2} + \left\|
     \rho_{\ell/2}^{(4)} \sqrt{\psi} e^{\pm \beta_{k + 1} \alpha \psi}
     \right\|_{L^2}^2 + \int \rho_{\ell}^{(4)} e^{\pm (1 + 2 \beta_k) \alpha \psi} d
     \eta_{\pm}\\
   \lesssim &\int \rho^{(4)}_{\ell} e^{- \beta_k \alpha \psi} \mathd \eta_+ + \int \rho^{(4)}_{\ell}
     e^{+ \beta_k \alpha \psi} \mathd \eta_- . \end{align*}
  On the other hand by
  \[ \int \rho_{\ell}^{(4)} e^{\mp \beta_k \alpha \psi} \mathd \eta_{\pm} \leq \|
     e^{\mp \beta_k \alpha \psi} \|_{B^{1 - \kappa_{\alpha} + \varepsilon}_{2,
     2, \ell / 2} (\mathbb{R}^2 \times \varepsilon \mathbb{Z}^2)} \|
     \eta_{\pm} \|_{B^{- 1 + \kappa_{\alpha}}_{2, 2, \ell / 2} (\mathbb{R}^2
     \times \varepsilon \mathbb{Z}^2)}, \]
  and using the induction hypothesis we obtain the thesis.
\end{proof*}

\subsection{Existence and uniqueness of solutions to equation
{\eqref{eq:main5}}}

In this section we prove the existence and uniqueness of the solution to
equation {\eqref{eq:main5}}. We start with a results on the boundedness of the
measures $\mu^{\alpha}_{\varepsilon, N}$, $\mu^{\alpha}_{\varepsilon}$,
$\bar{\mu}^{\alpha}_{\varepsilon}$ and $\mu^{\alpha}$ with respect to
$\varepsilon$ and $N$ in suitable Besov spaces.

\begin{proposition}
  \label{proposition:convergencealpha}Fix $| \alpha | < 4 \pi$ and $0 < \delta
  < 1 - \frac{\alpha^2}{(4 \pi)^2}$, then there are two sequences
  $\varepsilon_n = 2^{- k_n} \rightarrow 0$ (for some $k_n \in
  \mathbb{N}$) and $N_r \rightarrow \infty$ such that for any $\ell \geqslant 4$ we
  have
  \begin{equation}\label{eq:supepsilon}
      \sup_{\varepsilon_n, N_r} (\| \mu_{\varepsilon_n, N_r}^{\pm \alpha}
     \|_{B^{- 1 + \delta}_{2, 2, \ell} (\mathbb{R}^2 \times \varepsilon_n
     \mathbb{Z}^2)}, \| \mu_{\varepsilon_n}^{\pm \alpha} \|_{B^{-1 +
     \delta}_{2, 2, \ell} (\mathbb{R}^2 \times \varepsilon_n \mathbb{Z}^2)},
     \| \bar{\mu}^{\pm \alpha}_{\varepsilon_n} \|_{B^{- 1 + \delta}_{2, 2,
     \ell} (\mathbb{R}^4)}, \| \mu^{\pm \alpha} \|_{B^{- 1 + \delta}_{2, 2,
     \ell} (\mathbb{R}^4)}) < + \infty 
    \end{equation} 
  almost surely. Furthermore $\mu_{\varepsilon_n, N_r}^{\pm \alpha}
  \rightarrow \mu^{\pm \alpha}_{\varepsilon_n}$ and $\bar{\mu}^{\pm
  \alpha}_{\varepsilon_n} = \overline{\mathcal{E}}_{\varepsilon_n} (\mu^{\pm
  \alpha}_{\varepsilon_n}) \rightarrow \mu^{\pm \alpha}$ almost surely in
  $B^{- 1 + \delta}_{2, 2, \ell} (\mathbb{R}^2 \times \varepsilon_n
  \mathbb{Z}^2)$ and in $B^{- 1 + \delta}_{2, 2, \ell} (\mathbb{R}^4)$
  respectively.
\end{proposition}

\begin{proof}
  From Theorem \ref{theorem:convergencebar} and Theorem
  \ref{theorem_convergenceepsilon} we have that, as $\varepsilon \rightarrow
  0$, $\mathbb{N}^s_{p, \ell} (\bar{\mu}_{\varepsilon}^{\alpha}, a, \beta)
  \rightarrow \mathbb{N}^s_{p, \ell} (\mu^{\alpha}, a, \beta)$ and
  $\mathbb{N}^s_{p, \ell, \varepsilon} (\mu_{\varepsilon}^{\alpha}, a, \beta)
  \rightarrow \mathbb{N}^s_{p, \ell} (\mu^{\alpha}, a, \beta)$ in $L^p
  (\Omega)$ for any $a \in \mathbb{R}_+$, $\beta \in (0, 1)$, $p \geqslant 2$,
  $\frac{\alpha^2}{(4 \pi)^2} (p - 1) < 2$ and $s < - \frac{\alpha^2}{(4
  \pi)^2} (p - 1)$. We can choose $\bar{\beta} \in (0, 1)$ and $\bar{a} \in
  \mathbb{R}_+$ such that $| K_i | \lesssim \exp (- \bar{a} (| x | + | z
  |)^{\bar{\beta}})$ for any $i \geqslant - 1$, and a subsequence
  $\varepsilon_n = 2^{- k_n}$ (where $k_n \in \mathbb{N}$) such that
  $\mathbb{N}^{- 1 + \delta}_{2, \ell} (\bar{\mu}_{\varepsilon_n}^{\pm
  \alpha}, \bar{a}, \bar{\beta}) \rightarrow \mathbb{N}^{- 1 + \delta}_{2,
  \ell} (\mu^{\pm \alpha}, \bar{a}, \bar{\beta})$ and $\mathbb{N}^{- 1 +
  \delta}_{2, \ell, \varepsilon_n} (\mu_{\varepsilon_n}^{\pm \alpha}, \bar{a},
  \bar{\beta}) \rightarrow \mathbb{N}^{- 1 + \delta}_{2, \ell} (\mu^{\pm
  \alpha}, \bar{a}, \bar{\beta})$ almost surely. Since, by Theorem
  \ref{theorem:norminequalities}, we have
  \begin{equation}
    \| \mu^{\pm \alpha}_{\varepsilon_n} \|_{B^{-1 + \delta}_{2, 2, \ell}
    (\mathbb{R}^2 \times \varepsilon_n \mathbb{Z}^2)} \lesssim \mathbb{N}^{- 1
    + \delta}_{2, \ell, \varepsilon_n} (\mu_{\varepsilon_n}^{\pm \alpha},
    \bar{a}, \bar{\beta}) \label{eq:Minequality1}
  \end{equation}
  \begin{equation}
    \| \bar{\mu}^{\pm \alpha}_{\varepsilon_n} \|_{B^{- 1 + \delta}_{2, 2,
    \ell} (\mathbb{R}^4)} \lesssim \mathbb{N}^{- 1 + \delta}_{2, \ell}
    (\bar{\mu}^{\pm \alpha}_{\varepsilon_n}, \bar{a}, \bar{\beta}), \quad \|
    \mu^{\pm \alpha} \|_{B^{- 1 + \delta}_{2, 2, \ell} (\mathbb{R}^4)}
    \lesssim \mathbb{N}^{- 1 + \delta}_{2, \ell} (\mu^{\pm \alpha}, \bar{a},
    \bar{\beta}), \label{eq:Minequality2}
  \end{equation}
  the sequence $\bar{\mu}^{\pm \alpha}_{\varepsilon_n}$ is a bounded in $L^2
  (\Omega, B^{-1 + \delta}_{2, 2, \ell} (\mathbb{R}^4))$, which, by the
  Banach-Alaoglu theorem is weakly precompact. Let $\kappa \in L^2 (\Omega,
  B^{-1 + \delta}_{2, 2, \ell} (\mathbb{R}^4))$ be a weak limit of some
  (convergent) subsequence $\bar{\mu}^{\alpha}_{\varepsilon_{n_k}}$ of
  $\bar{\mu}^{\alpha}_{\varepsilon_n}$. By Remark
  \ref{remark:convergenceintegral} we have that, for any $f \in \mathcal{S}
  (\mathbb{R}^4)$ and $A \in \mathcal{F}$ (where $\mathcal{F}$ is the
  $\sigma$-algebra in the underlying probability space $(\Omega, \mathcal{F},
  \mathbb{P})$), we have that
  \[ \mathbb{I}_A (\omega) \int_{\mathbb{R}^4} f (x, z)
     \bar{\mu}^{\alpha}_{\varepsilon_{n_k}} (\mathd x, \mathd z) \rightarrow
     \mathbb{I}_A (\omega) \int_{\mathbb{R}^4} f (x, z) \mu^{\alpha} (\mathd
     x, \mathd z) \]
  in $L^p (\Omega)$, which implies that $\mathbb{I}_A (\omega)
  \int_{\mathbb{R}^4} f (x, z) \mu^{\alpha} (\mathd x, \mathd z) =\mathbb{I}_A
  (\omega) \int_{\mathbb{R}^4} f (x, z) \kappa (\mathd x, \mathd z)$ for some
  $\kappa \in L^2 (\Omega, B^{-1 + \delta}_{2, 2, \ell} (\mathbb{R}^4))$. Since
  the linear combinations of the elements of the form $\mathbb{I}_A (\cdot)
  \int_{\mathbb{R}^4} f (x, z) \cdot \mathrm{d}x \mathrm{d}z  \in (L^2 (\Omega, B^{-1 + \delta}_{2, 2,
  \ell} (\mathbb{R}^4)))^{\ast}$, for generic $f \in \mathcal{S}
  (\mathbb{R}^4)$ and $A \in \mathcal{F}$, is dense in $(L^2 (\Omega, B^{-1 +
  \delta}_{2, 2, \ell} (\mathbb{R}^4)))^{\ast}$ we get that for the weak limit
  $\kappa$ the equality $\kappa = \mu^{\alpha}$ holds, and thus the whole
  sequence $\bar{\mu}^{\alpha}_{\varepsilon_n}$ weakly converges to
  $\mu^{\alpha}$. In a similar way it is possible to prove that $\bar{\mu}^{-
  \alpha}_{\varepsilon_n}$ converges to $\mu^{- \alpha}$.\\
  
  Inequalities {\eqref{eq:Minequality1}} and {\eqref{eq:Minequality2}} imply
  that $\sup_{\varepsilon_n} (\| \mu_{\varepsilon_n}^{\pm \alpha} \|_{B^{- 1 +
  \delta}_{2, 2, \ell} (\mathbb{R}^2 \times \varepsilon_n \mathbb{Z}^2)}, \|
  \bar{\mu}^{\pm \alpha}_{\varepsilon_n} \|_{B^{- 1 + \delta}_{2, 2, \ell}
  (\mathbb{R}^4)}, \| \mu^{\pm \alpha} \|_{B^{- 1 + \delta}_{2, 2, \ell}
  (\mathbb{R}^4)}) < + \infty$. Thus, by the extended Lebesgue dominated
  convergence theorem (see, e.g. Theorem 2.3.11 of {\cite{Athreya2006}}), we
  have that $\mathbb{E} [\| \bar{\mu}_{\varepsilon_n}^{\pm \alpha} \|_{B^{-1 +
  \delta}_{2, 2, \ell} (\mathbb{R}^4)}^2]$ converges to $\mathbb{E} [\|
  \mu^{\pm \alpha} \|_{B^{- 1 + \delta}_{2, 2, \ell} (\mathbb{R}^4)}^2]$.
  Since $L^2 (\Omega, B^{-1 + \delta}_{2, 2, \ell} (\mathbb{R}^4)^{})$ is a
  Hilbert space (more generically a uniformly convex space), and since
  $\bar{\mu}^{\pm \alpha}_{\varepsilon_n}$ weakly converges to $\mu^{\pm
  \alpha}$, we have that the sequence $\bar{\mu}_{\varepsilon_n}^{\pm \alpha}$
  converges to $\mu^{\pm \alpha}$ in $L^2 (\Omega, B^{-1 + \delta}_{2, 2, \ell}
  (\mathbb{R}^4))$. By passing to a suitable subsequence we have that
  $\bar{\mu}_{\varepsilon_n}^{\pm \alpha}$ converges to $\mu^{\pm \alpha}$
  almost surely in $B^{-1 + \delta}_{2, 2, \ell} (\mathbb{R}^4)$. This proves
  the statements of the theorem involving $\bar{\mu}_{\varepsilon_n}^{\pm
  \alpha}$ and $\mu^{\pm \alpha}$.\\
  
  By applying an analogous reasoning, combined with a diagonal argument applied to the double sequence $\varepsilon_{n} \rightarrow 0$ and $N \rightarrow +\infty$, to $\mu_{\varepsilon_n, N_r}^{\pm
  \alpha}$ and $\mu^{\pm \alpha}_{\varepsilon_n}$, replacing Theorem  \ref{theorem:convergencebar} and Theorem \ref{theorem_convergenceepsilon} by
  Proposition \ref{proposition:convergenceperiodic} the thesis is proved; namely we find two sequences $\varepsilon_{n} \rightarrow 0$ and $N_{r} \rightarrow +\infty$ (independent of each other) such that inequality \eqref{eq:supepsilon} holds.
\end{proof}

\begin{remark}
  \label{remark:bound1}Since $\rho_{\ell}^{(4)} \in (B^{- 1 + \delta}_{2, 2,
  \ell / 2} (\mathbb{R}^4))^{\ast}, (B^{-1 + \delta}_{2, 2, \ell / 2}
  (\mathbb{R}^2 \times \varepsilon \mathbb{Z}^2))^{\ast}$ an important
  consequence of Proposition \ref{proposition:convergencealpha}, is that
  \[ \sup_{\varepsilon_n, N_r} \left( \int_{\mathbb{R}^2 \times \varepsilon_n
     \mathbb{Z}^2} \rho^{(4)}_{\ell} (x, z) \mathd \mu_{\varepsilon_n,
     N_r}^{\pm \alpha}, \int_{\mathbb{R}^2 \times \varepsilon_n \mathbb{Z}^2}
     \rho^{(4)}_{\ell} (x, z) \mathd \mu_{\varepsilon_n}^{\pm \alpha},
     \int_{\mathbb{R}^4} \rho^{(4)}_{\ell} (x, z) \mathd \bar{\mu}^{\pm
     \alpha}_{\varepsilon_n} \right) < + \infty \]
  almost surely.
\end{remark}

We can now prove the uniqueness of solution to equation {\eqref{eq:main5}}.

\begin{proposition}
  \label{proposition:uniqueness}Suppose that $\bar{\phi}$ and $\bar{\psi}$ are
  two solutions to equation {\eqref{eq:main5}} such that, there exists $\ell >
  4$ for which $\bar{\phi}, \bar{\psi}, e^{\pm \frac{\alpha}{2} \bar{\psi}},
  e^{\pm \frac{\alpha}{2} \bar{\phi}} \in H^1_{\ell} (\mathbb{R}^4)$ and
  $e^{\pm 2 \alpha \bar{\phi}}, e^{\pm 2 \alpha \bar{\psi}} \in L^1_{2 \ell}
  (\mathd \mu^{\pm \alpha})$ almost surely, then $\bar{\phi} = \bar{\psi}$
  almost surely.
\end{proposition}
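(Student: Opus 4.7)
The strategy is to adapt the weighted energy estimate behind Theorem~\ref{theorem_apriori1} to the \emph{difference} $u\assign \bar\phi-\bar\psi$, and to exploit the strict monotonicity of the nonlinearity $t\mapsto \alpha(e^{\alpha t}-e^{-\alpha t})$ together with the positivity of the Radon measures $\mu^{\pm\alpha}$. Subtracting the two instances of \eqref{eq:main5} gives, in $\mathcal{S}'(\mathbb{R}^4)$,
\[
(-\Delta_{\mathbb{R}^4}+m^2)u+\alpha\bigl(e^{\alpha\bar\phi}-e^{\alpha\bar\psi}\bigr)\mu^{\alpha}-\alpha\bigl(e^{-\alpha\bar\phi}-e^{-\alpha\bar\psi}\bigr)\mu^{-\alpha}=0.
\]
I would pair this equation with the test function $u\,\rho^{(4)}_{\ell,\lambda}$, where $\rho^{(4)}_{\ell,\lambda}(y)\assign \rho^{(4)}_\ell(\lambda y)$ and $\lambda>0$ is taken small enough that $|\nabla\rho^{(4)}_{\ell,\lambda}|\le \delta\,\rho^{(4)}_{\ell,\lambda}$ pointwise, with $\delta>0$ chosen at the end.

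Because $t\mapsto e^{\alpha t}$ is strictly increasing and $t\mapsto -e^{-\alpha t}$ likewise, the pointwise inequalities $(e^{\alpha\bar\phi}-e^{\alpha\bar\psi})u\ge 0$ and $-(e^{-\alpha\bar\phi}-e^{-\alpha\bar\psi})u\ge 0$ hold, so the positivity of $\mu^{\pm\alpha}$ forces both nonlinear contributions to the test identity to be \emph{nonnegative}. Integration by parts in the linear term, carried out just as in Theorem~\ref{theorem_apriori1}, produces
\[
\int_{\mathbb{R}^4}\!\rho^{(4)}_{\ell,\lambda}\bigl(|\nabla u|^2+m^2 u^2\bigr)\,dy+\int_{\mathbb{R}^4}\!u\,\nabla u\cdot\nabla\rho^{(4)}_{\ell,\lambda}\,dy,
\]
and the second integral is absorbed into the first via Young's inequality and the bound $|\nabla\rho^{(4)}_{\ell,\lambda}|\le \delta\,\rho^{(4)}_{\ell,\lambda}$. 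Collecting the three terms I obtain $(1-C\delta)\|u\|_{H^1(\rho^{(4)}_{\ell,\lambda})}^2\le 0$, which, after fixing $\delta$ small, yields $u\equiv 0$.

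The substantive obstacle is to justify rigorously that the pairing
\[
\int_{\mathbb{R}^4}\!\bigl(e^{\alpha\bar\phi}-e^{\alpha\bar\psi}\bigr)\,u\,\rho^{(4)}_{\ell,\lambda}\,d\mu^{\alpha}
\]
(and its counterpart with $\mu^{-\alpha}$) is finite, since $\mu^{\pm\alpha}$ are only Radon measures of Besov regularity $-1+\delta$. Using the mean-value bound $|e^{\alpha\bar\phi}-e^{\alpha\bar\psi}|\,|u|\le \alpha\,u^2(e^{\alpha\bar\phi}+e^{\alpha\bar\psi})$, it suffices to control $\int u^2 e^{\alpha\bar\phi}\rho^{(4)}_{\ell,\lambda}\,d\mu^\alpha$. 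By Cauchy–Schwarz in $d\mu^\alpha$,
\[
\int u^2 e^{\alpha\bar\phi}\rho^{(4)}_{\ell,\lambda}\,d\mu^\alpha \le \Bigl(\int u^4\rho^{(4)}_{\ell,\lambda}\,d\mu^\alpha\Bigr)^{1/2}\Bigl(\int e^{2\alpha\bar\phi}\rho^{(4)}_{\ell,\lambda}\,d\mu^\alpha\Bigr)^{1/2},
\]
the second factor being finite by the hypothesis $e^{\pm2\alpha\bar\phi}\in L^1_{2\ell}(d\mu^{\pm\alpha})$. For the first factor I would invoke Theorem~\ref{thm:estimate-measure} applied to $g=u^4\rho^{(4)}_{\ell,\lambda}$, writing $u^4=(u\,e^{\alpha\bar\phi/2})^2\cdot (u\,e^{-\alpha\bar\phi/2})^2$ and using $u\in H^1_\ell(\mathbb{R}^4)$, $e^{\pm\alpha/2\,\bar\phi}\in H^1_\ell(\mathbb{R}^4)$ together with the four-dimensional Sobolev embedding $H^1_\ell\hookrightarrow L^4_\ell$ to place $u^2 e^{\pm\alpha\bar\phi}$ in a Besov space of positive regularity that pairs by duality with $\mu^{\pm\alpha}\in B^{-1+\delta}_{2,2,\ell/2}$ (as given by Proposition~\ref{proposition:convergencealpha}).

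Once these pairings are known to be finite and nonnegative, the energy identity closes as above. An equivalent route, if one prefers to avoid the Besov duality, is to approximate $u$ by a sequence of compactly supported smooth functions $u_n$, run the test against $u_n\rho^{(4)}_{\ell,\lambda}$ where all pairings are classical, and pass to the limit using monotone convergence (for the nonlinear terms, which have a fixed sign) and dominated convergence (for the linear term, using $u\in H^1_\ell$). This is the step I expect to require the most care, but the hypotheses $e^{\pm\alpha/2\,\bar\phi}\in H^1_\ell$ and $e^{\pm2\alpha\bar\phi}\in L^1_{2\ell}(d\mu^{\pm\alpha})$ are precisely tailored to make either route succeed.
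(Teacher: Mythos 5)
Your overall strategy --- test the difference equation for $u=\bar\phi-\bar\psi$ against $u\rho^{(4)}_{\ell,\lambda}$, exploit the monotonicity of the nonlinearity (equivalently, convexity of $\exp(\pm\alpha\,\cdot)$) to drop the sign-definite nonlinear contribution, and absorb the $\nabla\rho^{(4)}_{\ell,\lambda}$ cross term by choosing $\lambda$ small --- is exactly what the paper does, and that part of your argument is sound.

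The gap is in your route to the finiteness of the pairing $\int (e^{\alpha\bar\phi}-e^{\alpha\bar\psi})\,u\,\rho^{(4)}_{\ell,\lambda}\,\mathd\mu^{\alpha}$. Cauchy--Schwarz in $\mathd\mu^\alpha$ reduces it to $\int u^4\rho\,\mathd\mu^\alpha<\infty$, but the way you propose to prove that is circular: Theorem~\ref{thm:estimate-measure} takes $g\in L^{p'}(\mathd\eta)$ as a \emph{hypothesis}, not a conclusion, and the Sobolev embedding $H^1_\ell(\mathbb{R}^4)\hookrightarrow L^4_\ell(\mathbb{R}^4)$ only places a product of two $H^1_\ell$ functions in $L^2_\ell$, so $u^4$ has at best $L^1$-type regularity and cannot be paired by duality with $\mu^\alpha\in B^{-1+\delta}_{2,2,\ell}(\mathbb{R}^4)$. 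The paper avoids Cauchy--Schwarz entirely. It first observes that $e^{\pm\alpha\bar\phi/2},e^{\pm\alpha\bar\psi/2}\in H^1_\ell(\mathbb{R}^4)$ give $\cosh(\alpha\bar\phi/2),\cosh(\alpha\bar\psi/2)\in H^1_\ell$, which pairs against $\mu^{\pm\alpha}\in B^{-1+\delta}_{2,2,\ell}$ (from Proposition~\ref{proposition:convergencealpha}) to yield $\cosh(\alpha\bar\phi/2),\cosh(\alpha\bar\psi/2)\in L^1_{2\ell}(\mathd\mu^{\pm\alpha})$; it then uses only elementary pointwise Young inequalities on $\mathbb{R}$ ($ab\le a^2/2+b^2/2$), the bound $|t|\lesssim \cosh(t)$, and the hypotheses $e^{\pm2\alpha\bar\phi},e^{\pm2\alpha\bar\psi}\in L^1_{2\ell}(\mathd\mu^{\pm\alpha})$ to conclude $|\bar\phi|e^{\pm\alpha\bar\phi},\ |\bar\psi|e^{\pm\alpha\bar\phi},\ |\bar\psi|e^{\pm\alpha\bar\psi},\ |\bar\phi|e^{\pm\alpha\bar\psi}\in L^1_{2\ell}(\mathd\mu^{\pm\alpha})$, which is exactly what legitimizes the pairing of the nonlinear terms against $u\rho_{2\ell,\lambda}$. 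Your fallback suggestion --- truncation and passage to the limit by monotone convergence in the sign-definite nonlinear term --- would also close the argument, and it is much closer in spirit to how the tailored $L^1(\mathd\mu^{\pm\alpha})$ hypotheses are meant to be used than the Besov-duality route.
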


\begin{proof}
  By Proposition \ref{proposition:convergencealpha}, $\mu^{\pm \alpha} \in
  B^{-1 + \delta}_{2, 2, \ell} (\mathbb{R}^4)$ almost surely for any $\ell >
  4$. Since, by hypothesis, $e^{\pm \frac{\alpha}{2}
  \bar{\psi}}, e^{\pm \frac{\alpha}{2} \bar{\phi}} \in H^1_{\ell}
  (\mathbb{R}^4)$, and using Proposition \ref{propisition_product} about products in Besov spaces, we get that $\cosh \left( \frac{\alpha \bar{\psi}}{2} \right),
  \cosh \left( \frac{\alpha \bar{\phi}}{2} \right) \in L^1_{2 \ell} (\mathd
  \mu^{\pm \alpha})$, and thus $\bar{\phi},\bar{\psi} \in L^p_{2\ell}(\mathd \mu^{\pm \alpha})$.
  Then, by Young inequality, we obtain
  \begin{equation}
    | \bar{\psi} | e^{\pm \alpha \bar{\phi}}, | \bar{\phi} | e^{\pm \alpha
    \bar{\phi}}, | \bar{\psi} | e^{\pm \alpha \bar{\psi}}, | \bar{\phi} |
    e^{\pm \alpha \bar{\phi}} \in L^1_{2 \ell} (\mathd \mu^{\pm \alpha}) .
    \label{eq:L1mualpha}
  \end{equation}
  Since $\bar{\phi}, \bar{\psi}$ are solution to {\eqref{eq:main5}}, then
  $\bar{\phi} - \bar{\psi}$ satisfies
  \[ (m^2 - \Delta) (\bar{\phi} - \bar{\psi}) + \alpha [\exp (\alpha
     \bar{\phi}) - \exp (\alpha \bar{\psi})] \mathd \mu^{\alpha} - \alpha
     [\exp (- \alpha \bar{\phi}) - \exp (- \alpha \bar{\psi})] \mathd \mu^{-
     \alpha} = 0. \]
  By the property {\eqref{eq:L1mualpha}}, we can test the previous equation 
  with $(\bar{\phi} - \bar{\psi}) \rho_{2 \ell, \lambda}^{(4)}$ (where $\rho_{2
  \ell, \lambda}^{(4)} (x, z) = \rho_{2 \ell}^{(4)} (\lambda x, \lambda z)$) and
  integrating by parts (since we have assumed that $(\bar{\phi} - \bar{\psi})$
  in $H^1_{\ell} (\mathbb{R}^4)$), and choosing $\lambda > 0$ small enough, we
  obtain
  \begin{equation}
    \begin{array}{lll}
      0 & = & m^2 \int_{\mathbb{R}^4} \rho_{2 \ell, \lambda}^{(4)} (x, z) |
      \bar{\phi} - \bar{\psi} |^2 \mathd x \mathd z + \int_{\mathbb{R}^4}
      \rho_{2 \ell, \lambda}^{(4)} (x, z) | \nabla (\bar{\phi} - \bar{\psi}) |^2
      \mathd x \mathd z\\
      &  & + \int_{\mathbb{R}^4} (\bar{\phi} - \bar{\psi}) \nabla (\rho_{2
      \ell, \lambda}^{(4)}) (x, z) \nabla (\bar{\phi} - \bar{\psi}) \mathd x \mathd
      z\\
      &  & + \alpha \int_{\mathbb{R}^4} (\bar{\phi} - \bar{\psi}) \rho_{2
      \ell, \lambda}^{(4)} (x, z) (\exp (\alpha \bar{\phi}) - \exp (\alpha
      \bar{\psi})) \mathd \mu^{\alpha} (\mathd x, \mathd z)\\
      &  & - \alpha \int_{\mathbb{R}^4} (\bar{\phi} - \bar{\psi}) \rho_{2
      \ell, \lambda}^{(4)} (x, z) (\exp (- \alpha \bar{\phi}) - \exp (- \alpha
      \bar{\psi})) \mathd \mu^{- \alpha} (\mathd x, \mathd z)\\
      & \geqslant & \frac{1}{2} \left( \int_{\mathbb{R}^4} \rho_{2 \ell,
      \lambda}^{(4)} (x, z) | \bar{\phi} - \bar{\psi} |^2 \mathd x \mathd z +
      \int_{\mathbb{R}^4} \rho_{2 \ell, \lambda}^{(4)} (x, z) | \nabla (\bar{\phi} -
      \bar{\psi}) |^2 \mathd x \mathd z \right),
    \end{array} \label{eq:uniqueness1}
  \end{equation}
  where in the last line we have used that
  \[ (\bar{\phi} - \bar{\psi}) (\exp (\alpha \bar{\phi}) - \exp (\alpha
     \bar{\psi})), (\bar{\phi} - \bar{\psi}) (\exp (- \alpha \bar{\psi}) -
     \exp (- \alpha \bar{\phi})) \geqslant 0, \]
  due to convexity of $\exp (- \alpha x), \exp (\alpha x)$ and the fact that
  $\nabla \rho_{2 \ell, \lambda}^{(4)} \leqslant \delta \rho^{(4)}_{2 \ell,\lambda}$ when $\lambda > 0$ is
  small enough, and thus
  \[ \int_{\mathbb{R}^4} | (\bar{\phi} - \bar{\psi}) \nabla \rho_{2 \ell,
     \lambda}^{(4)} \nabla (\bar{\phi} - \bar{\psi}) | \mathd x \mathd z \leqslant \delta \left(
     \int_{\mathbb{R}^4} \rho_{2 \ell, \lambda}^{(4)} (x, z) | \bar{\phi} -
     \bar{\psi} |^2 \mathd x \mathd z + \int_{\mathbb{R}^4} \rho_{2 \ell,
     \lambda}^{(4)} (x, z) | \nabla (\bar{\phi} - \bar{\psi}) |^2 \mathd x \mathd z
     \right). \]
  Inequality {\eqref{eq:uniqueness1}} implies the thesis.
\end{proof}

\begin{remark}
  \label{remark:uniqueness}Using a reasoning similar to the one in the proof
  of Proposition \ref{proposition:uniqueness}, we get that also equations
  {\eqref{eq:mainfinitedimensional}} and {\eqref{eq:maindiscrete}} admit a
  unique solution. In particular if $\bar{\phi}_{\varepsilon, N}$, and
  $\bar{\phi}_{\varepsilon}$ are solutions to equation
  {\eqref{eq:maindiscrete}} and {\eqref{eq:mainfinitedimensional}}
  respectively such that $\bar{\phi}_{\varepsilon, N},
  \bar{\phi}_{\varepsilon} \in H^1_{\ell} (\mathbb{R}^2 \times \varepsilon
  \mathbb{Z}^2)$, by Theorem \ref{theorem_apriori1} and Theorem
  \ref{theorem_apriori2}, we have $e^{\pm \frac{\alpha}{2}
  \bar{\phi}_{\varepsilon, N}}, e^{\pm \frac{\alpha}{2}
  \bar{\phi}_{\varepsilon}} \in H^1_{\ell} (\mathbb{R}^2 \times \varepsilon
  \mathbb{Z}^2)$, $e^{\pm 2 \alpha \bar{\phi}_{\varepsilon, N}} \in L^1
  (\rho^{(4)}_{2 \ell} \mathd \mu^{\pm \alpha}_{\varepsilon, N})$ and $e^{\pm
  2 \alpha \bar{\phi}_{\varepsilon}} \in L^1 (\rho^{(4)}_{2 \ell} \mathd
  \mu^{\pm \alpha}_{\varepsilon})$. This implies that
  $\bar{\phi}_{\varepsilon, N}$, and $\bar{\phi}_{\varepsilon}$ are the unique
  solutions to {\eqref{eq:mainfinitedimensional}} and
  {\eqref{eq:maindiscrete}} in $H^1_{\ell} (\mathbb{R}^2 \times \varepsilon
  \mathbb{Z}^2)$.
\end{remark}

In order to prove existence for equation {\eqref{eq:main5}} we need the next
lemma.

\begin{lemma}
  \label{lemma:existencefinitedim}The solution $\bar{\phi}_{\varepsilon, N}$
  to equation {\eqref{eq:mainfinitedimensional}} exists and is unique in
  $H^1_{\ell} (\mathbb{R}^2 \times \varepsilon \mathbb{Z}^2)$ (where $\ell >
  \ell_0$ for some $\ell_0 \in \mathbb{R}_+$) almost surely. Furthermore, $e^{\pm \beta \alpha \bar{\phi}_{\varepsilon, N}} \in H^1_{\ell}
  (\mathbb{R}^2 \times \varepsilon \mathbb{Z}^2)$ and $e^{\pm (1 + 2
  \beta) \alpha \bar{\phi}_{\varepsilon, N}} \in L^1 (\rho_{\ell}^{(4)} \mathd
  \mu^{\pm \alpha}_{\varepsilon, N})$ for any $0 \leqslant \beta < 1$.
\end{lemma}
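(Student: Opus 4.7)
The plan is a double approximation followed by passage to the limit. At the innermost level, I would regularize the random field $W_{\varepsilon,N}$ by an $x$-mollification at scale $\tau>0$, so that
$\mu^{\pm\alpha}_{\varepsilon,N,\tau}=\exp(\pm\alpha W_{\varepsilon,N,\tau}-\tfrac{\alpha^2}{2}\mathbb{E}[W_{\varepsilon,N,\tau}^2])$
is a smooth, strictly positive function on $\mathbb{R}^2\times\varepsilon\mathbb{Z}^2$, and also introduce a compactly supported cutoff $\chi_R$ in the spatial variables. The truncated smooth equation is then the Euler--Lagrange equation of
$$F_{\tau,R}(\phi)=\int\Bigl[\tfrac12|\nabla\phi|^2+\tfrac12 m^2\phi^2\Bigr]dx\,dz+\int\chi_R\bigl(e^{\alpha\phi}\mu^\alpha_{\varepsilon,N,\tau}+e^{-\alpha\phi}\mu^{-\alpha}_{\varepsilon,N,\tau}\bigr)dx\,dz,$$
which is strictly convex (from $m^2>0$ and convexity of $e^{\pm\alpha\phi}$ against positive measures), coercive on $H^1(\mathbb{R}^2\times\varepsilon\mathbb{Z}^2)$, and lower semicontinuous (the quadratic part obviously, the exponential part by Fatou on an a.e.\ convergent subsequence). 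Since $F_{\tau,R}(0)<\infty$, the direct method produces a unique minimizer $\bar\phi_{\tau,R}\in H^1$ solving the truncated PDE in the strong sense.

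Next, Theorem~\ref{theorem_apriori1} applied with $\eta_{\pm}=\chi_R\mu^{\pm\alpha}_{\varepsilon,N,\tau}$ yields a bound on $\|\bar\phi_{\tau,R}\|_{H^1_\ell}$ that is uniform in both $R$ and $\tau$, thanks to Remark~\ref{remark:bound1} and to the uniform-in-$\tau$ control $\sup_\tau\int\rho_\ell^{(4)}d\mu^{\pm\alpha}_{\varepsilon,N,\tau}<\infty$, which follows from the same type of Gaussian-chaos estimate as in Proposition~\ref{proposition:convergencealpha}. Sending $R\to\infty$ along a subsequence, the compact embedding $H^1_\ell\hookrightarrow L^2_{\ell'}$ for $\ell>\ell'$ extracts $\bar\phi_\tau\in H^1_\ell$ with $\bar\phi_{\tau,R}\to\bar\phi_\tau$ weakly in $H^1_\ell$, strongly in $L^2_{\ell'}$, and a.e.; Theorem~\ref{theorem_apriori2} with some small $\beta>0$ provides the uniform higher integrability $\int\rho_\ell^{(4)}e^{\pm(1+2\beta)\alpha\bar\phi_{\tau,R}}\chi_R\,d\mu^{\pm\alpha}_{\varepsilon,N,\tau}$, which together with Vitali's theorem identifies the limit of the nonlinear term and shows that $\bar\phi_\tau$ solves the smoothed equation on all of $\mathbb{R}^2\times\varepsilon\mathbb{Z}^2$. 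The outer limit $\tau_k\downarrow 0$ is treated analogously: an adaptation of Proposition~\ref{proposition:convergencealpha} gives $\mu^{\pm\alpha}_{\varepsilon,N,\tau_k}\to\mu^{\pm\alpha}_{\varepsilon,N}$ almost surely in $B^{-1+\delta}_{2,2,\ell}(\mathbb{R}^2\times\varepsilon\mathbb{Z}^2)$ along a subsequence, Theorems~\ref{theorem_apriori1} and \ref{theorem_apriori2} give uniform bounds on $\bar\phi_{\tau_k}$, and compactness plus equiintegrability identify the limit as the solution $\bar\phi_{\varepsilon,N}$ of \eqref{eq:mainfinitedimensional}.

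Uniqueness in $H^1_\ell$ follows verbatim from the argument of Proposition~\ref{proposition:uniqueness}: both solutions satisfy $e^{\pm\frac{\alpha}{2}\bar\phi_{\varepsilon,N}}\in H^1_{\ell/2}$ and the required $L^1$-integrability against $\rho_\ell^{(4)}d\mu^{\pm\alpha}_{\varepsilon,N}$ by Theorem~\ref{theorem_apriori2} with $\beta=\tfrac12$, and convexity of $e^{\pm\alpha x}$ makes the nonlinear contribution in the test against $(\bar\phi-\bar\psi)\rho_{2\ell,\lambda}^{(4)}$ nonnegative. The Sobolev regularity $e^{\pm\beta\alpha\bar\phi_{\varepsilon,N}}\in H^1_\ell$ and integrability $e^{\pm(\alpha\pm 2\beta)\bar\phi_{\varepsilon,N}}\in L^1(\rho_\ell^{(4)}d\mu^{\pm\alpha}_{\varepsilon,N})$ for $\beta\in[0,1)$ are then just Theorem~\ref{theorem_apriori2} applied to $\bar\phi_{\varepsilon,N}$ itself. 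The main technical obstacle will be the identification of the nonlinear term in the $\tau_k\to 0$ passage: because $\mu^{\pm\alpha}_{\varepsilon,N,\tau_k}$ converges only in a negative-regularity Besov space, one needs \emph{strong} convergence of the factor $e^{\pm\alpha\bar\phi_{\tau_k}}$ in a matching positive-regularity Besov space, which in turn requires the improved integrability of Theorem~\ref{theorem_apriori2} at some $\beta\in(0,1)$ strictly—available only in the present $L^2$ regime $\alpha^2<(4\pi)^2$.
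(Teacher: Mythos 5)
Your overall architecture (truncate, solve, extract uniform bounds from Theorems~\ref{theorem_apriori1} and~\ref{theorem_apriori2}, pass to the limit, and recycle the uniqueness argument of Proposition~\ref{proposition:uniqueness}) is sound, but the route to the truncated well-posedness is genuinely different from the paper's. The paper introduces only the exponentially decaying spatial cutoff $f_K(x) = \exp\bigl(-(x^2/K + 1)^{1/2} + 1\bigr)$ multiplying the nonlinearity in {\eqref{eq:spacialcutoff}}, and cites Lemma 7 and Lemma 48 of {\cite{AlDeGu2018}} to produce a solution in $L^{\infty}(\mathbb{R}^2 \times \varepsilon\mathbb{Z}^2) \cap \mathcal{C}^{2-\delta}(\mathbb{R}^2, L^{\infty}(\varepsilon\mathbb{Z}^2))$ and to pass $K \to \infty$ in $L^{\infty}_{\exp(\beta)}$. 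You instead propose the direct method of the calculus of variations with a compactly supported cutoff $\chi_R$; that is an acceptable self-contained alternative, but you should be explicit that deriving the Euler--Lagrange equation from the minimizer is not free, since $H^1_{\ell}(\mathbb{R}^2 \times \varepsilon\mathbb{Z}^2)$ does not embed into $L^{\infty}$ and one cannot differentiate $\int \chi_R e^{\pm\alpha\phi}\,\mathd\mu^{\pm\alpha}$ in an arbitrary $H^1$ direction; one must test against $\psi \in C_c^{\infty}$ and use the pointwise bound $e^{\alpha(\phi^*+t\psi)} \leqslant e^{\alpha|t|\|\psi\|_{\infty}} e^{\alpha\phi^*}$ together with $F(\phi^*) < \infty$ to dominate the difference quotient. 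This works, but is a nontrivial step you compress into ``solving the truncated PDE in the strong sense.''

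The more substantial issue is that your $\tau$-mollification is superfluous, and the ``main technical obstacle'' you single out does not exist at this level of the approximation. At fixed $(\varepsilon, N)$ the covariance $\mathcal{G}_{\varepsilon,N}(0,0)$ is finite: the sum over the frequency $k$ dual to the periodized lattice is a finite sum, and for each $k$ one has $\int_{\mathbb{R}^2}(|y|^2 + \sigma_{\varepsilon}(k) + m^2)^{-2}\,\mathd y < \infty$. Hence $W_{\varepsilon,N}$ admits an almost surely continuous version in $x$, and $\mu^{\pm\alpha}_{\varepsilon,N} = e^{\pm\alpha W_{\varepsilon,N} - \frac{\alpha^2}{2}\mathbb{E}[W_{\varepsilon,N}^2]}$ is already a continuous, strictly positive function, not a distribution in a negative Besov space. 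There is therefore nothing to mollify; the concern about matching a negative-regularity factor $\mu^{\pm\alpha}_{\varepsilon,N,\tau_k}$ with a positive-regularity factor $e^{\pm\alpha\bar{\phi}_{\tau_k}}$ is genuine only for the singular limit $\varepsilon \to 0$, which is treated separately in Theorem~\ref{theorem:convergenceequation1} (and where the restriction to the $L^2$ regime indeed enters). If you drop the $\tau$-regularization and run your variational argument with $\chi_R$ alone, the remaining passage $R \to \infty$ is handled exactly as you describe via the a priori estimates and lower semicontinuity, and you recover the statement of the lemma.
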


\begin{proof}
  We consider the following equation
  \begin{equation}
    (- \Delta_{\mathbb{R}^2 \times \varepsilon \mathbb{Z}^2} + m^2)
    \bar{\phi}_{\varepsilon, N, K} + \alpha e^{\alpha \bar{\phi}_{\varepsilon,
    N, K}} f_K (x) \mu^{\alpha}_{\varepsilon, N} - \alpha e^{- \alpha
    \bar{\phi}_{\varepsilon, N, K}} f_K (x) \mu^{- \alpha}_{\varepsilon, N} =
    0 ,\label{eq:spacialcutoff}
  \end{equation}
  where $f_K (x) = \exp \left( - \left( \frac{x^2}{K} + 1 \right)^{1 / 2} + 1
  \right)$. Equation {\eqref{eq:spacialcutoff}} admits a unique solution (see
  Lemma 7 in {\cite{AlDeGu2018}}) in $L^{\infty} (\mathbb{R}^2 \times
  \varepsilon \mathbb{Z}^2) \cap \mathcal{C}^{2 - \delta} (\mathbb{R}^2,
  L^{\infty} (\varepsilon \mathbb{Z}^2))$ for any $\delta > 0$.\\
  
  By Lemma 48 in {\cite{AlDeGu2018}} (see also the discussion of Section 6 of
  {\cite{AlDeGu2018}}) the solution $\bar{\phi}_{\varepsilon, N, K}$ converges
  to $\bar{\phi}_{\varepsilon, N}$ in $L^{\infty}_{\exp (\beta)} (\mathbb{R}^2
  \times \varepsilon \mathbb{Z}^2) \cap C^{2 - \delta}_{\tmop{loc}}
  (\mathbb{R}^2, L^{\infty} (\varepsilon \mathbb{Z}^2))$ (where
  $L^{\infty}_{\exp (\beta)} (\mathbb{R}^2 \times \varepsilon \mathbb{Z}^2)$
  is the space of $L^{\infty}$ functions with an exponential weight, more
  precisely the norm of $L^{\infty}_{\exp (\beta)} (\mathbb{R}^2 \times
  \varepsilon \mathbb{Z}^2)$ is
  \[ \| f \|_{L^{\infty}_{\exp (\beta)} (\mathbb{R}^2 \times \varepsilon
     \mathbb{Z}^2)} = \sup_{(x, z) \in \mathbb{R}^2 \times \varepsilon
     \mathbb{Z}^2} | \exp (- \beta (| x | + | z |)) f (x, z) |, \quad f \in
     L^{\infty}_{\exp (\beta)} (\mathbb{R}^2 \times \varepsilon \mathbb{Z}^2),
  \]
  where $\beta \in \mathbb{R}_+$). Furthermore, since
  $\bar{\phi}_{\varepsilon, N, K} \in L^{\infty} (\mathbb{R}^2 \times
  \varepsilon \mathbb{Z}^2) \cap \mathcal{C}^{2 - \delta} (\mathbb{R}^2,
  L^{\infty} (\varepsilon \mathbb{Z}^2))$, then $\bar{\phi}_{\varepsilon, N,
  K} \in H^1_{\ell} (\mathbb{R}^2 \times \varepsilon \mathbb{Z}^2)$ for any
  $\ell \geqslant 4$, and so by Theorem \ref{theorem_apriori1} and Theorem
  \ref{theorem_apriori2} and Proposition \ref{proposition:convergencealpha} we
  have that
  \[ \| \bar{\phi}_{\varepsilon, N, K} \|_{H^1_{\ell} (\mathbb{R}^2 \times
     \varepsilon \mathbb{Z}^2)}, \| e^{\pm \beta \alpha
     \bar{\phi}_{\varepsilon, N, K}} \|_{H^1_{\ell} (\mathbb{R}^2 \times
     \varepsilon \mathbb{Z}^2)}, \| e^{\pm (1 + 2 \beta)\alpha
     \bar{\phi}_{\varepsilon, N, K}} \|_{L^1 (\rho_{2 \ell}^{(4)} \mathd
     \mu^{\pm \alpha}_{\varepsilon, N})} \lesssim 1, \]
  where the constants, hidden in the symbol $\lesssim$, do not depend on $K$.
  Since $\bar{\phi}_{\varepsilon, N, K}$ converges strongly to
  $\bar{\phi}_{\varepsilon, N}$, this implies that $\bar{\phi}_{\varepsilon,
  N, K}$ converges weakly to $\bar{\phi}_{\varepsilon, N}$ in $H^1_{\ell}
  (\mathbb{R}^2 \times \varepsilon \mathbb{Z}^2)$. By lower semicontinuity of
  the norm $\| \cdot \|_{H^1_{\ell} (\mathbb{R}^2 \times \varepsilon
  \mathbb{Z}^2)}$ with respect to the weak convergence we have $\|
  \bar{\phi}_{\varepsilon, N} \|_{H^1_{\ell} (\mathbb{R}^2 \times \varepsilon
  \mathbb{Z}^2)} < + \infty$.
  
  The uniqueness of the solution to equation
  {\eqref{eq:mainfinitedimensional}} is proved in Remark
  \ref{remark:uniqueness}.
\end{proof}

\begin{lemma}
  \label{lemma:solutiondiscrete}For any $| \alpha | < (4 \pi)$ and
  $\varepsilon_n, N_k > 0$ as in the thesis of Proposition
  \ref{proposition:convergencealpha}, there exists a (unique) solution to
  equation {\eqref{eq:maindiscrete}} such that $\bar{\phi}_{\varepsilon_n} \in
  H^1_{\ell} (\mathbb{R}^2 \times \varepsilon_n \mathbb{Z}^2)$ almost surely
  (for any $\ell > \ell_0 \geqslant 0$ for some $\ell_0$ not depending on
  $\varepsilon_n$). Furthermore, $e^{\pm \beta \alpha \bar{\phi}_{\varepsilon_n}} \in H^1_{\ell}
  (\mathbb{R}^2 \times \varepsilon_n \mathbb{Z}^2)$ and $e^{\pm (1 \pm 2
  \beta) \alpha \bar{\phi}_{\varepsilon_n}} \in L^1 (\rho_{\ell}^{(4)} \mathd
  \mu^{\pm \alpha}_{\varepsilon_n})$ for any $0 \leqslant \beta < 1$.
\end{lemma}

\begin{proof}
  By Lemma \ref{lemma:existencefinitedim}, equation
  {\eqref{eq:mainfinitedimensional}} admits a (unique) solution
  $\bar{\phi}_{\varepsilon_n, N_k}$ in $H^1_{\ell} (\mathbb{R}^2 \times
  \varepsilon_n \mathbb{Z}^2)$ for $\ell$ big enough. By Proposition
  \ref{proposition:convergencealpha}, Theorem \ref{theorem_apriori1} and
  Theorem \ref{theorem_apriori2} we obtain
  \begin{equation}
    \| \bar{\phi}_{\varepsilon_n, N_k} \|_{H^1_{\ell} (\mathbb{R}^2 \times
    \varepsilon_n \mathbb{Z}^2)}, \left\| e^{\pm \beta \alpha
    \bar{\phi}_{\varepsilon_n, N_k}} \right\|_{H^1_{\ell} (\mathbb{R}^2 \times
    \varepsilon_n \mathbb{Z}^2)}, \left\| e^{\pm ( 1+ 2 \beta)\alpha
    \bar{\phi}_{\varepsilon_n, N_k}} \right\|_{L^1 (\rho_{2 \ell}^{(4)} \mathd
    \mu^{\pm \alpha}_{\varepsilon_n, N_k})} \lesssim 1
    \label{eq:inequalityepsilonN},
  \end{equation}
  where the constants hidden in the symbol $\lesssim$ are independent of
  $N_k$.
  
  \
  
  This means in particular that the sequences (in $N_k$)
  $\bar{\phi}_{\varepsilon_n, N_k}$ and $e^{\pm \beta \alpha
  \bar{\phi}_{\varepsilon_n, N_k}}$ are weakly precompact in $H^1_{\ell}
  (\mathbb{R}^2 \times \varepsilon_n \mathbb{Z}^2)$. Consider $\kappa,
  \kappa'_{\pm} \in H^1_{\ell} (\mathbb{R}^2 \times \varepsilon_n
  \mathbb{Z}^2)$ such that there is a subsequence $\bar{\phi}_{\varepsilon_n,
  N_{k_o}}$ and $e^{\pm \beta \alpha \bar{\phi}_{\varepsilon_n, N_{k_o}}}$
  converging to $\kappa$ and $\kappa'_{\pm}$ respectively weakly in
  $H^1_{\ell} (\mathbb{R}^2 \times \varepsilon_n \mathbb{Z}^2)$. Since, by
  Theorem \ref{theorem_apriori1} and Theorem \ref{theorem_apriori2},
  $\bar{\phi}_{\varepsilon_n, N_{k_o}}$ and $e^{\pm \beta \alpha
  \bar{\phi}_{\varepsilon_n, N_{k_o}}}$ converge strongly to $\kappa$ and
  $\kappa'$ in $W^{s, 2}_{\ell'} (\mathbb{R}^2 \times \varepsilon_n
  \mathbb{Z}^2)$, for any $\ell' > \ell$ and $0 < s < 1$, we have also
  $\kappa'_{\pm} = e^{\pm \beta \alpha \kappa}$ almost everywhere with respect
  to the measure $\mathd x \mathd z$. We now want to prove that $\kappa$ is a solution to the equation
  {\eqref{eq:maindiscrete}}. First we want to prove that $e^{\pm \alpha
  \kappa} \in L^1 (\rho^{(4)}_{\ell'} \mu^{\pm \alpha}_{\varepsilon_n} (\mathd x,
  \mathd z))$ almost surely, for any $\ell' > \ell \geqslant 4$.
  
  \
  
  Let $K > 0$ and consider the function $F_K : \mathbb{R} \rightarrow
  \mathbb{R}$ defined as $F_K (y) = y^2 \wedge K^2$, $y \in \mathbb{R}$. The
  function $F_K$ is (globally) Lipschitz and bounded, this implies, by Remark
  \ref{remark:Lipschitz}, that $F_K \left( e^{\frac{\pm \alpha}{2}
  \bar{\phi}_{\varepsilon, N}} \right) \in H^1_{\ell} (\mathbb{R}^2 \times
  \varepsilon \mathbb{Z}^2) \cap L^{\infty} (\mathbb{R}^2 \times \varepsilon
  \mathbb{Z}^2)$, that $F_K:H^1_{\ell} (\mathbb{R}^2 \times
  \varepsilon \mathbb{Z}^2) \rightarrow H^{1}_{\ell}(\mathbb{R}^2 \times
  \varepsilon \mathbb{Z}^2)$ is continuous when the target space is equipped with the weak topology, and
  \[ F_K \left( e^{\frac{\pm \alpha}{2} \bar{\phi}_{\varepsilon, N} (x, z)}
     \right) = e^{\pm \alpha \bar{\phi}_{\varepsilon, N} (x, z)}, \]
  when $e^{\frac{\alpha}{2} \bar{\phi}_{\varepsilon, N} (x, z)} \leqslant K$.
  In particular we have that $F_K \left( e^{\frac{\pm \alpha}{2}
  \bar{\phi}_{\varepsilon_n, N_{k_o}} (x, z)} \right), F_K \left( e^{\frac{\pm
  \alpha}{2} \kappa} \right) \in L^1 (\rho^{(4)}_{\ell}
  \mu^{\pm \alpha}_{\varepsilon_n})$ and, by the convergence of
  $\bar{\phi}_{\varepsilon_n, N_k}$ to $\kappa$ in $H^1_{\ell} (\mathbb{R}^2
  \times \varepsilon_n \mathbb{Z}^2)$, and since $F_K$ is Lipschitz, $F_K
  \left( e^{\frac{\pm \alpha}{2} \bar{\phi}_{\varepsilon, N_k} (x, z)}
  \right)$ converges to $F_K (\kappa)$ Lebesgue almost everywhere. This means
  that, by Fatou lemma and using the fact that $\mu_{\varepsilon_n, N_k}^{\pm
  \alpha}$ are functions converging to $\mu_{\varepsilon_n}^{\pm \alpha}$
  Lebesgue almost everywhere, we get
  \begin{align*}
       &\int_{\mathbb{R}^2 \times \varepsilon_n \mathbb{Z}^2} \rho^{(4)}_{\ell
       + \ell'} (x, z) F_K \left( e^{\pm \frac{\alpha}{2} \kappa} \right)
       \mu^{\pm\alpha}_{\varepsilon_n} (\mathd x, \mathd z)\\
       \leqslant& \liminf_{N_{k_o} \rightarrow + \infty} \int_{\mathbb{R}^2
       \times \varepsilon_n \mathbb{Z}^2} \rho^{(4)}_{\ell + \ell'} (x, z) F_K
       \left( e^{\pm \frac{\alpha}{2} \bar{\phi}_{\varepsilon, N_k} (x, z)}
       \right) \mu^{\pm\alpha}_{\varepsilon_n, N_{k_o}} (\mathd x, \mathd z)\\
       \leqslant& \liminf_{N_{k_o} \rightarrow + \infty} \int_{\mathbb{R}^2
       \times \varepsilon_n \mathbb{Z}^2} \rho^{(4)}_{\ell + \ell'} (x, z)
       e^{\pm \alpha \bar{\phi}_{\varepsilon, N_{k_o}} (x, z)}
       \mu^{\pm\alpha}_{\varepsilon_n, N_{k_o}} (\mathd x, \mathd z) \lesssim 1.
    \end{align*}
  Applying again Fatou lemma for $K \rightarrow + \infty$
  \begin{align*}
       &\int_{\mathbb{R}^2 \times \varepsilon_n \mathbb{Z}^2} \rho^{(4)}_{\ell'
       + \ell} (x, z) e^{\pm \alpha \kappa} \mu^{\pm \alpha}_{\varepsilon_n}
       (\mathd x, \mathd z) \\
       =& \int_{\mathbb{R}^2 \times \varepsilon_n \mathbb{Z}^2}
       \rho^{(4)}_{\ell' + \ell} (x, z) \lim_{K \rightarrow + \infty} \left(
       F_K \left( e^{\pm \frac{\alpha}{2} \kappa} \right) \right)
       \mu^{\pm\alpha}_{\varepsilon_n} (\mathd x, \mathd z)\\
       \leqslant& \lim_{K \rightarrow + \infty} \int_{\mathbb{R}^2 \times
       \varepsilon_n \mathbb{Z}^2} \rho^{(4)}_{\ell' + \ell} (x, z) F_K \left(
       e^{\pm \frac{\alpha}{2} \kappa} \right) \mu^{\pm\alpha}_{\varepsilon_n}
       (\mathd x, \mathd z) \lesssim 1.
     \end{align*}
  This implies that $e^{\pm \alpha \kappa} \in L^1 (\rho_{\ell}^{4}\mathd \mu^{\pm
  \alpha}_{\varepsilon_n})$. What remain to prove is that $\kappa$ solves
  (weakly) equation {\eqref{eq:maindiscrete}}.
  
  \
  
   By the inequalities
  {\eqref{eq:inequalityepsilonN}}, we have that there exists a $p > 1$ such
  that
  \[ \rho^{(4)}_{\ell} (x, z) e^{\pm \alpha \bar{\phi}_{\varepsilon, N} (x,
     z)} \in L^p (\mu^{\pm \alpha}_{\varepsilon, N} (\mathd x, \mathd z)),
      \]
  uniformly in $N$. This implies by Jensen's inequality that, for any $\sigma>0$, there is a
  $K_{\sigma} > 0$ for which, for any $\bar{K} > K_{\sigma}$ and any $N \in
  \mathbb{N}$,
  \[ \int_{e^{\pm \alpha \bar{\phi}_{\varepsilon, N}} > \bar{K}}
     \rho^{(4)}_{\ell} (x, z) e^{\pm \alpha \bar{\phi}_{\varepsilon, N} (x,
     z)} \mu^{\pm \alpha}_{\varepsilon, N} (\mathd x, \mathd z) < \sigma . \]
  Consider $g \in \mathcal{S} (\mathbb{R}^2 \times \varepsilon_{n} \mathbb{Z}^2)$,
  then by multiplying equation {\eqref{eq:mainfinitedimensional}} by $g$,
  taking the integral and using integration by parts, for any $\bar{K} >
  K_{\sigma}$, we get
  \begin{equation}
    \begin{array}{rl}
      &\left. \left| \int_{\mathbb{R}^2 \times \varepsilon_{n} \mathbb{Z}^2} [(m^2
      - \Delta_{\mathbb{R}^2 \times \varepsilon_{n} \mathbb{Z}^2}) g (x, z)]
      \phi_{\varepsilon_n, N_{k_o}} \mathd x \mathd z + \alpha
      \int_{\mathbb{R}^2 \times \varepsilon_{n} \mathbb{Z}^2} g (x, z) \left(
      F_{\bar{K}} \left( e^{\frac{\alpha}{2} \bar{\phi}_{\varepsilon_n, N_{K_o}} (x,
      z)} \right) \times \right. \right. \right.\\
      &\left. \phantom{\int_{\mathbb{R}}} \mu^{\alpha}_{\varepsilon_n, N_{K_o}}
      (\mathd x, \mathd z) \left. - F_{\bar{K}} \left( e^{- \frac{\alpha}{2}
      \bar{\phi}_{\varepsilon, N_{K_o}} (x, z)} \right) \mu^{-
      \alpha}_{\varepsilon_n, N_{K_o}} (\mathd x, \mathd z) \right) \right|
      \\
      \lesssim &\left| \int_{\mathbb{R}^2 \times \varepsilon_{n} \mathbb{Z}^2} g
      (x, z) \left. \left( F_{\bar{K}} \left( e^{\frac{\alpha}{2}
      \bar{\phi}_{\varepsilon_n, N_{K_o}} (x, z)} \right) - e^{\alpha
      \bar{\phi}_{\varepsilon_n, N_{K_o}} (x, z)} \right)
      \mu^{\alpha}_{\varepsilon_n, N_{K_o}} (\mathd x, \mathd z) \right.
      \right| \\
      &+ \left| \int_{\mathbb{R}^2 \times \varepsilon_n \mathbb{Z}^2} g (x, z)
      \left. \left( F_{\bar{K}} \left( e^{- \frac{\alpha}{2}
      \bar{\phi}_{\varepsilon_n, N_{K_o}} (x, z)} \right) - e^{- \alpha
      \bar{\phi}_{\varepsilon_n, N_{K_o}} (x, z)} \right) \mu^{-
      \alpha}_{\varepsilon_n, N_{K_o}} (\mathd x, \mathd z) \right. \right|\\
      \lesssim& \int_{e^{\pm \alpha \bar{\phi}_{\varepsilon_n, N}} > \bar{K}} |
      g (x, z) | e^{\pm \alpha \bar{\phi}_{\varepsilon_n, N} (x, z)} \mu^{\pm
      \alpha}_{\varepsilon_n, N_{K_{o}}} (\mathd x, \mathd z) \lesssim \sigma \| g
      \|_{L^{\infty}_{- \ell - \ell'} (\mathbb{R}^2 \times \varepsilon_{n}
      \mathbb{Z}^2)}.
    \end{array} \label{eq:inequalityKsigma}
  \end{equation}

  Since $\phi_{\varepsilon_n, N_{K_o}}$ and $F_{\bar{K}} \left( e^{\pm
  \frac{\alpha}{2} \bar{\phi}_{\varepsilon_n, N_{K_o}} (x, z)} \right)$ converge
  weakly (in $H^1_{\ell'} (\mathbb{R}^2 \times \varepsilon_n \mathbb{Z}^2)$)
  to $\kappa$ and $F_{\bar{K}} \left( e^{\pm \frac{\alpha}{2} \kappa} \right)$
  respectively, and, by Proposition \ref{proposition:convergencealpha}, \
  $\mu^{\pm \alpha}_{\varepsilon_n, N_{K_o}}$ converge to $\mu^{\pm
  \alpha}_{\varepsilon_n}$ strongly in $B^{1 - \delta}_{2, 2, \ell}
  (\mathbb{R}^2 \times \varepsilon_n \mathbb{Z}^2)$, we can take the limit as
  $N_{K_o} \rightarrow + \infty$ in inequality {\eqref{eq:inequalityKsigma}}
  to get
  \begin{equation}
    \begin{array}{rl}
      \left. \left| \int_{\mathbb{R}^2 \times \varepsilon_{n} \mathbb{Z}^2} [(m^2
      - \Delta_{\mathbb{R}^2 \times \varepsilon \mathbb{Z}^2}) g (x, z)]
      \kappa \mathd x \mathd z + \alpha \int_{\mathbb{R}^2 \times \varepsilon_{n}
      \mathbb{Z}^2} g (x, z) \left( F_{\bar{K}} \left( e^{\frac{\alpha}{2}
      \kappa} \right) \times \right. \right. \right.&\\
      \left. \phantom{\int_{\mathbb{R}}} \left. \mu^{\alpha}_{\varepsilon_n}
      (\mathd x, \mathd z) - F_{\bar{K}} \left( e^{- \frac{\alpha}{2} \kappa}
      \right) \mu^{\alpha}_{\varepsilon_n} (\mathd x, \mathd z) \right)
      \right| & \leqslant \sigma \| g \|_{L^{\infty}_{- \ell - \ell'}
      (\mathbb{R}^2 \times \varepsilon_{n} \mathbb{Z}^2)} .
    \end{array} \label{eq:inequalityKsigma2}
  \end{equation}
  Since $e^{\pm \alpha \bar{\phi}_{\varepsilon_{n}} (x, z)} \in L^1
  (\rho^{(4)}_{\ell' + \ell} \mathd \mu^{\pm \alpha}_{\varepsilon_n})$, by
  Lebesgue dominate convergence theorem, we can take the limit $\bar{K}
  \rightarrow + \infty$, obtaining
  \[ \left| \int_{\mathbb{R}^2 \times \varepsilon_n \mathbb{Z}^2} [(m^2 -
     \Delta_{\mathbb{R}^2 \times \varepsilon_n \mathbb{Z}^2}) g] \kappa \mathd
     x \mathd z + \alpha \int_{\mathbb{R}^2 \times \varepsilon_n \mathbb{Z}^2}
     g \; (e^{\alpha \kappa} \mu^{\alpha}_{\varepsilon_n} (\mathd x, \mathd z)
     - e^{- \alpha \kappa} \mu^{\alpha}_{\varepsilon_n} (\mathd x, \mathd z))
     \right| \lesssim \sigma. \]
  Since $\sigma$ is arbitrary and $\mathcal{S} (\mathbb{R}^2 \times
  \varepsilon_n \mathbb{Z}^2)$ is dense in the dual of $H^1_{\ell + \ell'}
  (\mathbb{R}^2 \times \varepsilon_{n} \mathbb{Z}^2)$ we have that $\kappa$ solves
  the equation {\eqref{eq:maindiscrete}}. By the uniqueness of solution to
  equation {\eqref{eq:maindiscrete}} obtained in Remark
  \ref{remark:uniqueness}, we get then $\kappa = \bar{\phi}_{\varepsilon_n}$,
  where $\bar{\phi}_{\varepsilon_n}$ being the unique solution to equation
  {\eqref{eq:maindiscrete}}.\\
  
  The fact that $e^{\pm \beta \alpha \bar{\phi}_{\varepsilon_n}} \in H^1_{\ell}
  (\mathbb{R}^2 \times \varepsilon_n \mathbb{Z}^2)$ and $e^{\pm (1 \pm 2
  \beta) \alpha \bar{\phi}_{\varepsilon_n}} \in L^1 (\rho_{\ell}^{(4)} \mathd
  \mu^{\pm \alpha}_{\varepsilon_n})$ for any $0 \leqslant \beta < 1$ follows from the uniform in $N_k$ bound  \eqref{eq:inequalityepsilonN}.
\end{proof}

\begin{theorem}
  \label{theorem:convergenceequation1}Consider $\varepsilon_n$ as in
  Proposition \ref{proposition:convergencealpha} and let
  $\bar{\phi}_{\varepsilon_n}$ be the solution to equation
  {\eqref{eq:maindiscrete}} for $\varepsilon = \varepsilon_n$. Then
  $\overline{\mathcal{E}}^{\varepsilon_n} (\bar{\phi}_{\varepsilon_n})$
  converges in $\mathcal{S}' (\mathbb{R}^4)$ to the unique solution
  $\bar{\phi}$ to equation {\eqref{eq:main5}} in $H^1_{\ell} (\mathbb{R}^4)$
  (for any $\ell \geqslant \ell_0$ and a suitable $\ell_0 > 0$).  Furthermore, $e^{\pm \beta \alpha \bar{\phi}} \in H^1_{\ell}
  (\mathbb{R}^4)$ and $e^{\pm (1 \pm 2
  \beta) \alpha \bar{\phi}} \in L^1 (\rho_{\ell}^{(4)} \mathd
  \mu^{\pm \alpha})$ for any $0 \leqslant \beta < 1$.
\end{theorem}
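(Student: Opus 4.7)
The strategy is to exploit the uniform a priori estimates of Theorems~\ref{theorem_apriori1} and~\ref{theorem_apriori2}, lift them to $\mathbb{R}^4$ via the extension operator $\overline{\mathcal{E}}^{\varepsilon_n}$, extract a weakly convergent subsequence using the compactness results of Section~\ref{section:besov:rz}, identify the subsequential limit as a solution of \eqref{eq:main5}, and finally use the uniqueness statement of Proposition~\ref{proposition:uniqueness} to promote subsequential convergence to convergence of the whole sequence.

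Concretely, combining Lemma~\ref{lemma:solutiondiscrete}, Theorem~\ref{theorem_apriori1}, Theorem~\ref{theorem_apriori2}, Proposition~\ref{proposition:convergencealpha} and Remark~\ref{remark:bound1}, I obtain, for any fixed $\beta \in (0,1)$ sufficiently close to $1$ and all large enough $\ell$,
\[
\|\bar{\phi}_{\varepsilon_n}\|_{H^1_\ell(\mathbb{R}^2\times\varepsilon_n\mathbb{Z}^2)} + \bigl\|e^{\pm\beta\alpha\bar{\phi}_{\varepsilon_n}}\bigr\|_{H^1_\ell(\mathbb{R}^2\times\varepsilon_n\mathbb{Z}^2)} + \int \rho^{(4)}_\ell\, e^{\pm(\alpha+2\beta)\bar{\phi}_{\varepsilon_n}}\, \mathd \mu^{\pm\alpha}_{\varepsilon_n} \lesssim 1
\]
uniformly in $n$. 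Since $\overline{\mathcal{E}}^{\varepsilon_n}$ is local and commutes with composition by continuous scalar functions (step functions of step functions remain step functions), Theorem~\ref{theorem:extension3} converts the first two bounds into uniform bounds on $\overline{\mathcal{E}}^{\varepsilon_n}(\bar{\phi}_{\varepsilon_n})$ and $e^{\pm\beta\alpha\overline{\mathcal{E}}^{\varepsilon_n}(\bar{\phi}_{\varepsilon_n})}$ in $H^1_\ell(\mathbb{R}^4)$. By a standard weighted Rellich--Kondrachov argument (an $\mathbb{R}^4$ analogue of Theorem~\ref{theorem:compact}), I can extract a subsequence along which $\overline{\mathcal{E}}^{\varepsilon_n}(\bar{\phi}_{\varepsilon_n})\to \bar{\phi}$ weakly in $H^1_\ell(\mathbb{R}^4)$, strongly in $L^2_{\ell'}(\mathbb{R}^4)$ for some $\ell' > \ell$, and almost everywhere, with $\bar{\phi}\in H^1_\ell(\mathbb{R}^4)$ and $e^{\pm\frac{\alpha}{2}\bar{\phi}}\in H^1_\ell(\mathbb{R}^4)$.

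The hard part is identifying the limit of the nonlinear products $e^{\pm\alpha\bar{\phi}_{\varepsilon_n}}\,\mu^{\pm\alpha}_{\varepsilon_n}$ tested against an arbitrary $g\in\mathcal{S}(\mathbb{R}^4)$, since a priori neither factor converges strongly in a useful space. I would adapt the cutoff argument from the proof of Lemma~\ref{lemma:solutiondiscrete}: replace $e^{\pm\alpha\bar{\phi}_{\varepsilon_n}}$ by the truncation $F_K\bigl(e^{\pm\frac{\alpha}{2}\bar{\phi}_{\varepsilon_n}}\bigr)$, use the uniform bound above (with $\beta>0$) to make the tail
\[
\int_{\{e^{\pm\alpha\bar{\phi}_{\varepsilon_n}}>K\}} |g|\, e^{\pm\alpha\bar{\phi}_{\varepsilon_n}}\, \mathd \mu^{\pm\alpha}_{\varepsilon_n}
\]
arbitrarily small uniformly in $n$, and then, for each fixed $K$, combine the strong convergence $\bar{\mu}^{\pm\alpha}_{\varepsilon_n}\to\mu^{\pm\alpha}$ in $B^{-1+\delta}_{2,2,\ell}(\mathbb{R}^4)$ from Proposition~\ref{proposition:convergencealpha} with the strong convergence of $g\,F_K\bigl(e^{\pm\frac{\alpha}{2}\overline{\mathcal{E}}^{\varepsilon_n}(\bar{\phi}_{\varepsilon_n})}\bigr)$ in $B^{1-\delta}_{2,2,\ell'}(\mathbb{R}^4)$ (following from the weak $H^1$ convergence, the pointwise convergence and the boundedness of $F_K$, via the product estimate of Theorem~\ref{theorem:besov:rz:product}). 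A Fatou step as at the end of the proof of Lemma~\ref{lemma:solutiondiscrete} then shows $e^{\pm\alpha\bar{\phi}}\in L^1(\rho^{(4)}_\ell \mathd \mu^{\pm\alpha})$ and removes the cutoff, yielding that $\bar{\phi}$ satisfies \eqref{eq:main5} in the distributional sense. The conditions $\bar{\phi}\in H^1_\ell$, $e^{\pm\frac{\alpha}{2}\bar{\phi}}\in H^1_\ell$ and $e^{\pm\alpha\bar{\phi}}\in L^1(\rho^{(4)}_{2\ell}\mathd \mu^{\pm\alpha})$ are precisely the hypotheses of Proposition~\ref{proposition:uniqueness}, so every subsequential limit coincides with the same $\bar{\phi}$, and the full sequence $\overline{\mathcal{E}}^{\varepsilon_n}(\bar{\phi}_{\varepsilon_n})$ converges to $\bar{\phi}$ in $\mathcal{S}'(\mathbb{R}^4)$.
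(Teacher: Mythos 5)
The central difficulty that your proposal does not overcome is a regularity loss under the extension operator that you have overlooked. You claim that Theorem~\ref{theorem:extension3} ``converts the first two bounds into uniform bounds on $\overline{\mathcal{E}}^{\varepsilon_n}(\bar{\phi}_{\varepsilon_n})$ and $e^{\pm\beta\alpha\overline{\mathcal{E}}^{\varepsilon_n}(\bar{\phi}_{\varepsilon_n})}$ in $H^1_\ell(\mathbb{R}^4)$,'' and you then invoke a weighted Rellich--Kondrachov argument to pass to a weakly $H^1$-convergent subsequence. This is false: $\overline{\mathcal{E}}^{\varepsilon_n}$ produces functions that are piecewise constant in the lattice variables, hence have jump discontinuities and are \emph{never} in $H^1_\ell(\mathbb{R}^4)$. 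The actual content of Theorem~\ref{theorem:extension3} is that a uniform $W^{1,2}_\ell(\mathbb{R}^2\times\varepsilon\mathbb{Z}^2)$ bound only yields a uniform $B^{1/2}_{2,2,\ell}(\mathbb{R}^4)$ bound on the extension (the ``$s\wedge\frac{1}{p}$'' with $p=2$), and it is only the \emph{limit} that is recovered in $W^{1,2}_\ell(\mathbb{R}^4)$. The paper accordingly works with weak convergence in $B^{1/p}_{p,p,\ell}(\mathbb{R}^4)$ and strong convergence in $B^{1/p-\upsilon}_{p,p,\ell}(\mathbb{R}^4)$.

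This is not a cosmetic slip, because it propagates into the heart of your identification step. You want to pass to the limit in $\int g\,F_K\bigl(e^{\pm\frac{\alpha}{2}\cdot}\bigr)\,\mathd\bar\mu^{\pm\alpha}_{\varepsilon_n}$ by pairing $\bar\mu^{\pm\alpha}_{\varepsilon_n}\in B^{-1+\delta}_{2,2,\ell}(\mathbb{R}^4)$ against $gF_K(\cdot)$, which you assert converges strongly in $B^{1-\delta}_{2,2,\ell'}(\mathbb{R}^4)$. Since the extensions are only in $B^{1/2}_{2,2,\ell}$ and $F_K$ is Lipschitz (so does not gain regularity), the strong convergence is at best in $B^{1/2-\upsilon}_{2,2,\ell'}$. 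The product estimate then requires $\frac{1}{2}-\upsilon>1-\delta$, i.e.\ $\delta>\frac{1}{2}$. But Proposition~\ref{proposition:convergencealpha} only gives $\mu^{\pm\alpha}\in B^{-1+\delta}_{2,2,\ell}(\mathbb{R}^4)$ for $\delta<1-\frac{\alpha^2}{(4\pi)^2}$, so your argument requires $\frac{\alpha^2}{(4\pi)^2}<\frac{1}{2}$ and breaks down on the rest of the $L^2$ regime. The paper avoids this precisely by inserting the frequency cutoff $\Delta_N^{\varepsilon_n}$ on the measure (in addition to the amplitude cutoff $F_{\bar K}$): for each fixed $N$, $\Delta_N^{\varepsilon_n}\mu^{\pm\alpha}_{\varepsilon_n}$ is smooth and one can pair with the low-regularity step-function extensions, while the error $(1-\Delta_N^{\varepsilon_n})\mu^{\pm\alpha}_{\varepsilon_n}$ is controlled by pairing against $F_{\bar K}(e^{\pm\frac{\alpha}{2}\bar\phi_{\varepsilon_n}})$ \emph{at the discrete level}, where the truncated exponential genuinely has $H^1_\ell(\mathbb{R}^2\times\varepsilon_n\mathbb{Z}^2)$ regularity and the pairing with $B^{-1+\delta}_{2,2,\ell}(\mathbb{R}^2\times\varepsilon_n\mathbb{Z}^2)$ is legitimate. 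Your proposal omits this second cutoff and the change of level (discrete versus continuum) that makes the estimate close, and without it the proof does not go through in the full range $|\alpha|<4\pi$.
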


\begin{proof}
  The proof is similar to the one of Lemma \ref{lemma:solutiondiscrete}. We
  report here only a sketch of the proof pointing out the main differences.
  
  \
  
  By the Lemma \ref{lemma:solutiondiscrete}, there is a unique solution to \
  equation {\eqref{eq:maindiscrete}} and by Theorem \ref{theorem_apriori1},
  Theorem \ref{theorem_apriori2} and Proposition
  \ref{proposition:convergencealpha} we get for $\beta \in (- 1, 1)$ that
  \begin{equation}
    \| \bar{\phi}_{\varepsilon_n} \|_{H^1_{\ell} (\mathbb{R}^2 \times
    \varepsilon_n \mathbb{Z}^2)}, \left\| e^{\pm \beta \alpha
    \bar{\phi}_{\varepsilon_n}} \right\|_{H^1_{\ell} (\mathbb{R}^2 \times
    \varepsilon_n \mathbb{Z}^2)}, \left\| e^{\pm (1 + 2 \beta)\alpha
    \bar{\phi}_{\varepsilon_n}} \right\|_{L^1 (\rho_{2 \ell}^{(4)} \mathd
    \mu^{\pm \alpha}_{\varepsilon_n})} \lesssim 1,
    \label{eq:inequalityepsilonsmalln}
  \end{equation}
  holds, where the constants hidden in the symbol $\lesssim$ are independent
  on $\varepsilon_n$. Using the same methods of Lemma
  \ref{lemma:solutiondiscrete} there is a subsequence $\varepsilon_{n_k}
  \rightarrow 0$ and a $\kappa \in B^{\frac{1}{p}}_{p, p, \ell}
  (\mathbb{R}^4)$ (for any $1 \leqslant p \leqslant 2$) such that
  $\overline{\mathcal{E}}^{\varepsilon_{n_k}} \left(
  \bar{\phi}_{\varepsilon_{n_k}} \right) \rightarrow \kappa$,
  $\overline{\mathcal{E}}^{\varepsilon_{n_k}} \left( e^{\pm \beta \alpha
  \bar{\phi}_{\varepsilon_{n_{k}}}} \right) \rightarrow e^{\pm \alpha \beta \kappa}$
  weakly in $\tmmathbf{} B^{\frac{1}{p}}_{p, p, \ell} (\mathbb{R}^4)$ (and
  thus strongly in $B^{\frac{1}{p} - \upsilon}_{p, p, \ell} (\mathbb{R}^4)$
  for any $\upsilon > 0$). Furthermore, by the second part of Theorem
  \ref{theorem:extension3}, we also have that $\kappa, e^{\pm \beta \alpha
  \kappa} \in H^1_{\ell} (\mathbb{R}^4)$. What remains to prove is that
  $e^{\pm \alpha \kappa} \in L^1 (\rho^{(4)}_{\ell} \mathd \mu^{\pm \alpha})$ and
  $\kappa$ is a solution to equation {\eqref{eq:main5}}.
  
  \
  
  First we prove that $e^{\pm \alpha \kappa} \in L^1 (\rho^{(4)}_{\ell} \mathd
  \mu^{\pm \alpha})$. Let $\rho^{(4)}_{\ell, \varepsilon_n} : \mathbb{R}^2
  \times \varepsilon_{n} \mathbb{Z}^2 \rightarrow \mathbb{R}$ be the restriction
  of $\rho^{(4)}_{\ell}$, then, having $\rho^{(4)}_{\ell} \in B^k_{\infty,
  \infty, - \ell} (\mathbb{R}^4)$ for any $k > 0$, it is simple to prove
  (using the difference spaces $W^{s, q}_{- \ell} (\mathbb{R}^2 \times
  \varepsilon \mathbb{Z}^2)$ and their continuous immersion in $B^{s -
  \upsilon}_{q,q,\ell} (\mathbb{R}^2 \times \varepsilon \mathbb{Z}^2)$ for any
  $\upsilon > 0$), that $\rho^{(4)}_{\ell, \varepsilon_n} \in B^k_{\infty,
  \infty, - \ell} (\mathbb{R}^2 \times \varepsilon_n \mathbb{Z}^2)$ and also
  that $\overline{\mathcal{E}}^{\varepsilon_n} (\rho^{(4)}_{\ell,
  \varepsilon_n})$ converges strongly to $\rho_{\ell}^{(4)}$ in
  $B^{\frac{1}{q} - \upsilon}_{q, q, - \ell} (\mathbb{R}^4)$ for any $q \in [1,+\infty)$ and $\upsilon
  > 0$ (here $q$ is not $p$ above).\\
  
  Consider the operator $\Delta_N^{\varepsilon_n} : \mathcal{S}' (\mathbb{R}^2
  \times \varepsilon_n \mathbb{Z}^2) \rightarrow \mathcal{S}' (\mathbb{R}^2
  \times \varepsilon_n \mathbb{Z}^2)$, defined as $\Delta^{\varepsilon_n}_N f
  = f \ast_{\varepsilon_n} \rho_N$, where $f \in \mathcal{S}' (\mathbb{R}^2
  \times \varepsilon_n \mathbb{Z}^2)$ and $\mathcal{F}^{\varepsilon_n} \rho_N
  : \mathbb{R}^2 \times \mathbb{T}^2_{\frac{1}{\varepsilon_n}} \rightarrow [0, 1]$ is
  smooth and compactly supported in a ball of radius $2 N$ and
  $\mathcal{F}^{\varepsilon_n} \rho_N (\xi) = 1$ for \ $| \xi | \leqslant N$.
  The operator $\Delta^{\varepsilon_n}_N$ has a regularizing property: 
  $\Delta_N^{\varepsilon_n} : B^s_{q, q, \ell} (\mathbb{R}^2 \times
  \varepsilon_n \mathbb{Z}^2) \rightarrow B^{s'}_{q', q', \ell} (\mathbb{R}^2
  \times \varepsilon_n \mathbb{Z}^2)$ for any $s' > s$ and $q'  \geqslant q$ with a norm depending
  only on $q, q', \ell, s$ and $s'$ but not on $\varepsilon_n$.\\
  
  Furthermore, since by Theorem \ref{theorem:convergencebar} and Theorem
  \ref{theorem_convergenceepsilon} and by the proof of Proposition
  \ref{proposition:convergencealpha}, $\mathbb{N}^s_{p, \ell, \varepsilon_n}
  (\mu_{\varepsilon_n}^{\alpha}, a, \beta) \rightarrow \mathbb{N}^s_{p, \ell}
  (\mu^{\alpha}, a, \beta)$ almost surely, for any $\upsilon > 0$ there is a
  random variable $N_{\upsilon} : \Omega \rightarrow \mathbb{N}$, almost
  surely finite, such that for any $\bar{N} \geqslant N_{\upsilon}$
  \[ \| (1 - \Delta_{\bar{N}}^{\varepsilon_n}) \mu^{\pm
     \alpha}_{\varepsilon_n} \|_{B^{1 - \delta}_{2, 2, \ell} (\mathbb{R}^2
     \times \varepsilon_n \mathbb{Z}^2)} \leqslant \upsilon . \]
  Furthermore using again the proof of Proposition
  \ref{proposition:convergencealpha}, one can prove that
  $\overline{\mathcal{E}}^{\varepsilon_n} (\Delta_N^{\varepsilon_n}
  \mu^{\alpha}_{\varepsilon_n})$ converges to $\Delta_N \mu^{\pm \alpha}$ in
  $B^{\frac{1}{q} - \upsilon'}_{q, q, \ell} (\mathbb{R}^4)$ for any $\ell
  \geqslant 4$, $q\in(1,+\infty
  )$ and $\upsilon' > 0$. Consider $K > 0$ and $F_K$ as in the proof
  of Lemma \ref{lemma:solutiondiscrete}, then
  \begin{align*}   
   &\left| \int \rho^{(4)}_{2 \ell} (x, z) F_K \left( e^{\pm
     \frac{\alpha}{2} \kappa} \right) \Delta_N (\mu^{\pm \alpha}) (\mathd x,
     \mathd z) \right| \\
   =& \lim_{k \rightarrow + \infty} \left| \int
     \overline{\mathcal{E}}^{\varepsilon_k} \left( \rho^{(4)}_{2 \ell,
     \varepsilon_k} F_K \left( e^{\pm \frac{\alpha}{2}
     \bar{\phi}_{\varepsilon_k}} \right) \right)
     \overline{\mathcal{E}}^{\varepsilon_k} (\Delta_N (\mu^{\pm
     \alpha}_{\varepsilon_k}) (\mathd x, \mathd z)) \right| \\
   =& \lim_{k \rightarrow + \infty} \left| \int \rho^{(4)}_{2 \ell,
     \varepsilon_k} F_K \left( e^{\pm \frac{\alpha}{2}
     \bar{\phi}_{\varepsilon_k}} \right) \Delta_N \mu^{\pm
     \alpha}_{\varepsilon_k} (\mathd x, \mathd z) \right| \\
   \leqslant& \sup_k \left( \left| \int \rho^{(4)}_{2 \ell, \varepsilon_k}
     F_K \left( e^{\pm \frac{\alpha}{2} \bar{\phi}_{\varepsilon_k}} \right)
     \mu^{\pm \alpha}_{\varepsilon_k} (\mathd x, \mathd z) \right| + \left|
     \int \rho^{(4)}_{2 \ell, \varepsilon_k} F_K \left( e^{\pm
     \frac{\alpha}{2} \bar{\phi}_{\varepsilon_k}} \right) (1 - \Delta_N)
     (\mu^{\pm \alpha}_{\varepsilon_k}) (\mathd x, \mathd z) \right| \right)
  \\
   \lesssim& \sup_k \left( \int \rho^{(4)}_{2 \ell, \varepsilon_k} e^{\pm
     \alpha \bar{\phi}_{\varepsilon_k}} \mu_{\varepsilon_k}^{\pm \alpha}
     (\mathd x \mathd z) \right) + \upsilon \sup_k \left( \left\| F_K \left(
     e^{\pm \frac{\alpha}{2} \bar{\phi}_{\varepsilon_k}} \right)
     \right\|_{H^1_{\ell} (\mathbb{R}^2 \times \varepsilon_{n_k}
     \mathbb{Z}^2)} \right) . \end{align*}
  Since $\upsilon$ is arbitrary taking the limit $N \rightarrow + \infty$ we
  get
  \[ \left| \int \rho^{(4)}_{2 \ell} (x, z) F_K \left( e^{\pm
     \frac{\alpha}{2} \kappa} \right) \mu^{\pm \alpha} (\mathd x, \mathd z)
     \right| \lesssim \sup_k \left( \int \rho^{(4)}_{2 \ell, \varepsilon_k}
     e^{\pm \alpha \bar{\phi}_{\varepsilon_k}} \mu_{\varepsilon_k}^{\pm
     \alpha} (\mathd x \mathd z) \right) < + \infty . \]
  By Fatou lemma, the limit $K \rightarrow + \infty$ gives that $e^{\pm \alpha
  \kappa} \in L^1 (\rho^{(4)}_{2 \ell} \mathd \mu^{\pm \alpha}) .$ Using a
  similar strategy it is simple to prove that $e^{\pm 2 \alpha \kappa} \in L^1
  (\rho^{(4)}_{2 \ell} \mathd \mu^{\pm \alpha})$ as required in Proposition
  \ref{proposition:uniqueness}.
  
  \
  
  For proving that $\kappa$ is also a (weak) solution to equation
  {\eqref{eq:main5}} we shall use a combination of the technique in the proof
  of Lemma \ref{lemma:solutiondiscrete} and in the previous part of the proof.
  Consider $g \in \mathcal{S} (\mathbb{R}^4)$ and let $g_{\varepsilon} \in
  \mathcal{S} (\mathbb{R}^2 \times \varepsilon \mathbb{Z}^2)$ be the
  restriction of $g$ on $\mathbb{R}^2 \times \varepsilon \mathbb{Z}^2$. We
  have that $\overline{\mathcal{E}}^{\varepsilon} (g_{\varepsilon})$ converges
  to $g$ in $B^{\frac{1}{q}}_{q, q, - \ell} (\mathbb{R}^4)$, for any $q\in [1,+\infty]$, and also
  $\overline{\mathcal{E}}^{\varepsilon} (g_{\varepsilon} \mathd \mu^{\pm
  \alpha}_{\varepsilon}) = \overline{\mathcal{E}}^{\varepsilon}
  (g_{\varepsilon}) \mathd \bar{\mu}^{\pm \alpha}_{\varepsilon}$. Thus we obtain the following equality
  \[ \begin{array}{rl}
       0=& - \langle g_{\varepsilon_n}, \Delta_{\mathbb{R}^2 \times
       \varepsilon \mathbb{Z}^2} (\bar{\phi}_{\varepsilon_n}) \rangle + m^2
       \langle g_{\varepsilon_n}, \bar{\phi}_{\varepsilon_n} \rangle + \alpha
       \int_{\mathbb{R}^2 \times \varepsilon \mathbb{Z}^2} g_{\varepsilon_n}
       e^{\alpha \bar{\phi}_{\varepsilon_n}} \mathd
       \mu_{\varepsilon_n}^{\alpha} - \alpha \int_{\mathbb{R}^2 \times
       \varepsilon \mathbb{Z}^2} g_{\varepsilon_n} e^{- \alpha
       \bar{\phi}_{\varepsilon_n}} \mathd \mu_{\varepsilon_n}^{- \alpha} \\
       =& - \langle \Delta_{\mathbb{R}^2 \times \varepsilon \mathbb{Z}^2}
       g_{\varepsilon_n}, \bar{\phi}_{\varepsilon_n} \rangle + m^2 \langle
       g_{\varepsilon_n}, \bar{\phi}_{\varepsilon_n} \rangle + \alpha
       \int_{\mathbb{R}^2 \times \varepsilon \mathbb{Z}^2} g_{\varepsilon_n}
       e^{\alpha \bar{\phi}_{\varepsilon_n}} \mathd
       \mu_{\varepsilon_n}^{\alpha} - \alpha \int_{\mathbb{R}^2 \times
       \varepsilon \mathbb{Z}^2} g_{\varepsilon} e^{- \alpha
       \bar{\phi}_{\varepsilon}} \mathd \mu_{\varepsilon_n}^{- \alpha} \\
       =& - \langle \Delta_{\mathbb{R}^2 \times \varepsilon \mathbb{Z}^2}
       g_{\varepsilon_n}, \bar{\phi}_{\varepsilon_n} \rangle + m^2 \langle
       g_{\varepsilon_n}, \bar{\phi}_{\varepsilon_n} \rangle + \alpha
       \int_{\mathbb{R}^2 \times \varepsilon \mathbb{Z}^2} g_{\varepsilon_n}
       F_{\bar{K}} \left( e^{\alpha \bar{\phi}_{\varepsilon_n}} \right) \mathd
       \mu_{\varepsilon_n}^{\alpha} - \alpha \int_{\mathbb{R}^2 \times
       \varepsilon \mathbb{Z}^2} g_{\varepsilon} F_{\bar{K}} (e^{- \alpha
       \bar{\phi}_{\varepsilon}}) \mathd \mu_{\varepsilon_n}^{- \alpha}\\
        &+ \alpha \int_{e^{\alpha \bar{\phi}_{\varepsilon_n}} > \bar{K}}
       g_{\varepsilon_n} \left( e^{\alpha \bar{\phi}_{\varepsilon_n}} -
       \bar{K} \right) \mathd \mu_{\varepsilon_n}^{\alpha} - \alpha \int_{e^{-
       \alpha \bar{\phi}_{\varepsilon_n}} > \bar{K}} g_{\varepsilon_n} \left(
       e^{- \alpha \bar{\phi}_{\varepsilon_n}} - \bar{K} \right) \mathd
       \mu_{\varepsilon_n}^{- \alpha}\\
       =& - \int_{\mathbb{R}^4} \overline{\mathcal{E}}^{\varepsilon}
       (\Delta_{\mathbb{R}^2 \times \varepsilon \mathbb{Z}^2} g_{\varepsilon})
       \overline{\mathcal{E}}^{\varepsilon} (\bar{\phi}_{\varepsilon}) \mathd
       x \mathd z + m^2 \int_{\mathbb{R}^4}
       \overline{\mathcal{E}}^{\varepsilon} (g_{\varepsilon})
       \overline{\mathcal{E}}^{\varepsilon} (\bar{\phi}_{\varepsilon}) \mathd
       x \mathd z \\
       &+ \alpha \int_{\mathbb{R}^4}
       \overline{\mathcal{E}}^{\varepsilon} (g_{\varepsilon_n}) F_{\bar{K}}
       \left( e^{\alpha \overline{\mathcal{E}}^{\varepsilon_n}
       (\bar{\phi}_{\varepsilon_n})} \right) \mathd
       \overline{\mathcal{E}}^{\varepsilon} (\mu_{\varepsilon_n}^{\alpha})
       + \alpha \int_{\mathbb{R}^4}
       \overline{\mathcal{E}}^{\varepsilon} (g_{\varepsilon_n}) F_{\bar{K}}
       \left( e^{\alpha \overline{\mathcal{E}}^{\varepsilon_n}
       (\bar{\phi}_{\varepsilon_n})} \right) \mathd
       \overline{\mathcal{E}}^{\varepsilon} (
       \mu_{\varepsilon_n}^{\alpha})\\
       &+ \alpha \left( \int_{e^{\alpha \bar{\phi}_{\varepsilon_n}} > \bar{K}}
       g_{\varepsilon_n} \left( e^{\alpha \bar{\phi}_{\varepsilon_n}} -
       \bar{K} \right) \mathd \mu_{\varepsilon_n}^{\alpha} - \int_{e^{- \alpha
       \bar{\phi}_{\varepsilon_n}} > \bar{K}} g_{\varepsilon_n} \left( e^{-
       \alpha \bar{\phi}_{\varepsilon_n}} - \bar{K} \right) \mathd
       \mu_{\varepsilon_n}^{- \alpha} \right). \\
     \end{array} \]
  The previous equality (together with a reasoning similar to the one of
  equation {\eqref{eq:inequalityKsigma}}) implies that there exist
  $K_{\sigma}, N_{\upsilon}$ such that for $\bar{K} > K_{\sigma}$ and $N >
  N_{\upsilon}$ we get
  \begin{eqnarray*}
    &  & \left| - \int_{\mathbb{R}^4} \overline{\mathcal{E}}^{\varepsilon_k}
    (\Delta_{\mathbb{R}^2 \times \varepsilon_k \mathbb{Z}^2}
    g_{\varepsilon_k}) \overline{\mathcal{E}}^{\varepsilon_k}
    (\bar{\phi}_{\varepsilon_k}) \mathd x \mathd z + m^2 \int_{\mathbb{R}^4}
    \overline{\mathcal{E}}^{\varepsilon_k} (g_{\varepsilon_k})
    \overline{\mathcal{E}}^{\varepsilon_k} (\bar{\phi}_{\varepsilon_k}) \mathd
    x \mathd z \right. +\\
    &  & \left. \alpha \int_{\mathbb{R}^4}
    \overline{\mathcal{E}}^{\varepsilon_k} (g_{\varepsilon_k}) F_{\bar{K}}
    \left( e^{\alpha \overline{\mathcal{E}}^{\varepsilon_k}
    (\bar{\phi}_{\varepsilon_k})} \right) \mathd
    \overline{\mathcal{E}}^{\varepsilon_k} ( \mu_{\varepsilon_k}^{\alpha}) -
    \alpha \int_{\mathbb{R}^2 \times \varepsilon_k \mathbb{Z}^2}
    \overline{\mathcal{E}}^{\varepsilon_k} (g_{\varepsilon_k}) F_{\bar{K}}
    \left( e^{- \alpha \overline{\mathcal{E}}^{\varepsilon_k}
    (\bar{\phi}_{\varepsilon_k})} \right) \mathd
    \overline{\mathcal{E}}^{\varepsilon_k} ( \mu_{\varepsilon_k}^{- \alpha})
    \right|\\
    & \lesssim & \left| \alpha \left( \int_{e^{\alpha
    \bar{\phi}_{\varepsilon_k}} > \bar{K}} g_{\varepsilon_k} \left( e^{\alpha
    \bar{\phi}_{\varepsilon_k}} - \bar{K} \right) \mathd
    \mu_{\varepsilon_k}^{\alpha} - \int_{e^{- \alpha
    \bar{\phi}_{\varepsilon_k}} > \bar{K}} g_{\varepsilon_k} \left( e^{-
    \alpha \bar{\phi}_{\varepsilon_k}} - \bar{K} \right) \mathd
    \mu_{\varepsilon_k}^{- \alpha} \right) \right|\\
    &  & + \left| \alpha \int_{\mathbb{R}^2 \times \varepsilon \mathbb{Z}^2}
    g_{\varepsilon_k} F_{\bar{K}} \left( e^{\alpha \bar{\phi}_{\varepsilon_k}}
    \right) \mathd  \mu_{\varepsilon_k}^{\alpha} +
    \alpha \int_{\mathbb{R}^2 \times \varepsilon \mathbb{Z}^2}
    g_{\varepsilon_k} F_{\bar{K}} \left( e^{- \alpha
    \bar{\phi}_{\varepsilon_k}} \right) \mathd 
    \mu_{\varepsilon_k}^{- \alpha} \right|\\
    & \lesssim & 2 (\sigma + \upsilon) (\sup_k \| g_{\varepsilon_k}
    \|_{B^1_{\infty, \infty, - \ell} (\mathbb{R}^2 \times \varepsilon_k
    \mathbb{Z}^2)}) .
  \end{eqnarray*}
  Taking the limit in the order $\varepsilon_n \rightarrow + \infty$, $N
  \rightarrow + \infty$, $\bar{K} \rightarrow + \infty$, the left hand side in
  the last inequality converges to
  \[ \left| - \int_{\mathbb{R}^4} (\Delta_{\mathbb{R}^4} g) \bar{\phi} \mathd
     x \mathd z + m^2 \int_{\mathbb{R}^4} g \bar{\phi} \mathd x \mathd z +
     \alpha \int_{\mathbb{R}^4} g (e^{\alpha \bar{\phi}} \mathd \mu^{\alpha} -
     e^{- \alpha \bar{\phi}} \mathd \mu^{- \alpha}) \right| \leqslant 2
     (\sigma + \upsilon) \sup_{k \in \mathbb{N}} \| g_{\varepsilon_k} \|_{B^1_{\infty, \infty, -
     \ell}} . \]
Since $\sigma, \upsilon > 0$ and $g \in \mathcal{S}
  (\mathbb{R}^4)$ are arbitrary, we get the thesis.\\
   The fact that $e^{\pm \beta \alpha \bar{\phi}} \in H^1_{\ell}
  (\mathbb{R}^4)$ and $e^{\pm (1 + 2
  \beta) \alpha \bar{\phi}} \in L^1 (\rho_{\ell}^{(4)} \mathd
  \mu^{\pm \alpha})$ for any $0 \leqslant \beta < 1$ follows from the uniform in $\varepsilon_n$ bound \eqref{eq:inequalityepsilonsmalln}.
\end{proof}

\subsection{Improved a priori estimates and stochastic quantization}

\

In this section we want to prove the (elliptic) stochastic quantization of
the $\cosh (\beta \varphi)_2$ model. First we recall that, for any $x \in
\mathbb{R}^2$ and $f \in \mathcal{S} (\mathbb{R}^2)$, the tempered
distribution $\delta_x \otimes f \in \mathcal{S}' (\mathbb{R}^4)$ (where
$\delta_x \in \mathcal{S}' (\mathbb{R}^2)$ is the Dirac delta with unitary
mass in $x$) belongs to the Cameron-Martin space associated with the Gaussian
random distribution $W = (- \Delta_{\mathbb{R}^4 } + m^2)^{-1} (\xi)$, namely $\delta_x \otimes f \in H^{-2}
(\mathbb{R}^4)$. This means that for any $x \in \mathbb{R}^2$, the Gaussian
random distribution $W (x, \cdot)$ is well defined:
\begin{equation}
  _{\mathcal{S}' (\mathbb{R}^2)} \langle W (x, \cdot), f \rangle_{\mathcal{S}
  (\mathbb{R}^2)} \assign \langle W, \delta_x \otimes f \rangle \in L^2
  (\Omega), \label{eq:xix}
\end{equation}
where the pairing $\langle W, \delta_x \otimes f \rangle$ is understood in the
probabilistic sense of Wiener-Skorokhod integral (see, e.g.,
{\cite{Ustunel1995}}). Furthermore it is possible to build a version of the
map $x \mapsto W (x, \cdot) \in \mathcal{S}' (\mathbb{R}^2) \otimes L^2
(\Omega)$ which is continuous with respect to the natural topology of
$\mathcal{S}' (\mathbb{R}^2) \otimes L^2 (\Omega)$ (see Lemma 33 of
{\cite{AlDeGu2019}}). Hereafter we write $W (x, \cdot)$ for the continuous
version of the previous map satisfying the relation {\eqref{eq:xix}}.

\

In the remainder of this section we prove the following statement.

\begin{theorem}
  \label{theorem:stochasticquantization}For any $| \alpha | < 4 \pi$ and for
  any $x \in \mathbb{R}^2$ the random tempered distribution $\phi (x, \cdot) =
  \bar{\phi} (x, \cdot) + W (x, \cdot) \in \mathcal{S}'
  (\mathbb{R}^2)$, where $\bar{\phi}$ is the (unique) solution to equation
  {\eqref{eq:main5}} satisfying the hypotheses of Proposition
  \ref{proposition:uniqueness}, is well defined and distributed as
  $\nu_m^{\cosh, \beta}$ for $\beta = \frac{\alpha}{\sqrt{4 \pi}}$, where
  $\nu_m^{\cosh, \beta}$ is the probability measure associated with the $\cosh
  (\beta \varphi)_2$ model in the sense of Definition \ref{definition:coshmodel}.
\end{theorem}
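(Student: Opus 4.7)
\noindent\textbf{Proof proposal for Theorem \ref{theorem_main1} (final statement).}

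The plan is to establish the law of $\phi(x,\cdot)$ at the level of the lattice/periodic approximation \eqref{eq:mainfinitedimensional}, where the underlying discrete space $\varepsilon(\mathbb{Z}/N\mathbb{Z})^2$ is finite, and then to pass to the limit $(N\to\infty,\ \varepsilon\to 0)$ using the convergence results built up in Sections \ref{sec:stochastic-estimates}--\ref{sec:stochastic-quantization}. In more detail, I would first fix $\varepsilon = \varepsilon_n$ and $N = N_k$ as in Proposition \ref{proposition:convergencealpha} and Lemma \ref{lemma:existencefinitedim}, and consider the random field $\phi_{\varepsilon,N}(x,z)=\bar\phi_{\varepsilon,N}(x,z)+W_{\varepsilon,N}(x,z)$, which solves the finite-dimensional elliptic SPDE $(m^2-\Delta_{\mathbb R^2\times\varepsilon\mathbb Z^2/N})\phi_{\varepsilon,N} + \lambda\alpha\,\sinh(\alpha\phi_{\varepsilon,N})_{\mathrm{ren}}=\xi_{\varepsilon,N}$ on $\mathbb{R}^2\times\varepsilon(\mathbb{Z}/N\mathbb{Z})^2$. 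Since $V(\varphi)=\cosh(\alpha\varphi)$ is convex, this falls in the regime covered by the Parisi--Sourlas dimensional reduction principle rigorously proved in \cite{AlDeGu2018} (and reviewed in the discussion preceding \eqref{eq:law}): for any $x\in\mathbb{R}^2$, the random field $\phi_{\varepsilon,N}(x,\cdot)\in\mathcal S'(\varepsilon(\mathbb{Z}/N\mathbb{Z})^2)$ has law equal to the lattice Gibbs measure
\[
\nu_{m,\varepsilon,N}^{\cosh,\beta}(\mathrm d\varphi)=\frac{1}{Z_{m,\varepsilon,N}}\,e^{-V_N^{\cosh,\beta}(\varphi)}\,\nu_{m,\varepsilon,N}(\mathrm d\varphi).
\]

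The next step is to take the thermodynamic/ultraviolet limit. I would first send $N_k\to\infty$ at fixed $\varepsilon_n$, invoking the $\varepsilon$-level bounds of Lemma \ref{lemma:solutiondiscrete} and the convergence $\mu_{\varepsilon,N}^{\pm\alpha}\to\mu_\varepsilon^{\pm\alpha}$ from Proposition \ref{proposition:convergenceperiodic}, together with the uniqueness statement of Remark \ref{remark:uniqueness}, to identify the limit with the solution $\bar\phi_{\varepsilon_n}$ of the discrete equation \eqref{eq:maindiscrete}; then send $\varepsilon_n\to 0$ using Theorem \ref{theorem:convergenceequation1}. At each stage, one tests against bounded continuous functionals of $\phi(x,\cdot)$ supported on finite collections of test functions, and uses the a priori bounds of Theorems \ref{theorem_apriori1}--\ref{theorem_apriori2} to get tightness of $\phi_{\varepsilon,N}(x,\cdot)$ in an appropriate weighted Besov space, together with continuity in $x$ of the Gaussian piece $W(x,\cdot)$ (using that $\delta_x\otimes f\in H^2(\mathbb R^4)$ lies in the Cameron--Martin space).

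The genuinely new difficulty is that the solution $\bar\phi$ given by Theorem \ref{theorem:convergenceequation1} is only known a priori to lie in $H^1(\mathbb R^4)$, which is too weak to admit a trace on the two-dimensional slice $\{x\}\times\mathbb R^2$. To remedy this I would invoke the improved bootstrap estimates of Theorem \ref{theorem:improvedapriori}: starting from the $H^1_\ell$ bound and the exponential integrability $e^{\pm\alpha\bar\phi}\in L^1(\rho_\ell\mathrm d\mu^{\pm\alpha})$, one recasts \eqref{eq:main5} as $(m^2-\Delta)\bar\phi = -\alpha\,e^{\alpha\bar\phi}\mu^\alpha+\alpha\,e^{-\alpha\bar\phi}\mu^{-\alpha}$ and uses the product estimates of Theorem \ref{theorem:besov:rz:product} together with Theorem \ref{thm:estimate-measure} (where positivity of $\mu^{\pm\alpha}$ is crucial) to upgrade the right-hand side into a weighted Besov space of regularity strictly greater than $-1$; Schauder estimates then give $\bar\phi\in B^{s}_{p,p,\ell}(\mathbb R^4)$ for some $s>1$, which is enough to define the trace $\bar\phi(x,\cdot)$ and to make it depend continuously on $x$. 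One must verify that the same improved bound holds uniformly in $(\varepsilon_n,N_k)$, so that the identification of laws passes through both limits.

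The main obstacle I anticipate is precisely the last point: obtaining the improved regularity uniformly in the approximation parameters while keeping the right-hand side of the equation tractable. Because $e^{\pm\alpha\bar\phi}$ is not a priori in any Besov space of positive regularity, the bootstrap relies in an essential way on multiplying it against the positive distribution $\mu^{\pm\alpha}$ and extracting regularity from the pair $(e^{\pm\alpha\bar\phi},\mu^{\pm\alpha})$ via Theorem \ref{thm:estimate-measure}; this is exactly where the $L^2$ restriction $\alpha^2<(4\pi)^2$ enters and where the \emph{local} lattice approximation (rather than a Galerkin truncation) becomes indispensable, since the nonlinearity must commute with the extension operator $\overline{\mathcal E}^\varepsilon$ (Theorem \ref{theorem:extension2}) to transport the improved bound from the lattice to the continuum.
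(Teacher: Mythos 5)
Your proposal follows the same overall architecture as the paper's proof: (i) dimensional reduction at the finite‐$(\varepsilon,N)$ level via Lemma~\ref{lemma:approximationmeasure} and the convexity of $\cosh$, using Theorem~4 of \cite{AlDeGu2018}; (ii) passage to the limit $N_k\to\infty$ then $\varepsilon_n\to 0$; (iii) a bootstrap of regularity to make the trace $\bar\phi(x,\cdot)$ meaningful and continuous in $x$, using positivity of $\mu^{\pm\alpha}$ through Theorem~\ref{thm:estimate-measure} and the fact that $\overline{\mathcal E}^{\varepsilon}$ commutes with the nonlinearity.

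One step is mis-stated in a way that matters. You claim that the bootstrap yields $\bar\phi\in B^{s}_{p,p,\ell}(\mathbb R^4)$ for some $s>1$, i.e.\ \emph{isotropic} regularity above $1$ in all four variables. This is neither what Theorem~\ref{theorem:improvedapriori} gives nor what one should expect: the RHS $e^{\pm\alpha\bar\phi}\,\mathrm d\mu^{\pm\alpha}$ inherits genuinely negative regularity in the $z$-variables from $\mu^{\pm\alpha}$, and the trace $\bar\phi(x,\cdot)$ itself must be allowed to be a distribution in $z$. What the paper actually establishes is the \emph{anisotropic} bound
\[
  \bar\psi \in B^{2-\theta}_{p,p,\ell}\bigl(\mathbb R^2,\,B^{-s}_{p,p,\ell}(\varepsilon\mathbb Z^2)\bigr),
  \qquad 2-\theta>\tfrac{2}{p},\quad s>0,
\]
which is obtained by invoking the second half of Lemma~\ref{lemma-fm-reg2} / Theorem~\ref{theorem:besov:rz:equivalent} with the smoothing exponent $(\theta_1,\theta_2)=(1,0)$: all two units of Schauder gain are placed in the $x$-direction, and the $z$-direction stays at negative regularity. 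This is precisely what permits the Besov embedding $B^{2-\theta}_{p,p,\ell}(\mathbb R^2,E)\hookrightarrow C^0(\mathbb R^2,E)$ exploited in Lemma~\ref{lemma:convergenceimproved}, so that $\delta_x\otimes f$ becomes an admissible test functional. Your isotropic formulation does not produce $s>1$ (the $z$-regularity stays close to $0$ or below when $\alpha^2$ approaches $(4\pi)^2$), and one would not be able to take traces that way. Replacing the phrase ``$\bar\phi\in B^s_{p,p,\ell}(\mathbb R^4)$ for some $s>1$'' by the anisotropic statement, and noting explicitly the choice $(\theta_1,\theta_2)=(1,0)$ in the Schauder estimate, would make the argument match the paper and close the gap.
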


In order to prove Theorem \ref{theorem:stochasticquantization} we use the
approximations $\bar{\phi}_{\varepsilon, N}$ and $\bar{\phi}_{\varepsilon}$
satisfying equations {\eqref{eq:mainfinitedimensional}} and
{\eqref{eq:maindiscrete}} respectively. The first link between the measure
$\nu_m^{\cosh, \beta}$ and the approximation $\bar{\phi}_{\varepsilon, N}$ is
given by the following lemma.

\begin{lemma}
  \label{lemma:approximationmeasure}There exists a unique solution
  $\bar{\phi}_{\varepsilon, N}$ to equation {\eqref{eq:mainfinitedimensional}}
  which is continuous in $x \in \mathbb{R}^2$ and grows at most exponentially
  at infinity. Furthermore for any $x \in \mathbb{R}^2$ we have
  \begin{equation}
    \overline{\mathcal{E}}^{\varepsilon} (\phi_{\varepsilon, N}) (x, \cdot)
    \sim \frac{e^{- V_N^{\cosh, \beta} (\varphi)}}{Z_{m, \varepsilon, N}}
    \nu_{m, \varepsilon, N} (\mathd \varphi) = \nu^{\cosh, \beta}_{m,
    \varepsilon, N}, \label{eq:lawfinitedimensional}
  \end{equation}
  where $\phi_{\varepsilon, N} = \bar{\phi}_{\varepsilon, N} + W_{\varepsilon,
  N}$
 
\end{lemma}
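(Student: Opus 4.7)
The proof splits into two parts: (i) upgrading the regularity of $\bar\phi_{\varepsilon,N}$ from $H^1_\ell(\mathbb{R}^2\times\varepsilon\mathbb{Z}^2)$, already granted by Lemma~\ref{lemma:existencefinitedim}, to continuity in $x$ with at most exponential growth at infinity; and (ii) identifying the law of the two-dimensional slice $\overline{\mathcal{E}}^\varepsilon(\phi_{\varepsilon,N})(x,\cdot)$ with $\nu^{\cosh,\beta}_{m,\varepsilon,N}$ by a Parisi--Sourlas type dimensional reduction identity. Part (i) is essentially routine: because $\phi_{\varepsilon,N}$ and $\mu^{\pm\alpha}_{\varepsilon,N}$ are $N$-periodic in $z$, equation~\eqref{eq:mainfinitedimensional} reduces to a finite system of coupled two-dimensional elliptic PDEs in $x$ indexed by the finite set $\varepsilon(\mathbb{Z}/N\mathbb{Z})^2$; for each realization of the noise, the random distributions $\mu^{\pm\alpha}_{\varepsilon,N}$ are real-analytic in $x$, so elliptic bootstrapping upgrades $\bar\phi_{\varepsilon,N}$ to $C^{1,\gamma}_{\mathrm{loc}}(\mathbb{R}^2;\ell^\infty(\varepsilon(\mathbb{Z}/N\mathbb{Z})^2))$. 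The exponential-growth bound is inherited from the spatial-cutoff approximation $\bar\phi_{\varepsilon,N,K}$ of the proof of Lemma~\ref{lemma:existencefinitedim}, where convergence in $L^\infty_{\exp(\beta)}$ was already established via \cite{AlDeGu2018} (Lemma~48).

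For part (ii), I would work again at the level of the spatial-cutoff approximation \eqref{eq:spacialcutoff}. There, the nonlinearity $\lambda f_K(x)(\alpha e^{\alpha\bar\phi}\mu^\alpha_{\varepsilon,N}-\alpha e^{-\alpha\bar\phi}\mu^{-\alpha}_{\varepsilon,N})$ is smooth with globally bounded derivatives, so the map $\xi_{\varepsilon,N}\mapsto\phi_{\varepsilon,N,K}$ is a $C^1$ bijection on suitable weighted spaces, and the convexity of the cosh interaction guarantees positivity of its Jacobian. The corresponding dimensional reduction is precisely in the class of convex potentials treated in \cite{AlDeGu2018}: the Gaussian weight $e^{-\frac12\|\xi\|^2}$ pulled back via $\xi=(m^2-\Delta_{\mathbb{R}^2\times\varepsilon\mathbb{Z}^2})\phi+\lambda f_K V'(\phi)$ produces, after integration by parts to cancel the cross term $\int f_K V'(\phi)(-\Delta_x\phi)$ against the log-Jacobian $\log\det(m^2-\Delta+\lambda f_K V''(\phi))$, the effective two-dimensional action $S$ of Definition~\ref{definition:coshmodel} on any slice $x=x_0$. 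This yields
\[
\overline{\mathcal{E}}^\varepsilon(\phi_{\varepsilon,N,K})(x_0,\cdot)\;\sim\;\nu^{\cosh,\beta}_{m,\varepsilon,N,K},
\]
the cutoff Gibbs measure with interaction $\lambda\int f_K:\!\cosh(\beta\varphi)\!:\mathd x$.

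Finally I would pass to the limit $K\to\infty$. On the left, the convergence $\bar\phi_{\varepsilon,N,K}\to\bar\phi_{\varepsilon,N}$ in $L^\infty_{\exp(\beta)}\cap C^{2-\delta}_{\mathrm{loc}}$ from the proof of Lemma~\ref{lemma:existencefinitedim} transfers convergence of the slice distributions; on the right, $\nu^{\cosh,\beta}_{m,\varepsilon,N,K}\to\nu^{\cosh,\beta}_{m,\varepsilon,N}$ weakly because $\int f_K:\!\cosh(\beta\varphi)\!:\mathd x\to V_N^{\cosh,\beta}(\varphi)$ in $L^p(\nu_{m,\varepsilon,N})$ by dominated convergence and the existence of exponential moments of the Gaussian measure $\nu_{m,\varepsilon,N}$ (which has smooth, non-degenerate covariance for fixed $\varepsilon,N$).

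The main obstacle is the rigorous execution of the Parisi--Sourlas cancellation in the middle step: one must make sense of the change of variables $\xi\leftrightarrow\phi$ together with the determinantal Jacobian in a context which, although finite-dimensional in $z$, is still infinite-dimensional in $x$ before the cutoff $f_K$. Convexity of $\cosh$ is essential here, both for global invertibility of the map and for the positivity of the relevant determinants, so that the whole argument reduces to an instance of the machinery developed in \cite{AlDeGu2018}; the semi-discrete character of the underlying space $\mathbb{R}^2\times\varepsilon\mathbb{Z}^2$ only enters through the replacement of the continuum Laplacian on $\mathbb{R}^4$ by $-\Delta_{\mathbb{R}^2}-\Delta_{\varepsilon\mathbb{Z}^2}$, and does not affect the algebraic structure of the supersymmetric cancellation.
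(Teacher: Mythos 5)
Your proposal identifies the right ingredients (periodicity reduces to a finite system in $z$, convexity of $\cosh$, and the dimensional reduction machinery of \cite{AlDeGu2018}), but it is more elaborate than the paper's argument, which is a one-step invocation of Theorem~4 of \cite{AlDeGu2018}. Once one rewrites equation~\eqref{eq:mainfinitedimensional} as the finite system of elliptic SPDEs \eqref{eq:finitedimensional1} on $\mathbb{R}^2$ with an $\mathbb{R}^{N\times N}$-valued unknown, the effective potential $V_\varepsilon$ (the discrete Dirichlet energy in $z$ plus $2e^{-\alpha^2 c_\varepsilon}\cosh(\alpha y_i)$) is globally convex \emph{without any spatial cutoff}, so Theorem~4 of \cite{AlDeGu2018} delivers simultaneously existence, uniqueness, the exponential growth bound, continuity in $x$, and the Parisi--Sourlas identity \eqref{eq:lawfinitedimensional}. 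You instead reintroduce the cutoff $f_K$ from the proof of Lemma~\ref{lemma:existencefinitedim}, prove a cutoff dimensional reduction, and then take $K\to\infty$ on both sides — correct in spirit, but the limiting step is unnecessary here (it is needed in Lemma~\ref{lemma:existencefinitedim} only because that lemma invokes a version of \cite{AlDeGu2018} requiring bounded forcing, not for the dimensional reduction). Also note a small imprecision: $f_K$ depends on the auxiliary variable $x$, so on the slice $x=x_0$ it enters only as the scalar prefactor $f_K(x_0)$ in front of the interaction, not as a spatially varying cutoff of $:\cosh(\beta\varphi):$ over the physical variable; the cutoff Gibbs measure in your intermediate step therefore has interaction $\lambda f_K(x_0)\,V_N^{\cosh,\beta}$ rather than $\lambda\int f_K :\cosh(\beta\varphi):$, though this does not affect the $K\to\infty$ limit.
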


\begin{proof}
  The random field $\bar{\phi}_{\varepsilon, N}$ solves equation
  {\eqref{eq:mainfinitedimensional}} if and only if $\phi_{\varepsilon, N}
  (\cdot, z)$ solves the following (finite) set of elliptic SPDE on
  $\mathbb{R}^2$
  \begin{equation}
    (- \Delta_{\mathbb{R}^2} + m^2) \phi_{\varepsilon, N} (x, i \varepsilon) +
    \partial_{y^i} V_{\varepsilon} (\{ \phi_{\varepsilon, N} (x, i
    \varepsilon) \}) = \xi_{\varepsilon} (x, i \varepsilon) \quad i \in
    \left[ - \frac{N}{2}, \frac{N}{2} \right)^2 \label{eq:finitedimensional1},
  \end{equation}
  where $V_{\varepsilon} : \mathbb{R}^{N \times N} \rightarrow \mathbb{R}^2$
  is defined as
  \[ V_{\varepsilon} (y) = \sum_{i \in \left[ - \frac{N}{2}, \frac{N}{2}
     \right)^2} \left[ \frac{1}{2}\left( \frac{(y_i - y_{i + e_1})^2}{\varepsilon^2} +
     \frac{(y_i - y_{i + e_2})^2}{\varepsilon^2} \right) + 2 e^{- \alpha^2
     c_{\varepsilon}} \cosh (\alpha y_i) \right], \quad \{ y_i \}_{i \in
     \left[ - \frac{N}{2}, \frac{N}{2} \right)^2} \in \mathbb{R}^{N \times N},
  \]
  where $c_{\varepsilon} = \frac{1}{2} \mathbb{E} [W_{\varepsilon}^2]$. Since
  the function $V_{\varepsilon}$ is convex, by Theorem 4 of
  {\cite{AlDeGu2018}} there is a unique solution to equation
  {\eqref{eq:mainfinitedimensional}} which grows at most exponentially at
  infinity. Furthermore, for any $x \in \mathbb{R}^2$, the law of
  $\phi_{\varepsilon, N} (x, \cdot)$ satisfies relation
  {\eqref{eq:lawfinitedimensional}}.
\end{proof}

Thanks to Lemma \ref{lemma:approximationmeasure} what remains to prove is
that, for any $x \in \mathbb{R}^2$, $\bar{\phi}_{\varepsilon_n, N_k} (x,
\cdot) \rightarrow \bar{\phi}_{\varepsilon_n} (x, \cdot)$ and
$\overline{\mathcal{E}}^{\varepsilon_n} (\bar{\phi}_{\varepsilon_n}) (x,
\cdot) \rightarrow \bar{\phi} (x, \cdot)$ almost surely, as $N_k \rightarrow +
\infty$ and $\varepsilon_n \rightarrow 0$. The convergence results proved
above, namely the convergence of $\bar{\phi}_{\varepsilon_n, N_k}$ and
$\overline{\mathcal{E}}^{\varepsilon_n} (\bar{\phi}_{\varepsilon_n})$ to
$\bar{\phi}_{\varepsilon_n}$ and $\bar{\phi}$ in $H^1_{\ell} (\mathbb{R}^2
\times \varepsilon_n \mathbb{Z}^2)$ and $H^1_{\ell} (\mathbb{R}^4)$
respectively, is too weak for proving Theorem
\ref{theorem:stochasticquantization}. We need some a priori estimates stronger
than Theorem \ref{theorem_apriori1} and Theorem \ref{theorem_apriori2}.

\begin{theorem}
  \label{theorem:improvedapriori}Suppose that $\eta_{\pm} \in B^{-1 +
  \tau}_{2,2,\ell} (\mathbb{R}^2 \times \varepsilon \mathbb{Z}^2)$ (for any $\tau
  > 0$ and any $\ell > \ell_0$) and suppose that there are $r > 1$, $1
  \leqslant p' < 3$, $\theta, s > \frac{2}{p'}$ such that, writing $q' =
  \frac{p'}{p' - 1}$, we have
  \[ \mathbb{M}_{r, \ell, \varepsilon}^{\left( - \theta + \frac{2}{p'} \right)
     q', \left( - s + \frac{2}{p'} \right) q'} (\eta_{\pm}, \bar{a},
     \bar{\beta}) < + \infty \]
  (where $\bar{a} > 0$ and $0 < \bar{\beta} < 1$ are the constants appearing
  in Remarks \ref{remark:k:epsilon} and Remark \ref{remark:k:epsilon2}). If
  $\bar{\psi}$ is the unique solution to equation {\eqref{eq:apriori}}
  belonging to $H^1_{\ell} (\mathbb{R}^2 \times \varepsilon \mathbb{Z}^2)$,
  then, writing $p=r q'$, we have
  \[ \bar{\psi} \in B^{2 - \theta}_{p, p, \ell} (\mathbb{R}^2, B^{- s}_{p, p,\ell} (\varepsilon \mathbb{Z}^2)), \]
  and also, for $\tau > 0$ small enough and $\ell > 0$ big enough we have,
  \[ \begin{array}{lll}
       \| \bar{\psi} \|_{B^{2 - \theta}_{p, p, \ell} (\mathbb{R}^2, B^{-
       s}_{p, p, \ell} (\varepsilon \mathbb{Z}^2))} & \lesssim & P_{\theta,
       \alpha, r, \tau} \left. \left( \| \eta_{\pm} \|_{B^{-1 + \tau}_{\ell}
       (\mathbb{R}^2 \times \varepsilon \mathbb{Z}^2)}, \mathbb{M}_{r, \ell,
       \varepsilon}^{\left( - \theta + \frac{2}{p'} \right) q', \left( - s +
       \frac{2}{p'} \right) q'} (\eta_+, \bar{a}, \bar{\beta}) + \right.
       \right.\\
       &  & \left. \left. +\mathbb{M}_{r, \ell, \varepsilon}^{\left( - \theta
       + \frac{2}{p'} \right) q', \left( - s + \frac{2}{p'} \right) q'} (\eta_-,
       \bar{a}, \bar{\beta}) \right) \right.^{\frac{1}{q'}},
     \end{array} \]
  where $P_{\theta, \alpha, r, \tau}$ is a polynomial independent of $0 <
  \varepsilon \leqslant 1$.
\end{theorem}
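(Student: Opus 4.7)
I would combine the a priori bounds of Theorems~\ref{theorem_apriori1} and~\ref{theorem_apriori2}, the duality-type regularity estimate of Theorem~\ref{thm:estimate-measure}, and the anisotropic elliptic regularity of Theorem~\ref{theorem:besov:rz:equivalent}. Rewriting equation~\eqref{eq:apriori} as the identity
\[
\bar\psi = (m^2 - \Delta_{\mathbb{R}^2 \times \varepsilon\mathbb{Z}^2})^{-1}\bigl(-e^{\alpha\bar\psi}\,\mathd\eta_+ + e^{-\alpha\bar\psi}\,\mathd\eta_-\bigr),
\]
the goal is first to place the right-hand side into an anisotropic Besov space $B^{-k_1,-k_2}_{p,\ell,\ell}(\mathbb{R}^2 \times \varepsilon\mathbb{Z}^2)$ of (mildly) negative regularity, and then to invert $(m^2-\Delta)$, which gains up to two derivatives distributed between the continuous and discrete directions, so as to land in $B^{2-\theta,-s}_{p,\ell,\ell}$---the target space, after the identification with $B^{2-\theta}_{p,p,\ell}(\mathbb{R}^2, B^{-s}_{p,p,\ell}(\varepsilon\mathbb{Z}^2))$ provided by Remark~\ref{remark:besov:banach}.

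The first step is to produce enough integrability of $e^{\pm\alpha\bar\psi}$ against $\mathd\eta_\pm$ to feed into Theorem~\ref{thm:estimate-measure}. Choosing $\beta = (p'-1)/2$ in Theorem~\ref{theorem_apriori2}---admissible precisely because $p'<3$ ensures $|\beta|<1$---and using the elementary identity $\rho_{\ell'}^{p'} = \rho_{p'\ell'}$ to match weights, I obtain
\[
\|e^{\pm\alpha\bar\psi}\|_{L^{p'}_{\ell'}(\mathbb{R}^2 \times \varepsilon\mathbb{Z}^2,\,\mathd\eta_\pm)}^{p'}
= \int \rho_{p'\ell'}\,e^{\pm p'\alpha\bar\psi}\,\mathd\eta_\pm
\;\lesssim\; P_\alpha\bigl(\|\eta_+\|_{B^{1-\tau}_\ell},\|\eta_-\|_{B^{1-\tau}_\ell}\bigr),
\]
for $p'\ell' \leq \ell$ and a polynomial $P_\alpha$ independent of $\varepsilon$. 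Feeding this into Theorem~\ref{thm:estimate-measure} with $g = \mp\alpha e^{\pm\alpha\bar\psi}$ and the choice $s_1 = (\theta-2/p)q'$, $s_2 = (s-2/p)q'$ read off from the $\mathbb{M}$-hypothesis then yields
\[
\|\alpha e^{\pm\alpha\bar\psi}\mathd\eta_\pm\|_{B^{-k_1,-k_2}_{p,\ell,\ell}}
\;\lesssim\; \|e^{\pm\alpha\bar\psi}\|_{L^{p'}_{\ell'}(\mathd\eta_\pm)}\bigl(\mathbb{M}_{r,\ell,\varepsilon}^{(-\theta+2/p)q',(-s+2/p)q'}(\eta_\pm,q'\bar a,\bar\beta)\bigr)^{r/p},
\]
with $k_1 = \theta + 2/p' - 2/p$ and $k_2 = s + 2/p' - 2/p$.

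The second step applies Theorem~\ref{theorem:besov:rz:equivalent} to distribute the two-derivative gain from inverting $(m^2-\Delta)$ as $2\theta_1$ on the $x$-side and $2\theta_2$ on the $z$-side, with $\theta_i \geq 0$ and $\theta_1+\theta_2 < 1$; if needed I first enlarge the second exponent via the trivial embedding $B^{-k_1,-k_2} \hookrightarrow B^{-k_1,-k_2'}$ for $k_2' \geq k_2$. The equations $2\theta_1 = 2-\theta+k_1$ and $2\theta_2 = k_2'-s$ then place $\bar\psi$ in $B^{2-\theta,-s}_{p,\ell,\ell}$, while the budget $\theta_1+\theta_2<1$ with $\theta_i \geq 0$ translates into the pair $k_1 < \theta$ and $k_2 \leq s$---both of which amount, after substitution, to the relation $p \leq p'$, i.e.\ $rq' \leq q'/(q'-1)$, a consequence of $r>1$ together with $p'<3$ in the admissible parameter window. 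Tracking the polynomial dependence on $\|\eta_\pm\|_{B^{1-\tau}_\ell}$ and on the $\mathbb{M}$-functional through both steps, and folding in the lower-order contribution from the mass $m^2$, then yields the claimed estimate. The main obstacle is precisely this last parameter-matching step: one must verify that the H\"older-type defect $2/p'-2/p$ introduced by Theorem~\ref{thm:estimate-measure} fits within the two-derivative budget of the elliptic inversion without violating the positivity of the $\theta_i$, and that the auxiliary indices $p', q', r, s_1, s_2$ can be chosen simultaneously compatible with the hypotheses $\theta,s>p/2$, $p'<3$, $r>1$.
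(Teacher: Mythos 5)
Your proposal is correct and follows the same three-step route as the paper's proof: Theorem~\ref{theorem_apriori2} to place $e^{\pm\alpha\bar\psi}$ in $L^{p'}_{\ell'}(\mathd\eta_\pm)$ with $p'<3$, Theorem~\ref{thm:estimate-measure} to put $e^{\pm\alpha\bar\psi}\mathd\eta_\pm$ in a mixed negative Besov space $B^{-k_1,-k_2}_{p,\ell,\ell}$, and the anisotropic elliptic regularity of Theorem~\ref{theorem:besov:rz:equivalent} to invert $(m^2-\Delta_{\mathbb{R}^2\times\varepsilon\mathbb{Z}^2})$. Your explicit parameter bookkeeping (the choice $\beta=(p'-1)/2$, the computation of $k_1,k_2$, and the split of the two-derivative gain via $(\theta_1,\theta_2)$ after the trivial embedding $B^{-k_1,-k_2}\hookrightarrow B^{-k_1,-s}$) fills in what the paper's terse proof leaves implicit, and the parameter-compatibility concern you flag at the end is exactly what Remark~\ref{remark:theta} is there to resolve.
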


\begin{proof}
  By Theorem \ref{theorem_apriori2}, we have that, for any $\delta > 0$, \[\|
  e^{\pm \alpha \psi} \|_{L^{3 - \delta}_{\ell'} (\mathd \eta_{\pm})} \leqslant P_{(1
  - \delta)} \left(\| \eta_+ \|_{B^{-1 + \tau}_{2, 2, \ell} (\mathbb{R}^2 \times
  \varepsilon \mathbb{Z}^2)}, \| \eta_- \|_{B^{-1 + \tau}_{2, 2, \ell}
  (\mathbb{R}^2 \times \varepsilon \mathbb{Z}^2)}, \int_{\mathbb{R}^2\times \varepsilon\mathbb{Z}^2}\rho_{\ell}^{(4)}\mathrm{d}\eta_{+},\int_{\mathbb{R}^2\times \varepsilon\mathbb{Z}^2}\rho_{\ell}^{(4)}\mathrm{d}\eta_{-}\right)\] for a suitable polynomial
  $P_{1 - \delta}$ (independent of $\varepsilon$).\\
  
  By Theorem \ref{thm:estimate-measure}, choosing $ p'=3-\delta$, we get that
  \[ \| e^{\pm \alpha \bar{\psi}} \eta_{\pm} \|_{B_{p, \ell,  \ell}^{-
     \theta, - s}} \lesssim \| e^{\pm \alpha \bar{\psi}} \|_{L^{3 -
     \delta}_{\ell} (\eta_{\pm})} \left( \mathbb{M}_{r, \ell,
     \varepsilon}^{\left( - \theta + \frac{2}{p'} \right) q', \left( - s +
     \frac{2}{p'} \right) q'} (\eta_{\pm}, \bar{a}, \bar{\beta}) \right)^{1 /
     q'} . \]
  Then, since
  \[ \bar{\psi} = (\Delta_{\mathbb{R}^2 \times \varepsilon \mathbb{Z}^2} +
     m^2)^{- 1} (\alpha e^{\alpha \bar{\psi}} \mathd \eta_+ - \alpha e^{-
     \alpha \bar{\psi}} \mathd \eta_-), \]
  by Theorem \ref{theorem:besov:rz:equivalent}, the thesis follows.
\end{proof}

\begin{remark}
  \label{remark:theta}Fix $\alpha \in \mathbb{R}$ such that
  $\frac{\alpha^2}{(4 \pi)^2} < 1$, then there exist $1 < p' < 3$, $\theta >
  \frac{2}{p'}$, $r > 1$, $2 > \frac{\alpha^2}{(4 \pi)^2} (r - 1)$ such that
  \begin{equation}\label{numerology-technical} 2 - \theta - \frac{2 (p' - 1)}{r p'} > 0, \quad \theta-\frac{2}{p'}<\frac{\alpha^2(r-1)}{(4\pi)^2 q'}. \end{equation}
  Indeed writing $\theta = \frac{2}{p'} + \delta$, we have $2 - \theta -
  \frac{2 (p' - 1)}{r p'} = 2 - \frac{2}{p'} - \delta - \frac{2 (p' - 1)}{r
  p'} = \left( 1 - \frac{1}{r} \right) \frac{2 (p' - 1)}{p'} - \delta$ which
  is positive if we choose $0 < \delta < \frac{2 (p' - 1)}{p'} \left( 1 -
  \frac{1}{r} \right)$. The second condition in \eqref{numerology-technical} is satisfied if we choose $\delta < \frac{\alpha^2(r-1)}{(4\pi)^2 q'}$
\end{remark}

\begin{proposition}
  \label{proposition:quantizationbounds}Let $\alpha \in \mathbb{R}$,
  $\frac{\alpha^2}{(4 \pi)^2} < 1$ and consider $p', r > 1$ and $\theta >
  \frac{2}{p'}$ satisfying the requirement of Remark \ref{remark:theta}, and,
  writing $q' = \frac{p'}{(p' - 1)}$ and $p = r q'$, and take
  \[ s_{\theta, \alpha, r} > \frac{\alpha^2(r - 1)}{(4 \pi)^2q'}  - \theta +
     \frac{4}{p'} . \]
  Then there is a sequence $\varepsilon_n = 2^{- h_n} \rightarrow 0$ as $n
  \rightarrow + \infty$ and $N_k \rightarrow + \infty$, such that, for any
  $\ell \geqslant 4$, we have
  \[ \sup_{\varepsilon_n, N_k} \left( \mathbb{M}_{r, \ell,
     \varepsilon_n}^{\left( - \theta + \frac{2}{p'} \right) q', \left( -
     s_{\theta, \alpha, r} + \frac{2}{p'} \right) q'} (\mu_{\varepsilon_n}^{\pm
     \alpha}, \bar{a}, \bar{\beta}), \mathbb{M}_{r, \ell,
     \varepsilon_n}^{\left( - \theta + \frac{2}{p'} \right) q', \left( -
     s_{\theta, \alpha, r} + \frac{2}{p'} \right) q'} (\mu_{\varepsilon_n,
     N_k}^{\pm \alpha}, \bar{a}, \bar{\beta}) \right) < + \infty \]
  almost surely.
\end{proposition}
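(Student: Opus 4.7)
The statement is a quantitative refinement of Proposition~\ref{proposition:convergencealpha}. The plan is first to establish a uniform $L^r(\Omega)$ bound on $\mathbb{M}^{A,B}_{r,\ell,\varepsilon}$ with $A=(-\theta+2/p)q'$ and $B=(-s_{\theta,\alpha,r}+2/p)q'$, and then to extract almost surely bounded subsequences via Markov's inequality and Borel--Cantelli, in the same spirit as the concluding steps of Proposition~\ref{proposition:convergencealpha} and Proposition~\ref{proposition:convergenceperiodic}.

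For the uniform bound on $\mu^{\pm\alpha}_{\varepsilon}$, I would expand the definition and interchange expectation with sum and integral,
\[
\mathbb{E}\bigl[\mathbb{M}^{A,B}_{r,\ell,\varepsilon}(\mu^{\pm\alpha}_\varepsilon,\bar{a},\bar{\beta})^r\bigr] = \sum_{i \geq 0,\, j \leq J_\varepsilon} 2^{r(Aj+Bi)} \int \mathbb{E}\bigl|E^{\bar{a},\bar{\beta}}_{i,j}\ast(\rho_\ell\mu^{\pm\alpha}_\varepsilon)\bigr|^r(x,z)\,\mathd x\,\mathd z.
\]
Using $\rho_\ell(y,w)\lesssim\rho_\ell(x,z)(1+|x-y|+|z-w|)^\ell$ and absorbing the polynomial factor into the exponential tail of $E^{\bar{a},\bar{\beta}}_{i,j}$ by passing to some slightly smaller $\bar{a}'<\bar{a}$ (uniformly in $i,j,\varepsilon$), the weight $\rho_\ell$ pulls out of the convolution. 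The resulting pointwise expectation $\mathbb{E}|E^{\bar{a}',\bar{\beta}}_{i,j}\ast\mu^{\pm\alpha}_\varepsilon|^r(x,z)$ is translation-invariant, and Theorem~\ref{theorem:stochasticestimates1} applied with exponent $r$ (admissible since Remark~\ref{remark:theta} ensures $\frac{\alpha^2}{(4\pi)^2}(r-1)<2$) yields the bound $C_{r,\alpha}\,2^{r[(s_1-\delta/2)i+(s_2-\delta/2)j]}$ for any $\delta\geq 0$ and any $s_1,s_2>\delta/2$ with $s_1+s_2-\delta>\frac{\alpha^2}{(4\pi)^2}(r-1)$. Since $\int\rho_\ell^r\,\mathd x\,\mathd z<\infty$ whenever $r\ell>4$, the spatial integration contributes only a finite constant.

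Summability of the resulting geometric series $\sum_{i,j} 2^{r[(A+s_2-\delta/2)j+(B+s_1-\delta/2)i]}$ requires $A+s_2<\delta/2$ and $B+s_1<\delta/2$; combined with the admissibility of $(s_1,s_2,\delta)$, this is feasible exactly when $(\theta+s_{\theta,\alpha,r}-4/p)q'>\frac{\alpha^2}{(4\pi)^2}(r-1)$, an inequality which is weaker than (and hence implied by) the hypothesis $s_{\theta,\alpha,r}>\frac{\alpha^2}{(4\pi)^2}(r-1)q'-\theta+4/p'$ in the parameter regime of Remark~\ref{remark:theta} (using $q'>1$ and $p>p'$, so $4/p'\geq 4/p$). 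For the periodized chaos $\mu^{\pm\alpha}_{\varepsilon,N}$ the argument is identical once one has the uniform-in-$N$ analogue of Theorem~\ref{theorem:green1}, namely $|\mathcal{G}_{\varepsilon,N}(x,z)|\lesssim\log_+(1/(\sqrt{x^2+z^2}\vee\varepsilon))+D$ with constants independent of $N$. Establishing this is the main technical obstacle; I would proceed in the spirit of the proof of Theorem~\ref{eq:eepsilon}, by controlling the Riemann-sum error $\mathcal{G}_{\varepsilon,N}-\mathcal{G}_\varepsilon$ uniformly in $N$, and then transferring Lemma~\ref{Lemma:estimate-wick-green} and Theorem~\ref{theorem:stochasticestimates1} to $W_{\varepsilon,N}$ verbatim. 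With the uniform $L^r(\Omega)$ bound in hand, Markov's inequality gives $\mathbb{P}(\mathbb{M}^{A,B}_{r,\ell,\varepsilon}>K)\lesssim K^{-r}$ uniformly in $(\varepsilon,N)$, and a diagonal Borel--Cantelli extraction along $\varepsilon_n=2^{-h_n}$ with $h_n$ and $N_k$ growing fast enough yields the stated almost sure bound.
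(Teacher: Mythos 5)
There is a genuine gap in the concluding step. A uniform bound $\sup_{\varepsilon,N}\mathbb{E}\bigl[\mathbb{M}^{A,B}_{r,\ell,\varepsilon}(\mu^{\pm\alpha}_{\varepsilon,N})^{r}\bigr]<\infty$ together with Markov gives only $\mathbb{P}(\mathbb{M}_{n}>K)\lesssim K^{-r}$ with a constant that does not decay along any subsequence, and this is \emph{not} enough to conclude $\sup_{n}\mathbb{M}_{n}<\infty$ almost surely: a Borel--Cantelli argument from a uniform tail bound needs thresholds $K_{n}\to\infty$ to make the series converge, and then the resulting bound $\mathbb{M}_{n}\leqslant K_{n}$ eventually is vacuous for the supremum (consider i.i.d.\ variables with finite $r$-th moment but unbounded support). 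No choice of ``$h_{n}$ and $N_{k}$ growing fast enough'' can fix this; what is needed is that $\mathbb{M}_{n}$ actually \emph{converges}. This is exactly what the paper uses: by the statement ``similar to Proposition~\ref{proposition:convergencealpha},'' the intended argument invokes the $L^{p}(\Omega)$ convergence of the $\mathbb{M}$-functionals from Theorem~\ref{theorem:convergencebar}, Theorem~\ref{theorem_convergenceepsilon} and Proposition~\ref{proposition:convergenceperiodic}, then extracts a subsequence along which this convergence holds almost surely; an almost surely convergent sequence is automatically almost surely bounded, which is the claim. Once this is recognized, your concern about a uniform-in-$N$ analogue of Theorem~\ref{theorem:green1} also dissolves: Proposition~\ref{proposition:convergenceperiodic} handles the $N\to\infty$ limit at fixed $\varepsilon$ with an $\varepsilon$-dependent bound on $\mathcal{G}_{\varepsilon,N}$, and a diagonal subsequence over $(\varepsilon_{n},N_{k})$ gives the joint result.

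A secondary point: the arithmetic verification at the end uses ``$p>p'$, so $4/p'\geqslant 4/p$,'' but $p=rq'>p'$ is equivalent to $r>p'-1$, which is not guaranteed by the hypotheses of Remark~\ref{remark:theta} (take $p'$ close to $3$ and $r$ close to $1$). The slack coming from $\frac{\alpha^2}{(4\pi)^2}(r-1)\bigl(q'-\tfrac{1}{q'}\bigr)>0$ may compensate, but it can be small when $\alpha$ is small, so this comparison should be made precise rather than asserted. Identifying Theorem~\ref{theorem:stochasticestimates1} as the relevant stochastic input is correct, but the route from it to the a.s.\ statement must pass through a convergence (Cauchy-type) estimate, not a mere uniform moment bound.
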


\begin{proof}
  The proof is similar to the one of Corollary \ref{corollary-3.10}, thus we omit the details.
\end{proof}

\begin{lemma}
  \label{lemma:convergenceimproved}Let $\varepsilon_n$ and $N_k$ as in
  Proposition \ref{proposition:quantizationbounds}, then, for any
  $\varepsilon_n$, the sequence (in $N_k$) of measures $\nu^{\cosh, \beta}_{m,
  \varepsilon_n, N_k}$ on $\mathcal{S}' (\mathbb{R}^2)$ converges to a unique
  measure $\nu^{\cosh, \beta}_{m, \varepsilon_n}$. Furthermore, for any $x \in
  \mathbb{R}^2$, we have $\overline{\mathcal{E}}^{\varepsilon_n}
  (\bar{\phi}_{\varepsilon_n} + W_{\varepsilon_n}) (x, \cdot) =
  \overline{\mathcal{E}}^{\varepsilon_n} (\bar{\phi}_{\varepsilon_n}) (x,
  \cdot) + \bar{W}_{\varepsilon_n} (x, \cdot) \sim \nu^{\cosh, \beta}_{m,
  \varepsilon_n}$.
\end{lemma}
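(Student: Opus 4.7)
The plan is to leverage Lemma \ref{lemma:approximationmeasure}, which identifies $\nu^{\cosh,\beta}_{m,\varepsilon_n,N_k}$ with the law of $\overline{\mathcal{E}}^{\varepsilon_n}(\phi_{\varepsilon_n,N_k})(x,\cdot) = \overline{\mathcal{E}}^{\varepsilon_n}(\bar\phi_{\varepsilon_n,N_k})(x,\cdot) + \bar W_{\varepsilon_n,N_k}(x,\cdot)$. It then suffices to show that, along a subsequence in $N_k$ and almost surely, this random tempered distribution converges to $\overline{\mathcal{E}}^{\varepsilon_n}(\bar\phi_{\varepsilon_n})(x,\cdot) + \bar W_{\varepsilon_n}(x,\cdot)$ in $\mathcal{S}'(\mathbb{R}^2)$, and that the limit is independent of the subsequence; once this is established, the full sequence of measures $\nu^{\cosh,\beta}_{m,\varepsilon_n,N_k}$ converges weakly and we can define $\nu^{\cosh,\beta}_{m,\varepsilon_n}$ as the law of the limit.

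For the Gaussian (linear) part I would handle the convergence $\bar W_{\varepsilon_n,N_k}(x,\cdot)\to\bar W_{\varepsilon_n}(x,\cdot)$ by a direct covariance argument: the covariance of $W_{\varepsilon_n,N_k}(x,\cdot)$ is a discretised trace of $\mathcal{G}_{\varepsilon_n,N_k}$, which, by the lemma preceding Proposition \ref{proposition:convergenceperiodic}, is uniformly bounded in $N_k$ and converges pointwise to the covariance of $W_{\varepsilon_n}(x,\cdot)$; by dominated convergence in frequency this gives convergence in $L^2(\Omega, H^{-\kappa}_{-\ell'}(\mathbb{R}^2))$ for $\kappa,\ell'>0$ large enough, and applying the (deterministic) bounded extension $\mathcal{E}^{\varepsilon_n}$ on the lattice variable preserves this convergence.

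The nonlinear part is the main obstacle. I would apply Theorem \ref{theorem:improvedapriori} to equation \eqref{eq:mainfinitedimensional} (whose unique $H^1_\ell$-solution is $\bar\phi_{\varepsilon_n,N_k}$ by Lemma \ref{lemma:existencefinitedim}) with $\eta_\pm=\mu^{\pm\alpha}_{\varepsilon_n,N_k}$ and parameters $(p',r,\theta,s_{\theta,\alpha,r})$ chosen as in Remark \ref{remark:theta} and Proposition \ref{proposition:quantizationbounds}. This, together with the almost sure uniform bounds of Proposition \ref{proposition:quantizationbounds} on the $\mathbb{M}$-functional of $\mu^{\pm\alpha}_{\varepsilon_n,N_k}$, yields an almost-sure bound on $\bar\phi_{\varepsilon_n,N_k}$ in $B^{2-\theta}_{p,p,\ell}(\mathbb{R}^2, B^{-s_{\theta,\alpha,r}}_{p,p,\ell}(\varepsilon_n\mathbb{Z}^2))$ that is uniform in $N_k$. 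Because Remark \ref{remark:theta} gives $2-\theta>2/p$, the Banach-valued Besov embedding (the second part of Theorem \ref{theorem:besov:rz:embedding}) provides a continuous inclusion into $C^0_{\ell'}(\mathbb{R}^2, B^{-s_{\theta,\alpha,r}}_{p,p,\ell}(\varepsilon_n\mathbb{Z}^2))$, so evaluation at a fixed $x\in\mathbb{R}^2$ is a continuous operation and the uniform bound transfers to $\bar\phi_{\varepsilon_n,N_k}(x,\cdot)$ in $B^{-s_{\theta,\alpha,r}}_{p,p,\ell}(\varepsilon_n\mathbb{Z}^2)$.

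To identify the limit I would extract, by Banach--Alaoglu as in Proposition \ref{proposition:convergencealpha}, a weakly convergent subsequence of $\bar\phi_{\varepsilon_n,N_k}$ in the Banach space above, and upgrade to strong convergence in a slightly rougher space using the compact embedding of Corollary \ref{corollary:compact}. Passing to the limit in the weak form of \eqref{eq:mainfinitedimensional} — here lies the real technical work — is done by repeating the $F_K$-cutoff device of Lemma \ref{lemma:solutiondiscrete} at the level of the $N_k\to\infty$ limit rather than the $\varepsilon_n\to 0$ limit: the $H^1_\ell$-bounds of Theorem \ref{theorem_apriori2} provide the uniform integrability $\sup_{N_k}\|e^{\pm\alpha\bar\phi_{\varepsilon_n,N_k}}\|_{L^1(\rho_{2\ell}^{(4)}\,\mathrm d\mu^{\pm\alpha}_{\varepsilon_n,N_k})}<\infty$, while Proposition \ref{proposition:convergencealpha} delivers $\mu^{\pm\alpha}_{\varepsilon_n,N_k}\to\mu^{\pm\alpha}_{\varepsilon_n}$ strongly in $B^{-1+\delta}_{2,2,\ell}(\mathbb{R}^2\times\varepsilon_n\mathbb{Z}^2)$. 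The resulting limit solves \eqref{eq:maindiscrete} in $H^1_\ell$, and by the uniqueness statement recorded in Remark \ref{remark:uniqueness} it coincides with $\bar\phi_{\varepsilon_n}$; this identification being subsequence-independent, the full sequence converges. Applying the continuous operator $\mathcal{E}^{\varepsilon_n}$ in the lattice variable (Theorem \ref{theorem:extension:lattice:negative}) and combining with the Gaussian convergence of $\bar W_{\varepsilon_n,N_k}(x,\cdot)$ completes the proof.
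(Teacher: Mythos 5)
Your proposal follows essentially the same strategy as the paper's proof: use Proposition \ref{proposition:quantizationbounds} together with Theorem \ref{theorem:improvedapriori} to get a bound on $\bar{\phi}_{\varepsilon_n,N_k}$ in $B^{2-\theta}_{p,p,\ell}(\mathbb{R}^2, B^{-s_{\theta,\alpha,r}}_{p,p,\ell})$ uniform in $N_k$, invoke the Banach-valued Besov embedding into $C^0$ of the $x$-variable (valid since Remark \ref{remark:theta} gives $2-\theta>2/p$) so that $\delta_x\otimes f$ belongs to the dual, and thus transfer convergence of $\bar{\phi}_{\varepsilon_n,N_k}$ to pointwise-in-$x$ convergence of slices in $\mathcal{S}'(\mathbb{R}^2)$.

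The one place you make extra work for yourself is the identification of the limit of $\bar{\phi}_{\varepsilon_n,N_k}$: you propose to re-run the Banach--Alaoglu / compact-embedding / $F_K$-cutoff argument, but this is precisely the content of Lemma \ref{lemma:solutiondiscrete}, which already establishes $\bar{\phi}_{\varepsilon_n,N_k}\to\bar{\phi}_{\varepsilon_n}$ almost surely in $H^{1-\delta''}_\ell(\mathbb{R}^2\times\varepsilon_n\mathbb{Z}^2)$. The paper simply cites that convergence (upgrading it via Theorem \ref{theorem:extension3} to strong $L^2_\ell(\mathbb{R}^4)$ convergence of $\overline{\mathcal{E}}^{\varepsilon_n}(\bar{\phi}_{\varepsilon_n,N_k})$), and then combines it with the new uniform Besov-valued bound to get weak convergence there; your detour through a fresh compactness extraction is unnecessary, and also Corollary \ref{corollary:compact} as stated applies to $B^{s}_{p,p,\ell}(\mathbb{R}^2\times\varepsilon\mathbb{Z}^2)$ with $0<s\leqslant 1$, not to the mixed space $B^{2-\theta}_{p,p,\ell}(\mathbb{R}^2,B^{-s}_{p,p,\ell}(\varepsilon\mathbb{Z}^2))$, so you would in any case want to do the compactness extraction in $H^1_\ell$ exactly as in Lemma \ref{lemma:solutiondiscrete}. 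For the Gaussian piece your direct covariance/dominated-convergence argument is a fine variant of the paper's appeal to the Theorem \ref{theorem:Lpconvergence}-type chaos estimate restricted to $n=1$; both deliver the required convergence in probability of $\bar W_{\varepsilon_n,N_k}(x,\cdot)$ in $\mathcal{S}'(\mathbb{R}^2)$.
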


\begin{proof}
  By Theorem \ref{theorem:improvedapriori} and Proposition
  \ref{proposition:quantizationbounds}, if we choose $p, s_{\theta, \alpha,
  r}, \ell, \theta$ such as in Remark \ref{remark:theta} and Proposition
  \ref{proposition:quantizationbounds}, we have that(note that the second condition of \eqref{numerology-technical} and the choice of $s_{\theta,\alpha,r}$ from Remark \ref{remark:theta} imply $s_{\theta,\alpha,r}>\frac{2}{p'}$.) $\|
  \bar{\phi}_{\varepsilon_n, N_k} \|_{B^{2 - \theta}_{p, p, \ell}
  (\mathbb{R}^2, B^{- s_{\theta, \alpha, r}}_{p, p, \ell} (\varepsilon_n
  \mathbb{Z}^2))}, \| \bar{\phi}_{\varepsilon_n} \|_{B^{2 - \theta}_{p, p,
  \ell} (\mathbb{R}^2, B^{- s_{\theta, \alpha, r}}_{p, p, \ell} (\varepsilon_n
  \mathbb{Z}^2))} \leqslant C (\omega)$ (for some random variable $0 \leqslant
  C (\omega) < + \infty$ for almost all $\omega \in \Omega$) uniformly in
  $\varepsilon_n$ and $N_k$.
  
  This implies, by Theorem \ref{theorem:extension:lattice:negative}, that
  \begin{equation}
    \| \overline{\mathcal{E}}^{\varepsilon_n} (\bar{\phi}_{\varepsilon_n,
    N_k}) \|_{B^{2 - \theta}_{p, p, \ell} (\mathbb{R}^2, B^{- s_{\theta,
    \alpha, r} - \delta'}_{p, p, \ell} (\mathbb{R}^2))}, \|
    \overline{\mathcal{E}}^{\varepsilon_n} (\bar{\phi}_{\varepsilon_n})
    \|_{B^{2 - \theta}_{p, p, \ell} (\mathbb{R}^2, B^{- s_{\theta, \alpha, r}
    - \delta'}_{p, p, \ell} (\mathbb{R}^2))} \leqslant C' (\omega)
    \label{eq:improvedbound3}
  \end{equation}
  (for some random variable $0 \leqslant C (\omega) < + \infty$ for almost all
  $\omega \in \Omega$, and any $\delta' > 0$) uniformly in $\varepsilon_n$ and
  $N_k$. Since, by the proof of Lemma \ref{lemma:solutiondiscrete},
  $\bar{\phi}_{\varepsilon_n, N_k} \rightarrow \bar{\phi}_{\varepsilon_n}$
  almost surely in $H^{1 - \delta''}_{\ell} (\mathbb{R}^2 \times \varepsilon_n
  \mathbb{Z}^2)$, by Theorem \ref{theorem:extension3},
  $\overline{\mathcal{E}}^{\varepsilon_n} (\bar{\phi}_{\varepsilon_n, N_k})
  \rightarrow \overline{\mathcal{E}}^{\varepsilon_n}
  (\bar{\phi}_{\varepsilon_n})$ in $L^2_{\ell} (\mathbb{R}^4)$ almost surely.
  Thus, by the bound {\eqref{eq:improvedbound3}},
  $\overline{\mathcal{E}}^{\varepsilon} (\bar{\phi}_{\varepsilon_n, N_k})
  \rightarrow \overline{\mathcal{E}}^{\varepsilon_n}
  (\bar{\phi}_{\varepsilon_n})$ almost surely and (weakly) in $B^{2 -
  \theta}_{p, p, \ell} (\mathbb{R}^2, B^{- s_{\theta, \alpha, r} -
  \delta'}_{p, p, \ell} (\mathbb{R}^2))$. On the the other hand since $2 -
  \theta - \frac{2}{p} > 0$ (by the choice in Remark \ref{remark:theta}), by
  Besov embedding theorem (see Theorem 2.2.4 of {\cite{Amann2019}}), we have
  $B^{2 - \theta}_{p, p, \ell} (\mathbb{R}^2, B^{- s_{\theta, \alpha, r} -
  \delta'}_{p, p, \ell} (\mathbb{R}^2)) \subset C^0 (\mathbb{R}^2, B^{-
  s_{\theta, \alpha, r} - \delta'}_{p, p, \ell} (\mathbb{R}^2)^{})$, where
  $C^0 (\mathbb{R}^2, B^{- s_{\theta, \alpha, r} - \delta'}_{p, p, \ell}
  (\mathbb{R}^2)^{})$ denotes the Fr{\'e}chet space of (locally bounded)
  continuous functions from $\mathbb{R}^2$ into $B^{- s_{\theta, \alpha, r} -
  \delta'}_{p, p, \ell} (\mathbb{R}^2)^{}$, which implies that $\delta_x
  \otimes f \in (B^{2 - \theta}_{p, p, \ell} (\mathbb{R}^2, B^{- s_{\theta,
  \alpha, r} - \delta'}_{p, p, \ell} (\mathbb{R}^2)))^{\ast}$ for any $x \in
  \mathbb{R}^2$. From this we deduce that, for any $x \in \mathbb{R}^2$,
  $\overline{\mathcal{E}}^{\varepsilon_n} (\bar{\phi}_{\varepsilon_n, N_k})
  (x, \cdot) \rightarrow \overline{\mathcal{E}}^{\varepsilon_n}
  (\bar{\phi}_{\varepsilon_n}) (x, \cdot)$ almost surely and (weakly) in
  $\mathcal{S}' (\mathbb{R}^2)$.\\
  
  Furthermore, using a proof similar to the one of Theorem
  \ref{theorem:Lpconvergence}, it is easy to prove that
  $\overline{\mathcal{E}}^{\varepsilon_n} (W_{\varepsilon_n, N_k}) (x, \cdot)
  \rightarrow \overline{\mathcal{E}}^{\varepsilon_n} (W_{\varepsilon_n}) (x,
  \cdot)$ in probability and weakly in $\mathcal{S}' (\mathbb{R}^2)$. This
  means that, since, by Lemma \ref{lemma:approximationmeasure},
  $\overline{\mathcal{E}}^{\varepsilon_n} (W_{\varepsilon_n, N_k}) (x, \cdot)
  + \overline{\mathcal{E}}^{\varepsilon_n} (\bar{\phi}_{\varepsilon_n, N_k})
  (x, \cdot) \sim \nu^{\cosh, \beta}_{m, \varepsilon_n, N_k}$, that
  $\nu^{\cosh, \beta}_{m, \varepsilon_n, N_k}$ is tight in $\mathcal{S}'
  (\mathbb{R}^2)$ and it converges to a (unique) measure $\nu^{\cosh,
  \beta}_{m, \varepsilon_n}$ which is the probability law of the random
  distribution $\overline{\mathcal{E}}^{\varepsilon_n}
  (\bar{\phi}_{\varepsilon_n} + W_{\varepsilon_n}) (x, \cdot)$.
\end{proof}

\begin{proof*}{Proof of Theorem \ref{theorem:stochasticquantization}}
  The proof follows the same lines of the proof of Lemma
  \ref{lemma:convergenceimproved}. More precisely, by inequality
  {\eqref{eq:improvedbound3}}, since by Theorem
  \ref{theorem:convergenceequation1} $\overline{\mathcal{E}}^{\varepsilon_n}
  (\bar{\phi}_{\varepsilon_n}) \rightarrow \bar{\phi}$ almost surely in
  $\mathcal{S}' (\mathbb{R}^4)$, $\overline{\mathcal{E}}^{\varepsilon_n}
  (\bar{\phi}_{\varepsilon_n})$ converges to $\bar{\phi}$ almost surely and
  weakly in $B^{2 - \theta}_{p, p, \ell} (\mathbb{R}^2, B^{- s_{\theta,
  \alpha, r} - \delta'}_{p, p, \ell} (\mathbb{R}^2))$. Furthermore, since
  $\delta_x \otimes f \in (B^{2 - \theta}_{p, p, \ell} (\mathbb{R}^2, B^{-
  s_{\theta, \alpha, r} - \delta'}_{p, p, \ell} (\mathbb{R}^2)))^{\ast}$ for
  any $x \in \mathbb{R}^2$ and any $f \in \mathcal{S} (\mathbb{R}^2)$, the
  random distribution $\overline{\mathcal{E}}^{\varepsilon_n}
  (\bar{\phi}_{\varepsilon_n}) (x, \cdot)$ converges to $\bar{\phi} (x,
  \cdot)$ almost surely and in $\mathcal{S}' (\mathbb{R}^2)$. By Theorem
  \ref{theorem:Lpconvergence}, we have that
  $\overline{\mathcal{E}}^{\varepsilon_n} (W_{\varepsilon_n}) (x, \cdot)
  \rightarrow W (x, \cdot)$ in $L^2 (\Omega)$ and in $\mathcal{S}'
  (\mathbb{R}^2)$. By Lemma \ref{lemma:convergenceimproved} $\nu^{\cosh,
  \beta}_{m, \varepsilon_n} \sim \overline{\mathcal{E}}^{\varepsilon_n}
  (\bar{\phi}_{\varepsilon_n} + W_{\varepsilon_n}) (x, \cdot)$, which, by the
  previous convergences, implies that $\nu^{\cosh, \beta}_{m, \varepsilon_n}$
  is tight in $\mathcal{S}' (\mathbb{R}^2)$ and it converges, as $n
  \rightarrow + \infty$, to a measure $\nu^{\cosh, \beta}_m$ on $\mathcal{S}'
  (\mathbb{R}^2)$ which is the law of the random field $W (x, \cdot) +
  \bar{\phi} (x, \cdot)$. The measure $\nu^{\cosh, \beta}_m$ is unique due to
  the uniqueness of the law of the free field $W$ and the uniqueness of the
  solution to equation {\eqref{eq:main5}}.
\end{proof*}

\appendix\section{Osterwalder-Schrader Axioms using stochastic
quantization}\label{appendix:axioms}

In this appendix we will show that the measure we constructed satisfies the
Osterwalder-Schrader axioms, using the stochastic quantization equation
introduced above. The study of the properties of the measure $\nu^{\cosh,
\beta}_m$ presented here are inspired by the analogous stochastic quantization
proofs for the $\varphi^4_3$ measure in
{\cite{albeverio2021construction,GuHof2018,hairer2021phi34}}.\\

Let us recall here briefly the mentioned axioms for the convenience of the
reader: We take the version of the axioms, in the case of two dimensional
quantum fields, used, e.g., in {\cite{Glimm_Jaffe_book,Simon_phi2}} and
related to the moments of a measure rather than the original ones proposed by
Osterwalder and Schrader in
{\cite{OsterwalderSchrader1,OsterwalderSchrader2}}. Hereafter, if $\nu$ is a
probability measure on $\mathcal{S}' (\mathbb{R}^2)$, we define the
distributions $S_n \in \mathcal{S}' (\mathbb{R}^{2 n})$ as
\begin{equation}
  S_n (f_1 \otimes \cdots \otimes f_n) = \int \langle \varphi, f_1
  \rangle_{\mathcal{S}', \mathcal{S}} \cdots \langle \varphi, f_n
  \rangle_{\mathcal{S}', \mathcal{S}} \mathd \nu (\varphi), \label{eq:npoint}
\end{equation}
usually called $n$-point functions of the measure $\nu$.

\begin{axiom}[Regularity]
  \label{axiom:regularity}$S_0 = 1$ and there exists a Schwartz semi-norm $\|
  \cdot \|_s, \beta > 1, K > 0$ such that
  \[ | S_n (f_1 \otimes \ldots \otimes f_n) | \leqslant K^n (n!)^{\beta}
     \prod_{i = 1}^n \| f_i \|_s. \]
\end{axiom}

\begin{axiom}[Euclidean Invariance]
  \label{axiom:euclidean}Let the Euclidean group with $G = (R, a)$ $R \in O
  (2), a \in \mathbb{R}^2$ act on functions by
  \[ (G f) (x) = f (R x - a) . \]
  Then
  \[ S_n (G f_1 \otimes \ldots \otimes G f_n) = S_n (f_1 \otimes \ldots
     \otimes f_n) . \]
\end{axiom}

\begin{axiom}[Reflection Positivity]
  \label{axiom:reflection-pos}Let $\mathbb{R}_+^{2 n}$ be the set
  \[ \left\{ x \in \mathbb{R}^{2 n} : x = (x_1, \ldots ., x_n) \tmop{and} x_i
     = (\tau_i, y_i)  \text{ with } \tau_i > 0 \text{ and } y_i \in \mathbb{R}
     \right\}. \]
  Furthermore define the reflection $\Theta (x) = \Theta ((\tau, y)) = (-
  \tau, y)$ and its action on a function $f \in \mathcal{S}' (\mathbb{R}^{2
  n})$ by
  \[ \Theta f (x_1, \ldots, x_n) = f (\Theta x_1, \ldots ., \Theta x_n) . \]
  Now we require that for all finite families $\{ f_n \in \mathcal{S}
  (\mathbb{R}^{2 n}) \}_{n \leqslant M}$ such that $\tmop{supp} f_n \subseteq
  \mathbb{R}_+^{2 n}$ we have
  \[ \sum_{i, j = 1}^M S_{i + j} (\Theta f_i \otimes f_j) \geqslant 0_{} . \]
\end{axiom}

\begin{axiom}[Symmetry]
  \label{axiom:symmetry}Let $\pi$ be a permutation of $\{ 1, \ldots ., n \}$.
  Then
  \[ S_n (f_{\pi (1)} \otimes \ldots \otimes f_{\pi (n)}) = S_n (f_1 \otimes
     \ldots \otimes f_n). \]
\end{axiom}

\begin{axiom}[Clustering]
  \label{axiom:clustering}For any $j, n \in \mathbb{N}$, $1 \leqslant j
  \leqslant n$ \ and $a \in \mathbb{R}^2 $ such that $| a | = 1$ as $\lambda
  \rightarrow \infty$
  \[ S_n (f_1 \otimes \ldots \otimes f_j \otimes f_{j + 1} (\cdot + \lambda
     a) \otimes \ldots \otimes f_n (\cdot + \lambda a)) \rightarrow S_j (f_1
     \otimes \ldots \otimes f_j) S_{n - j} (f_{j + 1} \otimes \ldots \otimes
     f_n) . \]
  If furthermore there exists $m > 0$ such that for any family of $f_j \in
  C^{\infty}_c (\mathbb{R}^2)$ there exists a constant $C > 0$ such that
  \[ | S_2 (f_1 \otimes f_2 (\cdot + \lambda a)) - S_1 (f_1) S_1 (f_2) |
     \leqslant C e^{- m | \lambda |}, \]
  We say that the clustering is exponential.
\end{axiom}

\begin{remark}
  Exponential clustering implies clustering by Theorem VIII.36 in
  {\cite{Simon_phi2}}.
\end{remark}

In what fallows $\beta$ will always be a (positive) constant such that
$\frac{\beta^2}{(4 \pi)} = \frac{\alpha^2}{(4 \pi)^2} < 1$. Furthermore from
now on we denote by $\varepsilon_n \rightarrow 0$ and $N_r \rightarrow +
\infty$ two sequences satisfying the thesis of Proposition
\ref{proposition:convergencealpha} and of Proposition
\ref{proposition:quantizationbounds}.

\begin{theorem}
  The $n$-point functions of the probability measure $\nu_m^{\cosh, \beta}$,
  (where $\nu^{\cosh, \beta}_m$ is introduced in Definition
  \ref{definition:coshmodel} and built in Section
  \ref{sec:stochastic-quantization}) obtained by replacing $\nu$ by
  $\nu_m^{\cosh, \beta}$ in equation {\eqref{eq:npoint}}, satisfy the
  Osterwalder-Schrader axioms with exponential clustering.
\end{theorem}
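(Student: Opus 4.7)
The plan is to verify each of the five axioms for $\nu_m^{\cosh,\beta}$ by combining the stochastic-quantization representation $\phi(x_0,\cdot)\sim\nu_m^{\cosh,\beta}$ of Theorem \ref{theorem:stochasticquantization} with approximation by the finite-volume lattice Gibbs measures $\nu_{m,\varepsilon_n,N_r}^{\cosh,\beta}$ from Lemma \ref{lemma:approximationmeasure}. The convergences and moment bounds of Section \ref{sec:stochastic-quantization} --- particularly the improved a priori estimates of Theorem \ref{theorem:improvedapriori} and the convergence in Lemma \ref{lemma:convergenceimproved} --- will control all passages to the limit at the level of the $n$-point functions $S_n^{\varepsilon_n,N_r}\to S_n$ evaluated on Schwartz test functions.

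Axiom \ref{axiom:symmetry} is immediate from the symmetry of \eqref{eq:npoint}. For Axiom \ref{axiom:regularity}, I would observe that each finite-dimensional measure $\nu_{m,\varepsilon,N}^{\cosh,\beta}$ is log-concave --- the renormalized action is the sum of a quadratic part and a convex interaction, Wick renormalization being by a constant --- so a Brascamp--Lieb/Helffer--Sjöstrand argument dominates the even moments of any linear functional by the corresponding Gaussian moments, yielding $|S_n^{\varepsilon,N}(f_1\otimes\cdots\otimes f_n)|\leq K^n(n!)^{1/2}\prod_i\|f_i\|_s$ uniformly in $(\varepsilon,N)$. For Axiom \ref{axiom:euclidean}, the white noise $\xi$ and the operator $-\Delta_{\mathbb{R}^4}+m^2$ are invariant under the full Euclidean group of $\mathbb{R}^4$, so by uniqueness (Proposition \ref{proposition:uniqueness}) the law of $\bar\phi$ is $\mathbb{R}^4$-Euclidean invariant; restricting to the subgroup acting only on the $z$-coordinate yields the required $\mathbb{R}^2$-Euclidean invariance of the marginal $\phi(x_0,\cdot)$. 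For Axiom \ref{axiom:reflection-pos}, at the lattice level the periodic Gaussian free field is reflection positive about a coordinate hyperplane by the standard finite-volume construction \cite{Glimm_Jaffe_book}, and the pointwise-positive factor $\exp(-\lambda\int:\cosh(\beta\varphi):)$, being an even function of the field that respects the reflection, preserves reflection positivity; the defining nonnegativity $\sum_{i,j}S_{i+j}^{\varepsilon_n,N_r}(\Theta f_i\otimes f_j)\geq 0$ is then closed under pointwise convergence on tensor products of Schwartz test functions.

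The main obstacle is Axiom \ref{axiom:clustering}, in particular exponential clustering. Here I would again exploit convexity of the action decisively: for the log-concave finite-volume measure, the Helffer--Sjöstrand covariance inequality gives, uniformly in $(\varepsilon,N)$,
\[
\bigl|\mathrm{Cov}_{\nu_{m,\varepsilon,N}^{\cosh,\beta}}\bigl(\langle\varphi,f_1\rangle,\langle\varphi,f_2\rangle\bigr)\bigr|\;\leq\;\bigl\langle f_1,\,(-\Delta_{\varepsilon\mathbb{Z}^2,N}+m^2)^{-1} f_2\bigr\rangle.
\]
Applied to $f_1$ and $f_2(\cdot+\lambda a)$ with $|a|=1$, and using the exponential decay of the continuum Green's function (the $2$D Bessel kernel $K_0(m|\cdot|)/(2\pi)$ decays like $e^{-m|\cdot|}$), this will produce $|S_2(f_1\otimes f_2(\cdot+\lambda a))-S_1(f_1)S_1(f_2)|\lesssim e^{-m\lambda}$ after the limit $\varepsilon_n\to 0$, $N_r\to\infty$. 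Clustering of higher $n$-point functions then follows from the remark after Axiom \ref{axiom:clustering} via Theorem~VIII.36 of \cite{Simon_phi2}. The delicate point --- and the hardest part of the proof --- is to transfer the lattice Helffer--Sjöstrand bound uniformly through the joint limit while preserving the exponential rate $m$: this rests on the $(\varepsilon,N)$-uniform Besov estimates of Theorems \ref{theorem_apriori1}--\ref{theorem:improvedapriori} together with convergence of the discrete Green's functions to $G_0$ in a topology strong enough to see the exponential tail, a task that will require extending the stochastic estimates of Section \ref{sec:stochastic-estimates} to control the interaction of $(-\Delta_{\varepsilon\mathbb{Z}^2,N}+m^2)^{-1}$ with translated test functions at distances $|\lambda|\gg\varepsilon^{-1}$.
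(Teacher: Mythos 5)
Your proposal is essentially correct and establishes the same axioms with the same core observation---that the renormalized interaction $:\cosh(\beta\varphi):$ is convex---but it takes a genuinely different route from the paper for the two non-trivial axioms (regularity and exponential clustering). You work directly with the log-concave finite-volume lattice Gibbs measures and invoke the Brascamp--Lieb and Helffer--Sj\"ostrand inequalities, whereas the paper deliberately stays inside the stochastic quantization framework: for regularity it tilts $\nu_{m,\varepsilon,N}^{\cosh,\beta}$ by a quadratic weight $\exp(\gamma\|\cdot\|^2_{B^{-1}_{2,2,\ell}})$, identifies the tilted measure as the law of the restriction of the solution to an auxiliary elliptic SPDE (Proposition \ref{prop:dim-red-int}), proves a priori $H^1_\ell$ integrability of that solution (Proposition \ref{prop:a-priopi-integrability}), and then controls the partition function by the elementary Jensen-type Lemma \ref{lemma:Max}; for clustering it tilts by a \emph{linear} functional $\exp(-\gamma\langle\mathcal{D}^\varepsilon\zeta,\cdot\rangle)$, represents the tilted measure again via an elliptic SPDE with an added deterministic source $\gamma\overline{\mathcal{D}^\varepsilon}\zeta$ (Proposition-level statement preceding Proposition \ref{prop:exp-decay}), and obtains the exponential rate by testing the difference equation for $\bar\phi^{\gamma\zeta}-\bar\phi^0$ against an exponentially weighted function $w(x,z)=(1+\kappa(1+|x|^2)^{1/2})^{-n}e^{\kappa(1+|z|^2)^{1/2}}$, using that $\sinh$ is monotone so the nonlinear difference term has a sign. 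The paper then differentiates in $\gamma$ at $\gamma=0$ (Lemma \ref{lemma:derivative}) to recover the truncated two-point function. The payoff of your route is brevity and familiarity (BL and HS are standard for log-concave fields); the payoff of the paper's route, which the authors emphasize explicitly, is that the argument is self-contained within elliptic stochastic quantization and produces a coupling-level statement (an $L^2$ bound on $\bar\phi^{\gamma\zeta}-\bar\phi^0$, Proposition \ref{prop:exp-decay}) that is a priori stronger than a covariance bound.

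Two small cautions about your sketch. First, the Helffer--Sj\"ostrand covariance inequality you write,
\[
\bigl|\mathrm{Cov}(\langle\varphi,f_1\rangle,\langle\varphi,f_2\rangle)\bigr|\leq\langle f_1,(-\Delta+m^2)^{-1}f_2\rangle,
\]
does not hold verbatim for arbitrary signed $f_1,f_2$; the Brascamp--Lieb inequality directly bounds the \emph{variance}, and the off-diagonal covariance bound at rate $m$ needs either the HS resolvent representation with a positivity argument for the kernel (valid here since the Hessian of the $\cosh$ interaction is a nonnegative multiplication operator and the resolvent of $(m^2-\Delta_{\varepsilon\mathbb{Z}^2,N}+\text{pos.})$ is positivity-preserving), or restricting to nonnegative test functions and then polarizing. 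Second, your worry that passing the exponential rate through the joint limit $\varepsilon_n\to0$, $N_r\to\infty$ is the hardest part overstates the difficulty: once you have the uniform-in-$(\varepsilon,N)$ decay at a fixed rate $\kappa<m$, the convergence $S_2^{\varepsilon_n,N_r}\to S_2$ on tensor products of Schwartz functions already established in Lemma \ref{lemma:derivative} and Theorem \ref{theorem:stochasticquantization} suffices; no additional control of the lattice Green's functions at large translations is needed beyond the uniform exponential bound itself.
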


\begin{proof}
  Axiom \ref{axiom:euclidean} follows from the fact that the solution to
  Equation {\eqref{eq:main5}} is unique, and therefore invariant under the
  Euclidean group, since the equation itself is invariant. Axiom
  \ref{axiom:reflection-pos} follows from the fact that $\nu_m^{\cosh, \beta}$
  is a weak limit of reflection positive measures. Axiom \ref{axiom:symmetry}
  is obvious. It remains to show Axiom \ref{axiom:regularity} and exponential
  clustering. This is the content of the next two subsections. 
\end{proof}

\subsection{Analyticity}

The aim of this section is to prove the following

\begin{theorem}
  \label{thm:analyticity}There exist $\ell, \delta, \gamma > 0$ such that
  \[ \int_{B^{- 2 \delta}_{2, 2, 2 \ell} (\mathbb{R}^2)} \exp (\gamma \|
     \varphi \|^{2}_{B^{-1}_{2, 2, \ell} (\mathbb{R}^2)}) \mathd \nu_m^{\cosh,
     \beta}  (\varphi) < \infty, \]
  where $\nu_m^{\cosh, \beta}$ is the measure built in Theorem
  \ref{theorem:stochasticquantization}.
\end{theorem}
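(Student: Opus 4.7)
The plan is to establish the estimate uniformly at the level of the finite-dimensional cutoffs $\nu^{\cosh,\beta}_{m,\varepsilon_n,N_k}$ from Definition \ref{definition:coshmodel} and then transfer it to $\nu^{\cosh,\beta}_m$ via weak convergence and Fatou's lemma, exploiting crucially the convexity of the renormalized interaction. Each cutoff measure is a finite-dimensional probability on the image in $\mathcal{S}'(\mathbb{R}^2)$ of $\mathbb{R}^{N_k^2}$, with density proportional to $\exp(-V_{N_k}^{\cosh,\beta}(\varphi)-\tfrac{1}{2}\langle\varphi,(m^2-\Delta_{\varepsilon_n\mathbb{Z}^2,N_k})\varphi\rangle)$, where $V_{N_k}^{\cosh,\beta}$ is convex in $\varphi$ (since $\cosh$ is convex and Wick renormalization is just multiplication by the positive constant $e^{-\alpha^2 c_{\varepsilon_n}/2}$). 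Therefore the Hessian of the negative log-density dominates $m^2 I$ uniformly in $(n,k)$ in the natural $L^2(\varepsilon_n(\mathbb{Z}/N_k\mathbb{Z})^2)$ inner product, and the Bakry-\'Emery criterion furnishes a logarithmic Sobolev inequality with constant $\leq 1/m^2$, again uniform in the cutoffs.

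I would then apply the Herbst argument to the function $f(\varphi):=\|\varphi\|_{B^{-2}_{2,2,\ell}(\mathbb{R}^2)}$. The continuous embedding $L^2(\mathbb{R}^2)\hookrightarrow B^{-2}_{2,2,\ell}(\mathbb{R}^2)$ from Proposition \ref{proposition:embedding} (with norm $L$ independent of cutoff, for any $\ell\geq 0$) shows that $f$ is $L$-Lipschitz in the underlying $L^2$ norm, so Herbst yields the sub-Gaussian concentration
\[
\nu^{\cosh,\beta}_{m,\varepsilon_n,N_k}\bigl(|f-\mathbb{E}f|\geq t\bigr)\leq 2\exp\bigl(-t^2 m^2/(2L^2)\bigr)
\]
uniformly in $(n,k)$. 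For the mean, the Brascamp--Lieb inequality (applicable because $-\log(\text{density})=V+Q/2$ with $V$ convex and $Q$ a positive quadratic form) dominates variances of linear functionals by the corresponding Gaussian variances; by the symmetry $\varphi\mapsto -\varphi$ of the cosh interaction one has $\mathbb{E}[\Delta_j\varphi(x)]=0$, hence $\mathbb{E}[|\Delta_j\varphi(x)|^2]=\mathrm{Var}(\Delta_j\varphi(x))\leq\mathrm{Var}_{\nu_{m,\varepsilon_n,N_k}}(\Delta_j\varphi(x))$. Summing the Littlewood--Paley expansion of the Besov norm, using that the two-dimensional massive free field is log-correlated so that $\mathbb{E}_{\nu_{m,\varepsilon_n,N_k}}[|\Delta_j\varphi(x)|^2]\lesssim 1$ uniformly in $j,x$ and in the cutoff, yields a uniform bound on $\mathbb{E}[\|\varphi\|^2_{B^{-2}_{2,2,\ell}}]$ for any $\ell>1$. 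Combining the sub-Gaussian tail and the bounded mean via $f^2\leq 2(f-\mathbb{E}f)^2+2(\mathbb{E}f)^2$ together with the fact that $\mathbb{E}[e^{\gamma Y^2}]<+\infty$ for any sub-Gaussian $Y$ with variance proxy $\sigma^2$ and $\gamma<1/(2\sigma^2)$, one obtains
\[
\sup_{n,k}\int\exp\bigl(\gamma\|\varphi\|^2_{B^{-2}_{2,2,\ell}(\mathbb{R}^2)}\bigr)\,\mathd\nu^{\cosh,\beta}_{m,\varepsilon_n,N_k}(\varphi)<+\infty
\]
for every $\gamma<m^2/(4L^2)$.

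To conclude, fix $\delta>1$ and $\ell>1$, so that $B^{-2}_{2,2,\ell}(\mathbb{R}^2)\hookrightarrow B^{-2\delta}_{2,2,2\ell}(\mathbb{R}^2)$ compactly by Proposition \ref{proposition:embedding}. The uniform exponential bound above, which in particular yields a uniform second-moment bound on $\|\varphi\|_{B^{-2}_{2,2,\ell}}$, provides tightness of $\{\nu^{\cosh,\beta}_{m,\varepsilon_n,N_k}\}$ in $B^{-2\delta}_{2,2,2\ell}(\mathbb{R}^2)$; combined with the weak convergence to $\nu^{\cosh,\beta}_m$ in $\mathcal{S}'(\mathbb{R}^2)$ established in Lemma \ref{lemma:convergenceimproved} and Theorem \ref{theorem:stochasticquantization}, this shows that $\nu^{\cosh,\beta}_m$ is concentrated on $B^{-2\delta}_{2,2,2\ell}(\mathbb{R}^2)$ and that convergence of the approximating measures holds in that Besov space. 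Since $\varphi\mapsto\exp(\gamma\|\varphi\|^2_{B^{-2}_{2,2,\ell}})$ is non-negative and lower semicontinuous on $B^{-2\delta}_{2,2,2\ell}(\mathbb{R}^2)$ (being a monotone limit of continuous functionals built from truncated Littlewood--Paley sums), Fatou's lemma finishes the proof.

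The main obstacle is ensuring that the Bakry-\'Emery/Brascamp--Lieb machinery produces truly cutoff-independent constants: one must identify the natural $L^2$ structure at each discrete/periodic level with a subspace of $L^2(\mathbb{R}^2)$ through the extension operator $\overline{\mathcal{E}}^{\varepsilon_n}$, and verify that the LSI constant, the Lipschitz constant $L$, and the Gaussian Littlewood--Paley bound are all genuinely independent of $(\varepsilon_n,N_k)$; once this uniform log-concavity framework is in place, the passage to the weak limit via compactness and Fatou is routine.
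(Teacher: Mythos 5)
Your proposal is correct, but it takes a genuinely different route from the paper's. The paper's proof tilts the measure by $\exp(\gamma\|(1-\Delta_{\mathbb{R}^2})^{-1/2}\mathcal{E}^{\varepsilon}(\varphi)\|^2_{L^2_{\ell}})$, shows that for small $\gamma$ the tilted Hamiltonian is still convex, and uses the elliptic Parisi--Sourlas dimensional reduction (Proposition \ref{prop:dim-red-int}) to identify the tilted measure with the marginal of a modified elliptic SPDE; the a priori energy estimate (Proposition \ref{prop:a-priopi-integrability}) together with the Jensen-type bound of Lemma \ref{lemma:Max} then controls the normalizing constant $Z^{\varepsilon,N,\gamma}$ uniformly, and tightness plus Fatou finishes. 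You instead stay entirely on the finite-dimensional lattice side and apply the classical log-concavity toolkit: Bakry--\'Emery gives a logarithmic Sobolev inequality with constant $1/m^2$ uniformly in $(\varepsilon,N)$ because the cosh potential is convex, Herbst turns the $L^2$-Lipschitz property of the Besov norm into a sub-Gaussian tail, and Brascamp--Lieb (plus the symmetry $\varphi\mapsto-\varphi$) bounds the mean by the free-field Littlewood--Paley block variances. Both arguments exploit precisely the same structural ingredients -- strong convexity of the renormalized interaction and the mass gap $m^2>0$ -- but your version bypasses the stochastic quantization equation entirely and is in that sense more elementary and more general (it would apply to any convex renormalized interaction once the cutoff measures are identified as strongly log-concave in the natural $L^2$ inner product). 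The paper's version has the advantage of being self-contained within the elliptic SQ machinery developed throughout the article, which is the organizing theme of the paper. The main technical points you would still need to spell out carefully are the ones you already flag: that the Lipschitz constant of $\varphi\mapsto\|\mathcal{E}^{\varepsilon}(\varphi)\|_{B^{-2}_{2,2,\ell}(\mathbb{R}^2)}$ with respect to $\|\cdot\|_{L^2(\varepsilon(\mathbb{Z}/N\mathbb{Z})^2)}$ is uniform in the cutoff (using that the weight $\rho_{\ell}$ sums over periods with a cutoff-independent constant for $\ell>1$), and that the Gaussian Littlewood--Paley block variances $\mathbb{E}_{\nu_{m,\varepsilon,N}}[|\Delta_j\mathcal{E}^{\varepsilon}(\varphi)(x)|^2]$ are $O(1)$ uniformly in $j,x,\varepsilon,N$, including the aliasing regime $2^j\gtrsim\varepsilon^{-1}$ where the decay of $\hat{w}_{0,\varepsilon}$ compensates; both checks go through, but they are where the uniformity is actually earned.
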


We will follow the strategy developed in {\cite{hairer2021phi34}}, for the
case of $\varphi^4_3$ model. The proof is postponed until the end of this
subsection, while we first collect some assertions needed for it. We introduce
the notation
\begin{equation}
  \mathd \tilde{\nu}_{N, \varepsilon}^{\gamma} (\varphi) =
  \frac{1}{Z^{\varepsilon, N, \gamma}} \exp (\gamma \|
  \mathcal{E}^{\varepsilon} (\varphi) \|^2_{B_{2, 2, \ell}^{- 1}
  (\mathbb{R}^2)}) \mathd \tilde{\nu}_{m, \varepsilon, N}^{\cosh, \beta}  (\varphi),
  \label{eq:measure1}
\end{equation}
where
\begin{equation}
  Z^{\varepsilon, N, \gamma} = \int_{B^{- \delta}_{2, 2, \ell} (\mathbb{R}^2)}
  \exp (\gamma \|
  \mathcal{E}^{\varepsilon} (\varphi) \|^2_{B_{2, 2, \ell}^{- 1}
  (\mathbb{R}^2)}. ) \mathd
  \tilde{\nu}^{\cosh, \beta}_{m, \varepsilon, N} (\mathd \varphi), \label{eq:Z}
\end{equation}
the measure $\nu^{\cosh, \beta}_{m, \varepsilon, N}$ being defined in equation
{\eqref{eq:lawfinitedimensional}}, and $\delta > 0$ being small enough. We
also introduce the operator $\overline{\mathcal{D}^{\varepsilon}} : L^p_{\ell}
(\mathbb{R}^4) \rightarrow L^p_{\ell} (\mathbb{R}^2 \times \varepsilon
\mathbb{Z}^2)$ defined by
\[ \overline{\mathcal{D}^{\varepsilon}} (f) = (I \otimes
   \mathcal{D}^{\varepsilon}) (f), \quad f \in L^p_{\ell} (\mathbb{R}^4), \]
where $\mathcal{D}^{\varepsilon}$ is the operator introduced in equation
{\eqref{eq:operatorD}}. Then we have the following proposition:

\begin{proposition}
  \label{prop:dim-red-int}For $\gamma > 0$ small enough we have that
  \[ \tilde{\nu}_{N, \varepsilon}^{\gamma} = \tmop{Law} (W_{\varepsilon, N}
     (0, \cdot) + \phi_{\varepsilon, N}^{\gamma} (0, \cdot)), \]
  where $\phi_{\varepsilon, N}^{\gamma} \in B_{2, 2, \ell}^{- \delta}
  (\mathbb{R}^2 \times \varepsilon \mathbb{Z}^2)$ is the unique solution to
  the equation
  \begin{equation}
    \begin{array}{ll}
      & (- \Delta_{\mathbb{R}^2 \times \varepsilon \mathbb{Z}^2} + m^2)
      \phi_{\varepsilon, N}^{\gamma} + \alpha e^{\alpha \phi_{N,
      \varepsilon}^{\gamma}} \mathd \mu^{\alpha}_{\varepsilon, N} - \alpha
      e^{- \alpha \phi_{N, \varepsilon}^{\gamma}} \mathd \mu^{-
      \alpha}_{\varepsilon, N}\\
      = & \gamma \overline{\mathcal{D}}^{\varepsilon} (\rho_{\ell} (z) (1 -
      \Delta_{\mathbb{R}^2, z})^{- 1} (\bar{W}_{\varepsilon, N})) + \gamma
      \overline{\mathcal{D}}^{\varepsilon} (\rho_{\ell} (z) (1 -
      \Delta_{\mathbb{R}^2, z})^{- 1} (\overline{\mathcal{E}}^{\varepsilon}
      (\phi_{\varepsilon, N}^{\gamma}))),
    \end{array} \label{eq:dim-red}
  \end{equation}
  where $\Delta_{\mathbb{R}^2, z}$ is the Laplacian with respect to the second
  set of variables $z \in \mathbb{R}^2$, and $\bar{W}_{\epsilon,N}:=\mathcal{E}^{\varepsilon}W_{\epsilon,N}$.
\end{proposition}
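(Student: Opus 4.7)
The plan is to view the quadratic tilt in \eqref{eq:measure1} as a perturbation of the Gaussian base measure inside $\nu^{\cosh,\beta}_{m,\varepsilon,N}$ and then re-run the Parisi--Sourlas dimensional reduction of Lemma \ref{lemma:approximationmeasure} on the resulting modified Gibbs measure. Since everything is genuinely finite-dimensional after the $N,\varepsilon$ cutoffs, this should reduce to a convex-potential computation based on Theorem 4 of \cite{AlDeGu2018}.

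First I would identify $\varphi\mapsto\|\mathcal{E}^{\varepsilon}(\varphi)\|^{2}_{B^{-1}_{2,2,\ell}(\mathbb{R}^{2})}$ with $\tfrac{1}{2}\langle\varphi,K_{\varepsilon}\varphi\rangle$ for a symmetric positive operator $K_{\varepsilon}$ on $\ell^{2}(\varepsilon(\mathbb{Z}/N\mathbb{Z})^{2})$: by Proposition \ref{propsition:equivalent} the norm is equivalent to $\|(1-\Delta_{\mathbb{R}^{2}})^{-1/2}(\rho_{\ell}\,\cdot\,)\|_{L^{2}}$, and the duality in Lemma \ref{lemma:discretization:operator} between $\mathcal{E}^{\varepsilon}$ and $\mathcal{D}^{\varepsilon}$ identifies $K_{\varepsilon}$ as (the symmetrisation of) $\overline{\mathcal{D}}^{\varepsilon}\,\rho_{\ell}(1-\Delta_{\mathbb{R}^{2}})^{-1}\rho_{\ell}\,\overline{\mathcal{E}}^{\varepsilon}$. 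For $\gamma$ below a threshold depending only on $m$ and $\|\rho_{\ell}\|_{L^{\infty}}$ (uniform in $\varepsilon,N$ since $\ell\geq0$), the perturbed precision $-\Delta_{\varepsilon\mathbb{Z}^{2},N}+m^{2}-\gamma K_{\varepsilon}$ remains strictly positive and defines a shifted Gaussian reference $\tilde\nu^{\gamma}_{m,\varepsilon,N}$, so that by completing the square
\[
\tilde\nu^{\gamma}_{N,\varepsilon}(d\varphi)=\tfrac{1}{\tilde Z}\exp\!\bigl(-V_{N}^{\cosh,\beta}(\varphi)\bigr)\,\tilde\nu^{\gamma}_{m,\varepsilon,N}(d\varphi),
\]
exhibiting $\tilde\nu^{\gamma}_{N,\varepsilon}$ as a finite-dimensional Gibbs measure with convex potential and modified Gaussian base.

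Applying Theorem 4 of \cite{AlDeGu2018} to this convex system (exactly as in the proof of Lemma \ref{lemma:approximationmeasure}) yields the representation of $\tilde\nu^{\gamma}_{N,\varepsilon}$ as the $x$-marginal of the elliptic SPDE
\[
(-\Delta_{\mathbb{R}^{2}}-\Delta_{\varepsilon\mathbb{Z}^{2},N}+m^{2}-\gamma K_{\varepsilon})\phi^{\gamma}_{\varepsilon,N}+\nabla V_{\varepsilon}(\phi^{\gamma}_{\varepsilon,N})=\xi_{\varepsilon,N};
\]
moving the $-\gamma K_{\varepsilon}\phi^{\gamma}$ to the right-hand side and splitting $\phi^{\gamma}=\bar\phi^{\gamma}+W_{\varepsilon,N}$ produces \eqref{eq:dim-red} after unfolding $K_{\varepsilon}$ via $\overline{\mathcal{D}}^{\varepsilon}$, $\overline{\mathcal{E}}^{\varepsilon}$ and the resolvent $(1-\Delta_{\mathbb{R}^{2},z})^{-1}$. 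Uniqueness of $\bar\phi^{\gamma}_{\varepsilon,N}$ in $B^{-\delta}_{2,2,\ell}$ follows from a direct adaptation of Proposition \ref{proposition:uniqueness}: the extra linear-in-$\phi$ term contributes $\gamma\langle K_{\varepsilon}(\bar\phi^{\gamma}-\bar\psi^{\gamma}),\bar\phi^{\gamma}-\bar\psi^{\gamma}\rangle_{L^{2}_{\ell}}$, which is absorbed by the $m^{2}$-coercivity of the test-function argument when $\gamma$ is small.

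The main obstacle is the precise algebraic identification of $K_{\varepsilon}$ with the symmetric operator implicit in the right-hand side of \eqref{eq:dim-red}---in particular reconciling the apparent single $\rho_{\ell}$ inside the resolvent with the double occurrence generated by the quadratic form on the left of the completing-the-square identity, which requires choosing the correct symmetrization convention---together with verifying that the threshold on $\gamma$ can be chosen independently of $\varepsilon$ and $N$. The latter point reduces to the uniform operator-norm bound $\|K_{\varepsilon}\|_{\mathrm{op}}\leq\|\rho_{\ell}\|_{L^{\infty}}^{2}$, which follows from Proposition \ref{propsition:equivalent} and the fact that $(1-\Delta_{\mathbb{R}^{2}})^{-1}$ has operator norm one on $L^{2}(\mathbb{R}^{2})$; once both points are settled, the remaining steps simply repeat the arguments used in Lemma \ref{lemma:approximationmeasure} and Proposition \ref{proposition:uniqueness}.
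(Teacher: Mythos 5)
Your proposal is correct and follows essentially the same route as the paper's proof: both observe that for $\gamma$ small the total quadratic form $\|\nabla\varphi\|^2_{L^2_\ell}+m^2\|\varphi\|^2_{L^2_\ell}-\gamma\|(1-\Delta_{\mathbb{R}^2})^{-1/2}\mathcal{E}^\varepsilon(\varphi)\|^2_{L^2_\ell}$ remains strictly convex (your ``complete the square / shifted Gaussian'' formulation is equivalent), then invoke Theorem 4 of \cite{AlDeGu2018} exactly as in Lemma \ref{lemma:approximationmeasure}, and finally identify the right-hand side of \eqref{eq:dim-red} by computing the Gateaux derivative of the tilt (equivalently, unfolding your operator $K_\varepsilon$ via the $\mathcal{D}^\varepsilon$--$\mathcal{E}^\varepsilon$ duality of Lemma \ref{lemma:discretization:operator}).
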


\begin{proof}
  This is a consequence of Theorem 4 in {\cite{AlDeGu2018}} in the same way as
  Lemma \ref{lemma:approximationmeasure}, since for $\gamma$ small enough we
  have that
  \[ (\| \nabla \varphi \|^2_{L_{\ell}^2 (\varepsilon \mathbb{Z}^2)} + m^2 \|
     \varphi \|_{L^2_{\ell} (\varepsilon \mathbb{Z}^2)}^2) - \gamma \| (1 -
     \Delta_{\mathbb{R}^2})^{- 1 / 2} \mathcal{E}^{\varepsilon} (\varphi)
     \|^2_{L_{\ell}^2 (\mathbb{R}^2)} \]
  is (strictly) convex. Equation {\eqref{eq:dim-red}} follows from the fact
  that at $t=0$
  \[ \frac{\gamma}{2} \frac{\mathd}{\mathd t} \| (1 -
     \Delta_{\mathbb{R}^2})^{- 1 / 2} \mathcal{E}^{\varepsilon} (\varphi +
     t \psi) \|^2_{L_{\ell}^2 (\mathbb{R}^2)} = \gamma
     \int_{\varepsilon \mathbb{Z}^2} \mathcal{D}^{\varepsilon} \left(
     \rho_{\ell} (z) \left( 1 - \Delta_{\mathbb{R}^{^2}} \right)^{- 1}
     \mathcal{E}^{\varepsilon} (\varphi) \right) \psi (z) \mathd z. \]
  
\end{proof}

\begin{proposition}
  \label{prop:a-priopi-integrability}Let $\phi_{ \varepsilon,N}^{\gamma}$ be
  the solution to equation {\eqref{eq:dim-red}} defined by Proposition
  \ref{prop:dim-red-int}. Then for $\ell$ large enough and $\gamma$ small
  enough we have
  \[ \sup_{N_r, \varepsilon_n} \; \mathbb{E} [\| \phi_{\varepsilon_n,
     N_r}^{\gamma} \|^2_{H_{\ell}^1 (\mathbb{R}^2 \times \varepsilon_n
     \mathbb{Z}^2)}] < + \infty . \]
\end{proposition}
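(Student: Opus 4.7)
The plan is to extend the energy estimate of Theorem \ref{theorem_apriori1} to the perturbed equation \eqref{eq:dim-red}, treating the two new right-hand side contributions produced by the $\gamma$-tilt as, respectively, a small self-adjoint perturbation to be absorbed and a Gaussian forcing whose $L^2_\ell$ norm has uniformly bounded expectation. Concretely, I would test \eqref{eq:dim-red} against $\rho_{\ell,\lambda}\bar{\phi}^{\gamma}_{\varepsilon,N}$ (with $\rho_{\ell,\lambda}(x,z)=\rho^{(4)}_{\ell}(\lambda x,\lambda z)$) and integrate by parts exactly as in the proof of Theorem \ref{theorem_apriori1}. The left-hand side produces $\tfrac12\int\rho_{\ell,\lambda}|\nabla_{\mathbb{R}^2\times\varepsilon\mathbb{Z}^2}\bar{\phi}^{\gamma}|^2+\tfrac{m^2}2\int\rho_{\ell,\lambda}(\bar{\phi}^{\gamma})^2$ after choosing $\lambda$ small enough so that $|\nabla\rho_{\ell,\lambda}|\le\delta\rho_{\ell,\lambda}$, together with the exponential terms $\pm\alpha\int\rho_{\ell,\lambda}\bar{\phi}^{\gamma}e^{\pm\alpha\bar{\phi}^{\gamma}}\mathrm d\mu^{\pm\alpha}_{\varepsilon,N}$ which, as in Theorem \ref{theorem_apriori1}, are bounded below by $-C\int\rho_{\ell,\lambda}\mathrm d\mu^{\pm\alpha}_{\varepsilon,N}$ using $xe^{\pm x}\ge -C$.

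For the two new right-hand side terms, I would handle them as follows. The \emph{linear} term $\gamma\,\overline{\mathcal{D}}^{\varepsilon}\bigl(\rho_\ell(1-\Delta_{\mathbb{R}^2,z})^{-1}\overline{\mathcal{E}}^{\varepsilon}(\bar{\phi}^{\gamma})\bigr)$ is paired with $\rho_{\ell,\lambda}\bar{\phi}^{\gamma}$; using the duality $\langle\overline{\mathcal{D}}^{\varepsilon}f,g\rangle=\langle f,\overline{\mathcal{E}}^{\varepsilon}g\rangle$ from Lemma \ref{lemma:discretization:operator}, Cauchy--Schwarz and the fact that $(1-\Delta_{\mathbb{R}^2,z})^{-1}$ is bounded on $L^2$ gives the uniform (in $\varepsilon$) estimate
\[
\gamma\Bigl|\!\int\overline{\mathcal{D}}^{\varepsilon}\bigl(\rho_\ell(1-\Delta_{\mathbb{R}^2,z})^{-1}\overline{\mathcal{E}}^{\varepsilon}(\bar{\phi}^{\gamma})\bigr)\bar{\phi}^{\gamma}\rho_{\ell,\lambda}\Bigr|
\le \gamma C_\ell \|\bar{\phi}^{\gamma}\|^2_{L^2_\ell(\mathbb{R}^2\times\varepsilon\mathbb{Z}^2)},
\]
which for $\gamma$ sufficiently small is absorbed into the coercive piece on the left. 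The \emph{forcing} term $\gamma\,\overline{\mathcal{D}}^{\varepsilon}\bigl(\rho_\ell(1-\Delta_{\mathbb{R}^2,z})^{-1}\bar{W}_{\varepsilon,N}\bigr)$ is, by Cauchy--Schwarz and Young's inequality, controlled by $\tfrac{1}{2}\gamma\|\overline{\mathcal{D}}^{\varepsilon}(\rho_\ell(1-\Delta_{\mathbb{R}^2,z})^{-1}\bar{W}_{\varepsilon,N})\|^2_{L^2_\ell}+\tfrac{1}{2}\gamma\|\bar{\phi}^{\gamma}\|^2_{L^2_\ell}$; again the $\bar{\phi}^{\gamma}$ piece is absorbed.

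Taking expectations and rearranging, I obtain
\[
\mathbb{E}\bigl[\|\bar{\phi}^{\gamma}_{\varepsilon,N}\|^2_{H^1_\ell(\mathbb{R}^2\times\varepsilon\mathbb{Z}^2)}\bigr]
\lesssim \mathbb{E}\bigl[\|\overline{\mathcal{D}}^{\varepsilon}(\rho_\ell(1-\Delta_{\mathbb{R}^2,z})^{-1}\bar{W}_{\varepsilon,N})\|^2_{L^2_\ell}\bigr]
+\mathbb{E}\Bigl[\int\rho^{(4)}_\ell\,\mathrm d\mu^{\alpha}_{\varepsilon,N}+\int\rho^{(4)}_\ell\,\mathrm d\mu^{-\alpha}_{\varepsilon,N}\Bigr].
\]
The second expectation is uniformly bounded in $\varepsilon_n,N_r$ by Proposition \ref{proposition:convergencealpha} and Remark \ref{remark:bound1}; the first one is estimated using the explicit covariance of $\bar{W}_{\varepsilon,N}$, the $L^2$-boundedness of $\rho_\ell(1-\Delta_{\mathbb{R}^2,z})^{-1}$ and the uniform $L^2_\ell$-boundedness of $\overline{\mathcal{D}}^{\varepsilon}$ (Lemma \ref{lemma:discretization:operator}), giving a $\varepsilon,N$-uniform constant for $\ell$ sufficiently large.

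The main obstacle I anticipate is verifying that the absorption constants for the linear term can be chosen uniformly in $\varepsilon_n$: the operator $\overline{\mathcal{D}}^{\varepsilon}\,M_{\rho_\ell}\,(1-\Delta_{\mathbb{R}^2,z})^{-1}\,\overline{\mathcal{E}}^{\varepsilon}$ must be shown to have an $L^2_\ell\to L^2_\ell$ operator norm independent of $\varepsilon$ so that the threshold $\gamma$ does not degenerate as $\varepsilon_n\to 0$; this follows from combining Lemma \ref{lemma:discretization:operator} with Theorem \ref{theorem:besov:weight} applied to the weight $\rho_\ell$ on the lattice, but requires a careful bookkeeping of the commutator between multiplication by $\rho_\ell$ and $(1-\Delta_{\mathbb{R}^2,z})^{-1}$. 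Once this is done, the same argument also yields the uniform bound $N_r\to+\infty$, and passing to the limit is automatic since all estimates are obtained from a priori inequalities on the unique solution provided by Proposition \ref{prop:dim-red-int}.
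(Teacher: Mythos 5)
Your proposal follows the paper's proof: test \eqref{eq:dim-red} against $\rho_{\ell,\lambda}\bar{\phi}^{\gamma}_{\varepsilon,N}$, reproduce the coercive and sign-definite pieces from Theorem \ref{theorem_apriori1}, absorb the linear $\gamma$-term via the uniform $L^2_\ell\to L^2_\ell$ boundedness of $\overline{\mathcal{D}}^{\varepsilon}$, $\overline{\mathcal{E}}^{\varepsilon}$ and $(1-\Delta_{\mathbb{R}^2,z})^{-1}$ for $\gamma$ small, and treat the $\bar W_{\varepsilon,N}$ forcing by Cauchy--Schwarz and a Gaussian moment bound. The uniformity in $\varepsilon$ that worries you at the end is exactly what Lemma \ref{lemma:discretization:operator} and Theorem \ref{theorem:extension3} provide (uniform operator norms on $0<\varepsilon\leqslant 1$), and the commutator of $\rho_\ell$ with $(1-\Delta_{\mathbb{R}^2,z})^{-1}$ is handled by the weight-equivalence of Theorem \ref{theorem:besov:weight} / Theorem \ref{theorem:besov:rz:equivalent}; the paper estimates the forcing via the $H^{-1}_{\ell/2}$--$B^1_{2,2,\ell/2}$ dual pairing rather than pushing the whole operator onto $W$ and pairing in $L^2_\ell$, but both routes close the estimate.
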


\begin{proof}
  We observe that, by the continuity of the operators
  $\overline{\mathcal{E}}^{\varepsilon}$ and
  $\overline{\mathcal{D}}^{\varepsilon}$ (see Lemma
  \ref{lemma:discretization:operator} and Theorem \ref{theorem:extension3}),
  \[ \gamma \int_{\mathbb{R}^2 \times \varepsilon_n \mathbb{Z}^2} \rho_{\ell}
     (x, z) \overline{\mathcal{D}}^{\varepsilon_n} (\rho_{\ell} (z) (1 -
     \Delta_{\mathbb{R}^2, z})^{- 1} \mathcal{E}^{\varepsilon_n}
     (\phi_{\varepsilon_n, N_r}^{\gamma})) \phi_{\varepsilon_n,
     N_r}^{\gamma} \mathd x \mathd z \lesssim \gamma \|
     \phi_{\varepsilon_n, N_r}^{\gamma} \|^2_{L_{\ell}^2 (\mathbb{R}^2
     \times \varepsilon_{n} \mathbb{Z}^2)} \]
  and, thus,
  \begin{eqnarray*}
    &  & \gamma \int_{\mathbb{R}^2 \times \varepsilon_{n} \mathbb{Z}^2}
    \rho_{\ell} (x, z) \overline{\mathcal{D}}^{\varepsilon_{n}} (\rho_{\ell} (z)
    \cdot (1 - \Delta_{\mathbb{R}^2, z})^{- 1}
    \overline{\mathcal{E}}^{\varepsilon_{n}} (W_{\varepsilon_n, N_r}))
    \bar{\phi}_{\varepsilon_n, N_r}^{\gamma} \mathd x \mathd z\\
    & \leqslant & \gamma \| W_{\varepsilon_n, N_r} \|_{H_{2, 2, \ell / 2}^{-
    1} (\mathbb{R}^2 \times \varepsilon_n \mathbb{Z}^2)} \|
    \phi_{\varepsilon_n, N_r}^{\gamma} \|_{B_{2, 2, \ell / 2}^1
    (\mathbb{R}^2 \times \varepsilon_{n} \mathbb{Z}^2)}\\
    & \leqslant & \gamma (\| W_{\varepsilon_n, N_r} \|^2_{H_{2, 2, \ell /
    2}^{- 1} (\mathbb{R}^2 \times \varepsilon_n \mathbb{Z}^2)} + \|
    \phi_{\varepsilon_n, N_r}^{\gamma} \|^2_{B_{2, 2, \ell / 2}^1
    (\mathbb{R}^2 \times \varepsilon_{n}\mathbb{Z}^2)}) .
  \end{eqnarray*}
  Using this observation we can repeat the proof of Theorem
  \ref{theorem_apriori1} to obtain the statement.
\end{proof}

\begin{proposition}
  \label{prop:tightness-gamma}The measures $\bar{\nu}^{\gamma}_{\varepsilon_n,
  N_r} = (\overline{\mathcal{E}}^{\varepsilon_{n}})^{\ast}
  (\tilde{\nu}^{\gamma}_{\varepsilon_n, N_r})$, where
  $\tilde{\nu}^{\gamma}_{\varepsilon_n, N_r}$ is defined by equation
  {\eqref{eq:measure1}}, are tight on $B_{2, 2, 2 \ell}^{- 2 \delta}
  (\mathbb{R}^2)$ (where $\delta > 0$).
\end{proposition}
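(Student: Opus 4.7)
The goal is to establish a uniform-in-$(\varepsilon_n, N_r)$ bound of the form
\[
\sup_{n,r}\mathbb{E}_{\bar{\nu}^{\gamma}_{\varepsilon_n,N_r}}\bigl[\|\varphi\|_{B^{-\delta}_{2,2,\ell}(\mathbb{R}^2)}^{2}\bigr]<+\infty,
\]
since by Proposition~\ref{proposition:embedding} the embedding $B^{-\delta}_{2,2,\ell}(\mathbb{R}^2)\hookrightarrow B^{-2\delta}_{2,2,2\ell}(\mathbb{R}^2)$ is compact, and then tightness follows from Markov's inequality. By Proposition~\ref{prop:dim-red-int}, $\bar{\nu}^{\gamma}_{\varepsilon,N}=\mathrm{Law}\bigl(\mathcal{E}^{\varepsilon}(W_{\varepsilon,N}(0,\cdot)+\bar{\phi}^{\gamma}_{\varepsilon,N}(0,\cdot)+W_{\varepsilon,N}(0,\cdot))\bigr)$, so the desired bound splits into a Gaussian contribution and a deterministic (pathwise) contribution coming from $\bar{\phi}^{\gamma}_{\varepsilon,N}(0,\cdot)$. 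The free-field piece is handled directly: the law of $W_{\varepsilon,N}(0,\cdot)$ is Gaussian with covariance uniformly bounded in the relevant Besov norm, and Theorem~\ref{theorem:extension:lattice:negative} applied to its restriction gives the required uniform moments for $\mathcal{E}^{\varepsilon}(W_{\varepsilon,N}(0,\cdot))$ in $B^{-\delta}_{2,2,\ell}(\mathbb{R}^2)$.

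The heart of the matter is to control the interaction piece $\bar{\phi}^{\gamma}_{\varepsilon,N}(0,\cdot)$. The plan is to view \eqref{eq:dim-red} as an instance of equation~\eqref{eq:apriori} with an additional source term
\[
F^{\gamma}_{\varepsilon,N}\;\assign\;\gamma\,\overline{\mathcal{D}}^{\varepsilon}\!\bigl(\rho_{\ell}(z)(1-\Delta_{\mathbb{R}^2,z})^{-1}\bar{W}_{\varepsilon,N}\bigr)+\gamma\,\overline{\mathcal{D}}^{\varepsilon}\!\bigl(\rho_{\ell}(z)(1-\Delta_{\mathbb{R}^2,z})^{-1}\overline{\mathcal{E}}^{\varepsilon}(\bar{\phi}^{\gamma}_{\varepsilon,N})\bigr),
\]
on the right-hand side. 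By Proposition~\ref{prop:a-priopi-integrability} together with the continuity of $\overline{\mathcal{E}}^{\varepsilon}$ and $\overline{\mathcal{D}}^{\varepsilon}$ (Theorem~\ref{theorem:extension3} and Lemma~\ref{lemma:discretization:operator}) and Theorem~\ref{theorem:besov:rz:equivalent}, this source is uniformly bounded in $L^{2}(\Omega,B^{0,-2+\kappa}_{2,\ell,\ell}(\mathbb{R}^2\times\varepsilon\mathbb{Z}^2))$ — it has two extra derivatives of regularity in the $z$-variable compared to $\bar{\phi}^{\gamma}_{\varepsilon,N}$ itself. Combined with the stochastic estimates for $\mu^{\pm\alpha}_{\varepsilon,N}$ from Proposition~\ref{proposition:quantizationbounds}, the proof of Theorem~\ref{theorem:improvedapriori} extends verbatim to give, for the same exponents $p=rq'$, $\theta>2/p$, $s>\alpha^{2}(r-1)q'/(4\pi)^{2}-\theta+4/p$ as in Remark~\ref{remark:theta}, the uniform moment bound
\[
\sup_{n,r}\mathbb{E}\Bigl[\|\bar{\phi}^{\gamma}_{\varepsilon_n,N_r}\|_{B^{2-\theta}_{p,p,\ell}(\mathbb{R}^2,\;B^{-s}_{p,p,\ell}(\varepsilon_n\mathbb{Z}^2))}^{2}\Bigr]<+\infty.
\]

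With this improved regularity in hand, the Besov embedding $B^{2-\theta}_{p,p,\ell}(\mathbb{R}^2,E)\hookrightarrow C^{0}(\mathbb{R}^2,E)$ (which holds since $2-\theta>2/p$ by Remark~\ref{remark:theta}, cf.\ Theorem~2.2.4 of~\cite{Amann2019}) makes the evaluation at $x=0$ a continuous linear map into $B^{-s}_{p,p,\ell}(\varepsilon_n\mathbb{Z}^2)$. Thus $\bar{\phi}^{\gamma}_{\varepsilon_n,N_r}(0,\cdot)$ is uniformly bounded in $L^{2}(\Omega,B^{-s}_{p,p,\ell}(\varepsilon_n\mathbb{Z}^2))$, and applying Theorem~\ref{theorem:extension:lattice:negative} yields the corresponding bound for $\mathcal{E}^{\varepsilon_n}(\bar{\phi}^{\gamma}_{\varepsilon_n,N_r}(0,\cdot))$ in $B^{-s-\delta'}_{p,p,\ell}(\mathbb{R}^2)$ for any $\delta'>0$. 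Choosing the parameters so that $s+\delta'\leq\delta$ and interpolating/embedding into $B^{-\delta}_{2,2,\ell}(\mathbb{R}^2)$ closes the argument.

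The main obstacle is making sure that the extra source $F^{\gamma}_{\varepsilon,N}$ does not destroy the bootstrap that drives Theorem~\ref{theorem:improvedapriori}: one must verify that $F^{\gamma}_{\varepsilon,N}$ enters the right-hand side at a regularity compatible with the convex-duality a priori estimate (it does, because $(1-\Delta_{\mathbb{R}^2,z})^{-1}$ gains two $z$-derivatives and $\overline{\mathcal{D}}^{\varepsilon}$ is continuous uniformly in $\varepsilon$), and that the resulting right-hand side can be tested against $e^{\pm\alpha\bar\phi^\gamma}\bar\phi^\gamma$ in the same way, yielding a closed estimate by means of Young's inequality absorbing a $\gamma$-small portion into the coercive $H^{1}_{\ell}$ term already provided by Proposition~\ref{prop:a-priopi-integrability}. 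Once this bootstrap closes, all remaining steps (trace, extension, compact embedding, Markov) are routine.
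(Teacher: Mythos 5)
Your proposal gets the high-level decomposition right (dimensional reduction, Gaussian piece plus interaction piece, Markov plus compact embedding), but for the interaction piece it pursues a much harder route than the paper and in doing so leaves real gaps. The paper's argument never upgrades the $x$-regularity of $\bar\phi^\gamma_{\varepsilon,N}$ beyond the $H^1_\ell$ moment bound of Proposition~\ref{prop:a-priopi-integrability}. The key observation you are missing is that the law of $\bar\phi^\gamma_{\varepsilon,N}(x,\cdot)$ is independent of $x$: both the law of $W_{\varepsilon,N}(0,\cdot)$ and equation~\eqref{eq:dim-red} are invariant under translations in the $x$-coordinate. Hence for each fixed $z$,
\[
\mathbb{E}\bigl[((m^2-\Delta_{\varepsilon\mathbb{Z}^2})^{-\delta/2}\bar\phi^\gamma_{\varepsilon,N}(0,z))^2\bigr]
=\frac{1}{\int\rho_\ell(x)\,\mathd x}\int_{\mathbb{R}^2}\rho_\ell(x)\,\mathbb{E}\bigl[((m^2-\Delta_{\varepsilon\mathbb{Z}^2})^{-\delta/2}\bar\phi^\gamma_{\varepsilon,N}(x,z))^2\bigr]\mathd x,
\]
and integrating in $z$ against $\rho_\ell(z)$ converts the point evaluation at $x=0$ into a weighted integral over $\mathbb{R}^2\times\varepsilon\mathbb{Z}^2$, which is exactly what Proposition~\ref{prop:a-priopi-integrability} bounds. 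No bootstrap, no continuity-in-$x$ embedding, and no two-parameter Besov space is needed for tightness; the translation averaging replaces all of that.

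Your route is the one the paper later uses for Theorem~\ref{theorem:stochasticquantization} (where translation averaging is genuinely not enough, because one needs the \emph{random field} at $x=0$ and not just its second moment), but in the present setting it has two unresolved issues. First, Theorem~\ref{theorem:improvedapriori} is proved for equation~\eqref{eq:apriori}; applying it to~\eqref{eq:dim-red} requires rerunning Theorem~\ref{theorem_apriori2} and Theorem~\ref{thm:estimate-measure} with the extra linear source $F^\gamma_{\varepsilon,N}$ on the right-hand side, and verifying that the bootstrap still closes --- this is plausible but is not "verbatim", since $F^\gamma_{\varepsilon,N}$ depends on $\bar\phi^\gamma_{\varepsilon,N}$ itself. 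Second, and more seriously, Theorem~\ref{theorem:improvedapriori} and Proposition~\ref{proposition:quantizationbounds} yield \emph{pathwise} (almost sure) estimates, whereas tightness requires a uniform bound on $\mathbb{E}[\|\cdot\|^2]$. Promoting the polynomial pathwise bound of Theorem~\ref{theorem:improvedapriori} to a uniform second moment means tracking the powers of the $\mathbb{M}$-functionals through $P_{\theta,\alpha,r,\tau}$ and checking that moments of the required order exist in the regime $\alpha^2<(4\pi)^2$, $p<3$ of Theorem~\ref{theorem:stochasticestimates1} --- a nontrivial bookkeeping step that you do not address. The translation-averaging trick sidesteps both problems entirely by reducing to the already-proved moment bound.
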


\begin{proof}
  By Proposition \ref{prop:dim-red-int} we have
  \begin{equation}
    \begin{array}{ll}
      & \int_{B_{2, 2, 2 \ell}^{- 2 \delta} (\mathbb{R}^2)} \|
      \varphi \|^2_{B_{2, 2, \ell / 2}^{-
      \delta} (\mathbb{R}^2)} \mathd \bar{\nu}_{\varepsilon_n,
      N_r}^{\gamma}\\
      \lesssim & \int_{B_{2, 2, 2 \ell}^{- 2 \delta} (\mathbb{R}^2 \times \varepsilon_{n} \mathbb{Z}^2)} \|
      \mathcal{E}^{\varepsilon_{n}} (\varphi) \|^2_{B_{2, 2, \ell / 2}^{- \delta}
      (\mathbb{R}^2)} \mathd \tilde{\nu}_{\varepsilon_n, N_r}^{\gamma}\\
      \lesssim & \int_{B_{2, 2, 2 \ell}^{- 2 \delta} (\mathbb{R}^2 \times \varepsilon \mathbb{Z}^2)} \|
      \rho_{\ell / 2} (m^2 - \Delta_{\varepsilon_n \mathbb{Z}^2})^{- \delta/2}
      \varphi \|^2_{L^2 (\varepsilon_n \mathbb{Z}^2)} \mathd
      \tilde{\nu}_{\varepsilon_n, N_r}^{\gamma}\\
      \lesssim & \int_{\varepsilon_n \mathbb{Z}^2} \rho_{\ell} (z) \mathbb{E}
      [((m^2 - \Delta_{\varepsilon_n \mathbb{Z}^2})^{- \delta / 2}
      W_{\varepsilon_n, N_{r}} (0, z))^2] \mathd z\\
      & + \int_{\varepsilon_n \mathbb{Z}^2} \rho_{\ell} (z) \mathbb{E} [((m^2
      - \Delta_{\varepsilon_n \mathbb{Z}^2})^{- \delta / 2}
      \phi_{\varepsilon_n, N_{r}}^{\gamma} (0, z))^2] \mathd z.
    \end{array} \label{eq:Max1}
  \end{equation}
  The first term is uniformly bounded in $\varepsilon_n, N_{r}$ by well known
  properties of the Gaussian free field. For the second term we observe that
  since $\phi_{\varepsilon_n, N_{r}}^{\gamma} (0, z)$ is the unique
  solution to {\eqref{eq:dim-red}} we have by invariance of the law of
  $W_{\varepsilon_n, N_{r}} (0, \cdot)$ and of equation {\eqref{eq:dim-red}}
  under shifts in the $x$ coordinate, that, for some constant $C > 0$:
  \begin{equation}
    \begin{array}{ll}
      & \int_{\varepsilon_n \mathbb{Z}^2} \rho_{\ell} (z) \mathbb{E} [((m^2 -
      \Delta_{\varepsilon_n \mathbb{Z}^2})^{- \delta / 2}
      \phi_{\varepsilon_n, N_{r}}^{\gamma} (0, z))^2] \mathd z\\
      \leqslant & \frac{1}{\int \rho_{\ell} (x) \mathd x} \int_{\mathbb{R}^2
      \times \varepsilon_{n} \mathbb{Z}^2} \rho_{\ell} (z) \rho_{\ell} (x)
      \mathbb{E} [((m^2 - \Delta_{\varepsilon_{n} \mathbb{Z}^2})^{- \delta / 2}
      \phi_{\varepsilon_n, N_{r}}^{\gamma} (x, z))^2] \mathd x \mathd z\\
      \leqslant & C,
    \end{array} \label{eq:Max2}
  \end{equation}
  where in the last line we have used that $\rho_{\ell / 2} (z) \rho_{\ell /
  2} (x) \lesssim \rho_{\ell} (x, z)$ and Proposition
  \ref{prop:a-priopi-integrability}. This implies tightness on $B_{2, 2, 2
  \ell}^{- 2 \delta} (\mathbb{R}^2)$ by the compactness of the Besov
  embedding. 
\end{proof}

It will be helpful to have the following lemma.

\begin{lemma}
  \label{lemma:Max}Let $(\Omega, \mathcal{F})$ be a measurable space, let
  $\nu$ be a probability measure defined on $\Omega$, and let $S : \Omega
  \rightarrow \mathbb{R}_{}$ be a measurable function such that $e^{S (x)} \in
  L^1 (\Omega, \nu)$ then
  \[ \int_{\Omega} e^{S (x)} \nu (\mathd x) \leqslant e^{\int S (x) \nu_S
     (\mathd x)}, \]
  where $\nu_S$ is the probability measure defined on $\Omega$, for which
  $\nu_S (\mathd x) = \frac{e^{S (x)}}{\int_{\Omega} e^{S (x)} \nu (\mathd x)}
  \nu (\mathd x)$.
\end{lemma}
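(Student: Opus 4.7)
The plan is to recognize Lemma \ref{lemma:Max} as the standard variational lower bound on $\log Z$ coming from nonnegativity of relative entropy, which in turn is just Jensen's inequality for $\log$. Writing $Z = \int_\Omega e^{S(x)} \nu(\mathd x) \in (0,+\infty)$, by hypothesis $Z$ is finite, and the measure $\nu_S$ is well defined with Radon--Nikodym derivative
\[ \frac{\mathd \nu_S}{\mathd \nu}(x) = \frac{e^{S(x)}}{Z}, \qquad \text{so} \qquad \frac{\mathd \nu}{\mathd \nu_S}(x) = Z e^{-S(x)} \qquad \nu_S\text{-a.s.} \]

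The key step will be to apply Jensen's inequality to the concave function $\log$ and the probability measure $\nu_S$ with the integrand $Z e^{-S}$:
\[ \int_\Omega \log\bigl(Z e^{-S(x)}\bigr) \, \nu_S(\mathd x) \leq \log \int_\Omega Z e^{-S(x)} \, \nu_S(\mathd x) = \log \int_\Omega \mathd \nu = 0. \]
Splitting the left-hand side gives $\log Z - \int_\Omega S(x) \, \nu_S(\mathd x) \leq 0$, and exponentiating yields the claimed inequality $Z \leq \exp\!\bigl(\int S \, \mathd \nu_S\bigr)$. There is no real obstacle beyond checking measurability and justifying the split; the only mildly delicate point is that $\int S \, \mathd \nu_S$ should be interpreted in $(-\infty, +\infty]$ (the inequality being trivial if this integral is $+\infty$). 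Integrability of the negative part $S_-$ against $\nu_S$ is automatic because $|S| e^{S}$ is bounded on $\{S \leq 0\}$, so the integral is well defined in this extended sense without any further hypothesis beyond $e^S \in L^1(\nu)$.
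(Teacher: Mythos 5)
Your proof is correct and is essentially the paper's argument: the paper first notes the identity $\bigl(\int e^S\,\mathd\nu\bigr)\bigl(\int e^{-S}\,\mathd\nu_S\bigr)=1$ and then applies Jensen to the convex function $\exp$ on the factor $\int e^{-S}\,\mathd\nu_S$, which is exactly your application of Jensen to the concave function $\log$ with integrand $Z e^{-S}$ after taking logarithms. Your extra remark on the well-definedness of $\int S\,\mathd\nu_S$ in $(-\infty,+\infty]$ is a worthwhile clarification that the paper leaves implicit.
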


\begin{proof}
  By definition of $\nu_S$ and using the fact that $\nu$ is a probability
  measure we get
  \[ \left( \int_{\Omega} e^{S (x)} \nu (\mathd x) \right) \left(
     \int_{\Omega} e^{- S (y)} \nu_S (\mathd y) \right) = 1. \]
  Applying Jensen inequality to the term $\left( \int_{\Omega} e^{- S (y)}
  \nu_S (\mathd y) \right)$ we get the thesis.
\end{proof}

\begin{proof*}{Proof of Theorem \ref{thm:analyticity}\footnote{this argument
is due to M. Gubinelli (personal communication).}}
  Since, by Proposition \ref{prop:tightness-gamma}, the measures
  $\bar{\nu}^{\gamma}_{\varepsilon_n, N_r}$ are tight, what remains to show is
  that the constants $Z^{\varepsilon, N, \gamma}$ defined by equation
  {\eqref{eq:Z}} satisfy
  \[ \sup_{\varepsilon_n, N_r} Z^{\varepsilon_n, N_r, \gamma} < \infty . \]
  If we apply Lemma \ref{lemma:Max} to the measures
  $\bar{\nu}^{\gamma}_{\varepsilon_n, N_r}$ and $\nu_{m, \varepsilon_n,
  N_r}^{\cosh, \beta}$ (which means taking $S (\varphi) = \|
  \mathcal{E}_{\varepsilon_n} (\varphi) \|^2_{B^{- 1}_{2, 2, \ell}
  (\mathbb{R}^2)}$, we get
  \[ Z^{\varepsilon_n, N_r, \gamma} \leqslant \exp \left( \int_{B^{- 2
     \delta}_{2, 2, \ell} (\mathbb{R}^2)} \| \mathcal{E}_{\varepsilon_n}
     (\varphi) \|^2_{B^{- 1}_{2, 2, \ell} (\mathbb{R}^2)}
     \nu_{\varepsilon_n, N_r}^{\gamma} (\mathd \varphi) \right) . \]
  On the other hand by inequalities {\eqref{eq:Max1}} and {\eqref{eq:Max2}} we
  have that
  \[ \sup_{\varepsilon_n, N_r} \int_{B^{- 2 \delta}_{2, 2, \ell}
     (\mathbb{R}^2)} \| \mathcal{E}_{\varepsilon_n} (\varphi) \|^2_{B^{- 1}_{2, 2, \ell} (\mathbb{R}^2)} \nu_{\varepsilon_n, N_r}^{\gamma}
     (\mathd \varphi) \leqslant C < + \infty . \]
  From this we get the thesis.
\end{proof*}

\subsection{Clustering}

In this section we prove the exponential decay of the two point function
$S_2$. More precisely we prove the following theorem.

\begin{theorem}
  \label{clustering-exp}There exist some constants $C, \kappa > 0$ such that
  for any $\zeta, \psi \in H^1 (\mathbb{R}^2)$ such that $\tmop{supp} (\psi)
  \bigcup \tmop{supp} (\zeta) \subset B (0, 1)$ we have that
  \[ \int_{B^{- 2 \delta}_{2, 2, 2 \ell} (\mathbb{R}^2)} \langle \zeta,
     \varphi \rangle_{L^2 (\mathbb{R}^2)} \langle \psi (\cdot - x), \varphi
     \rangle_{L^2 (\mathbb{R}^2)} \mathd \nu_m^{\cosh, \beta}  (\varphi)
     \leqslant C \exp (- \kappa | x |) \| \psi \|_{H^1 (\mathbb{R}^2)} \| \zeta
     \|_{H^1 (\mathbb{R}^2)} . \]
\end{theorem}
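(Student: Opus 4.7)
The strategy is to reduce $S_2$ to a covariance by symmetry, bound it at the discretised level by combining log-concavity with a Feynman-Kac domination, exploit spatial separation of the supports against the massive Green's function, and finally pass to the limit.

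First I would observe that each approximating measure $\nu^{\cosh,\beta}_{m,\varepsilon,N}$ of Lemma \ref{lemma:approximationmeasure} is invariant under $\varphi\mapsto-\varphi$, since the interaction $V_\varepsilon(u)=2e^{-\alpha^2 c_\varepsilon/2}\cosh(\alpha u)$ is even. This symmetry is preserved under weak convergence, so $\nu^{\cosh,\beta}_m$ is $\varphi\mapsto-\varphi$ invariant, $S_1\equiv 0$, and
\[
S_2(\zeta\otimes\psi(\cdot-x))=\mathrm{Cov}_{\nu^{\cosh,\beta}_m}\bigl(\langle\varphi,\zeta\rangle,\langle\varphi,\psi(\cdot-x)\rangle\bigr).
\]

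Next, at the finite-dimensional level, $\nu^{\cosh,\beta}_{m,\varepsilon,N}$ has density $Z^{-1}e^{-\mathcal{V}(\varphi)}$ on $\mathbb{R}^{N\times N}$ with $\mathcal{V}$ strictly convex: its Hessian equals $A_{\varepsilon,N}+\mathrm{diag}(V''_\varepsilon(\varphi))$ where $A_{\varepsilon,N}:=m^2-\Delta_{\varepsilon\mathbb{Z}^2,N}$ and $V''_\varepsilon\ge 0$ because $\cosh$ is convex. I would invoke the Helffer-Sj\"ostrand representation of the covariance of linear functionals and dominate the Witten-Laplacian semigroup on $1$-forms by a Feynman-Kac argument: since the matrix-valued killing potential $V''_\varepsilon(\varphi_s)$ is pointwise nonnegative along the Langevin trajectories, the solution $u_t$ of the corresponding heat equation with initial datum $\psi(\cdot-x)$ satisfies $|u_t(\varphi)|\le e^{-tA_{\varepsilon,N}}|\psi(\cdot-x)|$ componentwise, yielding
\[
\bigl|\mathrm{Cov}_{\nu^{\cosh,\beta}_{m,\varepsilon,N}}\bigr|\le \bigl\langle|\zeta|,A_{\varepsilon,N}^{-1}|\psi(\cdot-x)|\bigr\rangle
=\iint|\zeta(y_1)|\,G^{\varepsilon,N}_m(y_1-y_2)\,|\psi(y_2-x)|\,dy_1\,dy_2,
\]
where $G^{\varepsilon,N}_m$ is the Green's function of $A_{\varepsilon,N}$. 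The lattice Green's function enjoys a Combes-Thomas type bound $G^{\varepsilon,N}_m(y)\lesssim e^{-\kappa|y|}$ for any $\kappa<m$, uniformly for $\varepsilon\le 1$ and $N\ge N_0$. Since $\mathrm{supp}\,\zeta\cup\mathrm{supp}\,\psi\subset B(0,1)$, for $|x|\ge 3$ one has $|y_1-y_2+x|\ge|x|-2$ on the support of the integrand, and the bound simplifies to $Ce^{-\kappa|x|}\|\zeta\|_{L^1}\|\psi\|_{L^1}\lesssim e^{-\kappa|x|}\|\zeta\|_{H^1}\|\psi\|_{H^1}$ by $L^1\hookleftarrow L^2\hookleftarrow H^1$ on the fixed ball $B(0,1)$. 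The range $|x|<3$ is absorbed into the constant $C$ by Cauchy-Schwarz and the uniform second moment bound from Theorem \ref{thm:analyticity}.

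Finally, to pass to the limit, I note that the proof of Theorem \ref{thm:analyticity} actually delivers uniform-in-$(\varepsilon_n,N_r)$ exponential integrability of $\|\mathcal{E}^{\varepsilon_n}(\varphi)\|_{B^{-2\delta}_{2,2,2\ell}}^{2}$ under $\nu^{\cosh,\beta}_{m,\varepsilon_n,N_r}$, hence uniform integrability of $\langle\varphi,\zeta\rangle\langle\varphi,\psi(\cdot-x)\rangle$. Together with the weak convergences established in Lemma \ref{lemma:convergenceimproved} and Theorem \ref{theorem:stochasticquantization}, this upgrades to convergence of the two-point functions, so the uniform bound of the previous step survives the limit. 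The main obstacle I anticipate is the pointwise Feynman-Kac domination step at the finite-dimensional level: one must set up the Witten-Laplacian-on-1-forms cleanly enough that the random nonnegative killing $V''_\varepsilon(\varphi_s)$ yields the kernel-wise bound by the free Schr\"odinger semigroup, and verify uniformly in $(\varepsilon,N)$ a Combes-Thomas type exponential estimate for $G^{\varepsilon,N}_m$; once these are in hand, the remainder of the argument is a straightforward combination of already established convergence and integrability results.
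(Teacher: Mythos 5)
Your proof is correct in outline but uses a genuinely different mechanism than the paper. The paper works entirely inside the elliptic stochastic quantization picture: it tilts the action by the linear perturbation $-\gamma\langle\mathcal{D}^{\varepsilon}\zeta,\cdot\rangle$, identifies the tilted measure as the $x=0$ trace of the solution to a modified elliptic SPDE (Equation \eqref{eq:dim-red-2}), writes $S_2$ as a $\gamma$-derivative at $\gamma=0$ (Lemma \ref{lemma:derivative}), and then proves Proposition \ref{prop:exp-decay}, which is an exponentially weighted $H^1$-type energy estimate on $\bar{\phi}^{\gamma\zeta}_{\varepsilon,N}-\bar{\phi}^0_{\varepsilon,N}$ obtained by subtracting the two elliptic equations, testing against $w(x,z)(\bar{\phi}^{\gamma\zeta}_{\varepsilon,N}-\bar{\phi}^0_{\varepsilon,N})$ with a weight $w$ that grows exponentially in $z$, and crucially using that $\sinh$ is nondecreasing (convexity of $\cosh$) so the nonlinear difference term has a favourable sign. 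You instead argue at the finite-dimensional level via the Helffer-Sj\"ostrand representation of the covariance, dominate the Witten Laplacian on $1$-forms by the free operator $A_{\varepsilon,N}$ using the positivity of $\cosh''$ together with a Feynman-Kac kernel comparison, and then invoke Combes-Thomas decay of the free lattice Green's function. Both proofs feed off the same structural fact -- convexity of the renormalized interaction -- but in different guises: the paper turns it into a sign condition in a PDE energy estimate, while you turn it into a Brascamp-Lieb-type kernel domination. Your route is closer to the classical statistical-mechanics toolbox and is arguably more modular; the paper's route has the advantage of staying within the SQ machinery that the rest of the manuscript has already set up (weights, discrete difference operators, uniform-in-$\varepsilon$ estimates), and of producing the exponential weight $w(x,z)$ in exactly the form needed to interact with the $4$d representation of the model. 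Two places where your sketch would need care if written out in full: (i) the pointwise kernel domination $|e^{-t(A_{\varepsilon,N}+V''_\varepsilon(\varphi_s))}|\le e^{-tA_{\varepsilon,N}}$ requires using that $A_{\varepsilon,N}$ has non-positive off-diagonal entries (so its semigroup is positivity preserving) and that $V''_\varepsilon$ is a non-negative \emph{diagonal} perturbation -- both true here, but worth stating; and (ii) the Combes-Thomas estimate needs to be verified uniformly over the $\varepsilon,N$ approximation, which is standard but nontrivial, and plays the role the weight inequality $|\nabla_{\mathbb{R}^2\times\varepsilon\mathbb{Z}^2}w|\le\kappa w$ plays in the paper.
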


Note that since $\cosh (\beta \varphi)$ is even and the law of the free field
is invariant with respect to reflection $\varphi \mapsto - \varphi$, we have
that
\[ \int_{B^{- 2 \delta}_{2, 2, 2 \ell} (\mathbb{R}^2)} \langle \zeta, \varphi
   \rangle_{L^2 (\mathbb{R}^2)} \mathd \nu_m^{\cosh, \beta}  (\varphi) =
   \int_{B^{- 2 \delta}_{2, 2, 2 \ell} (\mathbb{R}^2)} \langle \psi (\cdot -
   x), \varphi \rangle_{L^2 (\mathbb{R}^2)} \mathd \nu_m^{\cosh, \beta} 
   (\varphi) = 0. \]
Again we postpone the proof until the end of the section. First let us
introduce the measure
\[ \mathd \bar{\nu}_{\varepsilon, N}^{\gamma \zeta} = \frac{1}{Z^{\gamma
   \zeta}} \exp (- \gamma \langle \mathcal{D}^{\varepsilon} \zeta, \varphi
   \rangle_{L^2 (\mathbb{R}^2)}) \mathd \tilde{\nu}_{m, \varepsilon, N}^{\cosh, \beta}
   (\varphi) . \]
where
\[ Z^{\varepsilon, N, \gamma \zeta} = \int_{B^{- 2 \delta}_{2, 2, 2 \ell}
   (\varepsilon \mathbb{Z}^2)} \exp (- \gamma \langle
   \mathcal{D}^{\varepsilon} \zeta, \varphi \rangle_{L^2  (\mathbb{R}^2)})
   \mathd \tilde{\nu}_{m, \varepsilon, N}^{\cosh, \beta}  (\varphi). \]
This is a well defined measure since $\langle \mathcal{D}^{\varepsilon}
(\zeta), \cdot \rangle_{L^2  (\varepsilon \mathbb{Z}^2)}$ is continuous on
$H_{\tmop{loc}}^{- 1} (\varepsilon \mathbb{Z}^2)$ which contains the support
of $\tilde{\nu}_m^{\cosh, \beta}$. Immediately from the definitions we get the
following lemma:

\begin{lemma}
  \label{lemma:derivative}Under the previous hypotheses we have
  
  \begin{align*}
          &\int_{B^{- 2 \delta}_{2, 2, 2 \ell} (\varepsilon \mathbb{Z}^2)} \langle
    \mathcal{D}^{\varepsilon} \zeta, \varphi \rangle_{L^2 (\varepsilon
    \mathbb{Z}^2)} \langle \mathcal{D}^{\varepsilon} \psi (\cdot - x), \varphi
    \rangle_{L^2 (\varepsilon \mathbb{Z}^2)} \mathd \tilde{\nu}_{m, \varepsilon,
    N}^{\cosh, \beta}  (\varphi) \nonumber\\
    =&- \frac{\mathd}{\mathd \gamma} \left[ \int_{B^{- 2 \delta}_{2, 2, 2 \ell}
    (\varepsilon \mathbb{Z}^2)} \langle \mathcal{D}^{\varepsilon} \psi (\cdot
    - x), \varphi \rangle_{L^2 (\varepsilon \mathbb{Z}^2)} \mathd
    \bar{\nu}_{\varepsilon, N}^{\gamma \zeta} (\varphi) \right]_{\gamma = 0} .
    \nonumber
\end{align*}  
  Furthermore, if $\varepsilon_n$ and $N_r$ are the sequences in Proposition
  \ref{proposition:quantizationbounds}, we have
  \begin{eqnarray*}
    &  & \lim_{\varepsilon_n \rightarrow 0, N_r \rightarrow \infty}
    \int_{B^{- 2 \delta}_{2, 2, 2 \ell} (\varepsilon_n \mathbb{Z}^2)} \langle
    \mathcal{D}^{\varepsilon_n} \zeta, \varphi \rangle_{L^2 (\varepsilon_n
    \mathbb{Z}^2)} \langle \mathcal{D}^{\varepsilon_n} \psi (\cdot - x),
    \varphi \rangle_{L^2 (\varepsilon_n \mathbb{Z}^2)} \mathd \tilde{\nu}_{m,
    \varepsilon_n, N_r}^{\cosh, \beta}  (\varphi)\\
    & = & \int_{B^{- 2 \delta}_{2, 2, 2 \ell} (\varepsilon_n \mathbb{Z}^2)}
    \langle \zeta, \varphi \rangle_{L^2 (\mathbb{R}^2)} \langle \psi (\cdot -
    x), \varphi \rangle_{L^2 (\mathbb{R}^2)} \mathd \tilde{\nu}_m^{\cosh, \beta} 
    (\varphi).
  \end{eqnarray*}
\end{lemma}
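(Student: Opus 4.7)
\textbf{Proof proposal for Lemma \ref{lemma:derivative}.}

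The first identity is a direct computation. Set $Z(\gamma) \assign Z^{\varepsilon,N,\gamma\zeta}$ and
\[
I(\gamma) \assign \int F(\varphi)\, \mathd \bar{\nu}_{\varepsilon,N}^{\gamma\zeta}(\varphi) = \frac{1}{Z(\gamma)} \int F(\varphi)\, e^{-\gamma\langle \mathcal{D}^{\varepsilon}\zeta,\varphi\rangle}\, \mathd \nu_{m,\varepsilon,N}^{\cosh,\beta}(\varphi),
\]
with $F(\varphi) = \langle \mathcal{D}^{\varepsilon}\psi(\cdot-x),\varphi\rangle$. Differentiating under the integral (which is justified because $\varphi \mapsto \langle \mathcal{D}^{\varepsilon}\zeta,\varphi\rangle$ lies in every $L^p(\nu_{m,\varepsilon,N}^{\cosh,\beta})$ by Theorem \ref{thm:analyticity} and its proof) gives
\[
I'(0) = -\int F(\varphi)\,\langle \mathcal{D}^{\varepsilon}\zeta,\varphi\rangle\, \mathd \nu_{m,\varepsilon,N}^{\cosh,\beta}(\varphi) \;-\; \frac{Z'(0)}{Z(0)^2}\int F\, \mathd \nu_{m,\varepsilon,N}^{\cosh,\beta}.
\]
Since $\cosh$ is even and the Gaussian reference measure $\nu_{m,\varepsilon,N}$ is invariant under the involution $\varphi \mapsto -\varphi$, so is $\nu_{m,\varepsilon,N}^{\cosh,\beta}$. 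Hence both single linear moments $\int\langle \mathcal{D}^{\varepsilon}\zeta,\varphi\rangle\, \mathd \nu$ and $\int F \, \mathd \nu$ vanish, which is precisely the content of the first stated equality (noting $Z(0)=1$).

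For the convergence statement, I would combine the weak convergence $\nu_{m,\varepsilon_n,N_r}^{\cosh,\beta} \to \nu_m^{\cosh,\beta}$ on $\mathcal{S}'(\mathbb{R}^2)$ (from Theorem \ref{theorem:stochasticquantization}/Definition \ref{definition:coshmodel}) with a uniform integrability estimate. The proof of Theorem \ref{thm:analyticity} in fact establishes $\sup_{\varepsilon_n,N_r} Z^{\varepsilon_n,N_r,\gamma} < \infty$, which upgrades to
\[
\sup_{\varepsilon_n,N_r}\int \exp\!\bigl(\gamma'\,\|\varphi\|_{B_{2,2,2\ell}^{-2\delta}(\mathbb{R}^2)}^2\bigr)\,\mathd \nu_{m,\varepsilon_n,N_r}^{\cosh,\beta}(\varphi) < \infty
\]
for some $\gamma'>0$, giving uniform $L^p$ bounds on $\|\varphi\|_{B_{2,2,2\ell}^{-2\delta}}$ under all the discretized measures. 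Split
\[
\langle \mathcal{D}^{\varepsilon_n}\zeta,\varphi\rangle = \langle \zeta,\mathcal{E}^{\varepsilon_n}\varphi\rangle_{L^2(\mathbb{R}^2)}
\]
by the adjoint relation \eqref{eq:Depsilon1}, and write the integrand as
\[
\langle \zeta,\mathcal{E}^{\varepsilon_n}\varphi\rangle\,\langle \psi(\cdot-x),\mathcal{E}^{\varepsilon_n}\varphi\rangle = \langle\zeta,\varphi\rangle\langle\psi(\cdot-x),\varphi\rangle + R_{\varepsilon_n}(\varphi),
\]
where $R_{\varepsilon_n}$ gathers the error terms involving $\mathcal{E}^{\varepsilon_n}\varphi - \varphi$ (interpreted via the natural embedding).

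The main term converges because $\varphi \mapsto \langle\zeta,\varphi\rangle\langle\psi(\cdot-x),\varphi\rangle$ is continuous on $B_{2,2,2\ell}^{-2\delta}(\mathbb{R}^2)$ (since $\zeta,\psi\in H^1$ with compact support embed continuously into the dual), and weak convergence of the measures together with the uniform exponential moment bound above yields convergence of this quadratic functional by standard truncation. For the error term, a Cauchy--Schwarz estimate
\[
\Bigl|\int R_{\varepsilon_n}(\varphi)\,\mathd \nu_{m,\varepsilon_n,N_r}^{\cosh,\beta}\Bigr| \lesssim \bigl(\|\mathcal{D}^{\varepsilon_n}\zeta-\zeta\|_{H^1} + \|\mathcal{D}^{\varepsilon_n}\psi(\cdot-x)-\psi(\cdot-x)\|_{H^1}\bigr) \sup_{\varepsilon_n,N_r}\bigl(\mathbb{E}\|\varphi\|_{B_{2,2,2\ell}^{-2\delta}}^2\bigr)^{1/2}
\]
suffices, since $\mathcal{D}^{\varepsilon_n}$ converges to the identity strongly on $H^1(\mathbb{R}^2)$ when applied to $H^1$ functions (local averaging). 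The principal obstacle is the final uniform-in-$(\varepsilon_n,N_r)$ control of the quadratic moment of $\varphi$ in a space where weak convergence upgrades to convergence of the bilinear functional; this is precisely what the (uniform in the discretization) version of Theorem \ref{thm:analyticity} furnishes, so I expect the argument to go through cleanly.
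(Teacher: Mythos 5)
Your first part is essentially the paper's proof: differentiate under the integral (justified by the uniform $L^p$ bounds coming from Theorem~\ref{thm:analyticity}) and observe that both the linear moment $\int\langle\mathcal{D}^{\varepsilon}\zeta,\varphi\rangle\,\mathd\nu_{m,\varepsilon,N}^{\cosh,\beta}$ and the term carrying $Z'(0)/Z(0)^2$ vanish by the $\varphi\mapsto-\varphi$ symmetry of $\nu_{m,\varepsilon,N}^{\cosh,\beta}$. That is correct.

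In the second part you correctly identify the two load-bearing ingredients — the adjoint relation~\eqref{eq:Depsilon1} and the combination of weak convergence $\mathcal{E}^{\varepsilon_n}_*\nu_{m,\varepsilon_n,N_r}^{\cosh,\beta}\to\nu_m^{\cosh,\beta}$ (Theorem~\ref{theorem:stochasticquantization}) with the uniform exponential moment bound supplied by the proof of Theorem~\ref{thm:analyticity} — and these are exactly what the paper uses. But the decomposition
\[
\langle\zeta,\mathcal{E}^{\varepsilon_n}\varphi\rangle\langle\psi(\cdot-x),\mathcal{E}^{\varepsilon_n}\varphi\rangle=\langle\zeta,\varphi\rangle\langle\psi(\cdot-x),\varphi\rangle+R_{\varepsilon_n}(\varphi)
\]
is not meaningful: $\varphi$ in the integral is a lattice field (or, equivalently, already equal to its block-constant extension under the paper's convention for $\nu^{\cosh,\beta}_{m,\varepsilon,N}$), so there is no discrepancy $\mathcal{E}^{\varepsilon_n}\varphi-\varphi$ to estimate, and $R_{\varepsilon_n}$ is identically zero. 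Likewise the statement that ``$\mathcal{D}^{\varepsilon_n}$ converges to the identity strongly on $H^1(\mathbb{R}^2)$'' is type-confused, since $\mathcal{D}^{\varepsilon_n}$ maps continuum functions to lattice functions; the object that one could compare to the identity is $\mathcal{E}^{\varepsilon_n}\mathcal{D}^{\varepsilon_n}$, but this is not needed here. Once you apply~\eqref{eq:Depsilon1} the integrand becomes $G(\mathcal{E}^{\varepsilon_n}\varphi)$ for the fixed quadratic functional $G(\eta)=\langle\zeta,\eta\rangle\langle\psi(\cdot-x),\eta\rangle$ on $B^{-2\delta}_{2,2,2\ell}(\mathbb{R}^2)$, so you are literally integrating $G$ against the pushforward measure; weak convergence plus a truncation controlled by the uniform exponential moments (in the form $\mathbbm{1}_{\{\|\cdot\|\geqslant N\}}|G|\leqslant e^{-\gamma N/2}e^{\gamma\|\cdot\|^2}$) finishes the proof directly, with no error term in sight.
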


\begin{proof}
  For proving the first assertion observe that
  \begin{eqnarray*}
    &  & \frac{\mathd}{\mathd \gamma} \int_{B^{- 2 \delta}_{2, 2, 2 \ell}
    (\varepsilon \mathbb{Z}^2)} \langle \mathcal{D}^{\varepsilon_n} \psi
    (\cdot - x), \varphi \rangle_{L^2 (\varepsilon_n \mathbb{Z}^2)} \mathd
    \bar{\nu}^{\gamma \zeta}_{\varepsilon_n, N_r}  (\phi)\\
    & = & \frac{1}{Z^{\varepsilon_n, N_r, 0}} \int_{B^{- 2 \delta}_{2, 2, 2
    \ell} (\varepsilon_n \mathbb{Z}^2)} \langle \mathcal{D}^{\varepsilon_n}
    \psi (\cdot - x), \varphi \rangle_{L^2 (\varepsilon_n \mathbb{Z}^2)}
    \frac{\mathd}{\mathd \gamma} [\exp (- \gamma \langle
    \mathcal{D}^{\varepsilon_n} \zeta, \varphi \rangle_{L^2  (\varepsilon_n
    \mathbb{Z}^2)})]_{\gamma = 0} \mathd \tilde{\nu}_{m, \varepsilon_n, N_r}^{\cosh,
    \beta}  (\varphi)\\
    &  & + \frac{\mathd}{\mathd \gamma} \left[ \frac{1}{Z^{\varepsilon_n,
    N_r, \gamma \zeta}} \right]_{\gamma = 0} \int_{B^{- 2 \delta}_{2, 2, 2
    \ell} (\varepsilon_n \mathbb{Z}^2)} \langle \mathcal{D}^{\varepsilon_n}
    \psi (\cdot - x), \varphi \rangle_{L^2 (\varepsilon_n \mathbb{Z}^2)}
    \mathd \tilde{\nu}_{m, \varepsilon_n, N_r}^{\cosh, \beta} (\varphi)\\
    & = & - \int_{B^{- 2 \delta}_{2, 2, 2 \ell} (\varepsilon_n \mathbb{Z}^2)}
    \langle \mathcal{D}^{\varepsilon_n} \psi (\cdot - x), \varphi \rangle_{L^2
    (\varepsilon_n \mathbb{Z}^2)} \langle \mathcal{D}^{\varepsilon} \zeta,
    \varphi \rangle_{L^2  (\varepsilon_n \mathbb{Z}^2)} \mathd \tilde{\nu}_{m,
    \varepsilon_n, N_r}^{\cosh, \beta}  (\varphi),
  \end{eqnarray*}
  where we used the fact $\exp (- \gamma \langle \mathcal{D}^{\varepsilon_n}
  \zeta, \varphi \rangle_{L^2  (\varepsilon_n \mathbb{Z}^2)})$ is, by Theorem
  \ref{thm:analyticity} and its proof, uniformly in $L^p$ with respect to the
  measure $\nu_m^{\cosh, \beta}  (\varphi)$, and so we can exchange the
  derivative operation with the integral, and in the last line, we exploit the
  property that
  \[ \int_{B^{- 2 \delta}_{2, 2, 2 \ell} (\varepsilon_n \mathbb{Z}^2)}
     \langle \mathcal{D}^{\varepsilon} \psi (\cdot - x), \varphi \rangle_{L^2
     (\varepsilon_n \mathbb{Z}^2)} \mathd  \tilde{\nu}_{m,
    \varepsilon_n, N_r}^{\cosh, \beta}  (\varphi) 
     = 0, \]
  and that $\frac{1}{Z^{\varepsilon_n, N_r, \gamma \zeta}}$ is differentiable
  in $\gamma$. For the second assertion it is enough to observe that
  \begin{eqnarray*}
    &  & \int_{B^{- 2 \delta}_{2, 2, 2 \ell} (\varepsilon_n \mathbb{Z}^2)}
    \langle \mathcal{D}^{\varepsilon} \zeta, \phi \rangle_{L^2 (\varepsilon_n
    \mathbb{Z}^2)} \langle \mathcal{D}^{\varepsilon} \psi (\cdot - x), \varphi
    \rangle_{L^2 (\varepsilon_n \mathbb{Z}^2)} \mathd \tilde{\nu}_{m, \varepsilon_n,
    N_r}^{\cosh, \beta}  (\varphi)\\
    & = & \int_{B^{- 2 \delta}_{2, 2, 2 \ell} (\varepsilon_n \mathbb{Z}^2)}
    \langle \zeta, \mathcal{E}^{\varepsilon_n} \varphi \rangle_{L^2
    (\mathbb{R}^2)} \langle \psi (\cdot - x), \mathcal{E}^{\varepsilon_n}
    \varphi \rangle_{L^2 (\mathbb{R}^2)} \mathd \tilde{\nu}_{m, \varepsilon_n,
    N_r}^{\cosh, \beta}  (\varphi)
  \end{eqnarray*}
  and we have established \ in Theorem \ref{theorem:stochasticquantization}
  that $\lim_{\varepsilon_n \rightarrow 0, N_r \rightarrow \infty,}
  (\mathcal{E}^{\varepsilon_n})^{\ast} (\tilde{\nu}_{m, \varepsilon_n, N_r}^{\cosh, \beta}) =
  \nu_m^{\cosh, \beta}$ weakly on $B_{2, 2, 2 \ell}^{- 2 \delta}
  (\mathbb{R}^2)$, for any $\delta > 0$. The weak convergence together with
  the bound
  \begin{eqnarray*}
    &  & \mathbbm{1}_{\{  \| (1 - \Delta_{\varepsilon \mathbb{Z}^2})^{- 1 /
    2} (\varphi) \|^2_{L_{\ell}^2 (\varepsilon_n \mathbb{Z}^2)}  \geqslant N
    \}} \langle \zeta, \mathcal{E}^{\varepsilon_n} \varphi \rangle_{L^2
    (\mathbb{R}^2)} \langle \psi (\cdot - x), \mathcal{E}^{\varepsilon_n}
    \varphi \rangle_{L^2 (\mathbb{R}^2)}\\
    & \leqslant & \exp (- \gamma N / 2) \exp (\gamma \| (1 -
    \Delta_{\varepsilon \mathbb{Z}^2})^{- 1 / 2} (\varphi) \|^2_{L_{\ell}^2
    (\varepsilon_n \mathbb{Z}^2)})
  \end{eqnarray*}
  proves the thesis.
\end{proof}

\begin{proposition}
  For $\gamma > 0$ small enough we have that
  \[ \bar{\nu}_{N, \varepsilon}^{\gamma} = \tmop{Law} (W_{\varepsilon, N} (0,
     \cdot) + \bar{\phi}_{\varepsilon, N}^{\gamma \zeta} (0, \cdot)), \]
  where $\bar{\phi}_{\varepsilon, N}^{\gamma \zeta} \in B_{2, 2, l}^{- \delta}
  (\mathbb{R}^2 \times \varepsilon \mathbb{Z}^2)$ is the unique solution to
  the equation
  \begin{equation}
    \begin{array}{lll}
      &  & (- \Delta_{\mathbb{R}^2 + \varepsilon \mathbb{Z}^2} + m^2)
      \bar{\phi}_{\varepsilon, N}^{\gamma \zeta} + \alpha e^{\alpha
      \bar{\phi}_{\varepsilon, N}^{\gamma \zeta}} \mu^{\alpha}_{\varepsilon,
      N} - \alpha e^{- \alpha \bar{\phi}_{\varepsilon, N}^{\gamma \zeta}}
      \mu^{- \alpha}_{\varepsilon, N}\\
      & = & \gamma \overline{\mathcal{D}^{\varepsilon}} \zeta,
    \end{array} \label{eq:dim-red-2}
  \end{equation}
  where $\overline{\mathcal{D}^{\varepsilon}} \zeta : \mathbb{R}^2 \times
  \varepsilon \mathbb{Z}^2 \rightarrow \mathbb{R}$ is defined by
  \[ \overline{\mathcal{D}^{\varepsilon}} \zeta^{} (x, z) =
     \mathcal{D}^{\varepsilon} \zeta (z) . \]
\end{proposition}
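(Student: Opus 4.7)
The plan is to mirror the strategy of Proposition \ref{prop:dim-red-int}, exploiting the fact that, in contrast to the quadratic tilt used there, the exponential weight $\exp(-\gamma\langle\mathcal{D}^{\varepsilon}\zeta,\varphi\rangle_{L^2(\varepsilon\mathbb{Z}^2)})$ is only a \emph{linear} exponential tilt of the measure $\nu_{m,\varepsilon,N}^{\cosh,\beta}$. This means that, at the level of the underlying finite-dimensional description from Lemma \ref{lemma:approximationmeasure}, the new potential is $V_\varepsilon^{\gamma\zeta}(y) := V_\varepsilon(y) + \gamma\sum_{i\in\left[-\tfrac{2}{N},\tfrac{2}{N}\right)^2} \mathcal{D}^{\varepsilon}\zeta(i\varepsilon)\, y_i$. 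Since $V_\varepsilon$ is strictly convex and we are adding only a linear functional, the potential $V_\varepsilon^{\gamma\zeta}$ is strictly convex for every $\gamma\in\mathbb{R}$; in particular the normalization $Z^{\varepsilon,N,\gamma\zeta}$ is finite, and no further smallness condition on $\gamma$ is actually required.

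First, I would invoke Theorem 4 of \cite{AlDeGu2018} for the convex potential $V_\varepsilon^{\gamma\zeta}$, exactly as in the proof of Lemma \ref{lemma:approximationmeasure}: this gives a unique (continuous, at most exponentially growing) solution $\phi_{\varepsilon,N}^{\gamma\zeta}(x,z)$ on $\mathbb{R}^2\times\varepsilon(\mathbb{Z}/N\mathbb{Z})^2$ to the associated system of elliptic SPDEs, whose marginal at $x=0$ has law $\bar{\nu}_{\varepsilon,N}^{\gamma\zeta}$. Next, I would perform the Da Prato--Debussche change of variables $\phi_{\varepsilon,N}^{\gamma\zeta}=\bar{\phi}_{\varepsilon,N}^{\gamma\zeta}+W_{\varepsilon,N}$ and compute the drift coming from $V_\varepsilon^{\gamma\zeta}$. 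The $V_\varepsilon$ part reproduces the $\sinh$-type Wick nonlinearity $\alpha e^{\alpha\bar{\phi}^{\gamma\zeta}_{\varepsilon,N}}\mu^{\alpha}_{\varepsilon,N}-\alpha e^{-\alpha\bar{\phi}^{\gamma\zeta}_{\varepsilon,N}}\mu^{-\alpha}_{\varepsilon,N}$, while $\partial_{y_i}$ of the linear perturbation gives the constant $\gamma\mathcal{D}^{\varepsilon}\zeta(i\varepsilon)$, which viewed as a function of $(x,z)\in\mathbb{R}^2\times\varepsilon\mathbb{Z}^2$ is exactly $\gamma\overline{\mathcal{D}^{\varepsilon}}\zeta(x,z)$ (independent of $x$, by definition of $\overline{\mathcal{D}^{\varepsilon}}$). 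This yields equation \eqref{eq:dim-red-2}.

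For uniqueness in $B^{-\delta}_{2,2,\ell}(\mathbb{R}^2\times\varepsilon\mathbb{Z}^2)$, I would repeat the argument of Proposition \ref{proposition:uniqueness}: given two solutions $\bar{\phi}_1,\bar{\phi}_2$ to \eqref{eq:dim-red-2}, the source term $\gamma\overline{\mathcal{D}^{\varepsilon}}\zeta$ cancels in the difference, and testing $(-\Delta_{\mathbb{R}^2\times\varepsilon\mathbb{Z}^2}+m^2)(\bar{\phi}_1-\bar{\phi}_2) +\alpha(e^{\alpha\bar{\phi}_1}-e^{\alpha\bar{\phi}_2})\mu^\alpha_{\varepsilon,N}-\alpha(e^{-\alpha\bar{\phi}_1}-e^{-\alpha\bar{\phi}_2})\mu^{-\alpha}_{\varepsilon,N}=0$ against $(\bar{\phi}_1-\bar{\phi}_2)\rho_{2\ell,\lambda}^{(4)}$ produces, by monotonicity of $x\mapsto e^{\pm\alpha x}$, a nonnegative contribution from the exponential terms and a coercive contribution from the elliptic operator, forcing $\bar{\phi}_1=\bar{\phi}_2$. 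The a priori bound needed to justify the test, namely $\bar{\phi}^{\gamma\zeta}_{\varepsilon,N}\in H^1_\ell$ with $e^{\pm\alpha\bar{\phi}^{\gamma\zeta}_{\varepsilon,N}}\in L^1(\rho^{(4)}_{2\ell}\mu^{\pm\alpha}_{\varepsilon,N})$, follows by rerunning the energy estimates of Theorems \ref{theorem_apriori1}--\ref{theorem_apriori2}, whose only modification is the additional term $\gamma\int\rho^{(4)}_{\ell,\lambda}\overline{\mathcal{D}^{\varepsilon}}\zeta\,\bar{\phi}^{\gamma\zeta}_{\varepsilon,N}$, easily absorbed since $\overline{\mathcal{D}^{\varepsilon}}\zeta\in L^\infty\cap L^2_{\ell/2}$ by $\tmop{supp}\zeta\subset B(0,1)$ and continuity of $\mathcal{D}^{\varepsilon}$ (Lemma \ref{lemma:discretization:operator}).

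The main obstacle I anticipate is essentially bookkeeping: correctly identifying the linear source in the finite-dimensional Euler--Lagrange description with $\gamma\overline{\mathcal{D}^{\varepsilon}}\zeta$, and checking the $x$-independence that makes the extension $\overline{\mathcal{D}^{\varepsilon}}\zeta(x,z)=\mathcal{D}^{\varepsilon}\zeta(z)$ consistent with the dimensional reduction. Once this is settled, the rest is a direct transcription of the proofs of Lemma \ref{lemma:approximationmeasure} and Proposition \ref{prop:dim-red-int}, together with the uniqueness argument of Proposition \ref{proposition:uniqueness}.
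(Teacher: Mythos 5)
Your proposal is correct and takes essentially the same route as the paper, which simply states that the proof is analogous to that of Proposition \ref{prop:dim-red-int} (i.e.\ invoke Theorem 4 of \cite{AlDeGu2018} via the convexity of the tilted potential, as in Lemma \ref{lemma:approximationmeasure}, and then perform the Da Prato--Debussche decomposition). Your additional observation that the linear tilt preserves strict convexity of $V_\varepsilon^{\gamma\zeta}$ for \emph{every} $\gamma\in\mathbb{R}$, so the smallness restriction is not actually needed here, is a valid sharpening of the statement.
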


\begin{proof}
  The proof is analogous to the one of Proposition \ref{prop:dim-red-int}.
\end{proof}

Let us introduce the notation
\[ w (x, z) : \mathbb{R}^2 \times \varepsilon \mathbb{Z}^2 \rightarrow (1 +
   \kappa (1 + | x |^2)^{1 / 2})^{- n} \exp (\kappa (1 + | z |^2)^{1 / 2}) .
\]
\begin{proposition}
  \label{prop:exp-decay}Let $\bar{\phi}_{\varepsilon, N}^{\gamma \zeta}$ be
  the solution to equation {\eqref{eq:dim-red-2}}. Then provided $\kappa$ is
  small enough there exists a constant $C > 0$ such that
  \[ \mathbb{E} \left[ \int_{\mathbb{R}^2 \times \varepsilon_n \mathbb{Z}^2}
     w (x, z) (\bar{\phi}_{\varepsilon_n, N_r}^{\gamma \zeta} -
     \bar{\phi}_{\varepsilon_n, N_r}^0)^2 \mathd x \mathd z \right]^{1 / 2}
     \leqslant C \gamma \| \zeta \|_{L^2 (\mathbb{R}^2)}. \]
\end{proposition}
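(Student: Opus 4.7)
The plan is to establish the bound pathwise, by an energy estimate against the exponential weight, and then take expectations. Subtracting equation \eqref{eq:dim-red-2} (for $\gamma\zeta$) from the same equation at $\gamma=0$, the difference $\psi := \bar{\phi}^{\gamma\zeta}_{\varepsilon,N} - \bar{\phi}^0_{\varepsilon,N}$ satisfies
\begin{equation*}
(m^2-\Delta_{\mathbb{R}^2\times\varepsilon\mathbb{Z}^2})\psi + \alpha(e^{\alpha\bar{\phi}^{\gamma\zeta}}-e^{\alpha\bar{\phi}^0})\mu^\alpha_{\varepsilon,N} - \alpha(e^{-\alpha\bar{\phi}^{\gamma\zeta}}-e^{-\alpha\bar{\phi}^0})\mu^{-\alpha}_{\varepsilon,N} = \gamma\overline{\mathcal{D}^\varepsilon}\zeta.
\end{equation*}
I would test this equation with $w\psi$ and sum/integrate over $\mathbb{R}^2\times\varepsilon\mathbb{Z}^2$. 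Monotonicity of $x\mapsto e^{\pm\alpha x}$ combined with positivity of $\mu^{\pm\alpha}_{\varepsilon,N}$ ensures that the two nonlinear terms contribute nonnegatively, exactly as in the uniqueness argument of Proposition \ref{proposition:uniqueness}.

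The algebraic fact underpinning the estimate is the multiplicative bound $|\nabla_{x,z}w|\lesssim\kappa w$ and its discrete analogue $\varepsilon^{-1}|D_{\varepsilon e_i}w|\lesssim\kappa w$, both obtained by direct differentiation of the factors $(1+\kappa(1+|x|^2)^{1/2})^{-n}$ and $\exp(\kappa(1+|z|^2)^{1/2})$ (and controlling discrete differences by the supremum of the continuum derivative along the segment), with constants independent of $\varepsilon$. (Summation/)integration by parts against the Laplacian then produces $\int w|\nabla_{\mathbb{R}^2\times\varepsilon\mathbb{Z}^2}\psi|^2$ plus a cross term $\int\psi\,\nabla w\cdot\nabla\psi$, which by Cauchy--Schwarz and Young's inequality is bounded by $\tfrac12\int w|\nabla\psi|^2 + C\kappa^2\int w\psi^2$; for $\kappa$ sufficiently small relative to $m$, the last piece is absorbed into $m^2\int w\psi^2$.

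Since $\zeta$ is supported in $B(0,1)$, the function $\overline{\mathcal{D}^\varepsilon}\zeta(x,z)=\mathcal{D}^\varepsilon\zeta(z)$ is supported in $\mathbb{R}^2\times B(0,2)$ (uniformly for $\varepsilon\leq 1$), so Cauchy--Schwarz yields
\begin{equation*}
\gamma\left|\int w\psi\,\overline{\mathcal{D}^\varepsilon}\zeta\right|\leq \gamma\|w^{1/2}\psi\|_{L^2}\|w^{1/2}\overline{\mathcal{D}^\varepsilon}\zeta\|_{L^2}\lesssim \gamma\|w^{1/2}\psi\|_{L^2}\|\zeta\|_{L^2(\mathbb{R}^2)},
\end{equation*}
where the last inequality uses that the polynomial factor in $x$ is integrable for $n\geq 3$ and that the exponential factor is bounded on $B(0,2)$, uniformly in $\varepsilon$. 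Absorbing $\|w^{1/2}\psi\|_{L^2}$ into the left-hand side produces the desired pathwise estimate, and the expectation bound follows immediately (indeed from a deterministic one, which is stronger than needed).

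The main obstacle is justifying the test: a priori only $\psi\in H^1_\ell$ for polynomial weights (Theorem \ref{theorem_apriori1}), so $\int w\psi^2$ might be infinite and the manipulations above formal. I would circumvent this by replacing $w$ with a truncation $w_M$ in which the factor $\exp(\kappa(1+|z|^2)^{1/2})$ is capped at level $M$; all integrals are then finite (using the exponential growth control on $\bar{\phi}^{\gamma\zeta}_{\varepsilon,N}$ and $\bar{\phi}^0_{\varepsilon,N}$ from Lemma \ref{lemma:existencefinitedim}), and the proof of the algebraic bounds $|\nabla_{x,z}w_M|\lesssim\kappa w_M$ and $\varepsilon^{-1}|D_{\varepsilon e_i}w_M|\lesssim\kappa w_M$ carries over since the truncation is monotone and Lipschitz. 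Uniformity in $M$ then allows passage to the limit by monotone convergence.
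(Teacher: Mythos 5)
Your proof follows essentially the same route as the paper's: subtract the two equations, test with $w\psi$, use the convexity of $\cosh$ (equivalently monotonicity of $e^{\pm\alpha\cdot}$ together with positivity of $\mu^{\pm\alpha}_{\varepsilon,N}$) to drop the nonlinear contribution, absorb the cross term via $|\nabla_{\mathbb{R}^2\times\varepsilon\mathbb{Z}^2} w|\lesssim\kappa w$ for $\kappa$ small, and close with Cauchy--Schwarz exploiting the compact support of $\zeta$. The one substantive addition you make is the truncation $w_M$ needed to justify the test pathwise. This is a genuine improvement in rigor: the a priori bounds give $\psi$ in $L^\infty_{\exp(\beta)}$ and $H^1_\ell$ with polynomial weight only, while $w$ grows exponentially in $z$, so $\int w\psi^2$ is not evidently finite before the estimate is run; the paper's argument assumes this finiteness implicitly when it absorbs $\int w\psi^2$ into the left-hand side. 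Your monotone truncation, with the observation that the bound $|\nabla w_M|\lesssim\kappa w_M$ survives (the truncated region has zero exponential gradient), fills this gap and recovers the estimate by monotone convergence. Note in passing that the final statement only asserts a bound on the expectation, whereas your argument produces a deterministic pathwise bound, which is stronger and is consistent with what the paper also tacitly obtains.
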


\begin{proof}
  By the definitions of $\bar{\phi}_{\varepsilon_n, N_r}^{\gamma \zeta},
  \bar{\phi}_{\varepsilon_n, N_r}^0$ we have
  \begin{eqnarray*}
    &  & (- \Delta_{\mathbb{R}^2 + \varepsilon_n \mathbb{Z}^2} + m^2)
    (\bar{\phi}_{\varepsilon_n, N_r}^{\gamma \zeta} -
    \bar{\phi}_{\varepsilon_n, N_r}^0)\\
    &  & + \alpha c_{\varepsilon, \alpha} (\sinh (\alpha (W_{\varepsilon_n,
    N_r} + \bar{\phi}_{\varepsilon_n, N_r}^{\gamma \zeta})) - \sinh (\alpha
    (W_{\varepsilon_n, N_r} + \bar{\phi}_{\varepsilon_n, N_r}^0)))\\
    & = & \gamma \overline{\mathcal{D}}^{\varepsilon_n} \zeta,
  \end{eqnarray*}
  where
  \[ c_{\varepsilon, \alpha} = e^{- \frac{\alpha^2}{2} \mathbb{E}
     [W_{\varepsilon_n, N_r}^2]} . \]
  Testing the previous equation with the function $w (x, z)
  (\bar{\phi}_{\varepsilon_n, N_r}^{\gamma \zeta} - \bar{\phi}_{\varepsilon_n,
  N_r}^0)$ we get
  \[ \begin{array}{c}
       \int_{\mathbb{R}^2 \times \varepsilon_n \mathbb{Z}^2} w (x, z) (-
       \Delta_{\mathbb{R}^2 \times \varepsilon_n \mathbb{Z}^2} + m^2)
       (\bar{\phi}_{\varepsilon_n, N_r}^{\gamma \zeta} -
       \bar{\phi}_{\varepsilon_n, N_r}^0) (\bar{\phi}_{\varepsilon_n,
       N_r}^{\gamma \zeta} - \bar{\phi}_{\varepsilon_n, N_r}^0) \mathd x
       \mathd z \\
       + \alpha \int_{\mathbb{R}^2 \times \varepsilon_n \mathbb{Z}^2} 2 w (x,
       z) c_{\varepsilon, \alpha} (\sinh (\alpha (W_{\varepsilon_n, N_r} +
       \bar{\phi}_{\varepsilon_n, N_r}^{\gamma \zeta})) - \sinh (\alpha
       (W_{\varepsilon_n, N_r} + \bar{\phi}_{\varepsilon_n, N_r}^0))) \\
       \times (\bar{\phi}_{\varepsilon_n, N_r}^{\gamma \zeta} -
       \bar{\phi}_{\varepsilon_n, N_r}^0) \mathd x \mathd z = \gamma
       \int_{\mathbb{R}^2 \times \varepsilon_n \mathbb{Z}^2} w (x, z)
       \overline{\mathcal{D}^{\varepsilon}} \zeta (\phi_{N,
       \varepsilon}^{\gamma \zeta} - \phi_{N, \varepsilon}^0) \mathd x \mathd
       z.
     \end{array} \]
  Now since $\cosh$ is convex
  \[ \alpha [\sinh (\alpha (W_{\varepsilon_n, N_r} +
     \bar{\phi}_{\varepsilon_n, N_r}^{\gamma \zeta})) - \sinh (\alpha
     (W_{\varepsilon_n, N_r} + \bar{\phi}_{\varepsilon_n, N_r}^0))]
     (\bar{\phi}_{\varepsilon_n, N_r}^{\gamma \zeta} -
     \bar{\phi}_{\varepsilon_n, N_r}^0) \geqslant 0. \]
  So we have
  \begin{align*}
    &\int_{\mathbb{R}^2 \times \varepsilon_n \mathbb{Z}^2} w (x, z) (-
    \Delta_{\mathbb{R}^2 \times \varepsilon_n \mathbb{Z}^2} + m^2)
    (\bar{\phi}_{\varepsilon_n, N_r}^{\gamma \zeta} -
    \bar{\phi}_{\varepsilon_n, N_r}^0) (\bar{\phi}_{\varepsilon_n,
    N_r}^{\gamma \zeta} - \bar{\phi}_{\varepsilon_n, N_r}^0) \mathd x \mathd z\\
    \leqslant & 
    \gamma \int_{\mathbb{R}^2 \times \varepsilon_n \mathbb{Z}^2} w (x, z)
    \overline{\mathcal{D}^{\varepsilon_n}} \zeta (\bar{\phi}_{\varepsilon_n,
    N_r}^{\gamma \zeta} - \bar{\phi}_{\varepsilon_n, N_r}^0) \mathd x \mathd
    y. \nonumber
  \end{align*}
  On the other hand
  \begin{eqnarray*}
    &  & \int_{\mathbb{R}^2 \times \varepsilon_n \mathbb{Z}^2} w (x, z) (-
    \Delta_{\mathbb{R}^2 \times \varepsilon_n \mathbb{Z}^2} + m^2)
    (\bar{\phi}_{\varepsilon_n, N_r}^{\gamma \zeta} -
    \bar{\phi}_{\varepsilon_n, N_r}^0) (\bar{\phi}_{\varepsilon_n,
    N_r}^{\gamma \zeta} - \bar{\phi}_{\varepsilon_n, N_r}^0) \mathd x \mathd
    z\\
    & = & m^2 \int_{\mathbb{R}^2 \times \varepsilon_n \mathbb{Z}^2} w (x, z)
    (\bar{\phi}_{\varepsilon_n, N_r}^{\gamma \zeta} -
    \bar{\phi}_{\varepsilon_n, N_r}^0)^2 \mathd x \mathd z \\
    &  & + \int_{\mathbb{R}^2 \times \varepsilon_n \mathbb{Z}^2} w (x, z)
    (\nabla_{\mathbb{R}^2 \times \varepsilon_n \mathbb{Z}^2}
    (\bar{\phi}_{\varepsilon_n, N_r}^{\gamma \zeta} -
    \bar{\phi}_{\varepsilon_n, N_r}^0))^2 \mathd x \mathd z \\
    &  & + \int_{\mathbb{R}^2 \times \varepsilon_n \mathbb{Z}^2}
    \nabla_{\mathbb{R}^2 \times \varepsilon \mathbb{Z}^2}
    (\bar{\phi}_{\varepsilon_n, N_r}^{\gamma \zeta} -
    \bar{\phi}_{\varepsilon_n, N_r}^0) (\bar{\phi}_{\varepsilon_n,
    N_r}^{\gamma \zeta} - \bar{\phi}_{\varepsilon_n, N_r}^0)
    (\nabla_{\mathbb{R}^2 \times \varepsilon \mathbb{Z}^2} w) \mathd x \mathd
    z.
  \end{eqnarray*}
  It is easy to prove that
  \[ | \nabla_{\mathbb{R}^2 \times \varepsilon_n \mathbb{Z}^2} w | \leqslant
     \kappa | w | \]
  for $\varepsilon_n$ and $\kappa$ small enough. This and Cauchy-Schwarz
  inequality give us \
  \begin{eqnarray*}
    &  & \int_{\mathbb{R}^2 \times \varepsilon_n \mathbb{Z}^2} w (x, z) (-
    \Delta_{\mathbb{R}^2 \times \varepsilon_n \mathbb{Z}^2} + m^2)
    (\bar{\phi}_{\varepsilon_n, N_r}^{\gamma \zeta} -
    \bar{\phi}_{\varepsilon_n, N_r}^0) (\bar{\phi}_{\varepsilon_n,
    N_r}^{\gamma \zeta} - \bar{\phi}_{\varepsilon_n, N_r}^0) \mathd x \mathd
    z\\
    & \geqslant & \frac{1}{2} m^2 \int_{\mathbb{R}^2 \times \varepsilon_n
    \mathbb{Z}^2} w (x, z) (\bar{\phi}_{\varepsilon_n, N_r}^{\gamma \zeta} -
    \bar{\phi}_{\varepsilon_n, N_r}^0)^2 \mathd x \mathd z\\
    &  & + \frac{1}{2} \int_{\mathbb{R}^2 \times \varepsilon_n \mathbb{Z}^2}
    w (x, z) (\nabla_{\mathbb{R}^2 \times \varepsilon_n \mathbb{Z}^2}
    (\bar{\phi}_{\varepsilon_n, N_r}^{\gamma \zeta} -
    \bar{\phi}_{\varepsilon_n, N_r}^0))^2 \mathd x \mathd z .
  \end{eqnarray*}
  So putting everything together we get
  \begin{eqnarray*}
    &  & \int_{\mathbb{R}^2 \times \varepsilon_n \mathbb{Z}^2} w (x, z) (m^2
    (\bar{\phi}_{\varepsilon_n, N_r}^{\gamma \zeta} -
    \bar{\phi}_{\varepsilon_n, N_r}^0)^2 + (\nabla_{\mathbb{R}^2 \times
    \varepsilon_n \mathbb{Z}^2} (\bar{\phi}_{\varepsilon_n, N_r}^{\gamma
    \zeta} - \bar{\phi}_{\varepsilon_n, N_r}^0))^2) \mathd x \mathd z\\
    & \leqslant & 2 \gamma \int_{\mathbb{R}^2 \times \varepsilon_n \mathbb{Z}^2} w
    (x, z) \overline{\mathcal{D}^{\varepsilon_n}} \zeta
    (\bar{\phi}_{\varepsilon_n, N_r}^{\gamma \zeta} -
    \bar{\phi}_{\varepsilon_n, N_r}^0) \mathd x \mathd z.
  \end{eqnarray*}
  Now since by Young's inequality
  
  \begin{align*}
    &2 \int_{\mathbb{R}^2 \times \varepsilon_n \mathbb{Z}^2} w (x, z)
    \overline{\mathcal{D}^{\varepsilon_n}} \zeta (\bar{\phi}_{\varepsilon_n,
    N_r}^{\gamma \zeta} - \bar{\phi}_{\varepsilon_n, N_r}^0) \mathd x \mathd z\\
    \leqslant& \nonumber
    \gamma \int_{\mathbb{R}^2 \times \varepsilon_n \mathbb{Z}^2} w (x, z) \left(
    \overline{\mathcal{D}^{\varepsilon_n}} \zeta \right)^2 \mathd x \mathd z +
    \int_{\mathbb{R}^2 \times \varepsilon_n \mathbb{Z}^2} w (x, z)
    (\bar{\phi}_{\varepsilon_n, N_r}^{\gamma \zeta} -
    \bar{\phi}_{\varepsilon_n, N_r}^0)^2 \mathd x \mathd y \nonumber
  \end{align*}
  and since
  \[ \int_{\mathbb{R}^2 \times \varepsilon_n \mathbb{Z}^2} w (x, z) \left(
     \overline{\mathcal{D}^{\varepsilon_n}} \zeta \right)^2 \mathd x \mathd z
     \leqslant C \| \zeta \|_{L^2 (\mathbb{R}^2)}^2, \]
  for some constant $C > 0$ independent on $\zeta$ (due to the fact that we
  have assumed $\zeta$ to be supported in $B (0, 1)$), we can conclude the
  proof of Proposition \ref{prop:exp-decay}.
\end{proof}

\begin{proof*}{Proof of Theorem \ref{clustering-exp}}
  By Lemma \ref{lemma:derivative} we have that
  \[ \begin{array}{rl}
       &\sup_{\varepsilon_n, N_r} \left| \int_{B^{- 2 \delta}_{2, 2, 2 \ell}
       (\varepsilon_n \mathbb{Z}^2)} \langle \mathcal{D}^{\varepsilon_n}
       \zeta, \varphi \rangle_{L^2 (\varepsilon_n \mathbb{Z}^2)} \langle
       \mathcal{D}^{\varepsilon_n} \psi (\cdot - x), \varphi \rangle_{L^2
       (\varepsilon_n \mathbb{Z}^2)} \mathd \nu_{m, \varepsilon_n,
       N_r}^{\cosh, \beta}  (\varphi) \right|\\
       \leqslant & \sup_{\varepsilon_n, N_r} \sup_{\gamma > 0} \frac{1}{\gamma}\left| \int_{B^{-
       2 \delta}_{2, 2, 2 \ell} (\varepsilon_n \mathbb{Z}^2)} \langle
       \mathcal{D}^{\varepsilon_n} \psi (\cdot - x), \varphi \rangle_{L^2
       (\varepsilon_n \mathbb{Z}^2)} (\mathd \bar{\nu}^{\gamma \zeta}_{m,
       \varepsilon_n, N_r}  (\varphi) - \mathd \nu_{m, \varepsilon_n,
       N_r}^{\cosh, \beta} (\varphi)) \right|.
     \end{array} \]
  On the other hand, by the invariance of the law of
  $\bar{\phi}_{\varepsilon_n, N_r}^{\gamma \zeta} (x, \cdot)$ and
  $\bar{\phi}_{\varepsilon_n, N_r}^0 (x \comma \cdot)$ with respect to the
  translation of $x$ coordinates, we get
  \begin{eqnarray*}
    &  & \left| \int_{B^{- 2 \delta}_{2, 2, 2 \ell} (\varepsilon_n
    \mathbb{Z}^2)} \langle \mathcal{D}^{\varepsilon_n} \psi (\cdot - y),
    \varphi \rangle_{L^2 (\varepsilon_n \mathbb{Z}^2)} (\mathd
    \bar{\nu}^{\gamma \zeta}_{m, \varepsilon_n, N_r}  (\varphi) - \mathd
    \nu_{m, \varepsilon_n, N_r}^{\cosh, \beta} (\varphi)) \right|\\
    & = & | \mathbb{E} [\langle \mathcal{D}^{\varepsilon_n} \psi (\cdot - y),
    (\bar{\phi}_{\varepsilon_n, N_r}^{\gamma \zeta} (0, \cdot) -
    \bar{\phi}_{\varepsilon_n, N_r}^0 (0 \comma \cdot)) \rangle] |\\
    & = & \int_{\varepsilon_n \mathbb{Z}^2} (\mathcal{D}^{\varepsilon_n} \psi
    (z - y)) (\mathbb{E} [\bar{\phi}_{\varepsilon_n, N_r}^{\gamma \zeta} (0,
    z) - \bar{\phi}_{\varepsilon_n, N_r}^0 (0 \comma z)]) \mathd z\\
    & = & C \int_{\mathbb{R}^2 \times \varepsilon_n \mathbb{Z}^2} (1 + | x
    |)^{- n} \left( \overline{\mathcal{D}^{\varepsilon_n}} \psi (x, z - y)
    \right) (\mathbb{E} [\bar{\phi}_{\varepsilon_n, N_r}^{\gamma \zeta} (x, z)
    - \bar{\phi}_{\varepsilon_n, N_r}^0 (x \comma z)]) \mathd x \mathd z\\
    & \leqslant & C \exp (- \kappa | y |) \int_{\mathbb{R}^2 \times
    \varepsilon_n \mathbb{Z}^2} w (x, z) \left(
    \overline{\mathcal{D}^{\varepsilon_n}} \psi (x, z - y) \right) (\mathbb{E}
    [\bar{\phi}_{\varepsilon_n, N_r}^{\gamma \zeta} (x, z) -
    \bar{\phi}_{\varepsilon_n, N_r}^0 (x \comma z)]) \mathd x \mathd z,
  \end{eqnarray*}
  where again
  \[ w (x, z) = (1 + \kappa (1 + | x |^2)^{1 / 2})^{- n} \exp (\kappa (1 + |
     z |^2)^{1 / 2}), \]
  and we have used that $\psi$ is supported in $B (0, 1)$. From this we can
  conclude the proof of Theorem \ref{clustering-exp} using Proposition
  \ref{prop:exp-decay} and the Cauchy-Schwarz inequality.
\end{proof*}

\bibliographystyle{plain}
\bibliography{exponential}

\end{document}